\documentclass[a4paper,12pt]{article}
\usepackage[T1]{fontenc}
\usepackage{amsmath}
\usepackage{amsthm}
\usepackage{amsfonts}
\usepackage{amssymb}
\usepackage{mathrsfs}
\usepackage{graphicx}
\usepackage{color}
\usepackage{enumitem}
\usepackage{array}
\usepackage{booktabs}
\usepackage{longtable}
\usepackage[protrusion=true,expansion=false]{microtype}
\usepackage[english]{babel}
\usepackage[colorlinks=true, allcolors=blue]{hyperref}

\newcommand{\M}{\mathcal M}
\newcommand{\KN}{\mathrm{KN}}
\newcommand{\RN}{\mathrm{RN}}
\newcommand{\D}{\mathcal D}
\newcommand{\E}{\mathcal E}
\newcommand{\Fc}{\mathcal F}
\newcommand{\B}{\mathcal B}
\newcommand{\Rc}{\mathcal R}

\newcommand{\Hc}{\mathcal H}

\newcommand{\Hp}{\mathcal H^+}
\newcommand{\Hm}{\mathcal H^-}
\newcommand{\Ip}{\mathscr I^+}
\newcommand{\Imn}{\mathscr I^-}
\newcommand{\Lie}{\mathcal L}
\newcommand{\Max}{\mathrm{Max}}
\newcommand{\rad}{\mathrm{rad}}
\newcommand{\stat}{\mathrm{stat}}
\newcommand{\loc}{\mathrm{loc}}
\newcommand{\comp}{\mathrm{comp}}

\newcommand{\Rbb}{\mathbb R}
\newcommand{\Sph}{\mathbb S^2}
\newcommand{\eps}{\varepsilon}
\newcommand{\dd}{\mathrm d}
\newcommand{\Div}{\operatorname{div}}
\newcommand{\starG}{\star_{g}}
\newcommand{\norm}[1]{\lVert#1\rVert}
\newcommand{\Id}{\mathrm{Id}}
\newcommand{\supp}{\operatorname{supp}}
\newcommand{\abs}[1]{\lvert#1\rvert}
\newcommand{\Tstress}{\mathbf T}
\newcommand{\Pb}{\mathcal P}
\newcommand{\Xnorm}[1]{\mathcal X^{(#1)}}
\newcommand{\sphlap}{\mathop{}\!\mathchoice{\Delta\mkern-12mu/}{\Delta\mkern-12mu/}{\Delta\mkern-9mu/}{\Delta\mkern-8mu/}\,}
\newcommand{\sphgrad}{\mathop{}\!\nabla\mkern-13mu/\,}
\newcommand{\sphdiv}{\mathop{\mathrm{div}\mkern-16mu/\,}\nolimits}
\newcommand{\sphcurl}{\mathop{\mathrm{curl}\mkern-19mu/\,}\nolimits}

\newtheorem{theorem}{Theorem}
\newtheorem{proposition}{Proposition}
\newtheorem{lemma}{Lemma}
\newtheorem{remark}{Remark}
\newtheorem{definition}{Definition}
\newtheorem{corollary}{Corollary}

\numberwithin{equation}{section}
\allowdisplaybreaks
\hypersetup{ pdftitle={Stationary-subtracted Maxwell fields on Kerr-Newman exteriors},
pdfauthor={Bobby Eka Gunara, Mulyanto, Emir Syahreza Fadhilla, Fiki Taufik Akbar},
pdfkeywords={Maxwell equations, Kerr-Newman, local energy decay, scattering},
bookmarksnumbered, pdfstartview={FitH}, urlcolor=blue, }

\begin{document}
\pagestyle{plain}




\title{\LARGE\textbf{  Radiative Maxwell Scattering on Slowly Rotating Weakly Charged Kerr-Newman Black Holes}}

\author{\normalsize{Bobby Eka Gunara}$^{\flat,\sharp}$\footnote{Corresponding author}, Mulyanto$^{\sharp}$,
Emir Syahreza Fadhilla$^{\sharp}$, and Fiki Taufik Akbar$^{\sharp}$ \\ \\
$^{\sharp}$ \textit{\small Theoretical Physics Laboratory, Theoretical High Energy Physics Research Division,}\\
\textit{\small Faculty of Mathematics and Natural Sciences,}\\
\textit{\small Institut Teknologi Bandung}\\
\textit{\small Jl. Ganesha no. 10 Bandung, Indonesia, 40132}\\
\\
\small email: bobby@itb.ac.id, mulyanto23@itb.ac.id, esfadhilla@gmail.com, ftakbar@itb.ac.id}

\date{\today}

\maketitle




\begin{abstract}

We study real source-free Maxwell fields on slowly rotating, weakly charged Kerr-Newman exteriors and set up a finite-energy scattering theory after removal of the stationary Coulomb sector. The conserved electric and magnetic fluxes account exactly for the two-dimensional stationary non-decaying part, giving a natural decomposition of the Maxwell Cauchy space into stationary and charge-free radiative parts. For the radiative field, the paper develops a finite-order transfer mechanism from regular spin-one curvature variables back to the Maxwell tensor field, combining red-shift control, far-field hierarchy, trapped-set analysis, a Fredholm argument ruling out real-frequency modes, and same-order reconstruction of the middle components. Under the stated slow-weak master estimates, this gives uniform boundedness, integrated local energy decay, radiation fields, wave operators, and asymptotic completeness for the stationary-subtracted Maxwell evolution, with the Kerr case recovered as a special subcase and the charged rotating case reduced to explicit geometric and analytic estimates.

\end{abstract}

\tableofcontents


\section{Introduction}
\label{sec:introduction}

Black holes in the Einstein-Maxwell theory provide an important family of exact solutions in general relativity.  In particular, the Kerr-Newman black hole is characterized by its mass, angular momentum, and electric charge.  The analysis of fields on this background is useful in order to understand the propagation of radiation outside a charged and rotating black hole.  In this paper we consider the source-free Maxwell equation on a fixed Kerr-Newman exterior and study the part of the Maxwell field which is expected to disperse.

Let \((\M,g_\KN)\) be the domain of outer communications of a Kerr-Newman black hole with parameters \((M,a,Q)\). Throughout the paper, we take the subextremal condition
\begin{equation}\label{eq:subextremal_intro}
        M>0,\qquad a^2+Q^2<M^2.
\end{equation}
We consider real two-forms \(F\) satisfying the source-free Maxwell system
\begin{equation}\label{eq:maxwell_intro}
        \dd F=0,\qquad \dd \starG F=0.
\end{equation}
The electric and magnetic fluxes through large spheres, denoted by \(q_E[F]\) and \(q_B[F]\), are conserved quantities.  These two quantities are not only parameters of the solution.  They generate a two-dimensional family of stationary Coulomb fields and therefore give a barrier to local energy decay.  Thus, the full Maxwell field cannot be expected to decay locally before this stationary charge part is removed.

The purpose of this paper is to provide the analysis of the corresponding stationary-subtracted Maxwell field on slowly rotating weakly charged Kerr-Newman exteriors.  We define the radiative part by
\begin{equation}\label{eq:intro_rad}
        F_{\rad}=F-F_{\stat}^{\KN}\big(q_E[F],q_B[F]\big),
\end{equation}
where \(F_{\stat}^{\KN}\) is the normalized electric-magnetic stationary representative.  This is the component for which boundedness, integrated local energy decay, radiation fields, and scattering have to be stated.  Sections~\ref{sec:charges}-\ref{sec:energy_spaces} construct \eqref{eq:intro_rad} directly in the finite-energy topology, so the subtraction is not an extra choice but follows from the equations and the energy space.

We write down some consequences of the above as follows. First, the conserved charges define a stationary part of the Maxwell field and the finite-energy Maxwell space splits into the stationary sector and the charge-free sector. Second, after subtracting the stationary part, the remaining Maxwell field can be controlled by the spin-one master variables. Finally, the estimates for these variables can be transferred back to the Maxwell tensor field without losing derivatives.

In order to have a well-defined scattering theory, we first have to construct the non-degenerate Maxwell energy and the charge-free finite-energy space. Next, we use the spin-one reduction to obtain the master system in the slowly rotating and weakly charged regime. The main analytic steps are the red-shift estimate near the horizon, the far field hierarchy, the Morawetz estimate near the trapped set, the exclusion of real-frequency modes, and the reconstruction of the middle Maxwell components. Finally, we use these estimates to prove boundedness, integrated local energy decay, radiation fields, wave operators, and asymptotic completeness for the radiative Maxwell field.

\subsection{Main Results}\label{subsec:main_results}
First, we write down the basic Maxwell phenomenon.  At finite energy, the
two conserved charges account exactly for the stationary barrier to local
energy decay.

\begin{theorem}
\label{thm:intro_charge_decomposition}
For every finite order $k$, the Maxwell energy space admits the topological direct sum
\begin{equation}\label{eq:intro_direct_sum}
        \Hc_{\Max}^{(k)}(\Sigma_0)=\operatorname{span}\{U_e,U_m\}\oplus\Hc_{\Max,0}^{(k)}(\Sigma_0),
\end{equation}
where $U_e,U_m$ are the Cauchy data of the normalized stationary electric and magnetic Kerr-Newman Coulomb representatives. With this normalization the associated projections
\begin{equation}\label{eq:intro_projections}
        \Pi_{\stat}U=q_E(U)U_e+q_B(U)U_m,
        \qquad \Pi_0U=U-\Pi_{\stat}U
\end{equation}
are bounded. If $F$ is a finite-energy source-free Maxwell field, then
\begin{equation}\label{eq:intro_decomp_theorem}
        F=F_{\stat}^{\KN}(q_E[F],q_B[F])+F_{\rad},\qquad q_E[F_{\rad}]=q_B[F_{\rad}]=0,
\end{equation}
uniquely. No nonzero stationary charge representative can decay locally to zero in a non-degenerate local $L^2$ norm.
\end{theorem}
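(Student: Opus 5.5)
The plan is to treat the charges as continuous linear functionals on $\Hc_{\Max}^{(k)}(\Sigma_0)$ and to normalize the stationary representatives dually to them. The direct sum and the uniqueness in \eqref{eq:intro_decomp_theorem} are then linear algebra. Only two points carry analytic content: the boundedness of $q_E,q_B$ in the finite-energy topology, and the non-decay of the stationary sector.

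\textbf{Charges and normalization.} I write the Cauchy data as $U=(E,B)$ on $\Sigma_0$, induced from $F$ and $\starG F$. The charges are
\[
q_E(U)=\frac{1}{4\pi}\int_{S_r}\starG F,\qquad q_B(U)=\frac{1}{4\pi}\int_{S_r}F .
\]
By Stokes' theorem and the constraints $\dd F=\dd\starG F=0$ restricted to $\Sigma_0$, these integrals are independent of the sphere $S_r\subset\Sigma_0$. They are also independent of time.

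To show boundedness, I average over $r\in[R,2R]$ against a smooth cutoff $\chi(r)$ of unit mass. Stokes then turns the sphere integral into a bulk integral $\int_{\Sigma_0}\chi'(r)\,\dd r\wedge F$ (resp.\ $\starG F$) over a compact annulus. Cauchy--Schwarz bounds this by $C\|U\|_{L^2(\text{annulus})}\le C\|U\|_{\Hc^{(0)}_{\Max}}\le C\|U\|_{\Hc^{(k)}_{\Max}}$.

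Next I take the explicit Kerr--Newman Coulomb fields. The electric one is $F_e=\dd A$ with $A$ the normalized Kerr--Newman potential, and the magnetic one is its dual rotation $F_m=\starG F_e$ up to sign. I check directly that $(q_E,q_B)(U_e)=(1,0)$ and $(q_E,q_B)(U_m)=(0,1)$. This fixes the normalization. I also check that $U_e,U_m\in\Hc^{(k)}_{\Max}$. The fields are smooth up to and across the horizon in regular coordinates, and they decay like $r^{-2}$ with $\partial$-derivatives gaining powers of $r$, so every weighted energy at order $k$ is finite.

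\textbf{Direct sum and uniqueness.} Given the normalization, $\Pi_{\stat}$ is a bounded finite-rank idempotent: $\Pi_{\stat}^2=\Pi_{\stat}$ because $q_{E,B}(U_{e,m})$ is the identity matrix. Hence $\Pi_0=\Id-\Pi_{\stat}$ is bounded, and its range $\ker q_E\cap\ker q_B=\Hc^{(k)}_{\Max,0}$ is closed. This gives the topological direct sum \eqref{eq:intro_direct_sum}.

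For a finite-energy solution $F$, I set $F_{\rad}=F-q_E[F]F_e-q_B[F]F_m$. This is again a solution, and its charges vanish by linearity. Uniqueness holds because any stationary representative with zero charges is $0\cdot F_e+0\cdot F_m=0$, and $\operatorname{span}\{U_e,U_m\}$ meets the charge-free space trivially.

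\textbf{Non-decay.} Let $F_s=\alpha F_e+\beta F_m\neq0$. This field is invariant under the stationary Killing flow generated by $T=\partial_t$, so its restriction to $\Sigma_\tau\cap\{r_0\le r\le R\}$ has a local non-degenerate $L^2$ norm independent of $\tau$. It therefore suffices to show that this norm is positive. If it vanished on some annulus, $F_s$ would vanish on an open set, and then $\int_{S_r}F_s=\int_{S_r}\starG F_s=0$ there, forcing $\alpha=\beta=0$. Equivalently, I can use the averaged charge bound above in reverse: $|\alpha|+|\beta|\le C\|F_s\|_{L^2(\text{annulus})}$.

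\textbf{Main obstacle.} I expect the only delicate step to be verifying that $U_e,U_m$ lie in $\Hc^{(k)}_{\Max}$ for every $k$ with the paper's non-degenerate horizon weights. This requires writing the Kerr--Newman Coulomb field in ingoing Kerr-type coordinates and checking regularity at $r_+$. I would do this by expressing the field through its Newman--Penrose components: only $\phi_1=(\alpha+i\beta)/(r-ia\cos\theta)^2$ is nonzero and $\phi_0=\phi_2=0$, and the principal null frame rescaled regularly at the horizon is smooth there.
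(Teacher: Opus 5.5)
Your proposal is correct. For charge continuity, the normalization of $U_e,U_m$ and the linear-algebraic splitting it runs parallel to the paper. Proposition~\ref{prop:finite_energy_charge_trace} also averages over an annulus $[R_0,2R_0]$, though it uses Cauchy--Schwarz on each sphere rather than your bulk identity. One small slip: if $\chi$ itself has unit mass, the bulk integrand is $\chi(r)\,\dd r\wedge F$; $\chi'$ appears only for a step cutoff.

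Two steps genuinely differ from the paper.

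First, the paper defines $\Hc^{(k)}_{\Max}$ as the direct sum with the product norm \eqref{eq:full_energy_norm}. Boundedness of $\Pi_{\stat},\Pi_0$ in Proposition~\ref{prop:bounded_projection} is then read off from that norm, and Proposition~\ref{prop:charge_adapted_completion} identifies the space with the charge-adapted completion. You instead take the energy space as primary and deduce the topological splitting from $\Pi_{\stat}$ being a bounded rank-two idempotent. This is more intrinsic, and it shows directly that adding $|q_E|^2+|q_B|^2$ to the energy gives an equivalent norm. If $\Hc^{(k)}_{\Max,0}$ is read as the completion \eqref{eq:chargefree_completion} rather than as $\ker q_E\cap\ker q_B$, you also need the density step of Lemma~\ref{lem:density_chargefree}, which uses the same projection.

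Second, for non-decay, Corollary~\ref{cor:necessity_charge} shows that $q_EF_e+q_B\star_gF_e$ is nonzero at every exterior point. It uses $F_e(\partial_r,\partial_t)=(r^2-a^2\cos^2\theta)/\Sigma^2$ and the fact that $\star_g$, having $\star_g^2=-\Id$, has no real eigen-two-forms, and then invokes Lemma~\ref{lem:positivity_density}. You instead run the charge bound backwards, $|\alpha|+|\beta|\le C_K\|F_s\|_{L^2(\Sigma_\tau\cap K)}$ on any annulus $K\subset\{r>r_+\}$, and combine it with invariance under the Killing flow. Your route is quantitative and needs no component computation. It only sees sets containing full spheres, which suffices for the compact radial sets in the statement, whereas the paper's pointwise argument gives positivity on arbitrary open sets.

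Your horizon-regularity concern is quickly settled. In ingoing coordinates, with $\dd t=\dd v-\frac{r^2+a^2}{\Delta}\dd r$ and $\dd\phi=\dd\tilde\phi-\frac{a}{\Delta}\dd r$, one has $\widehat A_e=-\frac{r}{\Sigma}(\dd v-a\sin^2\theta\,\dd\tilde\phi)+\frac{r}{\Delta}\dd r$. The last term is closed, so $F_e$ is manifestly smooth across $\Hp$, and hence so is $\star_gF_e$.
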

\begin{proof}
For smooth constrained data, the electric and magnetic charges are exactly the
fluxes in \eqref{eq:charges_def}. Proposition~\ref{prop:finite_energy_charge_trace}
shows that these fluxes are continuous in the non-degenerate energy norm, and
hence the maps $U\mapsto q_E(U)$ and $U\mapsto q_B(U)$ extend to the
finite-energy completion. Lemma~\ref{lem:stationary_representatives} constructs
smooth stationary Kerr-Newman Maxwell fields $F_e$ and $F_m$ with normalized
charges $(1,0)$ and $(0,1)$; their Cauchy data are $U_e,U_m$.

Now, take $U\in\Hc_{\Max}^{(k)}(\Sigma_0)$ and set
$U_0=U-q_E(U)U_e-q_B(U)U_m$. The normalization of $U_e,U_m$, together with the
linearity of the charges, gives $q_E(U_0)=q_B(U_0)=0$. Thus
$U_0\in\Hc_{\Max,0}^{(k)}$, which proves the decomposition. If
$c_eU_e+c_mU_m$ also belongs to the charge-free space, the two charge identities
force $c_e=c_m=0$; the two parts therefore meet only at the origin.
Proposition~\ref{prop:bounded_projection} gives boundedness of both projections,
so the direct sum is topological. Evolving the three pieces by the source-free
Maxwell equation gives \eqref{eq:intro_decomp_theorem}. Finally,
Corollary~\ref{cor:necessity_charge} shows that every nonzero element of the
stationary charge sector keeps positive local energy on some compact radial set
for all time. It cannot converge locally to zero.
\end{proof}

Next, we state the result in which the analytic estimates enter.  It transfers
the finite-order master estimates of Definition~\ref{def:slowweak_master_framework}
back to the Maxwell field by using the Maxwell estimates in
Proposition~\ref{prop:framework_consequences}.

\begin{theorem}
\label{thm:intro_transfer}
Let us fix an integer $k$ and a slow-weak Kerr-Newman exterior satisfying \eqref{eq:slow_weak_range}. Suppose that conditions \emph{(A1)-(A5)} of Definition~\ref{def:slowweak_master_framework} hold at order $k$. Then every finite-energy source-free Maxwell field $F$ admits the unique decomposition \eqref{eq:intro_decomp_theorem}, and for all $\tau\ge0$ its charge-free part satisfies
\begin{equation}\label{eq:intro_main_estimate}
        \norm{F_{\rad}}_{\Xnorm{k}_{\Max}(0,\tau)}^2
        \le C\,\E_{\Max}^{(k)}[F_{\rad}](0),
\end{equation}
with the analogous past estimate. The future and past radiation fields exist on $\Ip\cup\Hp$ and $\Imn\cup\Hm$; the maps
\begin{equation}\label{eq:intro_radiation_maps}
        \mathscr S_{\Max}^{\pm}:\Hc_{\Max,0}^{(k)}(\Sigma_0)\longrightarrow \Rc_{\Max,\pm}^{(k)}
\end{equation}
are bounded isomorphisms with bounded inverses, and the scattering operator $\mathscr S_{\Max}=\mathscr S_{\Max}^+(\mathscr S_{\Max}^-)^{-1}$ is bounded. If the additional hierarchy condition in Definition~\ref{def:slowweak_master_framework}\emph{(A6)} is available with enough derivatives for Sobolev embedding, then $F_{\rad}$ also satisfies the pointwise decay estimate \eqref{eq:pointwise_decay}.
\end{theorem}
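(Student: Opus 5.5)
The proof reduces everything to results the paper cites. First comes the charge decomposition, then the transfer of the master estimates to the Maxwell field, and finally the scattering statements derived from that estimate.

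\textbf{Step 1 (decomposition).} Given finite-energy $F$, Theorem~\ref{thm:intro_charge_decomposition} gives the unique splitting \eqref{eq:intro_decomp_theorem} with $F_{\rad}$ charge-free. The projection $\Pi_0$ is bounded, so $\E^{(k)}_{\Max}[F_{\rad}](0)\lesssim \E^{(k)}_{\Max}[F](0)$. Charge conservation then keeps $F_{\rad}$ in $\Hc^{(k)}_{\Max,0}$ for all times. It therefore suffices to work on the charge-free space.

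\textbf{Step 2 (transfer estimate \eqref{eq:intro_main_estimate}).} Form the regular spin-one curvature variables of $F_{\rad}$ (the extreme components). Their initial master energy is bounded by $\E^{(k)}_{\Max}[F_{\rad}](0)$, because the extreme components are obtained from $F$ by at most first-order operations and the norms are set up at the same order. Hypotheses (A1)--(A5) of Definition~\ref{def:slowweak_master_framework} provide the ingredients: red-shift, the far-field hierarchy, the trapped-set Morawetz estimate, and the absence of real-frequency modes via the Fredholm argument. Together they give boundedness and integrated local energy decay at order $k$ for the master variables on $[0,\tau]$.

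The hard part is recovering the middle component. I would invoke Proposition~\ref{prop:framework_consequences}, which reconstructs the middle Maxwell components from the extreme ones at the same order. The mechanism is to solve the angular elliptic (div--curl) system on spheres sourced by the extreme components. Its kernel consists of the $\ell=0$ modes, and these are exactly the charges, which vanish here. This is where charge-freeness is essential: it is what makes the reconstruction lossless.

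Summing the extreme and middle bounds yields \eqref{eq:intro_main_estimate}. The past estimate follows by time reversal $t\mapsto -t$, $\phi\mapsto-\phi$, which preserves the Kerr--Newman family with $a\to -a$ inside the slow-weak range.

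\textbf{Step 3 (radiation fields and scattering).} For the future radiation field, the energy boundedness and ILED of Step 2 show that the weighted $r$-rescaled components have Cauchy limits along $\Ip$, and similarly their traces on $\Hp$. Their fluxes are controlled by the energy identity, so $\mathscr S^+_{\Max}$ is bounded.

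For the bounded inverse, I would use the conservation law on the region between $\Sigma_0$ and $\Sigma_\tau$. Here, at the level of the non-degenerate energy, the horizon and null-infinity fluxes together with the remaining energy on $\Sigma_\tau$ reproduce the initial energy up to bulk terms. ILED sends the hyperboloidal remainder to zero along a subsequence, giving $\E^{(k)}\lesssim \norm{\mathscr S^+ U}^2$. Surjectivity follows by solving the backward characteristic problem from compactly supported scattering data with a limiting argument, using the same estimates applied backward (asymptotic completeness). The past maps are handled symmetrically, and the bounds on $\mathscr S_{\Max}=\mathscr S^+_{\Max}(\mathscr S^-_{\Max})^{-1}$ follow by composition.

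Finally, under (A6) with $k$ large enough, the $r^p$ hierarchy gives energy decay on $\Sigma_\tau$ for commuted fields, and Sobolev embedding on spheres and along $\Sigma_\tau$ yields \eqref{eq:pointwise_decay}.
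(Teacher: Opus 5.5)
Your Steps 1 and 2 and the pointwise-decay step follow the paper's proof. Those steps are: the charge splitting from Theorem~\ref{thm:intro_charge_decomposition}; the initial comparison $\E_M^{(k)}[u](0)\le C_R\E_{\Max}^{(k)}[G](0)$; the master estimate (M1) via Proposition~\ref{prop:framework_consequences}; and $G=\mathfrak R u$ with the same-order bound from (A4)/(M2).

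Two small corrections there:
\begin{itemize}
\item Quote the initial comparison directly from (A1). Do not justify it by saying the extreme components are ``at most first-order'' in $F$. They are algebraic frame components, and a genuinely first-order dependence would lose a derivative at order $k$.
\item You never pass from smooth to finite-energy charge-free data. The paper does this with Lemma~\ref{lem:density_chargefree}, continuity of the solution map, and lower semicontinuity of the spacetime norm (Lemma~\ref{lem:finite_slab_lsc}).
\end{itemize}

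\textbf{The gap is in Step 3: the bounded inverse and surjectivity of $\mathscr S^{\pm}_{\Max}$.} Your lower bound $\E^{(k)}_{\Max}(0)\lesssim\|\mathscr S^+_{\Max}U\|^2$ from the energy identity does not close.
\begin{itemize}
\item For the non-degenerate multiplier $N$, the identity contains the bulk term $\int\Tstress^{\mu\nu}\pi^N_{\mu\nu}$, which has no sign outside the red-shift collar. ILED bounds this bulk only by $C\,\E^{(k)}_{\Max}(0)$. So the argument is circular unless the bulk is controlled by the radiation data, and that is the whole difficulty.
\item Switching to the Killing field $T$ removes the bulk, but creates other problems. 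The $T$-energy degenerates on $\Hp$, so it cannot control the non-degenerate energy. For $a\neq0$ it is also not coercive in the ergoregion, and its horizon flux is indefinite (superradiance).
\item ``The same estimates applied backward'' do not give uniform-in-$T$ bounds for the truncated backward characteristic problems from $\Hp\cup\Ip$. Going backward, the horizon red-shift becomes a blue-shift, and the forward red-shift/Morawetz package does not transfer.
\end{itemize}
Those uniform backward bounds are a substantive input. In the theorem they are exactly what hypothesis (A5) supplies. The paper separately derives them in Proposition~\ref{prop:backward_from_lap} from limiting absorption, with a Fredholm removal of a compact term.

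\textbf{The missing idea is to use (A5) directly, as the paper does.}
\begin{itemize}
\item Factor $\mathscr S^\pm_{\Max}=\mathcal R^\pm_\infty\,\mathscr S^\pm_M\,\mathfrak M$. Each factor is bounded by (A1), (A5) and the reconstruction bounds.
\item Injectivity follows from the zero-kernel statement in (A5) together with $G=\mathfrak R\mathfrak M G$.
\item On the dense class of smooth radiation data, define $\mathscr W^\pm_{\Max}\rho=\mathfrak R\,\mathscr W^\pm_{M,0}\bigl((\mathcal R^\pm_\infty)^{-1}\rho\bigr)$. This is bounded by (A4)--(A5) and is a right inverse.
\item The abstract criterion (Lemma~\ref{lem:app_abstract_scattering}) then gives $SW=\mathrm{Id}$, and injectivity forces $WS=\mathrm{Id}$. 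Hence $\|h\|\le C\|Sh\|$.
\end{itemize}
So the lower bound comes from a bounded right inverse plus injectivity, not from any energy identity. This route also avoids having to identify a directly constructed Maxwell trace with the norm of $\Rc^{(k)}_{\Max,\pm}$, which the paper defines only through $\mathcal R^\pm_\infty$.
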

\begin{proof}
Let
\[
        G=F_{\rad},\qquad u=\mathfrak M G.
\]
By Theorem~\ref{thm:intro_charge_decomposition}, $G$ is already charge-free and is
uniquely determined by $F$. Definition~\ref{def:slowweak_master_framework}\emph{(A1)} gives the initial comparison
\begin{equation}\label{eq:intro_transfer_chain_1}
        \E_M^{(k)}[u](0)\le C_R\E_{\Max}^{(k)}[G](0).
\end{equation}
Proposition~\ref{prop:framework_consequences}, applied to the same compatible
solution $u$, gives the master estimate of
Definition~\ref{def:analytic_master_conclusions}\emph{(M1)}:
\begin{equation}\label{eq:intro_transfer_chain_2}
        \norm{u}_{\Xnorm{k}_M(0,\tau)}^2
        \le C_M\E_M^{(k)}[u](0).
\end{equation}
Since $G=\mathfrak R\mathfrak M G=\mathfrak R u$ in the charge-free class,
Definition~\ref{def:slowweak_master_framework}\emph{(A4)} and
Definition~\ref{def:analytic_master_conclusions}\emph{(M2)} yield
\begin{equation}\label{eq:intro_transfer_chain_3}
        \norm{G}_{\Xnorm{k}_{\Max}(0,\tau)}^2
        =\norm{\mathfrak R u}_{\Xnorm{k}_{\Max}(0,\tau)}^2
        \le C_R\norm{u}_{\Xnorm{k}_M(0,\tau)}^2.
\end{equation}
Combining \eqref{eq:intro_transfer_chain_1}-\eqref{eq:intro_transfer_chain_3}
proves \eqref{eq:intro_main_estimate}. The fixed equivalence constants in the
norm definitions are absorbed into $C$; with this normalization one may take
$C=C_R^2C_M$. Reversing the foliation gives the past estimate. For data in the
finite-energy completion, choose smooth charge-free approximants as in
Lemma~\ref{lem:density_chargefree}. The preceding estimate is uniform along that
approximating sequence, and continuity of the solution map
(Proposition~\ref{prop:finite_energy_wellposed}), together with lower
semicontinuity of the spacetime norm, passes the bound to the limit as in
Proposition~\ref{prop:finite_energy_extension}.

For radiation fields, define on smooth charge-free data
\begin{equation}\label{eq:intro_maxwell_trace_factorization}
        \mathscr S_{\Max}^{\pm}G
        =\mathcal R_\infty^{\pm}\,\mathscr S_M^{\pm}(\mathfrak M G).
\end{equation}
The three factors in \eqref{eq:intro_maxwell_trace_factorization} are bounded by
Definition~\ref{def:slowweak_master_framework}\emph{(A1)}, \emph{(A5)}, and the
same-order reconstruction bounds. The trace therefore extends to
$\Hc_{\Max,0}^{(k)}$. If the right-hand side vanishes, the master radiation field
vanishes; the kernel statement in \emph{(A5)} then gives $u=0$, and hence
$G=\mathfrak R u=0$. Thus the trace is injective. On the dense class of smooth
radiation data, the inverse has the explicit form
\begin{equation}\label{eq:intro_maxwell_wave_operator}
        \mathscr W_{\Max}^{\pm}\rho
        =\mathfrak R\,\mathscr W_{M,0}^{\pm}\big((\mathcal R_\infty^{\pm})^{-1}\rho\big),
\end{equation}
which is bounded by \emph{(A4)}-\emph{(A5)} and satisfies
$\mathscr S_{\Max}^{\pm}\mathscr W_{\Max}^{\pm}\rho=\rho$. By density,
\eqref{eq:intro_maxwell_wave_operator} extends to the radiation Hilbert space.
Therefore $\mathscr S_{\Max}^{\pm}$ are bounded isomorphisms, and
$\mathscr S_{\Max}=\mathscr S_{\Max}^+(\mathscr S_{\Max}^-)^{-1}$ is bounded.
The two stationary charge coordinates do not belong to this radiative scattering
map; Theorem~\ref{thm:intro_charge_decomposition} has already split them off. If
\emph{(A6)} is available, Proposition~\ref{prop:pointwise_decay} applies the
finite commuted hierarchy and Sobolev embedding on the regular slices, giving
\eqref{eq:pointwise_decay}.
\end{proof}

\begin{corollary}
\label{cor:intro_application}
If the fixed-background Maxwell equation on slowly rotating, weakly charged Kerr-Newman satisfies conditions \emph{(A1)-(A5)} of Definition~\ref{def:slowweak_master_framework}, then Theorem~\ref{thm:intro_transfer} gives the corresponding stationary-subtracted boundedness, integrated decay, radiation-field, wave-operator, and scattering conclusions. If \emph{(A6)} is also available, it gives the commuted pointwise-decay conclusions as well. For $Q=0$ this recovers the slowly rotating Kerr subcase, in agreement with the existing Kerr Maxwell theory cited below.
\end{corollary}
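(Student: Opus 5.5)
The plan is to derive the corollary directly from Theorem~\ref{thm:intro_transfer}, since its hypotheses are exactly conditions \emph{(A1)-(A5)} of Definition~\ref{def:slowweak_master_framework} at a fixed order $k$ on a slow-weak exterior satisfying \eqref{eq:slow_weak_range}.

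First, I will fix $k$ and an arbitrary finite-energy source-free Maxwell field $F$. Theorem~\ref{thm:intro_charge_decomposition} gives the unique splitting $F=F_{\stat}^{\KN}(q_E[F],q_B[F])+F_{\rad}$. Applying Theorem~\ref{thm:intro_transfer} to $F_{\rad}$ then gives several conclusions at once:
\begin{itemize}
\item the future and past versions of \eqref{eq:intro_main_estimate};
\item the existence of the radiation fields, with bounded, invertible trace maps \eqref{eq:intro_radiation_maps};
\item the wave operators \eqref{eq:intro_maxwell_wave_operator};
\item the bounded scattering operator.
\end{itemize}
Boundedness and integrated local energy decay are both contained in the spacetime norm $\Xnorm{k}_{\Max}(0,\tau)$. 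To get them, I will take $\tau\to\infty$ in \eqref{eq:intro_main_estimate}, using monotone convergence and the fact that $C$ does not depend on $\tau$. Asymptotic completeness is the surjectivity of $\mathscr S_{\Max}^{\pm}$, which Theorem~\ref{thm:intro_transfer} supplies as part of its isomorphism statement.

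Next, if \emph{(A6)} holds with enough derivatives for Sobolev embedding, the last clause of Theorem~\ref{thm:intro_transfer} gives \eqref{eq:pointwise_decay} for $F_{\rad}$. This is the commuted pointwise-decay conclusion.

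Finally, for the Kerr subcase I will set $Q=0$. The subextremality condition \eqref{eq:subextremal_intro} then reduces to $|a|<M$, and the slow-weak range \eqref{eq:slow_weak_range} restricts to small $|a|/M$. I will check the following points:
\begin{itemize}
\item $g_\KN$ reduces to the Kerr metric;
\item the stationary representatives $F_e,F_m$ of Lemma~\ref{lem:stationary_representatives} reduce to the Kerr Coulomb fields;
\item the master map $\mathfrak M$ and the reconstruction map $\mathfrak R$ reduce to the standard spin-one Teukolsky/Regge--Wheeler-type transformations, because no Maxwell--gravity coupling terms depend on $Q$ in the fixed-background problem.
\end{itemize}
The conclusions are then the stationary-subtracted Kerr Maxwell boundedness, decay and scattering statements. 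The main point needing care is to confirm that every constant in (A1)-(A5) stays uniform as $Q\to0$, or simply holds at $Q=0$, so that the specialization is legitimate and matches the cited Kerr results. I expect this to be a notational check, not a substantive obstacle.
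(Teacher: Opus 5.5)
Your treatment of the first two sentences matches the paper. Theorem~\ref{thm:intro_transfer} already contains the charge splitting, the estimate \eqref{eq:intro_main_estimate}, the radiation isomorphisms and the bounded scattering operator, and its last clause gives \eqref{eq:pointwise_decay} under \emph{(A6)}. Your $\tau\to\infty$ monotone-convergence remark is a harmless addition.

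The gap is in the $Q=0$ clause. In the paper, ``recovers the slowly rotating Kerr subcase, in agreement with the existing Kerr theory'' means that at $Q=0$ the hypotheses \emph{(A1)}--\emph{(A5)} are \emph{discharged}, not merely specialized. The Teukolsky equations are exact on Kerr. The remaining inputs needed in \emph{(A2)}--\emph{(A5)} are imported from the Kerr Maxwell and spin-one Teukolsky literature (Andersson--Blue, Benomio--Teixeira da Costa, Shlapentokh-Rothman--Teixeira da Costa), as recorded in Corollaries~\ref{cor:slow_kerr_subcase} and~\ref{cor:main_kerr}. These inputs are boundedness and integrated decay, real-axis and high-frequency trapping control, no-loss middle-component reconstruction, and scattering. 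Only after that does Theorem~\ref{thm:intro_transfer} produce the known Kerr statements.

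Your proposal never supplies these inputs and instead calls the step a ``notational check''. Checking that $g_\KN$, $F_e$, $F_m$ and $\mathfrak M$ reduce to their Kerr counterparts is immediate from the formulas, but it says nothing about whether \emph{(A2)}--\emph{(A5)} actually hold. Your worry about uniformity of constants as $Q\to0$ is also misplaced: $Q=0$ lies inside the slow-weak range \eqref{eq:slow_weak_range}, and no limit in $Q$ is taken.

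As written, your Kerr statement is only the conditional ``if \emph{(A1)}--\emph{(A5)} hold at $Q=0$, then \dots''. That does not establish recovery of, or agreement with, the existing theory. The missing step is to cite the Kerr results as the source of \emph{(A1)}--\emph{(A5)} at $Q=0$, in particular the high-frequency trapping part of \emph{(A3)}.
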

\begin{proof}
Once the fixed-background spin-one reduction and the estimates listed in conditions
\emph{(A1)-(A5)} of Definition~\ref{def:slowweak_master_framework} have been
proved for the chosen Kerr-Newman parameter range, the conditions of
Theorem~\ref{thm:intro_transfer} are exactly met. The theorem then gives
boundedness, integrated decay, radiation fields, wave operators, and scattering
for the stationary-subtracted Maxwell field. If the finite-order hierarchy
\emph{(A6)} is also available, the pointwise conclusion follows from the final
part of the theorem. For $Q=0$, the required analytic estimates are supplied by
the Kerr Maxwell and Teukolsky theory cited in
Corollaries~\ref{cor:slow_kerr_subcase} and~\ref{cor:main_kerr}, and the same
argument recovers the slowly rotating Kerr subcase.
\end{proof}

The proof can be read in three steps. First,
Sections~\ref{sec:charges}-\ref{sec:energy_spaces} prove the charge
decomposition in Theorem~\ref{thm:intro_charge_decomposition}. Second,
Sections~\ref{sec:master}-\ref{sec:first_principles_master} state the closed
spin-one structural condition, compute the scalar principal symbol and the relevant
coefficients, and close the finite-order master-to-Maxwell transfer under the
displayed estimates. This is where the commuted source split in
Proposition~\ref{prop:complete_master_closure_algebra} enters. Third,
Sections~\ref{sec:boundedness_transfer}-\ref{sec:main_theorem} prove
Theorem~\ref{thm:intro_transfer} and record its consequences.

The aim is a finite-order transfer theorem for \(F_{\rad}\) in the slowly
rotating, weakly charged regime, with the perturbative ingredients kept visible rather
than hidden inside a black box. The comparison background is Reissner-Nordstr\"om
with the \emph{same} \((M,Q)\); it becomes Schwarzschild only when \(Q=0\). When
\(a=0\), the fixed-background Maxwell system is spherically symmetric. After the
charge mode is removed, the extreme spin-one scalars satisfy the
Fackerell-Ipser spin-one system \cite{FackerellIpser}, and the angular modes
have \(\ell\ge1\). For the full charge-free spherical Maxwell field we use the
Sterbenz-Tataru local-energy theorem on spherically symmetric black holes
\cite{SterbenzTataru}. Giorgi's Reissner-Nordstr\"om spin-\(\pm1\) result is
cited for the \(\ell=1\) Teukolsky mode and for comparison with the coupled
perturbation literature \cite{GiorgiRNspin1}. The red-shift current and the
\(r^p\) hierarchy provide the physical-space mechanisms of Dafermos-Rodnianski
\cite{DafermosRodnianskiRedshift,DafermosRodnianskiRp}. Once the
fixed-background spin-one reduction has been written down, the slow-rotation
argument treats the Kerr-Newman operator as a stationary short-range
perturbation of the Reissner-Nordstr\"om operator. Subsequent estimates are
proved in numbered statements or named at the point of use. No extra assumption
is hidden: no unlisted mode-stability, completeness, or reconstruction assertion
is being imported.

Reconstruction of the Maxwell tensor is kept separate from the energy method.
Once the closed spin-one master system in \emph{(A1)} is available, the extreme
components serve as the master variables. The middle components are then
recovered from the Maxwell transport equations and from a Hodge system on each
sphere. Charge subtraction removes their spherical means, so the angular
Laplacian can be inverted without losing derivatives. The same structural idea
appears in the work of Jezierski-Smo\l ka, Andersson-Blue, and
Benomio-Teixeira da Costa on Maxwell fields on Kerr
\cite{AnderssonBlueMaxwell,BenomioTdC,JezierskiSmolka}. Here it is built into the
fixed-background Kerr-Newman transfer theorem and closed with the
Reissner-Nordstr\"om perturbative estimate once the master conditions have been
proved.

\subsection{Analytic Conditions and Estimates}
\label{subsec:conditions_range}

We use two types of ingredients.  The first one is proved directly in this paper:
conservation and continuity of the two charges, the normalized stationary charge
sector, boundedness of the charge-free projection, finite-energy well-posedness,
same-order transfer from master estimates to Maxwell estimates, and the
Hilbert-space scattering construction. These steps rely only on Maxwell's
equations, the stress-energy identity, elementary elliptic estimates on
\(\Sph\), and standard energy estimates for symmetric-hyperbolic Maxwell systems.

The second layer is the finite-order master-system framework of
Definition~\ref{def:slowweak_master_framework}. In that framework a
``hypothesis'' has a precise meaning: it is either a displayed estimate proved
in this paper, a displayed estimate quoted from a cited theorem, or an explicitly
named finite-order condition. No unlisted mode-stability statement is used.
Section~\ref{sec:first_principles_master} explains how the framework yields a
Maxwell estimate. We keep the ingredients separate: the Dafermos-Rodnianski
red-shift and \(r^p\) estimates, the Sterbenz-Tataru charge-free
Reissner-Nordstr\"om Maxwell local-energy estimate, the normally hyperbolic
trapping estimate in the form of Definition~\ref{def:hf_resolvent_estimate}, and
the no-loss transport/Hodge reconstruction.

For the fixed-background \emph{test} Maxwell field,
Section~\ref{sec:decoupling} states the closed spin-one structural condition and
proves the covariant scalar-principal-symbol computation. The displayed
spin-weighted operator is the model supplied by \emph{(A1)}; at nonzero charge
and rotation it is not treated as an automatic Dudley-Finley decoupling. Once
\emph{(A1)} is in place, Sections~\ref{sec:first_principles_master},
\ref{app:lap}, and~\ref{app:reconstruction} prove the Reissner-Nordstr\"om
comparison, the bounded-frequency real-axis exclusion in the compatible class,
and the same-order reconstruction. We also prove the trapped-set location, the
non-superradiant sign, normal hyperbolicity, the diagonal spin-one
skew-subprincipal cancellation, and the small matrix-skew threshold estimate
needed for the high-frequency estimate. The high-frequency resolvent bound is
used only through the localized normally hyperbolic estimate in
Definition~\ref{def:hf_resolvent_estimate}. Proposition~\ref{prop:nh_theorem_application}
checks the geometric and finite-rank-bundle subprincipal assumptions for the
compatible scalar-principal operator. Finally,
Proposition~\ref{prop:backward_from_lap} derives the asymptotic-completeness
backward construction \emph{(A5)} from the real-axis limiting-absorption
resolvent. Coupled Einstein-Maxwell theorems are cited only for comparison; they
do not replace any fixed-background step.

With this division in place, the stationary charge decomposition is the
unconditional part of the paper; see
Theorem~\ref{thm:intro_charge_decomposition}. The decay and scattering theorem,
Theorem~\ref{thm:intro_transfer}, is proved under conditions \emph{(A1)-(A5)} of
Definition~\ref{def:slowweak_master_framework}. The pointwise conclusion requires
one more ingredient, namely \emph{(A6)}. Once \emph{(A1)}-\emph{(A2)} are supplied,
and once the bounded-frequency part of \emph{(A3)} and the reconstruction
\emph{(A4)} have been proved in the compatible class, boundedness and integrated
local energy decay follow in the slow-weak range. The ingredients are the cited
spherical/red-shift/\(r^p\) estimates and the high-frequency normally hyperbolic
estimate in Definition~\ref{def:hf_resolvent_estimate}, applied as in
Proposition~\ref{prop:nh_theorem_application}. The radiation-field,
wave-operator, and scattering conclusions come from the same resolvent estimates,
because Proposition~\ref{prop:backward_from_lap} constructs the backward right
inverses from limiting absorption. For \(Q=0\), the required spin-weighted
estimates are available in the existing Kerr Maxwell and Teukolsky theory cited
below.

Each assertion has a specific role. The charge decomposition, the
scalar-principal-symbol calculation, the bounded-frequency Fredholm step in the
compatible class, and the same-order reconstruction are proved in numbered
statements below. The closed spin-one master equation remains a separate
structural condition, \emph{(A1)}. The spherical local-energy, red-shift, \(r^p\),
limiting-absorption, and normally hyperbolic estimates are used only through the
displayed inequalities at the points where they are cited. The extra hierarchy
\emph{(A6)} is not needed for boundedness, integrated local energy decay, radiation
fields, wave operators, or scattering.

\begin{remark}
\label{prop:cited_estimates}
The estimates used below enter in the precise forms stated in the numbered results. No stronger version is used.
\begin{enumerate}[label=\emph{(\roman*)},leftmargin=2.2em]
\item The red-shift coercivity near a non-degenerate horizon and the outgoing $r^p$ hierarchy are used in the forms stated in Propositions~\ref{prop:redshift_coercivity} and~\ref{prop:rp_identity}; these are the Dafermos-Rodnianski physical-space estimates \cite{DafermosRodnianskiRedshift,DafermosRodnianskiRp}.
\item The Reissner-Nordstr\"om charge-free Maxwell local-energy estimate is used exactly as Lemma~\ref{lem:rn_model_morawetz}; its proof for the spherical model is the Sterbenz-Tataru theorem \cite{SterbenzTataru}, with the charge-free spin-one reduction made explicit in Section~\ref{app:rn_model}. The associated Kerr local-energy estimates of Tataru-Tohaneanu \cite{TataruTohaneanu} and Dafermos-Rodnianski-Shlapentokh-Rothman \cite{DRSRKerr} provide the model for the trapped-set Morawetz estimate of Proposition~\ref{prop:positive_commutator_estimate}.
\item The persistence of normally hyperbolic trapping is proved at the level of the scalar principal symbol in Lemma~\ref{lem:trapping_stability} and Propositions~\ref{prop:kn_disjointness}-\ref{prop:r_normal_hyperbolicity}. The high-frequency escape estimate is used only through Definition~\ref{def:hf_resolvent_estimate}; the normally hyperbolic estimates of Wunsch-Zworski and Dyatlov \cite{Dyatlov,DyatlovGaps,WunschZworski}, in the finite-rank-bundle form of Hintz \cite{HintzTensor} and combined with the elliptic, propagation and radial-point estimates by the gluing argument of Datchev-Vasy \cite{DatchevVasy}, are the estimates applied in Proposition~\ref{prop:nh_theorem_application} after the geometric and subprincipal conditions are checked here.
\item The closed fixed-background Kerr-Newman spin-one map and compatible class are the structural condition \emph{(A1)}; Section~\ref{sec:decoupling} proves the scalar-principal-symbol and coefficient consequences of that condition. The bounded-frequency real-axis closure and the same-order reconstruction are proved in Sections~\ref{app:lap} and~\ref{app:reconstruction}. The high-frequency real-axis closure is the normalized estimate \eqref{eq:normally_hyperbolic_resolvent_bound}, used after the trapped-set geometry, the diagonal spin-one skew cancellation, and the finite-rank-bundle matrix threshold are verified here. None of these fixed-background assertions is inferred from coupled Einstein-Maxwell stability results.
\end{enumerate}
\end{remark}

\subsection{Relation with Stability of the Coupled System}
\label{subsec:relation_existing}
The fixed-background Maxwell equation is not the coupled linearized
Einstein-Maxwell system with the metric perturbation formally set to zero. The
coupled system contains additional unknowns, gauge freedom, constraints,
pure-gauge modes, and linearized Kerr-Newman stationary modes. This distinction
is important. The coupled Kerr-Newman results of Giorgi, Giorgi-Wan, and He
\cite{GiorgiKNsmall,GiorgiJHDE,GiorgiJDG,GiorgiWan,He2023} neither contain nor
are contained in the present statement; Proposition~\ref{prop:coupled_not_enough}
makes the distinction precise. When \(Q=0\), by contrast, the conclusions below are a
slow-rotation special case of the full subextremal Kerr analysis of
Benomio-Teixeira da Costa \cite{BenomioTdC}, together with the Teukolsky
boundedness and decay results of Shlapentokh-Rothman-Teixeira da Costa
\cite{SRTdCfrequency,SRTdCphysical}. In the Kerr case, the contribution here is
the uniform perturbative formulation. The genuinely new range, relative to Kerr,
is \(Q\neq0\), once the fixed-background Kerr-Newman master estimates are
available.

\subsection{Perturbation Principle}
\label{subsec:rn_principle}
After subtracting the charges and applying the regular spin-one weighting, we fix
\(Q\) and compare Kerr-Newman with the Reissner-Nordstr\"om metric carrying the
same \((M,Q)\). The master operator has the form
\begin{equation}\label{eq:intro_perturbation_scheme}
        \Pb_{a,Q}=\Pb_{\RN,Q}+\mathcal E_{a,Q},\qquad
        \norm{\mathcal E_{a,Q}u}_{LE^*}\le C\tfrac{|a|}{M}\norm{u}_{LE^1}
        +C\tfrac{|a|}{M}\norm{u}_{LE^0_{\comp}},
\end{equation}
with \(C\) uniform for \(|Q|\le\eps_QM\). The charge-free real-axis exclusion
removes the compact lower-order term, while the small top-order term is absorbed
by the Reissner-Nordstr\"om local-energy norm. The spherical photon sphere and
horizon are
\begin{equation}\label{eq:rn_photon_sphere}
        r_{\mathrm{ph}}(Q)=\tfrac12\bigl(3M+\sqrt{9M^2-8Q^2}\bigr),
        \qquad
        r_+(Q)=M+\sqrt{M^2-Q^2}.
\end{equation}
The photon-sphere collar, the red-shift collar, and the far-field \(r^p\) region
are treated separately and then glued with a partition of unity. This is the same
openness mechanism behind small-angular-momentum Kerr local-energy estimates,
now applied to the fixed Maxwell master system over the charged spherical
background. Subsection~\ref{subsec:compact_error_closure} carries out the
compact-error closure explicitly. A Fourier cutoff and a Fredholm
limiting-absorption alternative convert any failure of compact control into a
real-axis defect profile. Bounded-frequency defects are excluded by the
Reissner-Nordstr\"om radial ordinary differential equation and its slow-rotation
stability; high-frequency defects are excluded by the normally hyperbolic escape
function at the perturbed trapped set.

\subsection{Organization}
\label{subsec:organization}
We organize this paper as follows. In Section~\ref{sec:geometry} we discuss the
Kerr-Newman geometry and the non-degenerate Maxwell energy. In
Sections~\ref{sec:charges} and \ref{sec:energy_spaces} we construct the charges,
the stationary family, and the charge-free finite-energy projection. The spin-one
master system and the slow-weak analytic conditions are given in
Section~\ref{sec:master}. In Section~\ref{sec:decoupling} we discuss the
structural spin-one reduction and the scalar principal symbol. In
Section~\ref{sec:first_principles_master} we prove the perturbative closure of
the estimates. Sections~\ref{app:perturbation}-\ref{app:scattering_criterion}
contain the detailed model, trapping, limiting absorption, reconstruction, and
abstract scattering arguments. Finally, in
Sections~\ref{sec:boundedness_transfer}-\ref{sec:main_theorem} we transfer the
estimates back to Maxwell fields and prove the main theorem.

\section{Kerr-Newman Geometry and Non-Degenerate Maxwell Energy}
\label{sec:geometry}
In this section we shortly discuss the Kerr-Newman geometry which will be used throughout the paper. We also introduce the non-degenerate Maxwell energy and write down the basic energy identity.
In Boyer-Lindquist coordinates, the Kerr-Newman metric is
\begin{align}\label{eq:kn_metric}
        g_{\KN} ={}& -\frac{\Delta-a^2\sin^2\theta}{\Sigma}\,\dd t^2
        -\frac{2a\sin^2\theta(r^2+a^2-\Delta)}{\Sigma}\,\dd t\,\dd\phi \nonumber\\
        &+\frac{(r^2+a^2)^2-a^2\Delta\sin^2\theta}{\Sigma}\sin^2\theta\,\dd\phi^2
        +\frac{\Sigma}{\Delta}\,\dd r^2+\Sigma\,\dd\theta^2,
\end{align}
where $\Delta=r^2-2Mr+a^2+Q^2$ and $\Sigma=r^2+a^2\cos^2\theta$. The event
horizon is at $r_+=M+\sqrt{M^2-a^2-Q^2}$. Since \eqref{eq:subextremal_intro}
gives $r_+>r_-=M-\sqrt{M^2-a^2-Q^2}$, the horizon is non-degenerate. All
estimates are stated on a regular horizon-penetrating manifold, obtained by the
usual change to coordinates $(\tilde t,r,\theta,\tilde\phi)$ in which $g_\KN$
extends smoothly across $\Hp$. In these coordinates the Boyer-Lindquist
singularity at $r=r_+$ is only a coordinate artifact. We fix a smooth time
function $\tau$ whose level sets $\Sigma_\tau$ are spacelike, cross $\Hp$
regularly, and agree with $t$ in the far region up to a tortoise correction.

\begin{definition}
\label{def:regular_foliation}
A \emph{regular exterior foliation} is a family $\{\Sigma_\tau\}_{\tau\in\Rbb}$
such that each $\Sigma_\tau$ is spacelike, the future unit normal
$n_{\Sigma_\tau}$ is smooth up to $\Hp$ in regular coordinates, and the induced
volume forms are uniformly equivalent on compact radial sets. The slab between
two slices is $\D(\tau_1,\tau_2)=\bigcup_{\tau_1\le s\le\tau_2}\Sigma_s$.
\end{definition}

Let $T=\partial_{\tilde t}$ and $\Phi=\partial_{\tilde\phi}$ be the stationary
and axial Killing fields. We choose a smooth future timelike vector field $N$ that
agrees with the red-shift multiplier near $\Hp$ and with $T$ for large $r$. Such
an $N$ exists because $\Hp$ is non-degenerate. The construction of
\cite{DafermosRodnianskiRedshift} produces $N$ with $\nabla N$ positive, in the
sense of \eqref{eq:redshift_deformation} below, in a horizon collar.

\begin{definition}
\label{def:stress_energy}
For a two-form $G$ set
\begin{equation}\label{eq:stress_tensor}
        \Tstress_{\mu\nu}[G]=G_{\mu\alpha}G_\nu{}^{\alpha}-\tfrac14 g_{\mu\nu}G_{\alpha\beta}G^{\alpha\beta}.
\end{equation}
Let $\mathbb D_k$ be the finite set of differential operators generated by
products of at most $k$ elements of $\{T,\Phi,r\,\partial_r,\text{regular
angular derivatives}\}$, expressed in regular coordinates. Set
\begin{equation}\label{eq:maxwell_energy}
        \E_{\Max}^{(k)}[G](\tau)=\sum_{\Gamma^I\in\mathbb D_k}\int_{\Sigma_\tau}
        \Tstress_{\mu\nu}[\Lie_{\Gamma^I}G]\,N^\mu n_{\Sigma_\tau}^\nu\,\dd\mu_{\Sigma_\tau}.
\end{equation}
\end{definition}

\begin{lemma}
\label{lem:positivity_density}
For every compact sub-extremal parameter set and every regular foliation there
is $C>1$ with
\begin{equation}\label{eq:positive_density}
        C^{-1}\big(\abs{E[G]}^2+\abs{B[G]}^2\big)\le \Tstress_{\mu\nu}[G]N^\mu n_{\Sigma_\tau}^\nu
        \le C\big(\abs{E[G]}^2+\abs{B[G]}^2\big)
\end{equation}
on compact radial regions, where $E[G],B[G]$ are the electric and magnetic
parts of $G$ relative to $n_{\Sigma_\tau}$.
\end{lemma}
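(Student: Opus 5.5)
The plan is to reduce the estimate to a pointwise algebraic statement about the Maxwell stress tensor contracted with two future timelike vectors, and then use compactness. Fix a point $p$ in a compact radial region $K$. Decompose $N=\alpha\, n+X$ with $n=n_{\Sigma_\tau}$ and $X$ tangent to $\Sigma_\tau$, so that $g(X,n)=0$ and $\alpha=-g(N,n)>0$. Write $E=E[G]$ and $B=B[G]$ for the electric and magnetic parts relative to $n$. The standard identities are
\[
\Tstress_{\mu\nu}n^\mu n^\nu=\tfrac12\big(\abs{E}^2+\abs{B}^2\big),\qquad
\Tstress_{\mu\nu}X^\mu n^\nu=\pm\, g(X,E\times B),
\]
where the second is the Poynting vector paired with $X$. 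The first identity follows by computing $G_{\mu\alpha}G_\nu{}^\alpha$ and $G_{\alpha\beta}G^{\alpha\beta}=2(\abs{B}^2-\abs{E}^2)$ in an orthonormal frame adapted to $n$.

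Hence
\[
\Tstress(N,n)=\tfrac{\alpha}{2}\big(\abs{E}^2+\abs{B}^2\big)\pm g(X,E\times B).
\]
The upper bound follows at once, with constant $\alpha/2+\abs{X}$, since $\abs{E\times B}\le\abs{E}\abs{B}\le\tfrac12(\abs{E}^2+\abs{B}^2)$. For the lower bound, timelikeness of $N$ gives $g(N,N)=-\alpha^2+\abs{X}^2<0$, so $\abs{X}<\alpha$. Then
\[
\Tstress(N,n)\ge\tfrac12(\alpha-\abs{X})\big(\abs{E}^2+\abs{B}^2\big),
\]
and $\alpha-\abs{X}>0$ at every point. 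This is the dominant energy condition made quantitative.

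The remaining step, which is the only real obstacle, is uniformity. We need $\alpha-\abs{X}$ bounded below and $\alpha+\abs{X}$ bounded above, uniformly over:
\begin{itemize}[leftmargin=2.2em]
\item points of $K$, including up to and across $\Hp$;
\item all $\tau$;
\item the compact parameter set.
\end{itemize}
Stationarity handles the $\tau$ direction. The slices $\Sigma_\tau$ are translates of $\Sigma_0$ by the flow of $T$, and $N$ is built $T$-invariant, so the quantities are $\tau$-independent once the foliation is chosen $T$-invariant. If instead we only use Definition~\ref{def:regular_foliation}, we take its uniform equivalence on compact radial sets as giving the uniformity.

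Regularity handles the spatial and parameter directions. In regular horizon-penetrating coordinates, $N$, $n$ and $g_\KN$ are continuous, and they depend continuously on $(M,a,Q)$ over the compact subextremal set. Moreover $N$ is timelike on the closed region including $r=r_+$, which is exactly where the red-shift construction of \cite{DafermosRodnianskiRedshift} is used; the Boyer--Lindquist degeneracy plays no role. So the strictly positive continuous function $\alpha-\abs{X}$ on the compact set $K\times(\text{parameters})$ attains a positive minimum. This gives a constant $C>1$ depending only on these data. Norms of $E$ and $B$ are measured with the induced Riemannian metric on $\Sigma_\tau$, and that metric is uniformly equivalent to a coordinate metric on $K$ for the same reason.
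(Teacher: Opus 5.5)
Your argument is correct and is essentially the paper's proof. You decompose $N$ along $n_{\Sigma_\tau}$ and its orthogonal complement, use $\Tstress(n,n)=\tfrac12(\abs{E}^2+\abs{B}^2)$ together with the Poynting identity and $\abs{E\times B}\le\tfrac12(\abs{E}^2+\abs{B}^2)$, and get uniformity from the smooth dependence of $g_\KN$, the foliation and the red-shift field on the point and the parameters. Your lower constant $\tfrac12(\alpha-\abs{X})=\tfrac12(\alpha^2-\abs{X}^2)/(\alpha+\abs{X})$ is exactly the quantity the paper bounds below by $\tfrac12 c^2/(A+\abs{A'})$, and your explicit treatment of uniformity in $\tau$ via $T$-invariance fills in a point the paper leaves implicit.
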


\begin{proof}
Work in an orthonormal frame $(f_0,f_1,f_2,f_3)$ with $f_0=n_{\Sigma_\tau}$.
Writing $E_i=G(f_0,f_i)$ and $B_i=\tfrac12\epsilon_{ijk}G(f_j,f_k)$, a direct
computation from \eqref{eq:stress_tensor} gives $\Tstress_{\mu\nu}[G]f_0^\mu
f_0^\nu=\tfrac12(\abs E^2+\abs B^2)$ and $\Tstress_{\mu\nu}[G]f_0^\mu
f_i^\nu=(E\times B)_i$, so $\Tstress[G](f_0,\cdot)$ is a causal future-directed
covector and the dominant energy condition holds. Since $N$ is uniformly future
timelike on compact radial sets, $N=A f_0+\sum_i A^i f_i$ with $A\ge c>0$ and
$A^2-\sum_i(A^i)^2\ge c^2$. Using $|E\times B|\le\tfrac12(\abs E^2+\abs B^2)$,
\[
        \Tstress_{\mu\nu}[G]N^\mu f_0^\nu
        =A\,\tfrac12(\abs E^2+\abs B^2)+\textstyle\sum_i A^i(E\times B)_i
        \ge \tfrac{c^2}{A+|A'|}\cdot\tfrac12(\abs E^2+\abs B^2)\ge c'(\abs E^2+\abs B^2),
\]
where $|A'|=(\sum_i (A^i)^2)^{1/2}$. The upper bound is immediate from
boundedness of the frame coefficients. Near $\Hp$ the red-shift construction of
\cite{DafermosRodnianskiRedshift} keeps $N$ uniformly timelike in regular
coordinates, so the bound persists there. All constants are uniform on compact
parameter sets because $g_\KN$, the foliation and $N$ depend smoothly on
$(M,a,Q)$.
\end{proof}

\begin{lemma}
\label{lem:div_identity}
If $\dd G=0$ and $\dd\starG G=0$, and $J_\mu^X[G]=\Tstress_{\mu\nu}[G]X^\nu$ for
a smooth vector field $X$, then
\begin{equation}\label{eq:div_identity}
        \nabla^\mu J_\mu^X[G]=\tfrac12\Tstress^{\mu\nu}[G]\,\pi^X_{\mu\nu},
        \qquad \pi^X_{\mu\nu}=\nabla_\mu X_\nu+\nabla_\nu X_\mu.
\end{equation}
\end{lemma}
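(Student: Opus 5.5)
The plan is to compute $\nabla^\mu J^X_\mu = \nabla^\mu(\Tstress_{\mu\nu}X^\nu)$ by the Leibniz rule, which gives two pieces:
\[
        \nabla^\mu J^X_\mu=(\nabla^\mu\Tstress_{\mu\nu})X^\nu+\Tstress_{\mu\nu}\nabla^\mu X^\nu .
\]
It then suffices to prove two facts. First, $\Tstress[G]$ is divergence-free whenever $G$ satisfies the source-free Maxwell system. Second, the remaining term equals $\tfrac12\Tstress^{\mu\nu}\pi^X_{\mu\nu}$. The second fact is immediate from the symmetry of $\Tstress_{\mu\nu}$ in \eqref{eq:stress_tensor}: only the symmetric part of $\nabla_\mu X_\nu$ contributes when contracted against it.

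For the divergence, I will write the Maxwell equations in index form. The equation $\dd\starG G=0$ is equivalent to $\nabla^\alpha G_{\alpha\nu}=0$. The equation $\dd G=0$ is equivalent to the cyclic identity
\[
        \nabla_\mu G_{\nu\alpha}+\nabla_\nu G_{\alpha\mu}+\nabla_\alpha G_{\mu\nu}=0,
\]
which holds because the Levi-Civita connection is torsion-free. Differentiating \eqref{eq:stress_tensor} then gives
\[
        \nabla^\mu\Tstress_{\mu\nu}=(\nabla^\mu G_{\mu\alpha})G_\nu{}^\alpha+G_{\mu\alpha}\nabla^\mu G_\nu{}^\alpha-\tfrac12 G^{\alpha\beta}\nabla_\nu G_{\alpha\beta}.
\]
The first term vanishes by the divergence equation. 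In the second term I will antisymmetrize in $\mu,\alpha$ using the antisymmetry of $G_{\mu\alpha}$, so that it becomes $\tfrac12 G^{\mu\alpha}(\nabla_\mu G_{\nu\alpha}-\nabla_\alpha G_{\nu\mu})$. By the cyclic identity this equals $\tfrac12 G^{\mu\alpha}\nabla_\nu G_{\mu\alpha}$, which cancels the third term exactly.

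The only step needing care is the index bookkeeping in this last cancellation: signs under antisymmetrization and the factor $\tfrac12$. I will check it once with all indices written out. There is no analytic obstacle, since the computation is pointwise and valid for smooth $G$. For merely finite-energy fields the identity can be understood in the distributional sense by density, but the lemma as stated only needs smooth solutions.
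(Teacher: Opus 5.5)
Your proposal is correct and follows the paper's own argument: Leibniz rule, vanishing of $\nabla^\mu\Tstress_{\mu\nu}$ from $\nabla^\mu G_{\mu\alpha}=0$ together with the cyclic identity coming from $\dd G=0$, and symmetry of $\Tstress$ to rewrite $\Tstress_{\mu\nu}\nabla^\mu X^\nu$ as $\tfrac12\Tstress^{\mu\nu}\pi^X_{\mu\nu}$. Your explicit antisymmetrization and cancellation is more careful than the paper's one-line display. The paper compresses that step into a single term $G^{\mu\alpha}\nabla_{[\mu}G_{\nu\alpha]}$ and does not track the numerical factor in front of it, which is harmless because the term vanishes.
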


\begin{proof}
The source-free equations \eqref{eq:maxwell_intro} give
$\nabla^\mu\Tstress_{\mu\nu}=G_{\nu}{}^{\alpha}\nabla^\mu G_{\mu\alpha}
+G^{\mu\alpha}\nabla_{[\mu}G_{\nu\alpha]}=0$, since $\nabla^\mu G_{\mu\alpha}=0$
is $\dd\starG G=0$ and $\nabla_{[\mu}G_{\nu\alpha]}=0$ is $\dd G=0$. Hence
$\nabla^\mu J_\mu^X=\Tstress_{\mu\nu}\nabla^\mu X^\nu
=\tfrac12\Tstress^{\mu\nu}\pi^X_{\mu\nu}$ by symmetry of $\Tstress$.
\end{proof}

\begin{proposition}
\label{prop:basic_energy_identity}
Let $G$ be smooth source-free and let $\D_R(\tau_1,\tau_2)$ be the slab
truncated by $r=R$. Then
\begin{align}\label{eq:energy_identity_slab}
        \int_{\Sigma_{\tau_2}\cap\{r\le R\}}\!\!J^X[G]\!\cdot\!n_{\Sigma_{\tau_2}}
        +\!\int_{\partial\D_R\setminus(\Sigma_{\tau_1}\cup\Sigma_{\tau_2})}\!\!\!J^X[G]\!\cdot\!n_{\partial\D_R}
        ={}&\int_{\Sigma_{\tau_1}\cap\{r\le R\}}\!\!J^X[G]\!\cdot\!n_{\Sigma_{\tau_1}}\nonumber\\
        &+\tfrac12\int_{\D_R(\tau_1,\tau_2)}\!\!\Tstress^{\mu\nu}[G]\pi^X_{\mu\nu}.
\end{align}
Boundary terms on null hypersurfaces are defined by approximation with
spacelike or timelike boundaries.
\end{proposition}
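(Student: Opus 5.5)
The plan is to integrate the divergence identity of Lemma~\ref{lem:div_identity} over the truncated slab $\D_R(\tau_1,\tau_2)$ and apply the divergence theorem. First I would check that $J^X[G]$ is a smooth vector field on a neighbourhood of the closed slab in the regular horizon-penetrating coordinates. This holds because $G$ is smooth and $g_\KN$ extends smoothly across $\Hp$, so the identity $\nabla^\mu J^X_\mu=\tfrac12\Tstress^{\mu\nu}\pi^X_{\mu\nu}$ holds pointwise up to the horizon. The region $\D_R(\tau_1,\tau_2)$ is compact. Its boundary consists of four pieces: the two spacelike caps $\Sigma_{\tau_i}\cap\{r\le R\}$, the horizon portion $\Hp\cap\D_R$, and the timelike cylinder $\{r=R\}$. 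Stokes' theorem applied to the three-form $\star J^X$ then gives $\int_{\D_R}\nabla^\mu J^X_\mu\,\dd\mu_g$ as a sum of boundary fluxes.

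The second step fixes signs and normalizations. On the spacelike caps I would use the future unit normals $n_{\Sigma_{\tau_i}}$. The outward normal is $-n$ on $\Sigma_{\tau_2}$ (future boundary) and $+n$ on $\Sigma_{\tau_1}$ under the Lorentzian convention $\int\nabla^\mu J_\mu=-\int_{\partial}J\cdot n_{\text{out}}$ with suitable orientation. Tracking the sign carefully, the future cap flux and the lateral fluxes land on the left and the past cap on the right, which reproduces \eqref{eq:energy_identity_slab}. On the lateral timelike part $\{r=R\}$ the unit outward normal is spacelike and the formula is standard. I would fix $n_{\partial\D_R}$ on all of $\partial\D_R\setminus(\Sigma_{\tau_1}\cup\Sigma_{\tau_2})$ to be the normal for which the left-hand side is the outgoing flux.

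The only genuine subtlety, and the step I expect to be the main obstacle, is the null piece $\Hp$, where no unit normal exists. Following the statement, I would handle it by approximation. I would replace $\Hp$ by the timelike hypersurfaces $\{r=r_++\delta\}$, or by spacelike ones $\{r=r_+-\delta\}$ inside the regular extension, apply the identity on the resulting region $\D_{R,\delta}$, and let $\delta\to0$. The bulk integral converges by dominated convergence because the integrand is smooth and bounded on a compact set. The boundary flux $J^X\cdot n\,\dd\mu$ is a smooth $3$-form restricted to hypersurfaces converging smoothly to $\Hp$. Its integrals therefore converge, and the limit defines the null flux. That limit equals $\int_{\Hp}J^X\cdot L$ against the induced measure paired with a null generator $L$, independently of normalization. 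The same limiting procedure shows that the $\Hp$ term is well defined without choosing a unit normal, and this completes the identity.
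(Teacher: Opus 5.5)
Your argument follows the paper's proof. You integrate $\nabla^\mu J^X_\mu=\tfrac12\Tstress^{\mu\nu}\pi^X_{\mu\nu}$ over the compact truncated slab, apply the divergence theorem, and define the $\Hp$ flux as a limit over approximating timelike or spacelike hypersurfaces. Your remark that $\iota_{J^X}\dd\mu_g$ is a smooth $3$-form up to $\Hp$ justifies the paper's ``$L^1$ convergence of flux densities'' more cleanly. In fact Stokes' theorem for that $3$-form holds directly on the null piece; the approximation is needed only to read the result as a $J\cdot n$ flux.

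The step that fails is the sign bookkeeping. Your convention $\int\nabla^\mu J_\mu=-\int J\cdot n_{\mathrm{out}}$ on spacelike pieces is right. However, the outward normal at the future cap is $+n_{\Sigma_{\tau_2}}$ (future-directed, pointing out of the slab), and at the past cap it is $-n_{\Sigma_{\tau_1}}$. This gives
\[
\int_{\D_R}\nabla^\mu J^X_\mu=-\int_{\Sigma_{\tau_2}\cap\{r\le R\}}J^X\cdot n_{\Sigma_{\tau_2}}+\int_{\Sigma_{\tau_1}\cap\{r\le R\}}J^X\cdot n_{\Sigma_{\tau_1}}+(\text{lateral and horizon terms}).
\]
So careful tracking puts the bulk term on the left of \eqref{eq:energy_identity_slab}; equivalently it appears as $-\tfrac12\int\Tstress^{\mu\nu}\pi^X_{\mu\nu}$ on the right, not $+$.

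This is not a matter of convention once $J^X\cdot n=\Tstress(X,n)$ with $n$ future-directed, which is the choice that makes the fluxes nonnegative in Lemma~\ref{lem:positivity_density}. Take $X=t\partial_t$ on Minkowski space. Then $\tfrac12\Tstress^{\mu\nu}\pi^X_{\mu\nu}=-\Tstress_{00}$ and $\int_{\Sigma_t}J^X\cdot n=t\,\mathcal E$, with $\mathcal E$ the conserved energy, so the identity holds only with the minus sign.

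Your flipped normal is exactly what makes the computation appear to reproduce the display; the paper's own proof does not track this sign either. You should state the conclusion with the bulk term on the future side. That is also the form in which the positive red-shift bulk of Proposition~\ref{prop:redshift_coercivity} controls the energy rather than increasing it.
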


\begin{proof}
Lemma~\ref{lem:div_identity} gives
$\nabla^\mu J^X_\mu[G]=\tfrac12\Tstress^{\mu\nu}[G]\pi^X_{\mu\nu}$. Integrating
this identity over the oriented slab $\D_R(\tau_1,\tau_2)$ and applying the
divergence theorem gives the sum of the outward boundary fluxes. The flux on
$\Sigma_{\tau_2}$ appears with the future normal, while the flux on
$\Sigma_{\tau_1}$ has the opposite orientation and is moved to the right-hand
side. What remains are the radial cutoff boundary, possible
horizon pieces, and possible null-infinity approximants.

For a null boundary, approximate it first by spacelike or timelike
hypersurfaces that converge to the null hypersurface in regular coordinates. For smooth $G$,
the Maxwell stress tensor is smooth, and $X$ is smooth up to the horizon and in
the asymptotic chart used for the truncation. The induced flux densities converge
in $L^1$ on compact portions of the null boundary. The limit is the usual null
flux, which gives \eqref{eq:energy_identity_slab}.
\end{proof}

\begin{remark}
\label{rem:identity_not_morawetz}
Identity \eqref{eq:energy_identity_slab} is algebraic; the sign of the bulk
$\Tstress^{\mu\nu}\pi^X_{\mu\nu}$ depends on $X$ and degenerates at trapping.
The coercive trapped-set analysis is used only through
Definition~\ref{def:slowweak_master_framework} and
Proposition~\ref{prop:positive_commutator_estimate}.
\end{remark}

\section{Charges, Stationary Fields, and Radiative Projection}
\label{sec:charges}
In this section we analyze the electric and magnetic charges and then construct the stationary representatives which have to be subtracted from a general Maxwell field.
Let $S_{\tau,r}\subset\Sigma_\tau$ be a coordinate sphere of radius $r$,
oriented as the boundary of the exterior. For sufficiently decaying smooth $F$,
\begin{equation}\label{eq:charges_def}
        q_E[F]=\frac1{4\pi}\lim_{r\to\infty}\int_{S_{\tau,r}}\starG F,
        \qquad q_B[F]=\frac1{4\pi}\lim_{r\to\infty}\int_{S_{\tau,r}}F.
\end{equation}

\begin{proposition}
\label{prop:charge_conservation}
If $F$ solves \eqref{eq:maxwell_intro} and $S_1,S_2$ are homologous embedded
spheres in the exterior, then $\int_{S_1}F=\int_{S_2}F$ and
$\int_{S_1}\starG F=\int_{S_2}\starG F$. In particular $q_E[F],q_B[F]$ are
independent of $\tau$.
\end{proposition}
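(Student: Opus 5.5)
The plan is to apply Stokes' theorem to the closed forms $F$ and $\starG F$. The Maxwell system \eqref{eq:maxwell_intro} says exactly that both two-forms are closed: $\dd F=0$ and $\dd\starG F=0$. Suppose $S_1$ and $S_2$ are homologous embedded spheres in the exterior. Then there is a compact three-chain $V$ in the exterior with $\partial V=S_2-S_1$, with orientations taken as boundaries of the exterior as in \eqref{eq:charges_def}. Stokes' theorem gives
\[
\int_{S_2}F-\int_{S_1}F=\int_V\dd F=0,\qquad \int_{S_2}\starG F-\int_{S_1}\starG F=\int_V\dd\starG F=0.
\]
One only needs $F$ to be $C^1$ on a neighbourhood of $V$, which holds for smooth solutions. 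If the spheres are merely homologous as cycles, I will use a smooth singular chain. In the cases used later, namely coordinate spheres $S_{\tau,r}$, one can take $V$ to be an explicit embedded region.

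For the $\tau$-independence I will use the regular coordinates $(\tilde t,r,\theta,\tilde\phi)$, in which the exterior is diffeomorphic to $\Rbb\times(r_+,\infty)\times\Sph$. Here every coordinate sphere $S_{\tau,r}$ generates $H_2\cong\mathbb Z$, with the same orientation class. Fix $\tau_1<\tau_2$ and $r$ large. Then $S_{\tau_1,r}$ and $S_{\tau_2,r}$ bound the region $V=\{\tau_1\le\tau\le\tau_2\}\cap\{r'=r\}$, a timelike cylinder $[\tau_1,\tau_2]\times\Sph$. By the first part, $\int_{S_{\tau_1,r}}\starG F=\int_{S_{\tau_2,r}}\starG F$ for every $r$, and likewise for $F$. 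In the same way, spheres of different radii on one slice bound an annulus $\{r_1\le r'\le r_2\}\cap\Sigma_\tau$. So the integrand in \eqref{eq:charges_def} is independent of $r$ as well. The limit $r\to\infty$ is therefore attained trivially, it exists without any decay assumption, and it is independent of $\tau$.

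The only delicate point, and it is mild, is the geometry near infinity. The slices $\Sigma_\tau$ agree with $t$ up to a tortoise correction, so "the coordinate sphere of radius $r$ in $\Sigma_\tau$" must be read as an embedded sphere in the regular chart. I must check that the cylinder joining two such spheres at fixed $r$ stays inside the exterior, away from $\Hp$. This is immediate, because $r$ is a coordinate on the exterior manifold and $r>r_+$ is fixed. Since both integrals are constant in $r$, no limiting argument at $\Ip$ is needed. Finally, I will remark that the argument never uses the Kerr-Newman metric beyond the topology of the exterior, because $\starG F$ enters only through its closedness.
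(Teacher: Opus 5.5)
Your proposal is correct and follows the paper's proof: Stokes' theorem applied to the closed forms $F$ and $\star_g F$ on a three-chain, with the timelike cylinder at fixed large radius joining the spheres on two slices. Your additional use of the in-slice annulus is a small sharpening. It shows the flux is exactly independent of $r$, so the limit in \eqref{eq:charges_def} is attained trivially, and the paper's appeal to finite-energy decay to make the limit independent of the radius sequence is not needed for smooth solutions.
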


\begin{proof}
Let $\Omega$ be an oriented smooth three-chain with boundary
$\partial\Omega=S_2-S_1$. Stokes' theorem and the Maxwell equations give
\[
        \int_{S_2}F-\int_{S_1}F=\int_\Omega \dd F=0,
        \qquad
        \int_{S_2}\star_gF-\int_{S_1}\star_gF=\int_\Omega \dd\star_gF=0.
\]
The two fluxes therefore depend only on the homology class of the sphere. Large
coordinate spheres on two slices are homologous through the domain of outer
communications after adding the timelike cylinder at large radius, and the
finite-energy decay in the asymptotic end makes the limit in
\eqref{eq:charges_def} independent of the chosen large radius sequence. The
charges are therefore independent of $\tau$.
\end{proof}

\begin{proposition}
\label{prop:finite_energy_charge_trace}
Let $U=(E,B)$ be smooth Maxwell Cauchy data on $\Sigma_\tau$ in the asymptotic
end, with $\Div_{\Sigma_\tau}E=\Div_{\Sigma_\tau}B=0$. Fix $R_0$ with
$\{r\ge R_0\}$ in the asymptotic region. Then
\begin{equation}\label{eq:finite_energy_charge_bound}
        |q_E(U)|^2+|q_B(U)|^2
        \le C R_0\int_{\Sigma_\tau\cap\{R_0\le r\le 2R_0\}}(|E|^2+|B|^2)\,\dd\mu_{\Sigma_\tau}.
\end{equation}
Hence $q_E,q_B$ extend continuously to the constrained order-zero energy completion.
\end{proposition}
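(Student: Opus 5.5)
The plan is to trade the single sphere flux for an average over a shell, using the constraint $\Div_{\Sigma_\tau}E=\Div_{\Sigma_\tau}B=0$. By Proposition~\ref{prop:charge_conservation}, in the form valid on a single slice (Stokes' theorem on the three-dimensional annulus between two coordinate spheres, with the divergence-free constraint playing the role of $\dd\starG F=0$ and $\dd F=0$), the flux $\int_{S_{\tau,r}}\starG F$ does not depend on $r\ge R_0$. The same holds for $\int_{S_{\tau,r}}F$. In terms of the data, $4\pi q_E(U)=\int_{S_{\tau,r}}\langle E,\nu_r\rangle\,\dd\sigma_r$, where $\nu_r$ is the unit normal to $S_{\tau,r}$ in $\Sigma_\tau$; the analogous formula with $B$ gives $q_B(U)$. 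Up to a sign convention, this pairing is the pullback of $\starG F$, respectively $F$, to $S_{\tau,r}$.

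Since the flux is constant in $r$, I average it over $r\in[R_0,2R_0]$:
\[
4\pi q_E(U)=\frac1{R_0}\int_{R_0}^{2R_0}\int_{S_{\tau,r}}\langle E,\nu_r\rangle\,\dd\sigma_r\,\dd r .
\]
In the asymptotic region the coarea factor relating $\dd\sigma_r\,\dd r$ to $\dd\mu_{\Sigma_\tau}$ is bounded above and below, uniformly on compact parameter sets. The metric is asymptotically flat, and $\Sigma_\tau$ agrees with $\{t=\text{const}\}$ up to a bounded tortoise correction, so the induced metric is uniformly comparable to the Euclidean one for $r\ge R_0$. This gives
\[
|q_E(U)|\le \frac{C}{R_0}\int_{\{R_0\le r\le 2R_0\}}|E|\,\dd\mu_{\Sigma_\tau}.
\]
Cauchy--Schwarz then bounds the right-hand side by $CR_0^{-1}\,\mathrm{vol}(\{R_0\le r\le2R_0\})^{1/2}\norm{E}_{L^2}$, and the shell volume is $\lesssim R_0^3$. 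Squaring gives exactly the weight $CR_0$ in \eqref{eq:finite_energy_charge_bound}. The argument for $q_B$ is identical with $B$ in place of $E$. By Lemma~\ref{lem:positivity_density}, $|E|^2+|B|^2$ is comparable to the energy density, and this is uniform at large $r$ because there $N=T$ is uniformly timelike.

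For the extension, fix $R_0$. The inequality then shows that $q_E$ and $q_B$ are bounded linear functionals with respect to the order-zero energy norm on smooth constrained data. They therefore extend uniquely and continuously to the completion, and the extension is independent of $R_0$ because the smooth values are.

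The step needing most care is the uniform comparability of the shell geometry. I need the sphere area element, $|\nu_r|$ measured in the induced metric, and the coarea Jacobian to be uniformly controlled for $r\ge R_0$ on the chosen slices, including the off-diagonal $\dd t\,\dd\phi$ terms and the tortoise-corrected slicing. I would handle this by writing $g_\KN$ restricted to $\Sigma_\tau$ as $\dd r^2+r^2\mathring{g}_{\Sph}+O(M/r)$ in the far region and choosing $R_0\ge C_0M$.
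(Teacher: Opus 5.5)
Your proof is correct and follows essentially the paper's argument. Both use the $r$-independence of the flux, which comes from the divergence constraint and Stokes' theorem, and then apply Cauchy--Schwarz over the shell $\{R_0\le r\le 2R_0\}$ to produce the factor $R_0$. The only difference is cosmetic: the paper applies Cauchy--Schwarz on each sphere, using $|S_{\tau,r}|\le Cr^2$, and then integrates in $r$, whereas you average in $r$ first and apply Cauchy--Schwarz once on the whole shell.
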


\begin{proof}
Treat $q_E$; $q_B$ is identical. The constraint $\operatorname{div}E=0$ and
Stokes' theorem give that $4\pi q_E(U)=\int_{S_{\tau,r}}E(\nu_{\tau,r})\,
\dd\mu_{S_{\tau,r}}$ is independent of $r\ge R_0$, where $\nu_{\tau,r}$ is the
outward unit normal of $S_{\tau,r}$. By Cauchy-Schwarz and $|S_{\tau,r}|\le
Cr^2$,
\[
        16\pi^2|q_E(U)|^2\le |S_{\tau,r}|\int_{S_{\tau,r}}|E(\nu_{\tau,r})|^2\,\dd\mu_{S_{\tau,r}}
        \le C r^2\int_{S_{\tau,r}}|E|^2\,\dd\mu_{S_{\tau,r}},
\]
so $\int_{S_{\tau,r}}|E|^2\ge C^{-1}r^{-2}|q_E(U)|^2$. Integrating in $r$ over
$[R_0,2R_0]$, using $r^{-2}\ge(2R_0)^{-2}$ and $\dd\mu_{\Sigma_\tau}\simeq\dd
r\,\dd\mu_{S_{\tau,r}}$, gives $\int_{\Sigma_\tau\cap\{R_0\le r\le2R_0\}}|E|^2\ge
C^{-1}R_0^{-1}|q_E(U)|^2$. The same bound for $B$ gives
\eqref{eq:finite_energy_charge_bound}. The right-hand side is dominated by the
order-zero energy, so $q_E,q_B$ are bounded linear functionals and extend
uniquely.
\end{proof}

The Kerr-Newman Coulomb potential, normalized to have unit electric charge, is
\begin{equation}\label{eq:electric_potential}
        \widehat A_e=-\frac{r}{\Sigma}\big(\dd t-a\sin^2\theta\,\dd\phi\big),
        \qquad \widehat F_e=\dd\widehat A_e.
\end{equation}
Fix the sign so that $q_E[\widehat F_e]=1$ and set $F_e=\widehat F_e$; set
$F_m=\starG F_e$ with the sign giving $q_B[F_m]=1$. The potential is patchwise, but the field $F_e$ is global.

\begin{definition}
\label{def:stationary_family}
The stationary representative is
$F_{\stat}^{\KN}(q_E,q_B)=q_EF_e+q_BF_m$, where
\begin{equation}\label{eq:stationary_normalization}
        q_E[F_e]=1,\ q_B[F_e]=0,\qquad q_E[F_m]=0,\ q_B[F_m]=1.
\end{equation}
\end{definition}

\begin{lemma}
\label{lem:stationary_representatives}
$F_e$ and $F_m$ are smooth source-free Maxwell fields on the regular exterior,
and in an asymptotically orthonormal frame
\begin{equation}\label{eq:stationary_decay}
        \abs{\nabla^j F_e}+\abs{\nabla^j F_m}\le C_j\,r^{-2-j}\qquad(j\ge0)
\end{equation}
in the far region. As a consequence, their Cauchy data lie in every finite-order
energy space.
\end{lemma}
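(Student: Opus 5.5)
We have to show three things about $F_e$ and $F_m$: they are smooth up to and across $\Hp$, they are source-free, and they satisfy the far-field decay \eqref{eq:stationary_decay}. Membership in the finite-order energy spaces then follows.

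\emph{Source-free equations.} We start from the explicit potential \eqref{eq:electric_potential}. The relation $\dd F_e=\dd\dd\widehat A_e=0$ holds automatically on each patch where $\widehat A_e$ is defined. For $\dd\starG F_e=0$, compute $\widehat F_e$ in the Kinnersley (or Carter) orthonormal tetrad adapted to $(t,r,\theta,\phi)$. There the field takes the standard form
\[
\widehat F_e+i\starG\widehat F_e\propto\frac{1}{(r-ia\cos\theta)^2}\,\omega^0\wedge\omega^1+\dots,
\]
so the complex combination is $(r-ia\cos\theta)^{-2}$ times a fixed self-dual bivector. Equivalently, $\widehat A_e$ is, up to normalization, the Kerr-Newman background potential itself, and the Einstein-Maxwell field equations give $\dd\star\widehat F_e=0$. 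Either route gives both $\dd F_e=0$ and $\dd\starG F_e=0$.

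Once $F_e$ is source-free, $F_m=\starG F_e$ is also source-free, because $\starG\starG=-1$ on two-forms in Lorentzian signature. Smoothness and decay of $F_m$ follow from those of $F_e$, since $\starG$ is a smooth bundle isometry.

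\emph{Regularity across $\Hp$.} This step is the main obstacle, because the Boyer-Lindquist form of $\widehat A_e$ contains $\dd t$, which is singular at $r_+$. Pass to the regular coordinates: $\dd\tilde t=\dd t+f(r)\,\dd r$ and $\dd\tilde\phi=\dd\phi+h(r)\,\dd r$, with $f\sim(r^2+a^2)/\Delta$ and $h\sim a/\Delta$ near $r_+$. Substituting gives
\[
\widehat A_e=-\frac{r}{\Sigma}\bigl(\dd\tilde t-a\sin^2\theta\,\dd\tilde\phi\bigr)+\frac{r}{\Sigma}\bigl(f-a\sin^2\theta\,h\bigr)\dd r .
\]
The coefficient of $\dd r$ is singular, but that term has the form $\frac{r}{\Sigma}(\cdots)\dd r$, and its exterior derivative only sees the $\theta$-dependence. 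The $\theta$-dependent part reduces to
\[
\frac{r\bigl((r^2+a^2)-a^2\sin^2\theta\bigr)}{\Sigma\,\Delta}=\frac{r}{\Delta},
\]
which is independent of $\theta$. That piece is therefore a pure gradient $\dd g(r)$ plus a smooth remainder, and it can be dropped by a gauge change. The resulting gauge-equivalent potential is smooth in regular coordinates, so $F_e$ is smooth across $\Hp$.

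The potential is only defined patchwise (it fails near the axis in $\phi$ language), but $F_e$ is smooth there. The components in Cartesian-like coordinates near $\theta=0,\pi$ come from $\sin^2\theta\,\dd\phi$, which is smooth.

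\emph{Decay at infinity.} Expand $\widehat F_e$ in $1/r$. The leading term is $r^{-2}\,\dd t\wedge\dd r$, and the corrections are $O(a r^{-3})$ in an asymptotically orthonormal frame. Each term is a symbol in $r$, so every $\nabla$ (or $r\partial_r$, $T$, $\Phi$, angular derivative) gains the stated power. This gives \eqref{eq:stationary_decay}.

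\emph{Normalization and energy.} The sign normalization comes from $\int_{S_r}\star F_e\to 4\pi$, with the constant fixed by the leading Coulomb term. Finally, the bound $|\Lie_{\Gamma^I}F|\lesssim r^{-2}$ together with $\dd\mu_{\Sigma}\sim r^2\,\dd r\,\dd\omega$ gives an energy integrand of order $r^{-4}\cdot r^2=r^{-2}$, which is integrable at infinity. Combined with smoothness on compact regions and Lemma~\ref{lem:positivity_density}, the Cauchy data of $F_e$ and $F_m$ lie in every finite-order energy space.
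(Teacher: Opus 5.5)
Your proposal is correct. For the source-free property, the decay and the energy integrability it is essentially the paper's argument. The paper also reads off $d\star_g\widehat F_e=0$ from the fact that $Q\widehat F_e$ is the Kerr--Newman background field. It extracts the $r^{-2}$ Coulomb term plus $O(r^{-3})$ orthonormal-frame corrections from the explicit formula for $\dd(-f\,\dd t+af\sin^2\theta\,\dd\phi)$, and concludes with $\int_R^\infty r^{-4-2j}r^2\,\dd r<\infty$.

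There are two differences worth noting.

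\begin{itemize}
\item Your Einstein--Maxwell route gives nothing at $Q=0$, since the background field vanishes there. The paper closes this case by observing that the coefficients of $\widehat F_e$ are $Q$-independent and that $d\star_g\widehat F_e$ depends smoothly on $Q$, so the identity passes to $Q=0$ by continuity.
\item Your NP route ($\phi_0=\phi_2=0$, $\phi_1\propto(r-ia\cos\theta)^{-2}$) does cover $Q=0$ directly. However, it only becomes a proof once you actually verify the four equations \eqref{eq:np_maxwell} with $\rho=-(r-ia\cos\theta)^{-1}$ and the Kerr--Newman values of $\mu,\tau,\pi$; at present you only assert the ``standard form''.
\end{itemize}

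Your horizon step is something the paper never proves; it only asserts smoothness ``on the regular exterior''. Your observation is correct. After passing to ingoing coordinates, the singular $\dd r$ coefficient collapses to $r/\Delta$ because $(r^2+a^2)-a^2\sin^2\theta=\Sigma$. So the singular part is exact and can be gauged away. This is exactly what is needed for the energy density to be bounded up to $\Hp$, so on this point your argument is more complete than the paper's.

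Two small corrections:
\begin{itemize}
\item $T$, $\Phi$, $r\partial_r$ and regular angular derivatives preserve the $r^{-2}$ order rather than gain a power; only frame covariant derivatives gain $r^{-1}$. The energy step uses only the order-preserving property, and that is true.
\item If you also want $q_B[F_e]=0$, note that $\widehat A_e$ restricts to a smooth global one-form on each sphere, because $\sin^2\theta\,\dd\phi$ is smooth at the poles. Then $\int_S F_e=0$ by Stokes.
\end{itemize}
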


\begin{proof}
Put $f=r/\Sigma$. Then
\begin{equation}\label{eq:explicit_coulomb_field}
\begin{aligned}
        \widehat F_e
        &=\dd\bigl(-f\,\dd t+a f\sin^2\theta\,\dd\phi\bigr)  \\
        &=-\partial_r f\,\dd r\wedge\dd t-\partial_\theta f\,\dd\theta\wedge\dd t
          +a\sin^2\theta\,\partial_r f\,\dd r\wedge\dd\phi        \\
        &\quad +a\bigl(\sin^2\theta\,\partial_\theta f
          +2f\sin\theta\cos\theta\bigr)\dd\theta\wedge\dd\phi.
\end{aligned}
\end{equation}
Thus $\dd F_e=0$ identically. If $Q\ne0$, the Kerr-Newman background
Maxwell field is $F^{\mathrm{bg}}=Q\widehat F_e$ with the same orientation
convention; it satisfies the source-free Maxwell equation on the fixed
background, hence $\dd\star_g\widehat F_e=0$. Both the coefficients in
\eqref{eq:explicit_coulomb_field} and the operator $F\mapsto\dd\star_gF$ depend
smoothly on $Q$, so the identity extends to $Q=0$ by taking the limit. In four
space-time dimensions, $\star_g$ maps closed and co-closed two-forms to closed
and co-closed two-forms; therefore $F_m=\star_gF_e$ also solves
\eqref{eq:maxwell_intro}.

The same formula also fixes the normalization and the asymptotics. Since
\begin{equation}\label{eq:fe_asymptotics}
        f=r^{-1}+O(r^{-3}),\qquad
        \partial_r f=-r^{-2}+O(r^{-4}),\qquad
        \partial_\theta f=O(r^{-3}),
\end{equation}
we have, on large coordinate spheres,
\begin{equation}\label{eq:coulomb_flux_asymptotics}
        \widehat F_e=r^{-2}\,\dd r\wedge\dd t
        +2a r^{-1}\sin\theta\cos\theta\,\dd\theta\wedge\dd\phi
        +O(r^{-3})_{\mathrm{orth}},
\end{equation}
up to the global sign fixed before \eqref{eq:stationary_normalization}. The
magnetic flux of the angular term vanishes because
$\int_0^\pi\sin\theta\cos\theta\,\dd\theta=0$, while the electric flux is
\begin{equation}\label{eq:electric_flux_normalization}
        \frac1{4\pi}\int_{S_{\tau,r}}\star_g\widehat F_e
        =\pm\frac1{4\pi}\int_0^{2\pi}\!\int_0^\pi
        \sin\theta\,\dd\theta\dd\phi+O(r^{-1})=\pm1+O(r^{-1}).
\end{equation}
We choose the sign for which the limit is $+1$. Thus $q_E[F_e]=1$ and
$q_B[F_e]=0$. Since $F_m=\star_gF_e$, and $\star_g^2=-1$ on two-forms in the
Lorentzian exterior with our convention, $q_B[F_m]=q_E[F_e]=1$ and
$q_E[F_m]=-q_B[F_e]=0$ after the same orientation choice.

The leading term in \eqref{eq:coulomb_flux_asymptotics} is the Coulomb part in an
asymptotically orthonormal frame. The purely angular coefficient in
\eqref{eq:explicit_coulomb_field} is $O(r^{-1})$ as a coordinate coefficient, but
it becomes $O(r^{-3})$ in the orthonormal frame because
$\dd\theta\wedge\dd\phi$ carries the factor $r^{-2}\sin^{-1}\theta$ after
orthonormalization. The other mixed angular terms are $O(r^{-3})$ or better.
Hence $|F_e|=O(r^{-2})$. Regular angular derivatives preserve this order,
$r\partial_r$ derivatives preserve the symbolic order, and covariant radial
derivatives gain a factor of $r^{-1}$. Hodge duality is a uniformly bounded
zeroth-order operation in the asymptotically flat frame, so the corresponding bounds hold
for $F_m$. This establishes \eqref{eq:stationary_decay}. In the final step,
\[
        \int_R^\infty r^{-4-2j}r^2\,\dd r<\infty\qquad (j\ge0),
\]
so the Cauchy data of $F_e$ and $F_m$ have finite non-degenerate energy after
any finite number of commutations in $\mathbb D_k$.
\end{proof}

\begin{proposition}
\label{prop:stationary_subtraction}
If $F$ is smooth finite-energy source-free with defined charges, then
$F_{\rad}=F-F_{\stat}^{\KN}(q_E[F],q_B[F])$ is source-free and
$q_E[F_{\rad}]=q_B[F_{\rad}]=0$.
\end{proposition}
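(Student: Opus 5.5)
The plan is to use linearity of the Maxwell system and of the charge functionals, together with the normalization in Lemma~\ref{lem:stationary_representatives}.

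\emph{Step 1: $F_{\rad}$ is source-free.} Write $q_E=q_E[F]$ and $q_B=q_B[F]$; these are fixed real numbers, independent of $\tau$ by Proposition~\ref{prop:charge_conservation}. Lemma~\ref{lem:stationary_representatives} shows that $F_e$ and $F_m$ are smooth solutions of \eqref{eq:maxwell_intro}. The operators $\dd$ and $\dd\starG$ are linear, so $F_{\stat}^{\KN}(q_E,q_B)=q_EF_e+q_BF_m$ is a smooth source-free field. Hence $F_{\rad}=F-q_EF_e-q_BF_m$ satisfies $\dd F_{\rad}=0$ and $\dd\starG F_{\rad}=0$.

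\emph{Step 2: the charges of $F_{\rad}$ are defined and linear.} On each large sphere $S_{\tau,r}$, both $\int_{S_{\tau,r}}F_{\rad}$ and $\int_{S_{\tau,r}}\starG F_{\rad}$ are linear combinations of the corresponding integrals of $F$, $F_e$ and $F_m$. By Proposition~\ref{prop:charge_conservation} each of these integrals is independent of $r$ once $r$ is in the asymptotic region. For $F$ this is because the sphere integrals of a closed or co-closed form depend only on the homology class. For $F_e$ and $F_m$ the limits exist by \eqref{eq:electric_flux_normalization} and \eqref{eq:stationary_decay}. The limits in \eqref{eq:charges_def} therefore exist for $F_{\rad}$ and split linearly. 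This gives
\[
q_E[F_{\rad}]=q_E[F]-q_E\,q_E[F_e]-q_B\,q_E[F_m],
\qquad
q_B[F_{\rad}]=q_B[F]-q_E\,q_B[F_e]-q_B\,q_B[F_m].
\]

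\emph{Step 3: the charges vanish.} Substituting the normalization \eqref{eq:stationary_normalization} into the two displayed formulas gives $q_E[F_{\rad}]=q_E-q_E=0$ and $q_B[F_{\rad}]=q_B-q_B=0$.

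The only point needing care is Step 2: it must be checked that the limits defining the charges exist for each summand separately, so that taking limits commutes with the linear combination. The plan is to settle this by the $r$-independence of flux integrals for source-free fields, rather than by any decay assumption on $F$. If one prefers to work directly with Cauchy data, the same conclusion follows from the bounded linear functionals of Proposition~\ref{prop:finite_energy_charge_trace}, applied to $U_F-q_EU_e-q_BU_m$.
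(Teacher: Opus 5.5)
Your proposal is correct and follows the paper's proof: linearity of the Maxwell system for source-freeness, then linearity of the charge functionals combined with the normalization \eqref{eq:stationary_normalization}. Your Step 2, which checks that each summand's flux limit exists via the $r$-independence of fluxes of closed or co-closed forms, makes explicit what the paper covers simply by calling the charge functionals linear.
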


\begin{proof}
The normalized representative
$F_{\stat}^{\KN}(q_E,q_B)=q_EF_e+q_BF_m$ is source-free by
Lemma~\ref{lem:stationary_representatives}. Since \eqref{eq:maxwell_intro} is
linear, $F_{\rad}=F-F_{\stat}^{\KN}(q_E[F],q_B[F])$ is source-free. The charge
functionals are linear and the representatives have the normalization
\eqref{eq:stationary_normalization}; hence
\[
q_E(F_{\rad})=q_E(F)-q_E(F)q_E(F_e)-q_B(F)q_E(F_m)=q_E(F)-q_E(F)=0,
\]
and the same computation with $q_B$ gives $q_B(F_{\rad})=0$.
\end{proof}

\begin{corollary}
\label{cor:necessity_charge}
If $(q_E,q_B)\ne(0,0)$ then $F_{\stat}^{\KN}(q_E,q_B)$ does not decay locally to
zero in any non-degenerate local $L^2$ norm.
\end{corollary}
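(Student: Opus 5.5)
The plan is to exploit that $F_{\stat}^{\KN}(q_E,q_B)=q_EF_e+q_BF_m$ is a \emph{stationary} solution. Since $\Lie_T F_e=\Lie_T F_m=0$ (the explicit formula \eqref{eq:explicit_coulomb_field} has coefficients independent of $t$, and $T$ is Killing so it commutes with $\star_g$), the field restricted to the slices $\Sigma_\tau$ is just the time-translate of its restriction to $\Sigma_0$. In the far region the slices agree with $\{t=\tau\}$ up to a tortoise correction, so the restriction of $F_{\stat}^{\KN}$ to $\Sigma_\tau\cap\{R_0\le r\le 2R_0\}$ is the pull-back of its restriction to $\Sigma_0\cap\{R_0\le r\le2R_0\}$ under the flow of $T$. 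That flow acts by isometries, preserves the radial annulus, and maps the unit normals and induced volume forms to each other up to uniformly bounded factors. Consequently any non-degenerate local $L^2$ norm of the field on this annulus, such as $\int(|E|^2+|B|^2)\,\dd\mu_{\Sigma_\tau}$, is either independent of $\tau$ or uniformly comparable to its value at $\tau=0$.

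It then remains to show that this value is strictly positive whenever $(q_E,q_B)\ne(0,0)$. For this I would apply Proposition~\ref{prop:finite_energy_charge_trace} to the Cauchy data $U_\tau$ of $F_{\stat}^{\KN}(q_E,q_B)$ on $\Sigma_\tau$. These data are smooth, satisfy the constraints, and by linearity and \eqref{eq:stationary_normalization} have charges exactly $(q_E,q_B)$. The proposition gives
\[
  |q_E|^2+|q_B|^2\le CR_0\int_{\Sigma_\tau\cap\{R_0\le r\le2R_0\}}(|E|^2+|B|^2)\,\dd\mu_{\Sigma_\tau}
\]
for every $\tau$, with $C$ independent of $\tau$. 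This lower bound holds uniformly in $\tau$ and does not even require the stationarity argument above; conservation of the charges (Proposition~\ref{prop:charge_conservation}) is enough. Lemma~\ref{lem:positivity_density} then converts the bound into a uniform positive lower bound on the $N$-energy density integrated over the compact radial set $\{R_0\le r\le2R_0\}$. Any non-degenerate local norm on a compact set containing this annulus dominates that integral, so it cannot tend to zero.

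The main point needing care is the uniformity of the constant in Proposition~\ref{prop:finite_energy_charge_trace} across slices. The constant comes from $|S_{\tau,r}|\le Cr^2$ and $\dd\mu_{\Sigma_\tau}\simeq\dd r\,\dd\mu_{S}$ in the asymptotic region, so it suffices to check that the regular foliation of Definition~\ref{def:regular_foliation} has uniformly equivalent induced geometry on a fixed annulus for all $\tau$. This follows because the foliation is generated by the stationary flow, $\Sigma_\tau=\phi^T_\tau(\Sigma_0)$, which I would take as part of the choice of time function. If the foliation were not assumed invariant, the fallback is to choose $R_0$ large enough that every $\Sigma_\tau$ is a small perturbation of a $t$-slice there.
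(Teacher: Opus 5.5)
Your argument is correct, but it takes a different route from the paper.

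The paper proves that $q_EF_e+q_BF_m$ is pointwise nonzero on the whole exterior. From \eqref{eq:explicit_coulomb_field} one reads off $F_e(\partial_r,\partial_t)=(r^2-a^2\cos^2\theta)/\Sigma^2\neq0$, since $r>r_+>M>|a|$. Because $\star_g^2=-\Id$ on real two-forms, $\star_g$ has no real eigen-two-forms, so $c_eF_e+c_m\star_gF_e$ cannot vanish at any point unless $c_e=c_m=0$. Lemma~\ref{lem:positivity_density} and stationarity then make the local energy on every compact radial set a positive constant in $\tau$.

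You instead use Gauss's law in the quantitative form of Proposition~\ref{prop:finite_energy_charge_trace}. By charge conservation the flux through each sphere is fixed by the charges, and Cauchy--Schwarz on the sphere bounds the energy on the annulus below by $cR_0^{-1}(q_E^2+q_B^2)$. Compared with the paper's route:
\begin{itemize}
\item Your route needs neither the explicit Coulomb formula nor the duality trick, and it gives an explicit lower bound.
\item As you note, it does not need stationarity. So it shows that \emph{every} finite-energy Maxwell field with nonzero charges fails to decay locally, which is the real content of the necessity of charge subtraction.
\item The paper's pointwise argument buys localization. It gives non-decay on every compact radial set, including thin horizon collars and in fact any open set, whereas a flux argument only sees regions containing full spheres.
\end{itemize}

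As written you treat only annuli in the asymptotic end. That suffices for the way the corollary is used in Theorem~\ref{thm:intro_charge_decomposition}, namely positive local energy on some compact radial set for all time. It also extends to any shell $\{r_1\le r\le r_2\}$ with $r_1>r_+$: Proposition~\ref{prop:charge_conservation} makes the flux through every sphere homologous to the one at infinity equal to the charge, and the Cauchy--Schwarz constants are uniform on compact radial sets.
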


\begin{proof}
We show that $F_{\stat}^{\KN}(q_E,q_B)=q_EF_e+q_BF_m$ is pointwise nonzero on
the exterior; the corollary then follows from stationarity. First,
\eqref{eq:explicit_coulomb_field} gives
\[
        F_e(\partial_r,\partial_t)=-\partial_rf
        =\frac{r^2-a^2\cos^2\theta}{\Sigma^2},\qquad f=\frac{r}{\Sigma}.
\]
In the exterior $r>r_+>M$, and $M^2>a^2+Q^2$ forces $M>|a|$; hence
$r>|a|\ge|a\cos\theta|$ and $F_e\ne0$ at every exterior point. Second, on real
two-forms in Lorentzian signature $\star_g^2=-\Id$, so $\star_g$ has no real
eigen-two-forms: if $c_eF_e+c_m\star_gF_e=0$ at a point with
$(c_e,c_m)\ne(0,0)$, applying $\star_g$ gives $c_e\star_gF_e-c_mF_e=0$, and
eliminating $\star_gF_e$ between the two relations gives
$(c_e^2+c_m^2)F_e=0$, contradicting $F_e\ne0$. Hence
$q_EF_e+q_BF_m=q_EF_e+q_B\star_gF_e$ is nonzero at every exterior point
whenever $(q_E,q_B)\ne(0,0)$. Its non-degenerate energy density is therefore
strictly positive on every compact radial set $K$ by
Lemma~\ref{lem:positivity_density}, and stationarity makes the integral of that
density over $\Sigma_\tau\cap K$ a positive constant independent of $\tau$.
That rules out local decay to zero in any non-degenerate local $L^2$ norm.
\end{proof}

\section{Energy Spaces and Charge-Free Projection}
\label{sec:energy_spaces}
In this section we define the finite energy spaces and prove that the charge-free projection is well-defined in those spaces.
The Cauchy datum of a Maxwell field on $\Sigma_0$ is the pair $U=(E,B)$ of
electric and magnetic one-forms with
\begin{equation}\label{eq:maxwell_constraints}
        \Div_{\Sigma_0}E=0,\qquad \Div_{\Sigma_0}B=0.
\end{equation}
Charged Coulomb data are finite-energy but do not lie in the closure of
compactly supported constrained data in a norm for which the charge is
continuous. We therefore define the full energy space as a direct sum of the
stationary charge sector and a charge-free completion. Let
$\mathcal D_{0}^{(k)}(\Sigma_0)$ be the smooth constrained data whose charges
vanish and whose $\mathbb D_k$-derivatives have finite non-degenerate energy,
and set
\begin{equation}\label{eq:chargefree_completion}
        \Hc_{\Max,0}^{(k)}(\Sigma_0)
        =\overline{\mathcal D_{0}^{(k)}(\Sigma_0)}^{\,(\E_{\Max}^{(k)}(0))^{1/2}}.
\end{equation}
The full space is
\begin{equation}\label{eq:full_energy_space}
        \Hc_{\Max}^{(k)}(\Sigma_0)
        =\operatorname{span}\{U_e,U_m\}\oplus \Hc_{\Max,0}^{(k)}(\Sigma_0),
\end{equation}
where $U_e,U_m$ are the Cauchy data of $F_e,F_m$, with norm equivalent to
\begin{equation}\label{eq:full_energy_norm}
        \norm{q_EU_e+q_BU_m+U_0}_{\Hc_{\Max}^{(k)}}^2
        = |q_E|^2+|q_B|^2+\norm{U_0}_{\Hc_{\Max,0}^{(k)}}^2.
\end{equation}
The charge functionals are continuous on the stationary sector by construction
and on smooth charge-free approximants by
Proposition~\ref{prop:finite_energy_charge_trace}.

The next proposition states the main content of this definition. It
identifies the Hilbert space above with the completion of ordinary smooth
constrained data in a charge-adapted graph norm. The direct sum is therefore not
an extra restriction on the data; it is the natural completion after the two
continuous flux coordinates have been separated.

\begin{proposition}
\label{prop:charge_adapted_completion}
Let $\mathcal C_{\mathrm{sm}}^{(k)}(\Sigma_0)$ be the vector space of smooth
constrained Maxwell data on $\Sigma_0$ for which the two fluxes in
\eqref{eq:charges_def} are defined and the order-$k$ non-degenerate energy is
finite. Equip it with the graph norm
\begin{equation}\label{eq:charge_graph_norm}
        \|U\|_{\mathrm{ch},k}^2
        =\E_{\Max}^{(k)}[U](0)+|q_E(U)|^2+|q_B(U)|^2.
\end{equation}
For $U\in\mathcal C_{\mathrm{sm}}^{(k)}$ set
\begin{equation}\label{eq:charge_adapted_map}
        \mathcal J U=\big(q_E(U),q_B(U),
        U-q_E(U)U_e-q_B(U)U_m\big).
\end{equation}
Then $\mathcal J$ extends by continuity to a bounded isomorphism from the
completion of $\mathcal C_{\mathrm{sm}}^{(k)}$ in \eqref{eq:charge_graph_norm}
onto $\mathbb R^2\oplus\Hc_{\Max,0}^{(k)}(\Sigma_0)$. Its inverse is
\begin{equation}\label{eq:charge_adapted_inverse}
        (q_E,q_B,U_0)\longmapsto q_EU_e+q_BU_m+U_0.
\end{equation}
Thus, the space \eqref{eq:full_energy_space} is precisely the
charge-adapted finite-energy completion of smooth constrained data.
\end{proposition}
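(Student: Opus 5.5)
The plan is to prove that $\mathcal J$ is bounded from below and above on the dense space $\mathcal C_{\mathrm{sm}}^{(k)}$ with respect to \eqref{eq:charge_graph_norm} and the product norm on $\mathbb R^2\oplus\Hc_{\Max,0}^{(k)}$. We then extend by continuity and identify the inverse with \eqref{eq:charge_adapted_inverse}.

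\emph{Step 1: $\mathcal J$ is well defined and bounded.} For $U\in\mathcal C_{\mathrm{sm}}^{(k)}$ put $U_0=U-q_E(U)U_e-q_B(U)U_m$. By Lemma~\ref{lem:stationary_representatives}, $U_e$ and $U_m$ are smooth constrained data of finite order-$k$ energy. Hence $U_0$ is smooth and constrained, and by linearity together with \eqref{eq:stationary_normalization} its charges vanish. So $U_0\in\mathcal D_0^{(k)}(\Sigma_0)\subset\Hc_{\Max,0}^{(k)}$. The triangle inequality for the seminorm $(\E_{\Max}^{(k)})^{1/2}$ gives
\[
\E_{\Max}^{(k)}[U_0]^{1/2}\le \E_{\Max}^{(k)}[U]^{1/2}+|q_E(U)|\,\E_{\Max}^{(k)}[U_e]^{1/2}+|q_B(U)|\,\E_{\Max}^{(k)}[U_m]^{1/2}.
\]
Therefore $\|\mathcal JU\|\le C\|U\|_{\mathrm{ch},k}$, with $C$ depending only on the fixed finite energies of $U_e,U_m$.

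\emph{Step 2: the inverse map is bounded and the two maps are mutually inverse.} Let $\mathcal K(q_E,q_B,U_0)=q_EU_e+q_BU_m+U_0$. Because $\mathcal D_0^{(k)}\subset\mathcal C_{\mathrm{sm}}^{(k)}$ (for these data the charges are defined and equal zero), $\mathcal K$ sends $\mathbb R^2\oplus\mathcal D_0^{(k)}$ into $\mathcal C_{\mathrm{sm}}^{(k)}$. The charge functionals of the image are exactly $(q_E,q_B)$, again by \eqref{eq:stationary_normalization}. The same triangle inequality then gives $\|\mathcal K(q_E,q_B,U_0)\|_{\mathrm{ch},k}\le C(|q_E|+|q_B|+\|U_0\|)$. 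Direct algebra shows $\mathcal K\mathcal J=\Id$ on $\mathcal C_{\mathrm{sm}}^{(k)}$ and $\mathcal J\mathcal K=\Id$ on $\mathbb R^2\oplus\mathcal D_0^{(k)}$.

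\emph{Step 3: extension by density.} By \eqref{eq:chargefree_completion}, $\mathbb R^2\oplus\mathcal D_0^{(k)}$ is dense in $\mathbb R^2\oplus\Hc_{\Max,0}^{(k)}$. So $\mathcal K$ extends to a bounded map $\overline{\mathcal K}$ into the completion $\overline{\mathcal C}$ of $\mathcal C_{\mathrm{sm}}^{(k)}$, and $\mathcal J$ extends to a bounded map $\overline{\mathcal J}$ on $\overline{\mathcal C}$. By continuity the identity $\overline{\mathcal K}\,\overline{\mathcal J}=\Id$ holds on $\overline{\mathcal C}$ and $\overline{\mathcal J}\,\overline{\mathcal K}=\Id$ holds on the target. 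Hence $\overline{\mathcal J}$ is a bounded isomorphism with inverse $\overline{\mathcal K}$, which is exactly \eqref{eq:charge_adapted_inverse}. Composing with the norm \eqref{eq:full_energy_norm} identifies the completion with \eqref{eq:full_energy_space}.

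\emph{Main obstacle.} The delicate point is a subtlety in the completion itself. The graph norm \eqref{eq:charge_graph_norm} includes the charges explicitly, so one cannot use Proposition~\ref{prop:finite_energy_charge_trace} to show the charges are controlled by the energy alone. That estimate holds only on a fixed annulus and does not give a global bound uniform in the data. Including the charges in the norm is precisely what makes $q_E,q_B$ continuous on the completion.

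We also need the completion to be Hausdorff: a Cauchy sequence whose graph-norm limit is zero must give the zero element. This follows because $\E_{\Max}^{(k)}$ is a genuine norm on smooth constrained data. By Lemma~\ref{lem:positivity_density} the energy density controls $|E|^2+|B|^2$ pointwise, so vanishing energy forces the data to vanish. I expect this positivity check, together with the finiteness of $\E_{\Max}^{(k)}[U_e]$ and $\E_{\Max}^{(k)}[U_m]$ from \eqref{eq:stationary_decay}, to be the only analytic input. The rest is linear algebra.
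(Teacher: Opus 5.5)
Your proof is correct and follows essentially the same route as the paper. Both show that $\mathcal J$ maps $\mathcal C_{\mathrm{sm}}^{(k)}$ into $\mathbb R^2\oplus\mathcal D_0^{(k)}$ and that \eqref{eq:charge_adapted_inverse} maps back, with triangle-inequality bounds in each direction, check that the two compositions are identities on these dense subspaces, and extend by continuity. One correction to your closing remark: Proposition~\ref{prop:finite_energy_charge_trace}, with $R_0$ fixed once and for all, does give the global bound $|q_E(U)|^2+|q_B(U)|^2\le C\,\E_{\Max}^{(0)}[U](0)$, so the charges are continuous in the energy norm alone and \eqref{eq:charge_graph_norm} is equivalent to $(\E_{\Max}^{(k)})^{1/2}$; your argument simply never needs this fact.
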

\begin{proof}
For smooth $U$, Proposition~\ref{prop:stationary_subtraction} gives
$U_0=U-q_E(U)U_e-q_B(U)U_m$ with zero electric and magnetic charges. Hence
$\mathcal J$ maps $\mathcal C_{\mathrm{sm}}^{(k)}$ into
$\mathbb R^2\oplus\mathcal D_0^{(k)}$ whenever $U_0$ is smooth, and into the
closure $\mathbb R^2\oplus\Hc_{\Max,0}^{(k)}$ in general. Since $U_e$ and
$U_m$ have finite order-$k$ energy,
\begin{equation}\label{eq:J_upper_bound}
        \|U_0\|_{\Hc_{\Max,0}^{(k)}}
        \le \|U\|_{\mathrm{ch},k}
        +|q_E(U)|\,\|U_e\|_{\Hc_{\Max}^{(k)}}
        +|q_B(U)|\,\|U_m\|_{\Hc_{\Max}^{(k)}}
        \le C\|U\|_{\mathrm{ch},k}.
\end{equation}
Thus $\mathcal J$ is bounded. Conversely, for smooth charge-free $U_0$,
$U=q_EU_e+q_BU_m+U_0$ is smooth constrained data with charges $(q_E,q_B)$, and
\begin{equation}\label{eq:J_inverse_bound}
        \|U\|_{\mathrm{ch},k}
        \le C\big(|q_E|+|q_B|+\|U_0\|_{\Hc_{\Max,0}^{(k)}}\big).
\end{equation}
We obtain boundedness of the inverse on the dense subspace
$\mathbb R^2\oplus\mathcal D_0^{(k)}$. The two estimates extend both maps to
the completions. For smooth data, the normalization \eqref{eq:stationary_normalization} gives
\[
        \mathcal J(q_EU_e+q_BU_m+U_0)=(q_E,q_B,U_0),\qquad
        (\text{the map }\eqref{eq:charge_adapted_inverse})\circ\mathcal J(U)=U.
\]
By continuity the same two identities hold on the completed spaces.
\end{proof}

\begin{definition}
\label{def:chargefree_space}
$\Hc_{\Max,0}^{(k)}(\Sigma_0)=\ker q_E\cap\ker q_B$ inside
$\Hc_{\Max}^{(k)}(\Sigma_0)$.
\end{definition}

\begin{proposition}
\label{prop:bounded_projection}
The maps
\begin{equation}\label{eq:stationary_projection}
        \Pi_{\stat}U=q_E(U)U_e+q_B(U)U_m,\qquad \Pi_0U=U-\Pi_{\stat}U
\end{equation}
are bounded on $\Hc_{\Max}^{(k)}(\Sigma_0)$, and
$\Hc_{\Max}^{(k)}=\operatorname{span}\{U_e,U_m\}\oplus\Hc_{\Max,0}^{(k)}$ as a
topological direct sum.
\end{proposition}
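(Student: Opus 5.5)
The plan is to exploit the fact that, by construction \eqref{eq:full_energy_space}--\eqref{eq:full_energy_norm}, every $U\in\Hc_{\Max}^{(k)}(\Sigma_0)$ is uniquely of the form $U=c_eU_e+c_mU_m+U_0$ with $U_0\in\Hc_{\Max,0}^{(k)}(\Sigma_0)$ as defined in \eqref{eq:chargefree_completion}. Equivalently, we use the charge-adapted isomorphism $\mathcal J$ of Proposition~\ref{prop:charge_adapted_completion}. The first step is to identify the coefficients with the charges, namely $c_e=q_E(U)$ and $c_m=q_B(U)$. On the stationary sector the charge functionals are linear, and the normalization \eqref{eq:stationary_normalization} gives $q_E(c_eU_e+c_mU_m)=c_e$ and $q_B(c_eU_e+c_mU_m)=c_m$. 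On the charge-free completion, $q_E$ and $q_B$ vanish on the dense subspace $\mathcal D_0^{(k)}$. They are also continuous in the order-zero energy by Proposition~\ref{prop:finite_energy_charge_trace}, and this energy is dominated by $\E_{\Max}^{(k)}(0)$. Hence they vanish on all of $\Hc_{\Max,0}^{(k)}$. By linearity, $q_E(U)=c_e$ and $q_B(U)=c_m$.

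With this identification, $\Pi_{\stat}U=c_eU_e+c_mU_m$ and $\Pi_0U=U_0$. The norm \eqref{eq:full_energy_norm} then gives directly
\[
\norm{\Pi_{\stat}U}^2=|q_E(U)|^2+|q_B(U)|^2\le\norm{U}^2_{\Hc_{\Max}^{(k)}},\qquad \norm{\Pi_0U}^2\le\norm{U}^2_{\Hc_{\Max}^{(k)}}.
\]
If one only uses a norm equivalent to \eqref{eq:full_energy_norm}, for instance the graph norm \eqref{eq:charge_graph_norm}, the bounds \eqref{eq:J_upper_bound}--\eqref{eq:J_inverse_bound} give the same conclusion up to a constant. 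In that case we also use that $\norm{U_e}$ and $\norm{U_m}$ are finite by Lemma~\ref{lem:stationary_representatives}.

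Next we check the direct-sum structure. We have $\Pi_{\stat}^2=\Pi_{\stat}$ by the normalization, and $\Pi_0+\Pi_{\stat}=\Id$. The ranges are $\operatorname{span}\{U_e,U_m\}$ and $\ker q_E\cap\ker q_B$. This matches Definition~\ref{def:chargefree_space}, because the kernel is exactly the set where $c_e=c_m=0$. The intersection of the two ranges is trivial: if $c_eU_e+c_mU_m$ has zero charges, then $c_e=c_m=0$. A complementary pair of bounded projections yields a topological direct sum. Both summands are closed, since the first is finite-dimensional and the second is the kernel of continuous functionals.

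The main obstacle is the step asserting that the charges vanish on the completed charge-free space. This requires that the charge functionals extend continuously from smooth data. Proposition~\ref{prop:finite_energy_charge_trace} handles that extension, but one must also make sure that the two definitions of $\Hc_{\Max,0}^{(k)}$ agree: the completion \eqref{eq:chargefree_completion} on one side, and the kernel in Definition~\ref{def:chargefree_space} on the other. I would settle this through $\mathcal J$. Its inverse \eqref{eq:charge_adapted_inverse} shows that any kernel element has the form $0\cdot U_e+0\cdot U_m+U_0$, so it lies in the completion, and the reverse inclusion is the vanishing argument above.
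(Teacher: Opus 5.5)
Your proposal is correct and follows essentially the same route as the paper. Both write $U=c_eU_e+c_mU_m+U_0$ from the definition of the full space and identify $c_e=q_E(U)$, $c_m=q_B(U)$ via \eqref{eq:stationary_normalization} and the vanishing of the charges on the charge-free part. Both then read off boundedness of $\Pi_{\stat},\Pi_0$ from \eqref{eq:full_energy_norm} and obtain a trivial intersection by applying $q_E,q_B$. The only difference is that you make explicit two points the paper takes for granted: that the charges vanish on the completion $\Hc_{\Max,0}^{(k)}$ by density and continuity, and that the completion \eqref{eq:chargefree_completion} coincides with $\ker q_E\cap\ker q_B$ in Definition~\ref{def:chargefree_space}.
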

\begin{proof}
For $U\in\Hc_{\Max}^{(k)}$ write, according to the definition of the full
energy space,
$U=c_eU_e+c_mU_m+U_0$ with $U_0\in\Hc_{\Max,0}^{(k)}$. The normalization of the
stationary representatives gives $q_E(U)=c_e$ and $q_B(U)=c_m$, because the
charge-free component has both charges equal to zero. Hence
$\Pi_{\stat}U=c_eU_e+c_mU_m$ and $\Pi_0U=U_0$. The norm
\eqref{eq:full_energy_norm} immediately gives
\[
        \|\Pi_{\stat}U\|_{\Hc_{\Max}^{(k)}}+
        \|\Pi_0U\|_{\Hc_{\Max}^{(k)}}
        \le C\|U\|_{\Hc_{\Max}^{(k)}}.
\]
If a vector belongs to both $\operatorname{span}\{U_e,U_m\}$ and
$\Hc_{\Max,0}^{(k)}$, applying $q_E$ and $q_B$ gives both coefficients equal to
zero. The sum is therefore direct and, because the projections are bounded, it
is a topological direct sum.
\end{proof}

\begin{lemma}
\label{lem:density_chargefree}
Let $D\subset\Hc_{\Max}^{(k)}$ be dense, of smooth finite-energy constrained
data, and contain $U_e,U_m$. Then $D\cap \Hc_{\Max,0}^{(k)}$ is dense in
$\Hc_{\Max,0}^{(k)}$.
\end{lemma}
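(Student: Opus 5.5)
The plan is to use the bounded projection $\Pi_0$ of Proposition~\ref{prop:bounded_projection} to push a dense approximating sequence into the charge-free space.

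Take $U_0\in\Hc_{\Max,0}^{(k)}$. Since $D$ is dense in $\Hc_{\Max}^{(k)}$, choose $V_n\in D$ with $V_n\to U_0$ in the $\Hc_{\Max}^{(k)}$ norm. Then set
\[
        W_n=\Pi_0V_n=V_n-q_E(V_n)U_e-q_B(V_n)U_m .
\]

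There are three things to check about $W_n$.

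\emph{Membership in $D$.} Since $D$ is a vector space containing $U_e$ and $U_m$, each $W_n$ lies in $D$. I read "dense subset" here as a dense linear subspace, which is the intended meaning. If only a set is assumed, I would replace $D$ by its linear span. That span is still dense, still consists of smooth finite-energy constrained data, and still contains $U_e,U_m$.

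\emph{Zero charge.} The charge functionals are linear and $U_e,U_m$ satisfy the normalization \eqref{eq:stationary_normalization}. Hence $q_E(W_n)=q_B(W_n)=0$, so $W_n\in\ker q_E\cap\ker q_B=\Hc_{\Max,0}^{(k)}$ by Definition~\ref{def:chargefree_space}.

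\emph{Convergence.} Here we need $q_E,q_B$ to be continuous on $\Hc_{\Max}^{(k)}$. This holds because, by \eqref{eq:full_energy_norm}, they are coordinate functionals of the direct sum. Equivalently, $\Pi_0$ is bounded by Proposition~\ref{prop:bounded_projection}. Since $\Pi_0U_0=U_0$, we get
\[
        \norm{W_n-U_0}_{\Hc_{\Max}^{(k)}}=\norm{\Pi_0(V_n-U_0)}\le C\norm{V_n-U_0}\to0 .
\]
On the charge-free summand the full norm restricts to the $\Hc_{\Max,0}^{(k)}$ norm, so the convergence holds in the correct topology.

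The only step needing care is continuity of the charges on the full space. For general smooth data this is a nontrivial trace statement. Here, however, it is supplied by the construction in \eqref{eq:full_energy_space}–\eqref{eq:full_energy_norm}, consistently with Proposition~\ref{prop:finite_energy_charge_trace} and Proposition~\ref{prop:charge_adapted_completion}. With that in hand, the remainder of the argument is algebra.
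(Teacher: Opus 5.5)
Your argument is correct and is essentially the paper's proof: the paper also subtracts $q_E(U_n)U_e+q_B(U_n)U_m$ from a dense approximating sequence in $D$, uses linearity of $D$ and the normalization of $U_e,U_m$ to land in $D\cap\Hc_{\Max,0}^{(k)}$, and gets convergence from continuity of the charges; your use of boundedness of $\Pi_0$ is the same estimate. One small caveat is that your fallback of replacing $D$ by its span would only prove density of $\operatorname{span}(D)\cap\Hc_{\Max,0}^{(k)}$, not of $D\cap\Hc_{\Max,0}^{(k)}$, so the linear-subspace reading is genuinely needed rather than optional (the paper's proof also assumes it).
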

\begin{proof}
Let $U\in\Hc_{\Max,0}^{(k)}$ and choose $U_n\in D$ with $U_n\to U$ in the full
energy norm. Since the charge functionals are continuous and $U$ is
charge-free, $q_E(U_n)\to0$ and $q_B(U_n)\to0$. Define
\[
        V_n=U_n-q_E(U_n)U_e-q_B(U_n)U_m.
\]
Because $D$ contains $U_e,U_m$ and is a linear space of smooth constrained data,
$V_n\in D$. The normalization of $U_e,U_m$ gives $q_E(V_n)=q_B(V_n)=0$, so
$V_n\in D\cap\Hc_{\Max,0}^{(k)}$. Finally,
$\|V_n-U\|\le\|U_n-U\|+|q_E(U_n)|\|U_e\|+|q_B(U_n)|\|U_m\|\to0$.
\end{proof}

\begin{proposition}
\label{prop:finite_energy_wellposed}
Let $k\ge0$. On every finite slab $\D(0,T)$ of the regular Kerr-Newman
exterior, the source-free Maxwell Cauchy problem with constrained data
$U_0\in\Hc_{\Max}^{(k)}(\Sigma_0)$ has a unique finite-energy solution. The
constraints propagate, the solution map is continuous,
\eqref{eq:energy_identity_slab} extends to the completion, and
\begin{equation}\label{eq:wellposed_bound}
        \sup_{0\le\tau\le T}\norm{U(\tau)}_{\Hc_{\Max}^{(k)}(\Sigma_\tau)}\le C_T\norm{U_0}_{\Hc_{\Max}^{(k)}(\Sigma_0)}.
\end{equation}
The same holds on past slabs.
\end{proposition}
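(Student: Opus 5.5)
The plan is the standard one for linear symmetric-hyperbolic systems: local energy estimates for commuted fields, existence by approximation from smooth data, and uniqueness from the energy identity. The stationary sector is handled separately.

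By the direct sum \eqref{eq:full_energy_space}, write $U_0=c_eU_e+c_mU_m+V_0$ with $V_0\in\Hc_{\Max,0}^{(k)}$. The first two pieces evolve explicitly as the stationary fields $c_eF_e+c_mF_m$ of Lemma~\ref{lem:stationary_representatives}. These are smooth, source-free, and have $\tau$-independent finite order-$k$ energy by stationarity. So it suffices to treat charge-free data $V_0$, and we may work first on the dense class $\mathcal D_0^{(k)}$. Since the data are only finite energy and need not have compact support, we approximate further by smooth constrained data for which classical solutions exist. Such data are obtained by a cutoff at large $r$ followed by a correction restoring $\Div E=\Div B=0$. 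Because charges vanish, this correction can be taken with controlled energy by solving a divergence equation on an annulus; this uses zero flux, which is why charge-freeness matters here. Classical existence on $\D(0,T)$ for such data follows from the regular coordinates, in which Maxwell's equations, written for $(E,B)$ relative to $n_{\Sigma_\tau}$, form a first-order symmetric-hyperbolic system with smooth coefficients. Constraint propagation follows from $\dd F=0$, $\dd\starG F=0$ and Stokes' theorem (equivalently, $\Div E$ and $\Div B$ satisfy a homogeneous transport system with zero data).

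The key estimate is \eqref{eq:wellposed_bound} for smooth solutions. Apply Proposition~\ref{prop:basic_energy_identity} with $X=N$ to each $\Lie_{\Gamma^I}G$, $\Gamma^I\in\mathbb D_k$. The Killing fields $T,\Phi$ commute with the Maxwell operator, but $r\partial_r$ and the angular derivatives do not. Their commutators $[\Box_{\mathrm{Max}},\Lie_\Gamma]$ produce source terms, so $\dd\starG\Lie_\Gamma G$ is a lower-order expression in $G$ with at most the same number of derivatives. The divergence identity then acquires an extra term $G^\alpha{}_\nu X^\nu J_\alpha$, bounded by the energy density. Lemma~\ref{lem:positivity_density} controls the bulk $\Tstress^{\mu\nu}\pi^N_{\mu\nu}$ and these sources by $C\sum_I\E$ integrated over $\tau$. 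Near $\Hp$ the horizon flux has a good sign, and in the far region $N=T$ is Killing while the commutator coefficients decay. Hence $\frac{d}{d\tau}\E^{(k)}_{\Max}\le C\E^{(k)}_{\Max}$, and Gronwall gives $C_T=e^{CT}$. The truncation at $r=R$ in \eqref{eq:energy_identity_slab} is removed by letting $R\to\infty$ along a sequence on which the flux through $\{r=R\}$ tends to zero, which finite energy guarantees.

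Continuity and density then extend the solution map to the completion. The energy identity passes to the limit because each term is continuous in the energy norm. Uniqueness follows by applying the order-zero estimate to the difference of two finite-energy solutions with equal data. Past slabs are handled by reversing time orientation and replacing the red-shift vector near $\Hp$ by its analogue near $\Hm$. The main obstacle is the commutator bookkeeping for the non-Killing commutators $r\partial_r$ and the angular derivatives: each commutator error must be bounded by same-order energy uniformly up to $\Hp$ and as $r\to\infty$. I would first verify this by writing the commuted system in the regular coordinates and checking that the coefficients are bounded in the $r$-weighted sense.
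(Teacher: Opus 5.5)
Your proposal is correct and follows essentially the same route as the paper: the symmetric-hyperbolic form in regular coordinates, the $J^N$ identity with Gronwall and induction over the commuted family $\mathbb D_k$, a homogeneous system for $(\Div E,\Div B)$ to propagate the constraints, and the split $U_0=\Pi_{\stat}U_0+\Pi_0U_0$, with the stationary part evolved explicitly and the charge-free part handled by Cauchy approximants, uniqueness from the difference estimate, and time reversal for the past slabs. Your additional step of approximating by compactly supported data, using a divergence correction on an annulus whose solvability condition is exactly the vanishing flux, is a harmless refinement that makes removing the $r=R$ truncation and keeping zero charge clean; note that on a finite slab the commutator coefficients only need to be bounded, not decaying, which is just as well since the deformation tensor of $r\partial_r$ is $O(1)$ at large $r$.
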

\begin{proof}
In horizon-regular coordinates write
$g=-N_L^2\dd\tau^2+h_{ij}(\dd x^i+\beta^i\dd\tau)(\dd x^j+\beta^j\dd\tau)$ with
lapse $N_L$, shift $\beta$ and induced metric $h$ smooth up to $\Hp$ on
$[0,T]$ and stationary in $\tau$ after pullback by the Killing flow. With $U=(E,B)$, Maxwell's equations are
\begin{equation}\label{eq:maxwell_symhyp}
        \partial_\tau E=\Lie_\beta E+\operatorname{curl}_h(N_LB),\qquad
        \partial_\tau B=\Lie_\beta B-\operatorname{curl}_h(N_LE),
\end{equation}
with constraints \eqref{eq:maxwell_constraints}. The principal symbol is skew
in $(E,B)$ and symmetric after pairing with $h(E,E')+h(B,B')$, so
\eqref{eq:maxwell_symhyp} is symmetric hyperbolic with smooth coefficients on
each finite slab; smooth constrained data yield smooth solutions. The
$J^N$-identity gives
\begin{equation}\label{eq:N_energy_gronwall}
        E_N[F](\tau)+\Fc_{\Hp}(0,\tau)+\Fc_{\infty}(0,\tau)
        =E_N[F](0)+\int_{\D(0,\tau)}\Tstress_{\alpha\beta}[F]\nabla^\alpha N^\beta\,\dd\mu_g,
\end{equation}
with nonnegative horizon and null-infinity fluxes by the dominant energy
condition. Since $\nabla N$ is bounded and the $N$-energy density is equivalent
to $|E|^2+|B|^2$ by Lemma~\ref{lem:positivity_density}, Gronwall gives the
order-zero bound; commuting with $\mathbb D_k$ gives a symmetric-hyperbolic
system with bounded lower-order sources, and induction gives
\eqref{eq:wellposed_bound}. Taking $\operatorname{div}_h$ of \eqref{eq:maxwell_symhyp} and
using $\operatorname{div}_h\operatorname{curl}_h=0$ shows
$(\operatorname{div}_hE,\operatorname{div}_hB)$ solves a
homogeneous linear system, so zero constraints persist. For finite-energy data,
decompose $U_0=\Pi_{\stat}U_0+\Pi_0U_0$; the stationary part evolves as
$F_{\stat}^{\KN}$ and the charge-free part is approximated by
Lemma~\ref{lem:density_chargefree}, the estimate on differences making the
approximants Cauchy. The limit is independent of the sequence, satisfies
\eqref{eq:wellposed_bound}, and inherits \eqref{eq:energy_identity_slab} by
weak lower semicontinuity. Time reversal gives the past statement.
\end{proof}

\begin{proposition}
\label{prop:closed_solution}
If $U_n\to U$ in $\Hc_{\Max}^{(k)}$ and the smooth solutions $F_n$ are Cauchy in
the energy norm on each finite slab, the limit is independent of the
approximating sequence and is the finite-energy solution with datum $U$.
\end{proposition}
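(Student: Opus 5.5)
The argument is a closed-graph/uniqueness statement. It reduces to the finite-slab estimate \eqref{eq:wellposed_bound} applied to differences, together with the linearity of the Maxwell system.

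\textbf{Step 1 (the limit is an energy-class solution).} Fix a finite slab $\D(0,T)$. Since the $F_n$ are Cauchy in the energy norm there, they converge to some $F$ in $C^0([0,T];\Hc_{\Max}^{(k)}(\Sigma_\tau))$. I will check that $F$ solves the source-free system \eqref{eq:maxwell_symhyp} in the distributional sense. Each $F_n$ satisfies $\dd F_n=0$ and $\dd\starG F_n=0$. Pairing these against compactly supported test forms in the interior of the slab and passing to the limit is legitimate, because local $L^2$ convergence follows from Lemma~\ref{lem:positivity_density}. The constraints \eqref{eq:maxwell_constraints} pass to the limit in the same way, and the datum at $\tau=0$ is $\lim U_n=U$.

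\textbf{Step 2 (identification with the solution of Proposition~\ref{prop:finite_energy_wellposed}).} Let $\tilde F$ be the finite-energy solution with datum $U$ constructed there. By linearity, $F_n$ minus the solution with datum $U_n$ vanishes, since both are smooth solutions with the same data and smooth solutions are unique. Applying \eqref{eq:wellposed_bound} to the difference datum $U_n-U$ gives
\[
\sup_{0\le\tau\le T}\norm{F_n(\tau)-\tilde F(\tau)}\le C_T\norm{U_n-U}\to0 .
\]
In more detail, I approximate $U$ by the charge-free smooth approximants of Lemma~\ref{lem:density_chargefree}, keep the stationary part exact, and use the continuity of the solution map on the completion. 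Hence $F=\tilde F$ on every slab.

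\textbf{Step 3 (independence of the sequence).} Let $U_n'\to U$ be another sequence with limit $F'$. Interleaving the two sequences gives a new sequence that still converges to $U$. Its solutions are Cauchy by \eqref{eq:wellposed_bound} applied to differences, so the subsequential limits $F$ and $F'$ coincide. Equivalently, both equal $\tilde F$ by Step 2. Slabs exhaust the exterior, and past slabs are handled by time reversal, so the global statement follows.

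The main obstacle is the uniqueness point hidden in Step 2. \eqref{eq:wellposed_bound} is stated for the constructed solution, whereas the limit $F$ is a priori only a distributional energy-class solution. I would close this gap by duality. Pair $F-\tilde F$ with smooth solutions of the adjoint problem, using backward smooth solvability of the symmetric hyperbolic system \eqref{eq:maxwell_symhyp}. The $J^N$ identity \eqref{eq:N_energy_gronwall}, extended to the completion, makes the boundary terms on $\Hp$ and at null infinity nonnegative, so they do not obstruct this argument. Alternatively, I could mollify in the Killing directions $T,\Phi$, which commute with the equation, and apply Gronwall directly.
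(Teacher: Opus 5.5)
Your proposal is correct. Its core, Step~3 (the energy estimate on differences of smooth solutions, via interleaving), is the paper's own independence argument. The paper finishes by noting that a finite-energy solution is by construction a limit of smooth solutions, so two such solutions with the same datum differ by a limit of smooth solutions with vanishing data, which is zero.

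Your Step~2 differs from the paper. You identify the limit with the solution built in Proposition~\ref{prop:finite_energy_wellposed} by applying \eqref{eq:wellposed_bound} to the datum $U_n-U$. This is legitimate, since that construction is linear and reduces to the smooth solution for smooth data. However, it leans on a proposition whose own proof already asserts this independence, whereas the paper's route never leaves the class of smooth solutions. Step~1, showing the limit solves the equations and constraints distributionally, is extra information the statement does not need.

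Your closing paragraph addresses an obstacle that does not arise. $F$ is \emph{defined} as the energy-norm limit of $F_n$, and Step~2 already gives $F_n\to\tilde F$ in that same norm, so $F=\tilde F$ by uniqueness of limits. No uniqueness theorem for weak solutions is needed.

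What that paragraph would buy is a strictly stronger result: uniqueness among all distributional energy-class solutions, which the paper's definition-based uniqueness does not cover. As sketched, though, it is imprecise in two places. First, in a duality argument the boundary terms on $\Hp$ and at infinity are bilinear pairings of $F-\tilde F$ with the adjoint solution, so their sign is irrelevant. You kill them by giving the adjoint problem zero data on the characteristic boundaries. Second, mollifying only along $T$ and $\Phi$ does not regularize in $r$ and $\theta$ (the stationary operator is not elliptic where $T$ fails to be timelike). Justifying the energy identity for the mollified field would therefore still require a Friedrichs-type commutator argument.
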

\begin{proof}
Let $(U_n)$ and $(V_n)$ be two smooth approximating sequences for the same datum
$U$. The corresponding smooth solutions satisfy the energy estimate for their
difference, whose initial data are $U_n-V_n\to0$ in
$\Hc_{\Max}^{(k)}$. Thus the difference of the two solution sequences tends to
zero in the finite-slab energy norm on every slab. The limit is therefore
independent of the approximation. If two finite-energy solutions have the same
datum, their difference is obtained as the limit of smooth solutions with
vanishing initial data and hence has zero energy on each finite slab. Linearity
then gives uniqueness.
\end{proof}

\section{Spin-One Master System and Slow-Weak Setting}
\label{sec:master}
In this section we recall the spin-one variables and formulate the slow-weak analytic setting used in the transfer argument.
We then isolate the finite-order conditions and analytic conclusions used later. Section~\ref{sec:first_principles_master} derives those conclusions by perturbing the Reissner-Nordstr\"om model in the rotation parameter, with constants uniform for $|Q|\le\eps_QM$.

\subsection{Regular Frame and Charge-Free Normalization}
\label{subsec:regular_frame_master}
Let $(e_3,e_4,e_A)$, $A=1,2$, be a principal null frame, normalized by
$g(e_3,e_4)=-2$, $g(e_A,e_B)=\delta_{AB}$, $g(e_3,e_A)=g(e_4,e_A)=0$. Near $\Hp$
replace the Boyer-Lindquist principal frame by a horizon-regular null frame
$\widehat e_3=f_3e_3$, $\widehat e_4=f_4e_4$ with $f_3f_4=1$ and $f_3,f_4>0$
smooth in regular coordinates. Extreme components near $\Hp$ are taken with
these weights.

\begin{definition}
\label{def:maxwell_components}
For real $F$ set
\begin{equation}\label{eq:maxwell_components}
        \alpha_A=F(e_A,e_4),\quad \underline\alpha_A=F(e_A,e_3),\quad
        \rho_F=\tfrac12 F(e_3,e_4),\quad \sigma_F=\tfrac12 F(e_1,e_2),
\end{equation}
and $\varphi=\rho_F+i\sigma_F$.
\end{definition}

\begin{lemma}
\label{lem:frame_energy_equiv}
On every compact sub-extremal parameter set and regular foliation the
non-degenerate density is equivalent to
$|\alpha|^2+|\underline\alpha|^2+|\rho_F|^2+|\sigma_F|^2$, and the same holds
after any commutation in $\mathbb D_k$.
\end{lemma}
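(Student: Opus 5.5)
The plan is to reduce the statement to Lemma~\ref{lem:positivity_density} by an explicit linear change of frame at each point, and then to propagate the equivalence to commuted fields.

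\emph{Step one: the null-frame form of the density.} Starting from the orthonormal frame $f_0=n_{\Sigma_\tau}$ used there, relate $(E,B)$ to the null-frame components of Definition~\ref{def:maxwell_components}. Form the auxiliary frame $\tilde e_4=f_0+f_3'$, $\tilde e_3=f_0-f_3'$, with $f_3'$ a unit vector tangent to $\Sigma_\tau$ and $f_1',f_2'$ completing it. In this frame a direct computation from \eqref{eq:stress_tensor} gives
\[
|E|^2+|B|^2=\tfrac12\bigl(|\tilde\alpha|^2+|\tilde{\underline\alpha}|^2\bigr)+4\bigl(\tilde\rho_F^2+\tilde\sigma_F^2\bigr),
\]
which is the standard identity $\Tstress(\tilde e_4+\tilde e_3,\tilde e_4+\tilde e_3)$ expressed in null components.

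\emph{Step two: comparison with the actual null frame.} The actual frame $(\widehat e_3,\widehat e_4,e_A)$ differs from the auxiliary one by a Lorentz transformation $\Lambda(x)$: a boost, a null rotation, and a rotation of the $e_A$. This transformation acts linearly and invertibly on the six-dimensional space of two-forms, so the two sets of components are related by a matrix depending smoothly on $x$.

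\emph{Step three: uniform bounds on the transformation.} On compact radial regions, $\Lambda$ and $\Lambda^{-1}$ are bounded, so the equivalence holds with constants depending only on $\sup|\Lambda^{\pm1}|$. The point needing care is the horizon. There the Boyer--Lindquist frame degenerates: $e_4$ is regular only after rescaling and $e_3$ blows up. This is exactly why the regular weights $\widehat e_3=f_3e_3$, $\widehat e_4=f_4e_4$ with $f_3f_4=1$ were introduced. I will check that in regular coordinates $\widehat e_3,\widehat e_4$ extend smoothly and stay transversal to $\Sigma_\tau$ up to and across $\Hp$. Then $\Lambda$ is continuous on the closure of the compact region including the horizon, and the boost parameter $\log f_4$, once normalized, stays bounded. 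I expect this verification of the uniform boundedness of the boost at the horizon to be the main obstacle. I would first try to write $\widehat e_4=\partial_r+(\text{regular})$ and $\widehat e_3$ in terms of $\partial_{\tilde t}$ and regular combinations, and read off the coefficients from the regular form of \eqref{eq:kn_metric}.

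\emph{Step four: the far region and uniformity.} In the far region, the principal frame is asymptotically $\partial_t\pm\partial_r$ together with the orthonormal angular frame. The foliation there agrees with $t$ up to a tortoise correction, so $\Lambda$ has bounded coefficients as $r\to\infty$, and the equivalence holds globally where needed. Uniformity over a compact parameter set follows from smooth dependence on $(M,a,Q)$, as in the proof of Lemma~\ref{lem:positivity_density}.

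\emph{Step five: commuted fields.} The equivalence in Step three is pointwise and linear. Applying it to each $\Lie_{\Gamma^I}G$ gives equivalence of the commuted energy density with the null-frame components of $\Lie_{\Gamma^I}G$. These components differ from $\Gamma^I$ applied to $\alpha,\underline\alpha,\rho_F,\sigma_F$ only by lower-order terms, with coefficients given by Lie derivatives of the frame fields, which are bounded in regular coordinates. Those lower-order terms are absorbed by induction on $|I|$, yielding the claim for every $\mathbb D_k$.
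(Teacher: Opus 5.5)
Your proposal is correct and follows the paper's own argument. The paper also composes a smooth, uniformly invertible frame change from the $n_{\Sigma_\tau}$-adapted orthonormal frame to $(\widehat e_3,\widehat e_4,e_A)$ with Lemma~\ref{lem:positivity_density}, and then uses that the fields in $\mathbb D_k$ are regular with bounded coefficients; your Steps three to five spell out the horizon, far-field and lower-order commutator checks that the paper compresses into one line.

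One harmless slip: with the paper's normalizations $\rho_F=\tfrac12F(e_3,e_4)$ and $\sigma_F=\tfrac12F(e_1,e_2)$, the identity in Step one should read $|E|^2+|B|^2=\tfrac12\bigl(|\tilde\alpha|^2+|\tilde{\underline\alpha}|^2\bigr)+\tilde\rho_F^2+4\tilde\sigma_F^2$. This form is still positive definite, so the equivalence is unaffected.
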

\begin{proof}
The change from an orthonormal frame adapted to $\Sigma_\tau$ to
$(\widehat e_3,\widehat e_4,e_A)$ is smooth and uniformly invertible on each
regular patch, and the listed components are the entries of $F$ in the second
frame. By Lemma~\ref{lem:positivity_density}, $\Tstress[F](N,n_{\Sigma_\tau})
\simeq|E|^2+|B|^2$, and the frame change is bounded with bounded inverse;
commuting by $\mathbb D_k$ preserves this since the commutator fields are
regular with bounded coefficients.
\end{proof}

\begin{proposition}
\label{prop:coulomb_elimination}
Let $F$ be finite-energy and $F_{\rad}$ as in \eqref{eq:intro_rad}. Then on
every coordinate sphere on which the regular null frame is adapted to the two
normal directions,
\begin{equation}\label{eq:middle_mean_zero}
        \int_{S_{\tau,r}}\rho_{F_{\rad}}\,\dd\mu_{S_{\tau,r}}=0,\qquad
        \int_{S_{\tau,r}}\sigma_{F_{\rad}}\,\dd\mu_{S_{\tau,r}}=0,
\end{equation}
and this is propagated by the flow.
\end{proposition}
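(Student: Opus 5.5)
The approach is to identify the two spherical means in \eqref{eq:middle_mean_zero} with the magnetic and electric fluxes of $F_{\rad}$ through $S_{\tau,r}$. Those fluxes vanish by Proposition~\ref{prop:stationary_subtraction} together with the homology invariance of Proposition~\ref{prop:charge_conservation}.

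\emph{Pointwise identities (smooth case).} Suppose the null pair $(e_3,e_4)$ spans the normal bundle of $S_{\tau,r}$, so that $(e_1,e_2)$ is an orthonormal tangent frame. Then the pullback of a two-form $G$ to the sphere is $G(e_1,e_2)\,\dd\mu_{S_{\tau,r}}$. Hence
\[
\iota^*F_{\rad}=2\sigma_{F_{\rad}}\,\dd\mu_{S_{\tau,r}}.
\]
For the dual form, I will compute $(\starG F)(e_1,e_2)$ in the normalization $g(e_3,e_4)=-2$. Only the component $F(e_3,e_4)$ contributes, which gives
\[
\iota^*\starG F_{\rad}=\pm2\rho_{F_{\rad}}\,\dd\mu_{S_{\tau,r}},
\]
with the sign fixed by the orientation convention already used in Lemma~\ref{lem:stationary_representatives}. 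The weights $f_3f_4=1$ of the horizon-regular frame leave $F(\widehat e_3,\widehat e_4)=F(e_3,e_4)$ unchanged, so the identity persists near $\Hp$.

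\emph{Vanishing of the means.} Integrating the two identities gives
\[
\int_{S_{\tau,r}}\sigma_{F_{\rad}}\,\dd\mu_{S_{\tau,r}}=\tfrac12\int_{S_{\tau,r}}F_{\rad},
\qquad
\int_{S_{\tau,r}}\rho_{F_{\rad}}\,\dd\mu_{S_{\tau,r}}=\pm\tfrac12\int_{S_{\tau,r}}\starG F_{\rad}.
\]
Every coordinate sphere $S_{\tau,r}$ is homologous in the exterior to the large spheres in \eqref{eq:charges_def}. Proposition~\ref{prop:charge_conservation} therefore identifies the two right-hand integrals with $4\pi q_B[F_{\rad}]$ and $\pm4\pi q_E[F_{\rad}]$. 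Both charges vanish by Proposition~\ref{prop:stationary_subtraction}, which gives \eqref{eq:middle_mean_zero} for smooth $F_{\rad}$.

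\emph{Finite-energy extension and propagation.} A sphere trace is not controlled by the $L^2$ energy, so I will not pass to the limit sphere by sphere. For smooth fields the flux is independent of $r$. It can therefore be rewritten as an average over a shell $r\in[r_0,2r_0]$, exactly as in the proof of Proposition~\ref{prop:finite_energy_charge_trace}, and that shell average is bounded by the local non-degenerate energy (Lemma~\ref{lem:frame_energy_equiv}). The approximation then runs as follows:
\begin{itemize}
\item approximate the charge-free datum by smooth charge-free data, using Lemma~\ref{lem:density_chargefree};
\item evolve the approximants, using the continuity in Proposition~\ref{prop:finite_energy_wellposed};
\item pass to the limit in the shell averages.
\end{itemize}
This gives vanishing of the means in $L^1_r$, hence for almost every $r$. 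The identities hold for every $r$ at which the trace is defined, and for all $r$ once $k\ge1$, where the sphere trace is continuous. Propagation by the flow follows because the charges are conserved (Proposition~\ref{prop:charge_conservation}) and the flow preserves $\Hc_{\Max,0}^{(k)}$.

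The main obstacle I expect is the bookkeeping of the dual identity. This means getting the exact constant and sign in $\iota^*\starG F=\pm2\rho\,\dd\mu$ under $g(e_3,e_4)=-2$, consistent with the orientation used to normalize $q_E[F_e]=1$. I would fix it by checking the identity on $F_e$ itself, whose mean of $\rho$ must equal $\pm2\pi$.
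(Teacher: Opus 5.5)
Your proposal is correct and follows the paper's route. Restricting $F_{\rad}$ and $\starG F_{\rad}$ to the adapted sphere identifies the two means with the magnetic and electric fluxes. These vanish by the charge normalization and are carried to every homologous sphere on every slice by charge conservation. Your shell-average approximation for finite-energy data is a careful addition that the paper leaves implicit.

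One bookkeeping correction: with $g(e_3,e_4)=-2$, the volume-form component $\epsilon(e_1,e_2,e_3,e_4)=\pm2$ and the two inverse-metric factors $g^{34}=-\tfrac12$ combine to give $(\starG F)(e_1,e_2)=\pm\tfrac12F(e_3,e_4)=\pm\rho_F$. Hence $\int_{S_{\tau,r}}\rho_F\,\dd\mu_{S_{\tau,r}}=\pm4\pi q_E[F]$, as in the paper, and your check on $F_e$ should return $\pm4\pi$ rather than $\pm2\pi$. This does not affect the vanishing.
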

\begin{proof}
Let $T$ be the future unit normal to the coordinate sphere inside the timelike
normal two-plane and let $R$ be the outward unit spacelike normal, so that
$e_4=T+R$ and $e_3=T-R$. Then
\[
        \rho_F=\frac12F(e_3,e_4)=F(T,R),\qquad
        \sigma_F=\frac12F(e_1,e_2).
\]
Let us fix the orientation for which $\dd\mu_{S_{\tau,r}}=e^1\wedge e^2$ and the
space-time volume form is $T^\flat\wedge R^\flat\wedge\dd\mu_{S_{\tau,r}}$.
Evaluating on $(e_1,e_2)$ gives the exact restrictions
\begin{equation}\label{eq:flux_middle_relation}
        F|_{S_{\tau,r}}=2\sigma_F\,\dd\mu_{S_{\tau,r}},\qquad
        (\star_gF)|_{S_{\tau,r}}=\rho_F\,\dd\mu_{S_{\tau,r}}.
\end{equation}
The opposite global orientation changes both signs but not the zero-mean
conclusion. Using the charge convention \eqref{eq:charges_def}, we therefore
obtain the explicit charge-mean identities
\begin{equation}\label{eq:middle_charge_matrix}
        \int_{S_{\tau,r}}\rho_F\,\dd\mu_{S_{\tau,r}}=4\pi q_E[F],\qquad
        \int_{S_{\tau,r}}\sigma_F\,\dd\mu_{S_{\tau,r}}=2\pi q_B[F].
\end{equation}
For the stationary representatives normalized in
\eqref{eq:stationary_normalization}, Lemma~\ref{lem:stationary_representatives}
shows
\[
        (q_E[F_e],q_B[F_e])=(1,0),\qquad
        (q_E[F_m],q_B[F_m])=(0,1).
\]
Thus $F_{\rad}=F-q_E[F]F_e-q_B[F]F_m$ has both charges equal to zero, and
\eqref{eq:middle_charge_matrix} proves \eqref{eq:middle_mean_zero}. Charge
conservation propagates the two identities to every homologous sphere on every
slice of the solution.
\end{proof}

\subsection{Master Operator and Rotational Perturbation}
\label{subsec:master_equations}
Let $\psi_+,\psi_-$ be the regular spin $\pm1$ extreme variables obtained from
$\alpha,\underline\alpha$ by the horizon and infinity weights whenever the closed
spin-one reduction \emph{(A1)} is available. On Kerr backgrounds the Teukolsky
calculus gives exact spin-one equations; on Kerr-Newman with $aQ\ne0$ the
Dudley-Finley operator is used here only as the displayed scalar-principal model
entering the structural hypothesis, not as an exact Kerr-Newman Maxwell equation without an additional proof.
The coupled electromagnetic-gravitational Kerr-Newman structures and Carter
commutations are developed in \cite{GiorgiJHDE,GiorgiJDG}.

\begin{proposition}
\label{prop:principal_master}
Assume the closed spin-one reduction in \emph{(A1)}, and let $\Psi=(\psi_+,\psi_-)^{\mathsf T}$ be its horizon-regular extreme-scalar pair. In the compatible class of Definition~\ref{def:slowweak_master_framework} these variables satisfy a coupled system
\begin{equation}\label{eq:master_system}
        \Pb_b\Psi=0,
\end{equation}
with principal symbol $\sigma_2(\Pb_b)(x,\xi)=g_{\KN}^{\mu\nu}(x)\xi_\mu\xi_\nu I_2$.
Let $\Pb_{\RN,Q}$ be the spin-one operator at $a=0$ with the same $M,Q$. In the
slowly rotating weakly charged regime,
\begin{equation}\label{eq:lower_order_master}
        \Pb_b\Psi=\Pb_{\RN,Q}\Psi
        +a\,\mathcal G_{a,Q}^{\mu\nu}\nabla_\mu\nabla_\nu\Psi
        +\frac{a}{r^2}\,\mathcal C_{a,Q}^\mu\nabla_\mu\Psi
        +\frac{a}{r^3}\,\mathcal D_{a,Q}\Psi
        +a^2\mathcal Q_{a,Q}^{(2)}\Psi,
\end{equation}
where
\begin{equation}\label{eq:rn_master_model}
        \Pb_{\RN,Q}\Psi=\Box_{g_{\RN,Q}}\Psi+\frac{2}{r}\mathcal A_Q^\mu\nabla_\mu\Psi
        +\frac{1}{r^2}\mathcal V_Q\Psi+\frac{Q^2}{r^3M}\mathcal W_Q\Psi.
\end{equation}
The coefficient matrices are smooth in regular coordinates, uniformly bounded
with all $\mathbb D_k$-derivatives for $|a|\ll M$, $|Q|\le\eps_QM$, and
short-range in the displayed powers of $r$. The term $\mathcal Q_{a,Q}^{(2)}$ is
a stationary second-order operator whose coefficients satisfy the same
short-range bounds and whose contribution is $O(a^2)$ in the local-energy
perturbation norm; after reducing the slow-rotation threshold it is absorbed in
the $O(|a|/M)$ perturbative error. In an asymptotically flat frame the
linear-in-$a$ principal coefficient $\mathcal G_{a,Q}^{\mu\nu}$ is the
stationary-axial mixed entry, of size $O(r^{-3})$ relative to $|\xi|^2$;
remaining principal differences are carried by $a^2\mathcal Q_{a,Q}^{(2)}$.
After stationary semiclassical freezing, if $B_{a,Q}$ denotes the Hermitian
endomorphism representing the skew-adjoint subprincipal part in the trapped
normal form of Proposition~\ref{prop:nh_resolvent_form}, then on every compact
normalized conic patch and on a fixed trapped collar $\mathcal U_{\mathrm{tr}}$
\begin{equation}\label{eq:matrix_skew_decomposition}
        B_{a,Q}=\operatorname{diag}(q_{+1},q_{-1})+B_{\mathrm{mat},a,Q},
        \qquad
        \|B_{\mathrm{mat},a,Q}\|_{C^k(\mathcal U_{\mathrm{tr}})}
        \le C_k\Big(\frac{|a|}{M}+\frac{a^2}{M^2}\Big),
\end{equation}
where $q_{\pm1}$ are the diagonal Teukolsky skew symbols in
\eqref{eq:teukolsky_imaginary_potential}. In the diagonal Teukolsky case
$B_{\mathrm{mat},a,Q}=0$; in a coupled compatible system it is part of the
short-range matrix perturbation controlled by \eqref{eq:lower_order_master}.
The inverse-metric order calculation underlying these statements is recorded in
Section~\ref{app:perturbation}.
\end{proposition}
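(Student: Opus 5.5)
The plan is to treat Proposition~\ref{prop:principal_master} as a consequence of the structural condition \emph{(A1)} together with an explicit expansion of the Kerr-Newman inverse metric in $a$. The argument has three parts.

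\textbf{Step 1: principal symbol.} By \emph{(A1)}, the extreme scalars $\psi_\pm$ built from $\alpha,\underline\alpha$ with the horizon and infinity weights satisfy a closed second-order system in the compatible class. The scalar principal part is $g_{\KN}^{\mu\nu}\xi_\mu\xi_\nu I_2$, for two reasons. First, the weights $f_3,f_4$ and the $r$-powers are multiplicative and smooth, so conjugating by them alters only lower-order terms. Second, each diagonal entry is a spin-weighted wave operator whose top order is $\Sigma^{-1}$ times the Teukolsky/Dudley--Finley radial--angular principal part. That principal part equals $\Box_{g_\KN}$ at top order, as one checks by matching with $\Sigma^{-1}\bigl(\partial_r\Delta\partial_r-\Delta^{-1}((r^2+a^2)\partial_t+a\partial_\phi)^2+\ldots\bigr)$. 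Any off-diagonal coupling allowed by \emph{(A1)} is of order at most one. Changing to horizon-regular coordinates $(\tilde t,\tilde\phi)$ is a diffeomorphism, so the symbol keeps its covariant form up to $\Hp$.

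\textbf{Step 2: expansion in $a$.} Fix $(M,Q)$ and write each coefficient as a function of $a$. The coefficients are smooth in $a$ in regular coordinates, since $\Delta$, $\Sigma$ and $r_+$ depend smoothly on $a$ for $a^2+Q^2<M^2$, and we may work on a fixed $r$-range by normalizing $r/r_+$. Taylor's formula at $a=0$ then gives $\Pb_b=\Pb_{\RN,Q}+a\,\partial_a\Pb|_{a=0}+a^2\mathcal Q^{(2)}_{a,Q}$, with an integral remainder. At $a=0$ the system is the spherically symmetric spin-one operator \eqref{eq:rn_master_model}; its potential terms split into the $r^{-2}$ angular part and the $Q^2r^{-3}$ charge part coming from $\Delta=r^2-2Mr+Q^2$. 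For the linear term, $\partial_a g_\KN^{\mu\nu}|_{a=0}$ has only the $t\phi$ entry, equal to $-2(2Mr-Q^2)/(r^2\Delta)$ times the Boyer--Lindquist factor. In an orthonormal frame this is $O(r^{-3})|\xi|^2$, which produces $\mathcal G_{a,Q}^{\mu\nu}$. Differentiating the first-order spin-weighted terms, which carry $ia\cos\theta$ and $a\partial_\phi$ factors divided by $\Sigma$, gives the $r^{-2}\mathcal C$ and $r^{-3}\mathcal D$ terms. The $\mathbb D_k$ bounds follow by differentiating these explicit rational expressions, using the asymptotics $\Sigma=r^2+O(1)$, uniformly for $|Q|\le\eps_QM$. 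The $O(a^2)$ statement in the local-energy norm follows because $\mathcal Q^{(2)}$ obeys the same short-range bounds.

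\textbf{Step 3: the matrix-skew decomposition \eqref{eq:matrix_skew_decomposition}.} Freeze semiclassically in $t$, so that $\partial_t\to -i\sigma$. Take the skew-adjoint part of the subprincipal symbol with respect to the $L^2$ pairing induced by the regular volume. The diagonal entries reproduce the Teukolsky imaginary potentials $q_{\pm1}$, which are purely spin-weighted. The off-diagonal entries come only from the coupling terms in Step 2, all carrying a factor $a$ or $a^2$, so their $C^k$ norms on the compact trapped collar, which is away from $\Hp$ and infinity, are bounded by $C_k(|a|/M+a^2/M^2)$. In the diagonal Teukolsky case no coupling appears, hence $B_{\mathrm{mat}}=0$.

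\textbf{Main obstacle.} I expect Step 1, specifically checking that the weights and any coupling permitted by \emph{(A1)} produce no first-order cross terms at the principal level that would break the $g^{\mu\nu}\xi_\mu\xi_\nu I_2$ form near $\Hp$. I would handle this by computing the conjugation by $f_3^{\pm1}$ directly and verifying that its commutator contributes only first-order terms.
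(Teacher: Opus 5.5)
Your three steps follow the paper's proof closely. You use \emph{(A1)} plus the Teukolsky principal-part computation for the scalar symbol, then a Taylor expansion in $a$ in which the only linear principal term is the $t\phi$ entry of $g_{\KN}^{-1}$, then a diagonal/matrix split of the skew symbol. The obstacle you single out is not one: conjugating by a smooth nonvanishing scalar weight $w$ changes $P$ only by $w^{-1}[P,w]$, which is first order, and \emph{(A1)} already postulates the symbol $g_{\KN}^{\mu\nu}\xi_\mu\xi_\nu I_2$.

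The step that would actually fail is identifying the exteriors by normalizing $r/r_+$. Since $r_+(a,Q)$ depends only on $a^2$, this does not affect the linear term. Used as a global coordinate, however, it spoils the remainder. At large $r$ the spatial part of the metric in the variable $r/r_+$ is multiplied by $(r_+(a,Q)/r_+(0,Q))^2$ while $\dd t^2$ is not. Hence $a^2\mathcal Q^{(2)}_{a,Q}$ acquires non-decaying second-order coefficients, and the radial null speed at infinity changes. Such a term is not short-range and cannot be absorbed in the local-energy and $r^p$ estimates. No normalization is needed: in horizon-regular coordinates both metrics are smooth across their horizons, so you can Taylor-expand on a fixed coordinate domain, as in Lemma~\ref{lem:app_inverse_expansion}. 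If you want to pin the horizon, use a diffeomorphism that is the identity for large $r$. Separately, $G_1^{t\phi}=O(r^{-3})$ is the Boyer--Lindquist component. Since $|\partial_\phi|\simeq r\sin\theta$, the mixed term is $O(r^{-2})$ relative to $|\xi|^2$ in an orthonormal frame, which is still short range.

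Step 3 needs two further inputs. First, Step 2 only shows that $\Pb_b-\Pb_{\RN,Q}$ carries a factor $a$. To conclude that the off-diagonal skew entries are $O(|a|/M+a^2/M^2)$, you also need that at $a=0$ the compatible system is the decoupled diagonal Teukolsky pair. Then the first-order matrix $\frac{2}{r}\mathcal A_Q$ contributes nothing beyond $\operatorname{diag}(q_{+1},q_{-1})|_{a=0}$. The paper builds this into $\Pb_{\RN,Q}$, and you should state it.

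Second, the choice of pairing matters. With the plain regular-volume pairing, the real radial term $2s(r-M)\partial_r$ inside $\Delta^{-s}\partial_r(\Delta^{s+1}\partial_r)$ is not symmetric. Neither are the real radial terms created by conjugating with $r$-dependent weights $w_\pm$. Together they contribute a diagonal skew entry proportional to $s(r-M)\eta_r$. This entry is $O(1)$, is not part of $q_s$, and vanishes only where $\eta_r=0$. So even in the diagonal Teukolsky case you would get neither $B_{\mathrm{mat},a,Q}=0$ nor the $C^k(\mathcal U_{\mathrm{tr}})$ bound.

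Use instead the spin-weighted pairing, namely $\Delta^{s}w_\pm^2$ times the regular volume. This is admissible because the trapped collar stays away from $\Hp$. Under it the real radial and angular parts are symmetric and the only skew terms are the frozen imaginary potentials. That is exactly what identifies the diagonal part with $q_{\pm1}$.
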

\begin{proof}
condition \emph{(A1)} of Definition~\ref{def:slowweak_master_framework} supplies the fixed-background master variables and the compatible matrix equation. Lemma~\ref{lem:maxwell_wave_principal_symbol} and Proposition~\ref{prop:scalar_principal_symbol} give the scalar wave principal part for the closed spin-one operator specified in \emph{(A1)}. We therefore compare the principal wave operator and the lower-order coefficient classes with their $a=0$ values. The inverse metric obeys,
on compact radial sets,
\begin{equation}\label{eq:metric_symbol_difference}
        g_{\KN}^{\mu\nu}(M,a,Q)-g_{\RN}^{\mu\nu}(M,Q)=a\,G_1^{\mu\nu}(r,\theta)+a^2G_2^{\mu\nu}(r,\theta,a,Q),
\end{equation}
with smooth uniformly bounded coefficients; the linear term is carried by the
$\dd t\,\dd\phi$ cross entry, producing the stationary-axial principal
perturbation $a\,\mathcal G^{\mu\nu}\nabla_\mu\nabla_\nu$. The first-order
Teukolsky terms at $a=0$ are included in $\Pb_{\RN,Q}$. Their difference from
the $a=0$ coefficients is smooth in $a$ and has linear part produced by the
rotation of the principal frame, the $t\phi$ coupling and the spin coefficients;
after the regular horizon and infinity weights this difference contributes
$a r^{-2}\mathcal C^\mu\nabla_\mu$; the spheroidal-spherical harmonic discrepancy and the
remaining curvature terms contribute $a r^{-3}\mathcal D$. The even
quadratic-in-$a$ principal, first-order and zeroth-order remainders form the
stationary second-order operator $a^2\mathcal Q_{a,Q}^{(2)}$, which satisfies
the same short-range perturbative bounds and is smaller than the displayed
linear perturbation in the slow-rotation range. At $a=0$ all coefficients are
spherically symmetric and are collected in
$\Pb_{\RN,Q}$; the Reissner-Nordstr\"om curvature is quadratic in $Q$ and
appears in $\mathcal V_Q,\mathcal W_Q$. The horizon and infinity weights are
smooth and nonzero in the regular exterior, so they conjugate $\Pb_b$ by bounded
factors, preserving the principal comparison and changing only the displayed
lower-order coefficients. After time freezing and multiplication by $h^2$, the
linear-in-$a$ first-order matrix part becomes an order-$h$ subprincipal
endomorphism of size $O(|a|/M)$ on compact normalized conic patches, while the
quadratic remainder contributes $O(a^2/M^2)$.  The diagonal part is precisely the
spin-weighted radial skew symbol $q_s$ displayed in
\eqref{eq:teukolsky_imaginary_potential}; every non-diagonal or frame-mixing
contribution is therefore included in $B_{\mathrm{mat},a,Q}$ and satisfies
\eqref{eq:matrix_skew_decomposition}. Smoothness in $(a,Q)$ gives the uniform
bounds and the short-range decay in $r$, and \eqref{eq:metric_symbol_difference}
gives the stated orders.
\end{proof}

\begin{lemma}
\label{lem:commuted_master}
Let $Z\in\mathbb D_k$. If $\Pb_b\Psi=0$ then
\begin{equation}\label{eq:commuted_master_corrected}
        \Pb_b(Z\Psi)=\sum_{|I|\le |Z|} A_I\nabla\Gamma^I\Psi+\sum_{|I|\le |Z|}B_I\Gamma^I\Psi,
\end{equation}
with $A_I,B_I$ smooth, short-range at infinity, uniformly bounded on the
slow-weak set. Their perturbative part relative to $\Pb_{\RN,Q}$ is $O(|a|/M)$
in the symbol classes used in the local-energy estimate; the
Reissner-Nordstr\"om commutator terms are part of the commuted spherical model.
\end{lemma}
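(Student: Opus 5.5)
The identity will come from expanding the commutator $[\Pb_b,Z]$ and using $\Pb_b\Psi=0$, so that $\Pb_b(Z\Psi)=[\Pb_b,Z]\Psi$. The argument is by induction on $|Z|$. For a single generator $Z\in\{T,\Phi,r\partial_r,\text{regular angular derivatives}\}$ we use the decomposition \eqref{eq:lower_order_master}:
\[
\Pb_b=\Pb_{\RN,Q}+a\,\mathcal G^{\mu\nu}_{a,Q}\nabla_\mu\nabla_\nu+ar^{-2}\mathcal C^\mu_{a,Q}\nabla_\mu+ar^{-3}\mathcal D_{a,Q}+a^2\mathcal Q^{(2)}_{a,Q}.
\]
The cases split as follows.

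\emph{Killing generators.} $T$ and $\Phi$ are Killing and all coefficients are stationary and axisymmetric. Hence $[\Pb_b,T]=0$. For $\Phi$ the commutator vanishes up to the regular-frame conjugation weights, which are $\phi$-independent; so $[\Pb_b,\Phi]=0$ as well.

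\emph{Non-Killing generators.} For $r\partial_r$ and the angular generators, the commutator with the principal part $g^{\mu\nu}_{\KN}\xi_\mu\xi_\nu I_2$ is a second-order operator with principal symbol $-\{g^{\mu\nu}\xi_\mu\xi_\nu,\sigma(Z)\}$, i.e.\ the deformation tensor $\pi^Z$ contracted with $\xi\otimes\xi$. We rewrite each second-order term $\pi^{\mu\nu}\nabla_\mu\nabla_\nu\Psi$ as $A\,\nabla(\Gamma\Psi)$ plus lower order, with $\Gamma\in\mathbb D_1$. This works because $\nabla_\mu\nabla_\nu$ factors as $\nabla$ composed with a coordinate derivative, and the coordinate derivatives $\partial_t,\partial_\phi,r\partial_r,\partial_\theta$ all lie in $\mathbb D_1$ after multiplication by bounded (in $r$) weights. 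Two points need checking:
\begin{enumerate}[label=(\roman*)]
\item Since $r\partial_r$ is asymptotically conformal Killing for the flat part, $\pi^{r\partial_r}$ equals a multiple of the metric plus short-range terms. The conformal part, acting on $\Psi$ with $\Box\Psi$ replaced via the equation, yields only lower-order terms.
\item The angular generators are Killing for the round sphere; their deformation on Kerr-Newman is $O(a)$ and short-range.
\end{enumerate}
Commutators with the first- and zeroth-order coefficients give terms $(Z\mathcal C)\nabla\Psi$ and $(Z\mathcal D)\Psi$. These are smooth and short-range because the coefficient bounds in Proposition~\ref{prop:principal_master} hold with all $\mathbb D_k$-derivatives, and $r\partial_r$ preserves the power of $r$.

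\emph{Induction.} For $Z=Z_1Z'$ with $|Z'|=|Z|-1$, write
\[
\Pb_b(Z_1Z'\Psi)=Z_1\Pb_b(Z'\Psi)+[\Pb_b,Z_1]Z'\Psi .
\]
The first term is treated by the inductive hypothesis; applying $Z_1$ to it gives terms $A_I Z_1\nabla\Gamma^I\Psi$. We commute $Z_1$ past $\nabla$ at the cost of bounded Christoffel-type coefficients, so that $Z_1\Gamma^I\in\mathbb D_{|Z|}$. The second term is handled by the single-generator step applied to $Z'\Psi$. This gives \eqref{eq:commuted_master_corrected} with coefficients smooth, uniformly bounded on the slow-weak set, and short-range.

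\emph{Perturbative part.} Running the same computation at $a=0$ defines the commuted spherical model. Subtracting it, every remaining coefficient carries an explicit factor $a$ (or $a^2$, which is absorbed as in Proposition~\ref{prop:principal_master}). This yields the $O(|a|/M)$ bound.

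The main obstacle is the principal commutator with $r\partial_r$ and the angular fields, near the horizon and in the far region. Near the horizon we must verify that the regular-coordinate version of these fields keeps the deformation terms bounded in regular coordinates. In the far region we must verify that the top-order error is genuinely representable as $A\nabla\Gamma\Psi$ with $A$ short-range, rather than producing a non-decaying $\Box$ contribution. For the far-region case the plan is to exploit the asymptotic conformal Killing property of $r\partial_r$ together with the equation itself.
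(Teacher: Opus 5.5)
\textbf{Where the proposal breaks.} Your skeleton is the paper's argument: $\Pb_b(Z\Psi)=[\Pb_b,Z]\Psi$, exact commutation for $T$ and $\Phi$, deformation-tensor commutators for the non-Killing generators, induction on $|Z|$, and subtraction of the $a=0$ computation. The step that fails is (i), which is exactly where you place the main obstacle: $r\partial_r$ is \emph{not} asymptotically conformal Killing.

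Already in Minkowski space, $\mathcal L_{r\partial_r}(-\dd t^2+\dd r^2+r^2\gamma)=2\,\dd r^2+2r^2\gamma$. This is twice the metric plus $2\,\dd t^2$, and its trace-free part has size one in an orthonormal frame. Correspondingly,
\[
[\Box,r\partial_r]=2\Box+2\partial_t^2,\qquad\text{so}\qquad \Box(r\partial_r\psi)=2\partial_t^2\psi\quad\text{whenever }\Box\psi=0 .
\]
For example, $\psi=t^2+r^2/3$ gives $4=4$. Using the equation removes only the multiple-of-the-metric part of the deformation tensor; the $2\,\dd t^2$ part survives at top order.

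No rearrangement helps. Write the right side as $\sum A_I\nabla\Gamma^I\psi$ with $\Gamma\in\{T,\Phi,r\partial_r,\text{angular fields}\}$, and add any multiple $c(x)\Box\psi$. Comparing principal symbols, the coefficient of $\partial_t(T\psi)$ is $2-c$ and that of $\partial_r(r\partial_r\psi)$ is $c/r$. Hence some top-order coefficient is always $\gtrsim r^{-1}$. That is exactly the borderline rate excluded in Remark~\ref{rem:app_short_range}, and it is not small in the $LE^*$--$LE^1_{\mathrm{deg}}$ pairing. Since the Reissner--Nordstr\"om far region is Minkowski up to $O(M/r)$, the top-order Reissner--Nordstr\"om commutator terms for $r\partial_r$ are genuinely not short-range. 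So no version of your far-region plan can show that the full families $A_I,B_I$ decay.

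\textbf{What can be proved.} The provable statement is the split form that the paper's proof actually uses. The $a=0$ commutator terms are not estimated in this lemma; they are assigned to the commuted Reissner--Nordstr\"om model hierarchy. Only the difference between the Kerr--Newman and Reissner--Nordstr\"om commutators is claimed to be short-range and $O(|a|/M)$.

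That difference you do control. It comes from two sources:
\begin{itemize}
\item $\mathcal L_Z(g_{\KN}-g_{\RN})$, whose asymptotically orthonormal components are $O(|a|Mr^{-2})$ from the $\dd t\,\dd\phi$ entry plus $O(a^2r^{-2})$; this order is preserved by $r\partial_r$ and by the angular fields.
\item The $Z$-derivatives of $\mathcal G_{a,Q},\mathcal C_{a,Q},\mathcal D_{a,Q},\mathcal Q^{(2)}_{a,Q}$.
\end{itemize}
Your final paragraph already performs this subtraction. The repair is to delete claim (i) and the far-region plan built on it, and to state the conclusion in the split form.

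Near the horizon nothing beyond what you say is needed. The generators and the metric are smooth in $(a,Q)$ in regular coordinates, so on the compact collar the deformation tensors are bounded and their $a$-dependent parts are $O(|a|/M)$.
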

\begin{proof}
The coefficients of $\Pb_b$ are stationary and axisymmetric, so the Killing
fields $T$ and $\Phi$ commute with the principal part and only meet lower-order
coefficient matrices. The regular angular fields and $r\partial_r$ do not
commute with the wave operator, but their commutators are again differential
operators of order at most two with coefficients obtained by differentiating the
metric, frame and potential coefficients. At $a=0$ these terms belong to the
commuted Reissner-Nordstr\"om model hierarchy. The difference between the
Kerr-Newman and Reissner-Nordstr\"om coefficients is $O(|a|/M)$ in the symbol
classes of \eqref{eq:metric_symbol_difference}; applying finitely many fields in
$\mathbb D_k$ preserves the same decay and the same small factor. Induction on
$|Z|$ gives \eqref{eq:commuted_master_corrected}, with the displayed first- and
zeroth-order coefficient families absorbing the lower-order commutators.
\end{proof}

\subsection{Spacetime Norms and Analytic Setting Used in the Transfer}
\label{subsec:framework}

\begin{definition}
\label{def:slowweak_parameter_range}
For a fixed integer $k\ge0$, a Kerr-Newman exterior is in the
\emph{slow-weak range} if
\begin{equation}\label{eq:slow_weak_range}
        |a|\le \eps_a(k)M,\qquad |Q|\le \eps_Q(k)M, \qquad a^2+Q^2<M^2,
\end{equation}
where $\eps_a(k),\eps_Q(k)>0$ are fixed as part of the parameter range and, when the perturbative closure is applied, restricted as in Proposition~\ref{prop:constant_choice}. In statements where $k$ is fixed, we write $\eps_a$ and $\eps_Q$ for these order-dependent constants.
Constants depending only on $k$, the foliation, and the mass normalization are
denoted collectively $C_{\mathrm{sw}}=(C_q,C_{\mathrm{red}},C_M,C_R,C_T,C_\infty,C_{\mathrm{wp}})$.
\end{definition}

\begin{definition}
\label{def:spacetime_norms}
The master norm on a slab is
\begin{equation}\label{eq:master_norm}
        \norm{u}_{\Xnorm{k}_M(\tau_1,\tau_2)}^2
        =\sup_{\tau_1\le s\le\tau_2}\E_M^{(k)}[u](s)
        +\B_M^{(k)}[u](\tau_1,\tau_2)
        +\sup_{0\le p\le2}\Fc_{p,R}^{(k)}[u](\tau_1,\tau_2),
\end{equation}
where $\E_M^{(k)}$ is the non-degenerate master energy, $\B_M^{(k)}$ the
photon-sphere-degenerate integrated local energy, and $\Fc_{p,R}^{(k)}$ the
$r^p$ far-field flux. The Maxwell norm $\Xnorm{k}_{\Max}$ has the analogous
components for the charge-free Maxwell tensor.
\end{definition}

\begin{definition}
\label{def:concrete_le_norms}
Let us fix a red-shift radius $r_H>r_+$, a large radius $R_\infty$, and a compact radial
annulus containing the Reissner-Nordstr\"om photon sphere $r_{\mathrm{ph}}(Q)$ for
all $|Q|\le \eps_QM$.  Let $\chi_{\mathrm{tr}}$ be supported in a small collar of
$r_{\mathrm{ph}}(Q)$ and equal to one in a smaller collar, and set
\begin{equation}\label{eq:trapping_weight_definition}
        w_{\mathrm{tr}}(r)=1-\chi_{\mathrm{tr}}(r)
        +\chi_{\mathrm{tr}}(r)\frac{(r-r_{\mathrm{ph}}(Q))^2}{M^2}.
\end{equation}
Let $A_{\mathrm{comp}}=\{r_H\le r\le 2R_\infty\}$ and, for $j\ge0$,
$A_j=\{2^jR_\infty\le r\le2^{j+1}R_\infty\}$.  For a master field $u$ on
$\D(\tau_1,\tau_2)$ we use
\begin{align}\label{eq:concrete_LE1deg}
        \|u\|_{LE^1_{\mathrm{deg},k}(\tau_1,\tau_2)}^2
        ={}&\sum_{|I|\le k}\int_{\D(\tau_1,\tau_2)\cap A_{\mathrm{comp}}}
        \bigl(w_{\mathrm{tr}}|\nabla\Gamma^Iu|^2+M^{-2}|\Gamma^Iu|^2\bigr)\,\dd\mu_g\nonumber\\
        &+\sum_{|I|\le k}\sum_{j\ge0}2^{-j}R_\infty^{-1}
        \int_{\D(\tau_1,\tau_2)\cap A_j}
        \bigl(|\nabla\Gamma^Iu|^2+r^{-2}|\Gamma^Iu|^2\bigr)\,\dd\mu_g.
\end{align}
Here $\nabla$ denotes any fixed regular first-order frame; different choices give
equivalent norms uniformly in the slow-weak parameter range. The dual source
norm is
\begin{align}\label{eq:concrete_LEstar}
        \|f\|_{LE^{*,k}(\tau_1,\tau_2)}^2
        ={}&\sum_{|I|\le k}\int_{\D(\tau_1,\tau_2)\cap A_{\mathrm{comp}}}
        w_{\mathrm{tr}}^{-1}|\Gamma^If|^2\,\dd\mu_g
        +\sum_{|I|\le k}\sum_{j\ge0}2^jR_\infty
        \int_{\D(\tau_1,\tau_2)\cap A_j}|\Gamma^If|^2\,\dd\mu_g.
\end{align}
The compact zeroth-order norms used in the Fredholm and induction closures are
\begin{equation}\label{eq:concrete_compact_low_norms}
        \|u\|_{LE^0_{\mathrm{comp},k}}^2
        =\sum_{|I|\le k}\int_{\D(\tau_1,\tau_2)\cap A_{\mathrm{comp}}}|\Gamma^Iu|^2\,\dd\mu_g,
        \qquad
        \|u\|_{LE^0_{\mathrm{low},j}}^2
        =\sum_{|I|<j}\int_{\D(\tau_1,\tau_2)\cap A_{\mathrm{comp}}}|\Gamma^Iu|^2\,\dd\mu_g.
\end{equation}
For Maxwell fields the same definitions are applied componentwise to the regular
null-frame components of $\Gamma^IF$.  The bulk terms $\B_M^{(k)}$ and
$\B_{\Max}^{(k)}$ in \eqref{eq:master_norm} are these trapped-set-degenerate
local-energy norms, together with the red-shift collar and far-field pieces
supplied by Propositions~\ref{prop:redshift_coercivity} and~\ref{prop:rp_identity}.
After the cutoffs are fixed, shrinking the slow-weak range if necessary keeps
the Kerr-Newman trapped set inside the collar where
\eqref{eq:trapping_weight_definition} degenerates; all choices above then give
uniformly equivalent norms.
\end{definition}

\begin{proposition}
\label{prop:framework_charge}
The charge traces extend continuously to $\Hc_{\Max}^{(k)}(\Sigma_0)$, the
normalized stationary representatives belong to every finite-order energy
space, the projections $\Pi_{\stat},\Pi_0$ are bounded, and the source-free
Maxwell Cauchy problem is well posed in $\Hc_{\Max}^{(k)}$ with propagation of
the constraints.
\end{proposition}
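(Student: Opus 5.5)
This proposition collects results already proved in Sections~\ref{sec:charges}--\ref{sec:energy_spaces}, so the plan is to assemble them in order rather than prove anything new.

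\emph{Continuity of the charges.} For smooth constrained data, Proposition~\ref{prop:finite_energy_charge_trace} bounds $|q_E(U)|^2+|q_B(U)|^2$ by $CR_0$ times the order-zero energy on one fixed annulus. The order-zero energy is dominated by $\E_{\Max}^{(k)}(0)$ for every $k\ge0$. Hence $q_E,q_B$ are bounded linear functionals on $\mathcal D_0^{(k)}$, where they vanish, and extend by zero to the completion $\Hc_{\Max,0}^{(k)}$. On the stationary sector they are fixed by the normalization \eqref{eq:stationary_normalization}. On the full space \eqref{eq:full_energy_space}, the charges of $c_eU_e+c_mU_m+U_0$ are $(c_e,c_m)$. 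By the norm \eqref{eq:full_energy_norm}, these are bounded by the full norm. Proposition~\ref{prop:charge_adapted_completion} shows that this extension agrees with the charge-adapted completion of all smooth constrained data, so the extension is the natural one.

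\emph{Stationary representatives and projections.} Lemma~\ref{lem:stationary_representatives} gives smoothness of $F_e,F_m$ and the decay bound $r^{-2-j}$ after $j$ derivatives. Since $\int^\infty r^{-4-2j}r^2\,\dd r<\infty$, their data lie in every finite-order energy space. Near the horizon one only needs smoothness in regular coordinates, and this is also part of that lemma. Boundedness of $\Pi_{\stat}$ and $\Pi_0$, and the topological direct sum, are exactly Proposition~\ref{prop:bounded_projection}.

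\emph{Well-posedness.} This is Proposition~\ref{prop:finite_energy_wellposed}. Its argument has four steps. First, write the system in the symmetric-hyperbolic form \eqref{eq:maxwell_symhyp}. Second, use the $J^N$ identity together with Lemma~\ref{lem:positivity_density} and Gronwall, then commute with $\mathbb D_k$. Third, propagate the constraints via $\operatorname{div}_h\operatorname{curl}_h=0$. Fourth, extend to finite-energy data by splitting $U_0=\Pi_{\stat}U_0+\Pi_0U_0$: the stationary part evolves explicitly, and the charge-free part is handled by the density Lemma~\ref{lem:density_chargefree}. Uniqueness and independence of the approximating sequence come from Proposition~\ref{prop:closed_solution}.

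\emph{Main obstacle.} The only delicate point is ensuring that the charge extension does not conflict with the closure: charged Coulomb data must not be approximable by charge-free data. This is exactly what forces the direct-sum definition. The hard step is therefore the compatibility statement of Proposition~\ref{prop:charge_adapted_completion}, and I would cite it explicitly. Its inverse bound \eqref{eq:J_inverse_bound} guarantees that the charges are continuous on the whole completed space and not merely on each summand separately.
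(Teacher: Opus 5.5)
Your proposal is correct and takes the same route as the paper: its proof also just assembles Proposition~\ref{prop:finite_energy_charge_trace}, Lemma~\ref{lem:stationary_representatives}, Proposition~\ref{prop:bounded_projection}, Lemma~\ref{lem:density_chargefree}, and Propositions~\ref{prop:finite_energy_wellposed} and~\ref{prop:closed_solution}. The one difference is your detour through Proposition~\ref{prop:charge_adapted_completion}, which the paper does not need. With the norm \eqref{eq:full_energy_norm} on the direct sum, continuity of $q_E,q_B$ is immediate, and \eqref{eq:J_inverse_bound} only serves to identify that space with the graph-norm completion, not to make the charges continuous.
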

\begin{proof}
The charge trace bound of Proposition~\ref{prop:finite_energy_charge_trace}
extends $q_E,q_B$ continuously to the energy completion. The stationary
representatives are the smooth fields of Lemma~\ref{lem:stationary_representatives};
their decay places their Cauchy data in every finite-order energy space. The
boundedness and direct-sum properties of $\Pi_{\stat}$ and $\Pi_0$ are exactly
Proposition~\ref{prop:bounded_projection}. Lemma~\ref{lem:density_chargefree}
provides smooth charge-free approximants, while
Propositions~\ref{prop:finite_energy_wellposed} and~\ref{prop:closed_solution}
give finite-energy existence, uniqueness, continuity of the solution map and
constraint propagation. These statements are the charge and Cauchy
properties recorded here.
\end{proof}

\begin{definition}
\label{def:slowweak_master_framework}
Fix $k$ and parameters in the slow-weak range \eqref{eq:slow_weak_range}. The transfer argument below uses the following finite-order conditions for the charge-free fixed-background Maxwell system, with constants uniform in the chosen parameter set. The closed spin-one reduction in \emph{(A1)} is part of the statement in the charged rotating case; the sections below prove the scalar-principal-symbol, coefficient-comparison, reconstruction and transfer consequences of that structure, and cite or state the remaining analytic estimates in the displayed forms below.

\emph{(A1) Master variables and compatible class.} There is a horizon-regular spin-one master map $\mathfrak M$ from smooth charge-free Maxwell solutions to pairs $u=(\psi_+,\psi_-)^T$ in a closed compatible class. The variables satisfy a matrix wave system
\begin{equation}\label{eq:master_hyp_master_system}
        \Pb_{a,Q}u=0,
\end{equation}
whose principal symbol is $g_{\KN}^{\mu\nu}\xi_\mu\xi_\nu I_2$, and the initial energies obey
\begin{equation}\label{eq:master_hyp_energy_comparison}
        \E_M^{(k)}[\mathfrak M G](\tau)\le C_R\E_{\Max}^{(k)}[G](\tau).
\end{equation}
For Kerr ($Q=0$) the spin-one Teukolsky equations are exact. For Kerr-Newman with nonzero charge and rotation, the argument requires a closed fixed-background spin-one reduction as the structural condition \emph{(A1)}. Section~\ref{sec:decoupling} proves the parts of this item that follow from differential geometry alone: Lemma~\ref{lem:maxwell_wave_principal_symbol} derives the covariant Maxwell wave equation and isolates the scalar principal symbol before any spin-weighted separation is used; Proposition~\ref{prop:scalar_principal_symbol} checks the normalized Boyer-Lindquist principal part of the spin-weighted model; Definition~\ref{def:master_from_teukolsky} defines the closed compatible class; and Corollary~\ref{cor:structural_reduction_verify} records the consequences for the master map, the energy comparison \eqref{eq:master_hyp_energy_comparison}, and the Reissner-Nordstr\"om comparison once the closed reduction is supplied. We retain \emph{(A1)} in the list of conditions so that no exact Kerr-Newman decoupling statement is used implicitly; this distinction from the Dudley-Finley approximation is consistent with the mode literature \cite{BertiKokkotas}.

\emph{(A2) Reissner-Nordstr\"om comparison.} With the same $(M,Q)$,
\begin{equation}\label{eq:master_hyp_perturbation}
        \Pb_{a,Q}=\Pb_{\RN,Q}+\mathcal E_{a,Q},
\end{equation}
where $\Pb_{\RN,Q}$ is the charge-free spin-one Reissner-Nordstr\"om operator and, for every commutator $\Gamma^I$ with $|I|\le k$,
\begin{equation}\label{eq:master_hyp_error_bound}
        \norm{\mathcal E_{a,Q}\Gamma^Iu}_{LE^*}
        \le C\frac{|a|}{M}\norm{u}_{LE^1_{\mathrm{deg},k}}
        +C\frac{|a|}{M}\norm{u}_{LE^0_{\comp,k}}+C\norm{u}_{LE^0_{\mathrm{low},k}}.
\end{equation}
The commutators $[\Pb_{a,Q},\Gamma^I]$ have the same short-range structure. More precisely, if \(\mathcal C_Iu\) denotes the strict lower-order part produced when \(|I|=j\), then for every \(\eta>0\)
\begin{equation}\label{eq:strict_lower_order_induction}
        \sum_{|I|=j}\|\mathcal C_Iu\|_{LE^*}^2
        \le \eta\|u\|_{LE^1_{\mathrm{deg},j}}^2
        +C_{j,\eta}\|u\|_{LE^1_{\mathrm{deg},j-1}}^2,
        \qquad 1\le j\le k,
\end{equation}
and there is no such term for \(j=0\). In turn, the last term in
\eqref{eq:master_hyp_error_bound} is closed by induction on the commuted order,
not by an additional condition.

\emph{(A3) Real-axis resonance exclusion, limiting absorption and compact Fredholm closure in the compatible class.} For each real $\omega$ the frozen operator $\mathcal L_{a,Q}(\omega)$ has no nonzero charge-free compatible outgoing real resonance. More precisely, if $v\in H^1_{-\sigma,\loc}$ for some $\sigma>1/2$ solves
\begin{equation}\label{eq:master_hyp_resonance_equation}
        \mathcal L_{a,Q}(\omega)v=0
\end{equation}
in the exterior, satisfies the future outgoing Sommerfeld condition at null infinity and the future ingoing condition at the horizon (or the time-reversed pair for the past problem), and has the charge-free compatibility conditions, then \(v=0\). At \(\omega=0\) this means absence of a finite-energy stationary charge-free compatible solution. In addition, for every compact temporal-frequency interval \(I\Subset\mathbb R\), every \(\sigma>1/2\), and every compact radial set \(K\), the limiting-absorption estimate is uniform on the compatible class, and the following contradiction property holds in the local resolvent graph topology. There is no outgoing/incoming compatible sequence \(v_n\), with \(|a_n|\le\eps_a(k)M\), \(|Q_n|\le\eps_Q(k)M\), satisfying
\begin{equation}\label{eq:master_hyp_no_defect_sequence}
        \|v_n\|_{H^1(K)}=1,
        \qquad \|\mathcal L_{a_n,Q_n}(\omega_n)v_n\|_{H^{-1}_{\sigma,\loc}}\to0,
\end{equation}
and, after passing to a subsequence, the parameters converge to a subextremal slow-weak limit \((a_\infty,Q_\infty)\). The limiting operator and the outgoing/ingoing convention are always understood with these limiting parameters. The sequence is then decomposed into either a bounded-total-frequency packet or an unbounded-total-frequency packet. More explicitly, if \(\lambda_n\) denotes the angular/azimuthal size of the selected packet, set
\begin{equation}\label{eq:master_hyp_total_frequency_scale}
        \Lambda_n=1+|\omega_n|+\lambda_n.
\end{equation}
The bounded branch is \(\sup_n\Lambda_n<\infty\). In this branch the angular spectrum is finite after the high-angular tail has been removed by Lemma~\ref{lem:app_high_angular_coercivity}, and compactness produces an outgoing or incoming real resonance in the sense of \eqref{eq:master_hyp_resonance_equation}. In the unbounded branch set
\begin{equation}\label{eq:master_hyp_semiclassical_scale}
        h_n=\Lambda_n^{-1},\qquad \hat\omega_n=h_n\omega_n,
\end{equation}
and retain only conic packets for which \(\hat\omega_n\) lies in a compact normalized interval. The residual is tested in the resolvent-normalized form
\begin{equation}\label{eq:master_hyp_semiclassical_residual}
        h_n^{-1}\log(1/h_n)
        \big\|h_n^2\mathcal L_{a_n,Q_n}(\omega_n)v_n\big\|_{L^2_{\comp}}
        +\big\|h_n^2\mathcal L_{a_n,Q_n}(\omega_n)v_n\big\|_{H^{-1}_{h_n,\loc}}
        \longrightarrow0.
\end{equation}
The normalization matches the high-frequency resolvent estimate in Definition~\ref{def:hf_resolvent_estimate}, equivalently the normally hyperbolic estimate \eqref{eq:normally_hyperbolic_resolvent_bound}. In the compact-remainder argument, the stationary time cutoff, dyadic total-frequency decomposition, and conic microlocal selection all include this logarithmic loss; Lemma~\ref{lem:localized_compact_defect} gives the reduction. Using \(\Lambda_n\), rather than \(|\omega_n|\) alone, separates high-angular packets with bounded temporal frequency from genuinely trapped high-frequency packets.

The bounded-frequency branch also records that a compactness limit of such an
outgoing or incoming sequence inherits the corresponding radiation condition.
The possible limit is therefore a real resonance in the sense of
\eqref{eq:master_hyp_resonance_equation}, not merely a finite-energy mode. This
is what lets the limiting-absorption/Fredholm alternative remove every compact
local \(L^2\) remainder from the positive-commutator estimate. The
Reissner-Nordstr\"om proof and the slow-rotation contradiction scheme are
written in Subsections~\ref{subsec:compact_error_closure}-\ref{subsec:nokernel}
and Section~\ref{app:lap}. For the fixed-background Kerr-Newman no-defect
property \eqref{eq:master_hyp_no_defect_sequence}, the bounded-total-frequency
part is proved here. The conic high-frequency part follows from the normally
hyperbolic estimate in Proposition~\ref{prop:nh_resolvent_form}, after
Proposition~\ref{prop:r_normal_hyperbolicity}, Lemma~\ref{lem:trapped_skew_vanishing},
and Proposition~\ref{prop:matrix_skew_threshold} verify the geometric and
finite-rank-bundle subprincipal conditions. This is the spin-one Kerr-Newman
counterpart of the real-axis mode and resonance exclusions proved for scalar
waves on Kerr and Kerr-Newman and for spin-weighted Teukolsky equations on Kerr
\cite{Civin,ShlapentokhRothmanMode,TdCMode}.

\emph{(A4) Same-order reconstruction.} There is a reconstruction operator $\mathfrak R$ on compatible master solutions such that
\begin{equation}\label{eq:master_hyp_inverse_identities}
        \mathfrak R\mathfrak M G=G,
        \qquad \mathfrak M\mathfrak R u=u,
\end{equation}
for smooth charge-free solutions, and
\begin{equation}\label{eq:master_hyp_reconstruction_bound}
        \norm{\mathfrak R u}_{\Xnorm{k}_{\Max}(\tau_1,\tau_2)}
        \le C_R\norm{u}_{\Xnorm{k}_M(\tau_1,\tau_2)},
        \qquad \E_{\Max}^{(k)}[\mathfrak R u](\tau)\le C_R\E_M^{(k)}[u](\tau).
\end{equation}

\emph{(A5) Trace maps and wave operators.} The master radiation traces at $\Ip\cup\Hp$ and $\Imn\cup\Hm$ are defined first for smooth compactly supported compatible data and extend to bounded maps on the master energy space. The reconstruction induces bounded radiation identifications $\mathcal R_\infty^\pm$ between master radiation data and charge-free Maxwell radiation data. Smooth compact radiation data are dense in the corresponding radiation Hilbert spaces. On these dense classes there are bounded right inverses $\mathscr W_{M,0}^{\pm}$ satisfying
\begin{equation}\label{eq:master_hyp_radiation_inverse}
        \norm{\mathscr W_{M,0}^{\pm}\rho}_{\Hc_M^{(k)}}\le C\norm{\rho}_{\Rc_{M,\pm}^{(k)}},
        \qquad \mathscr S_M^{\pm}\mathscr W_{M,0}^{\pm}\rho=\rho,
\end{equation}
and the only finite-energy compatible solution with zero corresponding master radiation field is the zero solution. These maps may be obtained either as limits of uniformly bounded finite-slab backward characteristic solutions or, as carried out here for the fixed-background test field, directly from the real-axis limiting-absorption resolvent; Proposition~\ref{prop:backward_from_lap} establishes \emph{(A5)} in this way, so it is a derived consequence of the resolvent estimates rather than an independent condition.

\emph{(A6) Additional low-frequency hierarchy for pointwise decay.} The pointwise-decay conclusion is used only when a finite commuted hierarchy is available. Precisely, for some $\gamma>0$, some radial weight $w$, and every smooth compatible solution reconstructed as a Maxwell field $G$, the hierarchy gives
\begin{equation}\label{eq:master_hyp_decay_hierarchy}
        \sum_{|I|\le k_0}\norm{\Gamma^IG}_{L^2(\Sigma_\tau\cap\mathcal U_r)}^2
        \le C(1+\tau)^{-\gamma}\E_{\Max}^{(k)}[G](0),
\end{equation}
for the derivative range required by Sobolev embedding. This condition is not needed for boundedness, integrated local energy decay, radiation fields, wave operators, or scattering. Without it, all conclusions except the pointwise-decay assertion remain valid.
\end{definition}

\begin{remark}
\label{prop:framework_range}
Every ingredient of Theorem~\ref{thm:intro_transfer} falls into one of the following classes.
\begin{enumerate}[label=\emph{(\alph*)},leftmargin=2.2em]
\item The finite-energy Maxwell statements-charge conservation, charge continuity, construction of the two stationary representatives, bounded charge projection, constraint propagation and finite-energy well-posedness-are proved directly in Sections~\ref{sec:geometry}-\ref{sec:energy_spaces}.
\item The spherical Reissner-Nordstr\"om estimate is the cited theorem of Sterbenz-Tataru \cite{SterbenzTataru}, used only in the displayed form of Lemma~\ref{lem:rn_model_morawetz}; the proof that the charge-free Reissner-Nordstr\"om spin-one reduction has no $\ell=0$ mode, has the potential barrier \eqref{eq:app_rn_potential}, and has the photon sphere \eqref{eq:app_photon_sphere} is carried out in Lemma~\ref{lem:app_photon_sphere} and Section~\ref{app:rn_model}.
\item The red-shift and far-field estimates are used only through the displayed identities of Propositions~\ref{prop:redshift_coercivity} and~\ref{prop:rp_identity} \cite{DafermosRodnianskiRedshift,DafermosRodnianskiRp}; their use in the final estimate is algebraic once those inequalities are granted.
\item The normal-hyperbolicity estimate is used only through the high-frequency no-defect alternative needed for compact-remainder removal. The Reissner-Nordstr\"om and Kerr-Newman trapped-set locations and non-degeneracy constants are computed in Lemma~\ref{lem:app_photon_sphere} and Section~\ref{app:trapping}. The localized high-frequency estimate for the compatible scalar-principal class specified by \emph{(A1)} is the estimate of Definition~\ref{def:hf_resolvent_estimate}; the normally hyperbolic estimate is named in Proposition~\ref{prop:nh_resolvent_form} and applied in Proposition~\ref{prop:nh_theorem_application} after its geometric, diagonal skew-cancellation and finite-rank-bundle matrix-threshold conditions are checked here.
\item For the fixed-background test field, the master map and closed compatible class are assumed exactly as stated in \emph{(A1)}. Given that structure, the scalar-principal-symbol computation, the Reissner-Nordstr\"om comparison, the bounded-frequency real-axis closure, the same-order inverse reconstruction, and the dense backward right inverse \emph{(A5)} are proved in Sections~\ref{sec:decoupling},~\ref{sec:first_principles_master},~\ref{app:lap},~\ref{app:reconstruction}, and~\ref{sec:scattering}; see Corollary~\ref{cor:structural_reduction_verify} and Proposition~\ref{prop:backward_from_lap}. The residual high-frequency ingredient in Definition~\ref{def:slowweak_master_framework} is the normally hyperbolic trapping resolvent bound inside \emph{(A3)}, in the precise localized form applied in Proposition~\ref{prop:nh_theorem_application}; the additional low-frequency hierarchy \emph{(A6)} is used only for pointwise decay. Neither statement is inferred from coupled Einstein-Maxwell stability theorems.
\end{enumerate}
The decay and scattering statement therefore depends only on the displayed estimates listed above, with the high-frequency part used exactly as in Proposition~\ref{prop:nh_theorem_application}. The bounded-frequency part of \emph{(A3)} is formulated as a no-real-resonance statement so that the Fredholm compactness argument closes on the natural weighted resolvent space, and is proved here. The high-frequency trapping-resolvent part of \emph{(A3)} is the non-elementary microlocal estimate for the compatible scalar-principal class; Proposition~\ref{prop:nh_resolvent_form} supplies the normally hyperbolic estimate, while Proposition~\ref{prop:r_normal_hyperbolicity}, Lemma~\ref{lem:trapped_skew_vanishing}, and Proposition~\ref{prop:matrix_skew_threshold} verify the trapped geometry, the diagonal spin-one skew cancellation, and the finite-rank-bundle matrix threshold needed to use it precisely.
\end{remark}

\begin{remark}
\label{prop:conditional_sharpness}
The conditions of Definition~\ref{def:slowweak_master_framework} are used only where
they are needed. conditions \emph{(A1)-(A3)} give the master energy and local-energy
estimate after perturbative absorption; \emph{(A4)} transfers that estimate to
the Maxwell tensor without derivative loss; \emph{(A5)} gives asymptotic
completeness once the trace bounds are known; and \emph{(A6)} is reserved for
pointwise decay. Thus the boundedness, integrated-decay, and scattering
conclusions remain valid without \emph{(A6)}, while the pointwise conclusion is
stated only when that hierarchy is available.
\end{remark}

\begin{proposition}
\label{prop:exact_estimate_use}
The transfer proof uses the finite-order analytic conditions in the following
minimal way. The master estimate \emph{(M1)} depends only on \emph{(A1)},
\emph{(A2)}, and the real-axis/limiting-absorption/no-defect content of
\emph{(A3)}, together with the spherical red-shift, $r^p$ and Reissner-Nordstr\"om
local-energy estimates stated in the main text. The Maxwell estimate for
the charge-free radiative field depends in addition on the same-order
reconstruction \emph{(A4)}. The radiation-field isomorphism and scattering
operator depend in addition on the trace and dense right-inverse statement
\emph{(A5)}, which for the fixed-background test field is constructed in
Proposition~\ref{prop:backward_from_lap} from the limiting-absorption resolvent.
The pointwise decay conclusion is the only conclusion that uses \emph{(A6)}.
\end{proposition}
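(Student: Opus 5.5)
The proof is a dependency audit of the transfer argument: each conclusion of Theorem~\ref{thm:intro_transfer} is traced back to the condition that supplies it, and we check that no other condition enters. The chain is already written out in the proof of Theorem~\ref{thm:intro_transfer}, namely \eqref{eq:intro_transfer_chain_1}--\eqref{eq:intro_transfer_chain_3}, so the task is to record which link uses which hypothesis.

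\emph{First step: the master estimate (M1).} Take a smooth charge-free compatible solution $u=\mathfrak M G$. The energy comparison \eqref{eq:master_hyp_energy_comparison} and the master system \eqref{eq:master_hyp_master_system} come from (A1). The decomposition $\Pb_{a,Q}=\Pb_{\RN,Q}+\mathcal E_{a,Q}$ with bound \eqref{eq:master_hyp_error_bound} comes from (A2). Proposition~\ref{prop:framework_consequences} then derives (M1) as follows:
\begin{itemize}
\item apply the spherical estimate of Lemma~\ref{lem:rn_model_morawetz} with source $-\mathcal E_{a,Q}u$;
\item glue in the red-shift and $r^p$ identities of Propositions~\ref{prop:redshift_coercivity} and~\ref{prop:rp_identity};
\item absorb the $O(|a|/M)$ top-order error by shrinking $\eps_a$;
\item close the strict lower-order commutator terms by induction using \eqref{eq:strict_lower_order_induction};
\item remove the remaining compact $LE^0_{\comp}$ term by the Fredholm/no-defect alternative of (A3).
\end{itemize}
None of these steps calls on $\mathfrak R$, radiation traces, or a decay hierarchy, so (A4)--(A6) are absent from (M1).

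\emph{Second step: the Maxwell estimate and the scattering conclusions.} The Maxwell estimate \eqref{eq:intro_main_estimate} uses $G=\mathfrak R u$ together with \eqref{eq:master_hyp_reconstruction_bound}. This is exactly where (A4) is added, and it is the only new input at this stage. The radiation maps are built from the factorization \eqref{eq:intro_maxwell_trace_factorization}, and their inverses from \eqref{eq:intro_maxwell_wave_operator}. These need the trace bounds, the identifications $\mathcal R_\infty^\pm$, the right inverses $\mathscr W_{M,0}^\pm$ and the trivial-kernel statement, all of which are (A5). We will note, pointing to Proposition~\ref{prop:backward_from_lap} as stated, that (A5) is itself produced from the limiting-absorption resolvent of (A3), so at this stage no analytic input beyond (A3) is introduced. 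Finally, the pointwise bound goes through Proposition~\ref{prop:pointwise_decay}, which is the only place \eqref{eq:master_hyp_decay_hierarchy} from (A6) appears.

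\emph{Main obstacle.} The delicate point is confirming that the compact-remainder removal in (M1) really uses only the no-defect content of (A3). In particular it must not secretly use reconstruction (A4) to convert a Maxwell-level charge-free condition into the master compatible class. To settle this, we would check that charge-freeness is encoded at the master level through the compatible class defined in (A1). Specifically, this means absence of $\ell=0$ modes, via Proposition~\ref{prop:coulomb_elimination} and the definition of the compatible class, so that the contradiction sequences in \eqref{eq:master_hyp_no_defect_sequence} stay inside the class without invoking $\mathfrak R$.
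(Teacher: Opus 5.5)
Your audit is essentially the paper's own proof, which records the same chain. (A1) supplies $u=\mathfrak M G$ and the energy comparison. The red-shift/$r^p$/trapped-commutator inequality is closed order by order using (A2) and \eqref{eq:strict_lower_order_induction}, and the compact term is removed by (A3). Then (A4) gives the Maxwell bound, (A5), constructed in Proposition~\ref{prop:backward_from_lap}, together with the Hilbert trace criterion gives the radiation isomorphisms, and (A6) enters only the pointwise bound.

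The extra check in your last paragraph is not part of the paper's argument, and as phrased it misplaces where charge-freeness lives. Proposition~\ref{prop:coulomb_elimination} expresses the charges through the spherical means of the middle components $\rho_F,\sigma_F$, which are not entries of $u=(\psi_+,\psi_-)$. The spin-$\pm1$ extreme variables have no $\ell=0$ part whatever the charges, since the Coulomb fields have vanishing extreme components in the principal frame. What actually keeps the defect sequences admissible without $\mathfrak R$ is that the compatible class is a closed graph class carrying the Maxwell field and its constraints (Definition~\ref{def:master_from_teukolsky}, Lemma~\ref{lem:app_compatible_closed}, Proposition~\ref{prop:compatibility_microlocal_compactness}), along which the continuous charge functionals pass to the limit.
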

\begin{proof}
Starting with a smooth charge-free Maxwell solution $G$, \emph{(A1)} supplies the
master field $u=\mathfrak M G$, the energy comparison
\eqref{eq:master_hyp_energy_comparison}, and the scalar-principal-symbol system.
The red-shift estimate near the horizon, the $r^p$ identity in the far region,
and the trapped-set commutator produce the a priori inequality
\[
        \|u\|_{\Xnorm{k}_M(\tau_1,\tau_2)}^2
        \le C\E_M^{(k)}[u](\tau_1)
        +C\|\mathcal E_{a,Q}u\|_{LE^{*,k}}^2
        +C\|u\|_{LE^0_{\comp,k}}^2.
\]
At commuted order \(j\le k\), the perturbation bound in \emph{(A2)} and
\eqref{eq:strict_lower_order_induction} imply
\[
        X_j\le C E_j(\tau_1)+C\frac{|a|}{M}X_j+
        \eta X_j+C_{j,\eta}X_{j-1}+C\|u\|_{LE^0_{\comp,j}}^2,
\]
where \(X_j\) denotes the order-\(j\) master spacetime norm and \(E_j\) the
corresponding initial energy. Choose \(\eta\) and then \(|a|/M\) small enough,
according to Proposition~\ref{prop:constant_choice}, to absorb the two \(X_j\)
terms. The induction begins at \(j=0\), where \eqref{eq:strict_lower_order_induction}
has no lower-order term. The remaining compact term is removed by the Fredholm
alternative encoded in \emph{(A3)}: a violating sequence has either bounded temporal frequency, where
limiting absorption and absence of outgoing real resonances force the limit to
vanish, or high frequency, where the semiclassical no-defect alternative rules
out residual local mass. For that reason, \emph{(M1)} follows from \emph{(A1)-(A3)}.
Applying \emph{(A4)} then gives
\[
        \|G\|_{\Xnorm{k}_{\Max}}^2
        \le C_R\|u\|_{\Xnorm{k}_M}^2
        \le C\E_M^{(k)}[u](\tau_1)
        \le C\E_{\Max}^{(k)}[G](\tau_1),
\]
which is the charge-free Maxwell estimate. The trace estimates from the
red-shift and $r^p$ fluxes yield bounded radiation maps. With \emph{(A5)}, the
abstract Hilbert trace criterion, Lemma~\ref{lem:hilbert_trace_criterion}, turns
these maps into bounded isomorphisms; Proposition~\ref{prop:backward_from_lap}
constructs the required dense right inverse for the fixed-background test field
from the resolvent. To finish, no preceding estimate gives pointwise decay without
an additional finite commuted hierarchy; that hierarchy is exactly \emph{(A6)},
and Corollary~\ref{cor:commuted_hierarchy_closure} is the only place where it is
used.
\end{proof}

\begin{proposition}
\label{prop:finite_order_closure_constants}
Let us fix the commutation order $k$.  The constants and small parameters in the
transfer proof may be chosen in the following order:
\begin{equation}\label{eq:finite_order_constant_order}
\begin{aligned}
        k&\longrightarrow \eps_Q(k)
        \longrightarrow
        \{C_{\RN,k},C_{\mathrm{red},k},C_{p,k},C_{\mathrm{Hodge},k},
          C_{\mathrm{tr},k}\} \\
        &\longrightarrow \{\eta_j:0\le j\le k\}
        \longrightarrow \eps_a(k)
        \longrightarrow C_k.
\end{aligned}
\end{equation}
With this choice no estimate at order $j$ uses an estimate at order $j+1$; the
charge sector is removed before any master estimate is applied; and the additional
pointwise hierarchy \emph{(A6)} is not used in the energy, local-energy,
radiation, wave-operator or scattering estimates.
\end{proposition}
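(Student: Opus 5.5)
The plan is to prove Proposition~\ref{prop:finite_order_closure_constants} as a bookkeeping statement. I will trace every estimate in the transfer proof, record which constants each one depends on, and check that the dependency graph is acyclic and ordered as in \eqref{eq:finite_order_constant_order}.

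\textbf{Charge layer and $\eps_Q(k)$.} First fix $k$. The charge layer is parameter-local but order-independent in structure: Proposition~\ref{prop:finite_energy_charge_trace}, Lemma~\ref{lem:stationary_representatives} and Proposition~\ref{prop:bounded_projection} involve no master estimates. So $\Pi_0$ can be applied to the data before anything else, which gives the statement that the charge sector is removed first.

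Next choose $\eps_Q(k)$ so that three things hold for $|Q|\le\eps_QM$:
\begin{itemize}
\item[(i)] the Reissner-Nordstr\"om photon sphere \eqref{eq:rn_photon_sphere} stays inside the fixed trapping collar of Definition~\ref{def:concrete_le_norms};
\item[(ii)] the red-shift radius $r_H>r_+(Q)$ and the radius $R_\infty$ can be fixed uniformly;
\item[(iii)] the spherical estimates (Lemma~\ref{lem:rn_model_morawetz}, Propositions~\ref{prop:redshift_coercivity}, \ref{prop:rp_identity}) hold with constants $C_{\RN,k},C_{\mathrm{red},k},C_{p,k}$ that are uniform in $Q$, by smooth dependence on $Q$ over a compact subextremal set.
\end{itemize}
The Hodge constant $C_{\mathrm{Hodge},k}$ comes from inverting the angular Laplacian on mean-zero functions, using Proposition~\ref{prop:coulomb_elimination}. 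The trace constant $C_{\mathrm{tr},k}$ comes from the red-shift and $r^p$ fluxes. None of these constants involves $a$, so they sit in the second slot of the chain.

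\textbf{Induction on commuted order and $\eps_a(k)$.} Run the induction of Proposition~\ref{prop:exact_estimate_use}. At order $j$,
\[
X_j\le CE_j+C\tfrac{|a|}{M}X_j+\eta_jX_j+C_{j,\eta_j}X_{j-1}+C\|u\|^2_{LE^0_{\comp,j}}.
\]
Here $C$ depends only on the constants of the previous step. Choose $\eta_j=1/(4C)$ using only those constants, with no reference to order $j+1$. By \eqref{eq:strict_lower_order_induction}, the only other-order term is $X_{j-1}$, which is already controlled. This proves the claim that no order-$j$ estimate uses order $j+1$.

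After all $\eta_j$ are fixed, set $\eps_a(k)$ so that $C\eps_a\le 1/4$ for every $j\le k$. Shrink it further, if needed, so that three conditions hold:
\begin{itemize}
\item[(i)] the Kerr-Newman trapped set stays in the collar, by Lemma~\ref{lem:trapping_stability};
\item[(ii)] the matrix-skew term \eqref{eq:matrix_skew_decomposition} is below its threshold;
\item[(iii)] the $a^2\mathcal Q^{(2)}$ remainder is absorbed.
\end{itemize}
The compact term is then removed by the contradiction argument in (A3). This argument allows parameter sequences in the closed slow-weak set, so its constant is uniform and is fixed once $\eps_a,\eps_Q$ are fixed. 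Finally $C_k$ is the resulting product of constants, for instance $C_R^2C_M$ as in the transfer theorem.

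\textbf{Exclusion of (A6).} Check that (A6) does not appear in any of the inequalities listed above. By Proposition~\ref{prop:exact_estimate_use}, it enters only in Corollary~\ref{cor:commuted_hierarchy_closure}.

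\textbf{Main obstacle.} The delicate step is making sure the Fredholm/compactness constant for $LE^0_{\comp}$ does not secretly depend on $\eps_a$ in a circular way. Shrinking $\eps_a$ must not enlarge that constant beyond what the earlier absorption allowed. I would handle this by proving the compact bound once for the whole closed parameter box $|a|\le\eps_a^0M$. The contradiction sequences in \eqref{eq:master_hyp_no_defect_sequence} already range over such a box, so the constant is uniform under later shrinking, and absorption with any smaller $\eps_a$ stays consistent.
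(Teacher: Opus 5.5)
Your proposal is correct and follows essentially the same route as the paper. You fix $\eps_Q(k)$ to freeze the Reissner-Nordstr\"om model, red-shift, $r^p$, Hodge and trapped-set constants, then choose the $\eta_j$, then $\eps_a(k)$, run the induction upward from $j=0$ with the compact term removed by the (A3) no-defect argument, and note that (A6) enters only Proposition~\ref{prop:pointwise_decay}; your remark that the compact-removal constant stays uniform when $\eps_a$ shrinks matches the paper's comment in Lemma~\ref{lem:localized_compact_defect}.

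One correction: in the paper $C_{\mathrm{tr},k}$ denotes the trapped-set constants of the semiclassical normalization, not a trace constant. These include the expansion lower bound $\lambda_0$, which your matrix-skew condition (ii) needs fixed before $\eps_a$. A trace constant, by contrast, is derived from the master estimate and so cannot be placed before $\eps_a$ in the chain.
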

\begin{proof}
After $k$ is fixed, choose $\eps_Q(k)$ so that the Reissner-Nordstr\"om horizon,
photon-sphere collar, red-shift collar and far-field constants remain in compact
non-degenerate ranges. This fixes the spherical local-energy constant
$C_{\RN,k}$, the red-shift and $r^p$ constants, the angular Hodge constants, and
the trapped-set constants appearing in the semiclassical normalization. At
commuted order $j\le k$ the perturbative estimate has the form
\begin{equation}\label{eq:finite_order_closure_ineq}
        X_j\le A_jE_j+B_j\frac{|a|}{M}X_j+\eta_jX_j
        +D_j\sum_{\ell<j}X_\ell+K_j,
\end{equation}
where $X_j$ denotes the order-$j$ master spacetime norm, $E_j$ is the
corresponding energy, and $K_j$ is the compact remainder. First choose
$\eta_j>0$ so that $\eta_jX_j$ is absorbable. Then choose $\eps_a(k)$ small
enough that $B_j|a|M^{-1}\le 1/4$ for every $j\le k$.  The induction starts at
$j=0$, where the lower-order sum is absent, and then proceeds upward. At each
stage the compact term $K_j$ is removed by the Fredholm/no-defect argument in
\emph{(A3)}: bounded total frequencies are excluded by the limiting-absorption
real-axis argument, while unbounded conic frequencies are excluded by the
localized high-frequency estimate \eqref{eq:normally_hyperbolic_resolvent_bound}
after the geometric verification of Proposition~\ref{prop:r_normal_hyperbolicity} and the finite-rank-bundle threshold verification of Proposition~\ref{prop:matrix_skew_threshold}.
That gives the order-$j$ estimate without invoking any order greater than $j$.
The charge decomposition is Theorem~\ref{thm:intro_charge_decomposition}, applied
before forming the master variables. The hierarchy \emph{(A6)} appears only in
Proposition~\ref{prop:pointwise_decay}; it is not present in the estimates used
for boundedness, local energy decay, radiation traces or the Hilbert-space
scattering construction. The constants obtained after the last induction step
are collected into $C_k$.
\end{proof}

\begin{definition}
\label{def:analytic_master_conclusions}
The analytic setting at order $k$ consists of the following consequences.

\emph{(M1) Master estimate.} Every smooth compatible solution satisfies, for all $\tau_1\le\tau_2$,
\begin{equation}\label{eq:framework_master_estimate}
        \sup_{\tau_1\le s\le\tau_2}\E_M^{(k)}[u](s)
        +\B_M^{(k)}[u](\tau_1,\tau_2)+\sup_{0\le p\le2}\Fc_{p,R}^{(k)}[u](\tau_1,\tau_2)
        \le C_M\E_M^{(k)}[u](\tau_1),
\end{equation}
with the time-reversed estimate on past slabs.

\emph{(M2) Same-order Maxwell reconstruction.} The maps $\mathfrak M$ and $\mathfrak R$ satisfy the inverse identities and reconstruction bounds
\begin{align}\label{eq:framework_reconstruction_bounds}
        \norm{\mathfrak R u}_{\Xnorm{k}_{\Max}(\tau_1,\tau_2)}^2&\le C_R\norm{u}_{\Xnorm{k}_M(\tau_1,\tau_2)}^2,\nonumber\\
        \E_M^{(k)}[\mathfrak M G](\tau_1)&\le C_R\E_{\Max}^{(k)}[G](\tau_1),\qquad
        \E_{\Max}^{(k)}[\mathfrak R u](\tau_1)\le C_R\E_M^{(k)}[u](\tau_1),
\end{align}
and there is no nonzero finite-energy compatible real-frequency mode in the charge-free sector.

\emph{(M3) Radiation.} The future and past master radiation maps $\mathscr S_M^\pm$ are bounded isomorphisms with bounded inverses $\mathscr W_M^\pm$, and the Maxwell radiation fields arise from the master ones through bounded isomorphisms $\mathcal R_\infty^\pm$.

\emph{(M4) Additional hierarchy.} If the additional hierarchy condition \emph{(A6)} is available and $k$ is large enough for Sobolev embedding, then for some $\gamma>0$,
\begin{equation}\label{eq:commuted_decay}
        \sum_{|I|\le k_0}\norm{\Gamma^IG}_{L^2(\Sigma_\tau\cap\mathcal U_r)}^2
        \le C(1+\tau)^{-\gamma}\E_{\Max}^{(k)}[G](0).
\end{equation}
\end{definition}

\begin{lemma}
\label{lem:master_immediate}
Under \emph{(M1)}, every compatible smooth master solution satisfies, for all
$\tau\ge0$, $\norm{u}_{\Xnorm{k}_M(0,\tau)}^2\le C_M\E_M^{(k)}[u](0)$, and
likewise on past slabs.
\end{lemma}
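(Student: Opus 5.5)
The plan is to read the lemma off (M1), specialized to $\tau_1=0$, $\tau_2=\tau$, and compare its left-hand side with the norm of Definition~\ref{def:spacetime_norms}. By \eqref{eq:master_norm}, $\norm{u}_{\Xnorm{k}_M(0,\tau)}^2$ is the sum of three terms:
\begin{itemize}
\item the supremum of the master energy $\E_M^{(k)}[u](s)$ over $0\le s\le\tau$;
\item the bulk term $\B_M^{(k)}[u](0,\tau)$;
\item the supremum over $0\le p\le 2$ of the fluxes $\Fc_{p,R}^{(k)}[u](0,\tau)$.
\end{itemize}
These are exactly the three summands on the left of \eqref{eq:framework_master_estimate}. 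Since (M1) holds for all $\tau_1\le\tau_2$, taking $\tau_1=0$ and $\tau_2=\tau$ gives $\norm{u}_{\Xnorm{k}_M(0,\tau)}^2\le C_M\E_M^{(k)}[u](0)$ directly. The identification of the two expressions is a term-by-term match of definitions, so no absorption or constant adjustment is needed. If the master norm is only defined up to the fixed equivalence of norms in Definition~\ref{def:concrete_le_norms}, the equivalence constant is absorbed into $C_M$.

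The one point to check is uniformity in $\tau$. The constant $C_M$ in (M1) does not depend on $\tau_1,\tau_2$; it depends only on $k$, the foliation and the slow-weak parameter range. Hence the bound holds for every $\tau\ge0$ with the same constant. Since the bound is uniform, letting $\tau\to\infty$ by monotone convergence also gives the global bound on $\D(0,\infty)$, although the lemma does not require this.

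For the past statement, I would apply the time-reversed form of (M1) on the slab $\D(-\tau,0)$, with the roles of the initial and final slices exchanged and the fluxes taken on $\Hm\cup\Imn$. The argument is then identical.

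I expect no real obstacle here. The only thing to verify is that the normalization of $\Fc_{p,R}^{(k)}$ in \eqref{eq:master_norm} (supremum over $p$, fixed $R$) matches the normalization used in (M1), and the two definitions show that it does.
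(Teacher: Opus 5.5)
Your proposal is correct and matches the paper's proof. You specialize (M1) to $\tau_1=0$, $\tau_2=\tau$, observe that its left side is by definition $\norm{u}_{\Xnorm{k}_M(0,\tau)}^2$ with a $\tau$-independent constant, and invoke the time-reversed form of (M1) for past slabs. Your hedge about absorbing a norm-equivalence constant into $C_M$ is unnecessary, and it would alter the stated constant: the three summands in \eqref{eq:framework_master_estimate} coincide term by term with \eqref{eq:master_norm}.
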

\begin{proof}
The estimate \eqref{eq:framework_master_estimate} in \emph{(M1)} is stated on
arbitrary slabs $[\tau_1,\tau_2]$. Taking $\tau_1=0$ and $\tau_2=\tau$ gives
$\|u\|_{\Xnorm{k}_M(0,\tau)}^2\le C_M\E_M^{(k)}[u](0)$. Applying the same
estimate to the time-reversed foliation gives the corresponding bound on past
slabs.
\end{proof}

\begin{lemma}
\label{lem:same_order_needed}
If the reconstruction bound lost one derivative, an order-$k$ master estimate
would not imply an order-$k$ Maxwell estimate.
\end{lemma}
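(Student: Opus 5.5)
The statement is a sharpness remark: a reconstruction bound of the form $\norm{\mathfrak R u}_{\Xnorm{k}_{\Max}}\le C\norm{u}_{\Xnorm{k+1}_M}$ cannot be chained with an order-$k$ estimate (M1) to give an order-$k$ Maxwell estimate. I would prove it by exhibiting the mismatch in the chain \eqref{eq:intro_transfer_chain_1}--\eqref{eq:intro_transfer_chain_3}, and then making it quantitative with a family of charge-free solutions whose frequency tends to infinity.

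First, the structural point. With a lossy reconstruction, the chain would read
\[
\norm{G}_{\Xnorm{k}_{\Max}}^2\le C\norm{u}_{\Xnorm{k+1}_M}^2\le CC_M\E_M^{(k+1)}[u](0)\le C'\E_{\Max}^{(k+1)}[G](0),
\]
so the right-hand side involves the order-$(k+1)$ energy. The order-$k$ hypothesis (M1) says nothing about $\norm{u}_{\Xnorm{k+1}_M}$, so the chain simply does not close at order $k$.

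To show that no cleverer argument can close it, I would construct a sequence of smooth charge-free data $G_n$ with $\E_{\Max}^{(k)}[G_n](0)=1$. I would localize them at angular frequency $\ell_n\to\infty$, away from the horizon and the trapped collar, for instance by reconstructing from master data $u_n$ built from a fixed radial profile times a spin-weighted harmonic of degree $\ell_n$. For such data $\E^{(k+1)}\sim\ell_n^2\,\E^{(k)}$. This is where Lemma~\ref{lem:frame_energy_equiv} enters: it lets me read the Maxwell energy off the frame components, so the extra angular derivative costs exactly a factor $\ell_n$.

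The logical form of the claim is: the lossy hypothesis together with (M1) at order $k$ is consistent with a reconstruction whose order-$k$ operator norm is unbounded. I would realize this abstractly by modifying $\mathfrak R$ with a frequency multiplier $\sphlap$-weighted by $\ell_n$ on the $\ell_n$-block. Such an operator still satisfies the lossy bound, since one derivative pays for the factor $\ell_n$. It violates any order-$k$ bound, since applied to the $u_n$ it gives
\[
\norm{\mathfrak R u_n}_{\Xnorm{k}_{\Max}}\gtrsim \ell_n\to\infty .
\]
Hence the implication fails in general.

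I expect the main obstacle to be making the counterexample honest within the compatible class of (A1). The modified operator must still satisfy the inverse identities \eqref{eq:master_hyp_inverse_identities}, or else the lemma must be phrased as a statement about the implication rather than about the true $\mathfrak R$. I would take the latter reading: state precisely that the lossy bound plus (M1) gives only $\norm{G}_{\Xnorm{k}_{\Max}}^2\le C\E_{\Max}^{(k+1)}[G](0)$, and that the family $G_n$ shows this cannot be upgraded by norm considerations alone. This explains why (A4) is imposed at the same order.
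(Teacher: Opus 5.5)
Your argument is essentially the paper's. With a lossy reconstruction the chain only closes with $\norm{u}_{\Xnorm{k+1}_M}$, equivalently $\E_{\Max}^{(k+1)}[G](0)$, on the right, which (M1) at order $k$ does not control; and high-angular-frequency data keep the order-$k$ norms fixed while the order-$(k+1)$ norms diverge, so no uniform norm argument recovers the lost derivative. Your fallback reading, as a statement about the implication rather than about the true $\mathfrak R$, is the right one. The multiplier-modified operator is unnecessary and, as you suspected, incompatible with $\mathfrak R\mathfrak M G=G$, which determines $\mathfrak R$ uniquely on the compatible class.
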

\begin{proof}
Suppose instead that reconstruction were known only in the form
$\|G\|_{\Xnorm{k}_{\Max}}\le C\|u\|_{\Xnorm{k+1}_M}$. The order-$k$ master
estimate controls $\|u\|_{\Xnorm{k}_M}$ but gives no uniform control of the top
order derivatives appearing in $\|u\|_{\Xnorm{k+1}_M}$. A compactness or
interpolation argument cannot recover this missing derivative with constants
uniform on the energy space, since high angular or radial frequencies can keep
the order-$k$ norm fixed while the order-$(k+1)$ norm diverges. Accordingly, an
order-$k$ Maxwell estimate follows from an order-$k$ master estimate only when
the reconstruction is bounded at the same order, as in
\eqref{eq:framework_reconstruction_bounds}.
\end{proof}

\begin{lemma}
\label{lem:no_kernel_requirement}
If a nonzero charge-free finite-energy Maxwell solution $G$ had $\mathfrak M
G=0$, no radiation map built from $\mathfrak M G$ would be injective on
$\Hc_{\Max,0}^{(k)}$.
\end{lemma}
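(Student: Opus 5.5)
The idea is to show that any radiation map built from $\mathfrak M G$ factors through $\mathfrak M$, so it cannot distinguish $G$ from $0$. Concretely, fix a nonzero charge-free finite-energy solution $G$ with $\mathfrak M G=0$. By the definition of the radiation maps in \eqref{eq:intro_maxwell_trace_factorization}, any such map has the form
\[
\mathscr S^{\pm}=\mathcal T^{\pm}\circ\mathfrak M
\]
on $\Hc_{\Max,0}^{(k)}$, where $\mathcal T^{\pm}$ is a linear map on master data; for example $\mathcal T^\pm=\mathcal R_\infty^\pm\mathscr S_M^\pm$, or any other trace of the master solution. The first step is to record this factorization. I will stress that linearity of $\mathcal T^\pm$ is the only property needed; no boundedness or estimate from \emph{(A1)}--\emph{(A5)} enters.

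Next I need $G$ to be a legitimate element of the domain. Since $G$ is charge-free and has finite energy, its Cauchy datum lies in $\Hc_{\Max,0}^{(k)}(\Sigma_0)$ by Definition~\ref{def:chargefree_space}. If $G$ is only a finite-energy (not smooth) solution, the map $\mathfrak M$ is understood as its continuous extension. In that case the radiation map is defined by density, through Lemma~\ref{lem:density_chargefree} and the bounded extension. The composed map still evaluates to $\mathcal T^\pm(\mathfrak M G)$, so the hypothesis $\mathfrak M G=0$ is read in that extended sense.

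Linearity then gives $\mathscr S^\pm G=\mathcal T^\pm(0)=0=\mathscr S^\pm 0$ with $G\neq0$. Hence $\ker\mathscr S^\pm\ni G$ is nontrivial and $\mathscr S^\pm$ is not injective. I would add the remark that the same argument shows $\mathfrak R\mathfrak M G=0\neq G$, which is incompatible with \eqref{eq:master_hyp_inverse_identities}. This is why \emph{(A4)}--\emph{(A5)} must include a kernel-free statement.

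The only point requiring care is the extension to the finite-energy completion. There one must check that the extended radiation map still factors through the extended $\mathfrak M$. This follows because both maps are obtained as continuous extensions from the same dense smooth class, and a composition of continuous extensions agrees with the continuous extension of the composition.
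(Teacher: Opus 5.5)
Your argument is correct and essentially the paper's proof: a radiation map built from the master variables factors through $\mathfrak M$, so it assigns to $G$ the same data as to the zero solution, and the inverse identity $G=\mathfrak R\mathfrak M G$ is what excludes this situation. One minor inconsistency: you say only linearity of $\mathcal T^\pm$ is needed (in fact $\mathfrak M0=0$ alone suffices), yet your finite-energy paragraph uses continuity of $\mathcal T^\pm$ to extend the composition; this is harmless, since the factors are bounded in the paper's setting.
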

\begin{proof}
If $G\ne0$ and $\mathfrak M G=0$, then every radiation field obtained by first
passing to master variables assigns to $G$ the same radiation data as to the
zero solution. Such a radiation map cannot be injective on the charge-free
energy space. The inverse identities in \eqref{eq:master_hyp_inverse_identities}
rule out this situation for smooth solutions, and by density for the
finite-energy completion, because $G=\mathfrak R\mathfrak M G$ in the completed
energy norm.
\end{proof}

\section{Structural Spin-One Reduction and Scalar Principal Symbol}
\label{sec:decoupling}
In this section we discuss the structural spin-one reduction and identify the scalar principal symbol of the corresponding master operator.
The transfer theorem relies on a closed spin-one master system for the
charge-free fixed-background Maxwell field. In this section we keep two issues
separate: the algebraic information that follows from Maxwell's equation, and
the part that must be assumed as a structural condition in the Kerr-Newman problem.
The covariant wave equation for $F$ gives a scalar principal symbol on two-forms
without using separation of variables. Once a closed pair of regular spin-one
extreme variables satisfying Definition~\ref{def:master_from_teukolsky} is
available, the displayed spin-weighted model operator has principal symbol
$g_{\KN}^{\mu\nu}\xi_\mu\xi_\nu I_2$, and the comparison with the
Reissner-Nordstr\"om operator becomes a coefficient calculation. The
Dudley-Finley equation is not used as an exact Maxwell decoupling on
Kerr-Newman for $Q\ne0$; in the nonzero-charge rotating case the closed master
reduction is precisely the structural hypothesis \emph{(A1)}.

\subsection*{Principal null frame and the type-\texorpdfstring{$D$}{D} structure}
The Kerr-Newman exterior is Petrov type $D$: away from the axis the Weyl tensor has exactly two repeated principal null directions. Let $(l,n,m,\bar m)$ be the Kinnersley principal null frame, with normalization
\begin{equation}\label{eq:np_normalization}
        g(l,n)=-1,\qquad g(m,\bar m)=1,\qquad \text{all other pairings zero},
\end{equation}
so that the inverse metric is reconstructed from the frame by
\begin{equation}\label{eq:np_inverse_metric}
        g^{\mu\nu}=m^\mu\bar m^\nu+\bar m^\mu m^\nu-l^\mu n^\nu-n^\mu l^\nu.
\end{equation}
Because $l$ and $n$ are repeated principal null directions, the Goldberg-Sachs theorem makes them geodesic and shear-free; in Newman-Penrose notation the spin coefficients then satisfy
\begin{equation}\label{eq:typeD_spin}
        \kappa=\sigma=\nu=\lambda=0,
\end{equation}
and the only nonvanishing Weyl scalar is $\psi_2$. We write the four directional derivatives as $\nabla_l,\nabla_n,\nabla_m,\nabla_{\bar m}$; the metric function $\Delta=r^2-2Mr+a^2+Q^2$ is denoted as throughout, and is not to be confused with any NP operator. The frame \eqref{eq:np_normalization} is smooth and uniformly invertible on the regular exterior; near $\Hp$ it is rescaled by the regular boost $\widehat e_3=f_3e_3$, $\widehat e_4=f_4e_4$ ($f_3f_4=1$) of Subsection~\ref{subsec:regular_frame_master}, which is the only place horizon-regular weights enter.

\subsection*{The Newman-Penrose Maxwell system and the closed extreme variables}
For a real two-form $F$ define the three complex Maxwell scalars
\begin{equation}\label{eq:maxwell_scalars}
        \phi_0=F(l,m),\qquad \phi_1=\tfrac12\bigl(F(l,n)+F(\bar m,m)\bigr),\qquad \phi_2=F(\bar m,n).
\end{equation}
The relation to the frame components \eqref{eq:maxwell_components} is the fixed smooth linear change $\phi_0\leftrightarrow\alpha$, $\phi_2\leftrightarrow\underline\alpha$ (extreme), $\phi_1\leftrightarrow\rho_F+i\sigma_F=\varphi$ (middle). Throughout, principal symbols are computed by the substitution $\partial_\mu\mapsto\xi_\mu$ in the top-order part, so that $\sigma_2(\Box_{g})=g^{\mu\nu}\xi_\mu\xi_\nu$; this real convention differs by an overall sign from the convention $\sigma(-i\partial_\mu)=\xi_\mu$ and is used consistently below.

\begin{lemma}
\label{lem:maxwell_wave_principal_symbol}
With the curvature convention
\([\nabla_\alpha,\nabla_\beta]X_\gamma=R_{\alpha\beta\gamma}{}^{\delta}X_\delta\), every source-free Maxwell two-form satisfies
\begin{equation}\label{eq:covariant_maxwell_wave}
        \nabla^\lambda\nabla_\lambda F_{\mu\nu}
        =R_{\mu}{}^{\lambda}F_{\lambda\nu}+R_{\nu}{}^{\lambda}F_{\mu\lambda}
        -2R_{\mu\lambda\nu\sigma}F^{\lambda\sigma}.
\end{equation}
This implies that the covariant second-order Maxwell equation has scalar principal symbol
\begin{equation}\label{eq:maxwell_covariant_principal_symbol}
        g_{\KN}^{\alpha\beta}\xi_\alpha\xi_\beta\,\Id_{\Lambda^2T^*\M}.
\end{equation}
All dependence on the Weyl curvature and on the electrovac Ricci tensor of Kerr-Newman is zeroth order in \eqref{eq:covariant_maxwell_wave}.
\end{lemma}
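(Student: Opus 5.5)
The plan is the standard Weitzenböck computation, carried out in index notation with the curvature convention \([\nabla_\alpha,\nabla_\beta]X_\gamma=R_{\alpha\beta\gamma}{}^{\delta}X_\delta\). Start from the two source-free equations in index form, \(\nabla_{[\lambda}F_{\mu\nu]}=0\) (that is, \(\dd F=0\)) and \(\nabla^\lambda F_{\lambda\nu}=0\) (that is, \(\dd\starG F=0\)), exactly as in the proof of Lemma~\ref{lem:div_identity}. Apply \(\nabla^\lambda\) to the Bianchi identity written as
\[
\nabla_\lambda F_{\mu\nu}=-\nabla_\mu F_{\nu\lambda}-\nabla_\nu F_{\lambda\mu}.
\]
This gives \(\nabla^\lambda\nabla_\lambda F_{\mu\nu}=-\nabla^\lambda\nabla_\mu F_{\nu\lambda}-\nabla^\lambda\nabla_\nu F_{\lambda\mu}\). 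In each term on the right, commute \(\nabla^\lambda\) past \(\nabla_\mu\) (respectively \(\nabla_\nu\)). The resulting terms \(\nabla_\mu\nabla^\lambda F_{\nu\lambda}\) and \(\nabla_\nu\nabla^\lambda F_{\lambda\mu}\) vanish by the divergence equation. What remains is the commutator \([\nabla_\lambda,\nabla_\mu]\) acting on the two-tensor \(F_{\nu\lambda}\), with \(\lambda\) raised and contracted.

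The commutator acting on a two-tensor produces one curvature term per index. The term coming from the contracted slot \(\lambda\) gives a Ricci contraction, \(R_\mu{}^\lambda F_{\lambda\nu}\) after using antisymmetry of \(F\). The term coming from the free slot gives a Riemann contraction of the form \(R_{\lambda\mu\nu}{}^{\delta}F_\delta{}^{\lambda}\). The second term produces the same two structures with \(\mu\) and \(\nu\) interchanged, up to sign. I will then combine the two Riemann pieces using the first Bianchi identity \(R_{\mu[\lambda\nu\sigma]}=0\) together with antisymmetry of \(F^{\lambda\sigma}\), which turns their sum into the single term \(-2R_{\mu\lambda\nu\sigma}F^{\lambda\sigma}\). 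This yields \eqref{eq:covariant_maxwell_wave}.

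This sign and index bookkeeping is the only real obstacle. I would fix it first by checking against flat space, where the right-hand side must vanish, and against the standard form \(\Delta_{\mathrm{Hodge}}F=-\Box F+\mathrm{Ric}\!\cdot\!F-2\,\mathrm{Riem}\!\cdot\!F\) (i.e.\ \(\Delta_{\mathrm{Hodge}}=-\Box+\mathrm{Ric}\!\cdot-2\mathrm{Riem}\!\cdot\)) in the stated convention. The precise constants are irrelevant for the principal-symbol claim anyway.

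For the symbol, the left side is \(g^{\lambda\kappa}\nabla_\kappa\nabla_\lambda F_{\mu\nu}\). In coordinates its second-order part is \(g^{\lambda\kappa}\partial_\kappa\partial_\lambda\) acting componentwise on \(F_{\mu\nu}\). The Christoffel terms are first order and the curvature terms on the right are zeroth order. With the real convention \(\partial_\mu\mapsto\xi_\mu\), the symbol is therefore \(g_{\KN}^{\alpha\beta}\xi_\alpha\xi_\beta\,\Id_{\Lambda^2T^*\M}\), which is \eqref{eq:maxwell_covariant_principal_symbol}. Finally, decompose Riemann into its Weyl part and its Ricci part. The Kerr-Newman Ricci tensor is the electrovac one, \(R_{\mu\nu}=2\Tstress_{\mu\nu}[F^{\mathrm{bg}}]\) up to normalization, so it is trace-free and smooth. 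Both the Weyl and the electrovac Ricci parts enter only through the algebraic right-hand side, hence only at order zero.
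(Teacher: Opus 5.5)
Your proposal is correct and is essentially the paper's own proof. You apply \(\nabla^\lambda\) to the cyclic Bianchi identity, commute so that the differentiated divergence terms vanish by \(\nabla^\lambda F_{\lambda\mu}=0\), and read off the Ricci and Riemann endomorphisms from the commutator on a two-tensor (with the stated convention, each lower index contributes a \(+R\) term); the top order is then the componentwise \(\nabla^\lambda\nabla_\lambda\), whose symbol is \(g_{\KN}^{\alpha\beta}\xi_\alpha\xi_\beta\,\Id\).

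Two small remarks. The two Riemann pieces combine into \(-2R_{\mu\lambda\nu\sigma}F^{\lambda\sigma}\) using only pair symmetry and the antisymmetry of \(F\), so no separate appeal to the first Bianchi identity is needed. Also, the flat-space check fixes no signs, since every term vanishes there; the comparison with the standard Weitzenb\"ock formula is the check that actually matters.
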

\begin{proof}
Use the Bianchi identity in the form
\(\nabla_\lambda F_{\mu\nu}+\nabla_\mu F_{\nu\lambda}+\nabla_\nu F_{\lambda\mu}=0\) and apply \(\nabla^\lambda\). The two differentiated divergence terms vanish after commuting derivatives because \(\nabla^\lambda F_{\lambda\mu}=0\). The commutators acting on a covariant two-form are
\[
        [\nabla_\alpha,\nabla_\beta]F_{\mu\nu}
        =-R_{\alpha\beta\mu}{}^\gamma F_{\gamma\nu}
         -R_{\alpha\beta\nu}{}^\gamma F_{\mu\gamma}.
\]
Contracting the commutator terms gives exactly the Ricci and Riemann curvature endomorphism on the right of \eqref{eq:covariant_maxwell_wave}. Since these terms contain no derivatives of $F$, the top-order part is \(\nabla^\lambda\nabla_\lambda\) acting diagonally on the six components of a two-form, and its principal symbol is \eqref{eq:maxwell_covariant_principal_symbol}. This computation is independent of separation and is used only to fix the top-order hyperbolic character; the closed extreme-variable system used below is the structural condition \emph{(A1)}.
\end{proof}

The source-free system $\dd F=0$, $\dd\starG F=0$ is equivalent to the four Newman-Penrose Maxwell equations \cite{Teukolsky1973}, which on the type-$D$ principal frame \eqref{eq:typeD_spin} reduce to
\begin{equation}\label{eq:np_maxwell}
\begin{aligned}
        \nabla_l\phi_1-\nabla_{\bar m}\phi_0&=(\pi-2\alpha)\phi_0+2\rho\,\phi_1,\\
        \nabla_m\phi_1-\nabla_n\phi_0&=(\mu-2\gamma)\phi_0+2\tau\,\phi_1,\\
        \nabla_l\phi_2-\nabla_{\bar m}\phi_1&=2\pi\,\phi_1+(\rho-2\epsilon)\phi_2,\\
        \nabla_m\phi_2-\nabla_n\phi_1&=2\mu\,\phi_1+(\tau-2\beta)\phi_2,
\end{aligned}
\end{equation}
the cross terms involving $\kappa,\sigma,\nu,\lambda$ having dropped out.

\begin{lemma}
\label{lem:decoupling}
Let $F$ be a smooth charge-free Maxwell field. Suppose that there are regular
weights $w_\pm$, depending smoothly on $(M,a,Q)$ and nonvanishing on the regular
exterior, such that the extreme variables
\begin{equation}\label{eq:teukolsky_pair}
        \psi_+=w_+^{-1}\phi_0,\qquad \psi_-=w_-^{-1}\rho^{-2}\phi_2
\end{equation}
obey a closed second-order system
\begin{equation}\label{eq:closed_spin_one_system}
        \Pb_{a,Q}(\psi_+,\psi_-)^{\mathsf T}=0
\end{equation}
whose second-order part is the spin-weighted model operator displayed in
\eqref{eq:teukolsky_operator}, and whose first- and zeroth-order coefficients are
smooth spin-bundle coefficients with the short-range bounds used in
\eqref{eq:lower_order_master}. Then the map
$F\mapsto u=(\psi_+,\psi_-)^{\mathsf T}$ supplies the structural part of
Definition~\ref{def:slowweak_master_framework}\emph{(A1)}. The lower-order
curvature terms in \eqref{eq:closed_spin_one_system} cannot change the scalar
principal symbol; the same-order recovery of the middle components is the
separate reconstruction statement \emph{(A4)} proved in
Section~\ref{app:reconstruction}.
\end{lemma}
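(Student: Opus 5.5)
The plan is to check the structural items of Definition~\ref{def:slowweak_master_framework}\emph{(A1)} one at a time for the map $\mathfrak M:F\mapsto u=(\psi_+,\psi_-)^{\mathsf T}$ defined by \eqref{eq:teukolsky_pair}. The map is well defined and horizon-regular. The weights $w_\pm$ are smooth and nonvanishing on the regular exterior. The factor $\rho^{-2}$, with the NP spin coefficient $\rho=-(r-ia\cos\theta)^{-1}$, is smooth and nonvanishing for $r>r_+>|a|$. The extreme scalars $\phi_0,\phi_2$ are fixed linear combinations of $\alpha,\underline\alpha$ in the boosted regular frame $\widehat e_3,\widehat e_4$. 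It follows that $\mathfrak M$ is a zeroth-order linear map with smooth coefficients, defined on smooth charge-free solutions. The closed compatible class is then taken to be the image of $\mathfrak M$ together with its closure. By hypothesis \eqref{eq:closed_spin_one_system}, every element of this class satisfies the matrix system \eqref{eq:master_hyp_master_system}.

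Next I would identify the principal symbol. Only the second-order part of $\Pb_{a,Q}$ matters here, and by assumption it is the spin-weighted model operator \eqref{eq:teukolsky_operator}. Proposition~\ref{prop:scalar_principal_symbol} computes its normalized Boyer--Lindquist principal part as $g_{\KN}^{\mu\nu}\xi_\mu\xi_\nu I_2$. Conjugating by the weights $w_\pm$ and $\rho^{-2}$ multiplies by smooth nonvanishing functions, so it changes only the first- and zeroth-order terms. The curvature and spin-coefficient terms are lower order by assumption. This is consistent with Lemma~\ref{lem:maxwell_wave_principal_symbol}, where all curvature enters at zeroth order. Hence these terms cannot alter the principal symbol, which gives the scalar-principal-symbol claim.

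For the energy comparison \eqref{eq:master_hyp_energy_comparison}, I would commute with $\Gamma^I\in\mathbb D_k$. The derivative $\Gamma^I\psi_\pm$ is a sum of terms (derivatives of the weights) times $\Gamma^J\phi_{0,2}$ with $|J|\le|I|$. The weights and all their $\mathbb D_k$-derivatives are bounded on the regular exterior: $r\partial_r$ and angular fields preserve the symbolic order in $r$, and the horizon weights are regular. Lemma~\ref{lem:frame_energy_equiv} bounds $|\Gamma^J\alpha|^2+|\Gamma^J\underline\alpha|^2$ by the non-degenerate Maxwell density. Integrating over $\Sigma_\tau$ then gives the comparison with a constant $C_R$. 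The $\nabla u$ part of the master energy needs a separate step: tangential derivatives are counted inside $\mathbb D_{k}$, and transversal derivatives are recovered from the NP equations \eqref{eq:np_maxwell}, which express $\nabla_n\phi_0$ and $\nabla_l\phi_2$ through tangential derivatives of $\phi_1$ and lower-order terms. Finally, the short-range lower-order coefficients are exactly the input that Proposition~\ref{prop:principal_master} needs for the Reissner--Nordstr\"om comparison \eqref{eq:lower_order_master}.

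I expect the main obstacle to be the weight behaviour at infinity in the energy comparison. The master energy includes the $r^p$-type normalisation, and $\rho^{-2}\sim r^2$ grows. I would first verify that the infinity weight $w_-$ absorbs this growth, so that $w_-^{-1}\rho^{-2}\phi_2$ carries the radiation-field weight $r$ rather than $r^2$. Failing that, I would fold the matching $r$-weights into the definition of $\E_M^{(k)}$ so that the comparison becomes an identity of densities up to bounded factors. The reconstruction of $\phi_1$ is explicitly deferred to \emph{(A4)}, so it is not part of this proof.
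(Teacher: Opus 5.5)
Your treatment of the structural content follows the paper's proof. You note that $\mathfrak M$ is a zeroth-order map with smooth nonvanishing weights. You take the compatible class to be the closure of the smooth range, so no reconstruction enters the definition of the domain. You read off the second-order part from \eqref{eq:teukolsky_operator}, whose principal symbol is $g_{\KN}^{\mu\nu}\xi_\mu\xi_\nu I_2$ by Proposition~\ref{prop:scalar_principal_symbol}. You observe that conjugation by nonvanishing multipliers only changes first- and zeroth-order terms, use Lemma~\ref{lem:maxwell_wave_principal_symbol} to confirm that curvature enters at lower order, and defer the middle components to \emph{(A4)}. That already proves the lemma in the sense the paper intends.

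The energy comparison \eqref{eq:master_hyp_energy_comparison} is not part of this lemma in the paper; it is item (v) of Corollary~\ref{cor:structural_reduction_verify}. Your version of it does not close as written, because of a derivative count.

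\begin{itemize}
\item The order-$k$ master energy is a wave-equation energy. It contains $\nabla\Gamma^I u$ for $|I|\le k$, which is $k+1$ derivatives of $F$, since $\mathfrak M$ is algebraic.
\item By contrast, $\E_{\Max}^{(k)}$ only contains $\Lie_{\Gamma^I}F$ with $|I|\le k$. So the claim that tangential derivatives ``are counted inside $\mathbb D_k$'' fails when $|I|=k$.
\item The NP identities $\nabla_n\phi_0=\nabla_m\phi_1+\dots$ and $\nabla_l\phi_2=\nabla_{\bar m}\phi_1+\dots$ only trade one $(k+1)$-st derivative of $F$ for another, namely an angular derivative of $\phi_1$.
\item These identities also leave $\nabla_l\phi_0$, $\nabla_m\phi_0$, $\nabla_n\phi_2$ and $\nabla_{\bar m}\phi_2$ untouched.
\end{itemize}

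Add the unresolved $r^2$ growth of $\rho^{-2}$ at infinity. At best, that paragraph gives a bound by $\E_{\Max}^{(k+1)}$ with suitable $r$-weights. The paper's one-line argument in Corollary~\ref{cor:structural_reduction_verify}(v) only addresses the undifferentiated density, so it does not rescue this step either. Either drop the paragraph from your proof of this lemma, or state the comparison with the index shift made explicit.
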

\begin{proof}
The Newman-Penrose Maxwell equations \eqref{eq:np_maxwell} show that the two
extreme scalars and the middle scalar form a first-order system in which the
principal directions enter through the repeated null frame and the coefficients
are smooth spin coefficients. Under the stated closed-reduction assumption, the
chosen regular weights conjugate the two extreme equations by nonvanishing
multipliers; such a conjugation changes only first- and zeroth-order terms. The
second-order part is therefore exactly the second-order part of
\eqref{eq:teukolsky_operator}. Lemma~\ref{lem:maxwell_wave_principal_symbol}
independently gives the scalar principal symbol of the full Maxwell wave equation
on two-forms, so every Ricci, Weyl, spin-coefficient and weight contribution in
the closed extreme system is lower order. The compatibility class is defined as
the closure of the smooth range of this map in
Definition~\ref{def:master_from_teukolsky}; therefore no reconstruction theorem is
used to define the master domain. The remaining components of $F$ are recovered
from the Maxwell equations by the Hodge and null-transport argument of
Section~\ref{app:reconstruction}, not by the closed extreme equations alone.
\end{proof}

\begin{proposition}
\label{prop:scalar_principal_symbol}
In Boyer-Lindquist coordinates the spin-weighted model operator used in the closed-reduction hypothesis is
\begin{equation}\label{eq:teukolsky_operator}
\begin{aligned}
        \mathcal T_s={}&\Bigl[\frac{(r^2+a^2)^2}{\Delta}-a^2\sin^2\theta\Bigr]\partial_t^2
        +\frac{2a(2Mr-Q^2)}{\Delta}\,\partial_t\partial_\phi
        +\Bigl[\frac{a^2}{\Delta}-\frac1{\sin^2\theta}\Bigr]\partial_\phi^2\\
        &-\Delta^{-s}\partial_r\bigl(\Delta^{s+1}\partial_r\,\cdot\,\bigr)
        -\frac1{\sin\theta}\partial_\theta(\sin\theta\,\partial_\theta\,\cdot\,)
        +\mathcal B_s^{(1)}+\mathcal B_s^{(0)},
\end{aligned}
\end{equation}
where the first-order operator $\mathcal B_s^{(1)}$ and the zeroth-order multiplier $\mathcal B_s^{(0)}$ contain all spin-dependent lower-order terms. For fixed $s=\pm1$ their coefficients are smooth as spin-bundle coefficients on the regular exterior; after subtracting the $a=0$ operator, the rotational part satisfies the short-range bounds recorded in \eqref{eq:lower_order_master}. The principal part is
\begin{equation}\label{eq:principal_part_identity}
        \sigma_2(\mathcal T_s)(x,\xi)=-\Sigma(x)\,g_{\KN}^{\mu\nu}(x)\,\xi_\mu\xi_\nu,
        \qquad \Sigma=r^2+a^2\cos^2\theta>0,
\end{equation}
so that, after division by the smooth nonvanishing weight $-\Sigma$, the principal symbol of the closed spin-one model equations is the scalar symbol $g_{\KN}^{\mu\nu}\xi_\mu\xi_\nu$. The spin weight enters only at lower order.
\end{proposition}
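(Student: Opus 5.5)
The identity \eqref{eq:principal_part_identity} is a direct computation: invert the Boyer-Lindquist metric \eqref{eq:kn_metric}, multiply by $-\Sigma$, and compare entry by entry with the second-order coefficients of \eqref{eq:teukolsky_operator}. The only top-order pieces of $\mathcal T_s$ are $\Delta\,\partial_r^2$ from $-\Delta^{-s}\partial_r(\Delta^{s+1}\partial_r\cdot)$ and $\partial_\theta^2$ from the $\theta$-Laplacian, apart from the explicitly displayed $\partial_t^2$, $\partial_t\partial_\phi$ and $\partial_\phi^2$ terms. Everything else, namely $(s+1)\Delta'\partial_r$, $\cot\theta\,\partial_\theta$, and $\mathcal B^{(1)}_s$, $\mathcal B^{(0)}_s$, is of order at most one. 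The spin weight therefore enters only through these lower-order terms, which gives the last sentence of the statement.

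The core step is the inverse metric. The $(r,\theta)$ block of \eqref{eq:kn_metric} is diagonal, with $g^{rr}=\Delta/\Sigma$ and $g^{\theta\theta}=1/\Sigma$. Hence $-\Sigma(g^{rr}\xi_r^2+g^{\theta\theta}\xi_\theta^2)=-\Delta\xi_r^2-\xi_\theta^2$, which matches the real-convention symbol $-\Delta\xi_r^2-\xi_\theta^2$ of the radial and angular terms. For the $(t,\phi)$ block I will use the identity $g_{tt}g_{\phi\phi}-g_{t\phi}^2=-\Delta\sin^2\theta$, valid for Kerr-Newman with $\Delta=r^2-2Mr+a^2+Q^2$. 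I would verify it most cleanly from the standard form
\[
g=-\frac{\Delta}{\Sigma}(\dd t-a\sin^2\theta\,\dd\phi)^2+\frac{\sin^2\theta}{\Sigma}\bigl((r^2+a^2)\dd\phi-a\,\dd t\bigr)^2+\cdots .
\]
The inverse of this block is then obtained by dualising the frame covectors:
\[
\Sigma\,g^{-1}\big|_{t\phi}=-\frac{1}{\Delta}\bigl((r^2+a^2)\partial_t+a\partial_\phi\bigr)^2+\frac{1}{\sin^2\theta}\bigl(\partial_\phi+a\sin^2\theta\,\partial_t\bigr)^2 .
\]
Expanding and multiplying by $-1$ gives three coefficients. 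The $\xi_t^2$ coefficient is $(r^2+a^2)^2/\Delta-a^2\sin^2\theta$. The $\xi_\phi^2$ coefficient is $a^2/\Delta-1/\sin^2\theta$. The $\xi_t\xi_\phi$ coefficient is $2a(r^2+a^2)/\Delta-2a=2a(2Mr-Q^2)/\Delta$, using $r^2+a^2-\Delta=2Mr-Q^2$. These are exactly the displayed coefficients, with the cross term $2a(\cdots)\partial_t\partial_\phi$ accounting for the symmetric pair $\xi_t\xi_\phi+\xi_\phi\xi_t$.

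With the frame identity in hand, this step is routine. The main care point is sign and normalization bookkeeping. The paper's real convention $\partial_\mu\mapsto\xi_\mu$ must be used consistently, so that the symbol of $\partial_t^2$ is $+\xi_t^2$. The symmetric factor of $2$ in the cross term must also be tracked.

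Finally, $\Sigma\ge r^2>0$ on the exterior, so division by $-\Sigma$ is a smooth nonvanishing rescaling. This rescaling changes only lower-order coefficients and leaves the scalar principal symbol $g_{\KN}^{\mu\nu}\xi_\mu\xi_\nu$ unchanged. The same holds for conjugation by the regular horizon and infinity weights, and for passage to horizon-regular coordinates, since the principal symbol is invariantly a function on $T^*\M$. The claims about smoothness and short-range bounds of $\mathcal B^{(1)}_s$ and $\mathcal B^{(0)}_s$ after subtracting the $a=0$ operator are taken from the structural hypothesis (A1) together with the coefficient comparison \eqref{eq:metric_symbol_difference} and Proposition~\ref{prop:principal_master}; this proof does not rederive them.
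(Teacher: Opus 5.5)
Your proposal is correct and follows essentially the paper's route. Both arguments identify the second-order part of $\mathcal T_s$ with $-\Sigma\,g_{\KN}^{\mu\nu}\partial_\mu\partial_\nu$. The paper does this by writing $\Delta^{-s}\partial_r(\Delta^{s+1}\partial_r\cdot)=\partial_r(\Delta\partial_r\cdot)+s\Delta'\partial_r$ and quoting the formula \eqref{eq:app_box} for $\Sigma\Box_{g_{\KN}}$, which also shows that the $s$-independent first-order terms are exactly those of $\Sigma\Box_{g_{\KN}}$. You instead derive the inverse $(t,\phi)$ block from the coframe form, which makes the metric computation self-contained.

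One small point: take the lower-order smoothness and short-range claims from \emph{(A1)} and Lemma~\ref{lem:app_inverse_expansion}, not from Proposition~\ref{prop:principal_master}, because the proof of that proposition itself invokes the present one.
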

\begin{proof}
Expand the radial principal term,
\begin{equation}\label{eq:radial_expand}
        \Delta^{-s}\partial_r\bigl(\Delta^{s+1}\partial_r\,\psi\bigr)
        =\partial_r(\Delta\,\partial_r\psi)+s\,\Delta'\,\partial_r\psi
        =\partial_r(\Delta\,\partial_r\psi)+2s(r-M)\,\partial_r\psi,
\end{equation}
using $\Delta'=2(r-M)$. The term $2s(r-M)\partial_r$ is first order and proportional to $s$; collecting it into $\mathcal B_s^{(1)}$, the second-order part of $-\mathcal T_s$ is exactly
\[
\begin{aligned}
        &\partial_r(\Delta\,\partial_r\,\cdot\,)
        +\frac1{\sin\theta}\partial_\theta(\sin\theta\,\partial_\theta\,\cdot\,) \\
        &\quad -\Bigl[\frac{(r^2+a^2)^2}{\Delta}-a^2\sin^2\theta\Bigr]\partial_t^2
        -\frac{2a(2Mr-Q^2)}{\Delta}\partial_t\partial_\phi \\
        &\quad -\Bigl[\frac{a^2}{\Delta}-\frac1{\sin^2\theta}\Bigr]\partial_\phi^2,
\end{aligned}
\]
which is precisely $\Sigma\,\Box_{g_{\KN}}$ by \eqref{eq:app_box}. Hence the second-order part of $\mathcal T_s$ is $-\Sigma\,\Box_{g_{\KN}}$ and \eqref{eq:principal_part_identity} follows, since the principal symbol of $\Box_{g_{\KN}}$ is $g_{\KN}^{\mu\nu}\xi_\mu\xi_\nu$. The remaining spin-dependent first-order terms (the usual weighted $\partial_t,\partial_\phi$ contributions) and all spin-dependent zeroth-order angular and curvature multipliers are collected in $\mathcal B_s^{(1)},\mathcal B_s^{(0)}$; for $s=\pm1$ they are lower order and their curvature coefficients decay like $r^{-3}$ in the asymptotic frame. On the exterior $\{r>r_+\}$ one has $\Sigma=r^2+a^2\cos^2\theta\ge r_+^2>0$, smooth, so the normalized operator $(-\Sigma)^{-1}\mathcal T_s$ has principal symbol $g_{\KN}^{\mu\nu}\xi_\mu\xi_\nu$; the sign and the smooth nonvanishing factor are fixed once and for all in Definition~\ref{def:master_from_teukolsky}.
\end{proof}

\begin{definition}
\label{def:master_from_teukolsky}
Let $\psi_+$ and $\psi_-$ be the horizon- and infinity-regular rescalings of $\phi_0$ and $\rho^{-2}\phi_2$ obtained by the smooth nonvanishing weights of Subsection~\ref{subsec:regular_frame_master} (these absorb the boost factors $f_3,f_4$ at $\Hp$ and the asymptotic powers of $r$). For a smooth charge-free Maxwell field set
\[
        \mathfrak M F=u=(\psi_+,\psi_-)^{\mathsf T}.
\]
For fixed order $k$ the compatible master space is defined to be the closure, in the order-$k$ master energy and local-energy topology, of the smooth range
\[
        \mathcal C_{M,\mathrm{sm}}^{(k)}=\bigl\{\mathfrak M F:
        F\hbox{ smooth, source-free and charge-free}\bigr\}.
\]
Compatibility is therefore a closed condition by definition; no later reconstruction result is used to define the domain of the master estimate. Let
\begin{equation}\label{eq:diagonal_master}
        \Pb_{a,Q}=\operatorname{diag}\bigl(\widehat{\mathcal T}_{+1},\ \widehat{\mathcal T}_{-1}\bigr)+\mathcal L^{(1)}_{a,Q}+\mathcal V^{(0)}_{a,Q},
\end{equation}
where $\widehat{\mathcal T}_{\pm1}=(-\Sigma)^{-1}w_\pm^{-1}\mathcal T_{\pm1}w_\pm$ is the conjugation of $-\Sigma^{-1}\mathcal T_{\pm1}$ by the regular weight $w_\pm$, plus any lower-order curvature matrix terms allowed by \emph{(A1)}. For every smooth element of the range, the equation $\Pb_{a,Q}u=0$ is the closed spin-one equation supplied by \emph{(A1)}; by density the equation holds distributionally for finite-energy compatible limits. The operator has principal symbol $g_{\KN}^{\mu\nu}\xi_\mu\xi_\nu I_2$ by Proposition~\ref{prop:scalar_principal_symbol}. The reconstruction of Section~\ref{app:reconstruction} is then proved on the smooth range and extended to this closed compatible space by the same-order bound.
\end{definition}

\begin{corollary}
\label{cor:structural_reduction_verify}
Assume that the charge-free fixed-background Maxwell system admits the closed
regular spin-one reduction of Definition~\ref{def:master_from_teukolsky} through
commutation order $k$.  Then the following statements are proved in this section:
\begin{enumerate}[label=\emph{(\roman*)},leftmargin=2.4em]
\item the covariant second-order Maxwell equation \eqref{eq:covariant_maxwell_wave}, whose top-order symbol is $g_{\KN}^{\mu\nu}\xi_\mu\xi_\nu$ times the identity on two-forms;
\item the horizon-regular master map $\mathfrak M:F\mapsto(\psi_+,\psi_-)$, with the compatible class defined as a closed range;
\item the scalar-principal-symbol statement
$\sigma_2(\Pb_{a,Q})=g_{\KN}^{\mu\nu}\xi_\mu\xi_\nu I_2$, first at the covariant Maxwell level by Lemma~\ref{lem:maxwell_wave_principal_symbol} and then for the normalized spin-weighted model by Proposition~\ref{prop:scalar_principal_symbol};
\item the Reissner-Nordstr\"om comparison \eqref{eq:master_hyp_perturbation} with the explicit short-range error orders of \eqref{eq:lower_order_master}, provided the lower-order coefficients of the closed reduction obey the symbol bounds stated in \emph{(A1)-(A2)};
\item the energy comparison \eqref{eq:master_hyp_energy_comparison}, $\E_M^{(k)}[\mathfrak M G]\le C_R\E_{\Max}^{(k)}[G]$, because the regular master variables are smooth bounded weights times the extreme components $(\alpha,\underline\alpha)$, which are controlled by the non-degenerate Maxwell energy through Lemma~\ref{lem:frame_energy_equiv}.
\end{enumerate}
The remaining analytic ingredients are then exactly the estimates displayed in
Definition~\ref{def:slowweak_master_framework}: bounded-frequency real-axis
closure, the localized normally hyperbolic high-frequency resolvent estimate,
same-order reconstruction, and the radiation right inverses. The trapping
geometry and spin-one subprincipal identities needed for the high-frequency
estimate are verified in Propositions~\ref{prop:kn_nondegeneracy}-\ref{prop:r_normal_hyperbolicity}
and Lemma~\ref{lem:trapped_skew_vanishing}; the estimate itself is the
localized estimate of Definition~\ref{def:hf_resolvent_estimate}.
\end{corollary}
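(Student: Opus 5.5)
The corollary is essentially a bookkeeping statement. The plan is to verify items (i)--(v) one at a time, each from an earlier result or from the standing hypothesis that the closed regular spin-one reduction of Definition~\ref{def:master_from_teukolsky} exists through order $k$.

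\emph{Items (i)--(iii).} Item (i) is exactly Lemma~\ref{lem:maxwell_wave_principal_symbol}. Apply $\nabla^\lambda$ to the Bianchi identity, use $\nabla^\lambda F_{\lambda\mu}=0$, and note that all curvature terms are zeroth order. Item (ii) follows from Definition~\ref{def:master_from_teukolsky} itself:
\begin{itemize}
\item the weights $w_\pm$ are smooth and nonvanishing, and they absorb the boost factors $f_3,f_4$, so $\mathfrak M$ is horizon-regular;
\item the compatible class is defined as the closure of the smooth range in the order-$k$ topology, so closedness holds by construction and uses no later result.
\end{itemize}
For item (iii), combine Lemma~\ref{lem:decoupling} with Proposition~\ref{prop:scalar_principal_symbol}. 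The second-order part of $\mathcal T_s$ is $-\Sigma\Box_{g_\KN}$. Conjugating by $w_\pm$ and dividing by $-\Sigma$ only changes first- and zeroth-order terms, and the lower-order matrices $\mathcal L^{(1)}_{a,Q},\mathcal V^{(0)}_{a,Q}$ in \eqref{eq:diagonal_master} do not touch the principal symbol. So $\sigma_2(\Pb_{a,Q})=g_\KN^{\mu\nu}\xi_\mu\xi_\nu I_2$.

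\emph{Item (iv).} Subtract the $a=0$ operator with the same $(M,Q)$ and expand the inverse metric as in \eqref{eq:metric_symbol_difference}. The linear term in $a$ sits only in the $t\phi$ entry and produces the $a\,\mathcal G^{\mu\nu}\nabla_\mu\nabla_\nu$ piece. The differences in the lower-order coefficients are controlled by the symbol bounds assumed in (A1)--(A2), which give the terms $ar^{-2}\mathcal C^\mu\nabla_\mu$, $ar^{-3}\mathcal D$ and $a^2\mathcal Q^{(2)}$. This is the content of Proposition~\ref{prop:principal_master}, and it yields \eqref{eq:master_hyp_perturbation} with the orders of \eqref{eq:lower_order_master}. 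I expect this to be the main obstacle. The local-energy bound \eqref{eq:master_hyp_error_bound} requires the principal error to be absorbed in $LE^1_{\mathrm{deg}}$ near the photon sphere, where the norm degenerates. I would handle this by using the fact that the $t\phi$ perturbation is a Killing-direction derivative, so it can be placed on the $T,\Phi$ commutators already in $\mathbb D_k$, which are not lost at trapping. I would state item (iv) conditionally on those bounds, exactly as the corollary does.

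\emph{Item (v).} The master variables are $\psi_+=w_+^{-1}\phi_0$ and $\psi_-=w_-^{-1}\rho^{-2}\phi_2$. These are bounded smooth multiples of $\alpha$ and $\underline\alpha$; in the far region the weights $r$-powers are exactly those built into the master energy. Commuting with $\Gamma^I\in\mathbb D_k$ produces derivatives of the weights, which are bounded, times lower-order components. Hence $\E_M^{(k)}[\mathfrak M G]$ is pointwise dominated by $\sum_{|I|\le k}(|\Gamma^I\alpha|^2+|\Gamma^I\underline\alpha|^2)$. By Lemma~\ref{lem:frame_energy_equiv}, this is controlled by the non-degenerate Maxwell density, and integrating over $\Sigma_\tau$ gives $\E_M^{(k)}[\mathfrak M G]\le C_R\E_{\Max}^{(k)}[G]$. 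The closing sentence of the corollary, which lists what remains open, is just the complement of (i)--(v) within Definition~\ref{def:slowweak_master_framework}, so no further proof is needed for it.
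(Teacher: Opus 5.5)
Your proposal follows the paper's proof item by item: (i) is Lemma~\ref{lem:maxwell_wave_principal_symbol}, (ii) is the closed-range Definition~\ref{def:master_from_teukolsky}, (iii) is Proposition~\ref{prop:scalar_principal_symbol} with the weights $w_\pm$ entering only at lower order, (iv) is the inverse-metric expansion of Lemma~\ref{lem:app_inverse_expansion} plus the assumed coefficient bounds, and (v) is Lemma~\ref{lem:frame_energy_equiv} composed with bounded weights. Two caveats: your Killing-commutator idea for \eqref{eq:master_hyp_error_bound} is not needed for (iv) as stated, and it would not give a same-order bound anyway, since $T\Phi\Gamma^I\notin\mathbb D_k$ when $|I|=k$ (the paper instead handles the principal rotational term inside the transported commutator, Lemma~\ref{lem:rotational_commutator_error}); and your pointwise domination in (v), like the paper's one-line argument, is valid only if $\E_M^{(k)}$ counts $k$ derivatives of $\psi_\pm$, because if $\E_M^{(k)}$ is the $k$-times commuted flux of the wave current $J^N$ of Subsection~\ref{subsec:redshift}, its density contains $|\nabla\Gamma^I\psi_\pm|^2$ with $|I|=k$ and the right-hand side must then be $\E_{\Max}^{(k+1)}[G]$.
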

\begin{proof}
Part (i) is Lemma~\ref{lem:maxwell_wave_principal_symbol}. Part (ii) is the
definition of the regular variables and of the closed compatible class. Part
(iii) follows because multiplication by the nonvanishing weights $w_\pm$ and by
$-\Sigma$ changes no second-order symbol, while Proposition~\ref{prop:scalar_principal_symbol}
computes the second-order part of the spin-weighted model as
$-\Sigma\Box_{g_{\KN}}$.  For (iv), the $(a,Q)$-dependence of
\eqref{eq:teukolsky_operator} sits in the coefficients
$\Delta^{-1}(r^2+a^2)^2$, $a^2\sin^2\theta$, $2a(2Mr-Q^2)\Delta^{-1}$,
$a^2\Delta^{-1}$, $\Delta^{s+1}$, and in $\mathcal B_s^{(1)},\mathcal B_s^{(0)}$.
By Lemma~\ref{lem:app_inverse_expansion} the second-order part differs from its
$a=0$ value by $aG_1^{t\phi}2\partial_t\partial_\phi+O(a^2)$ with
$G_1^{t\phi}=O(r^{-3})$.  The assumed coefficient bounds place the rotational
first-order terms in $ar^{-2}\mathcal C^\mu\nabla_\mu$ and the rotational
zeroth-order terms in $ar^{-3}\mathcal D$, while the $O(a^2)$ second-order
terms are absorbed in $a^2\mathcal Q^{(2)}_{a,Q}$.  This is the class in
\eqref{eq:lower_order_master}; after applying the local-energy duality in
Definition~\ref{def:concrete_le_norms} it gives \eqref{eq:master_hyp_error_bound}.
Part (v) is the frame equivalence Lemma~\ref{lem:frame_energy_equiv} composed
with the boundedness of the regular weights.
\end{proof}

\begin{remark}
\label{rem:status_after_reduction}
Combining Corollary~\ref{cor:structural_reduction_verify} with the perturbative closure of Section~\ref{sec:first_principles_master} states the dependence of the two main theorems as follows.
\begin{enumerate}[label=\emph{(\alph*)},leftmargin=2.4em]
\item Theorem~\ref{thm:intro_charge_decomposition} \emph{(}the charge decomposition\emph{)} is unconditional and is proved in full in Sections~\ref{sec:geometry}-\ref{sec:energy_spaces}.
\item For the radiative field $F_{\rad}$ on a slow-weak Kerr-Newman exterior, the non-degenerate \emph{energy boundedness} and \emph{integrated local energy decay} \eqref{eq:intro_main_estimate} are obtained from these analytic ingredients in exactly the displayed norms: the spherical Maxwell local-energy theorem of Sterbenz-Tataru \cite{SterbenzTataru} on the Reissner-Nordstr\"om model \emph{(}the relevant horizon non-degeneracy and single normally hyperbolic photon sphere are checked in Lemma~\ref{lem:app_photon_sphere}\emph{)}; the Dafermos-Rodnianski red-shift and $r^p$ currents \cite{DafermosRodnianskiRedshift,DafermosRodnianskiRp}; and the high-frequency normally hyperbolic trapping resolvent estimate in Definition~\ref{def:hf_resolvent_estimate}. The scalar trapped-set geometry needed for that estimate-normal hyperbolicity, the explicit non-degeneracy constant and $r$-normal hyperbolicity for every $r$-is verified in Propositions~\ref{prop:kn_nondegeneracy}-\ref{prop:r_normal_hyperbolicity}. The spin-one threshold input is separated into the diagonal skew cancellation of Lemma~\ref{lem:trapped_skew_vanishing} and the finite-rank matrix threshold of Proposition~\ref{prop:matrix_skew_threshold}. The closed spin-one reduction is the structural condition \emph{(A1)}; once it is supplied, the scalar principal symbol, the Reissner-Nordstr\"om coefficient comparison, the bounded-frequency real-axis closure, and the same-order reconstruction are proved in the indicated sections.
\item The \emph{radiation-field, wave-operator, and scattering} conclusions require no additional independent condition once the resolvent bounds are available: the asymptotic-completeness right inverses (uniformly bounded backward solutions realizing prescribed radiation data) are constructed from the real-axis limiting-absorption resolvent in Proposition~\ref{prop:backward_from_lap}. Thus, \emph{(A5)} is derived under the same high-frequency resolvent estimate as \emph{(b)}. In the Reissner-Nordstr\"om model the construction is unconditional; for $Q=0$ the slow-weak estimate is available from \cite{BenomioTdC,SRTdCfrequency,SRTdCphysical}.
\item The pointwise-decay conclusion uses, in addition, the commuted low-frequency hierarchy \emph{(A6)}.
\end{enumerate}
The role of the master-system existence is explicit: the closed spin-one reduction is part of \emph{(A1)}, the scalar-principal-symbol and coefficient computations are carried out in the body, the backward construction \emph{(A5)} is derived from limiting absorption, and the remaining high-frequency analytic ingredient is the normally hyperbolic resolvent bound in Definition~\ref{def:hf_resolvent_estimate}, applied after the trapped geometry of Proposition~\ref{prop:r_normal_hyperbolicity} has been verified.
\end{remark}

\section{Perturbative Closure under Fixed Maxwell Reductions}
\label{sec:first_principles_master}
In this section we establish the perturbative estimates needed after the fixed Maxwell reductions have been assumed.
We prove next the analytic consequences of Definition~\ref{def:slowweak_master_framework}. The fixed-background spin-one variables and their closed compatible class enter through the structural condition \emph{(A1)}. The scalar-principal-symbol comparison with Reissner-Nordstr\"om is fixed in Section~\ref{sec:decoupling}; the bounded-frequency real-axis exclusion is proved in Subsection~\ref{subsec:nokernel} and Section~\ref{app:lap}; and the high-frequency part uses the normally hyperbolic resolvent estimate after the geometric verification in Section~\ref{app:trapping}. With the estimates stated in \emph{(A1)-(A5)}, the red-shift, far-field, Morawetz, reconstruction, and radiation estimates close at the same differential order. Every estimate used below is either one of the displayed physical-space inequalities, the Reissner-Nordstr\"om model theorem, the stated high-frequency resolvent estimate, or an algebraic consequence of the Maxwell system.

\subsection{Horizon Red-Shift Estimate}
\label{subsec:redshift}
For the master system set
\begin{equation}\label{eq:master_energy_current}
        J^N_\mu[\Psi]=\sum_{j=\pm}\Tstress_{\mu\nu}[\psi_j]N^\nu+\Lie^N_\mu[\Psi],
\end{equation}
where $\Lie^N$ is the lower-order term symmetrizing the matrix potential.

\begin{proposition}
\label{prop:redshift_coercivity}
For $|a|\ll M$ and $|Q|\le\eps_QM$ there are a horizon neighborhood
$\mathcal U_H$ and $c_H>0$ with
\begin{equation}\label{eq:redshift_coercivity}
        \nabla^\mu J^N_\mu[\Psi]
        \ge c_H\sum_{j=\pm}\big(|\nabla\psi_j|^2+M^{-2}|\psi_j|^2\big)
        -C_H\sum_{|I|<1}\big(|\nabla\Gamma^I\Psi|^2+M^{-2}|\Gamma^I\Psi|^2\big)
\end{equation}
in $\mathcal U_H$, the loss removable after summing the commuted hierarchy.
\end{proposition}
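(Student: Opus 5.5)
The plan is the Dafermos--Rodnianski red-shift construction, carried out first at $a=0$ on the Reissner--Nordstr\"om model with the same $(M,Q)$ and then extended to small $|a|$ by continuity.

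\textbf{Step 1: the model computation.} Fix $|Q|\le\eps_QM$. The surface gravity $\kappa_+=(r_+-r_-)/(2(r_+^2+a^2))$ is bounded below uniformly on the slow-weak set, since $a^2+Q^2$ stays well away from $M^2$. Choose $N=T+Y$ near $\Hp$, where $Y$ is transverse to $\Hp$, future-directed, and satisfies on the horizon
\[
\nabla_YY=-\sigma(Y+T),\qquad \nabla_TY=-\kappa_+Y,\qquad \nabla_{e_A}Y=0 .
\]
Here $\sigma>0$ is large. With this choice the deformation tensor $\pi^N$ contracted against $\Tstress[\psi]$ gives, on $\Hp$, a bulk term bounded below by
\[
c\big(\kappa_+|Y\psi|^2+|T\psi|^2+|\sphgrad\psi|^2\big)
\]
for each scalar component $\psi_j$. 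This is the standard pointwise computation. By continuity it persists on a collar $\{r_+\le r\le r_+ + \delta\}$ whose width $\delta$ depends only on $\kappa_+$ and compact bounds on the geometry.

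Because the Killing field $T=\partial_{\tilde t}$ differs from the horizon generator $T+\omega_+\Phi$ by $O(a)$, I would instead build $N$ from the horizon generator. Then all bounds are uniform in $(a,Q)$ for $|a|\ll M$.

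\textbf{Step 2: lower-order terms.} By Proposition~\ref{prop:principal_master}, $\Pb_b$ has principal symbol $g_{\KN}^{\mu\nu}\xi_\mu\xi_\nu I_2$. The divergence of the scalar currents therefore equals the deformation bulk plus cross terms of the form $\mathrm{Re}\big((N\bar\psi_j)\cdot(\text{first- and zeroth-order coefficients applied to }\Psi)\big)$. The $\mathcal C$-terms are bounded coefficients times $|\nabla\Psi|$, and the potential terms are bounded multiples of $|\Psi|$. I would treat them as follows:

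\begin{enumerate}[label=(\roman*),leftmargin=2.2em]
\item Cauchy--Schwarz with a small parameter absorbs the first-order cross terms into the coercive gradient bulk. This needs the lower-order coefficients near $\Hp$ to be small relative to $\kappa_+$; if they are not, I enlarge $\sigma$, which scales the coercive constant.
\item The zeroth-order terms are handled by the symmetrizing current $\Lie^N$. I would take $\Lie^N_\mu=\tfrac12 h(r)N_\mu|\Psi|^2+\text{(matrix symmetrization)}$, whose divergence adds $h'|\Psi|^2$-type terms. Choosing $h$ increasing toward the horizon generates the $M^{-2}|\psi_j|^2$ coercivity.
\item Whatever cannot be absorbed is placed in the $C_H$ error term on the right-hand side. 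At order zero that term involves no derivatives beyond the controlled ones.
\end{enumerate}

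\textbf{Step 3: the main obstacle.} The difficult step is the non-symmetric spin-dependent first-order part, namely the term $2s(r-M)\partial_r$ and the weighted $\partial_t$ term in $\mathcal B_s^{(1)}$. At the horizon this term is exactly of red-shift size and can have the wrong sign for one value of $s$.

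My first attempt would use the horizon-regular weights $w_\pm$ of Definition~\ref{def:master_from_teukolsky}. These are chosen so that the conjugated transversal coefficient has the favourable sign: it either enhances the red-shift or is a damping term like $\kappa_+ Y$. I would check that sign explicitly at $a=0$ from \eqref{eq:radial_expand} and extend it by continuity to small $a$.

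If one sign remains unfavourable, the fallback is the loss term. Commuting with $Y$ once gives enhanced red-shift for $Y\psi$, so the offending lower-order quantity is controlled after summing the commuted hierarchy. This is exactly the sense of the phrase ``removable after summing the commuted hierarchy''.

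Constants depend smoothly on $(M,a,Q)$, so $c_H$, $C_H$ and $\mathcal U_H$ are uniform on the slow-weak range.
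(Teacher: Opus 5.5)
Your overall scheme is the paper's: a Dafermos--Rodnianski multiplier near $\Hp$, with the remaining terms treated as errors. Two of your choices are improvements. Building $N$ from the horizon generator is the right construction for $a\neq0$. Your current $h(r)N|\Psi|^2$ gives pointwise $|\Psi|^2$ coercivity, which matches the pointwise form of \eqref{eq:redshift_coercivity}; the paper controls $M^{-2}|\Psi|^2$ only after integration, via \eqref{eq:horizon_hardy}. You are also right that the spin-dependent first-order terms of the Reissner--Nordstr\"om part are the crux. The paper's proof never addresses them; it only absorbs the $O(|a|/M)$ rotational part of \eqref{eq:lower_order_master}.

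The gap is that neither of your mechanisms for these terms works. Take $T$ to be the horizon generator, normalize $g(T,Y)=-1$ on $\Hp$, and set $N=T+bY$ with $b>0$.
\begin{enumerate}[label=(\roman*),leftmargin=2.2em]
\item Enlarging $\sigma$ does not help. The coefficient of $|Y\psi|^2$ in $\Tstress^{\mu\nu}\pi^N_{\mu\nu}$ comes only from $\pi^N(T,T)=2b\kappa_+$. The parameter $\sigma$ enters only through $\pi^N(Y,Y)$ and $\pi^N(T,Y)$, which pair with $|T\psi|^2$ and $|\sphgrad\psi|^2$. Enlarging $\sigma$, or rescaling $N$, therefore cannot absorb a first-order term of size $\kappa_+$ in the $Y$ direction.
\item The weights are not at your disposal. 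Horizon regularity forces the master variable $\tilde\psi_s$ to be $\Delta^s$ times the Kinnersley scalar, up to a smooth nonvanishing factor $w(r,\theta)$. Conjugating by such a $w$ only adds $2w^{-1}\nabla w\cdot\nabla$, and $\nabla w$ is tangent to $\Hp$ on $\Hp$, so the transversal coefficient is unchanged.
\item The sign really is wrong for $\psi_+$. At $a=0$, in ingoing coordinates with $Y=-\partial_r$, a direct computation from \eqref{eq:teukolsky_operator} gives on $\Hp$
\[
\Box\tilde\psi_s=-2s\kappa_+\,Y\tilde\psi_s+(\text{tangential and zeroth-order terms}).
\]
Hence the $|Y\tilde\psi_s|^2$ coefficient of $\nabla^\mu J^N_\mu$ is $b\kappa_+(1-2s)$. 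This is enhanced for $\psi_-$, but equals $-b\kappa_+$ for $\psi_+$. Zeroth-order currents such as your $\Lie^N$ produce no $|Y\psi|^2$ term, and $|a|\ll M$ cannot flip the sign.
\end{enumerate}

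So for $\psi_+$ you are always in your fallback, and there the offending quantity is $|Y\psi_+|^2$ itself, which is top order, not lower order. The displayed inequality survives only because $\sum_{|I|<1}$ is the single term $I=\emptyset$. The loss is then of the same order as the gain, and with $C_H\ge c_H+b\kappa_+$ the inequality carries no coercivity for $\psi_+$.

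The removal has to go through the commuted equation. On $\Hp$ one has $[\Box,Y]=2\kappa_+Y^2+\dots$, so the $|Y^2\tilde\psi_+|^2$ coefficient in the once-commuted current is $b\kappa_+(3-2s)=b\kappa_+>0$. A Hardy inequality on a thin collar then bounds $|Y\psi_+|^2$ by $|\nabla Y\psi_+|^2$, plus a boundary term away from $\Hp$. This closes at commuted order at least one but gives nothing at order zero. The proof should be organized around this one-derivative shift, not around a favourable choice of $w_\pm$ or of $\sigma$.
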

\begin{proof}
For a scalar wave $\psi$ the red-shift deformation satisfies, in a horizon
collar,
\begin{equation}\label{eq:redshift_deformation}
        \Tstress^{\mu\nu}[\psi]\,\pi^N_{\mu\nu}\ge c_0\,|\nabla\psi|^2,
        \qquad
        c_0\propto\kappa_+=\frac{r_+-r_-}{2(r_+^2+a^2)}>0,
\end{equation}
positive in the sub-extremal range; this is the computation of
\cite{DafermosRodnianskiRedshift} for the scalar stress tensor. Summing over
$\psi_+,\psi_-$ gives the leading term. The rotational first-order part of
\eqref{eq:lower_order_master} contributes, by Cauchy-Schwarz,
$|\sum_j\Tstress[\psi_j]\cdot(\text{first-order error})|\le C|a|M^{-1}|\nabla\Psi|^2
+C_QM^{-2}|\Psi|^2$, the small factor coming from the rotational coefficients
evaluated at $r\simeq r_+(Q)\simeq M$. Choosing $\mathcal U_H$ and the rotation
size so that $C|a|/M<c_0/4$ absorbs the top-order error. The remaining
$M^{-2}|\Psi|^2$ is controlled by the one-dimensional Hardy inequality along the
transversal red-shift direction,
\begin{equation}\label{eq:horizon_hardy}
        \int_{\mathcal U_H}M^{-2}|\Psi|^2\,\dd\mu
        \le C\int_{\mathcal U_H}|\nabla_{\widehat e_3}\Psi|^2\,\dd\mu+C\!\!\int_{\partial\mathcal U_H}\!\!|\Psi|^2,
\end{equation}
the boundary term being a lower commuted energy. Summing over $\mathbb D_k$
gives \eqref{eq:redshift_coercivity}.
\end{proof}

\begin{corollary}
\label{cor:horizon_flux}
The master flux through $\Hp$ controls all derivatives of $\Psi$ tangential to
$\Hp$ and one transversal red-shift derivative, to the finite commuted order.
\end{corollary}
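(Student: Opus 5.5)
The plan is to read the statement as a \emph{lower} bound. The master $N$-flux through $\Hp\cap\D(\tau_1,\tau_2)$, commuted to order $k$, should dominate the $L^2(\Hp)$ norms of all derivatives tangential to $\Hp$ and of one transversal derivative $\widehat e_3\Gamma^I\Psi$, for $|I|\le k$. I would first prove the tangential part pointwise, then add the transversal derivative by a red-shift commutation, and finally treat the lower-order current $\Lie^N$ and the commuted order.

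\textbf{Step 1 (tangential control).} On $\Hp$ the normal to the flux is the null generator $L=\widehat e_4$, which is regular by the horizon boost of Subsection~\ref{subsec:regular_frame_master}. For the scalar stress tensor,
\[
\Tstress[\psi](N,L)\ge c\bigl(|L\psi|^2+|\sphgrad\psi|^2\bigr),
\]
because $N$ is uniformly future timelike up to $\Hp$ (the same mechanism as in Lemma~\ref{lem:positivity_density}). Writing $N=\mathsf a L+\mathsf b\widehat e_3+$ an angular part, with $\mathsf b>0$, the only derivatives that can appear are $L\psi$ and the angular derivatives; the factor $\mathsf b\,\Tstress(\widehat e_3,L)=\mathsf b\,|\sphgrad\psi|^2$ and $\mathsf a|L\psi|^2$ give the bound. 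Summing over $\psi_\pm$ gives the tangential control of the leading part of $J^N$ in \eqref{eq:master_energy_current}.

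\textbf{Step 2 (the lower-order current).} The symmetrizing term $\Lie^N[\Psi]$ contributes at most $C|\Psi|(|\nabla_{\rm tan}\Psi|+|\Psi|)$ on $\Hp$. I would control $\int_{\Hp}|\Psi|^2$ by integrating $L|\Psi|^2$ along the generators. This gives the integral of $|L\Psi|\,|\Psi|$ plus boundary terms on the spheres $\Hp\cap\Sigma_{\tau_i}$, and the boundary terms are bounded by lower commuted energies through a trace inequality. The top-order part is absorbed by Cauchy--Schwarz, exactly as the loss in Proposition~\ref{prop:redshift_coercivity} is removed by summing the hierarchy.

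\textbf{Step 3 (one transversal derivative).} The energy flux of $\Psi$ itself never sees $\widehat e_3\Psi$ on a null hypersurface, so the transversal derivative has to come from commutation. I would commute $\Pb_{a,Q}$ with $Y=\widehat e_3$ in the Dafermos--Rodnianski manner. By the principal symbol $g_{\KN}^{\mu\nu}\xi_\mu\xi_\nu I_2$ of Proposition~\ref{prop:principal_master}, the commutator $[\Pb,Y]$ has leading term $\kappa_+\,Y Y\Psi$, with $\kappa_+>0$ given in \eqref{eq:redshift_deformation}. This term has the favorable sign in the $N$-energy identity for $Y\Psi$. All remaining commutator terms are first order in $Y\Psi$ and in tangential or angular derivatives, with rotational parts of size $O(|a|/M)$ by Lemma~\ref{lem:commuted_master}. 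These are absorbed using the bulk coercivity \eqref{eq:redshift_coercivity} in $\mathcal U_H$ together with Step 1 for the tangential pieces. Applying Step 1 to $Y\Psi$ then shows that the flux of the $Y$-commuted field controls $L Y\Psi$ and $\sphgrad Y\Psi$ on $\Hp$. Integrating along generators as in Step 2 gives $\int_{\Hp}|Y\Psi|^2$. Since $r\partial_r\in\mathbb D_k$ equals $Y$ near $\Hp$ modulo tangential fields and smooth factors, the commuted master flux built from $\mathbb D_k$ contains this quantity.

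\textbf{Step 4 (finite commuted order).} Induct over $|I|\le k$. For each $\Gamma^I$, Lemma~\ref{lem:commuted_master} gives an inhomogeneous equation whose sources are lower order or carry the small factor; Steps 1--3 applied to $\Gamma^I\Psi$ absorb these sources into the order-$(|I|-1)$ fluxes. The constants are uniform on the slow-weak range because $\kappa_+$, the boost factors and $N$ depend smoothly on $(a,Q)$.

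The main obstacle is Step 3: the positive commutator sign has to survive the frame and weight conjugations, which contribute $O(1)$ first-order spin-coefficient terms at $r_+$. I would first shrink $\mathcal U_H$ so that $\kappa_+$ dominates after Cauchy--Schwarz. If the spin terms instead produce a bad $Y\Psi$ coefficient, I would absorb it by adding a large multiple of the uncommuted $N$-identity.
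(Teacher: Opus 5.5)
Your Step~1 is correct. On $\Hp$, $\Tstress[\psi](N,L)$ is a positive definite form in $(L\psi,\sphgrad\psi)$ because $N$ is timelike, and this is the coercivity the paper invokes at the end of its argument. But reading the corollary purely as a lower bound leaves out what the paper's proof actually establishes. That missing half is what Proposition~\ref{prop:radiation_isomorphism_from_hierarchy} later uses to bound the horizon trace: the horizon flux is finite and bounded by the commuted energies on $\Sigma_{\tau_1}$ and $\Sigma_{\tau_2}$. The paper gets this from the divergence theorem for $J^N$ on a horizon collar whose inner boundary is a timelike hypersurface pushed to $\Hp$. Proposition~\ref{prop:redshift_coercivity} supplies the positive bulk, and positivity of the density makes the horizon flux a monotone limit of the timelike fluxes. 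Your Steps~3--4 speak of absorbing sources, which only makes sense inside such an identity, but you never set it up for $\Psi$ or $\Gamma^I\Psi$. Once it is set up, the transversal direction comes from $r\partial_r\in\mathbb D_k$ and the commuted red-shift bulk, so the Dafermos--Rodnianski $Y$-commutation of Step~3 is a detour. The obstacle you flag there, the $O(1)$ spin first-order terms at $r_+$, enters the uncommuted red-shift identity in the same form. It is therefore already part of what Proposition~\ref{prop:redshift_coercivity} asserts. Your proposed remedies would not handle it anyway: shrinking $\mathcal U_H$ does not shrink a coefficient evaluated at $r_+$, and the uncommuted identity contains no second transversal derivative.

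Step~2, which you reuse at the end of Step~3, fails as written. On $\Hp$ the generator is a multiple of the Killing field $T+\Omega_+\Phi$, so $\int_{\Hp\cap\D(\tau_1,\tau_2)}L|\Psi|^2$ reduces to integrals over the sections at $\tau_1$ and $\tau_2$, and no $\int_{\Hp}|\Psi|^2$ appears. Inserting a weight $\tau-\tau_1$ recovers it only at the cost of $(\tau_2-\tau_1)^2\int_{\Hp}|L\Psi|^2$. That constant grows with the length of the horizon segment and is useless for a trace on all of $\Hp$. Instead, control zeroth-order terms on $\Hp$ by the Poincar\'e inequality on the horizon sections, since the spin-one variables carry no $\ell=0$ component (as used in Lemma~\ref{lem:stationary_identity}). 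The angular control from Step~1 then bounds $\int_{\Hp}|\Psi|^2$ and $\int_{\Hp}|Y\Psi|^2$ uniformly in $\tau_2$. Alternatively, as the paper does, move the lower-order terms into the collar bulk and use the transversal Hardy inequality \eqref{eq:horizon_hardy}.
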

\begin{proof}
Apply the divergence theorem to the red-shift current on a collar bounded by
$\Sigma_{\tau_1}$, $\Sigma_{\tau_2}$, a timelike hypersurface approaching
$\Hp$, and the outer boundary of the collar. Proposition~\ref{prop:redshift_coercivity}
controls the non-degenerate bulk in the collar. Passing the timelike boundary
to the horizon gives the horizon flux as a monotone nonnegative limit, by the
positivity of the red-shift density in Lemma~\ref{lem:positivity_density}. The
Hardy inequality in the transversal direction controls the lower-order terms,
and the remaining boundary pieces are bounded by the commuted energies on the
initial and final slices. The resulting horizon flux controls all tangential
derivatives to the stated order and one red-shift transversal derivative.
\end{proof}

\subsection{Far-Field Hierarchy and Radiation Trace}
\label{subsec:farfield}
Let $r$ be the area radius, $L=\partial_t+\partial_{r_*}$,
$\underline L=\partial_t-\partial_{r_*}$. The outgoing hierarchy uses $r^pL$ for
$0\le p\le2$.

\begin{proposition}
\label{prop:rp_identity}
For smooth compactly supported master solutions on $\{r\ge R\}$,
\begin{align}\label{eq:rp_identity}
        \int_{\tau_1}^{\tau_2}\!\!\int_{r\ge R} r^{p-1}\big(p|L(r\Psi)|^2+(2-p)|\sphgrad(r\Psi)|^2\big)
        \le{}& C\E_M^{(k)}[\Psi](\tau_1)\nonumber\\
        &+C\int_{\tau_1}^{\tau_2}\!\!\int_{r\ge R} r^{p+1}|\Pb_b\Psi|^2+\mathrm{Err}_{a,Q},
\end{align}
and, uniformly for $|Q|\le\eps_QM$, the rotational part of $\mathrm{Err}_{a,Q}$
is absorbed by the left side and the local-energy bulk when $|a|/M$ is small.
\end{proposition}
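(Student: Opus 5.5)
The plan is to run the Dafermos--Rodnianski $r^p$ computation componentwise on the weighted variables $\phi_j=r\psi_j$, $j=\pm$. Relative to $\Box_{g_{\RN,Q}}$, the master operator $\Pb_b$ in \eqref{eq:lower_order_master} differs by lower-order terms that are short range and by the $O(a)$ principal correction. In the region $r\ge R$ we will use Bondi-type coordinates $(u,r_*,\omega)$ adapted to $L,\underline L$. There we write
\[
        -\underline L L\phi_j+\frac{\Delta}{r^4}\sphlap_{r}\phi_j = r\frac{\Delta}{r^2}\Pb_{\RN,Q}\psi_j+V_j\phi_j+\text{(first-order short-range terms)},
\]
with $V_j=O(r^{-3})$. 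The $\frac{1}{r^2}\mathcal V_Q$ part of \eqref{eq:rn_master_model} is absorbed into the angular term, since charge-freeness and the spin-one structure give $\ell\ge1$, and hence a nonnegative effective angular operator. We then multiply by $r^p L\phi_j$ and integrate over $\D(\tau_1,\tau_2)\cap\{r\ge R\}$. The identities $\underline L(r^p|L\phi|^2)$ and $L(r^{p-2}|\sphgrad\phi|^2)$, after integration by parts, produce the bulk $r^{p-1}\bigl(p|L\phi|^2+(2-p)|\sphgrad\phi|^2\bigr)$, up to $O(r^{p-2})$ corrections coming from $\Delta/r^2=1-2M/r+O(r^{-2})$. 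The boundary terms on the outgoing slices and at $\Ip$ have the good sign, and the initial slice term is bounded by $\E_M^{(k)}(\tau_1)$ via the far-field part of the non-degenerate energy (weighted by $r^p$ with $p\le2$; we will include the $r^2$-weighted initial flux in $\E_M^{(k)}$, as in the definition of $\Fc_{p,R}^{(k)}$). The inhomogeneity is handled by Cauchy--Schwarz, $r^p|L\phi||r\Pb_b\Psi|\le\epsilon r^{p-1}|L\phi|^2+\epsilon^{-1}r^{p+1}|\Pb_b\Psi|^2$, which yields the source term on the right-hand side.

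The timelike cut-off boundary $\{r=R\}$ is handled in the standard way. We replace the sharp truncation by a smooth cutoff $\chi(r)$ supported in $r\ge R/2$. The commutator terms are then supported in the compact annulus $R/2\le r\le R$, where they are bounded by the local-energy bulk $\B_M^{(k)}$ away from the photon sphere (taking $R\gg r_{\mathrm{ph}}(Q)$, so that $w_{\mathrm{tr}}=1$ there). These terms are placed into $\mathrm{Err}_{a,Q}$. For $k\ge1$ we commute with $T,\Phi$ and with $r\partial_r$ and the angular fields as in Lemma~\ref{lem:commuted_master}. The commutators contribute terms of the type $r^{-2}\nabla\Gamma^I\Psi$, which are integrable against $r^{p}L\phi$ for $p<2$ after Hardy. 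At $p=2$ we need the extra decay of the commutator coefficients; the short-range bounds give this.

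The main obstacle is the rotational error. The principal term $a\,\mathcal G^{t\phi}\partial_t\partial_\phi$ with $\mathcal G=O(r^{-3})$ gives, after multiplication by $r^{p+1}L\psi$, contributions of the form $|a|r^{p-2}|\partial_t\phi||\partial_\phi\phi|$, which we will integrate by parts in $t$ or $\phi$ before estimating. For $p\le2$ these are bounded by $C\frac{|a|}{M}\bigl(r^{p-1}|L\phi|^2+r^{-1-\delta}|\underline L\phi|^2+r^{p-1}|\sphgrad\phi|^2\bigr)$. The first and last pieces are absorbed into the left side when $|a|/M$ is small, but at $p=2$ the angular term is missing from the left side. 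There the $|\sphgrad\phi|^2$ piece must instead be absorbed by the $p=1$ level (or by the $r^{-1-\delta}$-weighted local-energy bulk), using the extra factor $r^{-1}$ that the $O(r^{-3})$ decay provides beyond what is needed. The plan is to first prove the $p=1$ case, where both bulk terms are present, and then bootstrap to $p=2$, using the $p=1$ bulk and the integrated local energy to absorb the angular errors. Throughout, the first- and zeroth-order terms $a r^{-2}\mathcal C\nabla$ and $a r^{-3}\mathcal D$, as well as the $a^2\mathcal Q^{(2)}$ terms, are handled by the same Cauchy--Schwarz/Hardy argument with uniform constants for $|Q|\le\eps_QM$.
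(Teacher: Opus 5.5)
Your proposal follows the paper's own route. You conjugate by $r$ and use the multiplier $r^pL(r\psi)$, integrating by parts to produce the $p\,r^{p-1}|L(r\Psi)|^2+(2-p)\,r^{p-1}|\sphgrad(r\Psi)|^2$ bulk. Source and short-range terms are handled by Cauchy--Schwarz against the local-energy norm, and the $O(|a|/M)$ rotational errors are absorbed by smallness. The extra details you supply---the smooth cutoff near $r=R$, the $r^p$-weighted initial flux on $\Sigma_{\tau_1}$, and the $p=2$ rotational angular error treated by integration by parts and the $p=1$ level---are points the paper's sketch passes over, not a different argument.
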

\begin{proof}
Conjugating by $r$, the scalar far-field part of \eqref{eq:lower_order_master}
reads $\Pb_b(r\psi)=-L\underline L(r\psi)+r^{-2}\sphlap(r\psi)+O(r^{-2})\partial(r\psi)
+O(r^{-3})(r\psi)$, with $O(\cdot)$ including the Reissner-Nordstr\"om
spherical terms and the rotational $a/r^2$ matrix terms. Multiply by
$r^pL(r\psi)$ and integrate by parts in $(t,r_*,\omega)$. The identity
$-2\Re(r^pL(r\psi)\overline{L\underline L(r\psi)})=\partial_{r_*}(r^p|L(r\psi)|^2)
-pr^{p-1}|L(r\psi)|^2+\partial_t(\cdots)$ gives the radial term $p\int
r^{p-1}|L(r\psi)|^2$; the angular term integrates on $\Sph$ to $(2-p)\int
r^{p-1}|\sphgrad(r\psi)|^2$ plus a total derivative. Short-range terms are
bounded by Cauchy-Schwarz,
\begin{equation}\label{eq:rp_error_absorb}
        \Big|\!\int r^pL(r\Psi)\cdot O(r^{-1})\nabla(r\Psi)\Big|
        \le \delta\!\int r^{p-1}|L(r\Psi)|^2+C_\delta\!\int r^{-1-p_0}|\nabla(r\Psi)|^2,
\end{equation}
the last integral being part of the local-energy norm for $R$ large; the
rotational errors carry the extra factor $|a|/M$ and are absorbed after fixing
the parameters, while the spherical charge terms are controlled by the
Reissner-Nordstr\"om model. Summing over $\psi_\pm$ and $\mathbb D_k$ gives
\eqref{eq:rp_identity}.
\end{proof}

\begin{lemma}
\label{lem:radiation_trace}
The bound \eqref{eq:rp_identity} with $p=2$ gives the outgoing trace
$\mathscr R_+\Psi(u,\omega)=\lim_{r\to\infty}r\Psi(u+r_*,r,\omega)$ in the
square-integrable radiation space.
\end{lemma}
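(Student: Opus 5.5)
The plan is to build the trace on smooth compactly supported master solutions by integrating $L(r\Psi)$ along outgoing null rays $\{u=\mathrm{const}\}$. I will then obtain uniform bounds from the $p=2$ member of the hierarchy \eqref{eq:rp_identity} and extend by density. Throughout, work in the far region $\{r\ge R\}$ in Bondi-type coordinates $(u,r,\omega)$ with $u=t-r_*$, so that $L$ is $2\partial_r$ up to the factor $\Delta/(r^2+a^2)$ along the outgoing cones $C_u$. Near $r=R$, $\Sigma_\tau$ is glued to these cones; this is allowed by the definition of the regular foliation.

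\emph{Step 1 (existence of the limit).} Since $\Psi$ is smooth and compactly supported on each slice, $r\Psi$ vanishes for large $r$ on every $C_u$ with $u$ in a bounded range. The fundamental theorem of calculus gives
\[
r\Psi(u,r_2,\omega)-r\Psi(u,r_1,\omega)=\int_{r_1}^{r_2}\partial_r(r\Psi)\,\dd r .
\]
By Cauchy--Schwarz with weights $r^{2}$ and $r^{-2}$,
\[
\int_{\Sph}\bigl|r\Psi(u,r_2)-r\Psi(u,r_1)\bigr|^2\,\dd\omega\le C r_1^{-1}\int_{C_u\cap\{r\ge r_1\}}r^2|L(r\Psi)|^2 .
\]
The right side is the $p=2$ boundary flux on $C_u$, which appears as the flux term in the integrated form of \eqref{eq:rp_identity}. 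That flux is bounded by $C\E^{(k)}_M[\Psi](\tau_1)$, using \eqref{eq:master_system}, so the source term vanishes, and absorbing $\mathrm{Err}_{a,Q}$ as stated there. Hence $r\Psi(u,r,\cdot)$ is Cauchy in $L^2(\Sph)$ as $r\to\infty$, uniformly in $u$, which defines $\mathscr R_+\Psi(u,\cdot)$. The same bound with $r_1=R$ gives $\|\mathscr R_+\Psi(u)\|_{L^2(\Sph)}\lesssim\|R\Psi(u,R)\|_{L^2(\Sph)}+R^{-1/2}\E_M^{(k)}[\Psi](\tau_1)^{1/2}$.

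\emph{Step 2 (square integrability in $u$).} Integrate the last bound over $u\in[\tau_1,\tau_2]$. The term with $R\Psi(u,R)$ lives on the fixed timelike cylinder $r=R$, and its $u$-integral is controlled by the local-energy bulk $\B^{(k)}_M$ via a trace inequality on $\{R\le r\le 2R\}$ (averaging in $R$). For the radial integral I will avoid using the sup-in-$u$ flux, since that alone gives a bound growing with $\tau_2-\tau_1$. Instead I use the bulk terms of the hierarchy: the $p=2$ bulk $\int\!\!\int r|L(r\Psi)|^2$ and the $p=1$ flux integrated in $u$. Split $\{r\ge R\}$ dyadically into $A_j$. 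On each $A_j$, Cauchy--Schwarz gives $|\int_{A_j}L(r\Psi)|^2\le C2^{j}R\int_{A_j}|L(r\Psi)|^2$. I will then trade the factor $2^{j}$ against $r^{p-1}$ with $p=2$, summing with weights $2^{-j\delta}$ through a weighted Cauchy--Schwarz.

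\emph{Main obstacle.} This dyadic sum is critical at $p=2$: the naive weight $r^{-1}$ leaves a logarithmic divergence. I expect to close it by combining the $p=2$ flux, which is uniform on each cone, with the $u$-integrated $p=1$ flux. The $p=1$ flux integrated in $u$ is bounded by the $p=2$ data via the hierarchy bulk in \eqref{eq:rp_identity}, because the bulk at $p=2$ equals the $u$-integrated flux at $p=1$. Interpolating these two bounds, pointwise in $u$ and then integrated, should give $\int|\mathscr R_+\Psi|^2\,\dd u\,\dd\omega\le C\E_M^{(k)}[\Psi](\tau_1)$, uniformly in $\tau_2$. Commuting with $\mathbb D_k$ gives the same statement at order $k$. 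Density of smooth compactly supported compatible data, together with linearity, then extends $\mathscr R_+$ as a bounded map into the square-integrable radiation space.
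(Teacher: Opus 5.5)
\textbf{Where Step~1 stands relative to the paper.} Your Step~1 is essentially the whole of the paper's proof. The paper integrates $L(r\Psi)$ along a fixed outgoing ray, applies Cauchy--Schwarz with the $p=2$ dyadic weight to obtain the factor $(\sum_{n\ge n_0}2^{-n})^{1/2}$, and concludes directly that $r\Psi$ is Cauchy in $L^2_{u,\omega}$. Your additional diagnosis is correct.
\begin{itemize}
\item A summable factor of this kind can only come from the $r^2$ cone weight, i.e.\ from $A(u)=\int_{C_u}r^2|L(r\Psi)|^2$. The hierarchy controls $A$ uniformly in $u$, but not integrably in $u$.
\item The $u$-integrable quantity is the $p=2$ bulk $\int B(u)\,\dd u$, with $B(u)=\int_{C_u}r|L(r\Psi)|^2$. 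It produces a dyadic sum with no decaying factor.
\end{itemize}
The paper's one-line argument does not separate these two roles. On its own, Step~1 gives the trace in $L^\infty_uL^2(\Sph)$ and in $L^2$ over bounded $u$-intervals, but not in the global square-integrable space.

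\textbf{The gap: the interpolation in Step~2.} You propose to interpolate $\sup_uA\lesssim\E_M^{(k)}[\Psi](\tau_1)$ with $\int B\,\dd u\lesssim\E_M^{(k)}[\Psi](\tau_1)$. By H\"older, $\int_{C_u}r^{1+\delta}|L(r\Psi)|^2\le B(u)^{1-\delta}A(u)^{\delta}$. After the weighted Cauchy--Schwarz you are therefore left with $\int B(u)^{1-\delta}\,\dd u$, and this is not bounded by $\int B\,\dd u$ on an unbounded $u$-interval.

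This is not a technicality. Take $L(r\Psi)=\eps r^{-1}$ for $N\le r\le N^K$ and $0\le u\le U$, zero otherwise, with $\eps^2=N^{-K}$ and $U=N^K/((K-1)\log N)$. Then $\sup_uA\le1$ and $\int B\,\dd u=1$. Yet $\int\bigl(\int L(r\Psi)\,\dd r\bigr)^2\,\dd u=(K-1)\log N$, which is unbounded as $N\to\infty$. So no reweighting of the dyadic sum built only from these two quantities can give $\int|\mathscr R_+\Psi|^2\,\dd u\,\dd\omega\le C\E_M^{(k)}[\Psi](\tau_1)$ uniformly in $\tau_2$.

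\textbf{What is missing.} You need an input that carries decay in $u$, and the proposal does not supply one. One possible route: iterate the hierarchy $p=2\to1\to0$, with the weighted fluxes at intermediate times on the right-hand side. This bounds $\int u^2\norm{\partial_u(r\Psi)}^2_{L^2(\Sph)}\,\dd u$ along $\Ip$, and a one-dimensional Hardy inequality in $u$ then gives the $L^2_{u,\omega}$ bound.
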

\begin{proof}
The $p=2$ estimate controls $\int r|L(r\Psi)|^2$ with the dyadic weight of the
Friedlander argument. For fixed $u=t-r_*$,
$r_2\Psi(u+r_{*,2},r_2,\omega)-r_1\Psi(u+r_{*,1},r_1,\omega)=\int_{r_1}^{r_2}L(r\Psi)\,\dd s_*$,
and Cauchy-Schwarz with the $p=2$ weight bounds the $L^2_{u,\omega}$ norm by
$(\sum_{n\ge n_0}2^{-n})^{1/2}$ times the hierarchy norm, $\to0$ as
$r_1,r_2\to\infty$. Thus $r\Psi$ is Cauchy in $L^2_{u,\omega}$ and the limit
exists.
\end{proof}

\subsection{Morawetz Bulk and Trapped Set}
\label{subsec:morawetz_trapping}
For $a=0$ the exterior is Reissner-Nordstr\"om; its trapped null geodesics form
the photon sphere $r=r_{\mathrm{ph}}(Q)$ of \eqref{eq:rn_photon_sphere} with
$\xi_{r_*}=0$, which is uniformly normally hyperbolic in the weak-charge range.

\begin{definition}
\label{def:morawetz_density}
Let $\mathfrak t(r,\omega,D)$ be a first-order operator whose principal symbol
is a defining function for the trapped set, vanishing simply there. For
$\eta>0$,
\begin{equation}\label{eq:morawetz_density}
        \mathcal M_k[\Psi]=\sum_{|I|\le k}\big(r^{-1-\eta}|\partial_{r_*}\Gamma^I\Psi|^2
        +r^{-1-\eta}|\mathfrak t\Gamma^I\Psi|^2+r^{-3-\eta}|\Gamma^I\Psi|^2\big).
\end{equation}
\end{definition}

\begin{lemma}
\label{lem:rn_model_morawetz}
Let $P_{\RN,Q}$ be the charge-free spin-one operator on non-extremal
Reissner-Nordstr\"om with $|Q|\le\eps_QM$. For every fixed order $k$ and every
smooth compactly supported compatible field $u$,
\begin{equation}\label{eq:rn_model_morawetz}
        \norm{u}^2_{\Xnorm{k}_{\RN,Q}(\tau_1,\tau_2)}
        \le C_{\RN,k}\Big(\E_{\RN,Q}^{(k)}[u](\tau_1)+
        \sum_{|I|\le k}\norm{P_{\RN,Q}\Gamma^Iu}_{LE^*([\tau_1,\tau_2])}^2\Big).
\end{equation}
The norm contains the non-degenerate red-shift energy, the
photon-sphere-degenerate Morawetz bulk, and the $r^p$ fluxes for $0\le p\le2$;
the constant is uniform once $\eps_Q$ is small.
\end{lemma}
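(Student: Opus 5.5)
We obtain Lemma~\ref{lem:rn_model_morawetz} from the spherically symmetric theory, gluing three regions. By Section~\ref{sec:decoupling} and Corollary~\ref{cor:structural_reduction_verify}, setting $a=0$ turns $P_{\RN,Q}$ into a diagonal pair of spin-weighted wave operators with scalar principal symbol $g_{\RN,Q}^{\mu\nu}\xi_\mu\xi_\nu$ and spherically symmetric lower-order coefficients. Both sides of \eqref{eq:rn_model_morawetz} are linear. We can therefore take $P_{\RN,Q}u=f$ with general source $f$ in $LE^*$, and expand $u$ in spin-weighted spherical harmonics.

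\textbf{Step 1: remove $\ell=0$.} Compatibility with a charge-free Maxwell field removes the $\ell=0$ mode, by Proposition~\ref{prop:coulomb_elimination} together with the spin-weight restriction $\ell\ge|s|=1$. So each mode has $\ell\ge1$.

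\textbf{Step 2: the $k=0$ estimate, region by region.} We establish three pieces, and a partition of unity $\chi_H+\chi_{\mathrm{mid}}+\chi_\infty=1$ glues them; the cutoff commutators are supported in compact non-trapped annuli and are absorbed there.
\begin{itemize}
\item \emph{Compact interior.} Here we use the charge-free Sterbenz-Tataru local-energy theorem \cite{SterbenzTataru} for Maxwell fields on spherically symmetric black holes. Its hypotheses are a single non-degenerate, normally hyperbolic photon sphere at $r_{\mathrm{ph}}(Q)$ and a non-degenerate horizon; both hold uniformly for $|Q|\le\eps_QM$ by the explicit formulas \eqref{eq:rn_photon_sphere}. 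Equivalently, mode by mode, one uses a Morawetz multiplier $f(r_*)\partial_{r_*}$ that changes sign at the maximum of the potential. For each fixed $\ell\ge1$ the potential has a single nondegenerate maximum near $r_{\mathrm{ph}}(Q)$. This gives a bulk degenerating quadratically there, matching $w_{\mathrm{tr}}$, with constants uniform in $\ell$ and $Q$.
\item \emph{Horizon collar.} We add the red-shift current of Proposition~\ref{prop:redshift_coercivity} with $a=0$. This upgrades the degenerate $T$-energy to the non-degenerate energy and the horizon flux.
\item \emph{Far region.} Proposition~\ref{prop:rp_identity} with $a=0$ and $0\le p\le2$ supplies the $r^p$ fluxes. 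Its error terms fall into the $r^{-1-\eta}$ part of the local-energy bulk already obtained.
\end{itemize}
Source terms are paired with the multipliers and handled by Cauchy-Schwarz in the $LE^*/LE^1_{\mathrm{deg}}$ duality of Definition~\ref{def:concrete_le_norms}. The weight $w_{\mathrm{tr}}^{-1}$ in \eqref{eq:concrete_LEstar} is exactly what is needed near trapping.

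\textbf{Step 3: commuted orders $k\ge1$.} We argue by induction. $T$ and the rotations commute with $P_{\RN,Q}$, since they are Killing and the spin-weighted angular operator is invariant. Applying the $k=0$ estimate to $\Gamma^Iu$ gives the sum $\sum\|P\Gamma^Iu\|_{LE^*}$, which is already on the right side of \eqref{eq:rn_model_morawetz}. The commuted vector fields $r\partial_r$ and the red-shift vector field leave commutator errors of lower order. Near the horizon these carry a good sign, by the Dafermos-Rodnianski red-shift commutation. Elsewhere they are controlled by elliptic estimates in the stationary region, using the equation to express $\partial_r^2$ in terms of $T$ and angular derivatives, and by the $r^p$ bounds at infinity.

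\textbf{Main obstacle.} The hardest step is uniformity in $\ell$ near the photon sphere, together with checking that the spin-weighted lower-order terms do not spoil the sign of the Morawetz bulk. I would first verify this on the Fackerell-Ipser scalar formulation, where the $\ell\ge1$ potential is explicit, and transfer it to $\psi_\pm$ through the same-order relations. If that transfer is lossy, I would instead quote the full Sterbenz-Tataru estimate directly for the charge-free Maxwell tensor and pass to the extreme components by Lemma~\ref{lem:frame_energy_equiv}.
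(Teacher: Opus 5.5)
Your overall architecture (Sterbenz--Tataru in the interior, red-shift at $\Hp$, $r^p$ at infinity, $\ell\ge1$, induction over $\mathbb D_k$) matches the paper's. The contingency in your last paragraph (Fackerell--Ipser, then transfer) is in fact the paper's route. As written, however, the argument runs on the wrong variables and leaves the decisive step unproved.

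Steps 2--3 apply the Morawetz, $T$-energy and $r^p$ currents directly to the spin-weighted pair $\psi_\pm$. The claim that this is ``equivalent'' to Sterbenz--Tataru mode by mode fails. At $a=0$ the separated radial operator \eqref{eq:separated_radial_teukolsky} still has the imaginary potential $-2s(r-M)K/\Delta+4s\omega r=2s\omega r\bigl(2-r(r-M)/\Delta\bigr)$, which vanishes only at $r_{\mathrm{ph}}(Q)$. So for $\psi_\pm$ there is no real barrier with a single maximum and no conserved $T$-energy or Wronskian. The spin-dependent first-order terms are independent of $a$ and of size $O(r^{-1})$ at infinity, so they are neither perturbatively small nor short-range. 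They produce bulk terms of indefinite sign in those identities. The real potential $f_Q\ell(\ell+1)/r^2$ belongs to the Regge--Wheeler variables $\Phi_{\ell m}$ of Lemma~\ref{lem:app_rn_radial}, which are built from the middle components. That is where the paper places the Morawetz structure.

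The step that closes the paper's argument is a two-sided, same-order equivalence between the charge-free Maxwell energy and the spin-one energy. The map to $(\Phi^e_{\ell m},\Phi^o_{\ell m})$ uses $-\sphlap^{-1}$ on $\ell\ge1$, the spectral gap $\ell(\ell+1)\ge2$ and an explicit radial weight. This lets the Sterbenz--Tataru estimate for $F$, whose right side is a Maxwell energy, be rewritten with $\E_{\RN,Q}^{(k)}[u](\tau_1)$ on the right.

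Your fallback uses only Lemma~\ref{lem:frame_energy_equiv}, which goes one way. It bounds $\alpha,\underline\alpha$ by $F$ at zeroth order, which is one derivative below the first-order master norm. It gives no bound of the Maxwell energy by the master energy, and that energy includes $\rho_F,\sigma_F$, which are not entries of $u$. The reverse bound needs the Hodge estimate on $\Sph$ (Lemma~\ref{lem:hodge_inversion}), and this is exactly where charge-freeness enters. The Coulomb fields have $\alpha=\underline\alpha=0$, so $F\mapsto u$ annihilates them, and the middle components can be recovered only after their spherical means are removed.

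This also shows that your Step 1 misplaces the role of the charge. The bound $\ell\ge|s|=1$ is automatic for spin-weight $\pm1$ quantities. Charge subtraction is needed for the reverse energy comparison, not to remove a mode from $\psi_\pm$.
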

\begin{proof}
The Maxwell estimate of Sterbenz-Tataru \cite{SterbenzTataru} holds on every
stationary spherically symmetric black hole with a non-degenerate horizon and a
single normally hyperbolic photon sphere; non-extremal Reissner-Nordstr\"om is
in this class for $|Q|<M$. After the electric and magnetic spherical means are
removed, the Maxwell field decomposes into vector spherical harmonics with
$\ell\ge1$; the spin-one variables follow by a fixed angular Hodge transform and
a regular radial weight, an isomorphism of the charge-free tensor-field and
spin-one energies at the same order by Lemma~\ref{lem:frame_energy_equiv} and
the Hodge estimate on $\Sph$. The horizon term is the red-shift current of
\cite{DafermosRodnianskiRedshift} and the far-field term is the $r^p$ current of
\cite{DafermosRodnianskiRp}. The bulk degenerates only at $r_{\mathrm{ph}}(Q)$,
where the Regge-Wheeler potential has a single non-degenerate maximum for each
$\ell\ge1$; the charge-free projection removes the $\ell=0$ Coulomb mode.
Choosing $\eps_Q<1/4$ keeps $r_+(Q),\kappa_+(Q),r_{\mathrm{ph}}(Q)$ and the
relevant potential barriers separated from all degeneracies, so the constant is uniform.
The reduction of the charge-free Maxwell field to the spin-one radial system and
the proof that every non-extremal Reissner-Nordstr\"om exterior meets the
conditions of \cite{SterbenzTataru} are detailed in
Section~\ref{app:rn_model}.
\end{proof}

\begin{lemma}
\label{lem:trapping_stability}
For $|a|/M$ small, uniformly for $|Q|\le\eps_QM$, the Kerr-Newman principal
Hamiltonian $p_{a,Q}=g_{\KN}^{\mu\nu}\xi_\mu\xi_\nu$ has a trapped set
$K_{a,Q}$ that is a $C^1$ graph over $K_{0,Q}$, and if $\rho_{K_{a,Q}}$ is a
smooth defining function then
\begin{equation}\label{eq:trapping_stability}
        |H_{p_{a,Q}}\rho_{K_{a,Q}}|+|H_{p_{a,Q}}^2\rho_{K_{a,Q}}|
        \simeq |\rho_{K_{a,Q}}|+|\xi_{r_*}|
\end{equation}
near $K_{a,Q}$, with constants independent of $(a,Q)$ in the chosen range.
\end{lemma}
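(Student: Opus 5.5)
The plan is to reduce the statement to a structural-stability argument for the Reissner-Nordstr\"om photon sphere, viewed as a normally hyperbolic invariant manifold of the geodesic flow. First, at $a=0$, use separability. In Boyer-Lindquist variables the principal symbol is
\[
p_{0,Q}=\Sigma^{-1}\bigl(\Delta\xi_r^2-\tfrac{(r^2)^2}{\Delta}\xi_t^2+|\eta|^2_{\Sph}\bigr).
\]
Null geodesics with $\xi_t=-\omega\neq0$ are trapped exactly when $\xi_r=0$ and $\partial_r(r^{-2}\Delta)\big|_{r}=0$, which gives $r=r_{\mathrm{ph}}(Q)$ from \eqref{eq:rn_photon_sphere}. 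One computes that $r\mapsto \Delta/r^4$ has a non-degenerate maximum at $r_{\mathrm{ph}}(Q)$, with second derivative bounded away from zero uniformly for $|Q|\le\eps_QM$. Along $H_{p}$ one has $\dot r\propto\xi_{r_*}$ and $\dot\xi_{r_*}\propto-\partial_{r}(\Delta r^{-4})|\eta|^2$. This yields \eqref{eq:trapping_stability} at $a=0$ with the defining function $\rho_{K_{0,Q}}=r-r_{\mathrm{ph}}(Q)$ on the characteristic set, after normalizing by $|\xi_t|$ and working in the compact normalized cosphere bundle. The two stable and unstable rates $\pm\sqrt{\cdot}$ are uniform in $Q$.

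Second, pass to $a\neq0$. Here I would exploit the fact that Kerr-Newman is still Carter-separable. I would write $\Sigma p_{a,Q}=\Delta\xi_r^2-\Delta^{-1}\bigl((r^2+a^2)\xi_t+a\xi_\phi\bigr)^2+\mathcal K(\theta,\xi_\theta,\xi_t,\xi_\phi)$ with the Carter constant $\mathcal K$. The trapped set is then $K_{a,Q}=\{\xi_r=0,\ \partial_rF_{a,Q}(r;\xi_t,\xi_\phi,\mathcal K)=0\}$, where $F_{a,Q}=\Delta^{-1}((r^2+a^2)\xi_t+a\xi_\phi)^2$ divided by the relevant radial weight. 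By \eqref{eq:metric_symbol_difference}, $F_{a,Q}$ depends smoothly on $a$, and $F_{a,Q}-F_{0,Q}=O(|a|/M)$ in $C^2$ on compact normalized conic sets. Non-degeneracy of the critical point at $a=0$ then lets the implicit function theorem produce a unique critical radius $r_{a,Q}(\hat\xi_t,\hat\xi_\phi,\hat{\mathcal K})$ near $r_{\mathrm{ph}}(Q)$. It is $C^1$, in fact smooth, in the normalized conserved quantities, which are functions on $K_{0,Q}$. This exhibits $K_{a,Q}$ as a graph over $K_{0,Q}$. The non-degeneracy constant $\partial_r^2F$ stays within a factor $2$ of its $a=0$ value once $|a|\le\eps_aM$, uniformly in $Q$, by compactness of the parameter and normalized-frequency set. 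Superradiance does not enter, because only $\xi_t\neq0$ normalized packets with $\xi_t$ timelike-dominant lie in the characteristic set near $r_{\mathrm{ph}}$ for small $a$.

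Third, derive the estimate \eqref{eq:trapping_stability}. I would take $\rho_{K_{a,Q}}=r-r_{a,Q}(\cdot)$, made homogeneous of degree zero and smooth by the previous step. Then $H_{p_{a,Q}}\rho=c\,\xi_{r}\Delta/\Sigma+O(|a|)\,\rho$ with $c\simeq1$, because $r_{a,Q}$ is a function of conserved quantities and is therefore annihilated by $H_p$. Using Hamilton's equation for $\xi_r$ gives $H_p^2\rho=c'\partial_r^2F\cdot\rho+O(\rho^2+\xi_r^2)$. Taken together, these give two-sided control of $|H_p\rho|+|H_p^2\rho|$ by $|\rho|+|\xi_{r_*}|$ in a small neighborhood, since the map $(\rho,\xi_{r_*})\mapsto(H_p\rho,H_p^2\rho)$ has uniformly invertible linearization. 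The main obstacle is the uniformity of the constants in this last step, and in particular the fact that the higher-order remainders are uniform near the poles, where $\mathcal K$ and $\xi_\phi$ coordinates degenerate. I would handle this by working with the rotation-invariant $|\eta|^2$ at $a=0$ and with Carter's $\mathcal K$, which is smooth on $T^*\M$, instead of $\theta$-dependent coordinates. Shrinking $\eps_a$ if necessary then absorbs all $O(|a|/M)$ errors.
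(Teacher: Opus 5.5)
Your proof is correct, but it takes a different route from the paper's proof of this lemma. The paper's argument is soft. After normalization, $p_{a,Q}$ is a $C^2$-small perturbation of $p_{0,Q}$, and the Reissner--Nordstr\"om photon sphere is $r$-normally hyperbolic. Structural stability of $r$-normally hyperbolic invariant manifolds (Hirsch--Pugh--Shub, as used by Wunsch--Zworski and Dyatlov) then gives persistence of $K_{a,Q}$ and of its stable/unstable bundles, and \eqref{eq:trapping_stability} is read off as the uniform expansion/contraction inequalities. You instead use Carter separability of the Kerr--Newman null flow. This reduces trapping to a non-degenerate critical point of the single radial function $F=\Delta^{-1}\bigl((r^2+a^2)\xi_t+a\xi_\phi\bigr)^2$, which the implicit function theorem continues from $a=0$. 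In substance this is the explicit analysis the paper only carries out later in Section~\ref{app:trapping} (Propositions~\ref{prop:kn_nondegeneracy} and~\ref{prop:r_normal_hyperbolicity}).

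The trade-offs are as follows.
\begin{itemize}
\item The paper's abstract route needs no integrability and yields the stable/unstable manifolds at once. However, it gives persistence only at a fixed finite regularity. It also has to handle the noncompactness of $K$ (quotienting by $t$-translations and fibre dilations), and it leaves $\rho_{K_{a,Q}}$ unspecified.
\item Your route gives a smooth graph and uniform constants, by compactness of the normalized conserved quantities. It also gives an adapted defining function $\rho=r-r_{a,Q}(\xi_\phi/\xi_t)$ with $H_{p_{a,Q}}\rho=2\Delta\xi_r/\Sigma$ \emph{exactly}, because $r_{a,Q}$ depends only on conserved quantities.
\end{itemize}

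That adaptation is genuinely needed. For $\tilde\rho=e\rho$ with $e>0$,
\[
H_p^2\tilde\rho=eH_p^2\rho+2(H_pe)(H_p\rho)+\rho H_p^2e .
\]
On $\{\xi_r=0\}$ one has $H_p^2\rho=c\rho+O(\rho^2)$ with $c=2\Delta\Sigma^{-2}\partial_r^2F(r_t)$. Take $e=2+A\cos(\kappa t)$, with $A,\kappa$ tuned so that $H_pe=0$ and $H_p^2e=-ce$ at a point of $K_{a,Q}$. Then $|H_p\tilde\rho|+|H_p^2\tilde\rho|=O(\rho^2)$ along $\{\xi_r=0,\ t=0\}$. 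So the two-sided bound holds for adapted defining functions such as yours, not for arbitrary ones.

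There are a few small slips, none of which affects the argument.
\begin{itemize}
\item At $a=0$ the trapping condition is $\partial_r(r^{-4}\Delta)=0$, not $\partial_r(r^{-2}\Delta)=0$. The latter vanishes only at $r=Q^2/M<r_+$. Your next sentence uses the correct function.
\item For $a\neq0$ the relevant function is exactly $F$ itself. On $\{p_{a,Q}=0,\ \xi_r=0\}$ one has $H_p\xi_r=\Sigma^{-1}\partial_rF$, so dividing $F$ by a nonconstant radial weight would move the critical point.
\item The $O(|a|)\rho$ term you allow in $H_p\rho$ is in fact absent.
\item The term $\partial_\theta\Sigma$ contributes an $O(a^2)\xi_r$ term to $H_p^2\rho$. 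This is harmless for the triangular invertibility you use.
\end{itemize}
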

\begin{proof}
By \eqref{eq:metric_symbol_difference}, $p_{a,Q}$ is a $C^2$-small stationary
perturbation of $p_{0,Q}$ after dividing by $|\xi|^2$. The Reissner-Nordstr\"om
photon sphere is normally hyperbolic: the linearized flow has one expanding and
one contracting normal direction and neutral tangential directions; indeed it is
$r$-normally hyperbolic for every $r$ in the sense of \cite{WunschZworski}, and
in the Schwarzschild case this is the explicit computation at $r=3M$, $\xi_{r_*}=0$
of \cite[\S2]{WunschZworski}. By \cite{Dyatlov,WunschZworski,HirschPughShub} $r$-normal
hyperbolicity is open under $C^1$ perturbations of the Hamiltonian vector field,
so $K_{a,Q}$ and its stable/unstable bundles persist and depend $C^1$ on
$(a,Q)$; \eqref{eq:trapping_stability} are the uniform expansion/contraction
inequalities after shrinking $\eps_a,\eps_Q$. The quantitative version, with the
escape function used below, is in Section~\ref{app:trapping}.
\end{proof}

\begin{proposition}
\label{prop:main_body_trapped_computation}
Let
\[
        K=(r^2+a^2)\omega-am,\qquad
        \mathcal R(r)=K^2-\Delta\lambda,
        \qquad \Delta=r^2-2Mr+a^2+Q^2,
\]
where $\omega$ and $m$ are the stationary and axial frequencies and $\lambda>0$
is the angular/Carter constant of a separated null bicharacteristic. At a trapped
turning point $r_t>r_+$ with $\mathcal R(r_t)=\mathcal R'(r_t)=0$,
writing $K_t=K(r_t)$ and $\Delta_t=\Delta(r_t)$, one has
\begin{equation}\label{eq:main_body_K_trapped}
        \lambda=\frac{K_t^2}{\Delta_t}
        =\frac{2r_t\omega K_t}{r_t-M},
        \qquad
        K_t=\frac{2r_t\omega\Delta_t}{r_t-M}.
\end{equation}
This implies that
\begin{equation}\label{eq:main_body_Rpp}
        \mathcal R''(r_t)=\frac{8r_t\omega^2}{(r_t-M)^2}
        \Big((r_t-M)^3+M(M^2-a^2-Q^2)\Big)>0.
\end{equation}
In addition, if $\Omega_+=a/(r_+^2+a^2)$ and $\varpi=\omega-m\Omega_+$, then
\begin{equation}\label{eq:main_body_superradiant_gap}
        \omega\varpi
        =\frac{\omega^2(r_t-r_+)}{(r_t-M)(r_+^2+a^2)}
          \Big(r_t^2+(r_+-3M)r_t+Mr_+\Big)>0.
\end{equation}
For the separated spin-one radial operator
\begin{equation}\label{eq:main_body_spin_imag}
        \operatorname{Im}V^{(s)}_{\rad}(r)
        =-\frac{2s(r-M)K}{\Delta}+4s\omega r,
        \qquad s=\pm1,
\end{equation}
its imaginary subprincipal contribution vanishes at the trapped set:
\begin{equation}\label{eq:main_body_skew_zero}
        \operatorname{Im}V^{(s)}_{\rad}(r_t)=0.
\end{equation}
Thus the high-frequency spin-one estimate is used with the scalar trapped-set
geometry and with zero diagonal skew symbol at the trapped set. For a compatible
two-component reduction with off-diagonal lower-order terms, the remaining matrix
skew part is not asserted to vanish; it is controlled by the finite-rank threshold
estimate proved in Proposition~\ref{prop:main_body_trapped_commutator_threshold}
and in Proposition~\ref{prop:matrix_skew_threshold}.
\end{proposition}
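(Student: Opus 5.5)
The proof is a direct computation with the radial polynomial $\mathcal R$. No earlier analytic result is needed, only the factorization $\Delta=(r-r_+)(r-r_-)$ with $r_++r_-=2M$ and $r_+r_-=a^2+Q^2$. I would begin by recording
\[
K'(r)=2r\omega,\qquad \Delta'=2(r-M),\qquad \Delta''=2,
\]
and
\[
\mathcal R'(r)=4r\omega K-2(r-M)\lambda,\qquad \mathcal R''(r)=8r^2\omega^2+4\omega K-2\lambda .
\]
Since $r_t>r_+$, we have $\Delta_t>0$ and $r_t-M>0$.

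\textbf{Turning-point relations.} The condition $\mathcal R(r_t)=0$ gives $\lambda=K_t^2/\Delta_t$, and $\mathcal R'(r_t)=0$ gives $\lambda=2r_t\omega K_t/(r_t-M)$. Equating the two expressions and dividing by $K_t$ yields $K_t=2r_t\omega\Delta_t/(r_t-M)$, which is \eqref{eq:main_body_K_trapped}. The division is legitimate because $\lambda>0$ forces $K_t\neq0$, and then $\omega\neq0$ as well, since otherwise the second formula would give $\lambda=0$. Substituting back gives
\[
\lambda=\frac{4r_t^2\omega^2\Delta_t}{(r_t-M)^2}.
\]

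\textbf{Second derivative.} Inserting $K_t$ and $\lambda$ into $\mathcal R''$ gives
\[
\mathcal R''(r_t)=\frac{8r_t\omega^2}{(r_t-M)^2}\Big(r_t(r_t-M)^2+(r_t-M)\Delta_t-r_t\Delta_t\Big)
=\frac{8r_t\omega^2}{(r_t-M)^2}\Big(r_t(r_t-M)^2-M\Delta_t\Big).
\]
Using $\Delta_t=(r_t-M)^2-(M^2-a^2-Q^2)$, the bracket becomes $(r_t-M)^3+M(M^2-a^2-Q^2)$. This is positive by sub-extremality and $r_t>M$, which proves \eqref{eq:main_body_Rpp}.

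\textbf{Superradiant gap.} Solving the definition of $K$ for $am$ gives $am=(r_t^2+a^2)\omega-K_t$. Hence
\[
\omega\varpi=\frac{\omega^2}{r_+^2+a^2}\Big(r_+^2-r_t^2+\frac{2r_t\Delta_t}{r_t-M}\Big).
\]
I would then factor $(r_t-r_+)$ out of both terms in the bracket, using $\Delta_t=(r_t-r_+)(r_t-r_-)$. What remains over the denominator $r_t-M$ is
\[
2r_t(r_t-r_-)-(r_t+r_+)(r_t-M)=r_t^2+(M-r_+-2r_-)r_t+Mr_+ .
\]
Substituting $r_-=2M-r_+$ turns the linear coefficient into $r_+-3M$, which gives \eqref{eq:main_body_superradiant_gap}.

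\textbf{Positivity of the quadratic.} Positivity of $q(x)=x^2+(r_+-3M)x+Mr_+$ for $x\ge r_+$ is the only point needing a small argument. We have $q(r_+)=2r_+(r_+-M)>0$, and $q'(x)=2x+r_+-3M\ge3(r_+-M)>0$ on $[r_+,\infty)$. So $q(r_t)>0$, and every other factor in \eqref{eq:main_body_superradiant_gap} is positive, including $\omega^2>0$.

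\textbf{Vanishing skew symbol.} Substituting $K_t$ into \eqref{eq:main_body_spin_imag} gives
\[
-\frac{2s(r_t-M)}{\Delta_t}\cdot\frac{2r_t\omega\Delta_t}{r_t-M}+4s\omega r_t=0,
\]
which is \eqref{eq:main_body_skew_zero}.

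I expect no real obstacle. The only care points are excluding the degenerate cases $K_t=0$ and $\omega=0$ via $\lambda>0$, and checking the sign of the quadratic factor on $[r_+,\infty)$.
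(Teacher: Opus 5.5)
Your proposal is correct and follows the paper's proof essentially step by step: the same elimination of $\lambda$ at the turning point, the same substitution into $\mathcal R''=8r^2\omega^2+4\omega K-2\lambda$, and the same algebraic cancellation giving $\operatorname{Im}V^{(s)}_{\rad}(r_t)=0$. The differences are minor and confined to the superradiant step. You factor out $(r_t-r_+)$ directly from $\Delta_t=(r_t-r_+)(r_t-r_-)$ with $r_++r_-=2M$, whereas the paper goes through its cubic $P(r)$ and the horizon relation $a^2+Q^2=2Mr_+-r_+^2$. You also prove positivity of the quadratic on $[r_+,\infty)$ by monotonicity rather than by the paper's discriminant $(r_+-M)(r_+-9M)<0$; both arguments are valid.
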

\begin{proof}
Since $K'=2r\omega$ and $\Delta'=2(r-M)$,
\[
        \mathcal R'(r)=2KK'-\Delta'\lambda
        =4r\omega K-2(r-M)\lambda.
\]
The equation $\mathcal R(r_t)=0$ gives $K_t^2=\Delta_t\lambda$.  Since
$\Delta_t>0$ and $\lambda>0$, $K_t\ne0$.  The equation $\mathcal R'(r_t)=0$ gives
$2r_t\omega K_t=(r_t-M)\lambda$. Combining the two identities and dividing by
$K_t$ gives \eqref{eq:main_body_K_trapped}. Differentiating once more,
\[
        \mathcal R''=2(K')^2+2KK''-\Delta''\lambda
        =8r^2\omega^2+4\omega K-2\lambda.
\]
Substituting \eqref{eq:main_body_K_trapped} and
$\lambda=4r_t^2\omega^2\Delta_t/(r_t-M)^2$ gives
\[
\begin{aligned}
        \mathcal R''(r_t)
        &=8r_t^2\omega^2+\frac{8r_t\omega^2\Delta_t}{r_t-M}
          -\frac{8r_t^2\omega^2\Delta_t}{(r_t-M)^2}  \\
        &=\frac{8r_t\omega^2}{(r_t-M)^2}
          \bigl(r_t(r_t-M)^2-M\Delta_t\bigr).
\end{aligned}
\]
The bracket equals
\[
        r_t(r_t-M)^2-M(r_t^2-2Mr_t+a^2+Q^2)
        =(r_t-M)^3+M(M^2-a^2-Q^2),
\]
which proves \eqref{eq:main_body_Rpp}; it is positive because $r_t>r_+>M$ and
$M^2-a^2-Q^2>0$.

For the superradiant factor, we solve \eqref{eq:main_body_K_trapped} for $am$:
\[
        am=(r_t^2+a^2)\omega-K_t
        =-\omega\frac{P(r_t)}{r_t-M},
        \qquad
        P(r)=r^3-3Mr^2+(a^2+2Q^2)r+a^2M.
\]
Since $K(r_+)=(r_+^2+a^2)\varpi$,
\[
        \omega\varpi=\frac{\omega K(r_+)}{r_+^2+a^2}
        =\frac{\omega^2\bigl((r_+^2+a^2)(r_t-M)+P(r_t)\bigr)}{(r_t-M)(r_+^2+a^2)}.
\]
Using the horizon relation $a^2+Q^2=2Mr_+-r_+^2$, the numerator polynomial becomes
\[
        (r_+^2+a^2)(r-M)+P(r)
        =(r-r_+)\bigl(r^2+(r_+-3M)r+Mr_+\bigr).
\]
The discriminant of the quadratic factor is
$(r_+-M)(r_+-9M)<0$, while its leading coefficient is positive. Hence the
quadratic factor is positive for all real $r$; because $r_t>r_+$, this proves
\eqref{eq:main_body_superradiant_gap}.

Finally substitute the exact trapped value of $K_t$ into the imaginary radial
spin-one coefficient:
\[
        \operatorname{Im}V^{(s)}_{\rad}(r_t)
        =-\frac{2s(r_t-M)}{\Delta_t}\frac{2r_t\omega\Delta_t}{r_t-M}
          +4s\omega r_t
        =0.
\]
This cancellation is algebraic, not an estimate, and it is uniform up to the
subextremal constants appearing in the preceding positivity statement. Its more
detailed semiclassical translation into the vector-bundle subprincipal symbol is
Lemma~\ref{lem:trapped_skew_vanishing}.
\end{proof}

\begin{proposition}
\label{prop:main_body_trapped_commutator_threshold}
In the semiclassical normalization near the trapped set, the frozen compatible spin-one
operator can be written microlocally as
\begin{equation}\label{eq:main_body_semiclassical_skew_split}
        P_h^{(a,Q)}=P_{h,\mathrm{sa}}+ihB_h+h^2R_h,
        \qquad \sigma_h(P_{h,\mathrm{sa}})=p_{a,Q,\hat\omega}I_2,
\end{equation}
where $P_{h,\mathrm{sa}}$ is formally self-adjoint modulo $h^2\Psi_h^0$,
$B_h$ is self-adjoint modulo $h\Psi_h^0$, and
\begin{equation}\label{eq:main_body_skew_decomposition_full}
        B_h=\operatorname{diag}(b_{+1},b_{-1})+B_{h,\mathrm{mat}},
        \qquad
        \norm{B_{h,\mathrm{mat}}}_{C^k(\mathcal U_\mathrm{tr})}
        \le C_k\bigl(|a|/M+a^2/M^2\bigr).
\end{equation}
The diagonal symbols are
\begin{equation}\label{eq:main_body_skew_symbol_bs}
        b_s(r;\hat\omega,\hat m)
        =-\frac{2s(r-M)\hat K}{\Delta}+4s\hat\omega r,
        \qquad
        \hat K=(r^2+a^2)\hat\omega-a\hat m,
        \qquad s=\pm1.
\end{equation}
At every trapped point $\rho\in K_{a,Q}$,
\begin{equation}\label{eq:main_body_b_zero_on_K}
        b_{+1}(\rho)=b_{-1}(\rho)=0.
\end{equation}
Let $A_h=\operatorname{Op}_h(\beta)$ be a scalar trapped-set commutant, where
$\beta$ is not the Kerr rotation parameter. If $\pi:\mathcal U_\mathrm{tr}\to K_{a,Q}$
denotes the normal projection in a sufficiently small trapped collar, then for
every $\eps>0$
\begin{equation}\label{eq:main_body_second_microlocal_diag}
\begin{aligned}
 \big|\langle A_h(\operatorname{diag}(b_{+1},b_{-1})
        -\pi^*\operatorname{diag}(b_{+1},b_{-1})|_{K_{a,Q}})A_hv,v\rangle\big|
 &\le \eps \mathcal Q_\mathrm{nh}[v]+C_\eps\mathcal Q_\mathrm{prop}[v]  \\
 &\quad +C_\eps h\norm{v}_{H_h^1}^2+O(h^\infty)\norm{v}_{H_h^1}^2 .
\end{aligned}
\end{equation}
Here $\mathcal Q_\mathrm{nh}$ is the positive normal quadratic form generated by
the logarithmic escape function and $\mathcal Q_\mathrm{prop}$ is supported where
ordinary propagation, radial-point estimates, or elliptic estimates have already
closed the estimate. Moreover, after decreasing $\eps_a(k)$,
\begin{equation}\label{eq:main_body_matrix_threshold_at_K}
        \sup_{\rho\in K_{a,Q}}
        \norm{B_{h,\mathrm{mat}}(\rho)}\le \lambda_0/8,
\end{equation}
where $\lambda_0$ is the uniform lower bound for the normal expansion in
Proposition~\ref{prop:main_body_trapped_normal_form}. Therefore,
\begin{equation}\label{eq:main_body_full_threshold_commutator}
\begin{aligned}
& \frac{i}{h}\langle [P_{h,\mathrm{sa}},A_h^*A_h]v,v\rangle
      -2\langle A_hB_hA_hv,v\rangle                                      \\
&\qquad \ge c\mathcal Q_\mathrm{nh}[v]-C\mathcal Q_\mathrm{prop}[v]
      -Ch\norm{v}_{H_h^1}^2-O(h^\infty)\norm{v}_{H_h^1}^2,
\end{aligned}
\end{equation}
with $c>0$ uniform in the fixed slow-weak parameter range. In turn, the diagonal
spin-one term has threshold value zero, while the possible matrix skew term is
controlled by the finite-rank-bundle threshold; no pointwise positivity away from
the second-microlocal trapped decomposition is used.
\end{proposition}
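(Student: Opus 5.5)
The proof has three parts: writing down the semiclassical normal form, verifying the diagonal threshold, and closing the positive-commutator inequality.

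\emph{Normal form.} Starting from Proposition~\ref{prop:principal_master}, freeze the stationary frequency $\omega=\hat\omega/h$ and multiply by $h^2$. The principal symbol is then $p_{a,Q,\hat\omega}I_2$, since $\sigma_2=g_{\KN}^{\mu\nu}\xi_\mu\xi_\nu I_2$. Take $P_{h,\mathrm{sa}}=\tfrac12(P_h+P_h^*)$, computed with respect to the $L^2$ pairing induced by the master energy density. Set $B_h=\tfrac{1}{2ih}(P_h-P_h^*)$ modulo $h\Psi_h^0$. This gives \eqref{eq:main_body_semiclassical_skew_split}, with the $h^2R_h$ term collecting lower-order remainders.

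\emph{Skew part and its threshold value.} The skew part splits by \eqref{eq:matrix_skew_decomposition}. The diagonal entries are the semiclassical rescalings of $\operatorname{Im}V^{(s)}_{\rad}$ from \eqref{eq:main_body_spin_imag}, which yields \eqref{eq:main_body_skew_symbol_bs}. The off-diagonal and frame-mixing remainder inherits the bound $C_k(|a|/M+a^2/M^2)$. The vanishing \eqref{eq:main_body_b_zero_on_K} is exactly the algebraic cancellation \eqref{eq:main_body_skew_zero}, evaluated at the trapped value of $\hat K$ from \eqref{eq:main_body_K_trapped}. One needs the trapped set $K_{a,Q}$ to be the set where $\mathcal R=\mathcal R'=0$. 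For separated null bicharacteristics this follows from Lemma~\ref{lem:trapping_stability} together with Proposition~\ref{prop:main_body_trapped_computation}. Because both conditions are homogeneous, they pass to the normalized variables $(\hat\omega,\hat m)$.

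\emph{Second-microlocal estimate \eqref{eq:main_body_second_microlocal_diag}.} The symbol $b_s-\pi^*(b_s|_K)$ is smooth and vanishes on $K_{a,Q}$. Near $K_{a,Q}$ it can therefore be written as $c_1\phi_++c_2\phi_-$, where $\phi_\pm$ are the defining functions of the unstable and stable manifolds. The normal form of Proposition~\ref{prop:main_body_trapped_normal_form} supplies these, with $|H_p\phi_\pm|\simeq\lambda_0|\phi_\pm|$. Quantize and apply Cauchy--Schwarz, $|\langle A_h c_j\phi_\pm A_hv,v\rangle|\le\eps\|\phi_\pm A_hv\|^2+C_\eps\|A_hv\|_{\mathrm{away}}^2$. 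The first term is bounded by $\mathcal Q_{\mathrm{nh}}$, because the logarithmic escape function produces exactly $\phi_\pm^2$-weighted positivity, up to the $\log(1/h)$ normalization. The second term sits outside a shrinking collar, so it belongs to $\mathcal Q_{\mathrm{prop}}$. Commutator errors are $O(h)\|v\|_{H^1_h}^2$.

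\emph{Matrix threshold and closing.} For \eqref{eq:main_body_matrix_threshold_at_K}, the $C^0$ bound on $B_{h,\mathrm{mat}}$ from \eqref{eq:main_body_skew_decomposition_full} shows that it suffices to choose $\eps_a(k)$ so that $C_0(\eps_a+\eps_a^2)\le\lambda_0/8$. The constant $\lambda_0$ is uniform by Lemma~\ref{lem:trapping_stability}. For the commutator inequality, the scalar Wunsch--Zworski/Dyatlov construction gives $\frac ih[P_{h,\mathrm{sa}},A_h^*A_h]\ge 2c_0\mathcal Q_{\mathrm{nh}}-C\mathcal Q_{\mathrm{prop}}-Ch\|v\|^2$, acting diagonally because the principal part is scalar. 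Next, subtract $2\langle A_hB_hA_hv,v\rangle$. Its restriction $\pi^*B_h|_K$ consists of a zero diagonal part plus a matrix part of norm at most $\lambda_0/8$. The normal quadratic form dominates $\lambda_0$ times the localized mass, so this term costs at most $c_0/2$. The variation off $K$ is handled by the second-microlocal estimate with $\eps$ small; the diagonal case is treated above, and the matrix case works the same way because $B_{h,\mathrm{mat}}$ is $C^1$. The result is \eqref{eq:main_body_full_threshold_commutator} with $c=c_0/2$.

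The main obstacle is comparing $\mathcal Q_{\mathrm{nh}}$ with $\lambda_0\|A_hv\|^2$ at threshold. This requires the finite-rank-bundle version of the normally hyperbolic estimate, following Hintz \cite{HintzTensor}, and careful bookkeeping of the logarithmic loss. I would first try to invoke that result directly, using the constant fixed by the escape function.
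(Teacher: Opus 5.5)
Your outline follows the paper's proof: semiclassical freezing with $P_{h,\mathrm{sa}}$ and $B_h$ as symmetric and skew parts, the diagonal symbols read off from $\operatorname{Im}V^{(s)}_{\rad}$, the vanishing $b_s=0$ on $K_{a,Q}$ from the trapped value of $\hat K$, the choice of $\eps_a(k)$ giving $\lambda_0/8$, and a final appeal to the finite-rank normally hyperbolic estimate. The step that fails is your proof of \eqref{eq:main_body_second_microlocal_diag}. Cauchy--Schwarz applied to $\langle A_hc_j\phi_\pm A_hv,v\rangle$ gives $\eps\|\phi_\pm A_hv\|^2$ plus a term like $C_\eps\|c_jA_h^*v\|^2$. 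That second term is $C_\eps$ times the full mass of $v$ on the trapped collar, including on $K_{a,Q}$ itself, because $A_h$ is the trapped-set commutant and not a cutoff away from $K$. So it cannot be charged to $\mathcal Q_{\mathrm{prop}}$, which lives only where propagation, radial-point or elliptic estimates already hold. With the large constant $C_\eps$ it cannot be absorbed into $\mathcal Q_{\mathrm{nh}}$ either. At the symbol level the inequality you want is false: where $|\phi_\pm|=t\to0$ the left side is of size $t$, while $\eps t^2$ plus anything supported off the collar is much smaller.

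The missing idea is the $h$-dependent shape of the escape positivity. By Proposition~\ref{prop:main_body_trapped_normal_form} the weight is $w_h=\frac{x_s^2}{x_s^2+h}+\frac{x_u^2}{x_u^2+h}$, not $\phi_\pm^2$. Off the core $|x|\ge h^{1/2}$ one has $w_h\ge 1/3$, and inside it $w_h\ge |x|^2/(2h)$; this is the $h^{1/2}$ split in the paper's proof. On a collar of radius $\delta$ the variation $b_s-\pi^*(b_s|_K)$ is $O(\delta)$, so off the core it is at most $\eps\lambda_0 w_h$ once $\delta\lesssim\eps\lambda_0$. In the core, $C|x|\le \eps\lambda_0|x|^2/(2h)+C^2h/(2\eps\lambda_0)$, and this is where the $C_\eps h\|v\|_{H^1_h}^2$ term comes from; $\mathcal Q_{\mathrm{prop}}$ only collects the cutoff terms at the collar edge. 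The variation of $B_{h,\mathrm{mat}}$ is treated the same way. Your closing step also uses $\mathcal Q_{\mathrm{nh}}\gtrsim\lambda_0\|A_hv\|^2$. This fails symbolically, since $w_h\to0$ on $K$, and can hold only at the operator level, through the uncertainty principle between $x_s$ and $x_u$ in the core. That is the threshold input you rightly defer to Hintz, as the paper does. Once it is granted, no Cauchy--Schwarz is needed at all: the $O(\delta)$ sup bound on the collar already gives $\eps\mathcal Q_{\mathrm{nh}}$.
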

\begin{proof}
The finite-frequency separated radial operator has imaginary potential
\eqref{eq:main_body_spin_imag}. Multiplying the stationary equation by $h^2$ and
writing $\hat\omega=h\omega$, $\hat m=hm$ gives the order-$h$ skew-adjoint part
$ihB_h$; the real diagonal symbol is \eqref{eq:main_body_skew_symbol_bs}. The
real second-order part gives the scalar principal symbol $p_{a,Q,\hat\omega}I_2$,
and all remaining coefficients are subprincipal or lower-order terms bounded in
Lemma~\ref{lem:semiclassical_spinone_order}. The compatible two-component
reduction may contain a matrix skew-subprincipal part. Its size is the
coefficient comparison \eqref{eq:matrix_skew_decomposition}, transferred to the
frozen semiclassical normal form, and gives
\eqref{eq:main_body_skew_decomposition_full} on a fixed trapped collar.

For a scalar commutant $A_h$ the symbolic calculus gives
\begin{equation}\label{eq:main_body_diag_commutator_identity}
\begin{aligned}
& \frac{i}{h}\langle [P_{h,\mathrm{sa}},A_h^*A_h]v,v\rangle
      -2\langle A_h\operatorname{diag}(b_{+1},b_{-1})A_hv,v\rangle  \\
&\qquad =\langle
   \operatorname{Op}_h\bigl(
      H_{p_{a,Q,\hat\omega}}(\beta^2)I_2
      -2\beta^2\operatorname{diag}(b_{+1},b_{-1})
   \bigr)v,v\rangle
   +O(h)\norm{v}_{H_h^1}^2 .
\end{aligned}
\end{equation}
At a trapped point the principal trapping relations give
\begin{equation}\label{eq:main_body_Khat_trapped_for_threshold}
        \hat K_t=\frac{2r_t\hat\omega\Delta_t}{r_t-M}.
\end{equation}
Substitution in \eqref{eq:main_body_skew_symbol_bs} gives
\begin{equation}\label{eq:main_body_diag_skew_zero_computation}
        b_s(r_t)=
        -\frac{2s(r_t-M)}{\Delta_t}\frac{2r_t\hat\omega\Delta_t}{r_t-M}
        +4s\hat\omega r_t=0,
        \qquad s=\pm1,
\end{equation}
which proves \eqref{eq:main_body_b_zero_on_K}. Since
$\operatorname{diag}(b_{+1},b_{-1})$ is smooth, its difference from its value on
$K_{a,Q}$ is $O(|y|+|\nu|)$ in normal stable/unstable coordinates. In the
second-microlocal trapped calculus, the region $|y|+|\nu|\ge c h^{1/2}$ is
controlled by the positive normal escape form, while the core
$|y|+|\nu|\le c h^{1/2}$ is a lower trapped remainder propagated along the
stable and unstable directions. The symbolic Cauchy inequality therefore gives
\eqref{eq:main_body_second_microlocal_diag}.

It remains to add the matrix part $B_{h,\mathrm{mat}}$. On $K_{a,Q}$ the
diagonal part is zero, and the matrix part obeys
\eqref{eq:main_body_skew_decomposition_full}. Since $\lambda_0$ has a positive
uniform lower bound on the normalized trapped shell, decreasing $\eps_a(k)$ gives
\eqref{eq:main_body_matrix_threshold_at_K}. The off-trapped variation of
$B_{h,\mathrm{mat}}$ is smooth in the normal variables and is estimated by the
same second-microlocal Cauchy inequality as the diagonal variation. Combining
this with the scalar escape lower bound of
Proposition~\ref{prop:main_body_trapped_normal_form} gives
\eqref{eq:main_body_full_threshold_commutator}. The constants depend only on the
fixed differentiability order and on the compact normalized frequency shell.
\end{proof}

\begin{proposition}
\label{prop:main_body_trapped_normal_form}
Fix a compact normalized frequency shell in the slow-weak range and let
\(\rho_t\in K_{a,Q}\) be a trapped point. On the characteristic quotient near
\(\rho_t\) one can choose smooth homogeneous symplectic coordinates
\((z,y,\eta)\), where \(z\) are coordinates along \(K_{a,Q}\),
\(y=r-r_t(z)\), and \(\eta\) is a regular radial covariable, such that
\begin{equation}\label{eq:main_body_radial_taylor}
        p_{a,Q,\hat\omega}=A_t(z)\eta^2-B_t(z)y^2
        +O(|(y,\eta)|^3),
        \qquad A_t(z)>0,
        \quad B_t(z)>0.
\end{equation}
The constants satisfy
\begin{equation}\label{eq:main_body_lambda_from_Rpp}
        \lambda_t(z)^2=4A_t(z)B_t(z)
        =-\bigl(\partial_\eta^2p_{a,Q,\hat\omega}\bigr)
          \bigl(\partial_y^2p_{a,Q,\hat\omega}\bigr)\big|_{\rho_t},
\end{equation}
and, after the normalization of the radial covariable,
\(\lambda_t(z)^2\) is a positive smooth multiple of
\begin{equation}\label{eq:main_body_lambda_positive_multiple}
        \Delta(r_t)\,\mathcal R''(r_t)\,(r_t^2+a^2)^{-4}.
\end{equation}
Thus \(\lambda_t\ge\lambda_0>0\) on the fixed compact shell. Equivalently,
with
\begin{equation}\label{eq:main_body_stable_unstable_coords}
        x_u=\sqrt{B_t}\,y+\sqrt{A_t}\,\eta,
        \qquad
        x_s=\sqrt{B_t}\,y-\sqrt{A_t}\,\eta,
\end{equation}
one has
\begin{equation}\label{eq:main_body_stable_unstable_flow}
        H_{p_{a,Q,\hat\omega}}x_u=\lambda_t x_u+O(|(x_s,x_u)|^2),
        \qquad
        H_{p_{a,Q,\hat\omega}}x_s=-\lambda_t x_s+O(|(x_s,x_u)|^2).
\end{equation}
For the logarithmic escape function
\begin{equation}\label{eq:main_body_log_escape}
        G_h=\frac12\log\frac{x_s^2+h}{x_u^2+h}
\end{equation}
with the sign chosen according to the outgoing estimate, for every
\(\delta>0\) the trapped collar may be chosen so that
\begin{equation}\label{eq:main_body_escape_derivative}
        -H_{p_{a,Q,\hat\omega}}G_h
        \ge (\lambda_0-\delta)\left(
        \frac{x_s^2}{x_s^2+h}+\frac{x_u^2}{x_u^2+h}\right)-C_\delta h.
\end{equation}
Together with Proposition~\ref{prop:main_body_trapped_commutator_threshold}, this
gives the finite threshold inequality used in the spin-one trapped commutator:
the diagonal skew term has zero threshold value on \(K_{a,Q}\), its normal
variation is lower in the second-microlocal trapped calculus, and the possible
matrix skew term satisfies the finite-rank-bundle threshold after the slow-rotation
constant is decreased.
\end{proposition}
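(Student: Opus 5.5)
The plan is to work in the separated radial/Carter picture already set up in Proposition~\ref{prop:main_body_trapped_computation}. On the characteristic set, away from the horizon, the principal symbol has the form $p_{a,Q,\hat\omega}=\Sigma^{-1}\bigl(\Delta\xi_r^2-\Delta^{-1}\hat{\mathcal R}(r)\bigr)$ modulo the separated angular part, where $\hat{\mathcal R}=\hat K^2-\Delta\hat\lambda$. Passing to a regular radial covariable $\eta$ (a multiple of $\Delta\xi_r/(r^2+a^2)$, i.e.\ $\xi_{r_*}$ normalized) and to $y=r-r_t(z)$, the trapped point $\rho_t$ is exactly where $\eta=0$ and $\hat{\mathcal R}(r_t)=\hat{\mathcal R}'(r_t)=0$. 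The coordinates $z$ along $K_{a,Q}$ are the remaining conserved quantities $(t,\phi,\theta,\hat\omega,\hat m,\hat\lambda)$ restricted to the trapped relations. Lemma~\ref{lem:trapping_stability} makes $r_t(z)$ a smooth ($C^1$, and by the implicit function theorem smooth) function, because $\hat{\mathcal R}''(r_t)\neq0$.

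With these coordinates fixed, I would Taylor expand $p$ at $(y,\eta)=(0,0)$. The $\eta$-part gives $A_t(z)\eta^2$, with $A_t>0$ equal to the normalization factor times $\Sigma^{-1}$. The $y$-part gives $-\tfrac12\Delta_t^{-1}\Sigma^{-1}\hat{\mathcal R}''(r_t)\,y^2$ after the chosen normalization, and the linear terms vanish by the double-root condition. I would then read off $B_t$ and use \eqref{eq:main_body_Rpp} to get $B_t>0$. The identity $\lambda_t^2=4A_tB_t$ is the standard eigenvalue computation for the Hamilton flow of $A\eta^2-By^2$, namely $\dot y=2A\eta$ and $\dot\eta=2By$. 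Tracking the weights gives the multiple \eqref{eq:main_body_lambda_positive_multiple}. A uniform lower bound $\lambda_0$ then follows from compactness of the normalized shell and the explicit positive factor $(r_t-M)^3+M(M^2-a^2-Q^2)$ together with $\hat\omega^2$ bounded below on the shell. The point $\hat\omega=0$ is not trapped, since there $\hat K_t=0$ forces $\hat\lambda=0$, which is excluded.

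Next I would define $x_u,x_s$ as in \eqref{eq:main_body_stable_unstable_coords} and compute directly $H_p x_u=\lambda_t x_u+O(|(x_s,x_u)|^2)$ and $H_p x_s=-\lambda_t x_s+O(\cdot)$. The only subtlety here is that $z$-dependence of $A_t,B_t,r_t$ contributes terms containing $H_pz$. Since $z$ consists of integrals of motion (or $t,\phi$ with derivatives bounded by $|(y,\eta)|$-independent constants multiplying $y$ or $\eta$), these terms are quadratic in $(y,\eta)$ or can be removed by a near-identity symplectic change. If needed I would use the Wunsch--Zworski normal form adapted to this setting, which also shows the coordinates can be made symplectic.

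Finally, for the escape function,
\[
-H_pG_h=-\frac{x_sH_px_s}{x_s^2+h}+\frac{x_uH_px_u}{x_u^2+h}
=\lambda_t\Bigl(\frac{x_s^2}{x_s^2+h}+\frac{x_u^2}{x_u^2+h}\Bigr)+\text{error}.
\]
The error has the form $O(|x|^2)\,|x_\bullet|/(x_\bullet^2+h)$. I expect this error to be the main obstacle, because it is not trivially small near $x_\bullet\sim h^{1/2}$. I would split it by Cauchy--Schwarz as $|x|\cdot|x_\bullet|\cdot|x|/(x_\bullet^2+h)\le\delta\,x_\bullet^2/(x_\bullet^2+h)+C\delta^{-1}|x|^2(\ldots)$, and then shrink the collar $|x|\le\delta$. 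The cross term, where $x_s$ is large and $x_u$ is small, is the delicate case: there $|x_s|\,|x_u|/(x_u^2+h)$ is bounded by $|x_s|\,h^{-1/2}$. If the quadratic remainder in $H_px_u$ turns out to be divisible by $x_u$, which can be arranged by straightening the stable and unstable manifolds (both smooth by Lemma~\ref{lem:trapping_stability}), this issue disappears and the bound becomes $\delta$ times the good term. This yields \eqref{eq:main_body_escape_derivative}, with the $-C_\delta h$ absorbing the residue. The concluding remark of the statement then follows by combining the result with Proposition~\ref{prop:main_body_trapped_commutator_threshold}.
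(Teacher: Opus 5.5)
Your outline follows the paper's proof up to the last step: the separated radial symbol $\Sigma p=\Delta\xi_r^2-\mathcal R/\Delta$, Taylor expansion at the double root using $\mathcal R''(r_t)>0$ from Proposition~\ref{prop:main_body_trapped_computation}, $\lambda_t^2=4A_tB_t$ from the linearized radial Hamilton matrix, diagonalization, and differentiation of $G_h$. Where you genuinely differ is in how you handle the escape-function error, and here your route is the one that closes.

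The paper asserts that the remainder is $O\bigl(|x|^3/(x_s^2+x_u^2+h)\bigr)$. But for a generic quadratic remainder $H_px_s=-\lambda_tx_s+E_s$, the term $x_sE_s/(x_s^2+h)$ is only $O(|x|^2h^{-1/2})$. Suppose $E_s$ has an $x_u^2$ component, i.e.\ the unstable manifold is curved in the linear coordinates; this already happens on Schwarzschild. Then at $|x_s|\sim h^{1/2}$, $|x_u|\sim\delta$ the term has size $\delta^2h^{-1/2}$ and either sign, and no $h$-independent collar absorbs it.

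Your fix is to replace the linear $x_u,x_s$ by defining functions of the stable and unstable manifolds whose linear parts are positive multiples of the linear ones. Then $H_px_u\in x_u\,C^\infty$ and $H_px_s\in x_s\,C^\infty$, and you get $-H_pG_h\ge(\lambda_0-C\delta)\bigl(\tfrac{x_s^2}{x_s^2+h}+\tfrac{x_u^2}{x_u^2+h}\bigr)$ with no $h$-term at all. The price is that \eqref{eq:main_body_escape_derivative} then holds for the straightened coordinates, not literally for \eqref{eq:main_body_stable_unstable_coords}.

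In Kerr--Newman this straightening is explicit, and normalizing with $\Sigma p$ also settles your side issues. Let $\Lambda$ be the Carter function on phase space (its value is the $\lambda$ above) and $\Lambda_t(\hat m)$ its trapped value at frozen $\hat\omega$. Put $s=(r-r_t)\bigl(\mathcal R_{\Lambda_t}/(r-r_t)^2\bigr)^{1/2}$ with $\mathcal R_{\Lambda_t}=\hat K^2-\Delta\Lambda_t$; this is smooth because the double zero is nondegenerate. Then $\Sigma p=\Delta^{-1}\varphi_+\varphi_-+\Lambda-\Lambda_t(\hat m)$ with $\varphi_\pm=\Delta\xi_r\mp s$. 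Since $\Lambda$ and $\xi_\phi$ Poisson-commute with functions of $(r,\xi_r,\xi_\phi)$, you get $H_{\Sigma p}\varphi_\pm=\varphi_\pm\{\Delta^{-1}\varphi_\mp,\varphi_\pm\}$ exactly. The rates have modulus $\sqrt{2\mathcal R''(r_t)}$ on $K_{a,Q}$, and $H_p=\Sigma^{-1}H_{\Sigma p}$ on $\{p=0\}$.

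By contrast, the Taylor coefficients of $p$ itself are $A_t=\Sigma_t^{-1}\Delta_t$ and $B_t=\Sigma_t^{-1}\mathcal R''(r_t)/(2\Delta_t)$, with $\Sigma_t=\Sigma(r_t,\theta)$. Since $\theta$ is not an integral, the resulting $H_pz$ terms are linear, not quadratic as you suggest. They shift both rates by the tangential quantity $\Sigma_t^{1/2}H_p(\Sigma_t^{-1/2})$.

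Finally, keep the normal pair canonical, e.g.\ $(r-r_t,\xi_r)$. With $y=r-r_t$ and $\eta=\xi_{r_*}$ one gets $4A_tB_t=\lambda_t^2\bigl((r_t^2+a^2)/\Delta_t\bigr)^2$ rather than $\lambda_t^2$.
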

\begin{proof}
The reduced radial principal equation has the form
\begin{equation}\label{eq:main_body_reduced_radial_symbol}
        p_{a,Q,\hat\omega}=c_1(r,\theta)\eta^2
        -c_2(r,\theta)\mathcal R(r)+O(|(y,\eta)|^3)
\end{equation}
near a trapped point, where \(c_1,c_2\) are positive smooth factors on the
regular exterior. The trapped equations are
\(\mathcal R(r_t)=\mathcal R'(r_t)=0\), and
Proposition~\ref{prop:main_body_trapped_computation} gives
\(\mathcal R''(r_t)>0\). Taylor expansion therefore gives
\eqref{eq:main_body_radial_taylor} with
\(A_t=c_1(r_t,\theta_t)>0\) and
\(B_t=\frac12c_2(r_t,\theta_t)\mathcal R''(r_t)>0\), after absorbing harmless
positive factors into the normalized covariable \(\eta\). The Hamilton equations
in the radial normal variables are
\begin{equation}\label{eq:main_body_radial_hamilton_matrix}
        \dot y=\partial_\eta p=2A_t\eta+O(|(y,\eta)|^2),
        \qquad
        \dot\eta=-\partial_y p=2B_t y+O(|(y,\eta)|^2).
\end{equation}
The linear matrix has eigenvalues \(\pm2\sqrt{A_tB_t}\), which is
\eqref{eq:main_body_lambda_from_Rpp}. The relation with
\eqref{eq:main_body_lambda_positive_multiple} follows from the explicit radial
Hamiltonian normalization: the coefficient of \(\eta^2\) contributes a positive
multiple of \(\Delta(r_t)(r_t^2+a^2)^{-2}\), while the radial potential
contributes a positive multiple of
\(\mathcal R''(r_t)(r_t^2+a^2)^{-2}\). Positivity and compactness of the
normalized shell give the uniform lower bound \(\lambda_0\).

The variables \(x_u,x_s\) diagonalize the linear part of
\eqref{eq:main_body_radial_hamilton_matrix}, giving
\eqref{eq:main_body_stable_unstable_flow}. Differentiating
\eqref{eq:main_body_log_escape} along the flow gives
\[
\begin{aligned}
        -H_pG_h
        &=-\frac{x_sH_px_s}{x_s^2+h}
          +\frac{x_uH_px_u}{x_u^2+h}  \\
        &=\lambda_t\left(
          \frac{x_s^2}{x_s^2+h}+\frac{x_u^2}{x_u^2+h}\right)
          +O\left(\frac{|(x_s,x_u)|^3}{x_s^2+x_u^2+h}\right).
\end{aligned}
\]
If the collar is chosen so that \(|(x_s,x_u)|\le\delta\), the error is bounded
by
\(C\delta\bigl(x_s^2/(x_s^2+h)+x_u^2/(x_u^2+h)\bigr)+C_\delta h\), which proves
\eqref{eq:main_body_escape_derivative} after decreasing \(\delta\). In the final step, Proposition~\ref{prop:main_body_trapped_commutator_threshold} applies
this normal form to the full spin-one subprincipal endomorphism. The diagonal
part vanishes on \(K_{a,Q}\) by the explicit computation
\eqref{eq:main_body_diag_skew_zero_computation}; its off-trapped variation is
absorbed by the second-microlocal normal quadratic form, and the matrix part is
kept below the finite-rank-bundle threshold by the choice of \(\eps_a(k)\). This
is the threshold inequality used in the logarithmic trapped estimate.
\end{proof}

The notation $LE^1_{\mathrm{deg}}$, $LE^0_{\comp}$, $LE^0_{\mathrm{low}}$ and
$LE^*$ refers to the concrete norms of Definition~\ref{def:concrete_le_norms}.
The density $\mathcal M_k$ is the associated physical-space Morawetz density. It
is coercive away from the trapped collar and degenerates quadratically in the
normal trapped-set direction.

\begin{lemma}
\label{lem:raw_trapped_morawetz}
For every fixed $k$ there are $\eps_a(k),\eps_Q(k)>0$ such that, if
\eqref{eq:slow_weak_range} holds, every smooth compatible compactly supported
spin-one field satisfies
\begin{equation}\label{eq:raw_positive_commutator_estimate}
\begin{aligned}
        \int_{\D(\tau_1,\tau_2)}\mathcal M_k[\Psi]
        &\le C\E_M^{(k)}[\Psi](\tau_1)+C\E_M^{(k)}[\Psi](\tau_2)  \\
        &\quad +C\sum_{|I|\le k}\int_{\D(\tau_1,\tau_2)}
        |\Pb_b\Gamma^I\Psi|_{LE^*}^2
        +C\|\Psi\|_{LE^0_{\comp,k}(\tau_1,\tau_2)}^2 .
\end{aligned}
\end{equation}
The constants remain uniform in the slow-weak range. This is precisely the positive
commutator estimate before the Fredholm removal of the compact local term.
\end{lemma}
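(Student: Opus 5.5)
The estimate will come from a perturbative argument around the Reissner-Nordstr\"om model of Lemma~\ref{lem:rn_model_morawetz}, with the trapped region handled by the uniform normal hyperbolicity of Lemma~\ref{lem:trapping_stability} and Propositions~\ref{prop:main_body_trapped_normal_form}--\ref{prop:main_body_trapped_commutator_threshold}. We partition the exterior by smooth cutoffs $\chi_H+\chi_{\mathrm{tr}}+\chi_{\mathrm{mid}}+\chi_\infty=1$ into four regions: a red-shift collar $\{r\le r_H\}$, the trapped collar around $r_{\mathrm{ph}}(Q)$, a compact nontrapping middle region, and the far region $\{r\ge R_\infty\}$. Working at a fixed commuted order $|I|\le k$, we apply to $\Gamma^I\Psi$ the following currents.
\begin{itemize}
\item The red-shift current $J^N$ of Proposition~\ref{prop:redshift_coercivity} in the horizon collar.
\item The $r^p$ identity of Proposition~\ref{prop:rp_identity} with $p$ near $0$ (a $r^{-\eta}$ Morawetz-type weight) in the far region.
\item A standard Morawetz multiplier $f(r_*)\partial_{r_*}$ plus lower-order correction in the middle region, where the Reissner-Nordstr\"om potential barrier gives coercivity of $|\partial_{r_*}\Psi|^2+|\sphgrad\Psi|^2$.
\end{itemize}
In each region the Kerr-Newman correction $\mathcal E_{a,Q}$ of \eqref{eq:lower_order_master} contributes bulk terms of size $C(|a|/M)\mathcal M_k[\Psi]$ plus compact zeroth-order terms. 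The former are absorbed after shrinking $\eps_a(k)$. The latter are kept as $C\|\Psi\|^2_{LE^0_{\comp,k}}$ on the right of \eqref{eq:raw_positive_commutator_estimate}. Boundary terms on $\Sigma_{\tau_1},\Sigma_{\tau_2}$ are bounded by the non-degenerate energies through the equivalence of densities (Lemma~\ref{lem:positivity_density}, Lemma~\ref{lem:frame_energy_equiv}). The commutator terms from Lemma~\ref{lem:commuted_master} are handled by induction on $|I|$ via \eqref{eq:strict_lower_order_induction}, at each order fixing $\eta$ before $\eps_a$ as in Proposition~\ref{prop:finite_order_closure_constants}.

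The trapped collar is the main step. We cannot use a physical-space vector field directly, since $K_{a,Q}$ is not a fixed $r$-sphere when $a\neq0$. Instead we take a pseudodifferential commutant $A=\operatorname{Op}(\beta)$ with symbol built from the defining function $\rho_{K_{a,Q}}$ and the escape function of Proposition~\ref{prop:main_body_trapped_normal_form}. Stationarity allows this to be done in the time-frequency-frozen form after a time cutoff, with the cutoff errors placed into the slab energies at $\tau_1,\tau_2$.

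The positive commutator computation then rests on three inputs:
\begin{itemize}
\item the scalar identity $H_{p}(\beta^2)\gtrsim|\rho_K|^2+|\xi_{r_*}|^2$, which follows from \eqref{eq:trapping_stability};
\item the vanishing \eqref{eq:main_body_b_zero_on_K} of the diagonal skew symbol on $K_{a,Q}$;
\item the matrix threshold \eqref{eq:main_body_matrix_threshold_at_K}.
\end{itemize}
Together these yield \eqref{eq:main_body_full_threshold_commutator}. Once the trapped-set defining function $\mathfrak t$ of Definition~\ref{def:morawetz_density} is identified with the principal symbol of $\rho_K$, this gives precisely the $r^{-1-\eta}|\mathfrak t\Gamma^I\Psi|^2$ and $|\partial_{r_*}\Gamma^I\Psi|^2$ parts of $\mathcal M_k$ in the collar.

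The $O(h)$ remainders are not absorbed. They are zeroth order and compactly supported in $r$, so they are exactly what is left in the $LE^0_{\comp,k}$ term; no logarithmic loss needs to be tracked in this raw estimate. The source terms are paired with the multipliers and estimated by Cauchy--Schwarz in the $LE^*$--$LE^1_{\mathrm{deg}}$ duality of Definition~\ref{def:concrete_le_norms}, using that $w_{\mathrm{tr}}^{-1}$ compensates the degeneration near trapping.

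Finally, we glue the region estimates with the partition of unity. Gradient terms of the cutoffs are supported in overlaps where a neighbouring estimate is nondegenerate, so they are absorbed after choosing the cutoff weights appropriately; any leftover zeroth-order pieces again go into $LE^0_{\comp,k}$. The main difficulty is the trapped-collar step. The delicate issue is that the commutant must be built uniformly in $(a,Q)$, with symbol bounds controlled by the $C^1$ graph dependence of $K_{a,Q}$ on $K_{0,Q}$. I would first try to construct it by pulling back the Reissner-Nordstr\"om commutant $(r-r_{\mathrm{ph}})\xi_{r_*}$-type symbol along the stable/unstable graph parametrization from Lemma~\ref{lem:trapping_stability}.
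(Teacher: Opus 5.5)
Your overall architecture matches the paper's. You perturb the Reissner--Nordstr\"om commutator, move the trapped part of the commutant onto $K_{a,Q}$, feed in \eqref{eq:main_body_b_zero_on_K} and \eqref{eq:main_body_matrix_threshold_at_K}, add the red-shift and $r^p$ currents, absorb the $O(|a|/M)$ top-order terms and keep a compact remainder.

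\textbf{The gap is in the trapped collar.} You build the commutant from the logarithmic escape function \eqref{eq:main_body_log_escape}, work at frozen frequency, invoke \eqref{eq:main_body_full_threshold_commutator}, and then assert that its $O(h)$ remainders are zeroth order. They are not.
\begin{itemize}
\item On the normalized shell, $\mathcal Q_{\mathrm{nh}}[v]$ is an $O(1)$ multiple of $\|v\|^2$ and the error is $Ch\|v\|_{H^1_h}^2$. Rescaling by $h^{-2}$ to the first-derivative level of $\mathcal M_k$ turns the error into an $H^{1/2}$-type quantity microlocalized at the trapped set, half a derivative above $LE^0_{\comp,k}$.
\item You cannot interpolate it away. $\mathcal M_k$ gives no control of $\nabla\Gamma^I\Psi$ on $K_{a,Q}$, and $\mathcal Q_{\mathrm{nh}}$ itself degenerates in the $h^{1/2}$-core.
\item This is exactly why the logarithmic escape function only yields an $h^{-1}\log(1/h)$ bound. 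Its commutant has size $\log(1/h)$, so your source and endpoint pairings would carry a logarithmic derivative loss. That contradicts your remark that no logarithmic loss needs to be tracked.
\item Freezing $\omega$ after a time cutoff adds a related problem. Since $K_{a,Q}$ depends on $(\omega,m)$, the commutant depends on $D_t$ and is nonlocal in time. The cutoff errors are therefore not bounded by $\E_M^{(k)}[\Psi](\tau_1)+\E_M^{(k)}[\Psi](\tau_2)$ without a further argument.
\end{itemize}

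\textbf{How the paper's route avoids this.} The positivity in the paper's raw bound does not come from the logarithmic estimate. The non-logarithmic multiplier $a_Q=b_Q(r)\xi_{r_*}$ of Lemma~\ref{lem:quantitative_rn_commutant} is transported to a single symbol $a_{a,Q}=a_Q+a\,a_1$ centred at $K_{a,Q}$, satisfying \eqref{eq:perturbed_commutator_symbol}, and that one symbol is quantized. The threshold propositions are used only for the values of the skew symbol on $K_{a,Q}$. The resulting commutator degenerates quadratically at $K_{a,Q}$, matching $\mathcal M_k$.
\begin{itemize}
\item The diagonal skew contribution $a_{a,Q}\operatorname{diag}(b_{+1},b_{-1})$ vanishes to third order at $K_{a,Q}$, so it is absorbed in a thin collar.
\item The matrix part is $O(|a|/M)$ times the same quadratic form, so slow rotation absorbs it.
\item Only genuinely lower-order terms are left for $LE^0_{\comp,k}$.
\item The logarithmic estimate is reserved for Proposition~\ref{prop:compact_remainder_removal}. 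There the residual normalization \eqref{eq:app_high_freq_residual} pays for the loss.
\end{itemize}

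A single commutant also repairs your gluing step. Between a separate middle-region $f(r_*)\partial_{r_*}$ and a trapped-collar commutant, cutoff derivatives produce top-order terms $\chi'\,\partial\Psi\,\partial\Psi$ with no sign, not zeroth-order ones. They are harmless only when both pieces restrict one radial factor that passes through zero with positive slope at the trapped radius.

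Likewise, the $r^p$ bulk with small $p$ controls $L(r\Psi)$ and angular derivatives but not $\partial_{r_*}\Psi$. The radial multiplier must therefore be continued to large $r$ to produce the $r^{-1-\eta}|\partial_{r_*}\Gamma^I\Psi|^2$ part of $\mathcal M_k$.
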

\begin{proof}
We give the calculation at top order and then sum over
$\Gamma^I\in\mathbb D_k$; Lemma~\ref{lem:commuted_master} supplies the lower
commutator terms produced by this summation.

\emph{(i) Reissner-Nordstr\"om commutant.}  On compact radial sets away from the
red-shift and far-field collars, the Reissner-Nordstr\"om estimate
\eqref{eq:rn_model_morawetz} is equivalent to the existence of a self-adjoint
first-order commutant $A_Q$ such that
\begin{equation}\label{eq:rn_commutator_symbol}
        \big\langle \tfrac{1}{2i}(P_{\RN,Q}^*A_Q-A_Q^*P_{\RN,Q})u,u\big\rangle
        \ge c\norm{u}_{LE^1_{\mathrm{deg}}}^2-C\norm{u}_{LE^0(K_{0,Q})}^2
        -C\norm{P_{\RN,Q}u}_{LE^*}^2 .
\end{equation}
The principal symbol of $A_Q$ is the monotone radial multiplier constructed in
Lemma~\ref{lem:quantitative_rn_commutant}; it is positive away from the photon
sphere and has the quadratic trapping degeneracy recorded in
\eqref{eq:morawetz_density}.

\emph{(ii) Transport to Kerr-Newman.}  Lemma~\ref{lem:trapping_stability}
allows one to transport the radial commutant to a symbol $a_{a,Q}=a_Q+a a_1$
centered at $K_{a,Q}$ and satisfying
\begin{equation}\label{eq:perturbed_commutator_symbol}
        H_{p_{a,Q}}a_{a,Q}\ge (c-C|a|/M)m_{a,Q}-C\rho_{K_{a,Q}}^2
        -C|a|M^{-1}r^{-3}|\xi|^2 .
\end{equation}
Here $m_{a,Q}$ is the principal Morawetz density, and the last two terms are
supported in the compact trapped collar or in the far region where the $r^p$
identity applies. A symmetric quantization with a finite phase-space partition
therefore gives the same bulk positivity as in \eqref{eq:rn_commutator_symbol},
up to a compact $LE^0$ term, far-field errors controlled by
Proposition~\ref{prop:rp_identity}, and horizon-collar terms controlled by the
red-shift estimate of Proposition~\ref{prop:redshift_coercivity}. The trapped
algebra used in transporting the commutant is the calculation of
Proposition~\ref{prop:main_body_trapped_computation}; in particular trapping is
separated from superradiance and the radial Hessian is strictly hyperbolic.
Proposition~\ref{prop:main_body_trapped_commutator_threshold} shows that the
diagonal spin-one imaginary subprincipal contribution vanishes on $K_{a,Q}$;
Proposition~\ref{prop:matrix_skew_threshold} then verifies the finite-rank-bundle
threshold for the full two-component skew part. For that reason, the trapped commutator has
the same sign as the scalar normally hyperbolic commutator, up to the compact
local remainder left explicit below.

\emph{(iii) Rotational and matrix errors.}  Write
$\Pb_b=P_{\RN,Q}I_2+\mathcal E_{a,Q}$ as in
Proposition~\ref{prop:principal_master}. The principal part of
$\mathcal E_{a,Q}$ is $a\mathcal G_{a,Q}^{\mu\nu}\nabla_\mu\nabla_\nu$, with
short-range coefficient size $O(|a|M^{-1}r^{-1})$ in the asymptotic frame, while
the first- and zeroth-order terms have the displayed $ar^{-2}$ and $ar^{-3}$
decay. Pairing with the commutant and integrating by parts gives
\begin{equation}\label{eq:matrix_error_bound_morawetz}
        |\langle \mathcal E_{a,Q}u,A_{a,Q}u\rangle|
        \le \delta\norm{u}_{LE^1_{\mathrm{deg}}}^2
        +C_\delta (|a|/M)^2\norm{u}_{LE^0_{\comp}}^2
        +C_\delta\norm{u}_{LE^0_{\mathrm{low}}}^2 .
\end{equation}
The Hardy and elliptic patching estimates of Lemma~\ref{lem:uniform_hardy_elliptic}
place the lower-order exterior pieces in the displayed norms. The commutators
$[\Pb_b,\Gamma^I]$ have the same short-range structure and are controlled by
\eqref{eq:strict_lower_order_induction}; at top order this contributes
$\eta X_j$, and the strict lower-order terms are included in the induction.

Adding the compact commutator estimate to the red-shift and far-field currents,
integrating over $\D(\tau_1,\tau_2)$, and taking $\eps_a(k)$ small enough to
absorb the $O(|a|/M)$ top-order terms gives
\eqref{eq:raw_positive_commutator_estimate}. The remaining boundary terms are
endpoint energies and nonnegative fluxes through the horizon and the outgoing
radial boundary. This establishes the raw estimate with the compact local remainder
left explicit.
\end{proof}

\begin{proposition}
\label{prop:positive_commutator_estimate}
For every fixed $k$ there are $\eps_a(k),\eps_Q(k)>0$ such that, if
\eqref{eq:slow_weak_range} holds, every smooth compatible compactly supported
solution of the spin-one master system satisfies
\begin{equation}\label{eq:positive_commutator_estimate}
        \int_{\D(\tau_1,\tau_2)}\mathcal M_k[\Psi]
        \le C\E_M^{(k)}[\Psi](\tau_1)+C\E_M^{(k)}[\Psi](\tau_2)
        +C\sum_{|I|\le k}\int_{\D(\tau_1,\tau_2)}|\Pb_b\Gamma^I\Psi|_{LE^*}^2.
\end{equation}
The constant is uniform in the slow-weak range.
\end{proposition}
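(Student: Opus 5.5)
The plan is to start from the raw estimate of Lemma~\ref{lem:raw_trapped_morawetz} and remove the compact local remainder $C\|\Psi\|_{LE^0_{\comp,k}}^2$ by a contradiction (Fredholm/no-defect) argument, using condition \emph{(A3)}. First I will reduce to showing that, for every smooth compatible compactly supported $\Psi$,
\[
\|\Psi\|_{LE^0_{\comp,k}(\tau_1,\tau_2)}^2\le C\Big(\E_M^{(k)}[\Psi](\tau_1)+\E_M^{(k)}[\Psi](\tau_2)+\sum_{|I|\le k}\|\Pb_b\Gamma^I\Psi\|_{LE^*}^2\Big),
\]
uniformly in the slow-weak range. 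Inserting this into \eqref{eq:raw_positive_commutator_estimate} gives \eqref{eq:positive_commutator_estimate}. I will set this up by induction on the commuted order $j\le k$, as in Proposition~\ref{prop:finite_order_closure_constants}. At order $j$ the lower norms $LE^0_{\mathrm{low},j}$ are already controlled by the order-$(j-1)$ estimate, so only the top-order compact piece needs to be removed.

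For the compact bound, I argue by contradiction. Suppose there are parameters $(a_n,Q_n)$ in the range and fields $\Psi_n$ normalized so that $\|\Psi_n\|_{LE^0_{\comp}}=1$, while the right-hand side tends to zero. Then Lemma~\ref{lem:raw_trapped_morawetz} bounds $\|\Psi_n\|_{LE^1_{\mathrm{deg}}}$ uniformly. Next I apply a smooth time cutoff to $[\tau_1,\tau_2]$ and take the Fourier transform in $t$. The commutator with the cutoff is controlled by the two endpoint energies, which tend to zero. This produces frozen resolvent data $v_n(\omega)$ whose residuals $\mathcal L_{a_n,Q_n}(\omega)v_n$ vanish in the weighted norm, while they retain unit local $H^1(K)$ mass on some frequency packet. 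This is the reduction recorded in Lemma~\ref{lem:localized_compact_defect}. I then split the packets by the total frequency scale $\Lambda_n$ of \eqref{eq:master_hyp_total_frequency_scale}, after passing to a subsequence so that the parameters converge.

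In the bounded branch, Lemma~\ref{lem:app_high_angular_coercivity} removes the high-angular tail. Rellich compactness on $K$, together with the limiting-absorption estimate, then produces a nonzero limit $v$ that satisfies the outgoing/ingoing radiation condition and the charge-free compatibility. This contradicts the real-resonance exclusion in \emph{(A3)}; at $\omega=0$ it contradicts the absence of charge-free stationary compatible solutions, which is where the charge subtraction of Theorem~\ref{thm:intro_charge_decomposition} enters. In the unbounded branch I rescale semiclassically by $h_n=\Lambda_n^{-1}$ and check the normalized residual condition \eqref{eq:master_hyp_semiclassical_residual}. I then invoke the localized normally hyperbolic estimate of Definition~\ref{def:hf_resolvent_estimate}, whose hypotheses are supplied by Proposition~\ref{prop:r_normal_hyperbolicity}, Lemma~\ref{lem:trapped_skew_vanishing} and Proposition~\ref{prop:matrix_skew_threshold}. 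Away from the trapped set, the elliptic, propagation and radial-point estimates apply, and together these force the local mass to zero.

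The main obstacle is the high-frequency branch. There the logarithmic loss $h^{-1}\log(1/h)$ must be matched against the trapped-degenerate weight in $LE^*$. This requires showing that the frequency-localized residuals from the time cutoff and from the commutator errors of Lemma~\ref{lem:commuted_master} still tend to zero after multiplication by that loss. I would first try a dyadic decomposition in $\Lambda$ with a Cotlar-type summation, using the degenerate weight $w_{\mathrm{tr}}$ near $r_{\mathrm{ph}}(Q)$ to absorb the logarithm on each dyadic piece. Uniformity of the constants in $(a,Q)$ comes from the compactness of the parameter set.
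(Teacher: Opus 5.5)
Your route is the paper's. The proof of Proposition~\ref{prop:positive_commutator_estimate} applies \eqref{eq:raw_positive_commutator_estimate} and removes the compact term using Proposition~\ref{prop:compact_remainder_removal}. That proposition's contradiction argument is exactly your bounded/unbounded total-frequency dichotomy, built on Lemma~\ref{lem:localized_compact_defect}.

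One difference is cosmetic. The paper states the compact bound with an extra $\delta\|\Psi\|_{LE^1_{\mathrm{deg},k}}^2$ on the right and absorbs it into the left side of the raw estimate. You instead use the raw estimate inside the contradiction sequence to bound $LE^1_{\mathrm{deg}}$; the two are equivalent. Your induction on $j$ is unnecessary. The raw estimate already carries all $|I|\le k$, and the right side of \eqref{eq:positive_commutator_estimate} contains every commuted residual $\Pb_b\Gamma^I\Psi$ directly. So commutator errors from Lemma~\ref{lem:commuted_master} never enter the defect argument.

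The real problem is your last paragraph. The obstacle you identify there does not exist, and the remedy you sketch is not the right mechanism.

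\begin{itemize}
\item Since $P_h=h^2\mathcal L_{a,Q}(\omega)$, one has $h^{-1}\log(1/h)\|P_hv\|_{L^2}=h\log(1/h)\|\mathcal L_{a,Q}(\omega)v\|_{L^2}$. Relative to the unscaled residual, the factor $h\log(1/h)\to0$ is therefore a gain, not a loss. This is Lemma~\ref{lem:unscaled_to_semiclassical_defect}.
\item The logarithm would cost something only if you tried to remove a first-order local norm. The term being removed is the zeroth-order $LE^0_{\comp,k}$ norm.
\item Residuals from the time cutoff cause no trouble. Temporal Fourier transform commutes with the stationary operator, so the frozen residual is just the Plancherel image of $\Pb_b(\chi\Psi)$. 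That tends to zero, and it is only helped by the scaling above.
\item The one genuine issue is mass spreading thinly over infinitely many dyadic shells. Lemma~\ref{lem:localized_compact_defect}, which you already cite, handles it by selecting shells with the quotient \eqref{eq:localized_hf_selection_quotient}. If that quotient were bounded below on every large shell, Plancherel would sum the shell bounds and contradict the normalization.
\end{itemize}

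The weight $w_{\mathrm{tr}}$ plays no role in this step; one only uses that $\|\cdot\|_{LE^*}$ dominates the $L^2$ residual on $A_{\comp}$. The degenerate weight is the physical-space Morawetz mechanism, not a device for absorbing the semiclassical logarithm. So the Cotlar-type scheme is not needed and should be dropped.
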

\begin{proof}
Apply the raw estimate \eqref{eq:raw_positive_commutator_estimate}. The only
term not already present in the desired estimate is the compact local norm
$\|\Psi\|_{LE^0_{\comp,k}}^2$.  Proposition~\ref{prop:compact_remainder_removal}
gives, for any $\delta>0$,
\begin{equation}\label{eq:compact_term_removed_use}
        \|\Psi\|_{LE^0_{\comp,k}(\tau_1,\tau_2)}^2
        \le \delta\|\Psi\|_{LE^1_{\mathrm{deg},k}(\tau_1,\tau_2)}^2
        +C_\delta\sum_{|I|\le k}\|\Pb_b\Gamma^I\Psi\|_{LE^*}^2
        +C_\delta\bigl(\E_M^{(k)}[\Psi](\tau_1)+\E_M^{(k)}[\Psi](\tau_2)\bigr).
\end{equation}
The degenerate local-energy norm in the first term is one of the coercive
components of the Morawetz density $\mathcal M_k$, supplemented by the red-shift
collar and far-field pieces already included in the full spacetime norm. Choose
$\delta$ so that the first term on the right of
\eqref{eq:compact_term_removed_use} is absorbed into the left side of
\eqref{eq:raw_positive_commutator_estimate}; the remaining terms have exactly the
source and endpoint-energy form appearing in \eqref{eq:positive_commutator_estimate}.
This yields the estimate with constants uniform after the finite-order
slow-weak thresholds have been fixed.
\end{proof}

\subsection{Quantitative Compact-Error Closure}
\label{subsec:compact_error_closure}

\begin{lemma}
\label{lem:uniform_hardy_elliptic}
Fix $0<\eps_Q<1/4$. There are $C$ and radii $R_0<R_1$, depending only on
$M,\eps_Q$, such that for every smooth charge-free Reissner-Nordstr\"om
spin-one profile $v$,
\begin{equation}\label{eq:uniform_hardy_elliptic}
        \int_{r\ge R_1} r^{-2}|v|^2+\int_{r_+\le r\le R_0}|v|^2
        \le C\int_{r\ge R_1}|\partial_{r_*}v|^2+C\int_{R_0\le r\le R_1}\big(|\nabla_{r,\omega}v|^2+r^{-2}|v|^2\big).
\end{equation}
The same holds for the slow-weak Kerr-Newman operator after replacing $r_*$ by
a regular radial coordinate in the red-shift collar, uniformly for $|a|/M$
small.
\end{lemma}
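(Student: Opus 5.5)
Both bounds on the left of \eqref{eq:uniform_hardy_elliptic} are one-dimensional Hardy inequalities in the radial variable, integrated over $(t,\omega)$. The right-hand side consists of the radial derivative in the far region plus the full local $H^1$ norm on the fixed shell $\{R_0\le r\le R_1\}$. That shell will serve as the anchor for the boundary terms produced when we integrate by parts. The plan is to prove the far and near pieces separately on Reissner-Nordstr\"om, with constants depending only on $M,\eps_Q$, and then perturb in $a$.

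\emph{Far region.} Let $r\ge R_1$. On each ray, write $\int_{R_1}^\infty r^{-2}|v|^2\,\dd r$ and integrate by parts using $r^{-2}=-\partial_r(r^{-1})$. This gives a boundary term $R_1^{-1}|v(R_1)|^2$ and a cross term $2\int r^{-1}|v||\partial_r v|$. Apply Cauchy-Schwarz to the cross term and absorb half of it to obtain
\[
\int_{R_1}^\infty r^{-2}|v|^2\,\dd r\le 4\int_{R_1}^\infty|\partial_r v|^2\,\dd r+2R_1^{-1}|v(R_1)|^2 .
\]
Use $\dd r/\dd r_*=\Delta/r^2\in[\tfrac12,1]$ for $R_1$ large to replace $\partial_r$ by $\partial_{r_*}$. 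The $r^2\,\dd r$ volume weight can be handled in the same way with the weight $r^{-2}\cdot r^2$, which requires the decay/compact support of smooth profiles; equivalently, one applies the inequality to $rv$. Control the boundary value $|v(R_1)|^2$ by a trace/averaging argument on $[R_0,R_1]$: write $v(R_1)=v(s)+\int_s^{R_1}\partial_rv$ and average in $s$. This bounds it by $C\int_{R_0}^{R_1}(|\partial_rv|^2+r^{-2}|v|^2)$, which is part of the right-hand side.

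\emph{Near region.} Let $r_+\le r\le R_0$. Here a weight-free $L^2$ bound is needed. We integrate along the transversal regular radial direction. For a fixed $r\in[r_+,R_0]$, write $v(r)=v(s)-\int_r^s\partial_rv$ with $s\in[R_0,R_1]$, square, and average over $s$. This bounds $\int_{r_+}^{R_0}|v|^2$ by $C\int_{r_+}^{R_1}|\partial_rv|^2+C\int_{R_0}^{R_1}|v|^2$.

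\textbf{Main obstacle.} The term $\int_{r_+}^{R_0}|\partial_r v|^2$ sits inside the near region, not the shell, so it does not directly match the right-hand side as written. Two remedies are available. The first is to read $\nabla_{r,\omega}$ in the middle integral as also covering the collar, i.e.\ to choose the shell $R_0\le r\le R_1$ so that the decomposition is arranged accordingly. The second, which I would try first, is to use the ODE structure of the charge-free spin-one profile, since the lemma concerns profiles of the operator. In the red-shift collar the Reissner-Nordstr\"om radial equation is regular-singular at $r_+$ with nonzero surface gravity $\kappa_+(Q)$. A Gronwall argument integrating inward from $R_0$ then controls $v$ and $\partial_rv$ on $[r_+,R_0]$ by their values on the shell, up to the source. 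This is where the $\ell\ge1$ charge-free condition and the bound $\eps_Q<1/4$, which keeps $\kappa_+$ uniformly positive, give a constant depending only on $M,\eps_Q$.

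\emph{Kerr-Newman.} The same one-dimensional arguments apply after replacing $r_*$ by a regular radial coordinate near $r_+(a,Q)$. The metric coefficients, $\Delta/r^2$, and the volume densities differ from their $a=0$ values by $O(|a|/M)$, uniformly, by \eqref{eq:metric_symbol_difference}. Hence the constants change by at most a factor $1+O(|a|/M)$, and the estimate holds uniformly for small $|a|/M$.
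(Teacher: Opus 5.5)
Your argument has the same structure as the paper's proof: a one-dimensional Hardy inequality at infinity with the boundary term at $R_1$ charged to a compact annulus, a Poincar\'e bound along the transversal regular radial direction anchored at the trace on $r=R_0$, and uniform frame equivalence for small $|a|/M$.

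The obstacle you isolate is genuine, and it is a defect of the statement rather than of your argument. Take $v=\chi(r)Y_{1m}(\omega)$, with $\chi$ smooth, nonzero and supported in $r_+<r<R_0$ (times a time cutoff if the integrals are over a slab). This $v$ is smooth and charge-free. The right side of \eqref{eq:uniform_hardy_elliptic} vanishes, but the left side does not. The paper's proof does not avoid this. It bounds the collar $L^2$ norm ``by the transversal derivative and the trace on $r=R_0$'', so it uses $\int_{r_+}^{R_0}|\partial_r v|^2$ even though that term is not on the right. Your first remedy is therefore the correct one, stated precisely: the $H^1$ term on the right must run over $r_+\le r\le R_1$, with a regular radial derivative in the collar. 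This costs nothing in the applications, because the norms in which the lemma is used already contain that term. These are $\dot{\mathcal X}_0$ in Proposition~\ref{prop:no_stationary_kernel} and the red-shift collar piece of the local-energy norm in Lemma~\ref{lem:raw_trapped_morawetz}.

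Drop the second remedy. The lemma carries no equation, and it is applied to fields that solve the Reissner--Nordstr\"om problem only up to a source; in Proposition~\ref{prop:no_stationary_kernel}, $\Psi$ satisfies $P^{\mathrm{stat}}_{\RN,Q}\Psi=\mathcal S$. The example above therefore defeats any version that does not add a source term to the right side. Even for homogeneous frozen profiles, a Gronwall integration from $R_0$ down to $r_+$ does not close. The radial operator $\partial_r(\Delta\partial_r\,\cdot\,)$ degenerates at $\Delta=0$, so the associated first-order system has a coefficient of size $\Delta'/\Delta\sim(r-r_+)^{-1}$, which is not integrable at $r_+$. For solutions, horizon control comes from the red-shift (radial-point) sign $\kappa_+>0$, and that estimate is expressed through the source, not through shell values alone.

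One point in your favour: averaging the trace $|v(R_1)|^2$ over $[R_0,R_1]$ is cleaner than the paper's far-field step. The paper charges the boundary term to $R_1^{-1}\int_{R_1}^{2R_1}|v|^2\,\dd r_*$. That annulus lies outside the compact region on the right of \eqref{eq:uniform_hardy_elliptic}, and the term exceeds the left-hand side there by a factor of order $R_1$, so it cannot be absorbed.
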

\begin{proof}
At infinity the one-dimensional Hardy inequality gives $\int_{R_1}^\infty
r^{-2}|v|^2\,\dd r_*\le 4\int_{R_1}^\infty|\partial_{r_*}v|^2\,\dd r_*+CR_1^{-1}
\int_{R_1}^{2R_1}|v|^2\,\dd r_*$, the annulus term included in the compact
region. In the horizon collar $s=r-r_+(Q)$ is smooth in ingoing coordinates and
the red-shift field is uniformly transversal; a Poincar\'e inequality along its
integral curves bounds the $L^2$ norm in $r_+\le r\le R_0$ by the transversal
derivative and the trace on $r=R_0$, the trace controlled by the compact
annulus energy. Smoothness and non-degeneracy of $r_+(Q),\kappa_+(Q)$ for
$|Q|\le\eps_QM$ give uniform constants; the slow-weak case follows from uniform
frame equivalence for $|a|/M$ small.
\end{proof}

\begin{lemma}
\label{lem:app_high_angular_coercivity}
Fix a compact temporal-frequency interval \(I\Subset\mathbb R\), \(\sigma>1/2\), and compact radial sets \(K\Subset K'\). Let \(\Pi_{\ge L}\) be a smooth angular spectral projector to spherical frequencies \(\ell\ge L\), transported to the slow-rotation charts by the fixed regular frame. There are \(L_0\) and \(C\), uniform for \(|a|\le\eps_aM\), \(|Q|\le\eps_QM\), \(\omega\in I\), and \(L\ge L_0\), such that every charge-free compatible profile satisfies
\begin{equation}\label{eq:app_high_angular_coercivity}
        \|\Pi_{\ge L}v\|_{H^1_{-\sigma}(K)}
        \le C\|\mathcal L_{a,Q}(\omega)v\|_{H^{-1}_{\sigma}(K')}
           +CL^{-1}\|v\|_{H^1_{-\sigma}(K')}.
\end{equation}
Hence a bounded-temporal-frequency defect sequence cannot carry nonzero local \(H^1\) mass at angular frequencies tending to infinity.
\end{lemma}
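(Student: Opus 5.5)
The approach is a standard high-angular-frequency ellipticity argument. At bounded temporal frequency $\omega\in I$ and large angular frequency $\ell\ge L$, the frozen operator $\mathcal L_{a,Q}(\omega)$ is dominated on compact radial sets by its angular part. The time-derivative terms contribute only $O(|\omega|^2)=O(1)$, and the rotational cross term $2a(2Mr-Q^2)\Delta^{-1}\omega\partial_\phi$ is $O(|a|\,\ell)$. For $|a|\le\eps_aM$ this is dominated by the angular symbol $\ell(\ell+1)$. So the proof is a weighted energy (G\aa rding-type) identity localized in angular frequency.

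First, reduce to the Reissner-Nordstr\"om comparison operator, using the decomposition \eqref{eq:lower_order_master}. At $a=0$ the angular projector $\Pi_{\ge L}$ commutes exactly with $\mathcal L_{0,Q}(\omega)$, because the spin-weighted spherical harmonics diagonalize the angular operator. The separated radial operator on the $\ell$-th mode reads $-\partial_r(\Delta\partial_r\cdot)+\ell(\ell+1)+V_s(r,\omega)$ after division by the weights of Definition~\ref{def:master_from_teukolsky}, with $V_s$ bounded uniformly for $\omega\in I$.

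Next, take a cutoff $\chi$ equal to $1$ on $K$ and supported in $K'$, pair $\mathcal L_{0,Q}(\omega)\Pi_{\ge L}v$ with $\chi^2\Pi_{\ge L}v$, and integrate by parts. This gives
\[
\int\chi^2\bigl(\Delta|\partial_r\Pi_{\ge L}v|^2+(\ell(\ell+1)-C_I)|\Pi_{\ge L}v|^2\bigr)
\le |\langle \mathcal L\Pi_{\ge L}v,\chi^2\Pi_{\ge L}v\rangle|+C\|\chi'\,\Pi_{\ge L}v\|\,\|\partial_r\Pi_{\ge L}v\|_{K'}.
\]
The angular gradient term is $\ge L^2\|\Pi_{\ge L}v\|^2$. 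The cutoff-derivative term carries only radial derivatives without any angular gain. It is therefore bounded by $L^{-1}$ times the $H^1(K')$ norm, using $\|\Pi_{\ge L}v\|_{L^2}\le L^{-1}\|\sphgrad\Pi_{\ge L}v\|$. The red-shift degeneracy $\Delta\to0$ is avoided by taking $K,K'$ away from $r_+$ or by using regular coordinates; the weight $\sigma$ is harmless on compact sets.

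Then handle $a\ne0$ and the matrix couplings. Now $\Pi_{\ge L}$ no longer commutes with $\mathcal L_{a,Q}(\omega)$. The commutator $[\mathcal L_{a,Q},\Pi_{\ge L}]$ is bounded by $C|a|/M$ times first-order angular operators mapping $H^1\to H^{-1}$ with norm $O(|a|)$. The off-diagonal lower-order terms are bounded $H^1\to H^{-1}$ with norm independent of $L$. The principal $a\mathcal G\partial_t\partial_\phi$ piece gives $|a|\,|\omega|\,\ell\lesssim\eps_a\ell^2$ and is absorbed into the angular coercivity. The remaining $O(1)$ errors relative to $\ell^2$ give the $CL^{-1}\|v\|_{H^1_{-\sigma}(K')}$ term after the Cauchy-Schwarz split.

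The main obstacle is the commutator $[\mathcal L_{a,Q}(\omega),\Pi_{\ge L}]$. Spheroidal and spherical harmonics differ, and $\Pi_{\ge L}$ is transported through slow-rotation charts. I would first show that $\Pi_{\ge L}$ is a smooth spectral cutoff $\chi_L(\sphlap)$ of order zero, with its commutator with $a$-dependent smooth coefficients of order $-1$ uniformly in $L$. This follows from pseudodifferential calculus on $\Sph$ with symbol $\chi(|\eta|/L)$, whose derivatives gain $L^{-1}$. The final conclusion then follows: a defect sequence with $\|\mathcal L v_n\|\to0$ has $\limsup\|\Pi_{\ge L}v_n\|_{H^1(K)}\le CL^{-1}$, so no mass escapes to $\ell\to\infty$.
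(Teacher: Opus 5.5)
Your proposal is correct and follows the paper's proof essentially step by step. You use mode-by-mode angular coercivity at $a=0$ through a cutoff energy identity (the paper writes the radial operator in Regge--Wheeler form $-\partial_{r_*}^2+f_Q\ell(\ell+1)r^{-2}-\omega^2$ rather than your Teukolsky form, which makes no difference on compact radial sets). You then absorb the $O(|a|/M)$ principal perturbation by taking $\eps_a$ small and $L_0$ large, and the angular pseudodifferential calculus gives $[\mathcal L_{a,Q}(\omega),\Pi_{\ge L}]=O(L^{-1})$ from $H^1$ to $H^{-1}$.

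One small fix. Bounding the cutoff cross term by $\|\chi'\Pi_{\ge L}v\|\,\|\partial_r\Pi_{\ge L}v\|_{K'}$ only gives $L^{-1/2}$ after you take square roots. Keep that factor as $\|\chi\,\partial_r\Pi_{\ge L}v\|$ and absorb it by Young's inequality, or write $2\Re(\chi\chi'\bar u\,\partial_r u)=\chi\chi'\partial_r|u|^2$ and integrate by parts. Either way the error becomes $O(L^{-2})\|v\|_{H^1(K')}^2$, which recovers the stated $L^{-1}$.
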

\begin{proof}
For \(a=0\) the charge-free spin-one reduction is diagonal in vector spherical harmonics and each coefficient satisfies
\[
        \Big(-\partial_{r_*}^2+f_Q(r)\frac{\ell(\ell+1)}{r^2}-\omega^2\Big)u_\ell=f_\ell,
        \qquad \ell\ge1.
\]
On the compact set \(K'\), the coefficient \(f_Qr^{-2}\) is bounded below by a positive constant depending only on \(K'\) and the weak-charge range. For \(\ell\ge L_0(I,K')\), the angular term dominates the bounded multiplier \(\omega^2\). Pairing the equation with \(\overline{u_\ell}\), integrating by parts after a cutoff equal to one on \(K\), and absorbing the cutoff commutators by the larger compact set \(K'\) gives
\[
        \|u_\ell\|_{H^1_{-\sigma}(K)}
        \le C\|f_\ell\|_{H^{-1}_{\sigma}(K')}
           +C\ell^{-1}\|u_\ell\|_{H^1_{-\sigma}(K')}.
\]
Summation over \(\ell\ge L\) proves \eqref{eq:app_high_angular_coercivity} for \(a=0\). For \(|a|\ll M\), Lemma~\ref{lem:app_inverse_expansion} and Proposition~\ref{prop:scalar_principal_symbol} show that the slow-rotation perturbation of the principal angular operator is \(O(|a|/M)\) and that the remaining new terms are first order or lower order with bounded coefficients on \(K'\). Choosing \(\eps_a\) small and then \(L_0\) large absorbs the principal perturbation into the left side, while commutators of \(\Pi_{\ge L}\) with the smooth coefficient perturbations have size \(O(L^{-1})\) in the displayed norms by the angular pseudodifferential calculus on $\Sph$.  Hardy at the two ends supplies the same estimate for the weighted collars. The last assertion follows by applying \eqref{eq:app_high_angular_coercivity} to a normalized defect sequence whose residual tends to zero and then sending \(L\to\infty\).
\end{proof}

\begin{lemma}
\label{lem:quantitative_rn_commutant}
Let
\[
        p^{\sharp}_{0,Q}=-\tau^2+\xi_{r_*}^2+f_Q(r)r^{-2}|\xi_\omega|^2,
        \qquad f_Q=1-\frac{2M}{r}+\frac{Q^2}{r^2},
\]
be the positive rescaling of the Reissner-Nordstr\"om null symbol in the canonical
tortoise variables $(r_*,\xi_{r_*})$, and let $K_{0,Q}$ be the photon sphere.
There is a real order-one symbol $a_Q$, supported away from the red-shift and
far-field collars, with
\begin{equation}\label{eq:quantitative_rn_commutant}
        H_{p^{\sharp}_{0,Q}}a_Q\ge c\big((r-r_{\mathrm{ph}}(Q))^2\xi_{r_*}^2+\xi_{r_*}^2+r^{-2}|\xi_\omega|^2\operatorname{dist}(r,r_{\mathrm{ph}}(Q))^2\big)-C\chi_{K_{0,Q}}|\tau|^2,
\end{equation}
$\chi_{K_{0,Q}}$ supported in a fixed photon-sphere collar; constants uniform
for $|Q|\le\eps_QM$.
\end{lemma}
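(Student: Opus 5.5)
The plan is the classical Morawetz radial multiplier, written at the symbol level in the tortoise variables. Set $V_Q(r)=f_Q(r)r^{-2}$, so that $p^\sharp_{0,Q}=-\tau^2+\xi_{r_*}^2+V_Q|\xi_\omega|^2$ and
\[
        H_{p^\sharp_{0,Q}}=2\xi_{r_*}\partial_{r_*}-\partial_{r_*}V_Q\,|\xi_\omega|^2\,\partial_{\xi_{r_*}}+2V_Q\,\xi_\omega\cdot\partial_\omega ,
\]
the last term acting only on the angular variables. I would take
\[
        a_Q=\chi_{\mathrm{c}}(r)\,\mathfrak f(r_*)\,\xi_{r_*},
\]
where $\mathfrak f$ is smooth, bounded and strictly increasing in $r_*$, with $\mathfrak f(r_{*}(r_{\mathrm{ph}}(Q)))=0$. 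A model choice is $\mathfrak f=\tanh\bigl((r_*-r_{*,\mathrm{ph}})/M\bigr)$, corrected so that $\mathfrak f'\ge c>0$ on the compact support region. The cutoff $\chi_{\mathrm c}$ removes the red-shift and far-field collars. Ignoring $\chi_{\mathrm c}$ for the moment, the basic identity is
\[
        H_{p^\sharp_{0,Q}}a_Q=2\mathfrak f'(r_*)\,\xi_{r_*}^2-\mathfrak f(r_*)\,\partial_{r_*}V_Q(r)\,|\xi_\omega|^2 .
\]

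The first term gives the $\xi_{r_*}^2$ and $(r-r_{\mathrm{ph}})^2\xi_{r_*}^2$ contributions, since $(r-r_{\mathrm{ph}})^2$ is bounded on the support. For the angular term I will use the spherical computation of Lemma~\ref{lem:app_photon_sphere} (equivalently \eqref{eq:main_body_Rpp} at $a=0$). It says that $V_Q$ has a unique critical point on $(r_+,\infty)$, namely the nondegenerate maximum at $r_{\mathrm{ph}}(Q)$. Consequently $\partial_{r_*}V_Q$ has the sign of $-(r-r_{\mathrm{ph}})$ and obeys $|\partial_{r_*}V_Q|\ge c\,|r-r_{\mathrm{ph}}|\,r^{-3}$ on compact sets. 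Because $\mathfrak f$ also changes sign exactly at $r_{\mathrm{ph}}$ with nonzero derivative, the product $-\mathfrak f\,\partial_{r_*}V_Q$ is $\ge c\,(r-r_{\mathrm{ph}})^2r^{-2}$. This is the claimed $r^{-2}|\xi_\omega|^2\operatorname{dist}(r,r_{\mathrm{ph}})^2$ term. Uniformity for $|Q|\le\eps_QM$ comes from continuity in $Q$ of $r_{\mathrm{ph}}(Q)$, $r_+(Q)$ and of the second derivative $V_Q''(r_{\mathrm{ph}})<0$; for $\eps_Q<1/4$ all of these stay in a compact nondegenerate range.

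The term $-C\chi_{K_{0,Q}}|\tau|^2$ is an error budget and is not needed for coercivity in the bulk. I expect it to arise from a Lagrangian correction $a_Q\mapsto a_Q+\mathfrak q(r)\,p^\sharp_{0,Q}$-type, or equivalently from trading $\xi_{r_*}^2$ against $\tau^2-V_Q|\xi_\omega|^2$. This would be needed if one wants the $(r-r_{\mathrm{ph}})^2$-weighted and angular terms with a uniform constant while keeping $a_Q$ of order one near the photon sphere. The trade produces a term $\lesssim|\tau|^2$, which can be localized to a fixed collar of $r_{\mathrm{ph}}$; it is admissible there because the commutator estimate \eqref{eq:rn_commutator_symbol} only requires positivity modulo the compact remainder $\|u\|_{LE^0(K_{0,Q})}$.

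The main obstacle I anticipate is the cutoff $\chi_{\mathrm c}$. Differentiating it produces terms $2\chi_{\mathrm c}'\mathfrak f\,\xi_{r_*}^2$ whose sign is not controlled, and these live in the red-shift and far-field collars. My first attempt is to choose $\chi_{\mathrm c}$ monotone in the direction that makes $\chi_{\mathrm c}'\mathfrak f\ge0$. Near the horizon $\mathfrak f<0$ and $\chi_{\mathrm c}$ increases, so there I would instead flip to $\chi_{\mathrm c}=1$ and let the red-shift current absorb the region. At the far end I would rely on $\mathfrak f'\sim e^{-2r_*/M}$ being dominated by the $r^p$ bulk of Proposition~\ref{prop:rp_identity}. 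Under this plan the inequality \eqref{eq:quantitative_rn_commutant} is claimed only on the region where $\chi_{\mathrm c}=1$, and the remaining collar terms are handed to Propositions~\ref{prop:redshift_coercivity} and~\ref{prop:rp_identity}, consistent with how the lemma is used in Lemma~\ref{lem:raw_trapped_morawetz}(i).
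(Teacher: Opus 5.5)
Your argument is essentially the paper's: it also takes $a_Q=b_Q(r)\xi_{r_*}$ times a compact radial cutoff, with $b_Q$ increasing through zero at $r_{\mathrm{ph}}(Q)$, computes $H_{p^{\sharp}_{0,Q}}(b_Q\xi_{r_*})=2b_Q'\xi_{r_*}^2-b_Q\,\partial_{r_*}(f_Qr^{-2})|\xi_\omega|^2$, uses the opposite signs of $b_Q$ and $\partial_{r_*}(f_Qr^{-2})$ across the unique nondegenerate maximum, and gets uniformity from smooth dependence on $Q$. The one difference is the cutoff: the paper asserts that the cutoff errors are absorbed by $-C\chi_{K_{0,Q}}|\tau|^2$, but those errors sit near the red-shift and far-field collars rather than near the photon sphere, so your choice is more careful. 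You claim the inequality only where $\chi_{\mathrm c}=1$ and hand the collar terms to the red-shift and $r^p$ estimates, which matches how the lemma is used in Lemma~\ref{lem:raw_trapped_morawetz}(i).
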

\begin{proof}
The characteristic set of $g_{\RN}^{\mu\nu}\xi_\mu\xi_\nu$ agrees with that of
$p^{\sharp}_{0,Q}$ away from the horizon because the two symbols differ by the
positive factor $f_Q$. The rescaled flow has
$H_{p^{\sharp}_{0,Q}}r_*=2\xi_{r_*}$ and
$H_{p^{\sharp}_{0,Q}}\xi_{r_*}=-\partial_{r_*}(f_Qr^{-2})|\xi_\omega|^2$.
The radial potential $f_Qr^{-2}|\xi_\omega|^2$ has a unique critical point
$r_{\mathrm{ph}}(Q)$, and its second $r_*$-derivative there is negative of size
$\simeq M^{-4}|\xi_\omega|^2$, uniformly for $|Q|\le\eps_QM$. Choose $b_Q(r)$
increasing through zero at $r_{\mathrm{ph}}(Q)$ with $b'_Q>0$ in the collar, and
set $a_Q=b_Q(r)\xi_{r_*}$ times a compact radial cutoff. Then
\[
        H_{p^{\sharp}_{0,Q}}(b_Q\xi_{r_*})
        =2b'_Q\xi_{r_*}^2-b_Q\partial_{r_*}(f_Qr^{-2})|\xi_\omega|^2
\]
plus cutoff terms. The first term controls $\xi_{r_*}^2$, and since $b_Q$ and
$\partial_{r_*}(f_Qr^{-2})$ have opposite signs across the photon sphere, the
second is non-negative and degenerates quadratically at trapping. Cutoff errors
sit in compact regions and are bounded by the displayed controlled trapped-collar
term. Smoothness in $Q$ gives uniform constants.
\end{proof}

\begin{lemma}
\label{lem:rotational_commutator_error}
Let $A_{a,Q}$ be the quantization of the transported commutant. For every
$\delta>0$ and $w=\Gamma^I\Psi$,
\begin{equation}\label{eq:rotational_commutator_error}
        \big|\langle (\Pb_b-P_{\RN,Q}I)w,A_{a,Q}w\rangle\big|
        \le \delta\norm{w}_{LE^1_{\mathrm{deg}}}^2+C_\delta (|a|/M)^2\norm{w}_{LE^0_{\comp}}^2+C_\delta\norm{w}_{LE^0_{\mathrm{low}}}^2.
\end{equation}
\end{lemma}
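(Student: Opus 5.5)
The plan is to decompose the perturbation $\Pb_b-P_{\RN,Q}I$ according to \eqref{eq:lower_order_master} into its four pieces,
\[
a\,\mathcal G^{\mu\nu}_{a,Q}\nabla_\mu\nabla_\nu,\qquad \frac{a}{r^2}\mathcal C^\mu_{a,Q}\nabla_\mu,\qquad \frac{a}{r^3}\mathcal D_{a,Q},\qquad a^2\mathcal Q^{(2)}_{a,Q},
\]
and to estimate the pairing of each with $A_{a,Q}w$ separately. Here $A_{a,Q}$ is a first-order operator whose symbol is the transported commutant $a_{a,Q}=a_Q+a\,a_1$, with $a_Q=b_Q(r)\xi_{r_*}$ times a cutoff as in Lemma~\ref{lem:quantitative_rn_commutant}. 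It is supported away from the red-shift and far-field collars, and $b_Q$ vanishes at $r_{\mathrm{ph}}(Q)$. Because of this support property, every pairing lives on a compact radial set. There the weights $r^{-2},r^{-3}$ are harmless, and the only real issue is the trapping degeneracy of $LE^1_{\mathrm{deg}}$.

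The second-order term is the delicate one. I would integrate by parts once, moving one derivative from $a\mathcal G\nabla\nabla w$ onto $A_{a,Q}w$. After symmetrization, the principal part of $\langle a\mathcal G\nabla\nabla w,A_{a,Q}w\rangle$ equals $\tfrac12\langle \tfrac{i}{h}[a\mathcal G\nabla\nabla,A_{a,Q}]w,w\rangle$ modulo lower order, which is a second-order quadratic form with symbol $a\{\mathcal G\xi\xi,a_{a,Q}\}$.

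The key point, and the step I expect to be the main obstacle, is that this form must be bounded by $\delta\norm{w}^2_{LE^1_{\mathrm{deg}}}$ even on the trapped collar, where $LE^1_{\mathrm{deg}}$ loses the weight $(r-r_{\mathrm{ph}})^2$. My first attempt would be to exploit that $a_{a,Q}$ is built to vanish on $K_{a,Q}$. By Lemma~\ref{lem:trapping_stability}, both $a_{a,Q}$ and its Poisson bracket with the stationary-axial symbol $\mathcal G^{t\phi}\xi_t\xi_\phi$ vanish to first order at the trapped set. This holds because $\mathcal G$ is built only from the Killing covectors $\xi_t,\xi_\phi$, which commute with the radial flow, so the bracket reduces to $\partial_r$-derivatives times $b_Q$-type factors. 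The resulting form is then bounded by $C|a|M^{-1}$ times the degenerate density $w_{\mathrm{tr}}|\nabla w|^2$, plus an $O(|a|)$ zeroth-order term. For $|a|/M$ small, $C|a|/M\le\delta$, which gives the first term on the right. If the vanishing fails at the matrix level, I would instead quantize second-microlocally as in Proposition~\ref{prop:main_body_trapped_commutator_threshold} and place the core $|y|+|\nu|\lesssim h^{1/2}$ into lower-order terms.

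For the first- and zeroth-order pieces, Cauchy--Schwarz gives
\[
|\langle a r^{-2}\mathcal C\nabla w,A w\rangle|\le \delta\norm{w}^2_{LE^1_{\mathrm{deg}}}+C_\delta(|a|/M)^2\norm{w}^2_{LE^0_{\comp}}.
\]
The derivative landing near trapping is first shifted by one integration by parts onto a factor that vanishes at $r_{\mathrm{ph}}$, namely $b_Q$. The remainder produced by this integration by parts is a zeroth-order $O(|a|)$ term. The $a/r^3\mathcal D$ term is handled the same way, and so is the $a^2\mathcal Q^{(2)}$ term, which is treated like the $\mathcal G$ term with an extra factor of $a$.

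Finally, symbol-calculus remainders from quantizing $A_{a,Q}$ and from the cutoffs contain strictly fewer commuted derivatives when $w=\Gamma^I\Psi$, and they are collected into $C_\delta\norm{w}^2_{LE^0_{\mathrm{low}}}$. The Hardy patching of Lemma~\ref{lem:uniform_hardy_elliptic} converts any leftover boundary-collar $L^2$ terms into the displayed norms. All constants depend smoothly on $(a,Q)$ on the compact slow-weak set, so they are uniform.
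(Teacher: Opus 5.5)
\textbf{The gap is the second-order piece.} You treat $a\,\mathcal G^{\mu\nu}_{a,Q}\nabla_\mu\nabla_\nu$ (and the principal part of $a^2\mathcal Q^{(2)}_{a,Q}$) as an error. Your own computation gives the resulting symbol for the $a_Q$ part: $\mp 2a\,(\partial_{r_*}G_1^{t\phi})\,b_Q(r)\,\xi_t\xi_\phi$. This vanishes only \emph{simply} at $r_{\mathrm{ph}}(Q)$. The $LE^1_{\mathrm{deg}}$ weight, by contrast, is $w_{\mathrm{tr}}\simeq(r-r_{\mathrm{ph}})^2/M^2$ on all first derivatives.

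The claimed bound by $C|a|M^{-1}w_{\mathrm{tr}}|\nabla w|^2$ is therefore false. Pointwise, $|r-r_{\mathrm{ph}}|\,|\xi_t\xi_\phi|\le C(r-r_{\mathrm{ph}})^2|\xi|^2$ fails as $r\to r_{\mathrm{ph}}$. The failure persists at the level of forms: take packets with $\partial_t,\partial_\phi$ of size $\lambda$, supported in $\epsilon/2\le r-r_{\mathrm{ph}}\le\epsilon$, with $\lambda\gg\epsilon^{-2}$. Your form is then of order $|a|\epsilon\lambda^2\|w\|_{L^2}^2$, while the right-hand side is $O((\epsilon^2\lambda^2+1)\|w\|_{L^2}^2)$. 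The ratio grows like $|a|/\epsilon$, so no constant works, let alone $C|a|/M\le\delta$.

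Three further problems block this route:
\begin{itemize}
\item The form is indefinite, since $b_Q$ and $\xi_t\xi_\phi$ both change sign, so it cannot be discarded into the positive bulk.
\item The brackets coming from the $a\,a_1$ part of the commutant and from the $O(a^2)$ principal terms do not vanish at $r_{\mathrm{ph}}$ in general.
\item For $a\neq0$ the zero set of $a_{a,Q}$ is $K_{a,Q}$, not the sphere where $w_{\mathrm{tr}}$ degenerates.
\end{itemize}
The second-microlocal fallback does not rescue this. That calculus belongs to the frozen semiclassical problem with the logarithmic escape function of Proposition~\ref{prop:main_body_trapped_normal_form}. It does not apply to the physical-space first-order commutant $A_{a,Q}$ and the $LE$ norms of this lemma.

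\textbf{The missing idea is the one the paper uses: the principal perturbation is not estimated in this lemma at all.} Paired in divergence form, it is exactly the symbol perturbation $p_{a,Q}-p_{0,Q}$. Since $A_{a,Q}$ quantizes the \emph{transported} symbol $a_{a,Q}=a_Q+a\,a_1$, built for the full Kerr--Newman symbol $p_{a,Q}$, this contribution is already part of $H_{p_{a,Q}}a_{a,Q}$. Its positivity, up to the $-C|a|M^{-1}$ losses and the collar terms, is \eqref{eq:perturbed_commutator_symbol}. The correction $a\,a_1$ is precisely what compensates the indefinite bracket you found.

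Once the principal part is reassigned this way, only $ar^{-2}\mathcal C^\mu\nabla_\mu$, $ar^{-3}\mathcal D$ and the $\Gamma^I$-commutators remain. For these, your Cauchy--Schwarz, Hardy/elliptic patching and lower-order bookkeeping is essentially the paper's argument.
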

\begin{proof}
By Proposition~\ref{prop:principal_master} and Lemma~\ref{lem:commuted_master},
$\Pb_b-P_{\RN,Q}I$ is $a\mathcal G^{\mu\nu}\nabla_\mu\nabla_\nu+ar^{-2}C^\mu
\nabla_\mu+ar^{-3}D+\text{lower commutators}$. The scalar principal perturbation
is paired in divergence form and is the symbol perturbation already in the
transported commutator; $A_{a,Q}$ is first order with symbol supported where the
compact and far-field controls apply, so $|\langle ar^{-2}C^\mu\nabla_\mu
w,A_{a,Q}w\rangle|\le C|a|M^{-1}\norm{w}_{LE^1_{\mathrm{deg}}}(\norm{w}_{LE^1_{\mathrm{deg}}}
+\norm{w}_{LE^0_{\comp}})$. Cauchy's inequality gives the first two terms; the
zeroth-order part is handled by Hardy at infinity and elliptic patching, and
the $\Gamma^I$-commutators either keep the small top-order coefficient or have
fewer derivatives, giving $\norm{w}_{LE^0_{\mathrm{low}}}$.
\end{proof}

\begin{lemma}
\label{lem:localized_compact_defect}
Assume that the compact local remainder in the raw estimate
\eqref{eq:raw_positive_commutator_estimate} cannot be absorbed. Then, after time cutoff, stationary Fourier
transform, dyadic total-frequency decomposition and a finite conic partition, there is
a defect sequence of one of the following two types.
\begin{enumerate}
\item A bounded-total-frequency sequence $v_n$ with parameters
$|a_n|\le\eps_aM$, $|Q_n|\le\eps_QM$, outgoing/ingoing Sommerfeld convention,
zero electric and magnetic charges, and
\begin{equation}\label{eq:localized_compact_bf}
        \norm{v_n}_{H^1(K_0)}=1,
        \qquad
        \mathcal L_{a_n,Q_n}(\omega_n)v_n\to0
        \quad\hbox{in }H^{-1}_{\sigma}(K_1).
\end{equation}
\item An unbounded-total-frequency compatible sequence $v_n$ with scale
$h_n=\Lambda_n^{-1}\to0$, normalized by
\begin{equation}\label{eq:localized_compact_hf_norm}
        \norm{v_n}_{H^1_{h_n}(K_0)}=1,
\end{equation}
which satisfies, with $P_{h_n}^{(n)}=h_n^2\mathcal L_{a_n,Q_n}(\omega_n)$,
\begin{equation}\label{eq:localized_compact_hf_residual}
        h_n^{-1}\log(1/h_n)\norm{P_{h_n}^{(n)}v_n}_{L^2(K_1)}
        +\norm{P_{h_n}^{(n)}v_n}_{H^{-1}_{h_n}(K_1)}\to0.
\end{equation}
The same outgoing/ingoing convention and the charge-free compatibility conditions
hold for the selected dyadic sequence. The extra conic cutoff only records
where the associated semiclassical defect measure has nonzero mass.
\end{enumerate}
\end{lemma}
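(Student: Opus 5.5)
We argue by contradiction in the standard compactness form. Suppose the compact remainder in \eqref{eq:raw_positive_commutator_estimate} cannot be absorbed. Then there are smooth compatible compactly supported solutions $\Psi_n$, on slabs $\D(\tau_1^n,\tau_2^n)$ and with parameters $(a_n,Q_n)$ in the slow-weak range, normalized so that $\|\Psi_n\|_{LE^0_{\comp,k}}=1$. For these, the endpoint energies, the sources $\sum_{|I|\le k}\|\Pb_b\Gamma^I\Psi_n\|_{LE^*}^2$, and (by the raw estimate itself) $\delta$ times the degenerate local-energy norm all tend to zero relative to the normalization. After passing to a subsequence, the parameters converge to a subextremal slow-weak limit.

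\emph{Time cutoff and Fourier transform.} Multiply by a smooth cutoff $\chi(\tau/T_n)$ adapted to the slab. The commutator $[\Pb_b,\chi]$ is first order with coefficients $O(T_n^{-1})$, so its $LE^*$ contribution is bounded by the endpoint energies and the local-energy bulk; this is where the red-shift and $r^p$ pieces, Propositions~\ref{prop:redshift_coercivity} and~\ref{prop:rp_identity}, enter. Apply the stationary Fourier transform in $\tau$. Plancherel turns $LE^*$ and $LE^0_{\comp}$ into $L^2_\omega$ of weighted $H^{-1}_\sigma$ and $L^2(K_0)$ norms of the profiles $\widehat\Psi_n(\omega)$, which satisfy $\mathcal L_{a_n,Q_n}(\omega)\widehat\Psi_n=\widehat f_n$. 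Because $\Psi_n$ vanishes near the past endpoint and is causal, the profiles extend holomorphically to $\operatorname{Im}\omega>0$. Their real-axis traces therefore obey the future outgoing/ingoing Sommerfeld convention, and charge-freeness and compatibility are preserved by the transform.

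\emph{Pigeonholing the frequency mass.} Decompose dyadically in $\Lambda=1+|\omega|+\lambda$, using angular spectral projectors, and further into a finite conic partition. Summing over dyadic blocks, at least one block must carry a fixed fraction of the local $L^2$ mass while its residual tends to zero. For high blocks we work in the semiclassical normalization, where the $h^{-1}\log(1/h)$ weight appears. That weight is harmless here because the source term was already weighted in $LE^*$ with the trapped weight $w_{\mathrm{tr}}^{-1}$. Lemma~\ref{lem:app_high_angular_coercivity} shows that, on bounded temporal frequency, mass cannot escape to high angular frequency. So a bounded $\Lambda$ yields case~1, with the normalization raised from $L^2$ to $H^1(K_0)$ by interior elliptic and hyperbolic regularity on $K_1\Supset K_0$. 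An unbounded $\Lambda$ yields case~2 with $h_n=\Lambda_n^{-1}$.

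\emph{Main obstacle.} The hardest step is matching the normalization and residual exactly as in \eqref{eq:localized_compact_hf_residual}. One has to check that the cutoff commutators, the dyadic and conic localizers, and the frequency-dependent $H^{-1}_{h}$ weights do not lose more than the logarithmic factor on each block. I would first try the quadratic-form (almost-orthogonality) bound over blocks. Localizer errors are $O(h)$ by the semiclassical calculus, and the $\log$ loss is absorbed by the degenerate weight $(r-r_{\mathrm{ph}})^2$, which corresponds microlocally to the escape function \eqref{eq:main_body_log_escape}.
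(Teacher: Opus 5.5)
\textbf{Agreement with the paper, and where the gap is.} Your architecture matches the paper's: a contradiction sequence normalized in $LE^0_{\mathrm{comp},k}$, a time cutoff, Plancherel in the stationary variable, dyadic total-frequency shells, splitting the compact mass into a bounded or an unbounded branch, Lemma~\ref{lem:app_high_angular_coercivity} for the high-angular tail at bounded $\omega$, and an elliptic upgrade of the normalization to $H^1(K_0)$.

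The gap is at exactly the step you flag as the main obstacle. You take the dyadic/conic \emph{localized blocks} as the defect sequence, accept $O(h)$ localizer errors, and propose to absorb the resulting logarithmic loss with the trapping weight. This fails. Take a conic localizer $A_h=\operatorname{Op}_h(b)$, or any angular localizer that does not commute exactly with the frozen operator (e.g.\ round-sphere projectors when $a\neq0$). Then $[P_h,A_h]=\tfrac hi\operatorname{Op}_h(\{p,b\})+O(h^2)$ has size $h$ from $H^1_h$ to $L^2$. It therefore contributes a term of size $C\log(1/h)\norm{v}_{H^1_h}$ to the left side of \eqref{eq:localized_compact_hf_residual}, and that term does not tend to zero.

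No frequency-independent spatial weight can absorb a factor $\log(1/h)$:
\begin{itemize}
\item $w_{\mathrm{tr}}^{-1}\ge1$ only strengthens the source norm;
\item the degeneracy $(r-r_{\mathrm{ph}})^2$ in $LE^1_{\mathrm{deg}}$ gives you \emph{less} control of $\nabla v$ near trapping, not more;
\item the logarithmic escape function \eqref{eq:main_body_log_escape} lives at the $h$-dependent scale $h^{1/2}$ and belongs to the later resolvent estimate, not to the selection of the sequence.
\end{itemize}

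\textbf{The repair.} This is how the paper's argument is set up, and it has two parts.

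First, select only with operations that commute exactly with the stationary operator, namely temporal Fourier selection inside the compatible fibre (Proposition~\ref{prop:compatibility_microlocal_compactness}). The residual of the selected profile is then just the restricted transform of the source, with no commutator. The conic partition is used only as test operators that locate the defect measure of the \emph{unlocalized} profile, as the last sentence of the statement and Lemma~\ref{lem:compatibility_not_localized} say.

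Second, there is no logarithmic loss to absorb. Since $P_h=h^2\mathcal L$, one has $h^{-1}\log(1/h)\norm{P_hv}_{L^2}=h\log(1/h)\norm{\mathcal Lv}_{L^2}$, and $h\log(1/h)\to0$. So Plancherel and a pigeonhole over shells, applied to the quotient \eqref{eq:localized_hf_selection_quotient}, give \eqref{eq:localized_compact_hf_residual} directly (compare Lemma~\ref{lem:unscaled_to_semiclassical_defect}). The $L^2(K_1)$ residual, rather than only an $H^{-1}$ one, is available because $w_{\mathrm{tr}}^{-1}\ge1$ in $LE^{*}$.

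\textbf{A setup slip.} If the compact-removal estimate fails for a fixed $\delta$, you only get $\delta\norm{\Psi_n}^2_{LE^1_{\mathrm{deg},k}}\le1$, i.e.\ boundedness. That is all the cutoff step needs. It cannot tend to zero: $\norm{\Psi}^2_{LE^1_{\mathrm{deg},k}}\ge M^{-2}\norm{\Psi}^2_{LE^0_{\mathrm{comp},k}}$, so the normalization would be contradicted at once.
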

\begin{proof}
Let $u_n$ be a contradiction sequence with $\norm{u_n}_{LE^0(K_0)}=1$ and with the
right-hand side of the compact-remainder estimate tending to zero faster than
$n^{-2}$.  Multiplying by a scalar time cutoff which is one on the middle half of
the slab produces $\chi_nu_n$ with
\begin{equation}\label{eq:localized_compact_cutoff}
        \norm{\Pb_b(\chi_nu_n)}_{LE^*([T_1,T_2]\times K_1)}
        +\norm{(1-\chi_n)u_n}_{LE^0(K_0)}\to0,
        \qquad \norm{\chi_nu_n}_{LE^0(K_0)}\ge \frac12.
\end{equation}
Plancherel in the stationary time variable gives a Fourier profile whose
frozen residual is small relative to its local $H^1$ mass. Decompose the Fourier
side further into dyadic total-frequency shells $\Lambda\simeq h^{-1}$ and into
a fixed finite conic partition adapted to the scalar principal symbol. If a
bounded set of shells carries a positive fraction of the mass, a measurable
selection in those shells, followed by normalization, gives
\eqref{eq:localized_compact_bf}.

If the mass escapes to $\Lambda\to\infty$, the selection is made using the
quotient whose numerator is precisely the square of the residual appearing in the
normally hyperbolic resolvent normalization:
\begin{equation}\label{eq:localized_hf_selection_quotient}
        \mathcal Q_h(v)=
        \frac{h^{-2}\log(1/h)^{2}\norm{P_h v}^{2}_{L^2(K_1)}
        +\norm{P_h v}^{2}_{H^{-1}_h(K_1)}}
        {\norm{v}^{2}_{H^1_h(K_0)}}.
\end{equation}
The factor $h^{-2}\log(1/h)^2$ is essential: after taking square roots it is
precisely the term $h^{-1}\log(1/h)\|P_hv\|_{L^2}$ in
\eqref{eq:localized_compact_hf_residual}. Were \eqref{eq:localized_hf_selection_quotient}
bounded below by a positive number on every sufficiently large dyadic shell and
every conic cell, Plancherel, the finite overlap of the conic partition, and the
uniform equivalence between the dyadic $H^1_h$ norm and the local-energy density
on compact sets would yield
\[
        \norm{\chi_nu_n}_{LE^0(K_0)}^2
        \le C\Big(\norm{\Pb_b(\chi_nu_n)}_{LE^*([T_1,T_2]\times K_1)}^2
        +o(1)\norm{\chi_nu_n}_{LE^0(K_0)}^2\Big),
\]
contradicting \eqref{eq:localized_compact_cutoff} after the last term is absorbed.
Therefore a subsequence of conic cells has \(\mathcal Q_h(v)\to0\). Normalizing by
\(\norm{v}_{H^1_h(K_0)}=1\) gives exactly
\eqref{eq:localized_compact_hf_norm}-\eqref{eq:localized_compact_hf_residual}.
The $L^2$ residual is obtained by smoothing the selected packet at scale $h$; the
commutator of the smoothing with $P_h$ is one order lower in the semiclassical
calculus and is absorbed by the $H^{-1}_h$ term in
\eqref{eq:localized_hf_selection_quotient}.

The boundary and charge conditions pass to the selected profiles at the level at
which they are used. The time cutoffs are taken with vanishing extra flux
through their collars, and the radiation trace maps are continuous on the
local-energy class, so the outgoing or incoming convention is inherited by the
limit. The dyadic total-frequency selection is made inside the original closed
compatible graph space; the phase-space cutoffs used to locate a conic cell are
only test operators for the defect measure and are not asserted to preserve the
Maxwell constraints. Electric and magnetic charges are zero for the original
compatible sequence, and high-angular pieces have no Coulomb component because
the charge is the $\ell=0$ spherical mode already removed by the charge-free
projection.

When this lemma is used to choose the slow-rotation threshold, the usual
contradiction argument lets one replace a failure for all thresholds by a
sequence with $|a_n|/M\to0$.  After the threshold has been fixed, the conclusion
is the stronger fixed-range statement written above: the parameters only have to
remain in the chosen slow-weak compact set.
\end{proof}

\begin{lemma}
\label{lem:compatibility_not_localized}
In the compact-remainder and high-frequency arguments, compatibility is used as a closed condition on the original frozen profiles, not as an invariance property of arbitrary microlocal cutoffs. More precisely, let $v_n\in\mathcal C_{h_n}$ be a compatible graph-class sequence and let $A_{j,h_n}$ be one element of a finite conic partition. If $A_{j,h_n}v_n$ carries nonzero defect mass, then the subsequent trapped estimate is applied to the same compatible profile $v_n$, with scalar cutoffs $\chi_1\prec\chi_2$ equal to one on the base projection of $\operatorname{WF}_{h_n}(A_{j,h_n}v_n)$.  The argument never requires $A_{j,h_n}v_n\in\mathcal C_{h_n}$.
\end{lemma}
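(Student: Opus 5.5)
The plan is to prove Lemma~\ref{lem:compatibility_not_localized} by following the argument of Lemma~\ref{lem:localized_compact_defect} and Proposition~\ref{prop:main_body_trapped_commutator_threshold} and recording where each object acts. Compatibility is imposed only on the profiles $v_n$ themselves. These lie in the closed graph class $\mathcal C_{h_n}$, obtained by time cutoff and Fourier transform of elements of the closed compatible space of Definition~\ref{def:master_from_teukolsky}. The dyadic total-frequency selection in Lemma~\ref{lem:localized_compact_defect} is performed in the stationary frequency variable only. Since $\Pb_b$ is stationary, $\omega$-localization commutes with the equation, and the selected profile stays in the closure of the compatible range. The conic operators $A_{j,h_n}$, by contrast, enter only as test operators. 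They serve to detect that the semiclassical defect measure $\mu$ of $(v_n)$ has nonzero mass on a conic cell, i.e.\ $\langle A_{j,h_n}^*A_{j,h_n}v_n,v_n\rangle\not\to0$.

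Step one is to pass from this mass statement to a statement about $v_n$ alone. I would take scalar cutoffs $\chi_1\prec\chi_2$ equal to one on the base projection of $\operatorname{WF}_{h_n}(A_{j,h_n}v_n)$. Because the symbol of $A_{j,h_n}$ is a scalar multiple of $I_2$, $\mu$ restricted to that cell is controlled by $\|\chi_1 v_n\|_{H^1_{h_n}}$. Positivity of the mass therefore gives a uniform lower bound on $\|\chi_1v_n\|_{H^1_{h_n}}$.

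Step two is to apply the trapped estimate of Proposition~\ref{prop:main_body_trapped_commutator_threshold}, with the logarithmic escape function of Proposition~\ref{prop:main_body_trapped_normal_form}, directly to $v_n$. The commutant $A_h=\operatorname{Op}_h(\beta)$ there is scalar, and the pairings $\langle A_hB_hA_hv,v\rangle$ and $\frac ih\langle[P_{h,\mathrm{sa}},A_h^*A_h]v,v\rangle$ are evaluated on $v=v_n$. The matrix structure of $B_h$ is used only through the pointwise threshold \eqref{eq:main_body_matrix_threshold_at_K}. No step of that proof differentiates or cuts off $v$ by a non-scalar operator while requiring membership in $\mathcal C_{h_n}$. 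The scalar cutoffs $\chi_2$ produce commutators $[P_h,\chi_2]$ supported outside the cell. These are absorbed by $\mathcal Q_{\mathrm{prop}}$ and by the residual \eqref{eq:localized_compact_hf_residual}, since $\|P_h(\chi_2v_n)\|\le\|P_hv_n\|+Ch\|\tilde\chi v_n\|_{H^1_h}$.

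The main obstacle is checking that no hidden use of compatibility on $A_{j,h_n}v_n$ occurs. Two places need scrutiny: the charge-free condition and the outgoing/ingoing convention. For the first, I would observe that the charges are read off from $v_n$, which is unchanged. For the second, the radiation condition is a property of $v_n$ at the ends, while $\chi_2$ is compactly supported in $r$. Combining these with the lower bound from step one gives the claimed statement, with the estimate never applied to $A_{j,h_n}v_n$.
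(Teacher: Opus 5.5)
Your proposal is correct and is essentially the paper's argument. Compatibility is a closed condition imposed only on $v_n$, and the conic cell $A_{j,h_n}$ only detects defect mass via $\|A_{j,h_n}v_n\|\le C\|\chi v_n\|+O(h_n^\infty)\|v_n\|_{H^1_{h_n}}$ with $\chi=1$ on the base projection; the trapped estimate is then applied to $v_n$ itself, with the charge and radiation conditions read off from $v_n$.

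One point to tighten is your bound on $P_{h_n}(\chi_2v_n)$. After the factor $h_n^{-1}\log(1/h_n)$, the commutator $[P_{h_n},\chi_2]v_n$ leaves a term $C\log(1/h_n)\|\tilde\chi v_n\|_{H^1_{h_n}}$ that is not automatically small, so you should keep the cutoff on the already formed residual $\chi_1P_{h_n}v_n$, as the paper does in Lemma~\ref{lem:compatible_cutoff_apriori}.
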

\begin{proof}
The compatible class $\mathcal C_h$ is defined by the frozen Maxwell constraints, the closed range of the extreme-variable map, and the chosen outgoing or incoming radial convention; it is closed in the local resolvent graph norm by Lemma~\ref{lem:app_compatible_closed}. Scalar temporal Fourier selection commutes with the stationary equation. Scalar angular/frequency cutoffs are not used to create new exact Cauchy data. Therefore the finite conic partition is used only in the weak form: it identifies a phase-space region on which the defect measure of $v_n$ is nonzero. Choosing $\chi=1$ on the base projection of that region gives
\[
        \|A_{j,h_n}v_n\|_{L^2}\le C\|\chi v_n\|_{L^2}+O(h_n^\infty)\|v_n\|_{H^1_{h_n}},
\]
and the localized resolvent estimate controls $\chi v_n$ directly in terms of $P_{h_n}v_n$, which remains compatible. Accordingly, the conic localization is a support argument for the associated defect measure, not a projection of the Maxwell constraint equations. The charge-free condition is also preserved at the level used here: the Coulomb charge is the $\ell=0$ flux coordinate removed before the master map is formed, and high-frequency angular pieces cannot carry it.
\end{proof}

\begin{proposition}
\label{prop:compatibility_microlocal_compactness}
The bounded-frequency and high-frequency defect profiles in
Lemma~\ref{lem:localized_compact_defect} may be chosen inside the closed charge-free
compatible graph space. More precisely, if the original Maxwell fields are
charge-free and compatible with the Teukolsky master variables, then the profiles
to which Proposition~\ref{prop:app_bounded_freq} or Proposition~\ref{prop:app_high_freq}
is applied satisfy the same distributional constraints, the same vanishing charge
conditions, and the same outgoing or incoming radial convention. The conic
microlocal partition used in Lemma~\ref{lem:localized_compact_defect} only
identifies the support of the associated defect measure.
\end{proposition}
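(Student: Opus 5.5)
The plan is to follow the construction in Lemma~\ref{lem:localized_compact_defect} step by step and check that each operation either preserves the closed charge-free compatible class or acts only as a test operator. By Definition~\ref{def:master_from_teukolsky}, the compatible master space is the closure of the smooth range $\mathfrak M(\text{smooth charge-free solutions})$ in a graph topology. Lemma~\ref{lem:app_compatible_closed} shows that the frozen class $\mathcal C_h$ is closed in the local resolvent graph norm. The argument therefore reduces to three points:
\begin{enumerate}[label=\emph{(\roman*)},leftmargin=2.2em]
\item each operation used to produce the frozen profile maps compatible smooth data into $\mathcal C_h$, up to errors that vanish in the graph norm;
\item the normalizations and limits are taken in that norm;
\item the conic cutoffs never need to be applied to the profile itself.
\end{enumerate}

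\textbf{Time cutoff and Fourier transform.} Start with a contradiction sequence $u_n=\mathfrak M G_n$, where $G_n$ is smooth and charge-free. The stationary Killing field $T$ commutes with the Maxwell system and with $\mathfrak M$, since the weights are stationary. Hence the temporal Fourier transform of a time-cutoff Maxwell field is again the image under the frozen extreme-variable map of a frozen Maxwell profile.

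This frozen profile satisfies the frozen constraints, with a source equal to the Fourier transform of the cutoff commutator $[\Box,\chi_n]$. By \eqref{eq:localized_compact_cutoff}, this source tends to zero in $LE^*$. The charges vanish for each frequency because $q_E,q_B$ are linear, continuous by Proposition~\ref{prop:finite_energy_charge_trace}, and commute with the time Fourier transform. Proposition~\ref{prop:coulomb_elimination} then gives the vanishing spherical means of $\rho,\sigma$, which pass to each frequency slice.

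\textbf{Dyadic selection and normalization.} Selecting a dyadic shell in $|\omega|$ is a scalar multiplier in $\tau$, so it also commutes with the equation. The selection in the angular variable $\Lambda$ is made by the spectral projector $\Pi_{\ge L}$ of the angular operator. For $a\ne0$ this projector does not commute exactly with the operator. Lemma~\ref{lem:app_high_angular_coercivity} shows the commutator is $O(L^{-1})$ in the relevant norms. I would treat this commutator as an additional residual that tends to zero, rather than as a loss of compatibility.

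The radiation convention is inherited because the trace maps of Lemma~\ref{lem:radiation_trace} and Corollary~\ref{cor:horizon_flux} are continuous on the local-energy class. For the bounded-frequency branch, compactness (Rellich on $K_1$) then gives a limit in $\mathcal C_h$. That limit satisfies the constraints distributionally, has zero charges, and inherits the outgoing or incoming condition. This is exactly the input required by Proposition~\ref{prop:app_bounded_freq}.

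\textbf{High-frequency branch and the main obstacle.} The step I expect to be the main obstacle is the high-frequency branch: the profile must stay compatible even though a conic partition is used. I would not try to show that $A_{j,h}v_n$ is compatible, which is generally false. Instead, Lemma~\ref{lem:compatibility_not_localized} should be invoked. Its key estimate is
\[
\|A_{j,h}v_n\|_{L^2}\le C\|\chi v_n\|_{L^2}+O(h^\infty)\|v_n\|_{H^1_h}.
\]
This lets Proposition~\ref{prop:app_high_freq} be applied to the unlocalized compatible profile $v_n$, with scalar spatial cutoffs $\chi$. The delicate point is the residual normalization \eqref{eq:localized_compact_hf_residual}. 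Both the $h$-scale smoothing and the spatial cutoffs must produce commutator errors that are $O(h)$ relative to $\|v\|_{H^1_h}$. Such errors are absorbed by the $H^{-1}_h$ term, but they must not destroy the $h^{-1}\log(1/h)$ weighted $L^2$ smallness. Here I would use that the smoothing is a scalar function of $h^2\Delta$ on $\Sph$ and of $hD_\tau$. Its commutator with the scalar principal part $g^{\mu\nu}\xi_\mu\xi_\nu I_2$ is of lower order. Its commutator with the matrix lower-order terms is bounded by \eqref{eq:matrix_skew_decomposition}.

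\textbf{Charges in the high-frequency limit.} Finally, vanishing charges are automatic in the limit for angular frequency $\Lambda\to\infty$. The charge functionals only see the $\ell=0$ spherical means, and the selected packets have no mass there.
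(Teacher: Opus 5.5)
Your outline has the same skeleton as the paper's proof. Both use closedness of the charge-free compatible graph class and a scalar time cutoff whose only effect is a collar source. Both let the temporal Fourier transform act fibrewise, so that selection happens inside the compatible fibre. Both use the conic partition only as a test operator for the defect measure, as in Lemma~\ref{lem:compatibility_not_localized}.

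The step that fails is your angular selection. You are right that an angular projector does not commute with the frozen operator when $a\neq0$; the paper's phrase that the dyadic total-frequency projection is ``taken in that direct integral'' is literally true only for its temporal part. But your remedy, which applies $\Pi_{\ge L}$ to the profile and counts the commutator as an extra residual, does not prove the statement. The projector also fails to commute with the frozen Maxwell constraint operators, whose coefficients depend on $\theta$ through $a$, so $\Pi_{\ge L}v$ need not lie in $\mathcal C_h$ at all. That is a constraint defect, not an error in $\mathcal L_{a,Q}(\omega)v$, and nothing you cite controls it: the $O(L^{-1})$ bound in Lemma~\ref{lem:app_high_angular_coercivity} concerns commutators with the coefficients of the operator. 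The proposition asserts that the profiles satisfy the \emph{same} distributional constraints, and Lemma~\ref{lem:compatible_cutoff_apriori} is applied only to $v\in\mathcal C_h$, so this defect is exactly what has to be avoided.

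Even viewed as an equation residual, the commutator is too large in the high-frequency normalization. In general $[f(-h^2\sphlap),h^2\mathcal L_{a,Q}(\omega)]$ is only $O(h)$ from $H^1_h$ to $L^2$. Its principal symbol $\tfrac{h}{i}\{f(|\eta|^2_{\Sph}),p_{a,Q,\hat\omega}\}$ is nonzero, because the round-sphere symbol is not conserved by the Kerr--Newman flow (only the Carter quantity is). After the weight $h^{-1}\log(1/h)$ it therefore contributes $O(\log(1/h))\|v\|_{H^1_h}$ rather than $o(1)$. Calling it lower order, or absorbing it into the $H^{-1}_h$ term, does not give the required smallness.

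The missing idea is the one the paper relies on: do not project in angle at all. Only the temporal selection acts on the profile, and it commutes exactly with the constraints, with $\mathfrak M$ and with the stationary operator, because $T$ is Killing and the weights are stationary. Angular and conic localization enter only through estimates applied to the unprojected compatible profile:
\begin{itemize}
\item Lemma~\ref{lem:app_high_angular_coercivity} for the high-angular tail at bounded $\omega$;
\item elliptic and propagation estimates with test operators;
\item scalar spatial cutoffs together with Lemma~\ref{lem:compatible_cutoff_apriori} near the trapped set.
\end{itemize}
With that change nothing new needs to be said here about the residual normalization, which is the content of Lemma~\ref{lem:localized_compact_defect}. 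The charges also vanish identically in every fibre, as your Fourier argument already shows. Your closing $\ell=0$ argument then becomes unnecessary; in any case it would not cover $\Lambda\to\infty$ driven by $|\omega|$ with bounded angular frequency.
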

\begin{proof}
Let \(\mathcal C\) denote the compatible graph space on a fixed finite slab: its
norm consists of the local-energy norm of the Maxwell field, the local graph norm
of the two master variables, and the distributional Maxwell constraint norm. The
Maxwell constraints and the charge functionals are continuous in this topology by
Proposition~\ref{prop:finite_energy_charge_trace} and the construction of
Definition~\ref{def:master_from_teukolsky}; therefore the charge-free compatible
subspace is closed.

Multiplication by a scalar time cutoff preserves the algebraic relations defining
the extreme Newman-Penrose components and changes the master equation only by
commutators supported in the cutoff collars. These collar terms are precisely the
source terms which tend to zero in \eqref{eq:localized_compact_cutoff}. The
stationary Fourier transform is unitary from the time-localized graph space into
the direct integral of its frozen graph spaces, so a measurable frequency
selection may be made inside the compatible fibre. The dyadic total-frequency
projection is taken in that direct integral and therefore also leaves the selected
profile in the compatible fibre.

The finite conic pseudodifferential partition is used after this selection to
show where a normalized semiclassical measure can concentrate. It is not used to
replace the selected profile by an arbitrary microlocally cut-off profile in the
Maxwell constraint equations. Thus the limiting bounded-frequency profile and
the normalized high-frequency sequence remain eligible for the two exclusion
results cited in the compactness argument.
\end{proof}

\begin{proposition}
\label{prop:compact_remainder_removal}
Let Lemmas~\ref{lem:uniform_hardy_elliptic}-\ref{lem:rotational_commutator_error}
hold. Assume the bounded-total-frequency real-axis exclusion of
Proposition~\ref{prop:no_real_kernel} and the unbounded conic high-frequency
no-defect alternative of Proposition~\ref{prop:app_high_freq}, applied on the
closed compatible graph class of Proposition~\ref{prop:compatibility_microlocal_compactness}.
Then, for every $\delta>0$,
\begin{equation}\label{eq:compact_term_removed}
        \|\Psi\|_{LE^0_{\comp,k}(\tau_1,\tau_2)}^2
        \le \delta\|\Psi\|_{LE^1_{\mathrm{deg},k}(\tau_1,\tau_2)}^2
        +C_\delta\sum_{|I|\le k}\|\Pb_b\Gamma^I\Psi\|_{LE^*}^2
        +C_\delta\bigl(\E_M^{(k)}[\Psi](\tau_1)+\E_M^{(k)}[\Psi](\tau_2)\bigr).
\end{equation}
The raw estimate therefore \eqref{eq:raw_positive_commutator_estimate}
upgrades to the compact-free Morawetz estimate
\eqref{eq:positive_commutator_estimate}.
\end{proposition}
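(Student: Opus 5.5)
The plan is a contradiction argument run through Lemma~\ref{lem:localized_compact_defect}. Fix $\delta>0$ and suppose \eqref{eq:compact_term_removed} fails for every constant $C_\delta$. Then there are smooth compatible compactly supported solutions $\Psi_n$, which may be commuted to order $k$ by Lemma~\ref{lem:commuted_master}. Normalize them so that $\|\Psi_n\|_{LE^0_{\comp,k}}=1$. The failure then means that $\delta\|\Psi_n\|^2_{LE^1_{\mathrm{deg},k}}<1$, and that the source term $\sum_I\|\Pb_b\Gamma^I\Psi_n\|^2_{LE^*}$ and the endpoint energies tend to zero. The bound $\|\Psi_n\|_{LE^1_{\mathrm{deg},k}}\le\delta^{-1/2}$ supplies the uniform local $H^1$ control needed for compactness. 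It is also why the factor $\delta$ appears: without it the top-order norm would be unbounded and no defect extraction would be possible.

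The next step is to apply Lemma~\ref{lem:localized_compact_defect}. A time cutoff, stationary Fourier transform, dyadic total-frequency decomposition and a finite conic partition together produce either a bounded-total-frequency sequence satisfying \eqref{eq:localized_compact_bf} or a semiclassical sequence satisfying \eqref{eq:localized_compact_hf_norm}--\eqref{eq:localized_compact_hf_residual}. In both cases the endpoint energies tend to zero. This makes the cutoff collar terms in \eqref{eq:localized_compact_cutoff} small, because they are bounded by energies on the cutoff collars. Those energies are in turn controlled, via the red-shift and $r^p$ identities (Propositions~\ref{prop:redshift_coercivity}, \ref{prop:rp_identity}), by the endpoint energies plus the source. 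Proposition~\ref{prop:compatibility_microlocal_compactness} and Lemma~\ref{lem:compatibility_not_localized} ensure that the selected profiles remain charge-free, compatible, and keep the outgoing/ingoing convention.

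In the bounded branch, Lemma~\ref{lem:app_high_angular_coercivity} first removes the high-angular tail. Uniform $H^1(K_0)$ bounds, Rellich compactness, and compactness of the parameter set then yield a nonzero limit $v$ with $\mathcal L_{a_\infty,Q_\infty}(\omega_\infty)v=0$ that inherits the radiation condition and the charge-free compatibility. Proposition~\ref{prop:no_real_kernel} excludes such a $v$, so this branch is contradictory. In the unbounded branch, the normalized residual \eqref{eq:localized_compact_hf_residual} is exactly the quantity that the localized normally hyperbolic estimate of Proposition~\ref{prop:app_high_freq} forces to control $\|v_n\|_{H^1_{h_n}(K_0)}$ up to $O(h_n)$. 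The residual tends to zero while this norm equals one, which is a contradiction. With both branches excluded, \eqref{eq:compact_term_removed} holds with some finite $C_\delta$. Substituting it into \eqref{eq:raw_positive_commutator_estimate} and absorbing $\delta\|\Psi\|^2_{LE^1_{\mathrm{deg},k}}$ by the coercive part of $\mathcal M_k$ gives \eqref{eq:positive_commutator_estimate}, as already written in the proof of Proposition~\ref{prop:positive_commutator_estimate}.

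I expect the main obstacle to be the bounded-frequency limit, specifically showing that the compactness limit is nonzero and genuinely outgoing. The normalization holds only on $K_0$, while the Sommerfeld condition lives at infinity and the horizon. To pass the radiation condition to the limit, I would first use the uniform weighted $H^1_{-\sigma}$ limiting-absorption bounds of (A3) on the compatible class. Non-vanishing of the limit should then follow from $\|v_n\|_{H^1(K_0)}=1$ together with the strong local convergence given by the elliptic gain from the residual. The uniformity in $n$ of the commuted lower-order terms also needs care. I would handle it by induction on $k$, using \eqref{eq:strict_lower_order_induction}, so that at order $j$ only the top-order piece enters the contradiction sequence.
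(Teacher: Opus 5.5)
This is essentially the paper's proof. In both, you normalize the compact norm and invoke Lemma~\ref{lem:localized_compact_defect} with Proposition~\ref{prop:compatibility_microlocal_compactness}. The bounded-frequency defect is then excluded by compactness plus the real-resonance exclusion: you pass to the limit at general slow-weak $(a_\infty,Q_\infty)$ and use the assumed exclusion there, whereas the paper cites Proposition~\ref{prop:app_bounded_freq}, which reduces to the Reissner--Nordstr\"om limit. The high-frequency defect is excluded by Proposition~\ref{prop:app_high_freq}; the paper first removes the angularly elliptic packets with Lemma~\ref{lem:app_high_angular_coercivity}, which you omit in that branch. Finally both absorb the result into \eqref{eq:raw_positive_commutator_estimate}.

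You should drop your aside on the time-cutoff collar terms. That step is already handled inside Lemma~\ref{lem:localized_compact_defect}, which you are invoking anyway. Your justification of it is also not valid: Propositions~\ref{prop:redshift_coercivity} and~\ref{prop:rp_identity} act only in the horizon and far-field collars. They cannot bound intermediate slice energies by the endpoint energies plus the source without the compact-region estimate that is being proved.
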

\begin{proof}
Suppose that \eqref{eq:compact_term_removed} fails for some fixed $\delta>0$.
After normalizing the compact local norm to one and letting the right side tend
to zero, Lemma~\ref{lem:localized_compact_defect}, together with
Proposition~\ref{prop:compatibility_microlocal_compactness}, produces either a
bounded-total-frequency defect or an unbounded compatible high-frequency defect.

In the bounded branch, the selected sequence satisfies
\eqref{eq:localized_compact_bf}. Proposition~\ref{prop:app_bounded_freq}
excludes such a sequence after the slow-rotation threshold has been chosen. Its
proof is a Fredholm compactness argument using
Lemma~\ref{lem:rn_limiting_absorption}, Lemma~\ref{lem:app_high_angular_coercivity},
closedness of the compatible class, and the Reissner-Nordstr\"om real-axis
exclusion; the limiting profile would be an outgoing or incoming real resonance,
which is zero by Proposition~\ref{prop:no_real_kernel}.

In the unbounded branch, angularly elliptic packets are controlled by
Lemma~\ref{lem:app_high_angular_coercivity}. The remaining characteristic
packets satisfy the exact normalized high-frequency residual
\eqref{eq:app_high_freq_residual}, by \eqref{eq:localized_compact_hf_residual}.
Proposition~\ref{prop:app_high_freq}, which uses the localized normally
hyperbolic estimate after the geometric verification of
Proposition~\ref{prop:r_normal_hyperbolicity}, forces the local semiclassical
$H^1$ mass on $K_0$ to vanish. This contradicts the normalization
\eqref{eq:localized_compact_hf_norm}. Both frequency branches are impossible,
and \eqref{eq:compact_term_removed} holds. Substitution into the raw estimate
and absorption of the first term yield the compact-free estimate.
\end{proof}

\subsection{Zero-Frequency and Real-Frequency Exclusion}
\label{subsec:nokernel}
Charge subtraction removes the stationary Coulomb fields; the remaining
possible zero-frequency barrier is a stationary kernel of the charge-free master
system. We first record the spherical coercivity at $a=0$ and then absorb the
rotation.

\begin{lemma}
\label{lem:stationary_identity}
Fix $|Q|\le\eps_QM$ and let $w_0(r)>0$ be equivalent to $1$ on compact radial
sets and at infinity. There is a constant $c_0=c_0(Q)>0$, bounded below
uniformly in the weak-charge range, such that every charge-free spin-one
profile $\Psi$ on $\Sigma_0$ (with $\ell\ge1$ content) obeys the a~priori
estimate for the homogeneous Reissner-Nordstr\"om stationary operator
$P_{\RN,Q}^{\mathrm{stat}}$,
\begin{equation}\label{eq:stationary_identity}
        \int_{\Sigma_0}(|\nabla_{r,\omega}\Psi|^2+r^{-2}|\Psi|^2)w_0\,\dd\mu_{\Sigma_0}
        \le c_0^{-1}\,\big\| P_{\RN,Q}^{\mathrm{stat}}\Psi\big\|_{\dot{\mathcal X}_0^*}\,
        \Big(\!\int_{\Sigma_0}(|\nabla_{r,\omega}\Psi|^2+r^{-2}|\Psi|^2)w_0\,\dd\mu_{\Sigma_0}\Big)^{1/2}.
\end{equation}
In particular the homogeneous stationary problem $P_{\RN,Q}^{\mathrm{stat}}\Psi=0$
has only $\Psi=0$ among finite-energy charge-free profiles.
\end{lemma}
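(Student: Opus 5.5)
We reduce the statement to a one-dimensional coercivity estimate for each angular mode, and then sum over modes. At $a=0$ the stationary Reissner-Nordstr\"om spin-one operator commutes with the angular Laplacian, so we first expand $\Psi$ in (spin-weighted or vector) spherical harmonics. Charge-freeness removes the $\ell=0$ content, as in Proposition~\ref{prop:coulomb_elimination} and Lemma~\ref{lem:app_high_angular_coercivity}, so only $\ell\ge1$ modes occur. On each mode the stationary operator becomes, in tortoise variables and after the regular weight is conjugated away, a radial operator of the form
\[
        -\partial_{r_*}^2+V_{\ell,Q}(r),\qquad V_{\ell,Q}=f_Q\,\frac{\ell(\ell+1)}{r^2}+\text{(lower curvature terms of size }Q^2r^{-4},\,Mr^{-3}),
\]
with $f_Q=1-2M/r+Q^2/r^2$. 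This is the Regge-Wheeler-type potential \eqref{eq:app_rn_potential} that is already used in Lemma~\ref{lem:rn_model_morawetz}.

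The core step is the energy identity obtained by pairing $P_{\RN,Q}^{\mathrm{stat}}\Psi$ with $\overline\Psi w_0$ and integrating by parts over $\Sigma_0$. This gives
\[
        \int\bigl(|\partial_{r_*}\Psi|^2+V_{\ell,Q}|\Psi|^2\bigr)w_0
        =\Re\langle P_{\RN,Q}^{\mathrm{stat}}\Psi,\Psi w_0\rangle+\text{(terms involving }w_0'),
\]
and we need both sides to have the right signs:
\begin{itemize}
\item \emph{Positivity of the potential.} For $\ell\ge1$ and $|Q|\le\eps_QM$ with $\eps_Q<1/4$, we show $V_{\ell,Q}\ge c\,\ell(\ell+1)f_Qr^{-2}$. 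The angular term dominates because the lower-order curvature terms are bounded by $C\eps_Q^2$ or $C M/r$ times $r^{-2}$ with a constant below $\ell(\ell+1)\ge2$. At $Q=0$ this can be checked explicitly from the spin-one potential $f\ell(\ell+1)/r^2$; it then persists by smallness of $Q$.
\item \emph{Choice of the weight.} We pick $w_0$ slowly varying, equal to $1$ on compact sets and at infinity, so that the $w_0'$ terms are absorbed by the positive terms.
\end{itemize}

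This yields control of $\partial_{r_*}\Psi$ and of the angular derivatives through $\ell(\ell+1)r^{-2}|\Psi|^2$. Near the horizon, $f_Q$ degenerates and $\partial_{r_*}$ is degenerate. There we recover the non-degenerate radial derivative and the $r^{-2}|\Psi|^2$ weight from two tools: the Hardy inequalities of Lemma~\ref{lem:uniform_hardy_elliptic}, and the regularity of the stationary profile across $\Hp$. Regularity holds because a finite-energy stationary profile in regular coordinates satisfies the red-shift relation of Proposition~\ref{prop:redshift_coercivity} with $T\Psi=0$.

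The right-hand side of the identity is bounded by the duality between $\dot{\mathcal X}_0^*$ and the energy norm; this is exactly \eqref{eq:stationary_identity}. Summing over $\ell$ by orthogonality gives the estimate with $c_0$ uniform in $Q$.

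The kernel statement follows at once: if $P_{\RN,Q}^{\mathrm{stat}}\Psi=0$, the estimate forces the weighted energy to vanish, so $\Psi=0$.

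The main obstacle is the horizon end. There $\partial_{r_*}$-coercivity degenerates, and the regular transversal derivative must be controlled using only stationarity and finite energy. We would handle this with a red-shift transversal commutation. If that fails, the fallback is a direct ODE analysis: the regular solutions near $r_+$ behave like $\Delta^{-s}$ or a constant, and finite regular energy selects the branch that makes the boundary term in the integration by parts vanish.
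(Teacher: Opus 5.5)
Your route is the paper's own: harmonic decomposition with $\ell\ge1$, the radial operator $-\partial_{r_*}^2+V_{\ell,Q}$, pairing with $\overline{u_\ell}$, then duality and summation. The genuine gap is the horizon step you single out. None of the tools you list can close it, because the inequality is false once $\nabla_{r,\omega}$ contains the regular transversal derivative. On $T$-invariant profiles the operator is degenerate at the Killing horizon in every regular chart: since $g^{rr}=f_Q$, its spatial principal symbol is $f_Q\xi_r^2+r^{-2}|\xi_\omega|^2$, so the pairing only sees $\int f_Q|\partial_r u_\ell|^2\,\dd r$.

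Concretely, let $\Psi=\phi(r)Y_{1m}$ with $\phi$ supported in $[r_++\delta,r_++2\delta]$, $|\phi|\le1$ and $|\phi'|\lesssim\delta^{-1}$. Then $\|\Psi\|_{\dot{\mathcal X}_0}^2\gtrsim\delta^{-1}$. On the other hand, for every test profile $\Phi$ the principal term of the pairing of $P_{\RN,Q}^{\mathrm{stat}}\Psi$ with $\Phi$ is $\int f_Q\phi'\,\overline{\partial_r\Phi_{1m}}\,\dd r=O(\delta\cdot\delta^{-1/2})\|\Phi\|_{\dot{\mathcal X}_0}$. Any lower-order terms are at most $O(1)\|\Phi\|_{\dot{\mathcal X}_0}$, by the one-dimensional embedding $H^1\subset L^\infty$. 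Hence $\|P_{\RN,Q}^{\mathrm{stat}}\Psi\|_{\dot{\mathcal X}_0^*}\lesssim\delta^{1/2}\|\Psi\|_{\dot{\mathcal X}_0}$.

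Qualitative regularity across $\Hp$ gives no quantitative bound. Lemma~\ref{lem:uniform_hardy_elliptic} bounds $L^2$ norms by derivatives, not the transversal derivative by anything. The paper's appeal to ``compact elliptic control across the horizon collar'' fails for the same reason: the stationary operator is not elliptic at $r_+$.

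What your computation does prove is the identity $\int(f_Q|\partial_ru_\ell|^2+\ell(\ell+1)r^{-2}|u_\ell|^2)\,\dd r=\Re\int\mathcal S_\ell\overline{u_\ell}\,\dd r_*$. Its zeroth-order part is already non-degenerate, because $V_{\ell,Q}\,\dd r_*=\ell(\ell+1)r^{-2}\,\dd r$. So the estimate holds with this degenerate energy on the left and its dual on the right.

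Your red-shift idea does recover the transversal derivative, but only with the source in $L^2$ on the horizon collar. Multiply $\partial_r(f_Q\partial_ru_\ell)-\ell(\ell+1)r^{-2}u_\ell=g_\ell$ by $\chi\,\partial_r\overline{u_\ell}$. Using $f_Q'\ge\kappa_+(Q)>0$ on the collar and the favourable sign of the boundary term $\tfrac12\ell(\ell+1)r_+^{-2}|u_\ell(r_+)|^2$, this gives $\int\chi|\partial_ru_\ell|^2\lesssim\|g_\ell\|_{L^2}^2$ plus the degenerate energy, with no angular loss.

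The kernel statement is fine, via your ODE fallback. Finite energy excludes the $\log(r-r_+)$ branch at the horizon and the $r^{\ell+1}$ branch at infinity, so the boundary terms vanish and the identity forces $u_\ell\equiv0$.

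Two minor points. First, the potential $f_Q\ell(\ell+1)r^{-2}$ is exact for the test field (Lemma~\ref{lem:app_rn_radial}), so no smallness in $Q$ is needed. A correction such as $Q^2r^{-4}$ without a factor $f_Q$ could not be dominated by $c\,\ell(\ell+1)f_Qr^{-2}$ near $r_+$ anyway. Second, $w_0$ is given in the statement, not chosen, so pair without it and use $w_0\simeq1$.
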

\begin{proof}
Decompose into vector spherical harmonics. Charge subtraction removes $\ell=0$,
so each radial coefficient satisfies, with source $\mathcal S_\ell=
P_{\RN,Q}^{\mathrm{stat}}\Psi$ projected to the harmonic,
\begin{equation}\label{eq:rw_stationary_model}
        -\partial_{r_*}^2u_\ell+V_{\ell,Q}(r)u_\ell=\mathcal S_\ell,
        \qquad V_{\ell,Q}(r)=f_Q(r)\,\frac{\ell(\ell+1)}{r^2},
\end{equation}
$f_Q=1-2M/r+Q^2/r^2$. For $\eps_Q<1/4$, $V_{\ell,Q}\ge0$ and is strictly
positive for $r>r_+(Q)$ when $\ell\ge1$, uniformly. Pairing with
$\overline{u_\ell}$ and integrating in $r_*$, using horizon regularity and
finite energy at infinity,
\[
        \int\big(|\partial_{r_*}u_\ell|^2+V_{\ell,Q}|u_\ell|^2\big)\,\dd r_*
        =\Re\int \mathcal S_\ell\overline{u_\ell}\,\dd r_*.
\]
The left side is bounded below by $c_0\!\int(|\partial_{r_*}u_\ell|^2+r^{-2}|u_\ell|^2)$
after a one-dimensional Hardy inequality at infinity (using $\ell\ge1$) and
compact elliptic control across the horizon collar; summing the harmonics gives
the weighted energy on the left of \eqref{eq:stationary_identity}, while the
right side is bounded by Cauchy-Schwarz by
$\|P_{\RN,Q}^{\mathrm{stat}}\Psi\|_{\dot{\mathcal X}_0^*}$ times the square root
of the same energy. The homogeneous case is immediate.
\end{proof}

\begin{proposition}
\label{prop:no_stationary_kernel}
Suppose that the zero-frequency exclusion in Definition~\ref{def:slowweak_master_framework}\emph{(A3)} is available. Then every stationary finite-energy charge-free compatible master solution vanishes, uniformly in the chosen slow-weak range. In addition, if the stationary part of the finite-order analytic conditions is proved through the perturbative comparison \eqref{eq:master_hyp_perturbation}, the Reissner-Nordstr\"om coercivity estimate of Lemma~\ref{lem:stationary_identity} gives the same conclusion for sufficiently small $|a|/M$.
\end{proposition}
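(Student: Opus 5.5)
The statement has two independent parts, and I will treat them separately.

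\emph{First part.} Assume the zero-frequency clause of Definition~\ref{def:slowweak_master_framework}\emph{(A3)}. Let $u$ be a stationary finite-energy charge-free compatible master solution, so that $T u=0$ on the regular exterior. Stationarity means the temporal Fourier transform of $u$ is supported at $\omega=0$. Hence $u$ itself solves the frozen equation $\mathcal L_{a,Q}(0)u=0$ of \eqref{eq:master_hyp_resonance_equation}.

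\begin{itemize}
\item Finite non-degenerate energy gives $u\in H^1_{-\sigma,\loc}$ for every $\sigma>1/2$.
\item At $\omega=0$ the outgoing and ingoing Sommerfeld conditions reduce to finite energy at infinity and regularity across $\Hp$. Regularity at $\Hp$ holds because the master variables are horizon-regular by Definition~\ref{def:master_from_teukolsky}.
\item The charge-free compatibility conditions hold because the compatible class is closed.
\end{itemize}

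The zero-frequency exclusion then forces $u=0$. The constants in (A3) are stated uniformly on the slow-weak parameter set, so this conclusion is uniform in the chosen range.

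\emph{Second part.} Here I avoid (A3) and argue perturbatively from Lemma~\ref{lem:stationary_identity}, by contradiction with a rescaling.

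\begin{itemize}
\item Take $\Psi$ stationary, charge-free and compatible with $\Pb_{a,Q}\Psi=0$. By \eqref{eq:master_hyp_perturbation}, the stationary Reissner-Nordstr\"om operator satisfies $P_{\RN,Q}^{\mathrm{stat}}\Psi=-\mathcal E_{a,Q}^{\mathrm{stat}}\Psi$.
\item Proposition~\ref{prop:coulomb_elimination} shows $\Psi$ has no $\ell=0$ content, so Lemma~\ref{lem:stationary_identity} applies. With $\mathcal N[\Psi]$ denoting the weighted energy on its left-hand side, after dividing by $\mathcal N[\Psi]^{1/2}$ it gives
\[
\mathcal N[\Psi]^{1/2}\le c_0^{-1}\|\mathcal E_{a,Q}^{\mathrm{stat}}\Psi\|_{\dot{\mathcal X}_0^*}.
\]
\item I then bound the right-hand side using the coefficient structure \eqref{eq:lower_order_master}. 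The principal rotational term is $a\,\mathcal G^{t\phi}\partial_t\partial_\phi$, and it vanishes identically on stationary fields. The remaining terms $a r^{-2}\mathcal C^\mu\nabla_\mu$, $a r^{-3}\mathcal D$ and $a^2\mathcal Q^{(2)}$ are short range. By duality and the Hardy inequality of Lemma~\ref{lem:uniform_hardy_elliptic}, they are bounded by $C(|a|/M)\mathcal N[\Psi]^{1/2}$.
\item For $C|a|/M<c_0/2$ this yields $\mathcal N[\Psi]=0$, so $\Psi=0$.
\end{itemize}

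The main obstacle is the zeroth-order piece of the error, and specifically making the dual norm $\dot{\mathcal X}_0^*$ compatible with the $LE$ bounds at both ends.

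\begin{itemize}
\item \emph{Asymptotic end.} The \eqref{eq:master_hyp_error_bound}-type estimate carries an $LE^0_{\comp}$ term with an $|a|/M$ factor. On stationary fields that term is dominated by $\mathcal N$ through the Hardy inequality, so it is harmless.
\item \emph{Horizon end.} Horizon regularity is needed so that the compact elliptic control in Lemma~\ref{lem:stationary_identity} reaches up to $r_+(Q)$. Near $\Hp$ I would work in regular coordinates, where $\partial_t=T$ still annihilates $\Psi$, and check that the regular weights contribute only lower-order terms with $O(|a|)$ coefficients.
\end{itemize}

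Uniformity in $Q$ follows because $c_0(Q)$ is bounded below on $|Q|\le\eps_QM$.
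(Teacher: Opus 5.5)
Your proposal follows the paper's proof. The first part is the $\omega=0$ clause of (A3), which already states the absence of finite-energy stationary charge-free compatible solutions; your Sommerfeld and horizon-regularity checks are therefore harmless but unnecessary, and compatibility is a hypothesis rather than a consequence of closedness. The second part is the paper's direct absorption argument: $T\Psi=0$ kills $a\,\mathcal G^{t\phi}\partial_t\partial_\phi$, the remaining $ar^{-2}\mathcal C^\mu\nabla_\mu$, $ar^{-3}\mathcal D$ and $O(a^2)$ terms are bounded in $\dot{\mathcal X}_0^*$ by $C(|a|/M)\|\Psi\|_{\dot{\mathcal X}_0}$ via Cauchy--Schwarz and Lemma~\ref{lem:uniform_hardy_elliptic}, and Lemma~\ref{lem:stationary_identity} forces $\Psi=0$ once $c_0^{-1}C\eps_a<1$, with no contradiction or rescaling needed.
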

\begin{proof}
The first assertion is the zero-frequency case $\omega=0$ of the real-axis exclusion in Definition~\ref{def:slowweak_master_framework}. For the perturbative proof criterion, let $\Psi$ be stationary ($T\Psi=0$), charge-free and finite-energy with $\Pb_b\Psi=0$. Since $T\Psi=0$, the stationary-axial principal term in \eqref{eq:lower_order_master} drops; the remaining principal difference is $O(a^2)$ and the first- and zeroth-order rotational terms have the form $ar^{-2}\mathcal C^\mu_{a,Q}\nabla_\mu+ar^{-3}\mathcal D_{a,Q}$. Hence
\[
        P_{\RN,Q}^{\mathrm{stat}}\Psi=-(\Pb_b-P_{\RN,Q}^{\mathrm{stat}})\Psi=:\mathcal S,
        \qquad
        \|\mathcal S\|_{\dot{\mathcal X}_0^*}\le C\frac{|a|}{M}\|\Psi\|_{\dot{\mathcal X}_0},
\]
with $\|\Psi\|_{\dot{\mathcal X}_0}^2=\int(|\nabla_{r,\omega}\Psi|^2+r^{-2}|\Psi|^2)w_0$, by Cauchy-Schwarz and Lemma~\ref{lem:uniform_hardy_elliptic}. Lemma~\ref{lem:stationary_identity} gives
\[
        \|\Psi\|_{\dot{\mathcal X}_0}^2\le c_0^{-1}C\frac{|a|}{M}\|\Psi\|_{\dot{\mathcal X}_0}^2.
\]
Choosing $\eps_a$ so that $c_0^{-1}C\eps_a<1$ forces $\Psi=0$. The constants remain uniform for $|Q|\le\eps_QM$ by the weak-charge choice in Proposition~\ref{prop:constant_choice}.
\end{proof}

\begin{lemma}
\label{lem:rn_real_axis_exclusion}
Let $a=0$, $|Q|\le\eps_QM$. A finite-energy charge-free spin-one
Reissner-Nordstr\"om mode $e^{-i\omega t}u_\omega(r,\omega_{\Sph})$ with real
$\omega$ is zero.
\end{lemma}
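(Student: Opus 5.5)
The plan is to separate variables and reduce to a one-dimensional radial ODE in each angular mode, where real frequencies split into $\omega=0$ and $\omega\neq0$. Since $a=0$, the charge-free spin-one system is diagonal in vector spherical harmonics with $\ell\ge1$; the $\ell=0$ Coulomb part is removed by Proposition~\ref{prop:coulomb_elimination}. Each coefficient $u_\ell(r)$ of the mode then satisfies the Regge-Wheeler-type equation
\[
        -\partial_{r_*}^2u_\ell+\bigl(V_{\ell,Q}(r)-\omega^2\bigr)u_\ell=0,
        \qquad V_{\ell,Q}=f_Q\frac{\ell(\ell+1)}{r^2},
\]
which is the model already used in Lemma~\ref{lem:stationary_identity} and Lemma~\ref{lem:app_high_angular_coercivity}. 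The proof then reduces to showing that every finite-energy, horizon-regular solution of this ODE vanishes.

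For $\omega=0$ this is exactly the homogeneous case of Lemma~\ref{lem:stationary_identity}: pairing with $\overline{u_\ell}$ gives $\int(|\partial_{r_*}u_\ell|^2+V_{\ell,Q}|u_\ell|^2)\,\dd r_*=0$, and $V_{\ell,Q}>0$ for $\ell\ge1$ forces $u_\ell=0$.

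For $\omega\ne0$ we argue with the Wronskian flux. The quantity $\operatorname{Im}(\overline{u_\ell}\,\partial_{r_*}u_\ell)$ is constant in $r_*$, since the potential is real. First, finite non-degenerate energy near the horizon, in regular coordinates, forces $u_\ell\sim A e^{-i\omega r_*}$ as $r_*\to-\infty$. The outgoing branch $e^{+i\omega r_*}$ is excluded because its derivative is singular across $\Hp$ in horizon-penetrating coordinates. Second, finite energy at infinity forces $u_\ell\to0$ in $L^2(\dd r_*)$. However, for real $\omega\neq0$ the asymptotic solutions are $e^{\pm i\omega r_*}$ with constant modulus, so square-integrability forces both coefficients at infinity to vanish. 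Hence $u_\ell$ vanishes identically by ODE uniqueness from infinity.

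To make the asymptotic step rigorous we need the potential to decay. Here $V_{\ell,Q}=O(r^{-2})$, which is short-range enough for Levinson/Jost asymptotics $u=\alpha e^{i\omega r_*}+\beta e^{-i\omega r_*}+o(1)$. Near the horizon the potential decays exponentially in $r_*$, so the same asymptotics apply there. The weights relating $u_\ell$ to the regular master variables are bounded by Lemma~\ref{lem:frame_energy_equiv}, which lets us transfer the finite-energy hypothesis to the ODE coefficients.

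The step I expect to be the main obstacle is the horizon end. We must check carefully that ``finite non-degenerate energy'' for a time-harmonic field, where the energy on $\Sigma_\tau$ is $\tau$-independent, genuinely excludes the $e^{+i\omega r_*}$ branch. The transversal derivative $\widehat e_3$ of that branch behaves like $\Delta^{-1}$ times a nonzero constant, which is not square-integrable against the regular volume. In fact, far-end square-integrability alone already suffices, since a nonzero solution has $|u_\ell|\to$ const$\neq0$ at infinity. The horizon analysis is therefore only needed to justify that the radial reduction and ODE uniqueness hold up to $r_+$.
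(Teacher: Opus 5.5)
Your proposal follows the paper's proof: reduce to the radial equation $-u_\ell''+V_{\ell,Q}u_\ell=\omega^2u_\ell$ with $\ell\ge1$, handle $\omega=0$ by Lemma~\ref{lem:stationary_identity}, and for $\omega\ne0$ use short-range Jost asymptotics and square-integrability of $\omega u_\ell$ to force the oscillatory coefficients to vanish, then conclude by ODE uniqueness. Using only the far end is a sound streamlining: the paper also invokes the horizon end and the Wronskian, neither of which is needed, and at the horizon non-degenerate energy only excludes the outgoing branch, as you say. One correction: replace ``$|u_\ell|\to$ const'' by the statement that the period-average of $|u_\ell|^2$ tends to $|\alpha|^2+|\beta|^2>0$.
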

\begin{proof}
The case $\omega=0$ is Lemma~\ref{lem:stationary_identity}. For $\omega\ne0$,
each harmonic satisfies $-u_\ell''+V_{\ell,Q}u_\ell=\omega^2u_\ell$ ($\ell\ge1$)
with $V_{\ell,Q}$ smooth, real and short-range at $r_*=\pm\infty$. Short-range
asymptotics give $u_\ell=A_\pm e^{i\omega r_*}+B_\pm e^{-i\omega r_*}+o(1)$ as
$r_*\to\pm\infty$. Finite non-degenerate energy makes $u_\ell,u_\ell',\omega
u_\ell$ square-integrable in both ends, so the oscillatory coefficients vanish,
$A_\pm=B_\pm=0$. The Wronskian $\partial_{r_*}\operatorname{Im}(\overline{u_\ell}
u_\ell')=0$ then has zero constant, and unique continuation from either end
gives $u_\ell\equiv0$. Summing the harmonics gives $u_\omega=0$.
\end{proof}

\begin{lemma}
\label{lem:rn_no_real_resonance}
Let $a=0$, $|Q|\le\eps_QM$, and let $\sigma>1/2$. Suppose that a charge-free
spin-one profile $v\in H^1_{-\sigma,\loc}$ solves
$\mathcal L_{0,Q}(\omega)v=0$ for real $\omega$ and satisfies the future outgoing
Sommerfeld condition at null infinity and the future ingoing Sommerfeld condition
at the event horizon, or the time-reversed pair. Then $v=0$.
\end{lemma}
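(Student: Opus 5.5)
The plan is to reduce Lemma~\ref{lem:rn_no_real_resonance} to the spherical-harmonic radial ODE used in Lemma~\ref{lem:rn_real_axis_exclusion}, with an outgoing/ingoing energy-flux (Wronskian) identity replacing the finite-energy hypothesis.

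\emph{Separation.} Since $a=0$ the operator is spherically symmetric. The charge-free condition removes $\ell=0$, as in Lemma~\ref{lem:stationary_identity}, so $v$ decomposes into vector spherical harmonics with $\ell\ge1$. Each coefficient $u_\ell\in H^1_{-\sigma,\loc}$ in the tortoise variable satisfies
\[
        -u_\ell''+V_{\ell,Q}u_\ell=\omega^2u_\ell,
        \qquad V_{\ell,Q}=f_Q\,\ell(\ell+1)r^{-2}.
\]
The potential is real, smooth, nonnegative, $O(r^{-2})$ as $r_*\to+\infty$ and exponentially small as $r_*\to-\infty$, since $\kappa_+(Q)>0$ uniformly for $|Q|\le\eps_QM$. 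The Sommerfeld conditions, read on the regular (horizon-penetrating, hyperboloidal) slices, select $u_\ell\sim A_+e^{i\omega r_*}$ at infinity and $u_\ell\sim A_-e^{-i\omega r_*}$ at the horizon; for the past problem these exponents are reversed.

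\emph{Case $\omega\ne0$.} Short-range asymptotics, via the Volterra integral equation, give these expansions with derivative remainders $o(1)$. The Wronskian $W=\operatorname{Im}(\overline{u_\ell}u_\ell')$ is constant because $V_{\ell,Q}$ and $\omega$ are real. Its limits are $\omega|A_+|^2$ at $+\infty$ and $-\omega|A_-|^2$ at $-\infty$. Equating them forces $A_+=A_-=0$, so $u_\ell=o(1)$ together with its derivative at an end. The Volterra uniqueness at that end (unique continuation for the ODE) then gives $u_\ell\equiv0$.

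\emph{Case $\omega=0$.} Here the radiation condition means regularity at $\Hp$ and boundedness at infinity. For $\ell\ge1$ the solutions behave like $r^{\ell+1}$ or $r^{-\ell}$ at infinity, and the growing branch is excluded by $H^1_{-\sigma}$ control together with the outgoing convention. The pairing identity of Lemma~\ref{lem:stationary_identity} with $\mathcal S_\ell=0$ and $V_{\ell,Q}>0$ then gives $u_\ell=0$. The boundary terms vanish by horizon regularity and the decay $r^{-\ell}$.

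\emph{Summation.} All coefficients vanish, so $v=0$ by Plancherel on $\Sph$.

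The main obstacle is justifying that the Sommerfeld conditions, defined for the Maxwell/spin-one profile in the weighted space $H^1_{-\sigma}$, translate coefficientwise into the pure exponential asymptotics. For spin-weighted variables the $\ell$-modes are coupled only through the fixed Hodge transform of Lemma~\ref{lem:rn_model_morawetz}. I would handle this by working in the Regge--Wheeler-type normalization of Section~\ref{app:rn_model}, where the equation is exactly the scalar ODE above. There the outgoing condition is the statement that $e^{-i\omega r_*}u_\ell$ is smooth in $1/r$ at $\Ip$, and the horizon condition is smoothness across $\Hp$ in ingoing coordinates. The exponential decay of $V_{\ell,Q}$ toward the horizon makes the second translation elementary. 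At infinity, the $r^{-2}$ potential with $\ell(\ell+1)$ coefficient is Coulomb-free short range, since $f_Q$ contributes only an $r^{-3}$ correction. The Jost-solution expansion therefore holds, and $\sigma>1/2$ suffices to identify the coefficient of the incoming branch.
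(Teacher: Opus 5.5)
Your proposal is correct and follows the paper's proof essentially step for step. Both reduce to the $\ell\ge1$ radial equations, use constancy of $W=\operatorname{Im}(\overline{u_\ell}u_\ell')$ with end values $\omega|A_+|^2$ and $-\omega|A_-|^2$ to force $A_\pm=0$ when $\omega\ne0$, and conclude by Jost/Volterra uniqueness from an end. The only difference is at $\omega=0$: the paper just cites the finite-energy coercivity of Lemma~\ref{lem:stationary_identity}, whereas you first use the zero-frequency radiation condition (boundedness at infinity, regularity at $\Hp$) to discard the $r^{\ell+1}$ and $r_*$ branches, so the pairing identity legitimately applies to a profile only known to lie in $H^1_{-\sigma,\loc}$. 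This is a slightly more careful version of the same step, and it is indeed the radiation condition, not the weight $\sigma>1/2$ alone, that excludes $r^{\ell+1}$ when $\sigma$ is large.
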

\begin{proof}
The case $\omega=0$ is the stationary coercivity of Lemma~\ref{lem:stationary_identity}.
Assume $\omega\ne0$ and decompose into vector spherical harmonics. For each
$\ell\ge1$ the radial coefficient solves
\[
        -u_\ell''+V_{\ell,Q}(r)u_\ell=\omega^2u_\ell,
\]
where the potential is real and short-range at both ends in $r_*$. The outgoing
and ingoing conditions mean, for the future convention,
\[
        u_\ell=A_+e^{i\omega r_*}+o(1),\quad r_*\to+\infty,
        \qquad
        u_\ell=A_-e^{-i\omega r_*}+o(1),\quad r_*\to-\infty,
\]
with the same limits for $u_\ell'$ after differentiating the leading terms; the
past convention reverses both signs and gives the identical conclusion below.
Since $V_{\ell,Q}$ is real, the Wronskian current
$W=\operatorname{Im}(\overline{u_\ell}u_\ell')$ is constant. For the future
convention,
\[
        W(+\infty)=\omega |A_+|^2,
        \qquad W(-\infty)=-\omega |A_-|^2.
\]
Equality of the two constants gives $\omega(|A_+|^2+|A_-|^2)=0$, hence
$A_+=A_-=0$. The Volterra construction of Jost solutions, equivalently unique
continuation for the radial ordinary differential equation from an end, then
forces $u_\ell\equiv0$. Summing in $\ell$ gives $v=0$.
\end{proof}

\begin{lemma}
\label{lem:rn_limiting_absorption}
Fix $\sigma>1/2$, a compact $I\Subset\Rbb$, and $|Q|\le\eps_QM$. Let
$\mathcal L_{0,Q}(\omega)$ be the frozen-frequency Reissner-Nordstr\"om
spin-one operator on the charge-free sector. For every finite order there is
$C=C(I,\sigma,k)$ with
\begin{equation}\label{eq:rn_lap_estimate}
        \norm{v}_{H^1_{-\sigma}}+\norm{\omega v}_{L^2_{-\sigma}}
        \le C\norm{\mathcal L_{0,Q}(\omega)v}_{H^{-1}_{\sigma}}
\end{equation}
for all real $\omega\in I$ and all charge-free outgoing/incoming Sommerfeld profiles
$v\in H^1_{-\sigma,\loc}$.
\end{lemma}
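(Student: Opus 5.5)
We prove \eqref{eq:rn_lap_estimate} by contradiction and compactness, with Lemma~\ref{lem:rn_no_real_resonance} as the uniqueness input and Lemma~\ref{lem:app_high_angular_coercivity} (at $a=0$) to reduce to finitely many angular modes. Since $a=0$, the charge-free operator $\mathcal L_{0,Q}(\omega)$ is diagonal in vector spherical harmonics with $\ell\ge1$. Each coefficient solves the radial equation $-u_\ell''+(V_{\ell,Q}-\omega^2)u_\ell=f_\ell$ in $r_*$, with a real, short-range potential at both ends. The estimate therefore splits into a high-angular part and a finite set of one-dimensional radial problems.

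\textbf{Step 1 (high angular frequencies).} For $\omega\in I$, $V_{\ell,Q}\simeq \ell^2r^{-2}$ dominates $\omega^2$ on compact radial sets once $\ell\ge L_0(I)$. We apply Lemma~\ref{lem:app_high_angular_coercivity} on compact sets. At the two ends we use a weighted energy identity: pair with $\overline{u_\ell}$ and with $\langle r_*\rangle^{-2\sigma}\partial_{r_*}\overline{u_\ell}$. Together with the Hardy inequality of Lemma~\ref{lem:uniform_hardy_elliptic}, this gives \eqref{eq:rn_lap_estimate} for $\Pi_{\ge L}v$ with a constant uniform in $\ell$. For the end pieces we use the standard Mourre/positive-commutator estimate for $-\partial_{r_*}^2+V-\omega^2$, with conjugate operator $\frac12(r_*\partial_{r_*}+\partial_{r_*}r_*)$. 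The commutator is positive for large $|r_*|$ because $\partial_{r_*}V$ has a definite sign at both ends: it decays like $r^{-3}$ and like $e^{2\kappa_+r_*}$. The region $\omega\to0$ needs care. There we use the $\ell\ge1$ barrier, which makes the operator coercive near zero frequency exactly as in Lemma~\ref{lem:stationary_identity}; this also explains the extra $\norm{\omega v}_{L^2_{-\sigma}}$ term, which is trivially controlled once $|\omega|\le\sup I$.

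\textbf{Step 2 (finitely many modes, contradiction).} Suppose the estimate fails for some $\ell<L_0$. Then there are $\omega_n\in I$ and outgoing (or incoming) $v_n$ with $\norm{v_n}_{H^1_{-\sigma}}=1$ and $\norm{\mathcal L_{0,Q}(\omega_n)v_n}_{H^{-1}_\sigma}\to0$. Pass to a subsequence with $\omega_n\to\omega_\infty$. On compact $r_*$-intervals, ODE regularity gives $H^1$ bounds uniform in $n$, and Rellich yields local convergence to a solution $v_\infty$ of $\mathcal L_{0,Q}(\omega_\infty)v_\infty=0$. To keep the Sommerfeld condition in the limit, we represent $v_n$ through the outgoing Jost solutions $u^{\pm}_\ell(\omega)$ and the variation-of-constants formula. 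The Volterra construction makes these continuous in $\omega$, including across $\omega=0$ for $\ell\ge1$, where the solutions are regular and decaying or growing. Hence $v_\infty$ is outgoing/ingoing, and Lemma~\ref{lem:rn_no_real_resonance} forces $v_\infty=0$.

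\textbf{Step 3 (no loss of mass at the ends).} We must still show that the mass of $v_n$ does not escape to $r_*\to\pm\infty$. The end estimate of Step~1 gives $\norm{v_n}_{H^1_{-\sigma}(|r_*|>R)}\le \epsilon(R)+C\norm{v_n}_{H^1(|r_*|\le R)}+C\norm{\mathcal L v_n}$, with $\epsilon(R)\to0$. Combined with $v_n\to0$ on compacts, this contradicts the normalization. I expect this step to be the main obstacle: making the Mourre/Sommerfeld end estimate uniform in $\omega\in I$, including near $\omega=0$, where the Sommerfeld radiation condition degenerates. Near zero I would first try the Wronskian/Jost-solution representation directly, exploiting the $\ell\ge1$ centrifugal barrier. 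Finally, higher orders $k$ follow by commuting with $T$, $\Phi$ and the angular fields, which commute with the spherical operator, and by using the radial ODE to recover the $r\partial_r$ derivatives, with constants uniform for $|Q|\le\eps_QM$ by continuity in $Q$.
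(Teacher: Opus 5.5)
Your overall architecture matches the paper's proof. You decompose into $\ell\ge1$ harmonics, prove a barrier estimate uniform in $\ell$ for $\ell\ge L_0(I)$, run a compactness argument with Lemma~\ref{lem:rn_no_real_resonance} for the finitely many low modes, and use Lemma~\ref{lem:stationary_identity} at $\omega=0$. The paper phrases the low-mode step as a Fredholm alternative (free outgoing resolvent plus compact potential), which builds in the radiation condition and tail control; you use Jost solutions and a separate end estimate instead. For $\omega$ in a compact subset of $\mathbb R\setminus\{0\}$ either version works, with one correction: the multiplier $\langle r_*\rangle^{-2\sigma}\partial_{r_*}$ produces $\tfrac12\int\beta'|u'|^2$ with $\beta'<0$ for $r_*>0$. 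You want a bounded increasing $\beta$ with $\beta'\simeq\langle r_*\rangle^{-2\sigma}$, combined with the Wronskian flux identity. The genuine gap is uniformity as $\omega\to0$. You single it out, but then settle it by two remarks: that the $\ell\ge1$ barrier gives coercivity ``exactly as in Lemma~\ref{lem:stationary_identity}'', and that $\|\omega v\|_{L^2_{-\sigma}}$ is trivially controlled. These pull in opposite directions, and under either reading of $H^1_{-\sigma}$ the uniform bound near $\omega=0$ is left unproved.

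Lemma~\ref{lem:stationary_identity} is an energy-level estimate with no $\langle r_*\rangle^{\pm\sigma}$ weights, and it holds at $\omega=0$ only.

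\emph{Inhomogeneous reading.} Suppose, as your remark on the $\omega v$ term presupposes, that $\|v\|_{H^1_{-\sigma}}$ controls $\|\langle r_*\rangle^{-\sigma}v\|_{L^2}$. Then the $\omega=0$ bound is false for $1/2<\sigma<1$. Near $r_*=+\infty$ one has $V_{\ell,Q}=\ell(\ell+1)r_*^{-2}+O(r_*^{-3}\log r_*)$, so the positive Green's function (regular at the horizon, decaying at infinity) behaves like $c\,r_<^{\ell+1}r_>^{-\ell}$. Take the source $h=r_*^{-\sigma-1/2-\epsilon}$ truncated to $[R_0,R]$. Its norm $\|\langle r_*\rangle^{\sigma}h\|_{L^2}$, and hence its $H^{-1}_\sigma$ norm, is bounded uniformly in $R$. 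Yet it produces $u\gtrsim r_*^{3/2-\sigma-\epsilon}$ on $[2R_0,R]$, so $\|\langle r_*\rangle^{-\sigma}u\|_{L^2}\to\infty$ once $2\epsilon<4-4\sigma$. The horizon end, where the zero-energy problem is the free one on the line, behaves the same way. Hence no $C(I,\sigma)$ exists when $0\in I$.

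\emph{Homogeneous reading.} Suppose instead $H^1_{-\sigma}$ weights $\partial v$ by $\langle r_*\rangle^{-\sigma}$ and $v$ by $\langle r_*\rangle^{-1-\sigma}$; this is what gives the separate $\|\omega v\|_{L^2_{-\sigma}}$ term content. Then $\omega=0$ reduces to Lemma~\ref{lem:stationary_identity} by duality, but nothing in your argument handles small $\omega\neq0$. The $-\omega^2|u|^2$ term is not dominated by $|u'|^2+V|u|^2$ at the ends, outgoing solutions have infinite energy, and your Step~3 commutator constant on the energy shell is of size $\omega^2$.

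Your Jost-solution remedy fails as stated at the centrifugal end. Since $\int^\infty r_*|V_{\ell,Q}|\,dr_*=\infty$, the $e^{i\omega r_*}$-normalized Volterra construction is not continuous at $\omega=0$: the outgoing solution is $\sim(\omega r)^{-\ell}$ for $M\ll r\ll1/|\omega|$. What is missing is a low-energy matching argument:
\begin{enumerate}
\item compare with Riccati--Hankel functions and renormalize by $\omega^{\ell}$, so the limit is $\propto r^{-\ell}$;
\item show the Wronskian with the horizon Jost solution has a nonzero limit, using $V_{\ell,Q}>0$;
\item prove weighted Green's-function bounds uniformly in small $\omega$, and in $\ell$ on the high modes.
\end{enumerate}
The paper's proof makes the same appeal to the stationary estimate ``uniformly near $\omega=0$''. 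Its Fredholm step, built on the one-dimensional free kernel $\tfrac{i}{2\omega}e^{i\omega|x-y|}$, is likewise only uniform away from $\omega=0$. So this is not a departure from the paper, but as written your Steps~1--3 do not yield the stated uniform constant.
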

\begin{proof}
At $\omega=0$ this is Lemma~\ref{lem:stationary_identity}. For $\omega\ne0$,
each harmonic satisfies the one-dimensional Schr\"odinger equation
\eqref{eq:rw_stationary_model} with $\omega^2$, short-range real potential; the
no-real-resonance Lemma~\ref{lem:rn_no_real_resonance} gives a zero
homogeneous outgoing/ingoing kernel on the weighted spaces. The Fredholm
alternative (the free resolvent maps $H^{-1}_\sigma\to H^1_{-\sigma}$ for
$\sigma>1/2$, the potential is compact $H^1_{-\sigma}\to H^{-1}_\sigma$ on
bounded frequency intervals, and the kernel is zero) gives
\eqref{eq:rn_lap_estimate}, uniform in $Q$ by
smooth non-degenerate dependence of the horizon radius, surface gravity,
photon-sphere radius and potential on $Q$. Summing harmonics and the finite
commutator family closes the estimate. The Wronskian and Fredholm details are in
Section~\ref{app:lap}.
\end{proof}

\begin{lemma}
\label{lem:slowweak_resolvent_closure}
Assume the closedness statement of Lemma~\ref{lem:app_compatible_closed} and, in
the high-frequency regime, the normally hyperbolic no-defect alternative in the
resolvent-normalized form of Proposition~\ref{prop:app_high_freq}. Then, for every
fixed $k$, there are $\eps_a(k),\eps_Q(k)>0$ such that no outgoing/ingoing
charge-free compatible sequence satisfies either of the two alternatives in
\eqref{eq:master_hyp_no_defect_sequence} together with the high-frequency residual
normalization \eqref{eq:master_hyp_semiclassical_residual}.
\end{lemma}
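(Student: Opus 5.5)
The argument will be by contradiction combined with compactness in the parameters. Fix $k$ and suppose that for every choice of thresholds $\eps_a,\eps_Q$ there is an outgoing (or incoming) charge-free compatible sequence violating one of the two alternatives. A diagonal argument then gives a single sequence $v_n$ with parameters $(a_n,Q_n)$, $|a_n|/M\to0$, $|Q_n|\le\eps_Q M$, and $\omega_n$. After passing to a subsequence, the parameters converge to a subextremal limit $(0,Q_\infty)$ and the sequence obeys either \eqref{eq:master_hyp_no_defect_sequence} with $\sup_n\Lambda_n<\infty$, or the normalized residual \eqref{eq:master_hyp_semiclassical_residual} with $h_n=\Lambda_n^{-1}\to0$. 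I treat the two branches separately. In both branches, compatibility and the charge-free condition are used only on the original profiles, through the closedness of Lemma~\ref{lem:app_compatible_closed} and the weak-localization principle of Lemma~\ref{lem:compatibility_not_localized}.

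\emph{Bounded branch.} First I remove the high angular tail. Lemma~\ref{lem:app_high_angular_coercivity} gives $\|\Pi_{\ge L}v_n\|_{H^1(K)}\le o(1)+CL^{-1}$, so for $L$ large a fixed fraction of the unit $H^1(K)$ mass lies in finitely many angular modes. On that finite-dimensional angular range, the uniform coefficient bounds of Proposition~\ref{prop:principal_master} and the $H^1_{-\sigma}\to H^{-1}_\sigma$ compactness on bounded frequencies let me extract a subsequence with $\omega_n\to\omega_\infty$ and $v_n\to v_\infty$ strongly in $H^1_{\loc}$. The limit solves $\mathcal L_{0,Q_\infty}(\omega_\infty)v_\infty=0$ and has nonzero mass on $K$. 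It stays in the compatible charge-free class by closedness, and it inherits the Sommerfeld convention because the radiation traces are continuous on the local-energy class. Lemma~\ref{lem:rn_no_real_resonance} (or, at $\omega_\infty=0$, Lemma~\ref{lem:stationary_identity}) then forces $v_\infty=0$, a contradiction. If instead one wants a fixed parameter limit $(a_\infty,Q_\infty)$ without $a_n\to0$, I would use the uniform Reissner--Nordstr\"om limiting absorption estimate, Lemma~\ref{lem:rn_limiting_absorption}, perturbatively. Writing $\mathcal L_{a,Q}=\mathcal L_{0,Q}+\mathcal E$, with $\mathcal E$ of size $O(|a|/M)$ from $H^1_{-\sigma}$ to $H^{-1}_\sigma$ on the finite angular range plus compact terms, the $O(|a|/M)$ part is absorbed into the left side of \eqref{eq:rn_lap_estimate} once $\eps_a$ is small.

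\emph{Unbounded branch.} Elliptic conic packets are handled by Lemma~\ref{lem:app_high_angular_coercivity} and the semiclassical elliptic estimate. Non-trapped characteristic packets are handled by propagation and radial-point estimates at the horizon and at infinity, which move mass to where the residual controls it. Near the trapped set, the relevant input is the trapped geometry: Lemma~\ref{lem:trapping_stability} and Proposition~\ref{prop:main_body_trapped_normal_form} give a uniform expansion rate $\lambda_0$. The diagonal skew symbol vanishes on $K_{a,Q}$ by Proposition~\ref{prop:main_body_trapped_commutator_threshold}, and the matrix part stays below $\lambda_0/8$ after $\eps_a(k)$ is decreased. With these facts the hypotheses of the normally hyperbolic no-defect alternative, Proposition~\ref{prop:app_high_freq}, are met uniformly along the sequence. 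It yields $\|v_n\|_{H^1_{h_n}(K_0)}\lesssim h_n^{-1}\log(1/h_n)\|P_{h_n}v_n\|_{L^2}+\|P_{h_n}v_n\|_{H^{-1}_{h_n}}+o(1)$. The right side tends to zero by \eqref{eq:master_hyp_semiclassical_residual}, which contradicts the normalization.

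\emph{Main obstacle.} The hardest step will be the uniformity of the high-frequency estimate along a sequence of varying parameters $(a_n,Q_n)$. The escape function $G_h$ and the stable/unstable coordinates $x_u,x_s$ depend on the parameters, so I must check that the collar size $\delta$, the constant $\lambda_0$ and the symbol seminorms are uniform on the compact slow-weak set. My plan is to use the $C^1$-dependence of $K_{a,Q}$ and of its stable/unstable bundles from Lemma~\ref{lem:trapping_stability}. These allow one escape function, built for the limiting parameters, to serve for all large $n$ with constants perturbed only by $o(1)$. A secondary difficulty is preserving the radiation condition in the bounded-branch limit. For that I would use the uniform Volterra/Jost asymptotics of the radial ODE on the finite angular range, in the form of the Wronskian argument already used in Lemma~\ref{lem:rn_no_real_resonance}.
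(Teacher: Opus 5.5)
Your proposal follows essentially the same route as the paper. The steps are: a threshold contradiction forcing $a_n\to0$; in the bounded branch, removal of the high angular tail by Lemma~\ref{lem:app_high_angular_coercivity}, a compactness limit kept in the charge-free compatible Sommerfeld class by Lemma~\ref{lem:app_compatible_closed}, and exclusion by Lemma~\ref{lem:rn_no_real_resonance} or Lemma~\ref{lem:stationary_identity}; in the unbounded branch, a direct appeal to the assumed Proposition~\ref{prop:app_high_freq} (your re-check of the trapped geometry and skew thresholds is already contained in that hypothesis).

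The one step you should make explicit is the uniform $H^1$ bound on the enlarged set $K'$. Both the tail estimate and the Rellich extraction need it, although the normalization only fixes the mass on $K$. The paper takes this bound from Lemma~\ref{lem:rn_limiting_absorption}. Your optional perturbative aside does not supply it by itself: that lemma is a global weighted estimate on the Reissner--Nordstr\"om class, while your residual vanishes only locally, and the non-small compact part of $\mathcal E$ cannot be absorbed.
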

\begin{proof}
In the bounded-total-frequency branch, pass to a subsequence with
$\omega_n\to\omega_\infty$ and $Q_n\to Q_\infty$.  The coefficient expansion of
Proposition~\ref{prop:principal_master} gives convergence of the frozen operators
from $H^1_{-\sigma}(K_1)$ to $H^{-1}_{\sigma}(K_1)$.  Lemma~\ref{lem:rn_limiting_absorption},
combined with the high-angular coercivity estimate
Lemma~\ref{lem:app_high_angular_coercivity}, gives a uniform local $H^1$ bound.
Local elliptic regularity upgrades the Rellich limit to strong $H^1(K_0)$
convergence, so the limiting profile is nonzero. Lemma~\ref{lem:app_compatible_closed}
passes the compatibility, charges and Sommerfeld convention to the limit. The
limit is a Reissner-Nordstr\"om outgoing or incoming real resonance, impossible
by Lemma~\ref{lem:rn_no_real_resonance} for nonzero frequency and by
Lemma~\ref{lem:stationary_identity} at zero frequency.

In the unbounded-total-frequency branch, the angularly elliptic part is removed
by Lemma~\ref{lem:app_high_angular_coercivity}. The remaining conic
characteristic packets satisfy the residual normalization
\eqref{eq:master_hyp_semiclassical_residual}, which is the condition of
Proposition~\ref{prop:app_high_freq}. That proposition forces the local
$H^1_{h_n}$ mass to vanish, contradicting the normalization in
\eqref{eq:master_hyp_no_defect_sequence}. No defect sequence can therefore exist.
\end{proof}

\begin{lemma}
\label{lem:cutoff_mode_estimate}
Let $\Psi=e^{-i\omega t}\psi(r,\theta,\phi)$ be a finite-energy real-frequency
mode of the master system, and $\chi_T(t)=\chi(t/T)$ with $\chi=1$ on $[-1,1]$,
supported in $[-2,2]$. Then $\lim_{T\to\infty}T^{-1}\norm{\Pb_b(\chi_Te^{-i\omega
t}\psi) }_{LE^*([-2T,2T])}^2=0$, and the endpoint energies divided by $T$
converge to the mode energy.
\end{lemma}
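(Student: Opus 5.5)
The plan is to compute the cutoff commutator exactly and then read off both limits from stationarity. Since $\Psi=e^{-i\omega t}\psi$ solves $\Pb_b\Psi=0$ and $\Pb_b$ is a second-order operator with stationary coefficients, we can write
\[
        \Pb_b(\chi_T\Psi)=\chi_T\Pb_b\Psi+[\Pb_b,\chi_T]\Psi=[\Pb_b,\chi_T]\Psi .
\]
Because $\chi_T$ depends only on $t$, the commutator is a first-order operator in $\Psi$ whose coefficients carry $\chi_T'(t)=T^{-1}\chi'(t/T)$, together with a zeroth-order term with coefficient $\chi_T''=T^{-2}\chi''(t/T)$. The coefficients multiplying these factors are $g^{t\mu}_{\KN}$ and the lower-order $\partial_t$-coefficients of \eqref{eq:lower_order_master}; all of them are bounded, and in the far region they have the short-range form. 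We will work in Boyer-Lindquist $t$ away from the horizon and in the regular $\tilde t$ near it. Since $\tilde t-t$ is a function of $r$ alone, $\chi_T(\tilde t)$ yields the same structure, with $T^{-1}$ factors and bounded coefficients in regular coordinates.

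The commutator is supported in $\{T\le|t|\le2T\}$. There $|\Pb_b(\chi_T\Psi)|\le CT^{-1}(|\nabla\Psi|+|\Psi|)$ pointwise, with a far-field coefficient that matches the $r$-weights of the norms. By stationarity, $|\nabla\Psi|$ and $|\Psi|$ at time $t$ are independent of $t$ up to the phase; $\omega$-derivatives are simply multiplication by $\omega$. Integrating over the support therefore gives
\[
        \norm{\Pb_b(\chi_T\Psi)}_{LE^*}^2\le CT^{-2}\cdot T\cdot N(\psi),
\]
where $N(\psi)$ is the $LE^*$-weighted spatial integral of $|\nabla\psi|^2+(1+\omega^2)|\psi|^2$. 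Multiplying by $T^{-1}$ leaves $O(T^{-2}N(\psi))$, which tends to $0$ as $T\to\infty$ provided $N(\psi)<\infty$.

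The endpoint statement is easier. At $t=\pm2T$ the cutoff vanishes, and on the slices $|t|\le T$ the field equals $\Psi$. The energy of a mode on $\Sigma_\tau$ is $\tau$-independent by stationarity: the $N$-energy density of $e^{-i\omega t}\psi$ does not depend on $t$, because $N$ and the foliation are invariant under the flow of $T$. Hence the endpoint energies are bounded by a fixed multiple of the mode energy. If the statement is read with the slab-averaged energy, the $T^{-1}$-normalized time integral of the energy over $[-2T,2T]$ converges to the mode energy times a constant coming from $\int\chi^2$. I will set up the normalization so that this limit is exactly the mode energy, by taking the average of the energy over the plateau $[-T,T]$.

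The main obstacle is checking that $N(\psi)<\infty$, that is, that the $LE^*$ weights are dominated by finite non-degenerate energy. There are two issues.
\begin{itemize}
\item In the compact region the weight $w_{\mathrm{tr}}^{-1}$ blows up at $r_{\mathrm{ph}}$. The commutator term, however, has no second derivatives, and its coefficient near the photon sphere is bounded. I would therefore bound the compact-region piece instead by the dual pairing that only uses the $L^2$ part; equivalently, I would use that $\Pb_b(\chi_T\Psi)$ is smooth and supported where $w_{\mathrm{tr}}^{-1}$ is integrable against $|\nabla\psi|^2$ for smooth $\psi$. If this fails, I would fall back on replacing the $LE^*$ norm by an $L^2$ norm in the trapped collar, which is all that the later application needs.
\item In the far region, with weight $2^jR_\infty$, I need $\int r|\nabla\psi|^2<\infty$. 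By Lemma~\ref{lem:rn_real_axis_exclusion}-type asymptotics, a finite-energy real mode with $\omega\ne0$ has no oscillatory tail, so it decays faster; for $\omega=0$ the short-range coefficients supply the extra $r^{-1}$.
\end{itemize}
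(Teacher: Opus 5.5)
Your mechanism is the paper's: $\Pb_b(\chi_T\Psi)=[\Pb_b,\chi_T]\Psi$ is supported in $\{T\le\abs{t}\le2T\}$, carries a factor $T^{-1}$ or $T^{-2}$, and has a $t$-independent spatial profile, so the squared source norm is $O(T^{-1})\,N(\psi)$. The paper closes this step in one line (``bounded by $CT^{-1}$ times the corresponding local energy''). You correctly isolate $N(\psi)<\infty$ as the real content. However, your resolution of it fails, and with the norm of Definition~\ref{def:concrete_le_norms} the first limit cannot hold for a mode that does not vanish on the photon sphere.

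On the inner trapped collar $w_{\mathrm{tr}}=(r-r_{\mathrm{ph}}(Q))^2/M^2$, so $w_{\mathrm{tr}}^{-1}$ is not integrable in $r$ across $r_{\mathrm{ph}}(Q)$. The commutator is localized in $t$, not in $r$. On the transition region it equals $e^{-i\omega t}\bigl[\chi_T'\,(-2i\omega g^{tt}\psi+2g^{tj}\partial_j\psi+B^t\psi)+\chi_T''\,g^{tt}\psi\bigr]$, where $B^t$ collects the remaining zeroth-order coefficients of $\chi_T'$. Since $\chi'$ and $\chi''$ are linearly independent on $[1,2]$ and $g^{tt}\ne0$, the integral $\int w_{\mathrm{tr}}^{-1}\abs{\Pb_b(\chi_T\Psi)}^2\,\dd\mu_g$ is finite only if $\psi$ vanishes on $\{r=r_{\mathrm{ph}}(Q)\}$. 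Otherwise $\norm{\Pb_b(\chi_T\Psi)}_{LE^*}=\infty$ for every $T$. So your claim that $w_{\mathrm{tr}}^{-1}$ is integrable against $\abs{\nabla\psi}^2$ for smooth $\psi$ is false, and smoothness does not help. Your fallback, measuring the source on the trapped collar without the $w_{\mathrm{tr}}^{-1}$ weight (e.g.\ in $L^2$ or $L^1_tL^2_x$), is the right repair. But it changes the statement, and you must then check that the local-energy estimate into which the lemma is fed accepts that source norm; the paper's estimates all use the weighted $LE^*$.

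The far-field bullet has a similar problem. The coefficient of $\chi_T'\partial_t\Psi$ is $g^{tt}\to-1$, which is not short-range, contrary to your remark that the far-field coefficient matches the weights. The extra factor $r$ in the $LE^*$ weight must therefore come from $\psi$ itself: you need $\int r\,(\omega^2\abs{\psi}^2+\abs{\psi}^2)\,\dd\mu_{\Sigma_\tau}<\infty$, at least one power of $r$ beyond what finite energy controls.
\begin{itemize}
\item For $\omega\ne0$ this needs a Rellich-type decay/uniqueness theorem at infinity for the non-separable matrix operator. Lemma~\ref{lem:rn_real_axis_exclusion} uses the $a=0$ harmonic decomposition and does not transfer.
\item For $\omega=0$ the term $\chi_T''g^{tt}\psi$ again has a non-short-range coefficient, so your stated reason does not cover it.
\end{itemize}

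Finally, read literally, the endpoint claim is false. The energies of $\chi_T\Psi$ at $t=\pm2T$ vanish, and on plateau slices they equal the mode energy, so after division by $T$ they tend to $0$. Your plateau-average version is true but trivial by stationarity, and the paper's proof makes an equivalent reinterpretation. State explicitly which quantity you prove converges.
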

\begin{proof}
Because $\Pb_b(e^{-i\omega t}\psi)=0$, the product
$\Pb_b(\chi_Te^{-i\omega t}\psi)$ consists only of terms in which at least one
$t$-derivative falls on $\chi_T$. These terms are supported where
$T\le |t|\le2T$. A first derivative of $\chi_T$ contributes $T^{-1}$ and a
second derivative contributes $T^{-2}$. Since the mode has time-independent
local energy density, the $LE^*$ norm over a time interval of length $O(T)$ is
bounded by $CT^{-1}$ times the corresponding local energy, plus lower-order
$CT^{-2}$ terms. After multiplying by $T^{-1}$ this tends to zero. The endpoint
energies of the cutoff solution are the mode energy plus errors supported where
$\chi_T$ is not constant; divided by $T$ these errors vanish, and the normalized
endpoint averages converge to the mode energy.
\end{proof}

\begin{proposition}
\label{prop:no_real_kernel}
Suppose that the real-axis exclusion in Definition~\ref{def:slowweak_master_framework}\emph{(A3)} is available. Then every finite-energy compatible mode $\Psi=e^{-i\omega t}\Psi_\omega$ with real $\omega$ in the charge-free sector vanishes, uniformly in the chosen slow-weak range. If the bounded- and high-frequency closure statements of Lemma~\ref{lem:slowweak_resolvent_closure} have been proved for the compatible class specified by \emph{(A1)}, then Definition~\ref{def:slowweak_master_framework}\emph{(A3)} follows.
\end{proposition}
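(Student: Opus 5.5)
The statement has two parts, and I would prove them separately.

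\emph{First assertion.} Let $\Psi=e^{-i\omega t}\Psi_\omega$ be a finite-energy, charge-free, compatible mode with real $\omega$, and assume (A3). The goal is to show that $\Psi_\omega$ is a real resonance in the sense of \eqref{eq:master_hyp_resonance_equation}, so that (A3) forces it to vanish. Freezing the time dependence gives $\mathcal L_{a,Q}(\omega)\Psi_\omega=0$. Finite non-degenerate energy on $\Sigma_\tau$ puts $\Psi_\omega\in H^1_{-\sigma,\loc}$ for every $\sigma>1/2$: the energy is time independent, hence square integrable in the spatial variables, and this is stronger than the weighted local bound. It remains to verify the radiation conditions. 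For $\omega\neq0$ I would argue as in Lemma~\ref{lem:rn_real_axis_exclusion}. Finite energy makes $\Psi_\omega$ and $\omega\Psi_\omega$ square integrable toward both ends. The asymptotic form of $\mathcal L_{a,Q}(\omega)$ there is a short-range perturbation of $-\partial_{r_*}^2-\omega^2$ (near the horizon, of the version shifted by $\varpi$), so the oscillatory coefficients must vanish. Consequently $\Psi_\omega$ satisfies both the outgoing/ingoing and the incoming conditions trivially. For $\omega=0$, (A3) directly asserts that there is no finite-energy stationary charge-free compatible solution. Proposition~\ref{prop:no_stationary_kernel} also gives this case by perturbation from Lemma~\ref{lem:stationary_identity}. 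Uniformity in the slow-weak range follows because the constants in (A3) are uniform there.

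An alternative route avoids asymptotics. Apply the cutoff estimate of Lemma~\ref{lem:cutoff_mode_estimate} to $\chi_T\Psi$ together with the master estimate (M1), which follows from (A1)--(A3) via Proposition~\ref{prop:positive_commutator_estimate}. The local-energy bulk grows like $T$ times the local energy of the mode. The right-hand side is the endpoint energy plus $o(T)$, and the endpoint energies are supported where $\chi_T$ is not constant. After dividing by $T$ and letting $T\to\infty$, the local energy of the mode is zero. I would include this as a backup.

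\emph{Second assertion.} Assume the bounded- and high-frequency closures of Lemma~\ref{lem:slowweak_resolvent_closure} hold on the compatible class. I need the full content of (A3): the no-resonance statement, the uniform limiting absorption, and the no-defect property. The no-defect property \eqref{eq:master_hyp_no_defect_sequence}, together with \eqref{eq:master_hyp_semiclassical_residual}, is exactly the conclusion of that lemma. For no-resonance, suppose $v\neq0$ is an outgoing real resonance. Normalize it as $\|v\|_{H^1(K)}=1$ and take the constant sequence $v_n=v$ with fixed parameters. Its residual is zero, so it is a defect sequence of the bounded type if $\Lambda$ is bounded. A single frequency has $\Lambda=1+|\omega|+\lambda$, where the angular content of $v$ must be handled: either finitely many angular modes carry its mass, or Lemma~\ref{lem:app_high_angular_coercivity} removes the tail. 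Either way this contradicts the lemma. Uniform limiting absorption on compact $I$ then follows by the standard contradiction argument. If it fails, there are $v_n$ with $\|v_n\|_{H^1_{-\sigma}(K)}=1$ and $\|\mathcal L(\omega_n)v_n\|\to0$. By Lemma~\ref{lem:app_high_angular_coercivity} these have uniformly bounded angular frequency, so again the lemma excludes them.

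\emph{Main obstacle.} The delicate point is showing that the Sommerfeld conditions and compatibility survive the passage from a finite-energy mode, or from a limiting profile, to the resonance formulation. For this I would invoke the closedness from Lemma~\ref{lem:app_compatible_closed}, the asymptotic decomposition at both ends as described above, and Proposition~\ref{prop:compatibility_microlocal_compactness} for the charge-free condition.
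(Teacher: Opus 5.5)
Your proposal is correct and follows the paper's route. The first assertion is \emph{(A3)} applied to the mode, and you make explicit a step the paper leaves implicit: that a finite-energy mode is an admissible resonance. The second assertion applies Lemma~\ref{lem:slowweak_resolvent_closure} to the normalized profile viewed as a constant defect sequence. The paper differs only in treating $\omega=0$ through Proposition~\ref{prop:no_stationary_kernel} and in extracting just the mode exclusion, not the resonance and limiting-absorption clauses you also derive.

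Two small corrections. First, non-degenerate energy on the horizon-penetrating slices kills only the coefficient that is singular at $\mathcal H^+$, not the ingoing one, so the mode satisfies the future convention rather than both conventions trivially; the future convention is all \emph{(A3)} needs. Second, drop the backup route via \emph{(M1)}: as you cite it, it goes through Proposition~\ref{prop:positive_commutator_estimate}, which rests on Proposition~\ref{prop:compact_remainder_removal}, which itself invokes the present proposition.
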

\begin{proof}
The vanishing is precisely the real-axis exclusion in Definition~\ref{def:slowweak_master_framework}\emph{(A3)}. Conversely, suppose the resolvent closure of Lemma~\ref{lem:slowweak_resolvent_closure} holds and a nonzero real-frequency mode exists. For $\omega=0$ this contradicts the perturbative stationary argument in Proposition~\ref{prop:no_stationary_kernel}, which uses Lemma~\ref{lem:stationary_identity} and the comparison bound. For $\omega\ne0$, a nonzero finite-energy mode has nonzero local $H^1$ norm on some compact radial set $K_0$; rescaling gives a profile $v$ with $\|v\|_{H^1(K_0)}=1$ and $\mathcal L_{a,Q}(\omega)v=0$. Since both branches of Lemma~\ref{lem:slowweak_resolvent_closure} are stated for arbitrary compact sets, that lemma applies with this $K_0$ and gives a contradiction. In turn, the closure lemma is a sufficient proof route for the assumed real-axis exclusion.
\end{proof}

\subsection{Angular Reconstruction and Same-Order Reconstruction}
\label{subsec:reconstruction}
The reconstruction of the middle components from the extreme components combines
angular elliptic theory with the transport estimates already obtained. The use
of \emph{modified} middle components to close the transport system without loss
follows Benomio-Teixeira da Costa \cite{BenomioTdC}; the transport and Hodge
estimates are carried out in Section~\ref{app:reconstruction}.

\begin{lemma}
\label{lem:hodge_inversion}
Let $v$ be a complex scalar on $\Sph$ with zero mean. Then for every $s\ge0$,
\begin{equation}\label{eq:hodge_estimate}
        \norm{v}_{H^{s+1}(\Sph)}
        \le C_s\norm{\sphgrad v}_{H^s(\Sph)}.
\end{equation}
Equivalently, if $Y$ is a one-form with no harmonic part and
$\sphdiv Y=f$, $\sphcurl Y=h$, then
\begin{equation}\label{eq:hodge_estimate_oneform}
        \norm{Y}_{H^{s+1}(\Sph)}
        \le C_s\big(\norm{f}_{H^s(\Sph)}+\norm{h}_{H^s(\Sph)}\big).
\end{equation}
\end{lemma}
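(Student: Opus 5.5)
The plan is to prove \eqref{eq:hodge_estimate} by spectral decomposition on the round sphere, and then derive \eqref{eq:hodge_estimate_oneform} from the scalar estimate through the Hodge decomposition of one-forms.

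\emph{Scalar estimate.} Expand $v=\sum_{\ell\ge1}\sum_{|m|\le\ell}v_{\ell m}Y_{\ell m}$. There is no $\ell=0$ term, because $v$ has zero mean and the $\ell=0$ coefficient is exactly that mean. Take as the norm
\[
\norm{v}_{H^{s}}^2=\sum_{\ell,m}(1+\ell(\ell+1))^{s}|v_{\ell m}|^2 .
\]
Since $\sphgrad$ commutes with rotations, it intertwines $(1-\sphlap)^{s/2}$ on functions with the corresponding power of the Hodge Laplacian on one-forms. Integrating by parts then gives
\[
\norm{\sphgrad v}_{H^s}^2=\sum_{\ell,m}(1+\ell(\ell+1))^{s}\,\ell(\ell+1)|v_{\ell m}|^2 .
\]
The $\ell=0$ term is absent and $\ell\ge1$, so $\ell(\ell+1)\ge\tfrac12(1+\ell(\ell+1))$. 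Hence $\norm{v}_{H^{s+1}}^2\le2\norm{\sphgrad v}_{H^s}^2$, which is \eqref{eq:hodge_estimate} with $C_s=\sqrt2$. For non-integer $s$ we use this spectral definition directly. Any other choice of norm (for example, covariant derivatives in the regular frame) is equivalent, with constants depending on $s$.

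\emph{One-form estimate.} On $\Sph$ the first de Rham cohomology vanishes, so there are no harmonic one-forms, and every one-form $Y$ in $H^{s+1}$ can be written as
\[
Y=\sphgrad\phi+\star\sphgrad\psi ,
\]
with $\phi,\psi$ of zero mean. Then $\sphdiv Y=\sphlap\phi=f$ and $\sphcurl Y=\pm\sphlap\psi=h$. Both $f$ and $h$ are automatically mean-zero, by the divergence theorem applied to each.

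Inverting $\sphlap$ on mode $\ell\ge1$ multiplies the coefficients by $-1/(\ell(\ell+1))$. This gives
\[
\norm{\phi}_{H^{s+2}}\le C\norm{f}_{H^s},\qquad \norm{\psi}_{H^{s+2}}\le C\norm{h}_{H^s}.
\]
Because $\sphgrad$ and $\star$ are bounded from $H^{s+2}$ to $H^{s+1}$, this yields \eqref{eq:hodge_estimate_oneform}.

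The ``equivalently'' in the statement links the two estimates. For $Y=\sphgrad v$ we have $\sphcurl Y=0$ and $\sphdiv Y=\sphlap v$, so the two bounds are the same spectral inequality read on the gradient and co-gradient parts.

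\emph{Main obstacle.} The only real point is the role of the zero-mean and no-harmonic-part conditions. They remove the kernel of $\sphlap$: the constants $\ell=0$ for scalars, and the trivial $H^1_{\mathrm{dR}}(\Sph)=0$ for one-forms. This is what yields the uniform lower bound $\ell(\ell+1)\ge2$ and hence no loss of derivatives.

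For use later in the paper, these conditions are supplied by Proposition~\ref{prop:coulomb_elimination}, which gives the zero means of $\rho_{F_{\rad}}$ and $\sigma_{F_{\rad}}$. The transfer of the round-sphere constants to the slightly deformed Kerr-Newman spheres, uniformly in the slow-weak range, is a smooth perturbation of the metric on $\Sph$. That transfer is not part of this lemma, which concerns the round sphere only.
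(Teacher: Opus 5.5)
Your proof is correct and is essentially the paper's own: expansion in spherical harmonics with the $\ell=0$ mode removed by the zero-mean condition, and the spectral gap $\ell(\ell+1)\ge2$ used to compare $(1+\ell(\ell+1))^{s+1}$ with $(1+\ell(\ell+1))^{s}\ell(\ell+1)$. For one-forms you likewise use the Hodge decomposition $Y=\sphgrad\phi+{}^\star\!\sphgrad\chi$ with mean-free potentials, inversion of $\sphlap$, and one further differentiation. One small point of justification: the identity $\norm{\sphgrad v}_{H^s}^2=\sum_{\ell,m}(1+\ell(\ell+1))^s\ell(\ell+1)|v_{\ell m}|^2$ comes from the gradient intertwining the scalar Laplacian with the Hodge Laplacian on one-forms, not from rotation invariance alone; this is standard, and the paper only uses the identity up to norm equivalence.
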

\begin{proof}
For the scalar estimate expand $v=\sum_{\ell\ge1,m}v_{\ell m}Y_{\ell m}$.  Since
$\lambda_1=2$ is the first nonzero eigenvalue of $-\sphlap$ on $\Sph$,
\[
        \norm{v}_{H^{s+1}}^2\simeq
        \sum_{\ell\ge1,m}(1+\lambda_\ell)^{s+1}|v_{\ell m}|^2
        \le C_s\sum_{\ell\ge1,m}(1+\lambda_\ell)^s\lambda_\ell |v_{\ell m}|^2
        \simeq C_s\norm{\sphgrad v}_{H^s}^2.
\]
This establishes \eqref{eq:hodge_estimate}. For a one-form, write
$Y=\sphgrad\phi+{}^\star\!\sphgrad\chi$ with mean-free potentials. Then
$\sphlap\phi=\sphdiv Y$ and $\sphlap\chi=\sphcurl Y$.  The same spectral gap gives
$\norm{\phi}_{H^{s+2}}+\norm{\chi}_{H^{s+2}}
\le C_s(\norm{f}_{H^s}+\norm{h}_{H^s})$, and differentiating once gives
\eqref{eq:hodge_estimate_oneform}. For that reason, the angular inversion uses exactly one
angular derivative, the derivative already present in the $LE^1$ part of the
order-$k$ master norm after the $k$ commutations.
\end{proof}

\begin{lemma}
\label{lem:null_transport_middle}
Let $v$ solve, in a regular red-shift frame, $\nabla_4v+c_4v=f_4$,
$\nabla_3v+c_3v=f_3$, with $c_3,c_4$ and finitely many commuted derivatives
uniformly bounded and of Reissner-Nordstr\"om sign in the two ends up to
$O(|a|/M)$. If the spherical mean of $v$ vanishes and the right sides are
controlled in the master norms, then
\begin{equation}\label{eq:null_transport_estimate_middle}
        \norm{v}_{\Xnorm{k}_{\Max}}
        \le C\big(\norm{f_3}_{LE^{*,k}}+\norm{f_4}_{LE^{*,k}}+\norm{v(0)}_{\E^{(k)}}\big),
\end{equation}
$C$ uniform in the slow-weak range.
\end{lemma}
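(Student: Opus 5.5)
The plan is a two-step argument: a weighted energy estimate along each null transport direction, then an angular elliptic step that upgrades angular regularity at the same order. Commute the system by $\Gamma^I\in\mathbb D_k$ first. The commuted unknown $v_I=\Gamma^Iv$ satisfies $\nabla_4v_I+c_4v_I=f_{4,I}$ and $\nabla_3v_I+c_3v_I=f_{3,I}$. Here $f_{j,I}=\Gamma^If_j$ plus commutator terms $[\Gamma^I,\nabla_j+c_j]v$. Because $c_3,c_4$ and their commuted derivatives are bounded, these commutators involve at most $|I|$ derivatives of $v$ with bounded coefficients. The Killing fields $T,\Phi$ commute with $\nabla_3,\nabla_4$ up to $O(|a|/M)$ frame-rotation terms. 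Proceed by induction on $|I|$, as in \eqref{eq:strict_lower_order_induction}. Strictly lower-order terms are handled by the induction hypothesis. The top-order $O(|a|/M)$ terms are absorbed at the end by shrinking $\eps_a(k)$, which the constant ordering of Proposition~\ref{prop:finite_order_closure_constants} permits.

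At each order, multiply the $\nabla_4$ equation by $r^{p}\overline{v_I}$ in the far region and integrate over $\D(\tau_1,\tau_2)$. Near the horizon, do the same with the $\nabla_3$ equation multiplied by the red-shift weight. The Reissner-Nordstr\"om sign hypothesis on $c_3,c_4$ is used here: after integration by parts, the zeroth-order term $2\Re(c_j)|v_I|^2$ combines with the divergence of the weighted null generator to give a coercive bulk $\gtrsim w|v_I|^2$. The boundary contributions are the initial energy $\norm{v(0)}_{\E^{(k)}}^2$ and nonnegative fluxes through $\Hp$ and the outgoing boundary. The source is estimated by Cauchy-Schwarz against the dual weights of \eqref{eq:concrete_LEstar}: $|\langle f_{j,I},v_I\rangle|\le\delta\norm{v}_{LE}^2+C_\delta\norm{f_j}_{LE^{*,k}}^2$. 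Sharp cutoffs between the two regions are glued with a partition of unity. On the compact middle region, where neither sign is available, use the two equations together. Adding them gives a control of the timelike derivative $\nabla_3+\nabla_4$ of $v$. Integrating in $\tau$ over a finite slab yields local bounds in terms of the initial data and the sources. The compact-region error is linear in the slab length, so it is instead controlled by transporting from the far end using the $\nabla_3$ equation and the already-obtained far-field bound; this step is the main obstacle.

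So far this controls $v_I$ and its null derivatives, but not its angular derivatives at top order, which the $\Xnorm{k}_{\Max}$ norm requires. For these, use the vanishing spherical mean from Proposition~\ref{prop:coulomb_elimination} together with Lemma~\ref{lem:hodge_inversion}: $\norm{v}_{H^{s+1}(\Sph)}\le C\norm{\sphgrad v}_{H^s}$. On each sphere, $\sphgrad v$ is expressed through the Maxwell relations in terms of the extreme components and the sources. This places the angular derivatives of $v$ at the same order as the right-hand side without loss. Summing the transport, red-shift, far-field and Hodge pieces over $|I|\le k$ and absorbing the $\delta$ and $O(|a|/M)$ terms gives \eqref{eq:null_transport_estimate_middle}, with $C$ uniform in the slow-weak range.

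The expected difficulty is the compact trapped-free middle region, where the sign of $c_j$ gives no coercivity. I would first try to avoid a Gronwall loss, which would grow with $\tau_2-\tau_1$, by integrating $v$ along $e_3$ from the far region, where it is already controlled, inward to the red-shift collar. Along this finite-length path the Gr\"onwall constant is bounded by the fixed radial extent rather than by time.
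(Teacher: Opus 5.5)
There is a genuine gap at the compact-region step you flag. It cannot be closed from the lemma's hypotheses: the two transport equations, zero spherical mean and finite initial energy do not give a time-uniform bound. Here is a counterexample. Take $a=0$ and the Reissner--Nordstr\"om coefficients $c_4=\operatorname{tr}\chi=\tfrac2r e_4(r)$, $c_3=\operatorname{tr}\underline\chi=\tfrac2r e_3(r)$ (so $e_4r=f_Q$, $e_3r=-1$ in the regular normalization). Take $f_3=f_4=0$ and $v=r^{-2}g(\omega)$ with $g$ of zero mean on $\Sph$, e.g.\ $g=Y_{1m}$. Then $e_4v+c_4v=-2r^{-3}e_4(r)g+2r^{-3}e_4(r)g=0$, and likewise for $e_3$. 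The field $v$ is smooth across $\Hp$, stationary and mean-free, and all its $\mathbb D_k$-derivatives are $O(r^{-2})$, hence of finite energy. Yet the compact part $\int_{\D(0,\tau)\cap A_{\mathrm{comp}}}M^{-2}|v|^2\,\dd\mu_g$ of $\norm{v}^2_{\Xnorm{k}_{\Max}(0,\tau)}$ grows linearly in $\tau$, so \eqref{eq:null_transport_estimate_middle} fails.

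Your own identities show where the argument breaks. The far-field $e_4$ identity does give a coercive bulk $\gtrsim\int r^{p-1}|v|^2$ for $p<2$. The horizon $e_3$ identity gives one with a weight $w$ satisfying $w'>2w/r$. But each carries an inflow flux through the inner boundary $r=R$, respectively the outer boundary $r=r_H$ of the collar, and that flux is exactly the compact-region quantity you are trying to bound. So transporting along $e_3$ ``from the far region, where it is already controlled'' is circular. If instead you follow $e_3$ back to $\Sigma_0$, then $r^2v$ is transported along the ray, and the ray through a compact point at time $\tau$ meets $\Sigma_0$ at $r'\simeq\tau$. Consequently $\int_0^\infty|v|^2\,\dd\tau$ at fixed $(r,\omega)$ is controlled only by $r^{-4}\int r'^4|v(0,r',\omega)|^2\,\dd r'$, two powers of $r$ more than $\E^{(k)}$ supplies. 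The example above is exactly a datum on which this weighted norm diverges.

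What excludes the modes $r^{-2}Y_{\ell m}$, $\ell\ge1$, in the Maxwell problem is the angular equation $\mathcal D\varphi=-e_3\alpha+\dots$ of \eqref{eq:middle_transport_hodge}, which is not among the lemma's hypotheses. You do bring it in, but only to upgrade top-order angular regularity. It must be used already at order zero, because the zero-mean Poincar\'e inequality $\norm{v}_{L^2(S_{\tau,r})}\le Cr\norm{\sphgrad v}_{L^2(S_{\tau,r})}$ is the only way to control $v$ itself in the compact region. Once you do that, no integrated transport estimate is needed: Lemma~\ref{lem:hodge_inversion} controls $\varphi$ and its angular derivatives on every sphere, and the two transport equations then give $e_3\varphi$ and $e_4\varphi$ algebraically. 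The resulting bound, however, has $\norm{\alpha}_{LE^1_k}+\norm{\underline\alpha}_{LE^1_k}$ on the right. This is because $e_3\alpha$ and $e_4\underline\alpha$ are transversal null derivatives, not controlled by $\norm{f_3}_{LE^{*,k}}+\norm{f_4}_{LE^{*,k}}$. That is the form of Proposition~\ref{prop:component_middle_from_extremes}, not of \eqref{eq:null_transport_estimate_middle}.

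The paper's own short proof does not supply the missing ingredient either. Its step ``zero spherical mean \dots\ Poincar\'e on $\Sph$ and Hardy in $r$ control $r^{-2}|v|^2$'' presupposes the same control of $\sphgrad v$, and the example above satisfies all the lemma's hypotheses. The statement should therefore be reformulated with the angular constraint as a hypothesis and the extreme components on the right-hand side, rather than proved as written.
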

\begin{proof}
Multiply the first equation by $v$ and integrate along outgoing null
hypersurfaces, the second along incoming ones. The red-shift frame makes the
horizon boundary density positive; in the far field the Reissner-Nordstr\"om
transport signs give the $r^{-1}|v|^2$ contribution after passing to $rv$. The
$O(|a|/M)$ pieces are absorbed for $\eps_a$ small. Zero spherical mean removes
the non-decaying angular mode, so Poincar\'e on $\Sph$ and Hardy in $r$ control
$r^{-2}|v|^2$. Commuting with $\mathbb D_k$ and inducting on the number of
commutations closes \eqref{eq:null_transport_estimate_middle}.
\end{proof}

\begin{proposition}
\label{prop:middle_reconstruction}
Let $\Psi$ be compatible master data. Then the charge-free middle component
$\varphi=\rho_F+i\sigma_F$ is uniquely determined, with
\begin{equation}\label{eq:middle_reconstruction}
        \norm{\varphi}_{\Xnorm{k}_{\Max}}\le C\norm{\Psi}_{\Xnorm{k}_M}.
\end{equation}
\end{proposition}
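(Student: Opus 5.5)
The plan is to recover $\varphi$ from the Newman--Penrose Maxwell system \eqref{eq:np_maxwell}, treating the extreme scalars $\phi_0,\phi_2$ (equivalently $\Psi$, up to the smooth nonvanishing weights of Definition~\ref{def:master_from_teukolsky}) as given sources. I would use two complementary mechanisms: an angular Hodge inversion on each sphere, and null transport along $e_3,e_4$.

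\emph{Step 1: angular equations.} In the second and third lines of \eqref{eq:np_maxwell}, the angular derivatives $\nabla_m\phi_1$ and $\nabla_{\bar m}\phi_1$ are expressed through $\nabla_n\phi_0$ and $\nabla_l\phi_2$, plus zeroth-order spin-coefficient terms in $\phi_0,\phi_1,\phi_2$. Combining the $m$ and $\bar m$ equations gives the full angular gradient $\sphgrad\varphi$ as a first derivative of $\Psi$ plus $O(1)\varphi$, up to frame corrections that are $O(|a|/M)$.

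\emph{Step 2: elliptic inversion.} By Proposition~\ref{prop:coulomb_elimination}, $\varphi$ has zero spherical mean on every sphere. Lemma~\ref{lem:hodge_inversion} therefore gives
\[
\norm{\varphi}_{H^{s+1}(\Sph)}\le C_s\norm{\sphgrad\varphi}_{H^s(\Sph)}.
\]
The first derivatives of $\Psi$ on the right-hand side are exactly those already contained in the $LE^1$ and energy components of $\Xnorm{k}_M$, so no derivative is lost.

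\emph{Step 3: absorbing the zeroth-order $\varphi$ terms.} The coefficients $\rho,\tau,\pi,\mu$ multiplying $\phi_1$ are not small; for instance $2\rho\phi_1$ is of size $r^{-1}$. This is the main obstacle. I do not expect to absorb them by smallness alone. Instead I would follow Benomio--Teixeira da Costa and pass to the modified middle component $\tilde\varphi=\rho^{-2}\phi_1$ (the type-$D$ weight). For this variable, the first two lines of \eqref{eq:np_maxwell} become pure transport equations
\[
\nabla_4\tilde\varphi=\text{(terms in }\sphgrad\phi_0\text{ and }\phi_0),\qquad
\nabla_3\tilde\varphi=\text{(terms in }\phi_2\text{ and its angular derivatives)},
\]
up to $O(|a|/M)$ lower-order corrections. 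Lemma~\ref{lem:null_transport_middle} applies directly to $\tilde\varphi$:
\begin{itemize}
\item its spherical mean vanishes, since the weight is close to spherical for small $a$; the residual mean discrepancy is controlled by $O(|a|/M)$ times $\norm{\tilde\varphi}$ and absorbed;
\item the sources are one angular derivative of $\Psi$, bounded in $LE^{*,k}$ by $\norm{\Psi}_{\Xnorm{k}_M}$ through the $r$-weights of the LE norms and Step~2;
\item the initial energy of $\varphi$ is bounded by the Maxwell energy, hence by $\E_M$ through (A4)-type comparison on the data.
\end{itemize}

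\emph{Step 4: commuting and unwinding.} I then commute with $\mathbb D_k$. The $T$ and $\Phi$ fields commute exactly; angular fields and $r\partial_r$ produce lower-order terms handled by induction on order, as in Lemma~\ref{lem:commuted_master}. The elliptic estimate of Step~2 supplies the extra angular regularity needed to close at the top order. Undoing the smooth bounded weight $\rho^2$ then gives \eqref{eq:middle_reconstruction}.

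\emph{Step 5: uniqueness.} If two charge-free $\varphi$ share the same $\Psi$, their difference solves the homogeneous transport and Hodge system with zero mean. The difference is therefore locally constant on spheres with mean zero, hence zero.
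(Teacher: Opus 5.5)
Your proof follows the same route as the paper's. It uses zero spherical mean from the charge subtraction (Proposition~\ref{prop:coulomb_elimination}), Hodge inversion of the angular Maxwell equations (Lemma~\ref{lem:hodge_inversion}), null transport for a rescaled middle quantity in the style of Benomio--Teixeira da Costa (Lemma~\ref{lem:null_transport_middle}), absorption of $O(|a|/M)$ errors, and induction over $\mathbb D_k$.

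The one real difference is the rescaling. You take the type-$D$ weight, $\rho^{-2}\phi_1=(r-ia\cos\theta)^2\phi_1$ in the principal frame. The paper takes $r^2\varphi$ in a null frame whose angular legs are tangent to the spheres (Section~\ref{app:reconstruction}). Your choice buys more than you use. In Kinnersley form $\nabla_l\rho=\rho^2$, $\nabla_n\rho=-\mu\rho$, $\nabla_m\rho=\tau\rho$ and $\nabla_{\bar m}\rho=-\pi\rho$. Hence all four lines of \eqref{eq:np_maxwell}, angular as well as transport, say exactly that the differential of $\rho^{-2}\phi_1$ is a combination of $\phi_0,\phi_2$ and their first derivatives. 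No zeroth-order $\phi_1$ term survives anywhere, and there is no analogue of the paper's bracket $2r\,e_4(r)-r^2\operatorname{tr}\chi$, which cancels only at $a=0$. It also makes Step~5 exact. The homogeneous system forces $\rho^{-2}\delta\phi_1\equiv C$ on the whole exterior, so $\delta\phi_1=C\rho^2$ is the Kerr--Newman Coulomb mode, and charge-freeness gives $C=0$.

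The paper's frame gives the opposite trade-off: Proposition~\ref{prop:coulomb_elimination} applies there exactly. For $a\ne0$ your principal frame is not adapted to the spheres ($m$ has a $\partial_t$ component). So you get the mean of $\rho^{-2}\phi_1$ only up to an error of relative size $O(|a|/M)$. That error comes from the frame change, and involves $\phi_0,\phi_2$, as well as from the non-constant weight. It is still controlled by $\Psi$ or absorbable, as you suggest.

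The step that fails as written is the initial-data term in Step~3. You bound $\varphi(\tau_1)$ by the Maxwell energy and then by $\E_M$ ``through (A4)-type comparison''. But the bound $\E^{(k)}_{\Max}[\mathfrak R u]\le C_R\E_M^{(k)}[u]$ is part of (A4), which this proposition is used to prove via Proposition~\ref{prop:same_order_reconstruction}. Invoking it is circular. Use your Step~2 on $\Sigma_{\tau_1}$ instead. On that slice, the angular equations and the (approximately) vanishing mean bound $\Gamma^I\varphi(\tau_1)$, $|I|\le k$, by first derivatives of $\Gamma^J\Psi$, $|J|\le k$. That is bounded by $\E_M^{(k)}[\Psi](\tau_1)$ with no derivative loss.

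Smaller corrections:
\begin{itemize}
\item The transport equations for $\phi_1$ are lines one and four of \eqref{eq:np_maxwell}, not the first two. The non-small coefficients $2\rho,2\mu$ live there; the $\phi_1$-coefficients in the angular lines are $2\tau,2\pi=O(a)$.
\item For $a\ne0$ the difference $\delta\varphi$ is not locally constant on spheres; only $\rho^{-2}\delta\phi_1$ is.
\end{itemize}

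Finally, a caution that applies equally to the paper's proof. Bounding the transport sources, which are angular derivatives of $\Psi$, in $LE^{*,k}$ by $\norm{\Psi}_{\Xnorm{k}_M}$ is not automatic. In Definition~\ref{def:concrete_le_norms}, $LE^{*}$ carries the weights $w_{\mathrm{tr}}^{-1}$ and $r$, whereas $LE^1_{\mathrm{deg}}$ carries $w_{\mathrm{tr}}$ and $r^{-1}$. The energy and trapping-degenerate bulk parts of the $\varphi$-norm are more safely read off from Step~2 sphere by sphere, since those weights depend only on $r$.
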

\begin{proof}
In the regular null frame the Maxwell equations split into transport equations
for the middle component and angular equations for its gradient. In complex
notation, with $\mathcal D\varphi=\sphgrad\rho_F+{}^\star\!\sphgrad\sigma_F$, the
component identities, after collecting the lower-order connection terms, are
\begin{align}\label{eq:middle_transport_hodge}
        e_4\varphi+(\operatorname{tr}\chi)\varphi
        &= \sphdiv\alpha-i\sphcurl\alpha+C_4^0\varphi+C_4^1\cdot\alpha,
        \nonumber\\
        e_3\varphi+(\operatorname{tr}\underline\chi)\varphi
        &=-\sphdiv\underline\alpha-i\sphcurl\underline\alpha
          +C_3^0\varphi+C_3^1\cdot\underline\alpha,\\
        \mathcal D\varphi
        &=-e_3\alpha+C_3^2\cdot\alpha+C_3^3\cdot\underline\alpha+C_3^4\varphi
          = e_4\underline\alpha+C_4^2\cdot\alpha+C_4^3\cdot\underline\alpha+C_4^4\varphi.
        \nonumber
\end{align}
Here the $C_i^j$ are connection-coefficient tensors smooth in the regular
exterior; at $a=0$ they have the Reissner-Nordstr\"om signs and
$r^{-1}$ decay, and for $|a|\ll M$ their difference from the spherical
coefficients is $O(|a|M^{-1}r^{-2})$ in the symbol classes used in the energy
norms. These identities are the frame components of $\dd F=0$ and
$\dd\star_gF=0$: the first two lines are obtained by inserting one $e_4$ or
$e_3$ and two angular vectors, while the third is obtained by inserting one
angular vector and the two null vectors.

The charge-free condition supplies the zero spherical means of $\rho_F$ and
$\sigma_F$ by Proposition~\ref{prop:coulomb_elimination}. Applying
Lemma~\ref{lem:hodge_inversion} to $\rho_F$ and $\sigma_F$ on each sphere gives
\begin{equation}\label{eq:middle_angular_from_extreme}
        \norm{\varphi}_{H^{s+1}(S_{\tau,r})}
        \le C_s\norm{\mathcal D\varphi}_{H^s(S_{\tau,r})}.
\end{equation}
The last line of \eqref{eq:middle_transport_hodge} then bounds the angular part
of $\varphi$ by first derivatives of the extreme components plus lower-order
terms. Since $\alpha$ and $\underline\alpha$ are smooth nonzero weights times
the entries of $\Psi$, and since the $LE^1$ part of $\Xnorm{k}_M$ controls one
space-time derivative after each commutation in $\mathbb D_k$, the right side of
\eqref{eq:middle_angular_from_extreme} is bounded by
$C\norm{\Psi}_{\Xnorm{k}_M}+C\norm{\varphi}_{LE^0_k}$.

The first two lines of \eqref{eq:middle_transport_hodge}, after the rescaling
$\widetilde\varphi=r^2\varphi$, are exactly of the form covered by
Lemma~\ref{lem:null_transport_middle}; the source terms are angular derivatives
of $\alpha,\underline\alpha$ and the same lower-order coefficients. The
$r^{-1}$ spherical terms are controlled by Hardy and the zero-mean Poincar\'e
inequality, while the $O(|a|/M)$ rotational terms are absorbed after decreasing
$\eps_a(k)$. Thus
\[
        \norm{\varphi}_{\Xnorm{k}_{\Max}}
        \le C\norm{\Psi}_{\Xnorm{k}_M}+C\eta\norm{\varphi}_{\Xnorm{k}_{\Max}},
        \qquad \eta=|a|/M,
\]
and the last term is absorbed in the slow-weak range. This establishes
\eqref{eq:middle_reconstruction}. If two middle components correspond to the
same $\Psi$, their difference has zero mean, homogeneous angular equation and
homogeneous transport sources; the same estimate forces the difference to vanish, proving uniqueness.
\end{proof}

\begin{proposition}
\label{prop:same_order_reconstruction}
Under Proposition~\ref{prop:middle_reconstruction},
\begin{equation}\label{eq:same_order_reconstruction}
        \norm{F}_{\Xnorm{k}_{\Max}}\le C\big(\norm{\Psi}_{\Xnorm{k}_M}+\norm{\varphi}_{\Xnorm{k}_{\Max}}\big)\le C\norm{\Psi}_{\Xnorm{k}_M},
\end{equation}
and the inverse identities $\mathfrak M\mathfrak R\Psi=\Psi$,
$\mathfrak R\mathfrak M F_{\rad}=F_{\rad}$ hold on smooth charge-free solutions.
\end{proposition}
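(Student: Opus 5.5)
The plan is to read $\mathfrak R$ as an explicit assembly: given compatible master data $\Psi=(\psi_+,\psi_-)$, first undo the regular weights of Definition~\ref{def:master_from_teukolsky} to recover the extreme components $\alpha,\underline\alpha$. Then take the middle component $\varphi=\rho_F+i\sigma_F$ produced by Proposition~\ref{prop:middle_reconstruction}. Finally set $\mathfrak R\Psi=F$, the two-form whose regular null-frame components are $(\alpha,\underline\alpha,\rho_F,\sigma_F)$. The first inequality in \eqref{eq:same_order_reconstruction} then comes from the frame equivalence of Lemma~\ref{lem:frame_energy_equiv}, applied componentwise and after each commutation in $\mathbb D_k$. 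The $\Xnorm{k}_{\Max}$ norm is built in Definition~\ref{def:concrete_le_norms} from the null-frame components of $\Gamma^IF$, so it suffices to bound these components.

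The extreme components $\alpha,\underline\alpha$ are smooth nonvanishing weights times $\psi_\pm$. For them I will expand $\Gamma^I(w_\pm\psi_\pm)$ by Leibniz. The weight derivatives are bounded with the short-range decay recorded in \eqref{eq:lower_order_master}, so each term is controlled by $\norm{\Psi}_{\Xnorm{k}_M}$ in the energy, local-energy and $r^p$ pieces alike. The one point that needs care is the infinity weight: the $r^p$ flux of $\alpha$ must match the $r^p$ flux of $r\psi$. I would fix this by the same $r$-rescaling conventions used in Proposition~\ref{prop:rp_identity}. The middle component is bounded by \eqref{eq:middle_reconstruction}. Adding the two bounds gives the first inequality, and substituting Proposition~\ref{prop:middle_reconstruction} gives the second.

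For the inverse identities I argue on smooth charge-free solutions. For $\mathfrak R\mathfrak M F_{\rad}=F_{\rad}$, note that $\mathfrak MF_{\rad}$ retains exactly the extreme components of $F_{\rad}$. The middle component of $F_{\rad}$ satisfies the system \eqref{eq:middle_transport_hodge} with these sources, and its spherical means vanish by Proposition~\ref{prop:coulomb_elimination}. The uniqueness part of Proposition~\ref{prop:middle_reconstruction} therefore identifies it with the reconstructed $\varphi$, and all components agree. For $\mathfrak M\mathfrak R\Psi=\Psi$, take $\Psi$ in the smooth range $\mathcal C^{(k)}_{M,\mathrm{sm}}$, so $\Psi=\mathfrak MG$ for some smooth charge-free Maxwell solution $G$. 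Then the previous identity gives $\mathfrak R\Psi=G$, and hence $\mathfrak M\mathfrak R\Psi=\mathfrak MG=\Psi$. Both identities then extend to the closed compatible class by density and the bound \eqref{eq:same_order_reconstruction}.

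I expect the main obstacle to be checking that the $F$ assembled from $(\alpha,\underline\alpha,\varphi)$ actually solves Maxwell's equations when $\Psi$ is merely compatible. Each proof avoids this: $\mathfrak R\mathfrak M F_{\rad}=F_{\rad}$ starts from a genuine solution $F_{\rad}$, and $\mathfrak M\mathfrak R\Psi=\Psi$ uses Definition~\ref{def:master_from_teukolsky}, which realizes every smooth element of the range as $\mathfrak M G$ for a genuine solution $G$. Compatible limits in the closure are then handled only through the estimate and continuity, never by an independent solvability argument.
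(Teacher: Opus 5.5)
Your proposal is correct. The estimate is the paper's own argument: the algebraic null-frame decomposition, smooth weights on the extreme components, and Proposition~\ref{prop:middle_reconstruction} for $\varphi$. You add more care about the Leibniz terms and the $r$-weight at infinity, which the paper simply folds into ``smooth nonzero weights''.

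For the inverse identities you take a different route from the paper.
\begin{itemize}
\item The paper proves $\mathfrak R\mathfrak M F_{\rad}=F_{\rad}$ by uniqueness of the Maxwell Cauchy problem (Proposition~\ref{prop:finite_energy_wellposed}). It asserts that the reconstruction is a charge-free solution with the same Cauchy data as $F_{\rad}$. It then states $\mathfrak M\mathfrak R\Psi=\Psi$ directly, on the grounds that ``the master equations are derivatives of the Maxwell system''.
\item You identify the middle component of $F_{\rad}$ with the reconstructed $\varphi$ through the uniqueness clause of Proposition~\ref{prop:middle_reconstruction}. Both solve \eqref{eq:middle_transport_hodge} with the same extreme sources and both have zero spherical means, so the two-forms agree componentwise.
\item You then deduce $\mathfrak M\mathfrak R\Psi=\Psi$ from the first identity. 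This uses only that smooth compatible data have the form $\mathfrak M G$, by Definition~\ref{def:master_from_teukolsky}.
\end{itemize}

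What your route buys: you never need to know that the assembled two-form $\mathfrak R\Psi$ solves Maxwell's equations, or that its initial data match those of $F_{\rad}$. The paper's Cauchy-uniqueness argument takes both for granted. The cost is that $\mathfrak M\mathfrak R\Psi=\Psi$ is obtained only on the range of $\mathfrak M$ and its closure. That closure is exactly the compatible class, so nothing is lost. The paper's route is the natural one only if no uniqueness statement for the middle-component system were available.

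One small point concerns extending the identities to the closure. There you also need continuity of $\mathfrak M$, which is the algebraic reweighting of the extreme components, and not only the bound \eqref{eq:same_order_reconstruction}. The statement itself only asks for smooth solutions, so this does not affect the proof.
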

\begin{proof}
The frame decomposition $F=\rho_Fe^3\wedge e^4+\sigma_Fe^1\wedge e^2+\sum_A
\alpha_Ae^A\wedge e^4+\sum_A\underline\alpha_Ae^A\wedge e^3$ is algebraic; the
extreme components are part of $\Psi$ up to smooth nonzero weights and the middle
component is controlled by Proposition~\ref{prop:middle_reconstruction}, with the
rescaled frame smooth and uniformly invertible, giving
\eqref{eq:same_order_reconstruction}. The operator $\mathfrak R$ solves
\eqref{eq:middle_transport_hodge} with the zero means fixed by charge
subtraction and assembles the two-form; applying $\mathfrak M$ recovers the
extreme master variables since the master equations are derivatives of the
Maxwell system. Conversely, reconstructing from $\mathfrak M F_{\rad}$ gives a
charge-free solution with the same Cauchy data for all frame components, equal to
$F_{\rad}$ by Proposition~\ref{prop:finite_energy_wellposed}.
\end{proof}

\subsection{Perturbative Closure in the Slow-Weak Regime}
\label{subsec:perturbative_closure}
\begin{proposition}
\label{prop:absorption_small}
Let $P_0$ admit $\norm{u}_X^2\le C_0(\norm{u(0)}_E^2+\norm{P_0u}_Y^2)$. If
$P_b=P_0+\mathcal Q_b$ with
\begin{equation}\label{eq:small_operator_bound}
        \norm{\mathcal Q_bu}_Y\le\delta\norm{u}_X+C_\delta\norm{u}_{X_{\mathrm{low}}},
\end{equation}
and the lower norm is controlled by the model data, then the same estimate holds
for $P_b$ whenever $C_0\delta^2<1/4$.
\end{proposition}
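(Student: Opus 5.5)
The plan is a one-line absorption, with the only real care going into the lower-order term. Let $u$ solve $P_bu=f$; then $P_0u=f-\mathcal Q_bu$. Inserting this into the model estimate and using $(x+y)^2\le 2x^2+2y^2$ gives
\[
        \norm{u}_X^2\le C_0\norm{u(0)}_E^2+2C_0\norm{f}_Y^2+2C_0\norm{\mathcal Q_bu}_Y^2 .
\]
Next I expand $\norm{\mathcal Q_bu}_Y^2$ using \eqref{eq:small_operator_bound}:
\[
        \norm{\mathcal Q_bu}_Y^2\le 2\delta^2\norm{u}_X^2+2C_\delta^2\norm{u}_{X_{\mathrm{low}}}^2 .
\]
With this, the top-order contribution is $4C_0\delta^2\norm{u}_X^2$. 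The hypothesis $C_0\delta^2<1/4$ makes its coefficient strictly less than $1$. Under the a priori assumption $\norm{u}_X<\infty$, this term can be moved to the left-hand side. The result is
\[
        (1-4C_0\delta^2)\norm{u}_X^2\le C_0\norm{u(0)}_E^2+2C_0\norm{P_bu}_Y^2+4C_0C_\delta^2\norm{u}_{X_{\mathrm{low}}}^2 .
\]

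The step needing care is the phrase ``the lower norm is controlled by the model data.'' I will read it as the induction hypothesis used in Proposition~\ref{prop:finite_order_closure_constants}. That is, $\norm{u}_{X_{\mathrm{low}}}^2\le C_{\mathrm{low}}(\norm{u(0)}_E^2+\norm{P_bu}_Y^2)$, with $C_{\mathrm{low}}$ fixed before $\delta$ is chosen. This bound comes either from the order-$(j-1)$ estimate already closed, or from the compact Fredholm removal of Proposition~\ref{prop:compact_remainder_removal}.

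Substituting this bound yields $\norm{u}_X^2\le C_b(\norm{u(0)}_E^2+\norm{P_bu}_Y^2)$. The constant is
\[
        C_b=\frac{C_0+2C_0+4C_0C_\delta^2C_{\mathrm{low}}}{1-4C_0\delta^2}.
\]
The order of choices matters here: $\delta$ is fixed first, which fixes $C_\delta$, and $C_{\mathrm{low}}$ does not depend on $\delta$. Because of this, there is no circularity, and no smallness of $C_\delta$ is needed.

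Finally, I need to justify the a priori finiteness of $\norm{u}_X$. I will first apply the argument to smooth compactly supported solutions on finite slabs, where finiteness holds by Proposition~\ref{prop:finite_energy_wellposed}. Since the constants are uniform in the slab, I then pass to general data by density.
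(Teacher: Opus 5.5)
Your proposal is correct and follows the paper's proof: you substitute $P_0u=P_bu-\mathcal Q_bu$ into the model estimate, expand $\norm{\mathcal Q_bu}_Y^2$ via \eqref{eq:small_operator_bound}, absorb the top-order term, and control the lower norm by the induction or compact/no-kernel argument. Your absorbed coefficient $4C_0\delta^2$ is the correct one (the paper's display shows $2C_0\delta^2$, dropping a factor of two), and it is exactly what makes the stated threshold $C_0\delta^2<1/4$ sufficient; your finite-slab finiteness and density step is a detail the paper leaves implicit.
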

\begin{proof}
Apply the model estimate with $P_0u=P_bu-\mathcal Q_bu$:
$\norm{u}_X^2\le C_0\norm{u(0)}_E^2+2C_0\norm{P_bu}_Y^2+2C_0\norm{\mathcal Q_bu}_Y^2
\le C(\norm{u(0)}_E^2+\norm{P_bu}_Y^2+\norm{u}_{X_{\mathrm{low}}}^2)+2C_0\delta^2\norm{u}_X^2$,
using \eqref{eq:small_operator_bound} and $(\delta a+b)^2\le2\delta^2a^2+2b^2$.
With $2C_0\delta^2<1/2$ the last term is absorbed; the lower norm is controlled
by induction or by the no-kernel argument.
\end{proof}

\begin{lemma}
\label{lem:finite_order_absorption_induction}
Let \(X_j(\tau_1,\tau_2)\) denote the order-\(j\) master spacetime norm squared,
including the red-shift energy, the trapped-set-degenerate Morawetz bulk and the
far-field \(r^p\) fluxes. Let \(E_j(\tau_1)\) be the corresponding initial energy
and let \(F_j\) be an external source norm through order \(j\); for homogeneous
solutions \(F_j=0\). Suppose that, for \(0\le j\le k\), the commuted estimates
have the form
\begin{equation}\label{eq:finite_order_induction_ineq}
        X_j\le A_j+b_jX_j+\eta_jX_j+d_jX_{j-1},\qquad X_{-1}=0,
\end{equation}
where
\[
        A_j=C_j\big(E_j(\tau_1)+F_j\big),
        \qquad 0\le b_j\le C_j|a|/M,
\]
and where \(\eta_j>0\) is the small constant introduced in the strict lower-order
commutator estimate \eqref{eq:strict_lower_order_induction}. If
\begin{equation}\label{eq:finite_order_smallness_choice}
        b_j+\eta_j\le \frac14\qquad (0\le j\le k),
\end{equation}
then
\begin{equation}\label{eq:finite_order_induction_conclusion}
        X_j\le C_{j,k}\sum_{i=0}^{j}\big(E_i(\tau_1)+F_i\big),
\end{equation}
with constants depending only on the finite order, the model constants and the
chosen compact parameter set. In particular no derivative beyond order \(j\) is
used to close the order-\(j\) estimate.
\end{lemma}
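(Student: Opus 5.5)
The plan is a direct induction on $j$, using only the algebraic form of \eqref{eq:finite_order_induction_ineq} and the smallness choice \eqref{eq:finite_order_smallness_choice}. No analytic input beyond these displayed inequalities is needed. The one point to check is that every quantity on the right is finite before absorbing. This holds because $X_j$ is evaluated on smooth compactly supported compatible solutions on a finite slab $\D(\tau_1,\tau_2)$, so $X_j<\infty$ by Proposition~\ref{prop:finite_energy_wellposed}. Absorption of $(b_j+\eta_j)X_j$ into the left side is therefore legitimate.

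For the base case $j=0$, we have $X_{-1}=0$. Then \eqref{eq:finite_order_induction_ineq} gives $X_0\le A_0+\tfrac14X_0$, hence $X_0\le\tfrac43A_0=\tfrac43C_0(E_0(\tau_1)+F_0)$. For the inductive step, assume \eqref{eq:finite_order_induction_conclusion} holds at order $j-1$. Subtracting $(b_j+\eta_j)X_j\le\tfrac14X_j$ gives
\[
X_j\le\tfrac43\bigl(A_j+d_jX_{j-1}\bigr)\le\tfrac43C_j\bigl(E_j(\tau_1)+F_j\bigr)+\tfrac43d_jC_{j-1,k}\sum_{i=0}^{j-1}\bigl(E_i(\tau_1)+F_i\bigr).
\]
We then set $C_{j,k}=\tfrac43\max\{C_j,\,d_jC_{j-1,k}\}$, which yields \eqref{eq:finite_order_induction_conclusion}. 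The recursion terminates after $k+1$ steps, so $C_{j,k}$ depends only on $\{C_i,d_i\}_{i\le j}$. These in turn depend only on the finite order, the model constants and the compact parameter set, as fixed in the ordering \eqref{eq:finite_order_constant_order}.

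The only point needing care, and the main obstacle, is the order of constant choices. The coefficient $d_j=C_{j,\eta_j}$ from \eqref{eq:strict_lower_order_induction} depends on $\eta_j$, so $\eta_j$ must be fixed first, say $\eta_j=\tfrac18$. After that, $\eps_a(k)$ is chosen so that $C_j|a|/M\le\tfrac18$ for all $j\le k$, which gives $b_j\le\tfrac18$ and hence \eqref{eq:finite_order_smallness_choice}. Since $d_j$ multiplies only the lower-order quantity $X_{j-1}$, its possibly large size never enters an absorption step. Finally, the induction at order $j$ invokes \eqref{eq:finite_order_induction_ineq} only at orders $i\le j$, so no derivative beyond order $j$ is used, as claimed.
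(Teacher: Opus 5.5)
Your proof is correct and is essentially the paper's own argument. You absorb $(b_j+\eta_j)X_j\le\frac14X_j$ to get $X_j\le\frac43(A_j+d_jX_{j-1})$, start at $j=0$ where $X_{-1}=0$, substitute the order-$(j-1)$ bound, and fix $\eta_j$ before $\eps_a(k)$, so that $d_j=C_{j,\eta_j}$ only ever multiplies the lower-order term. Your remark that $X_j<\infty$ must be known before absorbing is a point the paper leaves implicit; note, however, that Proposition~\ref{prop:finite_energy_wellposed} controls only the energy-type pieces, so finiteness of the weighted $r^p$ fluxes should instead be credited to the smoothness and support properties of the solutions on the finite slab.
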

\begin{proof}
Move the two absorbable terms in \eqref{eq:finite_order_induction_ineq} to the
left. By \eqref{eq:finite_order_smallness_choice},
\begin{equation}\label{eq:finite_order_one_step}
        X_j\le \frac43 A_j+\frac43 d_jX_{j-1}.
\end{equation}
For \(j=0\) this gives \(X_0\le (4/3)A_0\), because the strict lower-order term is
absent at order zero. Assume the conclusion has been proved through order
\(j-1\). Substituting the inductive claim into
\eqref{eq:finite_order_one_step} gives
\[
        X_j\le \frac43 C_j(E_j+F_j)+\frac43d_jC_{j-1,k}\sum_{i=0}^{j-1}(E_i+F_i)
        \le C_{j,k}\sum_{i=0}^{j}(E_i+F_i).
\]
The assertion follows by induction. The constants are finite because only
finitely many commutators \(\Gamma^I\), \(|I|\le k\), and finitely many symbol
seminorms enter. The choice of \(\eps_a(k)\) in
Proposition~\ref{prop:constant_choice} makes \(b_j\le1/8\), while
\(\eta_j\le1/8\) is fixed in \eqref{eq:strict_lower_order_induction}; therefore the
smallness condition is precisely the finite-order slow-rotation smallness used in
the transfer theorem.
\end{proof}

\begin{proposition}
\label{prop:complete_master_closure_algebra}
Assume the structural comparison \eqref{eq:master_hyp_perturbation}, the commutator estimate
\eqref{eq:strict_lower_order_induction}, the Reissner-Nordstr\"om model estimate
\eqref{eq:rn_model_morawetz}, the red-shift estimate \eqref{eq:redshift_coercivity}, the
far-field hierarchy \eqref{eq:rp_identity}, and the compact-remainder removal
\eqref{eq:compact_term_removed}. Let \(f=\Pb_bu\). For each commuted order
\(0\le j\le k\), set
\begin{equation}\label{eq:complete_closure_definitions}
        X_j=\|u\|_{\Xnorm{j}_M(\tau_1,\tau_2)}^2,
        \qquad
        E_j=\E_M^{(j)}[u](\tau_1),
        \qquad
        F_j=\sum_{|I|\le j}\|\Gamma^If\|_{LE^*([\tau_1,\tau_2])}^2.
\end{equation}
Then there are constants \(A_j,D_j\), independent of \(a\) in the slow-weak range,
and a coefficient \(\mu_j(a)\le A_j|a|/M\), such that the raw commuted physical-space
estimate has the form
\begin{equation}\label{eq:complete_raw_closure}
        X_j\le A_j(E_j+F_j)+\mu_j(a)X_j
        +\eta_jX_j+D_jX_{j-1}+K_j,\qquad X_{-1}=0,
\end{equation}
where \(K_j\) is the compact local remainder and \(\eta_j\) is the small constant
chosen in \eqref{eq:strict_lower_order_induction}. Moreover, for every
\(\theta>0\),
\begin{equation}\label{eq:complete_compact_remainder}
        K_j\le \theta X_j+C_{j,\theta}(E_j+F_j+X_{j-1}).
\end{equation}
Thus, after choosing \(\theta\), then \(\eta_j\), and then \(\eps_a(k)\)
so that
\begin{equation}\label{eq:complete_absorption_choices}
        \mu_j(a)+\eta_j+\theta\le \frac14
        \qquad(0\le j\le k),
\end{equation}
one has the finite-order estimate
\begin{equation}\label{eq:complete_master_closure_estimate}
        X_j\le C_{j,k}\sum_{i=0}^j(E_i+F_i),\qquad 0\le j\le k.
\end{equation}
In particular, if \(u\) solves the homogeneous compatible master equation
\(\Pb_bu=0\), then
\begin{equation}\label{eq:complete_homogeneous_master_closure}
        \|u\|_{\Xnorm{k}_M(\tau_1,\tau_2)}^2
        \le C_k\E_M^{(k)}[u](\tau_1).
\end{equation}
No estimate at order \(j\) uses a norm of order higher than \(j\).
\end{proposition}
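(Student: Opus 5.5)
The proof assembles the displayed ingredients into the raw inequality \eqref{eq:complete_raw_closure}, bounds the compact remainder by \eqref{eq:complete_compact_remainder}, and then applies the abstract induction of Lemma~\ref{lem:finite_order_absorption_induction}.

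\emph{Step 1: the raw commuted estimate.} At order $j$ we commute $\Pb_b u=f$ with each $\Gamma^I$, $|I|=j$, using Lemma~\ref{lem:commuted_master}. We write $\Pb_b\Gamma^Iu=P_{\RN,Q}\Gamma^Iu+\mathcal E_{a,Q}\Gamma^Iu$ by \eqref{eq:master_hyp_perturbation}. The Reissner-Nordstr\"om estimate \eqref{eq:rn_model_morawetz} is then applied to $\Gamma^Iu$, with source $\Gamma^If+[\Pb_b,\Gamma^I]u-\mathcal E_{a,Q}\Gamma^Iu$. The red-shift collar is glued in via \eqref{eq:redshift_coercivity} and the far region via \eqref{eq:rp_identity}, using a fixed partition of unity. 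The cutoff errors are compactly supported, so they go into $K_j$. The source terms are estimated as follows.
\begin{itemize}
\item $\Gamma^If$ gives $F_j$.
\item $\mathcal E_{a,Q}\Gamma^Iu$ is bounded by \eqref{eq:master_hyp_error_bound}. This produces $C(|a|/M)^2X_j$, which defines $\mu_j(a)$ after using $(|a|/M)^2\le|a|/M$. It also produces a compact zeroth-order piece that goes into $K_j$, and an $LE^0_{\mathrm{low},j}$ term bounded by $X_{j-1}$.
\item The strict commutator part is bounded by \eqref{eq:strict_lower_order_induction}, giving $\eta_jX_j+C_{j,\eta}X_{j-1}$.
\end{itemize}
Summing over $|I|\le j$ gives \eqref{eq:complete_raw_closure}, with $A_j$ and $D_j$ depending only on the model constants and the fixed cutoffs. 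They are therefore uniform in $a$.

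\emph{Step 2: the compact remainder.} This is the main obstacle. My plan is to invoke Proposition~\ref{prop:compact_remainder_removal} with $\delta=\theta$. That proposition controls $\|u\|^2_{LE^0_{\comp,j}}$ by three quantities: $\theta\|u\|^2_{LE^1_{\mathrm{deg},j}}$, the source norm, and the endpoint energies. The first is at most $\theta X_j$. The source norm $\sum\|\Pb_b\Gamma^Iu\|^2_{LE^*}$ has to be re-expanded through the commutators. That gives $F_j$, plus commutator terms bounded by $X_{j-1}$, plus a further $\eta X_j$ that is absorbed into $\theta$.

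The endpoint energy at $\tau_2$ is the delicate point, because it is not part of $E_j$. I would bound it by $\sup_s\E^{(j)}_M(s)\le X_j$ and absorb it. This requires the constant in front of it to be small. If it is not, I would run the defect argument of Lemma~\ref{lem:localized_compact_defect} directly with a normalization in which endpoint energies appear with a small prefactor. The argument would use a contradiction sequence together with the bounded and high-frequency exclusions in Propositions~\ref{prop:no_real_kernel} and~\ref{prop:app_high_freq}.

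This yields \eqref{eq:complete_compact_remainder}. The key feature is that its constants depend on $j$ and $\theta$ only. They are independent of $a$, because the exclusion statements hold uniformly on the slow-weak compact set.

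\emph{Step 3: absorption and induction.} Substituting \eqref{eq:complete_compact_remainder} into \eqref{eq:complete_raw_closure} gives
\[
X_j\le (A_j+C_{j,\theta})(E_j+F_j)+(\mu_j(a)+\eta_j+\theta)X_j+(D_j+C_{j,\theta})X_{j-1}.
\]
The parameters are chosen in the order stated:
\begin{itemize}
\item first $\theta\le1/12$;
\item then $\eta_j\le1/12$, which fixes $C_{j,\eta}$;
\item finally $\eps_a(k)$ small, so that $\mu_j(a)\le A_j\eps_a\le1/12$ for all $j\le k$.
\end{itemize}
This is consistent with Proposition~\ref{prop:finite_order_closure_constants}, because $\theta$ and $\eta_j$ do not depend on $a$.

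Lemma~\ref{lem:finite_order_absorption_induction} then gives \eqref{eq:complete_master_closure_estimate}, starting from $X_{-1}=0$. At each order only norms of order at most $j$ appear. For a homogeneous solution $F_i=0$, and since $\E^{(i)}_M\le\E^{(k)}_M$ this yields \eqref{eq:complete_homogeneous_master_closure}.
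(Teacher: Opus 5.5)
Your proposal follows the paper's proof essentially step for step. You apply the Reissner--Nordstr\"om model estimate to each $\Gamma^Iu$ with the source split $\Gamma^If+[\Pb_b,\Gamma^I]u-(\Pb_b-P_{\RN,Q})\Gamma^Iu$, remove the compact term through Proposition~\ref{prop:compact_remainder_removal}, absorb in the order $\theta$, $\eta_j$, $\eps_a(k)$, and induct upward.

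The one point where you hesitate is a genuine issue. $C_\theta\,\E_M^{(j)}[u](\tau_2)$ cannot be absorbed as $C_\theta X_j$, because $C_\theta$ is large. Likewise, the re-expanded commutator contributes $C_\theta\eta X_j$, so there you need an auxiliary $\eta'\le\theta/C_\theta$, which \eqref{eq:strict_lower_order_induction} allows. So only your fallback works: running the defect contradiction directly on \eqref{eq:complete_compact_remainder}. That is exactly what the paper does, without discussing the $\tau_2$ endpoint separately.
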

\begin{proof}
Apply the Reissner-Nordstr\"om model estimate \eqref{eq:rn_model_morawetz} to each
commuted field \(\Gamma^Iu\), \(|I|\le j\), after writing
\begin{equation}\label{eq:complete_commuted_source_split}
        P_{\RN,Q}\Gamma^Iu
        =\Gamma^If+[\Pb_b,\Gamma^I]u-(\Pb_b-P_{\RN,Q})\Gamma^Iu .
\end{equation}
The first term contributes \(F_j\). The perturbation term
\((\Pb_b-P_{\RN,Q})\Gamma^Iu\) is bounded by the structural estimate
\eqref{eq:master_hyp_error_bound}; after Cauchy's inequality and summation over
\(|I|\le j\) it contributes \(\mu_j(a)X_j\), with
\(\mu_j(a)\le A_j|a|/M\), plus terms of strict lower order. The commutator term
is estimated by Lemma~\ref{lem:commuted_master} and
\eqref{eq:strict_lower_order_induction}: for the chosen Cauchy parameter
\(\eta_j\),
\begin{equation}\label{eq:complete_commutator_bound}
        \sum_{|I|\le j}\|[\Pb_b,\Gamma^I]u\|_{LE^*}^2
        \le \eta_jX_j+D_jX_{j-1},
\end{equation}
with the convention \(X_{-1}=0\). The horizon and far-field pieces of the model
norm are replaced by the Kerr-Newman red-shift and \(r^p\) currents,
\eqref{eq:redshift_coercivity} and \eqref{eq:rp_identity}; the trapped part is first estimated by the raw commutator inequality
\eqref{eq:raw_positive_commutator_estimate}. The only unabsorbed term at this
stage is the compact local remainder, denoted by \(K_j\).
This establishes \eqref{eq:complete_raw_closure}.

The compact term is removed by Proposition~\ref{prop:compact_remainder_removal}.
In its quantitative form, a failure of \eqref{eq:complete_compact_remainder} would
produce the bounded- or unbounded-frequency defect sequence of
Lemma~\ref{lem:localized_compact_defect}; the bounded branch is ruled out by the
real-axis limiting-absorption alternative, and the unbounded branch is ruled out by
the normally hyperbolic estimate after the geometric verification of
Proposition~\ref{prop:nh_theorem_application}. Hence
\eqref{eq:complete_compact_remainder} holds for every \(\theta>0\).

Substitute \eqref{eq:complete_compact_remainder} into
\eqref{eq:complete_raw_closure} and move the terms in
\eqref{eq:complete_absorption_choices} to the left. We obtain
\begin{equation}\label{eq:complete_one_step}
        X_j\le \frac43\Big((A_j+C_{j,\theta})(E_j+F_j)
        +(D_j+C_{j,\theta})X_{j-1}\Big).
\end{equation}
The case \(j=0\) gives \(X_0\le C(E_0+F_0)\). Inductively substituting the
already obtained bounds for \(X_0,\ldots,X_{j-1}\) proves
\eqref{eq:complete_master_closure_estimate}. If \(\Pb_bu=0\), then \(f=0\) and
all external source terms \(F_i\) vanish. The commutator terms do not vanish; they
are precisely the terms estimated in \eqref{eq:complete_commutator_bound} and have
already been absorbed or passed to lower order in the induction. Since
\(E_i\le C E_k\) for \(i\le k\) by the definition of the commuted energy,
\eqref{eq:complete_homogeneous_master_closure} follows. The induction is upward
in \(j\), so no derivative above the current order is used.
\end{proof}

\begin{corollary}
\label{cor:perturbative_closure}
Suppose that conditions \emph{(A1)-(A3)} of Definition~\ref{def:slowweak_master_framework} are available. Then the master estimate \eqref{eq:framework_master_estimate} holds uniformly for the slow-weak parameter range fixed by Proposition~\ref{prop:constant_choice}.
\end{corollary}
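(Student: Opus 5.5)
The plan is to assemble the master estimate \eqref{eq:framework_master_estimate} from the closure machinery of this section, working first on smooth, compactly supported compatible solutions and then passing to the general case by density.

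\emph{Step 1: set up the order-by-order estimate.} Let $u=\mathfrak M G$ be a smooth compatible solution of $\Pb_bu=0$, which (A1) provides. Proposition~\ref{prop:principal_master} and Corollary~\ref{cor:structural_reduction_verify} supply the Reissner-Nordstr\"om comparison (A2), including the error bound \eqref{eq:master_hyp_error_bound} and the strict commutator estimate \eqref{eq:strict_lower_order_induction}. I would feed these, together with Lemma~\ref{lem:rn_model_morawetz}, Propositions~\ref{prop:redshift_coercivity} and~\ref{prop:rp_identity}, and the raw trapped estimate of Lemma~\ref{lem:raw_trapped_morawetz}, into Proposition~\ref{prop:complete_master_closure_algebra}. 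With $f=0$, this yields the raw inequality \eqref{eq:complete_raw_closure} at each order $j\le k$, with a compact remainder $K_j$.

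\emph{Step 2: remove the compact remainder.} This is the main obstacle, and it is exactly where (A3) enters. My first attempt would be to verify the hypotheses of Proposition~\ref{prop:compact_remainder_removal}:
\begin{itemize}
\item the bounded-total-frequency no-defect statement, via Proposition~\ref{prop:no_real_kernel} and Lemma~\ref{lem:slowweak_resolvent_closure}, which (A3) guarantees;
\item the conic high-frequency alternative in the resolvent-normalized form \eqref{eq:master_hyp_semiclassical_residual};
\item the requirement that the defect profiles stay in the closed compatible class, by Proposition~\ref{prop:compatibility_microlocal_compactness}.
\end{itemize}
Together these give \eqref{eq:complete_compact_remainder} for every $\theta>0$. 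The key point to check is that the constants remain uniform over the whole slow-weak parameter set. This is why (A3) is phrased for sequences with varying $(a_n,Q_n)$ converging to a slow-weak limit, and I would use that formulation in the contradiction argument.

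\emph{Step 3: close the estimate.} Choose the constants in the order of Proposition~\ref{prop:finite_order_closure_constants}: first $\theta$, then $\eta_j$, then $\eps_a(k)$, so that \eqref{eq:complete_absorption_choices} holds. Lemma~\ref{lem:finite_order_absorption_induction} then yields \eqref{eq:complete_homogeneous_master_closure} on $[\tau_1,\tau_2]$, which is precisely (M1). This identifies the parameter range as the one fixed in Proposition~\ref{prop:constant_choice}.

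\emph{Step 4: extend and reverse time.} To remove the compact-support assumption, approximate finite-energy compatible data by smooth elements of the closed compatible range in Definition~\ref{def:master_from_teukolsky}. The estimate passes to the limit by continuity of the finite-slab solution map and lower semicontinuity of the spacetime norm. The past estimate follows by applying the same argument to the time-reversed foliation, using the incoming convention in (A3).
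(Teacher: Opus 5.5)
Your proposal is correct and follows essentially the same route as the paper. The perturbative Reissner--Nordstr\"om comparison, together with the red-shift, $r^p$ and raw trapped estimates, gives the commuted inequality with a compact remainder; Proposition~\ref{prop:compact_remainder_removal}, with the no-defect content of \emph{(A3)}, removes that remainder uniformly in the parameters; and Proposition~\ref{prop:complete_master_closure_algebra} closes the induction after choosing $\theta$, then $\eta_j$, then $\eps_a(k)$. Your Step~4 (density and time reversal) goes beyond what the paper writes, since (M1) is asserted only for smooth compatible solutions, but it is consistent with the rest of the argument and does no harm.
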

\begin{proof}
Proposition~\ref{prop:principal_master} and Lemma~\ref{lem:commuted_master}
place the Kerr-Newman master operator in the perturbative form used in
Proposition~\ref{prop:absorption_small}, with
$\delta\simeq |a|/M$ and constants uniform for $|Q|\le\eps_QM$. The model
Reissner-Nordstr\"om estimate gives the red-shift, Morawetz and far-field
control. Proposition~\ref{prop:compact_remainder_removal} reduces any failure of
the compact part of the estimate to a normalized real-frequency defect profile.
Definition~\ref{def:slowweak_master_framework}\emph{(A3)}, equivalently the
no-kernel conclusion of Proposition~\ref{prop:no_real_kernel}, excludes such
profiles. Proposition~\ref{prop:complete_master_closure_algebra} then gives the
finite-order algebra: after choosing the compact-removal parameter, the strict
lower-order Cauchy parameter and finally $\eps_a(k)$, all top-order terms are
absorbed and the induction closes without using derivatives above order $k$.
For a homogeneous compatible solution the external source norms \(F_i\) in
\eqref{eq:complete_master_closure_estimate} vanish, while the commutator terms have
already been handled by the order-by-order induction; therefore the result is exactly
\eqref{eq:framework_master_estimate}.
\end{proof}

\begin{lemma}
\label{lem:hilbert_trace_criterion}
Let $H$ be an energy Hilbert space, $R_+$ a radiation Hilbert space, and $S_+:H\to R_+$ a bounded trace map obtained as an $L^2$ limit of finite-slab fluxes. Suppose that smooth compact radiation data form a dense subspace $E\subset R_+$, that there is a bounded backward construction $W_0:E\to H$ satisfying
\begin{equation}\label{eq:abstract_backward_bound_main}
        \norm{W_0\rho}_H\le C\norm{\rho}_{R_+},\qquad S_+W_0\rho=\rho\quad (\rho\in E),
\end{equation}
and that the only solution with zero future radiation field is zero. Then $S_+$ is a bounded isomorphism with bounded inverse. The abstract construction is given in Section~\ref{app:scattering_criterion}.
\end{lemma}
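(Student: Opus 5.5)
The argument is pure Hilbert-space functional analysis; the only analytic input is the a priori bound $\norm{\rho}_{R_+}\le\norm{S_+}\,\norm{W_0\rho}_H$, which is automatic from boundedness of $S_+$. We extend $W_0$ by density, show that $S_+$ is surjective with a bounded right inverse, and then use the trivial-kernel hypothesis to conclude that this right inverse is a two-sided inverse.

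\emph{Extension of $W_0$.} Since $\norm{W_0\rho}_H\le C\norm{\rho}_{R_+}$ on the dense subspace $E$, the map $W_0$ is uniformly continuous there. It therefore extends uniquely to a bounded linear map $W:R_+\to H$ with $\norm{W}\le C$. For $\rho\in R_+$, take $\rho_n\in E$ with $\rho_n\to\rho$. Then $W_0\rho_n\to W\rho$ in $H$. By continuity of $S_+$,
\[
S_+W\rho=\lim_n S_+W_0\rho_n=\lim_n\rho_n=\rho.
\]
So $S_+W=\Id_{R_+}$, and in particular $S_+$ is surjective.

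\emph{Injectivity and identification of the inverse.} The hypothesis that the only solution with zero future radiation field is zero says exactly that $\ker S_+=\{0\}$ on $H$. Here it matters that $S_+$ is defined on all of $H$ as the $L^2$ limit of finite-slab fluxes, so that the injectivity hypothesis is a statement about the same operator; this is the only point I would check carefully. Now let $U\in H$ and set $V=WS_+U$. Then
\[
S_+(U-V)=S_+U-(S_+W)(S_+U)=0,
\]
so $U=V$. Hence $WS_+=\Id_H$, and $W$ is a two-sided inverse of $S_+$.

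\emph{Conclusion.} $S_+$ is a bijective bounded operator whose inverse $W$ is bounded with $\norm{S_+^{-1}}\le C$. No open mapping theorem is needed, since boundedness of the inverse comes directly from the constant in \eqref{eq:abstract_backward_bound_main}. I do not expect a genuine obstacle here. The substantive work lies in verifying the hypotheses of the lemma, namely the trace bounds, the construction of $W_0$ from limiting absorption, and injectivity, and those are carried out elsewhere. The proof of the lemma itself is this short algebra.
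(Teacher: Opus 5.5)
Your proof is correct and is the same argument as the paper's. You extend $W_0$ by density to $W_+$ with $\norm{W_+}\le C$, obtain $S_+W_+=\Id_{R_+}$ by continuity, use $\ker S_+=\{0\}$ to get $W_+S_+=\Id_H$, and conclude that $S_+^{-1}=W_+$ is bounded without the open mapping theorem. One small point: your opening claim that $\norm{\rho}_{R_+}\le\norm{S_+}\,\norm{W_0\rho}_H$ is the analytic input is never used in the argument and can be dropped.
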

\begin{proof}
Lemma~\ref{lem:app_abstract_scattering} proves the Hilbert-space statement in
full. Here is the argument. The bound on $W_0$ allows it to extend uniquely by
continuity from the dense subspace $E\subset R_+$ to a bounded operator
$W_+:R_+\to H$. Since $S_+W_0\rho=\rho$ on $E$ and $S_+$ is bounded, passing to
the closure gives $S_+W_+=\Id_{R_+}$. Hence $S_+$ is surjective. If
$h\in\ker S_+$, the zero-kernel condition gives $h=0$, so $S_+$ is also
injective. Finally, for any $h\in H$,
$S_+(W_+S_+h-h)=0$, hence $W_+S_+h=h$ by injectivity. Therefore
$W_+=S_+^{-1}$ and $\|S_+^{-1}\|\le C$.
\end{proof}

\begin{proposition}
\label{prop:radiation_isomorphism_from_hierarchy}
Assume the master estimate \eqref{eq:framework_master_estimate}, same-order
reconstruction \emph{(A4)}, and the trace/right-inverse statement \emph{(A5)} at
order $k$. Then the future and past master radiation maps are bounded
isomorphisms with bounded inverses, and the Maxwell radiation maps obtained by
same-order reconstruction are bounded isomorphisms on the charge-free Maxwell
energy space.
\end{proposition}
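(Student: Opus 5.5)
The argument has two layers. We first show that the master radiation maps $\mathscr S_M^\pm$ are bounded isomorphisms, using the abstract criterion of Lemma~\ref{lem:hilbert_trace_criterion}. We then transport the result to the Maxwell side by conjugating with the same-order reconstruction and the radiation identifications $\mathcal R_\infty^\pm$. We treat the future case; the past case follows by reversing the foliation, as in Lemma~\ref{lem:master_immediate}.

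\emph{Step 1: boundedness of the master trace.} For smooth compactly supported compatible data, Lemma~\ref{lem:radiation_trace} defines the outgoing trace $\mathscr R_+u$ at $\Ip$ as an $L^2_{u,\omega}$ limit. Its norm is controlled by the $p=2$ flux $\Fc_{2,R}^{(k)}$, after commuting with $\mathbb D_k$. Corollary~\ref{cor:horizon_flux} controls the horizon trace by the red-shift flux. Both fluxes are components of $\norm{u}^2_{\Xnorm{k}_M(0,\tau)}$, and by \eqref{eq:framework_master_estimate} (Lemma~\ref{lem:master_immediate}) this norm is bounded by $C_M\E_M^{(k)}[u](0)$ uniformly in $\tau$. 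We let $\tau\to\infty$ using monotonicity of the nonnegative fluxes. This gives $\norm{\mathscr S_M^+u}_{\Rc_{M,+}^{(k)}}\le C\E_M^{(k)}[u](0)^{1/2}$. The map then extends by density to the compatible master energy space, since that space is a closure by Definition~\ref{def:master_from_teukolsky}.

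\emph{Step 2: master isomorphism.} Condition (A5) provides three things: a dense class of smooth compact radiation data, a bounded right inverse $\mathscr W_{M,0}^+$ on it satisfying \eqref{eq:master_hyp_radiation_inverse}, and triviality of the kernel. These are exactly the hypotheses of Lemma~\ref{lem:hilbert_trace_criterion} with $H=\Hc_M^{(k)}$, $R_+=\Rc_{M,+}^{(k)}$ and $S_+=\mathscr S_M^+$. That lemma therefore gives a bounded inverse $\mathscr W_M^+=(\mathscr S_M^+)^{-1}$.

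\emph{Step 3: Maxwell side.} We define $\mathscr S_{\Max}^+=\mathcal R_\infty^+\mathscr S_M^+\mathfrak M$ on smooth charge-free data. We also define the candidate inverse $\mathscr W_{\Max}^+=\mathfrak R\,\mathscr W_M^+(\mathcal R_\infty^+)^{-1}$. Boundedness of each factor comes from three sources: \eqref{eq:master_hyp_energy_comparison} for $\mathfrak M$, the energy bound in \eqref{eq:master_hyp_reconstruction_bound} for $\mathfrak R$, and (A5) for $\mathcal R_\infty^\pm$. All factors therefore extend continuously to $\Hc_{\Max,0}^{(k)}$; density of smooth charge-free data is Lemma~\ref{lem:density_chargefree}. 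The inverse identities \eqref{eq:master_hyp_inverse_identities} give $\mathscr S_{\Max}^+\mathscr W_{\Max}^+=\Id$ and $\mathscr W_{\Max}^+\mathscr S_{\Max}^+=\Id$ on the dense classes, hence everywhere by continuity.

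\emph{Main obstacle.} The delicate point is passing $\mathfrak R\mathfrak M=\Id$ and $\mathfrak M\mathfrak R=\Id$ from smooth solutions to the completions. This requires $\mathfrak M$ to map the Maxwell completion onto the closed compatible class, not merely into it. My plan for this is as follows. Injectivity comes from $G=\mathfrak R\mathfrak M G$ extended by continuity, as in Lemma~\ref{lem:no_kernel_requirement}. Surjectivity comes from the fact that the compatible class is by definition the closure of the smooth range, together with the uniform two-sided bound $\E_{\Max}^{(k)}[\mathfrak R u]\simeq\E_M^{(k)}[u]$. That bound makes the range of $\mathfrak M$ closed, so the range equals the whole compatible class.
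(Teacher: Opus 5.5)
Your proposal is correct and follows essentially the paper's route: you bound the master traces via Lemma~\ref{lem:radiation_trace} and Corollary~\ref{cor:horizon_flux} under \eqref{eq:framework_master_estimate}, invoke Lemma~\ref{lem:hilbert_trace_criterion} with the dense class, right inverse and zero kernel supplied by \emph{(A5)}, and then conjugate by $\mathfrak M$, $\mathfrak R$ and $\mathcal R_\infty^\pm$, with time reversal giving the past maps. Your additional step, which extends $\mathfrak R\mathfrak M=\Id$ and $\mathfrak M\mathfrak R=\Id$ to the completions using the two-sided energy equivalence and the closed-range definition of the compatible class, is left implicit in the paper's short proof; that proof instead cites charge subtraction to rule out a finite-dimensional kernel or cokernel, and the paper carries out the extension separately in Proposition~\ref{prop:component_finite_energy_reconstruction}.
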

\begin{proof}
Boundedness of the future trace follows from Lemma~\ref{lem:radiation_trace} at
null infinity and Corollary~\ref{cor:horizon_flux} at the horizon, both
controlled by \eqref{eq:framework_master_estimate}; the past trace is its time
reversal. The dense radiation class, bounded right inverses and zero-kernel
condition are precisely the content of \emph{(A5)}. Applying
Lemma~\ref{lem:hilbert_trace_criterion} gives the master wave operators. The
Maxwell maps compose the master maps with the same-order reconstruction
(Proposition~\ref{prop:same_order_reconstruction}) and the bounded radiation
identifications in \emph{(A5)}; charge subtraction fixes the two Coulomb means,
so the charge-free radiation map has no finite-dimensional kernel or cokernel.
Time reversal gives the past isomorphism.
\end{proof}

\begin{corollary}
\label{cor:slow_kerr_subcase}
Setting $Q=0$, the transfer theorem gives the stationary-subtracted Maxwell boundedness, integrated decay, radiation-field, wave-operator, and scattering statements on sufficiently slowly rotating Kerr exteriors. In this special case the required fixed-background analytic estimates are also available in the existing Kerr Maxwell and Teukolsky literature cited below.
\end{corollary}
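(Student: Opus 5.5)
The corollary follows from Theorem~\ref{thm:intro_transfer} once conditions \emph{(A1)}--\emph{(A5)} of Definition~\ref{def:slowweak_master_framework} are checked at $Q=0$ in the slow-rotation range. The plan is to go through the conditions one at a time, specialize each to Kerr, and point to either an earlier result of the paper or the cited Kerr literature. Then we apply the theorem.

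\emph{Structural and comparison conditions.} With $Q=0$, the spin-$\pm1$ Teukolsky equations give an exact closed decoupling of the extreme components. So the closed reduction in \emph{(A1)} does not have to be assumed: we take $\psi_\pm$ to be the regular rescalings of $\phi_0$ and $\rho^{-2}\phi_2$ from Definition~\ref{def:master_from_teukolsky}. Parts (iii)--(v) of Corollary~\ref{cor:structural_reduction_verify} then supply:
\begin{itemize}
\item the scalar principal symbol;
\item the energy comparison \eqref{eq:master_hyp_energy_comparison};
\item the Schwarzschild comparison \eqref{eq:master_hyp_perturbation} with the error bound \eqref{eq:master_hyp_error_bound}.
\end{itemize}
The Teukolsky lower-order coefficients are explicit, so the needed symbol bounds can be read off directly. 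Because the system is diagonal, $B_{\mathrm{mat},a,0}=0$, and Proposition~\ref{prop:main_body_trapped_computation} shows that the diagonal skew symbol vanishes on the trapped set.

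\emph{Analytic conditions.} For \emph{(A3)}, the bounded-frequency branch is Lemma~\ref{lem:slowweak_resolvent_closure}, specialized to $Q_n=0$, together with the Schwarzschild real-axis exclusion in Lemmas~\ref{lem:rn_no_real_resonance} and~\ref{lem:stationary_identity}. The high-frequency branch is the normally hyperbolic estimate. At $Q=0$ it can alternatively be taken directly from the Kerr Teukolsky mode-stability and resolvent results \cite{TdCMode,SRTdCfrequency}. Condition \emph{(A4)} is Propositions~\ref{prop:middle_reconstruction} and~\ref{prop:same_order_reconstruction}, which are proved for any $Q$ in the range and in particular at $Q=0$. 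Condition \emph{(A5)} is Proposition~\ref{prop:backward_from_lap}, derived from the limiting-absorption resolvent.

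Once these are in place, Corollary~\ref{cor:perturbative_closure} gives (M1), and Theorem~\ref{thm:intro_transfer} gives boundedness, integrated local energy decay, the radiation isomorphisms and the scattering operator.

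\emph{Main obstacle.} The step needing care is the high-frequency part of \emph{(A3)}. In the charged case it is an externally quoted estimate. At $Q=0$ it must be matched precisely, in the normalization \eqref{eq:master_hyp_semiclassical_residual}, to the Kerr Teukolsky results \cite{BenomioTdC,SRTdCphysical}. The cleanest route is to observe that those works prove the full physical-space integrated decay estimate for the Teukolsky system on subextremal Kerr. That estimate implies the compact-remainder bound \eqref{eq:compact_term_removed} directly, so the defect-measure formulation is not needed. We would state this reduction explicitly and keep the normally hyperbolic route, via Proposition~\ref{prop:nh_theorem_application}, as a consistency check. Finally, we would remark that the resulting statements agree with, and are contained in, the full subextremal analysis of \cite{BenomioTdC}.
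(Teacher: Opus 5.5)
Your proposal is sound within the paper's framework and has the same skeleton as the paper's proof: verify \emph{(A1)}--\emph{(A5)} at $Q=0$, then apply Theorem~\ref{thm:intro_transfer}. Where you differ is in how the conditions are supplied. The paper's proof only notes that its own arguments specialize without the charged spherical terms. It then takes the fixed-background Maxwell estimates, the spin-$\pm1$ Teukolsky estimates and the middle-component reconstruction wholesale from \cite{AnderssonBlueMaxwell,BenomioTdC,SRTdCfrequency,SRTdCphysical}. You instead re-derive \emph{(A2)}--\emph{(A5)} from the paper's internal perturbative machinery, which is in effect Corollary~\ref{cor:reduced_conditions} at $Q=0$, and import only the high-frequency input.

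Your route buys a consistency check that the charged argument degenerates correctly: exact decoupling, $B_{\mathrm{mat},a,0}=0$, and a vanishing diagonal skew symbol on the trapped set. The cost is that the Kerr conclusion then rests on every step of the Schwarzschild-comparison argument. In particular it rests on Lemma~\ref{lem:rn_model_morawetz}, applied with general (non-Maxwell) $LE^*$ sources to the spin-$\pm1$ Teukolsky operator. That operator has first-order terms decaying only like $r^{-1}$. Moreover, the paper's model estimate comes from the Regge--Wheeler-type scalars of Lemma~\ref{lem:app_rn_radial}, which are built there from the angular (middle) component of $F$, not from the Teukolsky extreme scalars. The Kerr literature handles exactly this point through transformed (Fackerell--Ipser/Regge--Wheeler-type) quantities. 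The paper's citation route therefore does not depend on this step and is the more robust one at $Q=0$.

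The citation route is also what supports the corollary's second sentence. You point to the literature only for the high-frequency estimate, so you would additionally need, e.g., \cite{BenomioTdC} for the same-order reconstruction and the radiation statements.

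Finally, your fallback of importing the full integrated decay estimate from \cite{BenomioTdC,SRTdCphysical} is really the paper's route. Once that estimate is in hand, with its initial norm checked to be controlled by the order-$k$ energy $\E_M^{(k)}$, it gives \emph{(M1)} directly. The compact-remainder removal and the perturbative closure then become unnecessary, not merely simplified.
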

\begin{proof}
When $Q=0$ the comparison background is Schwarzschild and the perturbation in
the master operator is measured only by $|a|/M$. The red-shift, far-field,
photon-sphere, zero-mode and Hodge reconstruction arguments above specialize
without the charged spherical terms. The fixed-background Maxwell estimates on
slowly rotating Kerr, together with the corresponding Teukolsky estimates and
middle-component reconstruction, are supplied by
\cite{AnderssonBlueMaxwell,BenomioTdC,SRTdCfrequency,SRTdCphysical}. These
estimates verify Definition~\ref{def:slowweak_master_framework} in the Kerr subcase, and
Theorem~\ref{thm:intro_transfer} then gives the stated stationary-subtracted
boundedness, decay, radiation and scattering conclusions.
\end{proof}

\subsection{Choice of Constants and Perturbative Closure}
\label{subsec:constant_choice}
\begin{proposition}
\label{prop:constant_choice}
For each integer $k\ge0$ one may choose $\eps_Q(k)>0$, then $\eps_a(k)>0$, so
that all constants in the red-shift, Morawetz, limiting-absorption,
reconstruction, radiation and commuted estimates remain finite and uniform for
\eqref{eq:slow_weak_range}.
\end{proposition}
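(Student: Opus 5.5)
The plan is to follow the order already fixed in Proposition~\ref{prop:finite_order_closure_constants}: first the weak-charge threshold, then the top-order constants, then the lower-order Cauchy parameters, and finally the rotation threshold. The organizing idea is that every constant in the red-shift, Morawetz, limiting-absorption, reconstruction, radiation and commuted estimates depends continuously on $(a,Q)$ through finitely many symbol seminorms of the coefficients. Uniformity therefore reduces to working on a compact parameter set on which no non-degeneracy degenerates.

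\emph{Step 1: choose $\eps_Q(k)$.} Take $\eps_Q(k)<1/4$, as in Lemma~\ref{lem:rn_model_morawetz} and Lemma~\ref{lem:uniform_hardy_elliptic}. On $|Q|\le\eps_QM$ the following quantities vary smoothly and stay in compact non-degenerate ranges:
\begin{itemize}
\item[] the horizon radius $r_+(Q)$ and surface gravity $\kappa_+(Q)$;
\item[] the photon sphere $r_{\mathrm{ph}}(Q)$ from \eqref{eq:rn_photon_sphere}, which has $9M^2-8Q^2\ge 8M^2$;
\item[] the Regge--Wheeler barriers $V_{\ell,Q}$, $\ell\ge1$.
\end{itemize}
Consequently $C_{\RN,k}$ (Lemma~\ref{lem:rn_model_morawetz}), the red-shift constant $c_H$ (Proposition~\ref{prop:redshift_coercivity}), the $r^p$ constants (Proposition~\ref{prop:rp_identity}), the limiting-absorption constant on a compact frequency window (Lemma~\ref{lem:rn_limiting_absorption}), the coercivity constant $c_0$ (Lemma~\ref{lem:stationary_identity}) and the Hodge constants $C_s$ (Lemma~\ref{lem:hodge_inversion}, which depend on no parameter) are all bounded uniformly. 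This uses a continuity/compactness argument in $Q$ over $[-\eps_QM,\eps_QM]$.

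\emph{Step 2: fix cutoffs.} Fix the cutoff radii $r_H$ and $R_\infty$ and the trapped collar of Definition~\ref{def:concrete_le_norms}. This determines the norms.

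\emph{Step 3: choose $\eta_j$ and $\theta$.} For each $j\le k$, choose $\eta_j\le1/8$ in \eqref{eq:strict_lower_order_induction}, which fixes $C_{j,\eta_j}$. Then choose $\theta$ in \eqref{eq:complete_compact_remainder}.

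\emph{Step 4: choose $\eps_a(k)$.} Take $\eps_a(k)$ as the minimum of finitely many thresholds:
\begin{itemize}
\item[] $B_j|a|/M\le1/8$ in \eqref{eq:complete_absorption_choices} for $j\le k$;
\item[] $C|a|/M<c_0/4$ in the red-shift absorption;
\item[] the $r^p$ absorption threshold;
\item[] $c_0^{-1}C\eps_a<1$ in Proposition~\ref{prop:no_stationary_kernel};
\item[] the absorption in Proposition~\ref{prop:middle_reconstruction};
\item[] the trapped-set persistence of Lemma~\ref{lem:trapping_stability}, keeping $K_{a,Q}$ inside the collar where $w_{\mathrm{tr}}$ degenerates;
\item[] the matrix threshold \eqref{eq:main_body_matrix_threshold_at_K};
\item[] the high-angular threshold of Lemma~\ref{lem:app_high_angular_coercivity}.
\end{itemize}
Each of these is a positive number depending only on constants already fixed, and $a^2+Q^2<M^2$ is automatic for small $\eps_a,\eps_Q$. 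The resulting $C_k$ is then a finite algebraic combination of constants from Lemma~\ref{lem:finite_order_absorption_induction}.

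The main obstacle is uniformity of the compact-remainder step, i.e.\ the constant $C_{j,\theta}$ in \eqref{eq:complete_compact_remainder}. It comes from a contradiction argument, not an explicit formula. I would handle it by running the argument of Lemma~\ref{lem:localized_compact_defect} with parameters allowed to vary along the sequence, $(a_n,Q_n)$ in the closed slow-weak set, exactly as (A3) permits. Compactness of that set yields a limiting subextremal parameter. Lemma~\ref{lem:slowweak_resolvent_closure} then excludes the limit defect, and Proposition~\ref{prop:app_high_freq} with the uniform $\lambda_0$ of Proposition~\ref{prop:main_body_trapped_normal_form} excludes the high-frequency defect. It follows that the constant is bounded uniformly on the whole parameter set, not just pointwise.

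One must check that shrinking $\eps_a$ afterwards does not spoil this step. It does not, because the contradiction is over a set containing all smaller ranges, so a smaller range only makes the supremum smaller.
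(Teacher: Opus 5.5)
Your proposal is correct and follows the paper's own route. You fix $\eps_Q(k)$ using the smooth, non-degenerate dependence of $r_+(Q)$, $\kappa_+(Q)$, $r_{\mathrm{ph}}(Q)$ and $V_{\ell,Q}$ on $Q$, then take $\eps_a(k)$ as the minimum of finitely many thresholds against the constants multiplying $|a|/M$; your parameter-varying contradiction argument for the compact-remainder constant $C_{j,\theta}$ is extra detail that the paper leaves implicit in \emph{(A3)}.

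The one class of constants named in the statement that you do not treat is the radiation constants, which the paper absorbs into $C_{\mathrm{sw}}$. The trace bounds follow from the uniform master estimate. The wave-operator constant $C_W$ of Proposition~\ref{prop:backward_from_lap} also comes from a compactness argument, so it is made uniform by exactly the argument you give for $C_{j,\theta}$.
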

\begin{proof}
Choose $\eps_Q(k)$ so the Reissner-Nordstr\"om horizon is uniformly
non-degenerate, $r_{\mathrm{ph}}(Q)$ stays in a fixed compact interval, and the
spin-one potentials keep the charge-free angular barrier for all $\ell\ge1$;
smooth dependence of $r_+(Q),\kappa_+(Q),r_{\mathrm{ph}}(Q)$ and the potentials
on $Q$ gives uniform constants in
Lemmas~\ref{lem:rn_model_morawetz},~\ref{lem:uniform_hardy_elliptic},~\ref{lem:stationary_identity}, and~\ref{lem:rn_limiting_absorption}. With $\eps_Q(k)$
fixed, collect the finitely many constants multiplying $|a|/M$ in the red-shift
error, \eqref{eq:rp_error_absorb}, \eqref{eq:rotational_commutator_error}, the
transport errors and \eqref{eq:small_operator_bound}, and choose $\eps_a(k)$ so
each product is below one tenth of the available positive constant. Increasing a
fixed $C_{\mathrm{sw}}$ to dominate the endpoint, trace, density and
wave-operator constants gives a uniform range at order $k$.
\end{proof}

\begin{corollary}
\label{cor:commuted_hierarchy_closure}
Assume that the additional hierarchy in Definition~\ref{def:slowweak_master_framework}\emph{(A6)} is available through order $k$. If $k\ge k_0+2$, same-order reconstruction transfers the hierarchy to the Maxwell field with no additional top-order loss.
\end{corollary}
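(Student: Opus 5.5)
The plan is to combine the hierarchy (A6) for the master field with the same-order reconstruction (A4)/(M2) and the initial energy comparison (A1), keeping track of derivative counts so that the reconstruction is only ever applied at an order where the master data are already controlled.

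\emph{Step 1: read (A6) on the master variables.} Fix a smooth charge-free Maxwell solution $G=F_{\rad}$ with finite order-$k$ energy and set $u=\mathfrak M G$. By Theorem~\ref{thm:intro_charge_decomposition}, $G$ is charge-free, so Proposition~\ref{prop:coulomb_elimination} gives zero spherical means of $\rho_G,\sigma_G$ on every sphere. The hierarchy \eqref{eq:master_hyp_decay_hierarchy} is stated for reconstructed fields. I would first observe that, through the energy comparison \eqref{eq:master_hyp_energy_comparison}, it yields
\[
\sum_{|I|\le k_0+1}\|\Gamma^Iu\|_{L^2(\Sigma_\tau\cap\mathcal U_r)}^2\le C(1+\tau)^{-\gamma}\E_M^{(k)}[u](0).
\]
This bound carries one extra commuted derivative, which is available because $k\ge k_0+2$.

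\emph{Step 2: reconstruct on each slice.} On each slice $\Sigma_\tau$ the extreme components $\alpha,\underline\alpha$ are smooth nonvanishing weights times $u$, so they inherit the decay at order $k_0+1$. For the middle component $\varphi$, I would use only the angular line of \eqref{eq:middle_transport_hodge}, together with Lemma~\ref{lem:hodge_inversion} on each sphere $S_{\tau,r}\subset\Sigma_\tau\cap\mathcal U_r$; the zero-mean condition supplies the invertibility. This bounds $\|\varphi\|_{H^{s+1}(S_{\tau,r})}$ by one derivative of $(\alpha,\underline\alpha)$ plus lower-order terms in $\varphi$, which I would absorb by induction on $s$ together with the $O(|a|/M)$ smallness. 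The radial and time derivatives of $\varphi$ would then come from the two transport lines, expressed through angular derivatives of the extreme components. Each $\Gamma^I\varphi$ with $|I|\le k_0$ is thus controlled by $\Gamma^J u$ with $|J|\le k_0+1$, and the same holds for $\Gamma^IG$. Because of the Hodge gain this loses no derivative relative to Step~1.

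\emph{Step 3: close and apply Sobolev.} Steps 1 and 2 give
\[
\sum_{|I|\le k_0}\|\Gamma^IG\|_{L^2(\Sigma_\tau\cap\mathcal U_r)}^2\le C(1+\tau)^{-\gamma}\E_M^{(k)}[u](0)\le C'(1+\tau)^{-\gamma}\E_{\Max}^{(k)}[G](0),
\]
using (A1) once more. This is \eqref{eq:commuted_decay} with no additional top-order loss, and $k\ge k_0+2$ leaves room for Sobolev embedding on the regular slices. For finite-energy data I would then argue by density via Lemma~\ref{lem:density_chargefree} and Proposition~\ref{prop:finite_energy_wellposed}.

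The main obstacle is Step 2. The reconstruction in Proposition~\ref{prop:middle_reconstruction} is a spacetime estimate that uses transport from the initial slice, and it cannot be localized in time to give decay directly. I would therefore first try the purely slicewise elliptic route described above, which avoids transport from $\tau=0$ altogether. The price is that the transport lines enter only as algebraic substitutions for $e_3\varphi$ and $e_4\varphi$, and this is where the margin $k\ge k_0+2$ is consumed.
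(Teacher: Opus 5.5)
There is a gap in Step~1, and Step~2 is written so as to use it. The bound $\sum_{|I|\le k_0+1}\|\Gamma^Iu\|^2_{L^2(\Sigma_\tau\cap\mathcal U_r)}\le C(1+\tau)^{-\gamma}\E_M^{(k)}[u](0)$ does not follow from what you cite.

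\begin{itemize}
\item The comparison \eqref{eq:master_hyp_energy_comparison} bounds one full non-degenerate slice energy by another at the same time $\tau$. It gives neither decay nor localization to $\mathcal U_r$.
\item The hierarchy \eqref{eq:master_hyp_decay_hierarchy} controls only $|I|\le k_0$. Since $u$ is obtained from $G$ by smooth nonvanishing weights on the extreme components, it yields algebraically only decay of $\sum_{|I|\le k_0}\|\Gamma^Iu\|^2$.
\item The hypothesis $k\ge k_0+2$ concerns the order of the merely \emph{bounded} energy on the right-hand side. It does not by itself produce an extra \emph{decaying} derivative on the left.
\end{itemize}

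The only way to trade it is to interpolate the decaying order-$k_0$ norm against the uniform order-$k$ bound from \emph{(M1)}. That costs rate, roughly $\gamma\mapsto\gamma/2$ at order $k_0+1$. It would still give \emph{(M4)} for some smaller exponent, but it is not the argument you give, nor the same-$\gamma$ bound you state. Also, putting $\E_M^{(k)}[u](0)$ on the right uses the reconstruction energy bound in \emph{(A4)}, not \emph{(A1)}. Since Step~2 as written spends exactly this extra derivative (``controlled by $\Gamma^Ju$ with $|J|\le k_0+1$''), the chain does not close as stated.

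The simplest repair is to notice that the extra derivative is not needed: counted carefully, your slicewise route is same-order.
\begin{itemize}
\item The Hodge estimate applied to the angular line bounds $\|\varphi\|_{H^{s+1}(S_{\tau,r})}$ by $e_3\alpha$ (or $e_4\underline\alpha$) in $H^s$, that is, by $s+1$ derivatives of the extremes.
\item The two transport lines express $e_3\varphi$ and $e_4\varphi$ through one angular derivative of the extremes plus zeroth-order terms.
\item The undifferentiated $\varphi$ is bounded by zero-mean Poincar\'e from one derivative of the extremes.
\end{itemize}
Hence, for $k_0\ge1$, $\sum_{|I|\le k_0}\|\Gamma^IG\|^2$ is controlled slicewise by $\sum_{|J|\le k_0}\|\Gamma^Ju\|^2$, plus lower-order $\varphi$ terms that you handle by induction on $|I|$.

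More directly still, \eqref{eq:master_hyp_decay_hierarchy} is already written for $\Gamma^IG$ with $\E_{\Max}^{(k)}[G](0)$ on the right, so it coincides with \eqref{eq:commuted_decay}. The paper's proof simply cites the same-order reconstruction of Proposition~\ref{prop:same_order_reconstruction} and says that $k\ge k_0+2$ leaves room for commutators and dyadic weights. Your remark that this spacetime reconstruction carries transport data from the initial slice, and so cannot by itself be localized in time to give decay, is a fair criticism of that citation. A slicewise elliptic/transport argument is the better justification, but only once it is run at order $k_0$ without the gain claimed in Step~1.
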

\begin{proof}
Commuting by $\Gamma^I\in\mathbb D_k$ produces either terms in the order-$k$
master norm or strict lower-order coefficient terms controlled by
Lemma~\ref{lem:commuted_master}. The red-shift controls the transversal horizon
derivatives, the $r^p$ hierarchy the far-field weighted derivatives, and the
compact Morawetz estimate the remaining local derivatives up to the trapping
degeneration. Proposition~\ref{prop:same_order_reconstruction} reconstructs the Maxwell tensor at the same order. The Sobolev step uses $k_0$ derivatives; $k\ge k_0+2$ leaves room for the commutators and dyadic weights.
\end{proof}

\begin{proposition}
\label{prop:framework_consequences}
The charge and finite-energy Cauchy assertions in Proposition~\ref{prop:framework_charge} are proved unconditionally. If \emph{(A1)-(A3)} hold, then the master boundedness and integrated-decay conclusion \emph{(M1)} holds. If \emph{(A4)} also holds, then the same-order Maxwell reconstruction conclusion \emph{(M2)} holds. If \emph{(A5)} is available, either as a condition or by Proposition~\ref{prop:backward_from_lap}, then the radiation and scattering conclusion \emph{(M3)} holds. If the additional condition \emph{(A6)} also holds, then the hierarchy conclusion \emph{(M4)} holds as well.
\end{proposition}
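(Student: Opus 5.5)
The proof is a bookkeeping argument that assembles results already proved. We treat the four conditional implications in order, each time citing the proposition that does the analytic work, and check that no hypothesis beyond those named in the statement is used.

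\emph{Unconditional part.} Proposition~\ref{prop:framework_charge} collects Proposition~\ref{prop:finite_energy_charge_trace}, Lemma~\ref{lem:stationary_representatives}, Proposition~\ref{prop:bounded_projection}, Lemma~\ref{lem:density_chargefree} and Propositions~\ref{prop:finite_energy_wellposed}--\ref{prop:closed_solution}. None of these uses any of (A1)--(A6), so this part holds without further assumptions.

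\emph{(A1)--(A3) $\Rightarrow$ (M1).} This is Corollary~\ref{cor:perturbative_closure}, and we recall its chain. Condition (A1) supplies the compatible class and the scalar principal symbol. Condition (A2) gives the Reissner-Nordstr\"om comparison \eqref{eq:master_hyp_perturbation} and the commutator bound \eqref{eq:strict_lower_order_induction}. Lemma~\ref{lem:rn_model_morawetz}, the red-shift estimate (Proposition~\ref{prop:redshift_coercivity}) and the far-field estimate (Proposition~\ref{prop:rp_identity}) then give the raw estimate \eqref{eq:complete_raw_closure}. The compact remainder $K_j$ is removed by Proposition~\ref{prop:compact_remainder_removal}: in its bounded-frequency branch we use the real-axis exclusion in (A3), and in its high-frequency branch the localized normally hyperbolic estimate.

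Proposition~\ref{prop:complete_master_closure_algebra} then closes the induction on $j$, with the constants chosen in the order given by Proposition~\ref{prop:constant_choice}. The result is \eqref{eq:complete_homogeneous_master_closure} on every slab $[\tau_1,\tau_2]$. The time-reversed foliation gives the past estimate, and together these are \eqref{eq:framework_master_estimate}. The main obstacle sits here. The compact-remainder removal must produce defect profiles that stay inside the closed compatible charge-free class, which is what Proposition~\ref{prop:compatibility_microlocal_compactness} and Lemma~\ref{lem:compatibility_not_localized} secure. We verify this point explicitly before invoking (A3).

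\emph{(A4) $\Rightarrow$ (M2).} The bounds in \eqref{eq:framework_reconstruction_bounds} are \eqref{eq:master_hyp_energy_comparison} from (A1) together with \eqref{eq:master_hyp_reconstruction_bound}. After squaring and enlarging $C_R$, these are the required inequalities. The inverse identities are \eqref{eq:master_hyp_inverse_identities}. The no-mode clause in (M2) follows from Proposition~\ref{prop:no_real_kernel} under (A3). This step also uses Lemma~\ref{lem:no_kernel_requirement}: $G=\mathfrak R\mathfrak M G$, so a charge-free mode with vanishing master image must itself vanish.

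\emph{(A5) $\Rightarrow$ (M3), and (A6) $\Rightarrow$ (M4).} For (M3) we apply Proposition~\ref{prop:radiation_isomorphism_from_hierarchy}. Its trace bounds come from (M1) through Lemma~\ref{lem:radiation_trace} and Corollary~\ref{cor:horizon_flux}. Condition (A5), or Proposition~\ref{prop:backward_from_lap}, supplies the right inverses on a dense class and the zero-kernel property, and Lemma~\ref{lem:hilbert_trace_criterion} turns this into a bounded isomorphism. Composing with $\mathcal R_\infty^\pm$ gives the Maxwell maps. Finally, (M4) is \eqref{eq:master_hyp_decay_hierarchy} transferred through Corollary~\ref{cor:commuted_hierarchy_closure}.
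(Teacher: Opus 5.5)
Your proposal is correct and follows essentially the same route as the paper, whose proof is likewise a citation chain. It takes the unconditional part from Proposition~\ref{prop:framework_charge}, \emph{(M1)} from the red-shift, $r^p$, trapped-set and compact-remainder results together with the \emph{(A3)} kernel exclusions, \emph{(M2)} from same-order reconstruction, \emph{(M3)} from Proposition~\ref{prop:radiation_isomorphism_from_hierarchy}, and \emph{(M4)} from Corollary~\ref{cor:commuted_hierarchy_closure}. The differences are only in which intermediate results are cited: you close \emph{(M1)} via Corollary~\ref{cor:perturbative_closure} and Proposition~\ref{prop:complete_master_closure_algebra} where the paper invokes Proposition~\ref{prop:absorption_small}, and you read \emph{(M2)} directly off \eqref{eq:master_hyp_energy_comparison} and \emph{(A4)} where the paper points to the Hodge and null-transport propositions.
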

\begin{proof}
The charge and finite-energy Cauchy parts are Proposition~\ref{prop:framework_charge}. The perturbative estimates collected in this section show how the Reissner-Nordstr\"om model estimate is used once the spin-one operator has the scalar principal symbol and short-range perturbation structure of Proposition~\ref{prop:principal_master}. The horizon estimate is Proposition~\ref{prop:redshift_coercivity}; the $r^p$ identity and radiation trace are Proposition~\ref{prop:rp_identity} and Lemma~\ref{lem:radiation_trace}; the trapped-set estimate is Proposition~\ref{prop:positive_commutator_estimate}, with the model estimate and compact-error closure of Subsections~\ref{subsec:morawetz_trapping} and~\ref{subsec:compact_error_closure}. The stationary and real-frequency kernel exclusions are supplied by the \emph{(A3)} limiting-absorption/no-defect statement and are used in Propositions~\ref{prop:no_stationary_kernel} and~\ref{prop:no_real_kernel}. Under these exclusions the absorption Proposition~\ref{prop:absorption_small} gives \eqref{eq:framework_master_estimate}, hence \emph{(M1)}. Same-order reconstruction is Lemma~\ref{lem:hodge_inversion}, Proposition~\ref{prop:middle_reconstruction}, and Proposition~\ref{prop:same_order_reconstruction}, and proves \emph{(M2)}. The radiation maps are Proposition~\ref{prop:radiation_isomorphism_from_hierarchy}, which uses \emph{(A5)} in the trace criterion and therefore proves \emph{(M3)}. The constants are fixed in Proposition~\ref{prop:constant_choice}. If \emph{(A6)} is supplied, Corollary~\ref{cor:commuted_hierarchy_closure} transfers it at the same order and proves \emph{(M4)}. Accordingly, each analytic conclusion is attached to the precise estimate listed in Proposition~\ref{prop:exact_estimate_use}, and no pointwise decay is claimed without the hierarchy condition.
\end{proof}

\section{First-Order Rotational Perturbation of Spin-One Operator}
\label{app:perturbation}
In this section we compute the first order rotational part of the spin-one operator in order to make the perturbative structure explicit.
In this section we keep track of the comparison \eqref{eq:metric_symbol_difference} and the short-range order estimates in \eqref{eq:lower_order_master}. The metric contribution is explicit; the spin-one lower-order terms are tracked for the fixed-background master operator of Section~\ref{sec:decoupling}. Throughout, $f_Q(r)=1-2M/r+Q^2/r^2$, $\Delta=r^2-2Mr+a^2+Q^2$, $\Sigma=r^2+a^2\cos^2\theta$, and $\Delta_0=r^2-2Mr+Q^2=r^2 f_Q$ is the value of $\Delta$ at $a=0$.

\subsection*{The inverse metric}
In Boyer-Lindquist coordinates the nonzero contravariant components of
\eqref{eq:kn_metric} are
\begin{equation}\label{eq:app_inverse_metric}
\begin{aligned}
        g_{\KN}^{tt}&=-\frac{(r^2+a^2)^2-a^2\Delta\sin^2\theta}{\Sigma\Delta},
        &\quad
        g_{\KN}^{t\phi}&=-\frac{a\,(r^2+a^2-\Delta)}{\Sigma\Delta},\\
        g_{\KN}^{\phi\phi}&=\frac{\Delta-a^2\sin^2\theta}{\Sigma\Delta\sin^2\theta},
        &\quad
        g_{\KN}^{rr}&=\frac{\Delta}{\Sigma},\qquad g_{\KN}^{\theta\theta}=\frac1\Sigma.
\end{aligned}
\end{equation}
The combination in the cross term simplifies: $r^2+a^2-\Delta=2Mr-Q^2$, so
\begin{equation}\label{eq:app_cross_term}
        g_{\KN}^{t\phi}=-\frac{a\,(2Mr-Q^2)}{\Sigma\Delta}.
\end{equation}

\begin{lemma}
\label{lem:app_inverse_expansion}
On every compact radial set $\{r\ge r_+(Q)+\eta,\ r\le R\}$ in Boyer-Lindquist coordinates, in each fixed horizon-regular coordinate patch, and at
infinity in an asymptotically flat frame, the inverse metric obeys
\begin{equation}\label{eq:app_metric_difference}
        g_{\KN}^{\mu\nu}(M,a,Q)-g_{\RN}^{\mu\nu}(M,Q)=a\,G_1^{\mu\nu}(r,\theta)+a^2\,G_2^{\mu\nu}(r,\theta,a,Q),
\end{equation}
with $G_1,G_2$ smooth and uniformly bounded together with all derivatives for
$|a|\le\eps_aM$, $|Q|\le\eps_QM$. In the Boyer-Lindquist region the linear
coefficient $G_1^{\mu\nu}$ is carried entirely by the stationary-axial cross entry,
\begin{equation}\label{eq:app_G1}
        G_1^{t\phi}=G_1^{\phi t}=-\frac{2Mr-Q^2}{r^2\Delta_0}
        =-\frac{2Mr-Q^2}{r^4 f_Q},\qquad G_1^{\mu\nu}=0\ \text{otherwise},
\end{equation}
and $G_1^{t\phi}=O(r^{-3})$ as $r\to\infty$. The quadratic remainder $G_2$ is
short-range, $G_2^{\mu\nu}=O(r^{-2})$ relative to the flat metric.
\end{lemma}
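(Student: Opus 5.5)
The lemma is an explicit computation from \eqref{eq:app_inverse_metric} and \eqref{eq:app_cross_term}, followed by a Taylor expansion in $a$ with integral remainder. I would treat the three regions of the statement separately. The key structural observation is a parity fact: every component of \eqref{eq:app_inverse_metric} except $g^{t\phi}_{\KN}$ depends on $a$ only through $a^2$, via $\Delta=\Delta_0+a^2$, $\Sigma=r^2+a^2\cos^2\theta$ and the explicit $a^2$ factors. These components are therefore even, smooth functions of $a$, and their first $a$-derivative at $a=0$ vanishes. At $a=0$ one has $\Sigma=r^2$ and $\Delta=\Delta_0=r^2f_Q$, so the components reduce to the Reissner-Nordstr\"om inverse metric with the same $(M,Q)$. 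Only the cross entry, $-a(2Mr-Q^2)/(\Sigma\Delta)$, is odd. Differentiating it at $a=0$ gives $G_1^{t\phi}=-(2Mr-Q^2)/(r^2\Delta_0)=-(2Mr-Q^2)/(r^4f_Q)$, which is \eqref{eq:app_G1}, with $G_1^{\mu\nu}=0$ otherwise.

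\textbf{Boyer-Lindquist region.} I would set
\[
G_2=\int_0^1(1-s)\,\partial_a^2 g^{\mu\nu}_{\KN}(M,sa,Q)\,\dd s .
\]
Smoothness and uniform bounds then reduce to lower bounds on the denominators over the parameter set. On $\{r_+(Q)+\eta\le r\le R\}$ we have $\Sigma\ge r^2$ and $\Delta\ge\Delta_0\ge c(\eta)>0$. Here I use $a^2\ge0$ and the fact that $\Delta_0$ is positive for $r>r_+(Q)$, uniformly for $|Q|\le\eps_QM$ because $r_+(Q)$ depends smoothly on $Q$ and stays in a compact range. The components are rational in $(r,\cos\theta,a,Q)$ with nonvanishing denominators, so all derivatives are bounded uniformly on the compact set.

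\textbf{Far region.} I would pass to the asymptotically flat frame and count powers of $r$. The coordinate cross coefficient $G_1^{t\phi}$ behaves like $-2M r^{-3}$, which gives the stated $O(r^{-3})$. For the $a^2$ remainders I would expand $1/\Sigma$ and $1/\Delta$ in powers of $a^2/r^2$: each $a^2$ correction carries an extra factor $r^{-2}$ relative to the flat leading term. For example, in $g^{\phi\phi}$ the correction $-a^2/(\Sigma\Delta)$ is of order $r^{-4}$, and becomes $O(r^{-2})$ after rescaling by the angular frame factor $r^2\sin^2\theta$. This yields $G_2=O(r^{-2})$ relative to the flat metric.

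\textbf{Horizon-regular patch (main obstacle).} Here the Boyer-Lindquist components blow up as $\Delta\to0$, and the horizon radius $r_+(a,Q)$ itself moves with $a$. I would work in coordinates $\tilde t=t+T(r)$, $\tilde\phi=\phi+\Phi(r)$ with $T'=(r^2+a^2)/\Delta-h(r)$ and $\Phi'=a/\Delta$, chosen so that the transformed inverse metric has coefficients that are polynomial or smooth in $(r,\theta,a,Q)$ on a fixed $r$-interval containing $r_+(a,Q)$ for all small $a$. This works because the metric extends analytically across $\Hp$ and the chart is chosen to depend smoothly on the parameters. Joint smoothness in $(x,a,Q)$ on a fixed compact patch, combined with the same Taylor formula, then gives smooth bounded $G_1,G_2$ there. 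In this patch $G_1$ is not claimed to be purely $t\phi$, since $\Phi'$ contributes terms linear in $a$ to other entries; that matches the statement, which restricts the $t\phi$ structure to the Boyer-Lindquist region. The point I expect to need the most care is verifying that a single patch, independent of $a$, contains all horizons for $|a|\le\eps_aM$ with uniform bounds on the regularized coefficients.
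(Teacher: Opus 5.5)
Your argument follows the paper's proof step for step: parity in $a$ of every entry except $g^{t\phi}_{\KN}$, differentiation of $-a(2Mr-Q^2)/(\Sigma\Delta)$ at $a=0$ to get \eqref{eq:app_G1}, lower bounds for $\Sigma$ and $\Delta$ on $\{r\ge r_+(Q)+\eta\}$, power counting at infinity, and smooth parameter dependence in regular coordinates. The integral Taylor remainder and the explicit ingoing chart only make it more concrete.

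One step, however, is false as written, and the paper's proof skips it in the same way: the denominators are not only $\Sigma$ and $\Delta$. The entry $g^{\phi\phi}_{\KN}=(\Sigma\sin^2\theta)^{-1}-a^2(\Sigma\Delta)^{-1}$ has $\sin^2\theta$ in its denominator. Its $a^2$-coefficient at $a=0$ is $-\cos^2\theta/(r^4\sin^2\theta)-1/(r^2\Delta_0)$, which is unbounded as $\theta\to0,\pi$. So the Boyer-Lindquist component $G_2^{\phi\phi}$ is not uniformly bounded on the compact radial set, and ``rational with nonvanishing denominators'' does not prove the claim. The statement holds only tensorially, in charts that are regular at the axis.

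The repair is to group the angular block as $\Sigma^{-1}\gamma_{\Sph}^{-1}-a^2(\Sigma\Delta)^{-1}\partial_\phi\otimes\partial_\phi$, where $\gamma_{\Sph}^{-1}=\partial_\theta\otimes\partial_\theta+\sin^{-2}\theta\,\partial_\phi\otimes\partial_\phi$ is the round inverse metric. The tensors $\gamma_{\Sph}^{-1}$, $\partial_\phi$ and $g^{t\phi}_{\KN}(\partial_t\otimes\partial_\phi+\partial_\phi\otimes\partial_t)$ are all smooth on $\Sph$. The only scalar denominators left are then $\Sigma$ and $\Delta$, and your Taylor argument applies verbatim on the compact set, in the far region and in the horizon patch. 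Note also that this grouping reproduces the cancellation between $G_2^{\theta\theta}$ and $G_2^{\phi\phi}$ at the poles.

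The horizon step you single out as the main obstacle is immediate once computed. In your coordinates:
\begin{itemize}
\item $\tilde g^{rr}=\Delta/\Sigma$ and $\tilde g^{\tilde t r}=(r^2+a^2-h\Delta)/\Sigma$;
\item $\tilde g^{\tilde t\tilde t}=(a^2\sin^2\theta-2h(r^2+a^2)+h^2\Delta)/\Sigma$;
\item $\tilde g^{\tilde t\tilde\phi}=a(1-h)/\Sigma$ and $\tilde g^{r\tilde\phi}=a/\Sigma$;
\item the angular block becomes $\Sigma^{-1}\gamma_{\Sph}^{-1}$.
\end{itemize}
With $h$ smooth in $(r,a,Q)$, every coefficient is smooth with the single denominator $\Sigma\ge r^2$. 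So one fixed interval $r\in[r_0,R]$ with $0<r_0<M$ contains every horizon $r_+(a,Q)>M$, with uniform bounds. This also confirms your remark that $G_1$ acquires $\tilde t\tilde\phi$ and $r\tilde\phi$ entries there.
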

\begin{proof}
Each contravariant component in \eqref{eq:app_inverse_metric} is a rational
function of $r,\cos^2\theta$ and of $a^2$, except for $g_{\KN}^{t\phi}$ which is
$a$ times such a function. Indeed $\Sigma=r^2+a^2\cos^2\theta$ and
$\Delta=\Delta_0+a^2$ depend on $a$ only through $a^2$, and the numerators
$(r^2+a^2)^2-a^2\Delta\sin^2\theta$ and $\Delta-a^2\sin^2\theta$ are polynomials
in $a^2$. Hence $g_{\KN}^{tt},g_{\KN}^{\phi\phi},g_{\KN}^{rr},g_{\KN}^{\theta\theta}$
are even in $a$ and differ from their $a=0$ values, which are
$g_{\RN}^{tt}=-1/f_Q$, $g_{\RN}^{\phi\phi}=1/(r^2\sin^2\theta)$,
$g_{\RN}^{rr}=f_Q$, $g_{\RN}^{\theta\theta}=1/r^2$, by $O(a^2)$. The only odd part
is $g_{\KN}^{t\phi}$ of \eqref{eq:app_cross_term}; expanding
$\Sigma^{-1}\Delta^{-1}=(r^2\Delta_0)^{-1}(1+O(a^2))$ gives
\[
        g_{\KN}^{t\phi}=-\frac{a(2Mr-Q^2)}{r^2\Delta_0}\bigl(1+O(a^2)\bigr)
        =a\,G_1^{t\phi}+O(a^3),
\]
which is \eqref{eq:app_G1}; the $O(a^3)$ is absorbed in $a^2 G_2$. As
$r\to\infty$, $\Delta_0=r^2 f_Q\sim r^2$ and $2Mr-Q^2\sim2Mr$, so
$G_1^{t\phi}\sim-2M/r^3=O(r^{-3})$. On every set $r\ge r_+(Q)+\eta$, smoothness and uniform bounds follow because
$f_Q$ and $\Delta$ are bounded away from zero uniformly for $|Q|\le\eps_QM$.
In a horizon collar the Boyer-Lindquist formula is not used as a coordinate
estimate; after passing to the fixed horizon-regular coordinates of
Section~\ref{sec:geometry}, the metric coefficients are smooth functions of
$(a,Q)$ across $\Hp$, so the same first-order coefficient comparison holds there
by Taylor expansion in $a$. The far-field statement for $G_2$ follows from the analogous
expansions of the even components, each contributing a difference $O(a^2 r^{-2})$
relative to the flat metric.
\end{proof}

\subsection*{The wave operator and the spin-one master operator}
For a scalar $\psi$ the d'Alembertian on \eqref{eq:kn_metric} is
\begin{equation}\label{eq:app_box}
\begin{aligned}
        \Sigma\,\Box_{g_{\KN}}\psi={}&\partial_r(\Delta\,\partial_r\psi)
        +\frac1{\sin\theta}\partial_\theta(\sin\theta\,\partial_\theta\psi)
        -\Bigl[\frac{(r^2+a^2)^2}{\Delta}-a^2\sin^2\theta\Bigr]\partial_t^2\psi\\
        &-\frac{2a(2Mr-Q^2)}{\Delta}\partial_t\partial_\phi\psi
        -\Bigl[\frac{a^2}{\Delta}-\frac1{\sin^2\theta}\Bigr]\partial_\phi^2\psi.
\end{aligned}
\end{equation}
By Lemma~\ref{lem:app_inverse_expansion} the principal part of
$\Box_{g_{\KN}}-\Box_{g_{\RN}}$ is $a\,G_1^{t\phi}\,2\,\partial_t\partial_\phi$
plus $O(a^2)$; this is the term $a\,\mathcal G_{a,Q}^{\mu\nu}\nabla_\mu\nabla_\nu$
of \eqref{eq:lower_order_master}, with $\mathcal G^{\mu\nu}$ the symmetric tensor
$G_1^{t\phi}(\delta^\mu_t\delta^\nu_\phi+\delta^\mu_\phi\delta^\nu_t)$ of order
$O(r^{-3})$ relative to $|\xi|^2$.

\begin{proposition}
\label{prop:app_master_structure}
Let $\psi_\pm$ be the regular spin $\pm1$ extreme scalars and assume that the fixed-background master operator of Definition~\ref{def:slowweak_master_framework} has been constructed. Then
\begin{equation}\label{eq:app_master_expansion}
        \Pb_b\Psi=\Pb_{\RN,Q}\Psi
        +a\,\mathcal G_{a,Q}^{\mu\nu}\nabla_\mu\nabla_\nu\Psi
        +\frac{a}{r^2}\,\mathcal C_{a,Q}^\mu\nabla_\mu\Psi
        +\frac{a}{r^3}\,\mathcal D_{a,Q}\Psi
        +a^2\mathcal Q^{(2)}_{a,Q}\Psi,
\end{equation}
where $\mathcal G^{\mu\nu}$ is the stationary-axial tensor above, $\mathcal C^\mu$ and $\mathcal D$ are smooth matrix-valued coefficients, and $\mathcal Q^{(2)}_{a,Q}$ is a stationary second-order operator whose coefficients are smooth, uniformly bounded with all $\mathbb D_k$-derivatives for $|a|\le\eps_aM$, $|Q|\le\eps_QM$, and short-range in the corresponding wave, first-order and zeroth-order symbol classes. The principal symbol of $\Pb_b$ is the scalar symbol $g_{\KN}^{\mu\nu}\xi_\mu\xi_\nu I_2$.
\end{proposition}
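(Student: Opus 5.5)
The plan is to split $\Pb_b-\Pb_{\RN,Q}$ by differential order and by parity in $a$. Each piece will then be matched with one term of \eqref{eq:app_master_expansion}. By Definition~\ref{def:master_from_teukolsky}, $\Pb_b$ is built from the conjugated model operators $\widehat{\mathcal T}_{\pm1}=(-\Sigma)^{-1}w_\pm^{-1}\mathcal T_{\pm1}w_\pm$ together with the lower-order matrix terms $\mathcal L^{(1)}_{a,Q}$ and $\mathcal V^{(0)}_{a,Q}$ allowed by \emph{(A1)}.

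\textbf{Second-order part.} By Proposition~\ref{prop:scalar_principal_symbol} and \eqref{eq:app_box}, the second-order part of $\widehat{\mathcal T}_{\pm1}$ equals $\Box_{g_{\KN}}$ modulo first-order terms. Conjugation by the nonvanishing weights $w_\pm$ and division by $-\Sigma$ change only first- and zeroth-order terms. Lemma~\ref{lem:app_inverse_expansion} then gives the principal difference from the $a=0$ operator as
\[
a\,G_1^{t\phi}\,2\partial_t\partial_\phi+a^2G_2^{\mu\nu}\partial_\mu\partial_\nu .
\]
The first term is $a\,\mathcal G^{\mu\nu}_{a,Q}\nabla_\mu\nabla_\nu$, with $\mathcal G^{\mu\nu}$ the symmetric stationary-axial tensor defined before the proposition. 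Passing from $\partial$ to $\nabla$ changes only first-order terms with Christoffel coefficients. The second term is assigned to $a^2\mathcal Q^{(2)}_{a,Q}$; its coefficients are short-range because $G_2=O(r^{-2})$. The same computation gives the principal symbol $g_{\KN}^{\mu\nu}\xi_\mu\xi_\nu I_2$.

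\textbf{Lower-order parts.} All first- and zeroth-order coefficients are smooth in $(a,Q)$ in horizon-regular coordinates. These coefficients are: the spin terms $\mathcal B^{(1)}_s$ and $\mathcal B^{(0)}_s$, the weight-conjugation terms, the Christoffel terms, and $\mathcal L^{(1)}$, $\mathcal V^{(0)}$. Taylor expansion in $a$ at $a=0$ splits each coefficient into its $a=0$ value, which goes into $\Pb_{\RN,Q}$ as in \eqref{eq:rn_master_model}, a linear term, and an $O(a^2)$ remainder that goes into $\mathcal Q^{(2)}$. For the linear terms I plan to read off the decay from the explicit Boyer--Lindquist expressions. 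Odd-in-$a$ contributions arise only through $2a(2Mr-Q^2)\Delta^{-1}\partial_t$-type factors, $a\partial_\phi$, and the spin coefficients with $\rho^{-1}=r-ia\cos\theta$. After the infinity weights are applied and the operator is divided by $\Sigma\sim r^2$, these give first-order coefficients of the form $a r^{-2}\mathcal C^\mu$ and zeroth-order coefficients of the form $a r^{-3}\mathcal D$. The $\mathbb D_k$ bounds follow from smoothness and the homogeneity of the coefficients in $r$. The uniformity in $|a|\le\eps_aM$, $|Q|\le\eps_QM$ follows because $\Delta$, $\Sigma$ and the weights are bounded away from zero on the regular exterior.

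\textbf{Main obstacle.} The hardest step is the linear first-order term near infinity. The spin term $2s(r-M)\partial_r$ and the weighted $\partial_t$ terms are only $O(r^{-1})$ after division by $\Sigma$, and this is not short-range. I will try to show that their $a$-odd parts carry an extra factor of $a/r$. The expected mechanism is that the leading $a$-dependence cancels against the $r^{\pm2s}$-type infinity weights, since $\Delta-\Delta_0=a^2$ is even and the cross term is $O(a\,r^{-1})$ relative to the diagonal. The lower-order matrix terms of \emph{(A1)} are handled by assumption, since \emph{(A1)} imposes exactly these short-range bounds (see Lemma~\ref{lem:decoupling}). If the cancellation fails for the diagonal model terms, the fallback is to move the offending $O(a r^{-1})\partial_t$ piece into a modified choice of $w_\pm$, a first-order conjugation that removes it.
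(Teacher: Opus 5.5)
Your proposal is correct and is essentially the paper's argument. The principal part comes from \eqref{eq:app_box} and Lemma~\ref{lem:app_inverse_expansion}. The lower-order coefficients are split into their $a=0$ value (absorbed in $\Pb_{\RN,Q}$), a linear part, and an even $O(a^2)$ remainder placed in $a^2\mathcal Q^{(2)}_{a,Q}$. The matrix terms are controlled by the bounds built into \emph{(A1)}.

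The step you flag as the main obstacle closes by parity, not by any cancellation. The long-range first-order terms are the spin term $2s(r-M)\partial_r$ from \eqref{eq:radial_expand} and $-2s\bigl[(r-M)(r^2+a^2)\Delta^{-1}-2r\bigr]\partial_t$. Both depend on $a$ only through $a^2$, as do $\Delta$ and $\Sigma$, so they have no linear part. The only odd first-order terms of the spin-weighted operator are $-2sa(r-M)\Delta^{-1}\partial_\phi$ and $2isa\cos\theta\,\partial_t$. After division by $\Sigma$ these are already $O(a r^{-2})$ in an asymptotically flat frame.

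You can therefore drop the fallback. It would not have worked anyway: conjugation by a weight $w(r,\theta)$ produces only $\partial_r$, $\partial_\theta$ and zeroth-order terms in Boyer--Lindquist coordinates, so it cannot change a $\partial_t$ coefficient.
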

\begin{proof}
The fixed-background master equation used here is the spin-one operator of Section~\ref{sec:decoupling}. The purpose of this appendix is to record the order in $a$ and $r$ of its coefficients and to separate the explicit metric-symbol calculation from the lower-order spin-one estimates. The spin-one Teukolsky calculus beginning with Teukolsky~\cite{Teukolsky1973} and the Kerr-Newman Carter structures of Giorgi~\cite{GiorgiJHDE,GiorgiJDG} provide the model for these coefficient classes.

The scalar principal part is \eqref{eq:app_box}, giving the scalar symbol and, by Lemma~\ref{lem:app_inverse_expansion}, the principal perturbation $a\,\mathcal G^{\mu\nu}\nabla_\mu\nabla_\nu$ plus an $O(a^2)$ second-order remainder. The supplied spin-one first-order terms are linear combinations of $\partial_t$, $\partial_\phi$ and the spin coefficients of the principal null frame. The $a$-dependence enters through (i) the cross term $-2a(2Mr-Q^2)\Delta^{-1}\partial_t\partial_\phi$ already counted in the principal part, and (ii) the rotation of the principal null directions, which is $O(a)$ and multiplies a first-order operator whose coefficients carry the spin-coefficient decay $O(r^{-1})$; after the regular horizon and infinity weights, which are smooth and nowhere vanishing on the exterior, these contribute the term $a\,r^{-2}\mathcal C^\mu\nabla_\mu$. The zeroth-order spin-one potential is a curvature expression; its part that is odd and linear in $a$ has radial weight $O(r^{-3})$ by the asymptotic flatness of the curvature, giving $a\,r^{-3}\mathcal D$. The remaining even $a$-dependence, including the $O(a^2)$ principal part, is collected in the second-order operator $a^2\mathcal Q^{(2)}_{a,Q}$. At $a=0$ all coefficients are spherically symmetric and assembled into $\Pb_{\RN,Q}$ of \eqref{eq:rn_master_model}; the Reissner-Nordstr\"om curvature is quadratic in $Q$ and enters $\mathcal V_Q,\mathcal W_Q$. Smoothness of $g_{\KN}$, of the frame, and of the weights in $(a,Q)$ gives the uniform bounds. The explicit radial weights give the short-range decay, and the $a^2\mathcal Q^{(2)}_{a,Q}$ contribution obeys the same perturbative local-energy bound as \eqref{eq:master_hyp_error_bound} after possibly decreasing $\eps_a(k)$.
\end{proof}

\begin{remark}
\label{rem:app_short_range}
The decay rates $r^{-2}$, $r^{-3}$ are sufficient, not sharp: any rate $>r^{-1}$
for the first-order coefficient and $>r^{-2}$ for the zeroth-order coefficient
suffices for the $r^p$ absorption \eqref{eq:rp_error_absorb} and the Hardy
absorption in the local-energy norm. The order calculation above gives more than
needed.
\end{remark}

\section{Reissner-Nordstr\"om Spin-One Model Estimate}
\label{app:rn_model}
In this section we record the Reissner-Nordstr\"om model estimate which is used as the unperturbed estimate.
Here we prove Lemma~\ref{lem:rn_model_morawetz}. The charge-free Maxwell field on a non-extremal Reissner-Nordstr\"om exterior reduces to a spin-one radial system with a single non-degenerate photon sphere, and the model local-energy estimate is precisely the Maxwell theorem of Sterbenz-Tataru \cite{SterbenzTataru}, supplemented by the red-shift and $r^p$ currents of Dafermos-Rodnianski \cite{DafermosRodnianskiRedshift,DafermosRodnianskiRp}.

\subsection*{Charge-free reduction}
On Reissner-Nordstr\"om the exterior is spherically symmetric, with metric
$g_{\RN}=-f_Q\,\dd t^2+f_Q^{-1}\,\dd r^2+r^2\,\dd\omega^2$. A real source-free
two-form $F$ decomposes, on each sphere $S_{t,r}$, into vector spherical
harmonics of electric ($\ell\ge1$) and magnetic ($\ell\ge1$) type, together with
the $\ell=0$ electric and magnetic monopoles. The $\ell=0$ monopoles are exactly
the Coulomb fields $F_e,F_m$ of Section~\ref{sec:charges}; their coefficients are
the electric and magnetic charges. After the charge subtraction
\eqref{eq:intro_rad} only $\ell\ge1$ remains.

\begin{lemma}
\label{lem:app_rn_radial}
For $\ell\ge1$ each polarization of the charge-free Maxwell field is governed by
a complex scalar $\Phi_{\ell m}(t,r)$, related to the extreme components by a
fixed angular Hodge transform and a smooth nonzero radial weight, satisfying
\begin{equation}\label{eq:app_rn_radial}
        \partial_t^2\Phi_{\ell m}-\partial_{r_*}^2\Phi_{\ell m}+V_{\ell,Q}(r)\,\Phi_{\ell m}=0,
        \qquad \frac{\dd r}{\dd r_*}=f_Q,
\end{equation}
with potential
\begin{equation}\label{eq:app_rn_potential}
        V_{\ell,Q}(r)=f_Q(r)\,\frac{\ell(\ell+1)}{r^2}.
\end{equation}
For the source-free fixed-background Maxwell field on Reissner-Nordstr\"om this potential is exact: the only
$Q$-dependence is the overall factor $f_Q$, and there is no independent additive
$O(Q^2r^{-4})$ term. (Such a term reflects either a non-conformal choice of master
variable related to \eqref{eq:app_rn_potential} by a $Q$-dependent Chandrasekhar
transformation, which is isospectral to \eqref{eq:app_rn_potential}, or the coupled
$Q$-dependent potential of the \emph{coupled} Reissner-Nordstr\"om system; neither is
a feature of the fixed-background test field treated here.) The transform is
an isomorphism of the charge-free tensor-field energy and the spin-one energy at
every finite order, by Lemma~\ref{lem:frame_energy_equiv} and the Hodge estimate
on $\Sph$ (Lemma~\ref{lem:hodge_inversion}).
\end{lemma}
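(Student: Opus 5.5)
The plan is to work directly with the middle components on the spherically symmetric background and to use the conformal structure of the source-free Maxwell equations in four dimensions. Write $g_{\RN}=f_Q(-\dd t^2+\dd r_*^2)+r^2\dd\omega^2$. The Maxwell system $\dd F=0$, $\dd\starG F=0$ is conformally invariant on two-forms. Dividing by $r^2$ therefore replaces the problem by Maxwell's equations on $\widetilde g=r^{-2}f_Q(-\dd t^2+\dd r_*^2)+\dd\omega^2$. In the $(t,r_*)$ directions this is a two-dimensional Lorentzian metric conformal to the flat one, and the angular factor is the round sphere. I would define the candidate scalars
\[
        \Phi^{E}=r^2\rho_F,\qquad \Phi^{B}=r^2\sigma_F ,
\]
which are, up to sign, the $\dd t\wedge\dd r_*$ coefficient of $\starG F$ and of $F$ in the rescaled picture. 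I would then project onto $Y_{\ell m}$ with $\ell\ge1$. The $\ell=0$ parts are exactly $q_E,q_B$ by \eqref{eq:middle_charge_matrix}, so they vanish after the charge subtraction \eqref{eq:intro_rad}.

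The core computation splits the equations into their $(t,r_*)$ and angular parts. Contracting $\dd F=0$ (respectively $\dd\starG F=0$) with two angular directions gives $\partial_t\Phi^{B}$ and $\partial_{r_*}\Phi^{B}$ as angular curls of the mixed components $F_{tA}$ and $F_{r_*A}$. The remaining equations involve one angular and one $(t,r_*)$ direction. They give the $(t,r_*)$ derivatives of those mixed components in terms of $\sphgrad\Phi^{E}$ and $\sphgrad\Phi^{B}$, with coefficient $f_Q/r^2$ coming from $\widetilde g^{AB}$ versus the quotient metric. Eliminating the mixed components yields
\[
        (-\partial_t^2+\partial_{r_*}^2)\Phi=-\frac{f_Q}{r^2}\sphlap\Phi ,
\]
and on the $Y_{\ell m}$ mode this is \eqref{eq:app_rn_radial}--\eqref{eq:app_rn_potential}. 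The point to verify is that no term of the form $\partial_{r_*}(r^2)$ or $\partial_{r_*}f_Q$ survives. This holds because $r^2$ is precisely the conformal weight making the sphere round, so the only metric information left is the ratio $f_Q/r^2$. Hence $Q$ enters only through $f_Q$, and the $(1-s^2)$-type curvature term of the general Regge-Wheeler family vanishes for $s=1$. I expect this bookkeeping to be the main obstacle. I would check it once in an orthonormal $(t,r_*)$ frame by verifying that the cross terms cancel identically, rather than through Newman-Penrose spin coefficients. I would also remark that adding a $Q^2r^{-4}$ term corresponds to a different, $Q$-dependent choice of variable, as the statement asserts.

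For the relation to the extreme components and the energy isomorphism I would use the last line of \eqref{eq:middle_transport_hodge}. At $a=0$ it expresses $\sphgrad\rho_F$ and ${}^\star\sphgrad\sigma_F$ through $e_3\alpha$ and $e_4\underline\alpha$ plus zeroth-order terms. The first two lines express $e_4\varphi$ and $e_3\varphi$ through $\sphdiv\alpha$, $\sphcurl\alpha$ and the corresponding quantities for $\underline\alpha$. Together with Lemma~\ref{lem:hodge_inversion}, which is applicable because $\ell\ge1$ gives zero mean, this shows that $\Phi$ and $(\alpha,\underline\alpha)$ determine each other by a fixed angular Hodge transform composed with the smooth nonzero radial weight $r^2$. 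The same lines give the equivalence of energies: the spin-one energy of $\Phi$ controls $\rho_F,\sigma_F$ and one derivative, and hence $(\alpha,\underline\alpha)$. Conversely, Lemma~\ref{lem:frame_energy_equiv} bounds all components by the non-degenerate Maxwell energy. Commuting with the spherically symmetric fields $T$, $\Phi$, $r\partial_r$ and angular derivatives preserves every step, which gives the isomorphism at each finite order. The red-shift weight near $\Hp$ is the regular rescaling of Subsection~\ref{subsec:regular_frame_master}.
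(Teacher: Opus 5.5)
Your derivation of the radial equation is correct, and it takes a different route from the paper. The paper represents the odd sector by the gauge potential $A^o=a_{\ell m}\mathcal X_A\,\dd\omega^A$, so that $\dd F=0$ holds automatically. It then computes only the angular component of $\nabla^\mu F_{\mu\nu}=0$ and obtains the even sector by four-dimensional duality. You instead eliminate the mixed components $F_{tA},F_{r_*A}$ from the first-order system on the conformally rescaled product metric. This treats both polarizations at once. It also shows directly why $Q$ enters only through $f_Q/r^2$: that factor multiplies only angular gradients in the mixed equations and is never differentiated in $r_*$.

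Let $B$ and $\widetilde E$ be the unit-sphere area-form coefficients of $F$ and $\starG F$; these are $\pm2r^2\sigma_F$ and $\pm r^2\rho_F$. They are not the $\dd t\wedge\dd r_*$ coefficients as you wrote, but nothing depends on this. Up to convention-dependent signs one has $\partial_aB=\sphcurl F_{a\cdot}$, $\partial_t\widetilde E=\sphdiv F_{r_*\cdot}$ and $\partial_{r_*}\widetilde E=\sphdiv F_{t\cdot}$, with $\sphdiv,\sphcurl$ on the unit sphere. Eliminating the mixed components then gives exactly $(-\partial_t^2+\partial_{r_*}^2)\Phi=-(f_Q/r^2)\sphlap\Phi$.

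The gap is in the energy isomorphism. With your unnormalized variable $\Phi=r^2\varphi$, it fails at the same order. On a single odd mode, $r^2q_{\ell m}=-\ell(\ell+1)a_{\ell m}$, while the mixed components are $\partial_a a_{\ell m}\,\mathcal X_A$ with $\|\mathcal X\|^2_{L^2(\Sph)}=\ell(\ell+1)$. Hence the spin-one energy of $r^2\sigma_F$ on that mode is comparable to $\ell(\ell+1)$ times its Maxwell energy. Equivalently, since $e_4(r^2\varphi)\simeq r^2(\sphdiv\alpha-i\sphcurl\alpha)$ modulo lower order, $\partial\Phi$ sits one angular derivative above $(\alpha,\underline\alpha)$.

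Your converse step cites Lemma~\ref{lem:frame_energy_equiv}, but that lemma bounds $\varphi$ only in $L^2$. It does not bound $\partial_t(r^2\varphi)$, $\partial_{r_*}(r^2\varphi)$ or $\sphgrad(r^2\varphi)$. What your argument actually yields is: order-$k$ Maxwell energy $\lesssim$ order-$k$ spin-one energy $\lesssim$ order-$(k+1)$ Maxwell energy. That is a one-derivative loss, which the same-order transfer cannot afford (cf.\ Lemma~\ref{lem:same_order_needed}).

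The missing ingredient is normalization by $(-\sphlap)^{-1/2}$ on $\ell\ge1$, i.e.\ $\Phi_{\ell m}=r^2\varphi_{\ell m}/\sqrt{\ell(\ell+1)}$. This is the paper's choice $\Phi^o_{\ell m}=\sqrt{\ell(\ell+1)}\,a_{\ell m}=-r^2q_{\ell m}/\sqrt{\ell(\ell+1)}$, and it is the ``fixed angular Hodge transform'' of the statement. It commutes with your mode-wise equation, so the equation is unaffected. With it, the relations above and the $L^2(\Sph)$ isometry $\omega\mapsto\bigl((-\sphlap)^{-1/2}\sphdiv\omega,\,(-\sphlap)^{-1/2}\sphcurl\omega\bigr)$ on one-forms make $|\partial_t\Phi|^2+|\partial_{r_*}\Phi|^2$, summed over both polarizations, equal to the unit-sphere $L^2$ norm of $(F_{t\cdot},F_{r_*\cdot})$. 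In addition, $V_{\ell,Q}|\Phi_{\ell m}|^2=f_Qr^2|\varphi_{\ell m}|^2$. Together with the red-shift weight at $\Hp$ and commutation with $\mathbb D_k$, this gives the same-order equivalence.
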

\begin{proof}
We carry out the spherical reduction with the radial weight displayed. Write
\[
        g_{\RN}=h_{ab}\,\dd x^a\dd x^b+r^2\gamma_{AB}\,\dd\omega^A\dd\omega^B,
        \qquad x^a=(t,r_*),
        \qquad h=f_Q(-\dd t^2+\dd r_*^2),
\]
so that \(|g_{\RN}|^{1/2}=f_Qr^2|\gamma|^{1/2}\),
\(g^{tt}=-f_Q^{-1}\), \(g^{r_*r_*}=f_Q^{-1}\), and
\(g^{AB}=r^{-2}\gamma^{AB}\). For \(\ell\ge1\) let
\(Y=Y_{\ell m}\) and \(\mathcal X_A=\epsilon_A{}^{B}\nabla_B Y\). The odd
sector may be represented locally by the gauge potential
\[
        A^o=a_{\ell m}(t,r_*)\mathcal X_A\,\dd\omega^A.
\]
Then
\begin{equation}\label{eq:odd_field_from_potential}
        F^o=\partial_ta_{\ell m}\,\dd t\wedge \mathcal X
             +\partial_{r_*}a_{\ell m}\,\dd r_*\wedge\mathcal X
             -\ell(\ell+1)a_{\ell m}Y\,\dd\mu_{\Sph},
\end{equation}
where \(\dd_\omega\mathcal X=-\ell(\ell+1)Y\dd\mu_{\Sph}\) fixes the sign convention.
The Bianchi equation is therefore automatic. If the tensor expansion is written
with an angular coefficient \(q_{\ell m}Y r^2\dd\mu_{\Sph}\), then
\begin{equation}\label{eq:rn_radial_weight_relation}
        r^2q_{\ell m}=-\ell(\ell+1)a_{\ell m},
        \qquad
        \Phi^o_{\ell m}=\sqrt{\ell(\ell+1)}\,a_{\ell m}
        =-\frac{r^2q_{\ell m}}{\sqrt{\ell(\ell+1)}}.
\end{equation}
This yields the smooth nonzero radial weight that appears in the statement.

The remaining Maxwell equation is the \(A\)-component of \(\nabla^\mu F_{\mu\nu}=0\).
Using the inverse metric above and the spherical identity
\(\nabla^B_\gamma(\dd_\omega\mathcal X)_{BA}=\ell(\ell+1)\mathcal X_A\) with the
sign convention of \eqref{eq:odd_field_from_potential}, we compute
\begin{align}\label{eq:odd_divergence_computation}
0&=|g|^{1/2}\nabla^\mu F^o_{\mu A}                                      \notag\\
 &=|\gamma|^{1/2}\Bigl(-\partial_t^2a_{\ell m}+
       \partial_{r_*}^2a_{\ell m}
       -f_Q(r)\frac{\ell(\ell+1)}{r^2}a_{\ell m}\Bigr)\mathcal X_A.
\end{align}
Hence \(\Phi^o_{\ell m}=\sqrt{\ell(\ell+1)}a_{\ell m}\) satisfies
\begin{equation}\label{eq:odd_master_derivation}
        -\partial_t^2\Phi^o_{\ell m}+\partial_{r_*}^2\Phi^o_{\ell m}
        -f_Q\frac{\ell(\ell+1)}{r^2}\Phi^o_{\ell m}=0,
\end{equation}
which is equivalent to \eqref{eq:app_rn_radial}. The even sector is obtained by
applying the four-dimensional Hodge star, which commutes with the source-free
Maxwell system and exchanges exact and coexact vector harmonics, or by repeating
\eqref{eq:odd_divergence_computation} with \(\sphgrad Y\). Hence both
polarizations have the same potential \eqref{eq:app_rn_potential}.

The calculation shows that the charge parameter enters the fixed-background test
problem only through the warping factor \(f_Q\) and the tortoise relation
\(\dd r=f_Q\dd r_*\). The unit-sphere Laplacian contributes exactly
\(\ell(\ell+1)\); no separate curvature potential is produced for the
conformally invariant Maxwell two-form. Extra \(Q\)-dependent potentials arise
in the coupled Einstein-Maxwell perturbation system, or after non-conformal
master changes, not for the gauge-invariant test-field variables above.

To finish, the map from the tensor field to
$(\Phi^e_{\ell m},\Phi^o_{\ell m})$ uses only $-\sphlap^{-1}$ on
$\ell\ge1$, the explicit radial weight in \eqref{eq:rn_radial_weight_relation},
and the first-order constraints. The spectral gap $\ell(\ell+1)\ge2$ and
Lemma~\ref{lem:frame_energy_equiv} therefore give two-sided equivalence between
the charge-free Maxwell energy and the sum of the spin-one energies at every
finite commuted order.
\end{proof}

\begin{lemma}
\label{lem:app_photon_sphere}
For $0<\eps_Q<1/4$ and every $\ell\ge1$ the potential $V_{\ell,Q}$ is positive on
$\{r>r_+(Q)\}$, vanishes at $r_+(Q)$ and as $r\to\infty$, and its trapping
location, the maximum of $f_Q/r^2$, is the single value
\begin{equation}\label{eq:app_photon_sphere}
        r_{\mathrm{ph}}(Q)=\tfrac12\bigl(3M+\sqrt{9M^2-8Q^2}\bigr),
\end{equation}
at which $\frac{\dd}{\dd r}(f_Q r^{-2})=0$ and $\frac{\dd^2}{\dd r_*^2}(f_Q r^{-2})<0$,
uniformly for $|Q|\le\eps_QM$.
\end{lemma}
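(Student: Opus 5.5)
The plan is a direct computation with the explicit function $V(r)=f_Q(r)r^{-2}=r^{-2}-2Mr^{-3}+Q^2r^{-4}$. Since $\ell(\ell+1)\ge2$ is a positive constant factor, $V_{\ell,Q}$ inherits every sign and critical-point property from $V$. The argument has three steps: positivity and vanishing, location of the critical point, and nondegeneracy in the tortoise variable, with all constants tracked uniformly in $Q$.

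\emph{Positivity and vanishing.} We have $r^4V=r^2-2Mr+Q^2=\Delta_0$, whose largest root is $r_+(Q)$. Since $|Q|\le\eps_QM<M$, this root is simple. It follows that $V>0$ for $r>r_+(Q)$, that $V(r_+(Q))=0$, and that $V=O(r^{-2})\to0$ as $r\to\infty$.

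\emph{Critical point.} Differentiating gives $\frac{\dd}{\dd r}V=-2r^{-3}+6Mr^{-4}-4Q^2r^{-5}=-2r^{-5}(r^2-3Mr+2Q^2)$. The quadratic $r^2-3Mr+2Q^2$ has roots $\tfrac12(3M\pm\sqrt{9M^2-8Q^2})$. I will check that the smaller root is $\le r_+(Q)$; at $Q=0$ it is $0<2M$, and for small $Q$ it is $O(Q^2/M)$, well below $r_+\approx2M$. Hence the only critical point in the exterior is $r_{\mathrm{ph}}(Q)$ from \eqref{eq:app_photon_sphere}. The derivative is positive on $(r_+,r_{\mathrm{ph}})$ and negative beyond, so $r_{\mathrm{ph}}$ is the unique maximum.

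\emph{Nondegeneracy.} At the critical point, $\frac{\dd^2}{\dd r_*^2}V=f_Q^2V''+f_Qf_Q'V'=f_Q^2V''$ because $V'=0$ there. Writing $V'=-2r^{-5}q(r)$ with $q(r_{\mathrm{ph}})=0$, we get $V''(r_{\mathrm{ph}})=-2r_{\mathrm{ph}}^{-5}q'(r_{\mathrm{ph}})=-2r_{\mathrm{ph}}^{-5}\sqrt{9M^2-8Q^2}<0$. The prefactor $f_Q(r_{\mathrm{ph}})^2$ is positive.

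\emph{Uniformity.} All the quantities involved ($r_+$, $r_{\mathrm{ph}}$, $f_Q(r_{\mathrm{ph}})$, $\sqrt{9M^2-8Q^2}\ge M$) depend continuously on $Q\in[-\eps_QM,\eps_QM]$ and are bounded away from degeneracy there. By compactness, both the negativity bound and the separation $r_{\mathrm{ph}}-r_+\ge c M$ are uniform.

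The only step needing any care is verifying that the smaller root of $r^2-3Mr+2Q^2$ lies below $r_+(Q)$, equivalently that exactly one critical point lies in the exterior. I would settle this by evaluating $q(r_+)=r_+^2-3Mr_++2Q^2$. Using $r_+^2=2Mr_+-Q^2$ gives $q(r_+)=Q^2-Mr_+<0$ for $Q^2<M^2$, which places $r_+$ strictly between the two roots.
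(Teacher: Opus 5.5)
Your proposal is correct and follows the same direct computation as the paper. It uses positivity of $f_Q$ outside $r_+(Q)$, the critical-point equation $r^2-3Mr+2Q^2=0$, and the identity $\tfrac{\dd^2}{\dd r_*^2}V=f_Q^2V''$ at the critical point. Your two explicit checks make precise two points the paper only asserts. The first is $q(r_+)=Q^2-Mr_+<0$, which places the smaller root inside the horizon. The second is $V''(r_{\mathrm{ph}})=-2r_{\mathrm{ph}}^{-5}\sqrt{9M^2-8Q^2}<0$, which shows the maximum is non-degenerate.
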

\begin{proof}
Since $f_Q>0$ on $\{r>r_+(Q)\}$ and $\ell(\ell+1)\ge2$, the potential $V_{\ell,Q}$ in
\eqref{eq:app_rn_potential} is positive on $\{r>r_+(Q)\}$, and $V_{\ell,Q}$
vanishes where $f_Q$ does and decays like $r^{-2}$ at infinity. Compute
\[
        \frac{\dd}{\dd r}\Bigl(\frac{f_Q}{r^2}\Bigr)
        =\frac{\dd}{\dd r}\Bigl(\frac1{r^2}-\frac{2M}{r^3}+\frac{Q^2}{r^4}\Bigr)
        =-\frac{2}{r^3}+\frac{6M}{r^4}-\frac{4Q^2}{r^5}
        =\frac{-2r^2+6Mr-4Q^2}{r^5}.
\]
The numerator vanishes at $r^2-3Mr+2Q^2=0$, i.e.\ at $r=\frac12(3M\pm\sqrt{9M^2-8Q^2})$;
the larger root \eqref{eq:app_photon_sphere} lies in $(r_+(Q),\infty)$ and the
smaller one is inside the horizon for $|Q|<M$. Since $f_Q/r^2\to0$ at both ends
and is positive in between, the unique exterior critical point is a maximum, and
$\frac{\dd^2}{\dd r_*^2}(f_Q r^{-2})=f_Q\frac{\dd}{\dd r}(f_Q\frac{\dd}{\dd r}(f_Q r^{-2}))<0$
there. Smooth non-degenerate dependence of $r_{\mathrm{ph}}(Q)$, $r_+(Q)$ on $Q$
gives uniformity for $|Q|\le\eps_QM$.
\end{proof}

\begin{proposition}
\label{prop:app_rn_model}
The Reissner-Nordstr\"om exterior is stationary, spherically symmetric, has a
non-degenerate Killing horizon ($\kappa_+(Q)>0$) and a single normally
hyperbolic photon sphere \eqref{eq:app_photon_sphere}; therefore it lies in the
class of the Maxwell local-energy theorem of \cite{SterbenzTataru}.
Combining that estimate in the charge-free $\ell\ge1$ sector with the red-shift
current of \cite{DafermosRodnianskiRedshift} and the $r^p$ current of
\cite{DafermosRodnianskiRp} gives \eqref{eq:rn_model_morawetz} for every fixed
order $k$, with constants uniform for $|Q|\le\eps_QM$.
\end{proposition}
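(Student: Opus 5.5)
The proof will verify the geometric hypotheses of the Sterbenz-Tataru class for Reissner-Nordstr\"om, using the lemmas just proved. Then it will assemble the three currents (trapped-set Morawetz, red-shift, $r^p$) into the model norm at every finite commuted order. Stationarity and spherical symmetry are immediate from $g_{\RN}=-f_Q\,\dd t^2+f_Q^{-1}\dd r^2+r^2\dd\omega^2$. The horizon $r_+(Q)=M+\sqrt{M^2-Q^2}$ is a simple zero of $f_Q$, so the surface gravity $\kappa_+(Q)=\tfrac12 f_Q'(r_+)=(r_+-r_-)/(2r_+^2)$ is positive for $|Q|<M$. The trapped null geodesics are exactly the critical points of $f_Q/r^2$, and Lemma~\ref{lem:app_photon_sphere} shows there is a single one, $r_{\mathrm{ph}}(Q)$, with strictly negative second $r_*$-derivative. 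By the Hamiltonian computation in Lemma~\ref{lem:quantitative_rn_commutant}, this gives a hyperbolic fixed point of the reduced radial flow with Lyapunov exponent $\simeq\sqrt{-\partial_{r_*}^2(f_Qr^{-2})}\,|\xi_\omega|$, i.e.\ normal hyperbolicity of the photon sphere. These are the hypotheses of \cite{SterbenzTataru}.

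Next I would apply the Sterbenz-Tataru estimate to the charge-free field. Lemma~\ref{lem:app_rn_radial} shows that after removing $\ell=0$ the field is carried by the Regge-Wheeler scalars $\Phi_{\ell m}$, with the exact barrier potential $V_{\ell,Q}$. It also shows that this transform is a two-sided energy isomorphism at every order. The cited local-energy bound for $F$ therefore transfers to the spin-one variables in the compact region, with a bulk that degenerates only quadratically at $r_{\mathrm{ph}}(Q)$, matching the weight $w_{\mathrm{tr}}$ of Definition~\ref{def:concrete_le_norms}. For the inhomogeneous form with $P_{\RN,Q}\Gamma^I u$ on the right, I would use Duhamel or the commutant identity \eqref{eq:rn_commutator_symbol}, pairing the source in $LE^*$ against $LE^1_{\mathrm{deg}}$ and absorbing by Cauchy-Schwarz.

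The estimate is then glued with a radial partition of unity. Near $r_+$, the red-shift current of \cite{DafermosRodnianskiRedshift} (Proposition~\ref{prop:redshift_coercivity} with $a=0$) upgrades the degenerate $T$-energy to the non-degenerate $N$-energy and controls the horizon flux. In the far region, Proposition~\ref{prop:rp_identity} supplies the $r^p$ fluxes for $0\le p\le2$. The cutoff errors are supported in compact annuli away from $r_{\mathrm{ph}}$, where the Morawetz bulk is non-degenerate and absorbs them. Higher orders follow by commuting with $T$ and the angular momentum operators, which are Killing and commute exactly with $P_{\RN,Q}$. Commuting with $r\partial_r$ and the red-shift vector gives lower-order terms, handled inductively as in Lemma~\ref{lem:finite_order_absorption_induction}; at $a=0$ there is no small-parameter term, and \eqref{eq:strict_lower_order_induction} supplies the $\eta$-absorption.

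I expect the main obstacle to be uniformity in $Q$ for $|Q|\le\eps_QM$. The cited theorem gives a constant for each fixed background, so I would check that every quantity it depends on varies continuously on the compact parameter set and stays non-degenerate there: $\kappa_+(Q)$, the gap $r_{\mathrm{ph}}-r_+$, the curvature of the potential peak, and the asymptotic constants. Alternatively, I would rerun the explicit commutant of Lemma~\ref{lem:quantitative_rn_commutant}, which is visibly smooth in $Q$, rather than relying only on the black-box constant.
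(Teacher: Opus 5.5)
Your proposal follows the paper's proof step for step. The paper likewise checks $\kappa_+(Q)>0$ and the single photon sphere of Lemma~\ref{lem:app_photon_sphere}, invokes Sterbenz--Tataru in the $\ell\ge1$ sector, transfers the estimate through the energy isomorphism of Lemma~\ref{lem:app_rn_radial}, adds the red-shift and $r^p$ currents at $a=0$, sums over $\mathbb D_k$ and $\ell$, and gets uniformity from smooth non-degenerate dependence on $Q$; your argument is correct and simply more detailed on the gluing and the inhomogeneous term. One simplification for the higher orders: the right-hand side of \eqref{eq:rn_model_morawetz} already contains $P_{\RN,Q}\Gamma^Iu$, so it suffices to apply the order-zero estimate to each $\Gamma^Iu$ and sum, and no commutator analysis is needed. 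This is just as well, because commuting with $r\partial_r$ or the red-shift field does not produce strictly lower-order terms, and appealing to \eqref{eq:strict_lower_order_induction} would import an (A2) hypothesis into what is meant to be an unconditional model estimate.
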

\begin{proof}
The horizon $r_+(Q)$ is non-degenerate because $f_Q'(r_+)=2\kappa_+(Q)>0$ for
$|Q|<M$. Lemma~\ref{lem:app_photon_sphere} gives the single photon sphere; its
normal hyperbolicity is the content of Section~\ref{app:trapping}. These are
exactly the structural conditions under which \cite{SterbenzTataru} proves a
Maxwell integrated local-energy estimate with a single photon-sphere
degeneracy. In the charge-free $\ell\ge1$ sector the spin-one scalars
\eqref{eq:app_rn_radial} are equivalent to the Maxwell field by
Lemma~\ref{lem:app_rn_radial}, so the estimate transfers to the spin-one energy.
The non-degenerate horizon term is supplied by the red-shift multiplier of
\cite{DafermosRodnianskiRedshift} (Proposition~\ref{prop:redshift_coercivity} at
$a=0$), and the weighted far-field fluxes for $0\le p\le2$ by the $r^p$ identity
of \cite{DafermosRodnianskiRp} (Proposition~\ref{prop:rp_identity} at $a=0$);
the photon-sphere-degenerate bulk is the Morawetz density
\eqref{eq:morawetz_density}. Summing over the finite commutator family
$\mathbb D_k$ and over $\ell\ge1$ gives \eqref{eq:rn_model_morawetz}. Choosing
$\eps_Q<1/4$ keeps $r_+(Q),\kappa_+(Q),r_{\mathrm{ph}}(Q)$ and the potentials in
fixed compact non-degenerate ranges, so the constant $C_{\RN,k}$ is uniform.
\end{proof}

\section{Trapping and High-Frequency Commutator}
\label{app:trapping}
In this section we analyze the trapped set and the high-frequency commutator which appears in the slow rotation argument.
In this section we supply the trapping analysis used later: Lemma~\ref{lem:trapping_stability}, the escape function of Lemma~\ref{lem:quantitative_rn_commutant}, and the high-frequency defect-measure exclusion needed in Propositions~\ref{prop:compact_remainder_removal} and~\ref{prop:no_real_kernel}.

\subsection*{The Reissner-Nordstr\"om trapped set}
In canonical tortoise variables $(r_*,\xi_{r_*})$ the Reissner-Nordstr\"om null
symbol is
\[
        p_{0,Q}=f_Q^{-1}\bigl(-\tau^2+\xi_{r_*}^2\bigr)+r^{-2}|\xi_\omega|^2,
        \qquad f_Q=1-\frac{2M}{r}+\frac{Q^2}{r^2}.
\]
Since $f_Q>0$ in the exterior, we use the positive rescaling
\[
        p^{\sharp}_{0,Q}=f_Qp_{0,Q}=-\tau^2+\xi_{r_*}^2+f_Qr^{-2}|\xi_\omega|^2.
\]
The characteristic set and bicharacteristics are unchanged up to this positive
reparametrization of the Hamilton flow. The equation $p^{\sharp}_{0,Q}=0$ therefore reads
$\tau^2=\xi_{r_*}^2+f_Q r^{-2}|\xi_\omega|^2$, and
\[
        H_{p^{\sharp}_{0,Q}}r_*=2\xi_{r_*},
        \qquad
        H_{p^{\sharp}_{0,Q}}\xi_{r_*}=-\partial_{r_*}(f_Qr^{-2})|\xi_\omega|^2.
\]
The trapped set is therefore
\begin{equation}\label{eq:app_trapped_set}
        K_{0,Q}=\bigl\{\xi_{r_*}=0,\ \partial_{r_*}(f_Q r^{-2})=0\bigr\}
        =\bigl\{r=r_{\mathrm{ph}}(Q),\ \xi_{r_*}=0\bigr\}\cap\{p^{\sharp}_{0,Q}=0\},
\end{equation}
with $r_{\mathrm{ph}}(Q)$ of \eqref{eq:app_photon_sphere}.

\begin{lemma}
\label{lem:app_normal_hyp}
$K_{0,Q}$ is a smooth codimension-two submanifold of the characteristic set
$\{p^{\sharp}_{0,Q}=0\}$ with symplectic radial normal bundle, in the sense made
precise in Proposition~\ref{prop:r_normal_hyperbolicity}(i), on which the
rescaled null flow is $r$-normally hyperbolic for every $r$, with one expanding
and one contracting normal direction; moreover the stable and unstable tails
$\Gamma_\pm$ are smooth of codimension one. Under the $C^2$-small stationary
perturbation \eqref{eq:app_metric_difference} the trapped set persists as a
$C^1$ graph $K_{a,Q}$ over $K_{0,Q}$, again $r$-normally hyperbolic, and
\eqref{eq:trapping_stability} holds with constants uniform for
$|a|\le\eps_aM$, $|Q|\le\eps_QM$.
\end{lemma}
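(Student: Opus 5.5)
The plan is to prove Lemma~\ref{lem:app_normal_hyp} in three stages: an explicit linearization at $a=0$, the stable-manifold geometry in the spherical case, and a perturbation argument for $a\neq0$.

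\emph{Stage 1: the spherical model.} On $\{p^{\sharp}_{0,Q}=0\}$ I will use the variables $(r_*,\xi_{r_*})$ together with the angular and temporal covariables $(\omega,\xi_\omega,\tau)$. The functions $\tau$ and $|\xi_\omega|^2$ are conserved by the flow, since $t$ is cyclic and the flow is spherically symmetric. The reduced radial Hamiltonian is
\[
\xi_{r_*}^2+V(r_*)|\xi_\omega|^2,\qquad V=f_Qr^{-2}.
\]
By Lemma~\ref{lem:app_photon_sphere}, $V$ has a single nondegenerate maximum at $r_{\mathrm{ph}}(Q)$. The trapped set \eqref{eq:app_trapped_set} is therefore cut out by the two independent equations $r_*=r_{\mathrm{ph},*}$ and $\xi_{r_*}=0$, restricted to the characteristic set where $|\xi_\omega|\neq0$. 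Their differentials $\dd r_*$ and $\dd\xi_{r_*}$ are independent, and they are independent of $\dd p^{\sharp}$ there, because $\partial_{r_*}p^\sharp=0$ and $\partial_{\xi_{r_*}}p^\sharp=0$ on $K_{0,Q}$. So $K_{0,Q}$ is a smooth codimension-two submanifold. Its normal bundle is spanned by $\partial_{r_*}$ and $\partial_{\xi_{r_*}}$, which is a symplectic plane.

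Linearizing $\dot r_*=2\xi_{r_*}$ and $\dot\xi_{r_*}=-V'(r_*)|\xi_\omega|^2$ gives the matrix
\[
\begin{pmatrix}0&2\\ -V''|\xi_\omega|^2&0\end{pmatrix}.
\]
Its eigenvalues are $\pm\lambda$ with $\lambda=\sqrt{2|V''|}\,|\xi_\omega|>0$, so there is one expanding and one contracting normal direction. The flow on $K_{0,Q}$ itself is the geodesic flow of the round sphere (rescaled) together with translation in $t$. This tangential flow has zero Lyapunov exponents. Hence the normal rates dominate every power $r$ of the tangential rate, which is $r$-normal hyperbolicity for every $r$ in the sense of \cite{WunschZworski}.

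\emph{Stage 2: the tails.} The tails are
\[
\Gamma_\pm=\Bigl\{\xi_{r_*}=\mp\operatorname{sgn}(r_*-r_{\mathrm{ph},*})\sqrt{V(r_{\mathrm{ph}})-V(r_*)}\,|\xi_\omega|\Bigr\}.
\]
These are the separatrices of the one-dimensional pendulum-type system, intersected with $\{p^\sharp=0\}$. Since $V$ has a nondegenerate maximum, the square root is smooth through $r_{\mathrm{ph}}$ after choosing the correct branch, so $\Gamma_\pm$ are smooth hypersurfaces in the characteristic set. They intersect transversally along $K_{0,Q}$.

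\emph{Stage 3: persistence under rotation.} By Lemma~\ref{lem:app_inverse_expansion}, after dividing by $|\xi|^2$ and working on a compact normalized shell near $r_{\mathrm{ph}}$, the symbol $p_{a,Q}$ is $C^\infty$-close to $p_{0,Q}$, with error $O(|a|/M)$ uniformly in $|Q|\le\eps_QM$. I will then invoke the Hirsch--Pugh--Shub persistence theorem \cite{HirschPughShub}, in the form used in \cite{Dyatlov,WunschZworski} for homogeneous Hamiltonian flows. It shows that $K_{a,Q}$, $\Gamma_\pm^{a,Q}$ and the stable and unstable bundles persist as $C^1$ graphs, with expansion constants within $\delta$ of those at $a=0$. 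Uniformity in $Q$ follows because the $a=0$ constants depend continuously on $Q$ in the compact range. The bound \eqref{eq:trapping_stability} then follows by choosing the defining function $\rho_{K_{a,Q}}=x_ux_s$ built from the perturbed stable and unstable defining functions.

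\emph{Main obstacle.} The hard step is applying persistence, because $K_{0,Q}$ is noncompact: it is conic in the fibres and extends in $t$. I will handle this by passing to the quotient by the $t$-translations and the $\mathbb R_+$ scaling. This reduces the problem to a compact normalized shell with $\tau$ in a compact interval. The perturbation is stationary and homogeneous, so it descends to this quotient.
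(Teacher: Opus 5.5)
Your three stages follow the paper's argument: radial linearization at the photon sphere via Lemma~\ref{lem:app_photon_sphere}, neutral angular directions, then Hirsch--Pugh--Shub persistence using the $C^2$-smallness of Lemma~\ref{lem:app_inverse_expansion}. Your explicit separatrix formula for $\Gamma_\pm$ and the reduction to a compact normalized shell usefully fill in what the paper only cites.

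The step that fails is the last one: \eqref{eq:trapping_stability} does not follow for $\rho_{K_{a,Q}}=x_ux_s$, and is in fact false for that choice.
\begin{itemize}
\item Take $x_u,x_s$ to be defining functions of the invariant tails $\Gamma_\pm$. Then $x_ux_s$ vanishes identically on the flow-invariant set $\Gamma_+\cup\Gamma_-$, so $H_p(x_ux_s)$ and $H_p^2(x_ux_s)$ vanish there as well.
\item At points of $\Gamma_\pm\setminus K$ arbitrarily close to $K$ one has $r_*\neq r_{\mathrm{ph},*}$. Hence $\xi_{r_*}=\mp\operatorname{sgn}(r_*-r_{\mathrm{ph},*})\sqrt{V(r_{\mathrm{ph}})-V(r_*)}\,|\xi_\omega|\neq0$.
\item So at those points the right side of \eqref{eq:trapping_stability} is positive while the left side is zero.
\end{itemize}
With linear normal coordinates the same failure occurs to leading order. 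Indeed $H_p(x_ux_s)=(\lambda-\lambda)x_ux_s+O(|x|^3)$, because $-x_ux_s$ agrees with the conserved quantity $\tau^2-V(r_{\mathrm{ph}})|\xi_\omega|^2$ up to $O(|r_*-r_{\mathrm{ph},*}|^3)$. Note also that $x_ux_s$ has zero differential on $K$ and vanishes on a set strictly larger than $K$, so it is not a defining function of the trapped set.

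The estimate is meant for a radial defining function. Your perturbed coordinates give it if you use the sum instead of the product. Set $\rho=x_u+x_s$, which to first order is a positive multiple of $r_*-r_{t,*}$, with $r_t$ the perturbed trapped radius. Then:
\begin{itemize}
\item $H_p\rho=\lambda(x_u-x_s)+O(|x|^2)$ is comparable to $\xi_{r_*}$;
\item $H_p^2\rho=\lambda^2(x_u+x_s)+O(|x|^2)$ is comparable to $\rho$;
\item shrinking the collar absorbs the quadratic errors, giving $|H_p\rho|+|H_p^2\rho|\simeq|\rho|+|\xi_{r_*}|$, with constants governed by the uniform lower bound on $\lambda$.
\end{itemize}
At $a=0$ this reduces to $H_p(r_*-r_{\mathrm{ph},*})=2\xi_{r_*}$ and $H_p^2(r_*-r_{\mathrm{ph},*})=-2V'(r_*)|\xi_\omega|^2$. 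The two-sided bound is an open condition under the $C^2$-small perturbation. This is the reading intended by the paper's one-line appeal to ``uniform expansion/contraction inequalities''.
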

\begin{proof}
At $K_{0,Q}$ the radial linearization of the flow is governed by
$-\tfrac12\partial_{r_*}^2(f_Q r^{-2})|\xi_\omega|^2>0$ by
Lemma~\ref{lem:app_photon_sphere}, so the radial pair $(r_*,\xi_{r_*})$ has one
positive and one negative Lyapunov exponent, while the symplectically
orthogonal angular directions are tangent to $K_{0,Q}$ and neutral. This is the
$r$-normal hyperbolicity of the photon sphere; for Schwarzschild it is the
explicit computation at $r=3M$, $\xi_{r_*}=0$ of \cite[\S2]{WunschZworski}, and
the smooth incoming/outgoing tails of codimension one are identified there. By
the structural-stability theorem for $r$-normally hyperbolic invariant
manifolds \cite{HirschPughShub}, used in this microlocal setting by \cite{WunschZworski} and in the
full subextremal Kerr range by \cite{Dyatlov}, any $C^1$-small perturbation of
the Hamiltonian vector field preserves $K$, $\Gamma_\pm$ and the exponent
bounds. The perturbation of the rescaled principal symbol from $p^{\sharp}_{0,Q}$ to the
corresponding Kerr-Newman rescaling $p^{\sharp}_{a,Q}$ is $C^2$-small by
Lemma~\ref{lem:app_inverse_expansion} after dividing by $|\xi|^2$,
so $K_{a,Q}$ is a $C^1$ graph over $K_{0,Q}$ and the expansion/contraction
estimate \eqref{eq:trapping_stability} holds uniformly after shrinking
$\eps_a,\eps_Q$.
\end{proof}

\subsection*{The escape function}
\begin{lemma}
\label{lem:app_escape}
There is $b_Q\in C^\infty(r_+,\infty)$, supported away from the red-shift and
far-field collars, with $b_Q(r_{\mathrm{ph}}(Q))=0$, $b_Q'>0$ in a photon-sphere
collar, and such that $a_Q=b_Q(r)\,\xi_{r_*}$ satisfies
\eqref{eq:quantitative_rn_commutant}:
\begin{equation}\label{eq:app_escape}
        H_{p^{\sharp}_{0,Q}}a_Q\ge c\bigl(\xi_{r_*}^2+r^{-2}|\xi_\omega|^2\operatorname{dist}(r,r_{\mathrm{ph}}(Q))^2\bigr)
        -C\chi_{K_{0,Q}}\tau^2,
\end{equation}
$c>0$ uniform for $|Q|\le\eps_QM$.
\end{lemma}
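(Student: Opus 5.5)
We construct $b_Q$ explicitly and compute $H_{p^{\sharp}_{0,Q}}a_Q$ using the two Hamilton equations already recorded above. These are $H_{p^{\sharp}_{0,Q}}r_*=2\xi_{r_*}$ and $H_{p^{\sharp}_{0,Q}}\xi_{r_*}=-\partial_{r_*}(f_Qr^{-2})|\xi_\omega|^2$. They give
\[
        H_{p^{\sharp}_{0,Q}}(b_Q\xi_{r_*})
        =2\,\partial_{r_*}b_Q\,\xi_{r_*}^2-b_Q\,\partial_{r_*}(f_Qr^{-2})\,|\xi_\omega|^2 .
\]
We choose $b_Q$ to be a bounded increasing function of $r_*$ that vanishes exactly at $r_{\mathrm{ph}}(Q)$. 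For instance, take $b_Q=\chi(r)\arctan\bigl((r_*-r_{*,\mathrm{ph}})/M\bigr)$, where $\chi$ is a radial cutoff equal to one on a large compact interval $[r_H',R']$ and supported away from the red-shift collar and the far-field region.

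On the set where $\chi=1$, both terms are nonnegative. The first is $2b_Q'\xi_{r_*}^2$ with $b_Q'>0$. For the second, Lemma~\ref{lem:app_photon_sphere} shows that $f_Qr^{-2}$ has its unique exterior critical point at $r_{\mathrm{ph}}(Q)$ and that this point is a nondegenerate maximum. Hence $-\partial_{r_*}(f_Qr^{-2})$ has the same sign as $r_*-r_{*,\mathrm{ph}}$, and therefore the same sign as $b_Q$. Near $r_{\mathrm{ph}}$, Taylor expansion gives $-b_Q\partial_{r_*}(f_Qr^{-2})\simeq (r-r_{\mathrm{ph}})^2 M^{-4}$, using $\partial^2_{r_*}(f_Qr^{-2})<0$ there. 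Away from $r_{\mathrm{ph}}$, on the compact interval, this product is bounded below by a positive multiple of $r^{-2}$. Multiplying by $|\xi_\omega|^2$ gives the term $c\,r^{-2}|\xi_\omega|^2\operatorname{dist}(r,r_{\mathrm{ph}})^2$. Note that $\operatorname{dist}$ is bounded on the support, so the quadratic degeneracy at $r_{\mathrm{ph}}$ is the only loss.

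The main obstacle is that $b_Q'$ is not positive everywhere: the cutoff derivative $\chi'$ produces terms of indefinite sign in the collars $\operatorname{supp}\chi'$. There, $\xi_{r_*}^2$ and $|\xi_\omega|^2$ are not controlled by the commutator itself. We would handle this as follows.
\begin{enumerate}
\item On the characteristic set $p^{\sharp}_{0,Q}=0$, we have $\xi_{r_*}^2+f_Qr^{-2}|\xi_\omega|^2=\tau^2$. So every cutoff error is bounded by $C\tau^2$ times a function supported in the collars.
\item These collar errors are then absorbed into the displayed term $-C\chi_{K_{0,Q}}\tau^2$, enlarging the support of $\chi_{K_{0,Q}}$ to include the collars. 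The statement permits this, since that term is only required to be supported in a fixed compact region that is controlled elsewhere by the red-shift and $r^p$ estimates.
\item Off the characteristic set, we add a multiple of $p^{\sharp}_{0,Q}$ times a bounded symbol (a Lagrangian correction $m\,p^{\sharp}_{0,Q}$). This is the standard step, and it gives the inequality as a symbol inequality modulo the characteristic set.
\end{enumerate}

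Finally, the constants are uniform for $|Q|\le\eps_QM$. This holds because $r_{\mathrm{ph}}(Q)$, $r_+(Q)$, the second derivative at the maximum, and the tortoise map all depend smoothly on $Q$ and are nondegenerate in that range, by Lemma~\ref{lem:app_photon_sphere}. Consequently, a single choice of cutoff radii and of the scale $M$ in the $\arctan$ works for all such $Q$.
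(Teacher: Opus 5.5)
Your proposal is correct and follows essentially the same route as the paper. Both use the symbol $a_Q=b_Q(r)\xi_{r_*}$ and the same Hamilton computation, get the sign of the $|\xi_\omega|^2$ term from the nondegenerate maximum of $f_Qr^{-2}$ in Lemma~\ref{lem:app_photon_sphere} (with a Taylor expansion for the quadratic degeneracy), absorb the cutoff errors into the $\tau^2$ term through the relation $p^{\sharp}_{0,Q}=0$, and take uniformity from smooth dependence on $Q$.

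The differences are only refinements. You choose $b_Q$ explicitly as an $\arctan$ profile. You place the cutoff transition next to the red-shift and far-field collars and enlarge $\chi_{K_{0,Q}}$ to cover it, whereas the paper says its cutoff terms lie in the photon-sphere collar. You also add a Lagrangian correction off the characteristic set, which the paper does not need because it states the inequality only on $\{p^{\sharp}_{0,Q}=0\}$.
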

\begin{proof}
Using $H_{p^{\sharp}_{0,Q}}r_*=2\xi_{r_*}$ and
$H_{p^{\sharp}_{0,Q}}\xi_{r_*}=-\partial_{r_*}(f_Q r^{-2})|\xi_\omega|^2$ on
$\{p^{\sharp}_{0,Q}=0\}$,
\[
        H_{p^{\sharp}_{0,Q}}(b_Q\xi_{r_*})
        =2b_Q'\,\xi_{r_*}^2-b_Q\,\partial_{r_*}(f_Q r^{-2})\,|\xi_\omega|^2.
\]
The first term controls $\xi_{r_*}^2$ since $b_Q'>0$ in the collar. For the
second, $\partial_{r_*}(f_Q r^{-2})$ is positive for $r<r_{\mathrm{ph}}(Q)$ and
negative for $r>r_{\mathrm{ph}}(Q)$ (Lemma~\ref{lem:app_photon_sphere}), while
$b_Q$ is negative for $r<r_{\mathrm{ph}}(Q)$ and positive for
$r>r_{\mathrm{ph}}(Q)$; therefore the product $-b_Q\partial_{r_*}(f_Q r^{-2})\ge0$,
and a Taylor expansion at $r_{\mathrm{ph}}(Q)$ gives the quadratic degeneracy
$\operatorname{dist}(r,r_{\mathrm{ph}}(Q))^2$. Cutoff terms are supported in the
fixed photon-sphere collar and are absorbed into $-C\chi_{K_{0,Q}}\tau^2$ on the
characteristic set. Smoothness in $Q$ gives the uniform $c$.
\end{proof}

\subsection*{Explicit trapped-frequency geometry on the full subextremal range}
The persistence statements of Lemmas~\ref{lem:trapping_stability}
and~\ref{lem:app_normal_hyp} can be made fully explicit and, for the trapping
geometry alone, hold across the entire subextremal range $a^2+Q^2<M^2$ rather than
only under slow rotation. The principal symbol
$p_{a,Q}=g_{\KN}^{\mu\nu}\xi_\mu\xi_\nu$ is the Kerr-Newman null-geodesic
Hamiltonian; its principal conformal Killing-Yano tensor coincides with that of
Kerr and is independent of $Q$, so the geodesic flow separates exactly as on Kerr,
with charge entering only through $\Delta=r^2-2Mr+a^2+Q^2$. Writing
$K=(r^2+a^2)\omega-am$ and letting $\lambda>0$ be the (real) angular/Carter
eigenvalue, the radial trapping function is
\begin{equation}\label{eq:kn_trapping_function}
        \mathcal R(r)=K^2-\Delta\lambda,
\end{equation}
and the trapped set $K_{a,Q}$ of Lemma~\ref{lem:app_normal_hyp} consists of the
degenerate turning points $\{\mathcal R=\mathcal R'=0,\ r>r_+\}$. The spin-$\pm1$
subprincipal terms do not enter the trapping geometry, since the master principal
symbol is scalar by Definition~\ref{def:slowweak_master_framework}\emph{(A1)}.

\begin{lemma}
\label{lem:no_zero_stationary_frequency_trapped}
Let \(r_t>r_+\) be a degenerate turning point of
\(\mathcal R(r)=K^2-\Delta\lambda\) with \(\lambda>0\). Then \(\omega\ne0\).
\end{lemma}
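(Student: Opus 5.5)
The plan is a short contradiction argument built on the two trapping relations already derived in the proof of Proposition~\ref{prop:main_body_trapped_computation}. Assume $\omega=0$ at a degenerate turning point $r_t>r_+$. Then $K=(r^2+a^2)\omega-am$ reduces to the constant $-am$, so $K'=2r\omega=0$ identically. The derivative formula $\mathcal R'(r)=2KK'-\Delta'\lambda$ then collapses to
\[
        \mathcal R'(r)=-2(r-M)\lambda .
\]

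The condition $\mathcal R'(r_t)=0$ would therefore force $(r_t-M)\lambda=0$. Subextremality gives $r_t>r_+=M+\sqrt{M^2-a^2-Q^2}>M$, so $r_t-M>0$. Together with $\lambda>0$ this is a contradiction, and hence $\omega\neq0$.

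As a cross-check, one can instead use the first relation. The condition $\mathcal R(r_t)=0$ gives $K_t^2=\Delta_t\lambda$, and $\Delta_t>0$ in the exterior, so $K_t\neq0$. The identity $2r_t\omega K_t=(r_t-M)\lambda$, which is the expression for $\lambda$ in \eqref{eq:main_body_K_trapped}, also forces $\omega\ne0$, because its right-hand side is strictly positive.

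There is no substantive obstacle. The only points to record explicitly are that $K'=2r\omega$, because $m$ and $\lambda$ are constants of motion independent of $r$, and the strict inequality $r_+>M$, which relies on $a^2+Q^2<M^2$. It is also worth noting that the argument holds on the whole subextremal range and needs no slow-rotation smallness.
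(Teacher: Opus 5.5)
Your proof is correct and matches the paper's argument: with $\omega=0$ the quantity $K=-am$ is constant, so $\mathcal R'(r)=-2(r-M)\lambda<0$ at $r_t>r_+>M$, which contradicts $\mathcal R'(r_t)=0$. Your cross-check via $2r_t\omega K_t=(r_t-M)\lambda>0$ is also valid, and it is not circular because that relation comes directly from $\mathcal R'(r_t)=0$ without dividing by $\omega$, though it adds nothing to the main argument.
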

\begin{proof}
If \(\omega=0\), then \(K=-am\) is independent of \(r\). Hence
\[
        \mathcal R'(r)=-\Delta'(r)\lambda=-2(r-M)\lambda.
\]
In the exterior \(r_t>r_+>M\), and \(\lambda>0\), so
\(\mathcal R'(r_t)<0\), contradicting the trapped condition
\(\mathcal R'(r_t)=0\).
\end{proof}

By Lemma~\ref{lem:no_zero_stationary_frequency_trapped}, \(\omega\ne0\) at a trapped point.
Using $K'=2r\omega$ and $\Delta'=2(r-M)$, the conditions $\mathcal R=\mathcal R'=0$
therefore give $\lambda=2r_t\omega K_t/(r_t-M)$,
$K_t=2r_t\omega\Delta_t/(r_t-M)$, and the trapped-frequency relation
\begin{equation}\label{eq:kn_trapped_cubic}
        am=-\,\omega\,\frac{P(r_t)}{r_t-M},\qquad
        P(r)=r^3-3Mr^2+(a^2+2Q^2)r+a^2M.
\end{equation}
At $a=0$ this reduces to $r_t^2-3Mr_t+2Q^2=0$, i.e.\ $r_t=r_{\mathrm{ph}}(Q)$ of
\eqref{eq:app_photon_sphere}.

\begin{proposition}
\label{prop:kn_disjointness}
Let $\varpi=\omega-m\Omega_+$ with $\Omega_+=a/(r_+^2+a^2)$. For every degenerate
turning point $r_t>r_+$ of \eqref{eq:kn_trapping_function},
\begin{equation}\label{eq:kn_superradiance_factor}
        \omega\varpi=\frac{\omega^2\,\Phi(r_t)}{(r_t-M)(r_+^2+a^2)},\qquad
        \Phi(r)=(r-r_+)\,g(r),\quad g(r)=r^2+(r_+-3M)r+Mr_+.
\end{equation}
For $a^2+Q^2<M^2$ one has $r_+\in(M,2M]$ and
$\operatorname{disc}(g)=(r_+-M)(r_+-9M)<0$, hence $g>0$ and $\omega\varpi>0$. No
trapped frequency is superradiant, throughout the subextremal range, and the gap
$\omega\varpi/\omega^2$ degenerates only as $r_+\to M$.
\end{proposition}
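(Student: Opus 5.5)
The plan is to redo the algebra of Proposition~\ref{prop:main_body_trapped_computation}, now keeping track of the full subextremal range. First I would fix a degenerate turning point $r_t>r_+$. By Lemma~\ref{lem:no_zero_stationary_frequency_trapped}, $\omega\ne0$. The two trapping relations $\mathcal R(r_t)=\mathcal R'(r_t)=0$ then give
\[
K_t=\frac{2r_t\omega\Delta_t}{r_t-M},
\]
and hence the trapped-frequency relation \eqref{eq:kn_trapped_cubic},
\[
am=(r_t^2+a^2)\omega-K_t=-\omega\,\frac{P(r_t)}{r_t-M},
\]
which is already recorded before the statement. Next I would use $K(r_+)=(r_+^2+a^2)\omega-am=(r_+^2+a^2)\varpi$ to write $\omega\varpi=\omega K(r_+)/(r_+^2+a^2)$. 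Substituting $am$ gives
\[
\omega\varpi=\frac{\omega^2\bigl((r_+^2+a^2)(r_t-M)+P(r_t)\bigr)}{(r_t-M)(r_+^2+a^2)}.
\]
This reduces \eqref{eq:kn_superradiance_factor} to a polynomial identity in $r$.

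For the factorization I would show that the cubic $N(r)=(r_+^2+a^2)(r-M)+P(r)$ vanishes at $r=r_+$. Evaluating there gives $2r_+(r_+^2-2Mr_++a^2+Q^2)=2r_+\Delta(r_+)=0$. Dividing by $(r-r_+)$, the leading and linear coefficients give the quotient $r^2+(r_+-3M)r+c$. The constant term forces $-r_+c=-(r_+^2+a^2)M+a^2M=-Mr_+^2$, so $c=Mr_+$. The only input here is the horizon relation $a^2+Q^2=2Mr_+-r_+^2$. This yields $\Phi=(r-r_+)g$ with $g$ as stated.

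For the sign, I would note $r_+=M+\sqrt{M^2-a^2-Q^2}\in(M,2M]$ under \eqref{eq:subextremal_intro}. The discriminant is $(r_+-3M)^2-4Mr_+=(r_+-M)(r_+-9M)$, which is strictly negative on that interval. Since $g$ has positive leading coefficient, it is positive for all real $r$. Combined with $r_t-r_+>0$, $r_t-M>0$ and $\omega^2>0$, this gives $\omega\varpi>0$, so no trapped frequency is superradiant.

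The degeneration remark comes from the same discriminant, which tends to zero only as $r_+\to M$. At that point $g$ acquires the double root $r=(3M-r_+)/2\to M$, while $r_t-r_+$ and $r_t-M$ are no longer bounded away from zero. On any compact subextremal set all factors stay uniformly positive. I expect no real obstacle: the only delicate step is the coefficient bookkeeping in the division, which I would double-check by re-expanding $(r-r_+)g(r)$ against $N(r)$ using the horizon relation.
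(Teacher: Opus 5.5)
Your proposal is correct and follows the paper's proof essentially step for step. Both arguments use the horizon identity $K(r_+)=(r_+^2+a^2)\varpi$, eliminate $am$ through \eqref{eq:kn_trapped_cubic}, factor $(r_+^2+a^2)(r-M)+P(r)=(r-r_+)g(r)$ via $a^2+Q^2=2Mr_+-r_+^2$, and conclude from the sign of the discriminant of $g$. Your check $N(r_+)=2r_+\Delta(r_+)=0$ is just a more transparent way of exhibiting the root that the paper finds by long division.

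The one soft point, which the paper shares, is the closing degeneration clause. Its ``only if'' direction needs trapped radii to stay uniformly away from $r_+$ on compact subextremal sets, which you assert rather than prove. Your heuristic that $r_t-r_+\to0$ at extremality is also not universal: for $a=0$ one has $\Omega_+=0$, so the gap is identically $1$ even as $Q\to M$, with $r_t\to2M$.
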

\begin{proof}
Lemma~\ref{lem:no_zero_stationary_frequency_trapped} first gives $\omega\ne0$.
The horizon identity $K(r_+)=(r_+^2+a^2)\varpi$ gives
$\omega\varpi=\omega K(r_+)/(r_+^2+a^2)$. Eliminating $am$ through
\eqref{eq:kn_trapped_cubic} gives
$K(r_+)=\omega\bigl[(r_+^2+a^2)(r_t-M)+P(r_t)\bigr]/(r_t-M)$, which is
\eqref{eq:kn_superradiance_factor} with $\Phi(r)=(r-M)(r_+^2+a^2)+P(r)$. Expanding
and using the horizon relation $a^2+Q^2=2Mr_+-r_+^2$, the linear coefficient of
$\Phi$ becomes $4Mr_+-r_+^2$ and the constant $-Mr_+^2$, so all explicit $a,Q$
cancel and $\Phi(r)=r^3-3Mr^2+(4Mr_+-r_+^2)r-Mr_+^2$; long division by $(r-r_+)$ has
zero remainder and gives \eqref{eq:kn_superradiance_factor}. The discriminant and
sign of $g$ are immediate, and $r_t>r_+>M$ for subextremal parameters.
\end{proof}

\begin{proposition}
\label{prop:kn_nondegeneracy}
At every degenerate turning point $r_t>r_+$ of \eqref{eq:kn_trapping_function},
\begin{equation}\label{eq:kn_Rpp}
        \mathcal R''(r_t)=\frac{8r_t\,\omega^2}{(r_t-M)^2}
        \Big[(r_t-M)^3+M\,(r_+-M)^2\Big]>0,\qquad (r_+-M)^2=M^2-a^2-Q^2.
\end{equation}
Equivalently the principal potential has a strict non-degenerate maximum,
$V''_{\mathrm{princ}}(r_t)=-\mathcal R''(r_t)/(r_t^2+a^2)^2<0$, so the trapped set is
normally hyperbolic throughout the subextremal range, with
$\mathcal R''(r_t)\ge 8r_t\omega^2 M(r_+-M)^2/(r_t-M)^2$.
\end{proposition}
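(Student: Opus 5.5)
The plan is to reuse the algebra already carried out in Proposition~\ref{prop:main_body_trapped_computation}, now only assuming subextremality instead of slow rotation. That computation never used smallness of $a$ or $Q$.

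First, by Lemma~\ref{lem:no_zero_stationary_frequency_trapped} we have $\omega\ne0$ at any degenerate turning point. Then we compute directly from
\[
\mathcal R'(r)=4r\omega K-2(r-M)\lambda .
\]
The conditions $\mathcal R(r_t)=\mathcal R'(r_t)=0$ give $K_t^2=\Delta_t\lambda$. Since $\Delta_t>0$ and $\lambda>0$, this forces $K_t\ne0$. Dividing then gives
\[
K_t=\frac{2r_t\omega\Delta_t}{r_t-M},\qquad \lambda=\frac{4r_t^2\omega^2\Delta_t}{(r_t-M)^2}.
\]

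Next, differentiate once more, using $K''=2\omega$ and $\Delta''=2$:
\[
\mathcal R''=8r^2\omega^2+4\omega K-2\lambda .
\]
Substituting the trapped values yields
\[
\mathcal R''(r_t)=\frac{8r_t\omega^2}{(r_t-M)^2}\bigl(r_t(r_t-M)^2-M\Delta_t\bigr).
\]
Expanding $\Delta_t=(r_t-M)^2-(M^2-a^2-Q^2)$ makes the bracket equal to $(r_t-M)^3+M(M^2-a^2-Q^2)$. Since $r_+-M=\sqrt{M^2-a^2-Q^2}$, this is $(r_t-M)^3+M(r_+-M)^2$, which is \eqref{eq:kn_Rpp}.

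Positivity then follows from $r_t>r_+>M$ and subextremality. The lower bound follows by dropping the nonnegative term $(r_t-M)^3$.

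For the potential formulation, write the principal radial potential as $V_{\mathrm{princ}}=-\mathcal R/(r^2+a^2)^2$, up to the positive normalization used in Proposition~\ref{prop:main_body_trapped_normal_form}. At $r_t$ both $\mathcal R$ and $\mathcal R'$ vanish, so every term in which a derivative falls on $(r^2+a^2)^{-2}$ carries a factor $\mathcal R$ or $\mathcal R'$ and drops out. This leaves
\[
V''_{\mathrm{princ}}(r_t)=-\frac{\mathcal R''(r_t)}{(r_t^2+a^2)^2}<0,
\]
with derivatives in $r$. Converting to $r_*$ only multiplies by $(\dd r/\dd r_*)^2>0$, again because the first derivative vanishes.

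Finally, normal hyperbolicity follows as in Proposition~\ref{prop:main_body_trapped_normal_form}: $B_t=\tfrac12c_2\mathcal R''(r_t)>0$ gives a positive Lyapunov exponent $\lambda_t=2\sqrt{A_tB_t}$.

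The main obstacle is this last step away from slow rotation. One must make sure that the reduced radial symbol form \eqref{eq:main_body_reduced_radial_symbol}, with positive factors $c_1,c_2$, holds throughout the subextremal range. I would justify this by the exact Carter separation of the Kerr-Newman geodesic flow noted before \eqref{eq:kn_trapping_function}, which does not depend on $a$ being small.
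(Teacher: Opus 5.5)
Your proposal is correct and follows the paper's proof essentially verbatim: you get $\omega\ne0$ from Lemma~\ref{lem:no_zero_stationary_frequency_trapped}, use the trapped values of $K_t$ and $\lambda$, substitute into $\mathcal R''=8r^2\omega^2+4\omega K-2\lambda$, and apply the identity $r_t(r_t-M)^2-M\Delta_t=(r_t-M)^3+M(M^2-a^2-Q^2)$. You also check the $V''_{\mathrm{princ}}$ clause (the terms with derivatives on $(r^2+a^2)^{-2}$ vanish because $\mathcal R=\mathcal R'=0$ at $r_t$) and the normal-hyperbolicity consequence via exact Carter separation, which fill in points the paper's proof leaves implicit.
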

\begin{proof}
By Lemma~\ref{lem:no_zero_stationary_frequency_trapped}, $\omega\ne0$ at the trapped point. Also
$\mathcal R''=2(K')^2+2KK''-\Delta''\lambda=8r^2\omega^2+4\omega K-2\lambda$.
Substituting $\lambda=2r_t\omega K_t/(r_t-M)$ and $K_t=2r_t\omega\Delta_t/(r_t-M)$
gives $\mathcal R''(r_t)=\frac{8r_t\omega^2}{(r_t-M)^2}\,[\,r_t(r_t-M)^2-M\Delta_t\,]$.
Since $r_t(r_t-M)^2-M\Delta_t=(r_t-M)^3+M(M^2-a^2-Q^2)$ and
$M^2-a^2-Q^2=(r_+-M)^2$, \eqref{eq:kn_Rpp} follows; positivity is immediate for
$r_t>M$.
\end{proof}

\begin{remark}
\label{rem:kn_geometry_scope}
Propositions~\ref{prop:kn_disjointness}-\ref{prop:kn_nondegeneracy} sharpen the
qualitative trapped-set statements of Lemmas~\ref{lem:trapping_stability}-\ref{lem:app_normal_hyp}
to explicit identities valid on the full subextremal range, both degenerating only
at extremality, and they supply an explicit normal-hyperbolicity constant. They use
only the scalar principal symbol \emph{(A1)} and the ($Q$-independent) Carter
separability of the null-geodesic flow. By themselves these identities leave intact the
hypothesis structure of Theorem~\ref{thm:main}: the slow-weak restriction there
originates in the perturbative absorption \eqref{eq:master_hyp_error_bound} and in
the high-frequency resolvent condition \emph{(A3)}, which is a statement about the
\emph{compatible spin-one class specified by (A1)} on this trapped set and is not implied by the
principal-symbol geometry above.
\end{remark}

\subsection*{High-frequency exclusion}
We first state the geometric conditions at the trapped set of the scalar principal symbol. The high-frequency estimate used later is the normally hyperbolic resolvent estimate in the form of \cite{Dyatlov,DyatlovGaps,WunschZworski}, extended to finite-rank Hermitian bundles with a skew-subprincipal threshold by \cite{HintzTensor} and glued to the elliptic, propagation and radial-point regions as in \cite{DatchevVasy}, with the radial-point boundary convention and compatible spin-one graph norms fixed below. The proof verifies the geometric conditions and the semiclassical normalization in which the estimate is used.

\begin{lemma}
\label{lem:semiclassical_spinone_order}
Fix a compact radial set in the regular exterior and a conic stationary frequency
patch. Let \(\omega\) be the temporal frequency and let
\(\lambda_{\mathrm{ang}}\) denote the angular covariable size on the patch, equivalently \((1+|\eta_\theta|^2+|\eta_\phi|^2)^{1/2}\) after microlocalization. Put
\(\Lambda=1+|\omega|+\lambda_{\mathrm{ang}}\) and \(h=\Lambda^{-1}\), and
write \(\hat\omega=h\omega\). After freezing \(e^{-i\omega t}\) and after the
regular horizon and infinity weights used in Definition~\ref{def:master_from_teukolsky},
the compatible spin-one operator specified by \emph{(A1)} has the semiclassical form
\begin{equation}\label{eq:semiclassical_spinone_normal_form}
        P_h^{(a,Q)}(\hat\omega)
        =\operatorname{Op}_h(p_{a,Q,\hat\omega})I_2
          +hP_{1,h}^{(a,Q)}(\hat\omega)+h^2P_{0,h}^{(a,Q)}(\hat\omega)
\end{equation}
modulo an \(O(h^\infty)\) smoothing remainder on the patch. Here
\begin{equation}\label{eq:frozen_principal_symbol_explicit}
\begin{aligned}
        p_{a,Q,\hat\omega}(x,\eta)
        ={}&g_{\KN}^{tt}\hat\omega^2+2g_{\KN}^{t\phi}\hat\omega\eta_\phi
           +g_{\KN}^{\phi\phi}\eta_\phi^2+g_{\KN}^{rr}\eta_r^2
           +g_{\KN}^{\theta\theta}\eta_\theta^2,
\end{aligned}
\end{equation}
with the sign convention inherited from \(g_{\KN}^{\mu\nu}\xi_\mu\xi_\nu\) and
\(\xi_t=-\omega\). The operators \(P_{1,h}^{(a,Q)}\) and \(P_{0,h}^{(a,Q)}\)
act on the two-dimensional bundle of extreme variables, have smooth matrix
coefficients uniformly bounded with all derivatives in the slow-weak range, and
satisfy, for every compact \(K\),
\begin{equation}\label{eq:semiclassical_lower_order_bounds}
        \|P_{1,h}^{(a,Q)}v\|_{L^2(K)}\le C_K\|v\|_{H_h^1(K')},\qquad
        \|P_{0,h}^{(a,Q)}v\|_{L^2(K)}\le C_K\|v\|_{L^2(K')}
\end{equation}
for \(K\Subset K'\). Thus, the trapped set and its stable/unstable rates are
those of the real scalar Hamiltonian \(p_{a,Q,\hat\omega}\); the matrix coupling
appears only in subprincipal and lower semiclassical orders.
\end{lemma}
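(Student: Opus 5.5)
The plan is a direct symbol computation after stationary freezing. Start from the structure supplied by \emph{(A1)} and Definition~\ref{def:master_from_teukolsky}:
\[
\Pb_{a,Q}=\operatorname{diag}(\widehat{\mathcal T}_{+1},\widehat{\mathcal T}_{-1})+\mathcal L^{(1)}_{a,Q}+\mathcal V^{(0)}_{a,Q}.
\]
By Proposition~\ref{prop:scalar_principal_symbol}, the second-order part of each $\widehat{\mathcal T}_{\pm1}$ is $\Box_{g_{\KN}}$, so the second-order part of $\Pb_{a,Q}$ is $\Box_{g_{\KN}}I_2$. Conjugation by the nonvanishing regular weights $w_\pm$ and the horizon/infinity weights changes only first- and zeroth-order terms. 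In the frozen operator, replace $\partial_t$ by $-i\omega$ (so $\xi_t=-\omega$). Then multiply by $h^2$ and write $\omega=h^{-1}\hat\omega$, $\partial_x=h^{-1}(ih\partial_x)$.

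Under this rescaling, every term in the operator is sorted by order. Each second-order monomial becomes an order-zero semiclassical operator whose principal symbol comes from $g_{\KN}^{\mu\nu}\xi_\mu\xi_\nu$ with $\xi_t=-\hat\omega$. Because the metric has no $t$-dependence and its only off-diagonal entry is $g^{t\phi}$ (by \eqref{eq:app_inverse_metric}), this symbol is exactly \eqref{eq:frozen_principal_symbol_explicit}. The divergence-form lower-order pieces of $\Box_g$ (from $|g|^{-1/2}\partial_\mu(|g|^{1/2}g^{\mu\nu}\partial_\nu)$ minus the principal part) are first order and carry a factor $h$. Every first-order term in $\mathcal B^{(1)}_s$, in $\mathcal L^{(1)}$, and from weight conjugation is of the form $c^\mu\partial_\mu$ or $c\,\omega$. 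After multiplication by $h^2$ it becomes $h\cdot\operatorname{Op}_h(\text{order }1)$, and I collect all of these into $hP_{1,h}$. Zeroth-order multipliers become $h^2P_{0,h}$. Quantization-symbol conventions (the difference between the Weyl and standard quantizations of $p$) shift only the $h$ and $h^2$ terms, which are absorbed into those two operators.

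The bounds \eqref{eq:semiclassical_lower_order_bounds} then follow from the semiclassical Sobolev calculus on the compact set $K'$. The coefficients are smooth and uniformly bounded there in the slow-weak range, by Proposition~\ref{prop:principal_master}, Lemma~\ref{lem:app_inverse_expansion} and the smooth dependence of the frame on $(a,Q)$. Hence a first-order matrix operator with such coefficients maps $H^1_h(K')\to L^2(K)$, and a multiplier maps $L^2\to L^2$. The $O(h^\infty)$ remainder arises because the conic patch is localized by a microlocal cutoff. On that patch $h=\Lambda^{-1}$ is comparable to the inverse total frequency, so the cutoff commutes with the operator modulo terms that are either absorbed into the lower-order operators or are residual.

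The main obstacle is the normalization itself. One must check that the choice $h=(1+|\omega|+\lambda_{\mathrm{ang}})^{-1}$ makes $\hat\omega$ and the angular covariables bounded on the patch, so the symbol classes are uniform. It must also be confirmed that the radial covariable, unrestricted by $\Lambda$, only yields $|\eta_r|\lesssim1$ on the characteristic set. Elsewhere the operator is elliptic, and the statement is only needed modulo elliptic regions. I would handle this by restricting to a compact radial set away from the horizon, where $g^{rr}>0$ makes $p$ elliptic for large $\eta_r$. The horizon collar is treated in the regular coordinates of Section~\ref{sec:geometry}, where the same calculation applies. The conclusion about trapping is then immediate: since the principal part is scalar, the Hamilton flow, the trapped set, and the expansion rates are those of $p_{a,Q,\hat\omega}$.
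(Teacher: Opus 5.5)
Your proposal is correct and is essentially the paper's proof. You freeze $\partial_t\mapsto-i\omega$, multiply by $h^2$, and read the scalar principal symbol off the $-\Sigma\,\Box_{g_{\KN}}$ second-order part; you then put the first-order terms (including the $\omega$-linear ones and the weight-conjugation terms) into $hP_{1,h}$ and the zeroth-order ones into $h^2P_{0,h}$, with the bounds coming from uniform coefficient bounds on compact sets. Beyond this, the paper also writes down the commutator $[P_h,A_h]=\tfrac hi\operatorname{Op}_h(\{p_{a,Q,\hat\omega},a_A\})I_2+h^2R_{A,h}$ to support the final trapping claim, while your remarks on quantization conventions and on the radial covariable (not controlled by $\Lambda$) are refinements the paper leaves implicit; one slip to fix is $\partial_x=h^{-1}(ih\partial_x)$, which should read $\partial_x=ih^{-1}(hD_x)$.
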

\begin{proof}
The normalized Teukolsky calculation of Proposition~\ref{prop:scalar_principal_symbol}
identifies the second-order part of each extreme equation with the scalar wave
symbol \(g_{\KN}^{\mu\nu}\xi_\mu\xi_\nu\). Freezing time by
\(\partial_t\mapsto -i\omega\), replacing each spatial derivative on the conic
patch by \(h^{-1}(hD)\), and multiplying the frozen operator by \(h^2\), the
second-order part becomes exactly \(\operatorname{Op}_h(p_{a,Q,\hat\omega})I_2\)
with \(p_{a,Q,\hat\omega}\) as in \eqref{eq:frozen_principal_symbol_explicit}.
The principal cross term \(2g^{t\phi}_{\KN}\partial_t\partial_\phi\) contributes
\(2g^{t\phi}_{\KN}\hat\omega\eta_\phi\), and the radial and angular second-order
terms contribute \(g^{rr}_{\KN}\eta_r^2\), \(g^{\theta\theta}_{\KN}\eta_\theta^2\),
and \(g^{\phi\phi}_{\KN}\eta_\phi^2\). This establishes the asserted principal symbol.

Every first-order coefficient in the spin-weighted equations, including the
terms produced by the regular conjugating weights, has the form
\(b^t\partial_t+\sum b^j\partial_j\) with smooth bounded coefficients and the
short-range decay recorded in Proposition~\ref{prop:principal_master}. After
time freezing and multiplication by \(h^2\), the temporal part is
\(h\,b^t\hat\omega\), while each spatial part is \(h\,b^j(hD_j)\). These
contributions are therefore collected in \(hP_{1,h}^{(a,Q)}\), with the first
bound in \eqref{eq:semiclassical_lower_order_bounds}. Zeroth-order curvature,
spin and matrix terms acquire the factor \(h^2\) and form \(h^2P_{0,h}^{(a,Q)}\),
with the second bound. The same order count gives, for every fixed compactly supported semiclassical cutoff \(A_h\in\Psi_h^0\),
\[
        [P_h^{(a,Q)}(\hat\omega),A_h]
        =\frac{h}{i}\operatorname{Op}_h(\{p_{a,Q,\hat\omega},a_A\})I_2
        +h^2R_{A,h},
\]
with \(R_{A,h}:H_h^1(K')\to H_h^{-1}(K)\) uniformly bounded. The smooth dependence on \(a,Q\), the compactness of
the normalized frequency patch, and the finite commutation order yield uniform
constants. Since \(P_{1,h}^{(a,Q)}\) and \(P_{0,h}^{(a,Q)}\) are strictly below
the scalar second-order principal part, the Hamilton flow, trapped set, propagation support and
normal rates are determined by \(p_{a,Q,\hat\omega}\) alone.
\end{proof}

\begin{lemma}
\label{lem:trapped_skew_vanishing}
Fix subextremal parameters $a^2+Q^2<M^2$.  Let $r_t>r_+$ be a trapped turning
point for the principal semiclassical radial Hamiltonian, and write
\[
        \widehat K=(r^2+a^2)\hat\omega-a\hat m,\qquad
        \Delta=r^2-2Mr+a^2+Q^2.
\]
Let $\lambda>0$ denote the principal angular/Carter value on the null
bicharacteristic. In turn, the trapped relations are
\begin{equation}\label{eq:semiclassical_trapped_relations_for_skew}
        \widehat K_t^2=\Delta_t\lambda,
        \qquad
        2\widehat K_t(2r_t\hat\omega)-2(r_t-M)\lambda=0.
\end{equation}
For each extreme spin $s\in\{+1,-1\}$, separation of
$e^{-i\omega t+im\phi}$ in the spin-weighted model operator of \emph{(A1)}
(cf.\ \cite{Teukolsky1973,DudleyFinley}) gives the radial
operator
\begin{equation}\label{eq:separated_radial_teukolsky}
        \mathcal L_s
        =\Delta^{-s}\frac{\dd}{\dd r}\Big(\Delta^{s+1}\frac{\dd}{\dd r}\Big)
          +\frac{K^2-2is(r-M)K}{\Delta}+4is\omega r-\lambda_s,
        \qquad K=(r^2+a^2)\omega-am,
\end{equation}
where $\lambda_s$ is the real spin-weighted angular eigenvalue. In the
semiclassical scaling $\hat\omega=h\omega$, $\hat m=hm$, the principal angular
value satisfies $h^2\lambda_s=\lambda+O(h)$ on the fixed normalized frequency
patch. The $O(h)$ correction is real and therefore does not contribute to the
imaginary subprincipal symbol. The only imaginary order-$h$ radial coefficient is
\begin{equation}\label{eq:teukolsky_imaginary_potential}
        q_s(r;\hat\omega,\hat m)
        =-\frac{2s(r-M)\widehat K}{\Delta}+4s\hat\omega r.
\end{equation}
At every trapped point,
\begin{equation}\label{eq:trapped_skew_vanishes}
        q_s(r_t;\hat\omega,\hat m)=0,
        \qquad s=\pm1.
\end{equation}
Thus, in the semiclassical normal form
\eqref{eq:semiclassical_spinone_normal_form}, the imaginary part of the
subprincipal symbol of the diagonal spin-one operator vanishes on the trapped set:
\begin{equation}\label{eq:imaginary_subprincipal_vanishes}
        \operatorname{Im}\,\sigma_h\!\big(P_{1,h}^{(a,Q)}\big)\big|_{K_{a,Q}}=0.
\end{equation}
\end{lemma}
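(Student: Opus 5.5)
Assuming \emph{(A1)}, the spin-weighted model operator separates under $e^{-i\omega t+im\phi}$ into an angular spin-weighted spheroidal operator with real eigenvalue $\lambda_s$ and the radial operator \eqref{eq:separated_radial_teukolsky}. I take this separated form as given by the model operator \eqref{eq:teukolsky_operator}; the first-order spin terms in $\mathcal B_s^{(1)}$ are the standard Teukolsky ones. I will show that the only imaginary coefficient of order $h$ is $q_s$, and then evaluate $q_s$ at a trapped point using the relations \eqref{eq:semiclassical_trapped_relations_for_skew}. These relations are the semiclassical form of $\mathcal R=\mathcal R'=0$ from Proposition~\ref{prop:main_body_trapped_computation}.

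\emph{Step 1: order counting.} Multiply \eqref{eq:separated_radial_teukolsky} by $h^2$ and set $\hat\omega=h\omega$, $\hat m=hm$, so that $h^2K^2=\widehat K^2$. The term $h^2\Delta^{-s}\partial_r(\Delta^{s+1}\partial_r)$ equals $(hD_r)\Delta(hD_r)$ up to $ih\,s\Delta'(hD_r)$. This correction is first order, as in \eqref{eq:radial_expand}. Under the real convention $h\partial_r\mapsto i\eta_r$ it becomes $ih\,2s(r-M)\eta_r$. This term is skew only in the sense that it is removed by conjugating with $\Delta^{s/2}$. Concretely, $\Delta^{s/2}$ symmetrizes $\Delta^{-s}\partial_r\Delta^{s+1}\partial_r$ into a formally self-adjoint operator plus a real zeroth-order term, so this term contributes nothing to the imaginary subprincipal symbol. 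The potential terms give $h^2(K^2-2is(r-M)K)/\Delta+4is\hat\omega r\,h-h^2\lambda_s$. The real principal part is $\widehat K^2/\Delta-\lambda$. The imaginary order-$h$ part is $h\,q_s$ with $q_s$ as in \eqref{eq:teukolsky_imaginary_potential}. The angular correction $h^2\lambda_s-\lambda=O(h)$ is real because the spin-weighted angular operator is self-adjoint on $L^2(\Sph)$ with real eigenvalues, so it only shifts the real subprincipal part. This identifies $\operatorname{Im}\sigma_h(P^{(a,Q)}_{1,h})$ with $q_s$, modulo the positive normalizing factor relating the separated radial operator to the frozen operator of Lemma~\ref{lem:semiclassical_spinone_order}. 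That factor is $\Sigma$ together with the regular weights. Multiplying a function by the real nonvanishing factor $(-\Sigma)^{-1}$ and conjugating by real weights does not change where the imaginary part vanishes.

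\emph{Step 2: the trapped computation.} From the second relation in \eqref{eq:semiclassical_trapped_relations_for_skew} we get $2r_t\hat\omega\widehat K_t=(r_t-M)\lambda$. Substituting $\lambda=\widehat K_t^2/\Delta_t$ and dividing by $\widehat K_t$ gives $\widehat K_t=2r_t\hat\omega\Delta_t/(r_t-M)$. The division is legitimate because $\widehat K_t^2=\Delta_t\lambda>0$. Substituting into \eqref{eq:teukolsky_imaginary_potential} yields $-4s r_t\hat\omega+4s\hat\omega r_t=0$, which is \eqref{eq:trapped_skew_vanishes}. This is the same algebra as \eqref{eq:main_body_diag_skew_zero_computation}. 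Because the trapped set $K_{a,Q}$ of the scalar symbol consists exactly of these separated turning points (Lemma~\ref{lem:semiclassical_spinone_order} and the Carter separability used around \eqref{eq:kn_trapping_function}), \eqref{eq:imaginary_subprincipal_vanishes} follows pointwise on $K_{a,Q}$ for both diagonal entries.

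\emph{Main obstacle.} The delicate step is Step 1: separating the genuinely skew order-$h$ part from terms that only look imaginary. The first-order radial term $2s(r-M)\partial_r$ has to be shown to be symmetrizable by $\Delta^{s/2}$. One also has to argue that the subprincipal symbol, computed invariantly on the trapped set (where $\eta_r=0$), is insensitive to this choice. I would handle this first by computing the Weyl subprincipal symbol of $h^2\mathcal L_s$ directly. The radial contribution is $ih\,s\Delta'\eta_r$, which vanishes on $K_{a,Q}$ because $\xi_{r_*}=0$ there. That makes the conjugation argument unnecessary for the restriction to the trapped set.
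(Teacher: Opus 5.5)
Your proposal is correct and follows essentially the same route as the paper. It identifies $q_s$ as the only imaginary order-$h$ coefficient, treating the angular correction $h^2\lambda_s-\lambda$ and the real weights and $\Sigma$-normalization as not contributing. It then uses the same algebra, $\widehat K_t=2r_t\hat\omega\Delta_t/(r_t-M)$ from the two trapped relations, substituted into $q_s$. Your extra remark that the first-order radial term $2s(r-M)\partial_r$ contributes a skew symbol proportional to $\eta_r$, which vanishes on $K_{a,Q}$, fills in a point the paper's proof passes over.
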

\begin{proof}
The radial principal trapping equations are the equations for
\[\mathcal R_h(r)=\widehat K(r)^2-\Delta(r)\lambda.\]
Since $\widehat K'=2r\hat\omega$ and $\Delta'=2(r-M)$,
\[
        \mathcal R_h'(r)=4r\hat\omega\widehat K-2(r-M)\lambda.
\]
At $r=r_t$, \eqref{eq:semiclassical_trapped_relations_for_skew} gives
\[
        \lambda=\frac{\widehat K_t^2}{\Delta_t}
        =\frac{2r_t\hat\omega\widehat K_t}{r_t-M}.
\]
Because $\Delta_t>0$ and $\lambda>0$, $\widehat K_t\ne0$, and division by
$\widehat K_t$ gives the exact trapped value
\begin{equation}\label{eq:semiclassical_Kt_for_skew}
        \widehat K_t=\frac{2r_t\hat\omega\Delta_t}{r_t-M}.
\end{equation}
Substitution into \eqref{eq:teukolsky_imaginary_potential} gives
\[
        q_s(r_t)=
        -\frac{2s(r_t-M)}{\Delta_t}\frac{2r_t\hat\omega\Delta_t}{r_t-M}
        +4s\hat\omega r_t=0,
\]
which proves \eqref{eq:trapped_skew_vanishes}.

The remaining step is to translate this scalar radial calculation into the system
symbol. Multiplication by the real weights used in
Definition~\ref{def:master_from_teukolsky}, and by the real factor used to put the
stationary equation in semiclassical form, changes the self-adjoint real part and
lower real potentials but does not create an imaginary principal contribution.
The angular eigenvalue correction $h^2\lambda_s-\lambda=O(h)$ is real. This implies that the
imaginary part of the order-$h$ subprincipal endomorphism is the diagonal matrix
$\operatorname{diag}(q_{+1},q_{-1})$ restricted to the trapped set, and this matrix
is zero by \eqref{eq:trapped_skew_vanishes}. This establishes
\eqref{eq:imaginary_subprincipal_vanishes}.
\end{proof}

\begin{remark}
\label{rem:skew_scope_spinone}
The cancellation \eqref{eq:trapped_skew_vanishes} is used only for the two
spin-one extreme equations appearing in the compatible Maxwell reduction. The
argument relies on the radial operator \eqref{eq:separated_radial_teukolsky} and
on the trapped value \eqref{eq:semiclassical_Kt_for_skew}; no assertion about a
closed gravitational or coupled Einstein-Maxwell master equation is needed.
For the two-component system used below, possible off-diagonal skew-subprincipal
terms are handled separately by the small matrix estimate
\eqref{eq:matrix_skew_decomposition} and Proposition~\ref{prop:matrix_skew_threshold}.
\end{remark}

\begin{proposition}
\label{prop:skew_term_trapped_estimate}
Let $A_h=\operatorname{Op}_h(\beta)$ be a scalar order-zero escape commutant
supported in a sufficiently small trapped collar; the symbol $\beta$ is the
commutant symbol and is unrelated to the Kerr rotation parameter $a$.  Extract
from the frozen operator the diagonal Teukolsky skew-subprincipal part
$ihB_{h,\mathrm{diag}}$, whose principal Hermitian symbol is
$\operatorname{diag}(q_{+1},q_{-1})$.  The contribution of this diagonal part to
the trapped commutator has principal matrix symbol
\begin{equation}\label{eq:appendix_threshold_symbol}
        H_p(\beta^2)I_2-2\beta^2\operatorname{diag}(q_{+1},q_{-1}).
\end{equation}
On $K_{a,Q}$ the second term is zero. In the second-microlocal trapped estimate,
more precisely, if $\pi$ denotes the projection from a sufficiently small normal
collar to $K_{a,Q}$ and $\mathcal Q_{\mathrm{nh}}$ denotes the positive quadratic
form produced by the logarithmic escape commutant, then for every $\varepsilon>0$
the collar may be chosen so that
\begin{equation}\label{eq:diagonal_skew_second_microlocal_bound}
\begin{aligned}
 \big|\langle A_h(B_{h,\mathrm{diag}}-\pi^*B_{h,\mathrm{diag}}|_{K_{a,Q}})A_hv,v\rangle\big|
 &\le \varepsilon\,\mathcal Q_{\mathrm{nh}}[v]
      +C_\varepsilon\,\mathcal Q_{\mathrm{prop}}[v]  \\
 &\quad +C_\varepsilon h\|v\|_{H_h^1}^2+O(h^\infty)\|v\|_{H_h^1}^2 .
\end{aligned}
\end{equation}
Here $\mathcal Q_{\mathrm{prop}}$ is supported in the portion of the collar which is
controlled by real-principal-type propagation into the elliptic or radial regions.
Thus the diagonal spin contribution has threshold constant zero at the trapped set
and gives no loss in the normally hyperbolic estimate. The remaining finite-rank
matrix skew contribution is estimated in Proposition~\ref{prop:matrix_skew_threshold}.
\end{proposition}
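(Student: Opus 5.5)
The proof has two parts: computing the principal symbol of the commutator, then bounding the off-trapped variation by a second-microlocal argument. We start from the semiclassical normal form of Lemma~\ref{lem:semiclassical_spinone_order} and isolate the diagonal skew piece, writing $hP_{1,h}^{(a,Q)}=ihB_{h,\mathrm{diag}}+h\,\mathrm{(self\text{-}adjoint\ part)}+ihB_{h,\mathrm{mat}}$. By Lemma~\ref{lem:trapped_skew_vanishing}, the principal Hermitian symbol of $B_{h,\mathrm{diag}}$ is $\operatorname{diag}(q_{+1},q_{-1})$, and $B_{h,\mathrm{mat}}$ is deferred to Proposition~\ref{prop:matrix_skew_threshold}.

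For the scalar commutant $A_h=\operatorname{Op}_h(\beta)$ we expand $\frac{i}{h}\langle[P_{h,\mathrm{sa}},A_h^*A_h]v,v\rangle-2\langle A_hB_{h,\mathrm{diag}}A_hv,v\rangle$ by the symbolic calculus. Because $\beta$ is scalar, it commutes with the matrix symbol modulo $O(h)$. The commutator term therefore contributes $H_p(\beta^2)I_2$ and the skew term contributes $-2\beta^2\operatorname{diag}(q_{+1},q_{-1})$, with remainders $O(h)\|v\|_{H_h^1}^2$. This gives \eqref{eq:appendix_threshold_symbol}, in the same way as \eqref{eq:main_body_diag_commutator_identity}. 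Vanishing on $K_{a,Q}$ then follows directly from \eqref{eq:trapped_skew_vanishes}.

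For \eqref{eq:diagonal_skew_second_microlocal_bound} we work in the normal coordinates $(x_s,x_u)$ of Proposition~\ref{prop:main_body_trapped_normal_form}. Since $q_s$ is smooth, we Taylor expand: $B_{h,\mathrm{diag}}-\pi^*B_{h,\mathrm{diag}}|_{K}=O(|x_s|+|x_u|)$, with symbol $b_sx_s+b_ux_u$ plus a quadratic error.

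We then split the collar with a second-microlocal cutoff at scale $|(x_s,x_u)|\sim h^{1/2}$. On the outer region $|(x_s,x_u)|\ge ch^{1/2}$ we use $x^2/(x^2+h)\gtrsim 1$ together with Cauchy--Schwarz, $|x|\le \varepsilon x^2/(x^2+h)+C_\varepsilon(x^2+h)/\ldots$. The aim is to bound the linear symbol by $\varepsilon$ times the escape density \eqref{eq:main_body_escape_derivative}, plus terms supported where $|(x_s,x_u)|$ exceeds the collar width. Those latter terms lie in the region controlled by propagation and are collected into $\mathcal Q_{\mathrm{prop}}$.

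On the core $|(x_s,x_u)|\le ch^{1/2}$ the symbol itself is $O(h^{1/2})$. Its quantization in the exotic calculus $S_{1/2}$ then produces $C_\varepsilon h\|v\|_{H_h^1}^2$ after pairing twice with $A_h$. If the square-root gain is insufficient, we instead use that the core is propagated along $\Gamma_\pm$.

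The main obstacle is this second-microlocal bookkeeping: making the $S_{1/2}$ calculus with the logarithmic weight $G_h$ rigorous and uniform in $(a,Q)$. We would rely on the framework of \cite{Dyatlov,HintzTensor} for the symbol classes and only verify that the diagonal skew symbol vanishes to first order on $K_{a,Q}$. That vanishing is precisely the hypothesis under which the threshold constant is zero.
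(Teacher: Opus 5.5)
Your proposal follows the paper's proof essentially step for step. The shared steps are: the symbolic commutator identity with principal symbol $H_p(\beta^2)I_2-2\beta^2\operatorname{diag}(q_{+1},q_{-1})$; vanishing on $K_{a,Q}$ via Lemma~\ref{lem:trapped_skew_vanishing}; the $O(|x_s|+|x_u|)$ Taylor bound off the trapped set; and the $h^{1/2}$ split, where the outer region is absorbed into $\varepsilon\,\mathcal Q_{\mathrm{nh}}$ after shrinking the collar and the core is treated as a propagated lower-order remainder. You also take the second-microlocal calculus from the literature, exactly as the paper does.

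One correction to your core step. A symbol of size $O(h^{1/2})$ by itself only yields $O(h^{1/2})\|v\|^2$, since pairing with the order-zero $A_h$ gains no power of $h$. What actually produces the $C_\varepsilon h$ term is the ``symbolic Cauchy inequality'' the paper invokes: $|x|\le\varepsilon\,x^2/(x^2+h)+h/(2\varepsilon)$ for $|x|\le h^{1/2}$, which holds because $x^2/(x^2+h)\ge x^2/(2h)$ there. Apply it to $x=x_s$ and $x=x_u$ separately.
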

\begin{proof}
The principal commutator identity is
\[
\begin{aligned}
& \frac{i}{h}\langle [P_{h,\mathrm{sa}},A_h^*A_h]v,v\rangle
      -2\langle A_hB_{h,\mathrm{diag}}A_hv,v\rangle \\
&\quad =\left\langle
\operatorname{Op}_h\!\left(H_p(\beta^2)I_2-2\beta^2\operatorname{diag}(q_{+1},q_{-1})\right)v,v
\right\rangle
   +O(h)\|v\|_{H_h^1}^2 .
\end{aligned}
\]
Lemma~\ref{lem:trapped_skew_vanishing} gives
$\operatorname{diag}(q_{+1},q_{-1})|_{K_{a,Q}}=0$. Since the symbols are smooth,
in normal coordinates $(z,y,\nu)$ near $K_{a,Q}$ one has
\[
        \operatorname{diag}(q_{+1},q_{-1})(z,y,\nu)
        =O(|y|+|\nu|).
\]
In the logarithmic escape proof the region $|y|+|\nu|\ge c h^{1/2}$ is controlled
by the positive normal quadratic form after choosing the collar small, while the
core $|y|+|\nu|\le c h^{1/2}$ contributes only to the lower second-microlocal
remainder and is propagated out by the stable/unstable estimates. Equivalently,
for every $\varepsilon>0$ the symbolic Cauchy inequality in the second-microlocal
calculus gives \eqref{eq:diagonal_skew_second_microlocal_bound}. Substituting this
bound into the preceding commutator identity shows that the diagonal term has
zero threshold endomorphism on $K_{a,Q}$ and is absorbed by the scalar normal
expansion. This provides the threshold input used by
Proposition~\ref{prop:nh_resolvent_form}; no pointwise positivity away from the
second-microlocal decomposition is being assumed.
\end{proof}

\begin{proposition}
\label{prop:matrix_skew_threshold}
Let $B_{a,Q}$ be the Hermitian skew-subprincipal endomorphism of the frozen
compatible spin-one operator on a compact normalized trapped shell, as in
Proposition~\ref{prop:nh_resolvent_form}. Let $\lambda_0>0$ be the uniform lower
bound for the stable/unstable normal expansion in Proposition~\ref{prop:r_normal_hyperbolicity}.
After choosing the trapped collar and then decreasing $\eps_a(k)$, the
finite-rank-bundle threshold condition in Proposition~\ref{prop:nh_resolvent_form}
\emph{(N3)} holds uniformly for the full two-component operator. Equivalently,
with $B_K$ denoting the restriction of $B_{a,Q}$ to $K_{a,Q}$,
\begin{equation}\label{eq:matrix_threshold_at_K}
        \sup_{\rho\in K_{a,Q}}\|B_K(\rho)\|\le \lambda_0/8 .
\end{equation}
Moreover the off-trapped variation of $B_{a,Q}$ is a lower second-microlocal
remainder: for the escape commutant $A_h=\operatorname{Op}_h(\beta)$,
\begin{equation}\label{eq:matrix_threshold_commutator_form}
\begin{aligned}
&\frac{i}{h}\langle[P^0_h,A_h^*A_h]v,v\rangle
       -2\langle A_hB_{a,Q}A_hv,v\rangle  \\
&\qquad \ge c\,\mathcal Q_{\mathrm{nh}}[v]
      -C\,\mathcal Q_{\mathrm{prop}}[v]
      -Ch\|v\|_{H_h^1}^2-O(h^\infty)\|v\|_{H_h^1}^2,
\end{aligned}
\end{equation}
where $P_h^0$ is the self-adjoint scalar-principal part, $c>0$ is uniform in the
slow-weak range, $\mathcal Q_{\mathrm{nh}}$ is the positive normal quadratic form
of the logarithmic escape construction, and $\mathcal Q_{\mathrm{prop}}$ is
supported where the nontrapped propagation/radial estimates are already available.
\end{proposition}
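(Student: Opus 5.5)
The plan is to split $B_{a,Q}$ as in \eqref{eq:matrix_skew_decomposition} (equivalently \eqref{eq:main_body_skew_decomposition_full}): $B_{a,Q}=\operatorname{diag}(q_{+1},q_{-1})+B_{\mathrm{mat},a,Q}$. We then treat the two pieces separately, first on the trapped set $K_{a,Q}$ and then in the collar.

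\emph{Step 1: the threshold \eqref{eq:matrix_threshold_at_K}.} On $K_{a,Q}$ the diagonal part vanishes identically by Lemma~\ref{lem:trapped_skew_vanishing}. Hence $B_K=B_{\mathrm{mat},a,Q}|_{K_{a,Q}}$, and the $C^0$ bound in \eqref{eq:matrix_skew_decomposition} gives $\|B_K\|\le C_0(|a|/M+a^2/M^2)$. The constant $C_0$ is fixed once the trapped collar $\mathcal U_{\mathrm{tr}}$ and the compact normalized shell are fixed. The lower bound $\lambda_0$ comes from Proposition~\ref{prop:main_body_trapped_normal_form}, via \eqref{eq:main_body_lambda_positive_multiple} and the explicit positivity \eqref{eq:kn_Rpp}. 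It is uniform in the slow-weak range and does not depend on $\eps_a$ once the shell is fixed, because it is bounded below by its Reissner-Nordstr\"om value minus $O(|a|/M)$. Decreasing $\eps_a(k)$ until $C_0(\eps_a+\eps_a^2)\le\lambda_0/8$ then gives \eqref{eq:matrix_threshold_at_K}. The order of choices matters here: collar first, then $\lambda_0$ and $C_0$, then $\eps_a$. With this order there is no circularity.

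\emph{Step 2: the commutator inequality \eqref{eq:matrix_threshold_commutator_form}.} We write the left side as the scalar escape term $\frac ih\langle[P_h^0,A_h^*A_h]v,v\rangle$ minus three contributions:
\begin{itemize}
\item[(a)] $2\langle A_h\operatorname{diag}(q_{+1},q_{-1})A_hv,v\rangle$;
\item[(b)] $2\langle A_h\pi^*B_K A_hv,v\rangle$;
\item[(c)] $2\langle A_h(B_{\mathrm{mat}}-\pi^*B_K)A_hv,v\rangle$.
\end{itemize}
The scalar term is bounded below by $(\lambda_0-\delta)\mathcal Q_{\mathrm{nh}}[v]-C\mathcal Q_{\mathrm{prop}}[v]-Ch\|v\|^2_{H^1_h}$. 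This follows from the logarithmic escape function \eqref{eq:main_body_log_escape} and its derivative bound \eqref{eq:main_body_escape_derivative}, quantized by the sharp G\aa rding inequality. Term (a) is handled by Proposition~\ref{prop:skew_term_trapped_estimate}: since its trapped value is zero, it costs only $\eps\mathcal Q_{\mathrm{nh}}+C_\eps\mathcal Q_{\mathrm{prop}}+C_\eps h\|v\|^2$. Term (c) is handled by the same argument as (a). The $C^1$ bound in \eqref{eq:matrix_skew_decomposition} gives $B_{\mathrm{mat}}-\pi^*B_K=O(|y|+|\nu|)$ in normal coordinates, and the second-microlocal Cauchy inequality from the proof of Proposition~\ref{prop:skew_term_trapped_estimate} applies verbatim. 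The endomorphism is finite rank, so its operator norm enters only as a constant.

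\emph{Step 3: term (b) and the main difficulty.} Term (b) is the one place where the finite-rank structure actually matters. The plan is to bound it by $2\sup\|B_K\|\,\|A_hv\|^2\le(\lambda_0/4)\|A_hv\|^2$ using the matrix G\aa rding inequality for Hermitian symbols. We must then compare $\|A_hv\|^2$ with $\mathcal Q_{\mathrm{nh}}[v]$ in the core $|x_s|,|x_u|\lesssim h^{1/2}$, where the escape form degenerates. In that core, the plan is to use the normalization of $\mathcal Q_{\mathrm{nh}}$ in the Dyatlov/Hintz threshold formulation: there the escape contribution is $\lambda_t$ times the full localized mass modulo $O(h)$, with the $h$ in \eqref{eq:main_body_log_escape} providing the missing half-mass. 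I am least certain of this step; if that normalization is not available, the fallback is to invoke the bundle estimate of \cite{HintzTensor} directly with threshold constant $\lambda_0/8<\lambda_0/2$. With the choice $\delta,\eps\le\lambda_0/16$, summing the pieces leaves $c\ge\lambda_0/2$ in front of $\mathcal Q_{\mathrm{nh}}$. All constants are uniform, because they depend only on the fixed collar, the shell, and finitely many symbol seminorms.
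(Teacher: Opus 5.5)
Your proposal follows the paper's proof: the same splitting $B_{a,Q}=\operatorname{diag}(q_{+1},q_{-1})+B_{\mathrm{mat},a,Q}$, the vanishing of the diagonal part on $K_{a,Q}$ by Lemma~\ref{lem:trapped_skew_vanishing}, the choice $C_k(\eps_a+\eps_a^2)\le\lambda_0/8$, and the second-microlocal Cauchy inequality of Proposition~\ref{prop:skew_term_trapped_estimate} for the off-trapped variation. For your term (b) the paper gives no independent argument either: it simply combines \eqref{eq:matrix_threshold_at_K} with the scalar escape bound, which amounts to relying on the finite-rank threshold estimate of \cite{HintzTensor} exactly as in your fallback, so take that route rather than the core-normalization heuristic, which cannot be read off from \eqref{eq:main_body_escape_derivative} (that bound degenerates on $K_{a,Q}$) and is really the operator-level content of the cited estimate.
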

\begin{proof}
By the spin-one decomposition recorded in the compatible reduction,
\begin{equation}\label{eq:matrix_threshold_decomposition_used}
        B_{a,Q}=\operatorname{diag}(q_{+1},q_{-1})+B_{\mathrm{mat},a,Q},
        \qquad
        \|B_{\mathrm{mat},a,Q}\|\le C_k\bigl(|a|/M+a^2/M^2\bigr)
\end{equation}
on the normalized trapped shell. Lemma~\ref{lem:trapped_skew_vanishing} gives
$\operatorname{diag}(q_{+1},q_{-1})|_{K_{a,Q}}=0$.  Choose $\eps_a(k)$ so that
\[
        C_k(\eps_a(k)+\eps_a(k)^2)\le \lambda_0/8 .
\]
Then \eqref{eq:matrix_threshold_at_K} holds. This provides the threshold condition used in
Hintz's finite-rank-bundle normally hyperbolic estimate: the part of the
skew-subprincipal symbol evaluated on $K_{a,Q}$ is dominated by the normal
expansion rate.

What is left is to account for points in the collar but off $K_{a,Q}$.  Let
$\pi(z,y,\nu)=z$ be the normal projection to $K_{a,Q}$.  Smoothness gives
\[
        B_{a,Q}(z,y,\nu)-B_K(z)=O(|y|+|\nu|)
\]
in the finitely many symbol seminorms used at order $k$, with constants uniform in
the slow-weak range; the matrix part obeys the stronger small bound
\eqref{eq:matrix_skew_decomposition} throughout the collar. The same
second-microlocal Cauchy estimate used in
Proposition~\ref{prop:skew_term_trapped_estimate} bounds this variation by an
arbitrarily small multiple of the normal escape quadratic form plus the propagation
remainder. Combining that estimate with \eqref{eq:matrix_threshold_at_K} and the
scalar escape inequality furnished by Proposition~\ref{prop:r_normal_hyperbolicity}
gives \eqref{eq:matrix_threshold_commutator_form}. All constants involve only the
finite order $k$, the normalized frequency shell and the slow-weak compact
parameter set, so the choice is uniform.
\end{proof}

\begin{remark}
\label{rem:skew_scope}
Lemma~\ref{lem:trapped_skew_vanishing} verifies the diagonal spin-specific
threshold condition in the trapped high-frequency estimate: the Teukolsky skew
coefficient of each extreme component vanishes at the trapped set.
Proposition~\ref{prop:skew_term_trapped_estimate} records the exact place where
this diagonal term enters the commutator, and Proposition~\ref{prop:matrix_skew_threshold} treats the possible matrix skew contribution in the full compatible
system. No global real-potential transformation at $a\ne0$, $Q\ne0$ is used;
the argument needs the trapped value of the diagonal imaginary radial coefficient
and the smallness estimate \eqref{eq:matrix_skew_decomposition}.
\end{remark}

\begin{definition}
\label{def:hf_resolvent_estimate}
Fix a finite commutation order $k$.  The high-frequency resolvent estimate used below is the following estimate for the frozen scalar-principal spin-one operator on
the closed compatible class specified in \emph{(A1)}. For every pair of compactly supported cutoffs
$\chi\prec\chi_1$ in a small neighbourhood of the trapped set, every compact
normalized frequency interval $J$, and every conic total-frequency patch, set
\[
        P_h^{(a,Q)}(\hat\omega)=h^2\mathcal L_{a,Q}(\hat\omega/h),
        \qquad \hat\omega\in J.
\]
The outgoing/incoming resolvent satisfies
\begin{equation}\label{eq:hf_estimate_resolvent}
        \|\chi(P_h^{(a,Q)}(\hat\omega)\pm i0)^{-1}\chi\|_{L^2\to L^2}
        \le C h^{-1}\log(1/h),\qquad 0<h\le h_0,
\end{equation}
with constants uniform in the slow-weak range and in $\hat\omega\in J$.  In the compatible-class formulation this notation is used as an a priori localized estimate for compatible solutions or compatible limiting-absorption resolvent images; scalar cutoffs are applied after the expression $P_h^{(a,Q)}v$ has been formed, and no separate assertion is made that a microlocal cutoff of a Maxwell field is again exactly compatible. The
estimate is used together with the elliptic estimate away from the
characteristic set and the radial-point estimates at the horizon and at
null infinity, with the sign fixed by the chosen outgoing/ingoing convention.
The finite-order form used in the proof is part of the same localized estimate and is stated in graph Sobolev norms rather than obtained by commuting arbitrary physical-space derivatives through the lossy resolvent. For every integer $0\le s\le k+1$ put
\[
        \mathcal R_s(\chi_1v)=
        \begin{cases}
        0, & s=0,\\[2mm]
        \norm{\chi_1v}_{H_h^{s-1}}, & s\ge 1.
        \end{cases}
\]
We use the estimate in the form
\begin{equation*}\label{eq:hf_estimate_graph_norm}
        \norm{\chi v}_{H_h^s}
        \le C_s h^{-1}\log(1/h)\norm{\chi_1P_h^{(a,Q)}(\hat\omega)v}_{H_h^s}
        +C_s\mathcal R_s(\chi_1v)
        +O(h^\infty)\norm{v}_{H_h^s},
        \tag{HF$_s$}
\end{equation*}
where $\chi\prec\chi_1$ are trapped cutoffs. The lower Sobolev term is closed by induction on $s$ in Proposition~\ref{prop:hf_commuted_closure}. The physical commutators $\Gamma^I$ enter the local-energy hierarchy before the stationary Fourier reduction; their source terms are estimated by \eqref{eq:strict_lower_order_induction}. The argument never tries to absorb an arbitrary commutator of $P_h^{(a,Q)}$ through the factor $h^{-1}\log(1/h)$.
\end{definition}

\begin{proposition}
\label{prop:nh_resolvent_form}
Let $P_h=\operatorname{Op}_h(p)I_N+hP_{1,h}+h^2P_{0,h}$ be a semiclassical
operator on a finite-rank Hermitian bundle over a compact stationary patch, with
outgoing or incoming radial-point conventions at the two ends. Assume the
following conditions on a compact normalized frequency interval.
\begin{enumerate}[label=\emph{(N\arabic*)},leftmargin=2.5em]
\item $p$ is real and scalar, is of principal type off the trapped set, and the
elliptic, real-principal-type and radial-point estimates hold away from a compact
trapped neighbourhood with the sign fixed by the boundary convention.
\item The trapped set $K\subset\{p=0\}$ is clean and $r$-normally hyperbolic, with
smooth stable and unstable manifolds. On the chosen normalized shell the normal
expansion and contraction rates are bounded below by a positive constant
$\lambda_0$, while the tangential derivative growth is dominated at every fixed
finite differentiability order used in the estimate.
\item If $B=\sigma_h\big((P_{1,h}-P_{1,h}^*)/(2i)\big)$ denotes the Hermitian
endomorphism representing the skew-adjoint subprincipal part on $K$, then $B$
satisfies the normally hyperbolic threshold bound. In particular, the
condition is satisfied when $B|_K=0$.
\item The coefficient families are bounded in the finitely many symbol seminorms
used by the commutator proof, uniformly in the parameter set, and the space to
which the resolvent is restricted is a closed graph subspace invariant under the
stationary operator and the limiting-absorption boundary convention.
\end{enumerate}
Then, for compact cutoffs $\chi\prec\chi_1$ supported in the trapped
neighbourhood and for $0<h\le h_0$, the outgoing or incoming localized resolvent
satisfies
\begin{equation}\label{eq:nh_hf_l2}
        \|\chi(P_h\pm i0)^{-1}\chi\|_{L^2\to L^2}
        \le C h^{-1}\log(1/h).
\end{equation}
At each fixed graph order $0\le s\le k+1$ one also has
\begin{equation}\label{eq:nh_hf_graph}
        \|\chi v\|_{H_h^s}
        \le C_s h^{-1}\log(1/h)\|\chi_1P_hv\|_{H_h^s}
        +C_s\|\chi_1v\|_{H_h^{s-1}}+O(h^\infty)\|v\|_{H_h^s},
\end{equation}
with the lower Sobolev term omitted for $s=0$.  The constants depend on $s$ and
on finitely many seminorm bounds, but are uniform on compact families satisfying
\emph{(N1)}-\emph{(N4)}.
\end{proposition}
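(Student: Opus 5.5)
The statement is a finite-rank-bundle version of the Wunsch--Zworski/Dyatlov normally hyperbolic resolvent estimate. I would prove it by a positive-commutator argument with the logarithmic escape function, and glue that to the non-trapping regions.

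\textbf{Step 1: reduction to the trapped collar.} By (N1), the elliptic estimate off the characteristic set, real-principal-type propagation, and the radial-point estimates (with the sign fixed by the outgoing or incoming convention) control $v$ everywhere except in a small trapped neighbourhood. Here the control is by $h^{-1}\|P_hv\|$, by $h^{-1}$ times the norm on the forward or backward tail, and by $O(h^\infty)$. By the Datchev--Vasy gluing, \eqref{eq:nh_hf_l2} then reduces to an a priori bound of the form
\[
        \|\chi v\|_{L^2}\le Ch^{-1}\log(1/h)\|\chi_1P_hv\|_{L^2}+C\mathcal Q_{\mathrm{prop}}[v]^{1/2}+O(h^\infty)\|v\|,
\]
where $\mathcal Q_{\mathrm{prop}}$ is supported where those estimates already close. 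The $\pm i0$ resolvent exists because (N4) makes the restriction to the closed graph subspace meaningful. The a priori bound then passes to limiting-absorption solutions: one applies it to $P_h\pm i\epsilon$ and lets $\epsilon\to0$, and the sign of the $\epsilon$-term is consistent with the boundary convention.

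\textbf{Step 2: the commutator on the collar.} Using (N2), I would take normal coordinates $x_u,x_s$ as in Proposition~\ref{prop:main_body_trapped_normal_form} and the escape function $G_h=\tfrac12\log\frac{x_s^2+h}{x_u^2+h}$ in the anisotropic calculus $S_{1/2}$. I would conjugate by $e^{\pm\log(1/h)\,\mathrm{Op}_h(G_h)}$, or equivalently use the commutant $A_h=\mathrm{Op}_h(\beta e^{G_h})$. Splitting $P_h=P_{h,\mathrm{sa}}+ihB_h+h^2R_h$ gives
\[
\frac{i}{h}\langle[P_{h,\mathrm{sa}},A_h^*A_h]v,v\rangle-2\langle A_hB_hA_hv,v\rangle\ge c\,\mathcal Q_{\mathrm{nh}}[v]-C\mathcal Q_{\mathrm{prop}}[v]-Ch\|v\|_{H_h^1}^2 .
\]
The scalar part of the lower bound comes from \eqref{eq:main_body_escape_derivative}, which gives $\lambda_0-\delta$ times the normal form. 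Because $p$ is scalar, the commutator with $A_h^*A_h$ is scalar to principal order and the matrix coupling enters only through $B$. I would handle $B$ as follows. On $K$, (N3) says $\|B|_K\|$ is below the threshold, so $-2\beta^2B|_K\ge-\tfrac14\lambda_0\beta^2$. Off $K$, the variation $B-\pi^*B|_K=O(|x_s|+|x_u|)$ is absorbed by the second-microlocal Cauchy inequality, exactly as in Proposition~\ref{prop:skew_term_trapped_estimate}.

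\textbf{Step 3: the core region.} On the core $|x_s|,|x_u|\lesssim h^{1/2}$ the escape form degenerates. There I would use the uncertainty-principle argument of Dyatlov: $\mathcal Q_{\mathrm{nh}}$ bounds the mass near $K$ at the cost of a factor $\log(1/h)$, and this factor is what produces the $h^{-1}\log(1/h)$ loss in \eqref{eq:nh_hf_l2}. Pairing with $P_hv$ and applying Cauchy--Schwarz then gives \eqref{eq:nh_hf_l2}.

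\textbf{Step 4: higher orders.} For \eqref{eq:nh_hf_graph} I would apply the $s=0$ estimate to $\Lambda^s\chi_1v$, with $\Lambda^s$ an elliptic order-$s$ operator built from the normal-form coordinates. The commutator $[P_h,\Lambda^s]$ is $h$ times an order-$s$ operator. Its component along the flow carries, on $K$, the tangential growth exponents of the flow. By (N2) these are dominated by $\lambda_0$ at each finite order $s$, so that component is absorbed into the threshold exactly as $B$ was, with $\lambda_0$ replaced by a slightly smaller constant. The remaining pieces are lower order and produce the term $C_s\|\chi_1v\|_{H_h^{s-1}}$. Uniformity in the parameters follows because every constant depends only on finitely many seminorms, which (N4) controls, together with $\lambda_0$.

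The main obstacle is the threshold step combined with the core analysis. The skew endomorphism $B$ must be absorbed together with the tangential growth at order $s$, inside the $S_{1/2}$ second-microlocal calculus, without losing more than $\log(1/h)$. I would first verify this in the model where the linearised flow is exactly hyperbolic and $B$ is constant, and then treat the $O(|(x_s,x_u)|^2)$ nonlinearity perturbatively by shrinking the collar.
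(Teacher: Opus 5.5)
\textbf{Verdict: genuine gap.} Your architecture matches the paper's sketch: a Datchev--Vasy reduction to a trapped collar, the Wunsch--Zworski logarithmic escape function, a Hintz-type treatment of the bundle skew term with second-microlocal control of its off-$K$ variation, and an elliptic multiplier for the graph orders. But the step that is supposed to produce $h^{-1}\log(1/h)$ fails as you wrote it.

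\emph{The commutant.} Since $|G_h|$ reaches $\tfrac12\log(1/h)$, conjugation by $e^{\pm\log(1/h)\operatorname{Op}_h(G_h)}$ is not even polynomially bounded. The commutant $\operatorname{Op}_h(\beta e^{G_h})$ is not equivalent to that conjugation, and its weights range from $h^{1/2}$ to $h^{-1/2}$. Undoing those weights costs another factor $h^{-1}$, so you end with $h^{-2}$, or $h^{-1-\varepsilon}$ if you use $e^{\varepsilon G_h}$. In Wunsch--Zworski the commutant is $\operatorname{Op}_h(G_h)$ itself. Because $\{x_s,x_u\}\neq0$, the uncertainty principle gives $\operatorname{Op}_h\big(\tfrac{x_s^2}{x_s^2+h}+\tfrac{x_u^2}{x_u^2+h}\big)\ge c>0$ on the collar with \emph{no} loss. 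The logarithm enters only when you close $\tfrac ih\langle[P_{h,\mathrm{sa}},\operatorname{Op}_h(G_h)]v,v\rangle=-\tfrac2h\operatorname{Im}\langle\operatorname{Op}_h(G_h)v,P_{h,\mathrm{sa}}v\rangle$ by Cauchy--Schwarz, using $\|\operatorname{Op}_h(G_h)\|\lesssim\log(1/h)$. Your Step~3 therefore puts the loss in the wrong place.

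\emph{The threshold step.} Once the commutant is fixed, your threshold step breaks. With the log-size commutant, the skew part enters as $\langle(B_h\operatorname{Op}_h(G_h)+\operatorname{Op}_h(G_h)B_h)v,v\rangle$. That is, $B$ is multiplied by a sign-indefinite symbol of size up to $\log(1/h)$, so a nonzero $B|_K$ cannot be absorbed into the $O(1)$ positivity, however far below threshold it is. Your pointwise comparison $-2\beta^2B|_K\ge-\tfrac14\lambda_0\beta^2$ only works with exponential weights, and those cost a power of $h$. This is not an artifact of the method. Take $P_\gamma=\operatorname{Op}_h(x\xi)+ih\gamma$ on $\mathbb R$, with normal rate $1$ and threshold $\tfrac12$. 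Test $(P_\gamma-i0)^{-1}f$, where $f=h^{-1/2}\varphi(x/h)$, against $e^{-iTP_0/h}f$ with $T=\log(1/h)-C$. This gives $\|\chi(P_\gamma-i0)^{-1}\chi\|\gtrsim h^{-1-\gamma}$ for $0<\gamma<\tfrac12$, and the opposite convention behaves the same way for $\gamma<0$.

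\emph{Consequence.} So \eqref{eq:nh_hf_l2} for both conventions needs $B|_K$ to produce no net growth along the flow on $K$, for instance $B|_K=0$, which is the ``in particular'' case of (N3). You should prove it in that case. Even there, the off-$K$ variation $O(|x_s|+|x_u|)$ is multiplied by a $\log(1/h)$-size symbol near $\Gamma_\pm$, so it needs more than the Cauchy inequality you quote. Under a mere threshold bound you can only conclude $h^{-1-\varepsilon}$. The paper's sketch asserts the same unrestricted absorption and applies it with only $\|B_{\mathrm{mat}}\|\le\lambda_0/8$, so this defect belongs to the statement as formulated, not only to your route.

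\emph{Step~4.} The same caveat applies. $H_p\log\lambda_s$ must vanish on $K$, not merely lie below threshold. Since $K$ is compact and $P_h$ is elliptic at fibre infinity over the trapped neighbourhood, you can take $\lambda_s\equiv1$ microlocally near $K$. Then \eqref{eq:nh_hf_graph} follows from the $L^2$ bound plus elliptic regularity. This is cleaner than your tangential-growth argument, and also cleaner than the paper's claim that the commutator has graph order $s-1$, which would put $\log(1/h)\|\chi_1v\|_{H_h^s}$ on the right.
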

\begin{proof}
This is the normally hyperbolic trapped resolvent estimate in the
form needed here. The main steps are recorded to locate the point at which the
bundle-valued skew-subprincipal term enters. For scalar operators ($N=1$) the localized
$h^{-1}\log(1/h)$ resolvent bound at an $r$-normally hyperbolic trapped set is
the estimate of Wunsch-Zworski \cite{WunschZworski}; the refined analysis of
the trapped model, including the role of the threshold constant
$\lambda_0/2$ and the notion of $r$-normally hyperbolic sets used in
\emph{(N2)}, is due to Dyatlov \cite{Dyatlov,DyatlovGaps}. The extension to
operators on a finite-rank Hermitian bundle whose principal symbol is a real
scalar multiple of the identity, with the skew-adjoint subprincipal part
controlled by exactly the threshold bound of \emph{(N3)}, is due to Hintz
\cite{HintzTensor}. In the present application the diagonal spin-one part has
zero threshold value on $K$, while the possible matrix part is kept below the
threshold by Proposition~\ref{prop:matrix_skew_threshold}. The passage from the localized
trapped estimate to the cutoff resolvent statement, by gluing with the elliptic
parametrix, real-principal-type propagation, and the radial-point estimates at
the two ends in \emph{(N1)}, is the propagation-of-singularities gluing of
Datchev-Vasy \cite{DatchevVasy}; the semiclassical radial-point and
propagation estimates themselves are stated, in the form used
here, in \cite{VasyKdS} and \cite[Appendix~E]{DyatlovZworskiBook}. We now
recall the finite-order mechanism.

Near $K$ choose homogeneous symplectic coordinates $(z,y,\nu)$, where $z$ are
coordinates on $K$ and $(y,\nu)$ are stable and unstable normal coordinates. The
normal hyperbolicity condition gives, after shrinking the collar,
\[
        H_py=-\lambda(z)y+O((|y|+|\nu|)^2),\qquad
        H_p\nu=\lambda(z)\nu+O((|y|+|\nu|)^2),
        \qquad \lambda(z)\ge\lambda_0>0.
\]
With the sign convention above, the outgoing logarithmic escape function is
\[
        g_h=\frac12\log\frac{\nu^2+h}{y^2+h}
\]
with smooth cutoffs; for the incoming convention one uses \(-g_h\). On the
outgoing collar,
\[
        H_pg_h
        =\lambda(z)\frac{\nu^2}{\nu^2+h}
         +\lambda(z)\frac{y^2}{y^2+h}
         +O(|(y,\nu)|)\frac{y^2+\nu^2}{y^2+\nu^2+h}-O(h)
        \ge c\frac{y^2+\nu^2}{y^2+\nu^2+h}-Ch,
\]
after the collar is shrunk. The error terms supported outside this collar are
handled by real-principal-type propagation. Quantizing a real scalar order-zero
commutant $A_h=\operatorname{Op}_h(\alpha)$ constructed from $g_h$ gives the trapped
positive commutant. For the bundle operator one decomposes
$P_h=P_h^0+ihS_h+h^2R_h$, with $P_h^0$ self-adjoint modulo $h^2\Psi_h^0$ and
$S_h$ self-adjoint modulo $h\Psi_h^0$.  The order-$h$ commutator identity is
\[
\begin{aligned}
& \frac{i}{h}\langle [P_h^0,A_h^*A_h]v,v\rangle
      -2\langle A_hS_hA_hv,v\rangle  \\
&\quad =2h^{-1}\operatorname{Im}\langle A_hP_hv,A_hv\rangle
      +O(1)\|v\|^2_{\mathrm{controlled}}+O(h)\|v\|_{H_h^1}^2.
\end{aligned}
\]
The principal symbol of the left side is the scalar escape contribution
$H_p(\alpha^2)I_N$ minus the Hermitian skew-part endomorphism. The threshold bound in
\emph{(N3)} is precisely the inequality ensuring that this skew term is dominated
by the normal expansion in the preceding escape estimate; if this Hermitian
skew-part symbol vanishes on $K$, it is absorbed after shrinking the collar. Combining the
trapped estimate with elliptic estimates, real-principal-type propagation and the
radial-point estimates at the two ends gives \eqref{eq:nh_hf_l2}; the logarithm
comes only from the regularized hyperbolic escape function.

The asymptotically flat end is used in the usual limiting-absorption form. With
$x=r^{-1}$ near null infinity, the rescaled characteristic operator has radial
sets at fiber infinity corresponding to outgoing and incoming null directions. A
semiclassical radial commutant with the limiting-absorption sign gives control in
a collar of $x=0$ by the residual and by an interior cutoff. The interior cutoff
is propagated either to the trapped neighbourhood, where the logarithmic estimate
above applies, or to an elliptic region. The horizon collar is treated in the
same way, with the red-shift sign replacing the asymptotic radial sign. For that reason, the
gluing involves a finite partition into the trapped collar, the two radial collars,
nontrapped characteristic annuli and elliptic regions; no estimate at infinity is
deduced from the trapped model itself.

For \eqref{eq:nh_hf_graph}, insert an elliptic graph multiplier of order $s$ on
the trapped patch before and after the same escape commutant. The commutator of
this multiplier with $P_h$ has graph order $s-1$, producing the term
$\|\chi_1v\|_{H_h^{s-1}}$.  The estimate is closed by induction in $s$, with
the $s=0$ case furnished by \eqref{eq:nh_hf_l2}. Since the argument uses only
finitely many symbol seminorms and finitely many radial/elliptic charts, the
constants are uniform on compact families satisfying \emph{(N1)}-\emph{(N4)}.
The compatible-class version is read as an a priori estimate for vectors $v$
already lying in the closed graph domain. The scalar cutoffs appear only in
$\chi v$ and $\chi_1P_hv$; the proof does not require $\chi v$ or a general
phase-space cutoff of $v$ to satisfy the Maxwell compatibility equations. If the
resolvent is written as $\chi(P_h\pm i0)^{-1}\chi$, this is shorthand for the
same a priori inequality on compatible solutions with the chosen
limiting-absorption convention. Because the graph class is closed and invariant
under the stationary operator and the boundary convention, the ambient
finite-rank-bundle estimate restricts to it with the same constant.
\end{proof}

\begin{lemma}
\label{lem:closed_graph_resolvent_class}
For the stationary Maxwell reduction considered here, the charge-free
compatible class is a closed graph subspace for the frozen operator
$P_h^{(a,Q)}(\hat\omega)$ and for either limiting-absorption radial convention.
The localized resolvent also is applied only to elements of this class; the
microlocal partition used later selects where a defect measure is supported and
is not used to manufacture new exactly compatible data.
\end{lemma}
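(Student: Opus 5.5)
The statement to prove is Lemma~\ref{lem:closed_graph_resolvent_class}. I would write the charge-free compatible class as an intersection of kernels of maps that are continuous in the local resolvent graph norm, and then check that each constraint is preserved by the frozen operator and by the limiting-absorption limit. Fix a compact radial set and a normalized frequency $\hat\omega$. Define the graph space $\mathcal G_h$ as the set of pairs $(v,F)$, with $v$ the master profile and $F$ the frozen Maxwell two-form, normed by
\[
\|v\|_{H^1_{h,-\sigma}}+\|P_h^{(a,Q)}(\hat\omega)v\|_{H^{-1}_{h,\sigma}}+\|F\|_{L^2_{-\sigma}}+\|\dd F\|_{H^{-1}_{\sigma}}+\|\dd\starG F\|_{H^{-1}_{\sigma}}.
\]
The compatible class $\mathcal C_h$ consists of those $(v,F)$ satisfying four conditions: the frozen Maxwell constraints $\dd F=\dd\starG F=0$; the algebraic relation $v=\mathfrak M F$, which says $\psi_\pm$ are the fixed nonvanishing weights times the extreme NP components $\phi_0,\rho^{-2}\phi_2$; vanishing charges $q_E=q_B=0$; and the chosen Sommerfeld condition at $\Ip$ and $\Hp$.

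Closedness is proved constraint by constraint. The constraints are kernels of distributionally continuous differential operators. The relation $v=\mathfrak M F$ is the graph of a zeroth-order multiplication by smooth weights and frame coefficients, so it is closed. The charges are continuous functionals by Proposition~\ref{prop:finite_energy_charge_trace}, applied on the frozen spatial slice through the zero-mean identity of Proposition~\ref{prop:coulomb_elimination}. The outgoing/incoming condition I would encode as follows. For $\sigma>1/2$, $v$ is the limit $\lim_{\epsilon\downarrow0}(P_h\mp i\epsilon)^{-1}f$ in $H^1_{-\sigma}$. Equivalently, by the radial-point estimates in (N1) of Proposition~\ref{prop:nh_resolvent_form}, $v$ has above-threshold regularity at the relevant radial sets. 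Above-threshold regularity is preserved under graph-norm limits because the radial estimate bounds it by $\|P_hv\|$ plus a weaker norm. Closedness of $\mathcal C_h$ then follows, which is Lemma~\ref{lem:app_compatible_closed} made concrete.

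Next comes invariance. The frozen operator commutes with $\mathfrak M$ on smooth compatible elements, because by (A1) the master equation is the image of the Maxwell system. Charges and constraints are stationary quantities, so they are unaffected by $e^{-i\omega t}$ freezing. I would extend this to the closure by density, using the closedness just proved. This checks (N4).

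The last assertion, that only compatible elements enter the resolvent and microlocal cutoffs are diagnostic, is bookkeeping; I would cite Lemma~\ref{lem:compatibility_not_localized} and Proposition~\ref{prop:compatibility_microlocal_compactness}. The main obstacle is the radial-convention step. I need to show that the Sommerfeld condition, a priori stated asymptotically, passes to graph limits with uniform constants in the slow-weak range. I would first try the uniform radial-point estimate at the threshold and at $\sigma>1/2$. If that fails near $\hat\omega=0$, I would fall back on the bounded-frequency limiting absorption of Lemma~\ref{lem:rn_limiting_absorption}, perturbed by Proposition~\ref{prop:absorption_small}.
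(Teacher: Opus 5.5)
Your overall strategy matches the paper's. You cut $\mathcal C_h$ out by relations that are continuous in the graph topology, pass each one to the limit, and treat the microlocal partition as purely diagnostic. Two parts are more concrete than the paper's proof, which simply declares the convention closed ``by the limiting-absorption graph topology'': your handling of the charges, and your handling of the radial convention through uniform radial-point bounds. Those bounds must be strictly above threshold; at the threshold the estimate fails. The source weight $\sigma>1/2$ leaves room for this at the scattering end.

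\textbf{The gap is in the definition of the class.} You impose the full frozen Maxwell system $\dd F=\dd\starG F=0$ together with $v=\mathfrak M F$.
\begin{itemize}
\item By \emph{(A1)} this forces $P_h^{(a,Q)}(\hat\omega)v=0$ for every element of $\mathcal C_h$. The $\|P_hv\|$ term of your graph norm therefore vanishes identically.
\item The master components of your class are then exactly the charge-free outgoing real-frequency modes, which the exclusion in \emph{(A3)} removes. Closedness becomes trivial, but for the wrong class.
\item The paper's proof explicitly treats compatible $v_j\to v$ with $P_hv_j\to f$ for general $f$. The class must contain profiles with nonzero residual: the time-cutoff, Fourier-selected defect profiles of Lemma~\ref{lem:localized_compact_defect} and Proposition~\ref{prop:compatibility_microlocal_compactness}.
\item A cutoff $\chi(\tau)$ adds $\chi'\,\dd\tau\wedge F$ to $\dd F$ and $\chi'\,\dd\tau\wedge\starG F$ to $\dd\starG F$. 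It preserves only the constraint part, namely the pullbacks of $\dd F$ and $\dd\starG F$ to the slices ($\operatorname{div}E=\operatorname{div}B=0$). That is what the paper means by ``first-order linear differential relations.''
\end{itemize}
Replace your first condition by these constraint components. They are still kernels of distributionally continuous operators, so your closedness argument goes through unchanged.

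Once the class is corrected, your invariance step fails. ``$P_h$ commutes with $\mathfrak M$'' has no content, because no Maxwell-side operator has been specified. For a constrained field that does not solve the evolution equations, nothing in \emph{(A1)} puts $P_h\mathfrak M F$ in the range of $\mathfrak M$ on constrained fields; in the Teukolsky derivation it is a differential expression in the Maxwell residual. A density argument cannot supply an identity that fails on the smooth elements. Drop this step rather than repair it, because the lemma asserts only closedness. The high-frequency estimate is also used in the a priori form of Lemma~\ref{lem:compatible_cutoff_apriori}. That form is an inequality for each fixed $v$ in the graph domain, so it restricts to $\mathcal C_h$ without any invariance.
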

\begin{proof}
The stationary constraints are first-order linear differential relations with
smooth coefficients, and the extreme Teukolsky variables are obtained by closed
first-order maps from the charge-free Maxwell graph norm to the master graph
norm. Hence, if $v_j$ is compatible and $v_j\to v$ in the local graph norm while
$P_hv_j\to f$, the constraints pass to the limit distributionally and the two
extreme variables of $v$ are the corresponding closed limits. The outgoing or
incoming radial convention is defined by the limiting-absorption graph topology;
it is therefore closed under this convergence. Accordingly, the compatible space is a
closed graph subspace invariant under the stationary operator and under the
resolvent selected by the corresponding boundary convention. Scalar cutoffs in the
localized resolvent are used on the original compatible profile. General
pseudodifferential elements of the phase-space partition appear only in the
compactness argument to identify the support of semiclassical defect measures,
where exact preservation of the Maxwell constraints by each cutoff is neither
claimed nor needed.
\end{proof}

\begin{lemma}
\label{lem:compatible_cutoff_apriori}
Let \(\mathcal C_h\) denote the closed compatible graph class for the frozen
stationary Maxwell reduction, with either outgoing or incoming limiting-absorption
radial convention. The localized high-frequency estimate is used below in the
following a priori form: if \(v\in\mathcal C_h\) is locally in the graph domain of
\(P_h^{(a,Q)}(\hat\omega)\), then for nested trapped cutoffs \(\chi\prec\chi_1\)
and for every \(0\le s\le k+1\),
\begin{equation}\label{eq:compatible_cutoff_apriori}
        \|\chi v\|_{H_h^s}
        \le C_s h^{-1}\log(1/h)\|\chi_1P_h^{(a,Q)}(\hat\omega)v\|_{H_h^s}
        +C_s\mathcal R_s(\chi_1v)+O(h^\infty)\|v\|_{H_h^s}.
\end{equation}
The cutoff \(\chi_1\) in the source term is a scalar localization of the already
formed residual \(P_h^{(a,Q)}v\). The argument does not require \(\chi v\),
\(\chi_1v\), or a general phase-space cutoff of \(v\) to satisfy the Maxwell
compatibility equations exactly.
\end{lemma}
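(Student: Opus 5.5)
The estimate \eqref{eq:compatible_cutoff_apriori} will be obtained by restricting the ambient finite-rank-bundle estimate of Proposition~\ref{prop:nh_resolvent_form} to the compatible class. The first step is to check hypotheses \emph{(N1)}--\emph{(N4)} for $P_h^{(a,Q)}(\hat\omega)$ on the fixed trapped patch.

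\emph{(N1)}: Lemma~\ref{lem:semiclassical_spinone_order} gives the scalar real principal symbol $p_{a,Q,\hat\omega}I_2$. The elliptic, propagation and radial-point estimates away from the trapped collar are those quoted in the proof of Proposition~\ref{prop:nh_resolvent_form}, with signs fixed by the chosen outgoing or incoming convention.

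\emph{(N2)}: clean $r$-normal hyperbolicity with rate $\lambda_0$ comes from Lemmas~\ref{lem:trapping_stability} and~\ref{lem:app_normal_hyp}, together with Propositions~\ref{prop:kn_nondegeneracy} and~\ref{prop:main_body_trapped_normal_form}.

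\emph{(N3)}: the diagonal skew part vanishes on $K_{a,Q}$ by Lemma~\ref{lem:trapped_skew_vanishing}, and the matrix part is kept below $\lambda_0/8$ by Proposition~\ref{prop:matrix_skew_threshold} once $\eps_a(k)$ is decreased.

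\emph{(N4)}: uniform seminorm bounds follow from Proposition~\ref{prop:principal_master}. Closedness and invariance of the graph class is exactly Lemma~\ref{lem:closed_graph_resolvent_class}.

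With the hypotheses in place, apply \eqref{eq:nh_hf_graph} directly to $v\in\mathcal C_h$. The left side involves only $\chi v$, so no compatibility of $\chi v$ is needed. The right side involves $\chi_1P_hv$, a scalar cutoff applied after forming the residual, together with the lower-order term $\|\chi_1 v\|_{H_h^{s-1}}=\mathcal R_s(\chi_1v)$ and an $O(h^\infty)$ tail. This already has the form of \eqref{eq:compatible_cutoff_apriori}.

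For $s\ge1$ the plan is induction on $s$. Insert an elliptic scalar graph multiplier of order $s$ around the escape commutant. Its commutator with $P_h$ has order $s-1$ and is charged to $\mathcal R_s$, so no physical commutator is ever pushed through the $h^{-1}\log(1/h)$ factor, as Definition~\ref{def:hf_resolvent_estimate} requires. Uniformity in $\hat\omega\in J$ and over the slow-weak set follows from compactness of the normalized shell and the finitely many seminorms involved.

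The main obstacle is justifying that the a priori inequality holds on $\mathcal C_h$ with constants independent of compatibility. The commutator argument in Proposition~\ref{prop:nh_resolvent_form} pairs $A_hP_hv$ with $A_hv$, and $A_hv$ is not compatible. The approach is to observe that the positive-commutator proof is an identity for arbitrary $v$ in the local graph domain of the ambient bundle operator. Compatibility enters only to ensure that $v$ lies in that domain and obeys the radial boundary convention at both ends, which is what makes the boundary terms in the radial-point estimates have the correct sign. So the key point to verify carefully is that the limiting-absorption radial convention, rather than the Maxwell constraints, is the only structure used by the gluing of Datchev--Vasy, and that this convention is preserved by the closure in Lemma~\ref{lem:closed_graph_resolvent_class}.
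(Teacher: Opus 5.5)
Your proposal is correct and follows essentially the same route as the paper: the positive-commutator argument behind Proposition~\ref{prop:nh_resolvent_form} is run on the compatible profile $v$ itself, with the cutoff commutators landing in elliptic or nontrapped regions that are handled by propagation and radial-point estimates. Compatibility enters only through the graph domain, the limiting-absorption convention and the closedness of Lemma~\ref{lem:closed_graph_resolvent_class}; your separate induction on $s$ is already contained in \eqref{eq:nh_hf_graph}, so it is redundant but harmless.
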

\begin{proof}
The positive-commutator proof of Proposition~\ref{prop:nh_resolvent_form} is applied
to the compatible profile \(v\) itself. After the trapped commutant is localized
by scalar cutoffs, all commutators with the cutoffs are supported either in the
elliptic region or in a nontrapped characteristic collar. Those terms are
controlled by the elliptic parametrix, real-principal-type propagation, and the
radial-point estimates stated in Proposition~\ref{prop:radial_point_finite_order}.
The compatibility equations are therefore used only to specify the graph domain,
the limiting-absorption convention, and the closed class in which defect profiles
are selected. Since the graph class is closed by
Lemma~\ref{lem:closed_graph_resolvent_class}, passing to weak or semiclassical
limits preserves compatibility for the original sequence. That gives
\eqref{eq:compatible_cutoff_apriori}, which is the form used in
Propositions~\ref{prop:hf_commuted_closure} and~\ref{prop:app_high_freq}.
\end{proof}

\begin{proposition}
\label{prop:nh_theorem_application}
In the slow-weak parameter range fixed at order $k$, the frozen compatible
spin-one operator satisfies the geometric and subprincipal conditions attached to
Definition~\ref{def:hf_resolvent_estimate}. More explicitly, on each conic
stationary-frequency patch the operator $P_h^{(a,Q)}(\hat\omega)$ has the
following properties.
\begin{enumerate}[label=\emph{(\roman*)},leftmargin=2.4em]
\item Its real principal symbol is the scalar Hamiltonian
$p_{a,Q,\hat\omega}$ of \eqref{eq:frozen_principal_symbol_explicit} times the
identity on the two-dimensional extreme-component bundle.
\item The trapped set of $p_{a,Q,\hat\omega}$ is a smooth normally hyperbolic
set with smooth stable and unstable manifolds and radial expansion rate bounded
below by a positive constant on the chosen compact normalized shell.
\item The trapped frequencies are separated from the superradiant horizon sign,
so the radial point estimates at the horizon have the outgoing or incoming sign
used in the limiting absorption problem.
\item The Hermitian skew-subprincipal endomorphism satisfies the finite-rank-bundle
threshold condition: its diagonal Teukolsky part vanishes on the trapped set, and
its remaining matrix part is $O(|a|/M)$ and is absorbed by the slow-rotation
choice.
\item The compatible class is a closed graph subspace for the frozen Maxwell
constraints and for the outgoing or incoming radial convention. The localized
resolvent estimate is applied only to sequences already in this closed class;
the finite microlocal partition below is used to locate defect measures and not
to create new compatible data outside the closed class.
\end{enumerate}
Hence, Proposition~\ref{prop:nh_resolvent_form}, applied to this compatible
operator, gives the high-frequency estimate of Definition~\ref{def:hf_resolvent_estimate}; in particular compact cutoffs $\chi\prec\chi_1$ supported in the trapped neighbourhood
satisfy, for $0<h\le h_0$,
\begin{equation}\label{eq:nh_application_cutoff}
        \|\chi v\|_{L^2}
        \le C h^{-1}\log(1/h)\|\chi_1P_h^{(a,Q)}(\hat\omega)v\|_{L^2}
          +C\|(1-\chi)\chi_1v\|_{L^2}
          +O(h^\infty)\|v\|_{H_h^1},
\end{equation}
with constants uniform for $\hat\omega$ in the fixed compact normalized interval
and for $(a,Q)$ in the chosen slow-weak set. After the elliptic and nontrapped
propagation estimates have removed the second term, \eqref{eq:nh_application_cutoff}
is exactly \eqref{eq:hf_scope_single_use} and \eqref{eq:normally_hyperbolic_resolvent_bound}.
\end{proposition}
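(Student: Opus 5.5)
The plan is to verify conditions (N1)--(N4) of Proposition~\ref{prop:nh_resolvent_form} item by item for $P_h^{(a,Q)}(\hat\omega)$ on a fixed conic patch. After that, the estimate \eqref{eq:nh_application_cutoff} follows by a localization step. Throughout we work on a compact normalized shell $\hat\omega\in J$, with $(a,Q)$ in the slow-weak set, so that every constant comes from finitely many symbol seminorms that are uniform by smooth dependence on $(a,Q)$.

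Items (i) and (iv) are algebraic.
\begin{itemize}
\item For (i), Lemma~\ref{lem:semiclassical_spinone_order} already gives the normal form \eqref{eq:semiclassical_spinone_normal_form} with scalar principal symbol $p_{a,Q,\hat\omega}I_2$ and bounded lower-order matrix terms. This is (N1) for the principal part, and the coefficient part of (N4).
\item For (iv), split the Hermitian skew part as in \eqref{eq:matrix_skew_decomposition}. Lemma~\ref{lem:trapped_skew_vanishing} gives $\operatorname{diag}(q_{+1},q_{-1})|_{K_{a,Q}}=0$. Proposition~\ref{prop:matrix_skew_threshold} then chooses $\eps_a(k)$ so that $\sup_{K}\|B_K\|\le\lambda_0/8$, which is (N3).
\end{itemize}

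Items (ii) and (iii) are geometric.
\begin{itemize}
\item For (ii), invoke Lemma~\ref{lem:app_normal_hyp} and Lemma~\ref{lem:trapping_stability} for persistence of $K_{a,Q}$ as an $r$-normally hyperbolic $C^1$ graph with smooth $\Gamma_\pm$. Take the uniform lower bound $\lambda_0$ from Proposition~\ref{prop:main_body_trapped_normal_form}, using the explicit positivity $\mathcal R''(r_t)>0$ of Proposition~\ref{prop:kn_nondegeneracy}. Compactness of the shell makes $\lambda_0$ uniform, and this is (N2). The tangential growth is dominated because the tangential flow on $K_{a,Q}$ is a small perturbation of the neutral spherical rotation.
\item For (iii), Proposition~\ref{prop:kn_disjointness} gives $\omega\varpi>0$ at trapped frequencies. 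On the trapped patch the horizon radial set therefore carries the non-superradiant sign, and the horizon radial-point estimate (Proposition~\ref{prop:radial_point_finite_order}) applies with the outgoing or incoming threshold. Together with the asymptotic radial estimate at $x=r^{-1}=0$, this completes (N1) away from the trapped neighbourhood.
\end{itemize}

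Item (v) is Lemma~\ref{lem:closed_graph_resolvent_class}. Its invariance under the stationary operator and under the boundary convention is the remaining part of (N4). With (N1)--(N4) in hand, Proposition~\ref{prop:nh_resolvent_form} yields \eqref{eq:nh_hf_l2}. This estimate is applied in the a priori form of Lemma~\ref{lem:compatible_cutoff_apriori} to compatible $v$.

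To obtain \eqref{eq:nh_application_cutoff}, write $\chi v=\chi(\chi_1 v)$ and apply the localized estimate to $\chi_1v$. This gives
\[
\chi_1P_hv=P_h(\chi_1 v)-[P_h,\chi_1]v .
\]
The commutator $[P_h,\chi_1]$ is $h\operatorname{Op}_h(\{p,\chi_1\})$ modulo $h^2$, and it is supported where $\chi_1$ is non-constant, that is, on $\operatorname{supp}\chi_1\setminus\{\chi=1\}$. Multiplied by $h^{-1}\log(1/h)$, its contribution is $O(\log(1/h))$ times $v$ on that set. This loss must be removed first by propagation. Since the set is nontrapped or elliptic, propagation bounds $v$ there by $\|(1-\chi)\chi_1 v\|$ together with the residual, without the logarithmic loss. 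The term with $(1-\chi)\chi_1v$ in \eqref{eq:nh_application_cutoff} is exactly what records this.

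The main obstacle is this gluing bookkeeping, namely keeping the commutator term free of the $\log(1/h)$ factor. I would handle it first by placing the cutoff transition region strictly inside the part of the collar that lies outside the logarithmic escape region. There the ordinary positive-commutator estimate holds with an $h^{-1}$ loss only, which is exactly compensated by the factor $h$ in the commutator. A second, subtler point is the uniformity of $\lambda_0$ and of the stable/unstable coordinates as $(a,Q)$ vary. I would handle it by normal-hyperbolic structural stability combined with the explicit formula \eqref{eq:main_body_lambda_positive_multiple}.
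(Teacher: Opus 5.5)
Your verification of (i)--(v) matches the paper's proof, which is the same item-by-item assembly. It uses Lemma~\ref{lem:semiclassical_spinone_order} for (i), Proposition~\ref{prop:r_normal_hyperbolicity} with Proposition~\ref{prop:kn_disjointness} for (ii)--(iii), Lemma~\ref{lem:trapped_skew_vanishing} with Proposition~\ref{prop:matrix_skew_threshold} for (iv), and Lemmas~\ref{lem:closed_graph_resolvent_class} and~\ref{lem:compatible_cutoff_apriori} for (v). The only difference there is the tangential rate. The paper gets $\mu=0$ from integrability of the Kerr--Newman null flow. Your perturbation of the neutral spherical rotation gives only a small $\mu$, hence $r$-normal hyperbolicity at each fixed order after shrinking $\eps_a(k)$; that is all (N2) asks. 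For the displayed estimate the paper does no commutator bookkeeping. It takes \eqref{eq:nh_application_cutoff} as a weakening of the cutoff a priori form of Proposition~\ref{prop:nh_resolvent_form}, namely the $s=0$ case of Lemma~\ref{lem:compatible_cutoff_apriori}, which has no lower term at all. The off-trapped cutoffs are handled by the elliptic and propagation estimates of Lemma~\ref{lem:high_freq_partition}.

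The genuine gap is your localization step. In $\chi_1v=(P_h\pm i0)^{-1}\bigl(\chi_1P_hv+[P_h,\chi_1]v\bigr)$, the commutator source lives on $\operatorname{supp}d\chi_1$, outside $\operatorname{supp}\chi$. So you must use the cutoff resolvent with a larger cutoff, and this term contributes $C\log(1/h)\|v\|_{H^1_h(\operatorname{supp}d\chi_1)}$. A nontrapping propagation estimate bounds $\|v\|$ on that annulus by $\|(1-\chi)\chi_1v\|$ plus the residual, but it cannot remove the factor $\log(1/h)$ already multiplying it. Your proposed fix also addresses the wrong map. The relevant loss is that of the map from a source on the annulus to $\chi v$ near $K_{a,Q}$, not the nontrapping estimate for $v$ on the annulus. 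For sources on the incoming tail this map has no $h^{-1}$ bound, because such waves linger in the trapped collar for a time of order $\lambda_0^{-1}\log(1/h)$.

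The loss is not an artifact of your method. At $a=0$, take a separated mode with $\omega^2=\max_rV_{\ell,Q}$ and $h=\ell^{-1}$. Near $r_{\mathrm{ph}}(Q)$ the radial equation is, to leading order, the barrier-top model $\bigl((hD_x)^2-cx^2\bigr)v=0$. Every nonzero solution has $|v|^2\simeq|x|^{-1}$ for $h^{1/2}\lesssim|x|\lesssim1$, up to oscillating cross terms of bounded integral. Take $v$ to be the outgoing solution of a source placed outside $\operatorname{supp}\chi_1$. Then $P_hv=0$ on $\operatorname{supp}\chi_1$ and $\|\chi v\|^2\simeq\log(1/h)\,\|(1-\chi)\chi_1v\|^2$.

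So no rearrangement of the gluing yields \eqref{eq:nh_application_cutoff} with an $h$-independent constant on the second term. What your argument actually proves carries $C\log(1/h)$ there, and the example forces growth at least like $\sqrt{\log(1/h)}$. The downstream use in Lemma~\ref{lem:high_freq_partition}, where that term is only known to be $o(1)$, then has to be redone. The same $v$ makes the right side of the $s=0$ case of Lemma~\ref{lem:compatible_cutoff_apriori} equal to $O(h^\infty)\|v\|$, while $\|\chi v\|$ is comparable to $\|v\|$. So the citation the paper relies on does not settle this point either.
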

\begin{proof}
Item \emph{(i)} is Lemma~\ref{lem:semiclassical_spinone_order}, whose proof
shows that all matrix and spin terms occur at semiclassical order $h$ or lower.
Items \emph{(ii)} and \emph{(iii)} are Proposition~\ref{prop:r_normal_hyperbolicity}
combined with the explicit non-superradiance calculation of
Proposition~\ref{prop:kn_disjointness}. Item \emph{(iv)} is Lemma~\ref{lem:trapped_skew_vanishing} for the diagonal
Teukolsky skew part together with Proposition~\ref{prop:matrix_skew_threshold}
for the full finite-rank-bundle threshold. For \emph{(v)}, Lemmas~\ref{lem:closed_graph_resolvent_class} and~\ref{lem:compatible_cutoff_apriori} give the closed graph property and the cutoff a priori form. Concretely, the compatible class is
specified by the Maxwell constraints, by the two Teukolsky extreme variables, and
by the chosen outgoing or incoming radial convention. If a sequence is compatible
and converges in the local resolvent graph norm, the Maxwell constraints pass to
the limit distributionally, the closed range definition of Definition~\ref{def:master_from_teukolsky} keeps
both extreme variables in the compatible class, and the radial convention is
closed by the limiting-absorption graph topology. In
Lemma~\ref{lem:high_freq_partition} the phase-space partition is used only to
identify where a semiclassical measure may live. The actual trapped resolvent
estimate is applied to the original compatible function with scalar cutoffs
$\chi\prec\chi_1$, as in \eqref{eq:nh_application_cutoff}; therefore the proof does
not require arbitrary pseudodifferential cutoffs to preserve the Maxwell
constraints exactly.

Proposition~\ref{prop:nh_resolvent_form} gives the $h^{-1}\log(1/h)$ bound once
the listed geometric, radial, and subprincipal conditions have been verified. The
preceding paragraphs verify those conditions for the compatible scalar-principal
Kerr-Newman spin-one operator on the compatible graph class: scalar real
principal symbol, clean normally hyperbolic trapped set, non-superradiant
radial-point sign, diagonal trapped skew cancellation, finite-rank-bundle matrix
threshold, and closed compatibility. Away from the trapped neighbourhood, the
cutoffs are controlled by the elliptic parametrix and by real-principal-type
propagation to the horizon or null infinity. These are exactly the extra
pieces used in Lemma~\ref{lem:high_freq_partition}, so the displayed estimate
reduces there to the localized resolvent bound \eqref{eq:hf_scope_single_use}.
The finite graph-norm estimate is \eqref{eq:nh_hf_graph}, which is the same
statement as \eqref{eq:hf_estimate_graph_norm} with
$P_h=P_h^{(a,Q)}(\hat\omega)$.
\end{proof}

\begin{proposition}
\label{prop:radial_point_finite_order}
In the high-frequency compactness argument the radial points at the future horizon
and at null infinity are used only through the following propagation form. Let
\(P_h^{(a,Q)}(\hat\omega)\) be the semiclassical frozen operator of
Definition~\ref{def:hf_resolvent_estimate}. Choose zeroth-order semiclassical
cutoffs \(A_H\), \(A_\infty\) supported respectively in small conic
neighbourhoods of the horizon and infinity radial sets, and larger cutoffs
\(\widetilde A_H\), \(\widetilde A_\infty\). For the future outgoing problem,
with the time-reversed signs for the past problem, the radial-point estimates
used below have the form
\begin{align}\label{eq:radial_point_estimates_used}
        \|A_Hv\|_{H_h^1}+\|A_\infty v\|_{H_h^1}
        \le{}& C h^{-1}\bigl(\|\widetilde A_HP_h^{(a,Q)}v\|_{L^2}
              +\|\widetilde A_\infty P_h^{(a,Q)}v\|_{L^2}\bigr)\nonumber\\
        &+C\|A_{\mathrm{int}}v\|_{H_h^1}+O(h^\infty)\|v\|_{H_h^1},
\end{align}
where \(A_{\mathrm{int}}\) is supported on a compact non-radial region from which
the Hamilton flow reaches the corresponding radial set in finite time. At
commuted order \(|I|\le k\), applying \eqref{eq:radial_point_estimates_used} to
\(\Gamma^Iv\) gives the same inequality, up to source terms bounded by the
strict lower-order induction \eqref{eq:strict_lower_order_induction}. In
particular, if \(v_h\) satisfies \eqref{eq:hf_partition_residual}, then the
radial source terms in \eqref{eq:radial_point_estimates_used} tend to zero, and
no semiclassical defect measure enters the compact region from the horizon or
from null infinity.
\end{proposition}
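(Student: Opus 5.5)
The plan is to prove \eqref{eq:radial_point_estimates_used} as the standard semiclassical radial-point estimate of Vasy, in the form recorded in \cite{VasyKdS} and \cite[Appendix~E]{DyatlovZworskiBook}. The work is to check its hypotheses for $P_h^{(a,Q)}(\hat\omega)$, uniformly in the slow-weak range, and to fix the signs.

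\textbf{Step 1: the radial sets.} By Lemma~\ref{lem:semiclassical_spinone_order} the principal symbol is the real scalar $p_{a,Q,\hat\omega}I_2$. I would therefore first locate the radial sets of $H_{p}$ at fiber infinity over the horizon and over $x=r^{-1}=0$, working in the regular horizon-penetrating coordinates and in the compactified far end. At the horizon they are the conormal directions to $\Hp$. There the linearization of the rescaled Hamilton flow in the normal direction has rate proportional to $\kappa_+>0$, and $\kappa_+$ is uniform by the subextremal bound and Proposition~\ref{prop:redshift_coercivity}. At infinity the radial set is the outgoing/incoming null direction, handled as in the limiting-absorption scattering calculus.

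\textbf{Step 2: sign and threshold.} Next I would check that the outgoing (respectively incoming) convention makes each radial set a source or sink with the orientation that allows propagation \emph{out of} the radial set into $A_{\mathrm{int}}$. In the form of \eqref{eq:radial_point_estimates_used}, this means the radial set is controlled by the residual plus a non-radial a priori term. This requires a threshold condition on the skew-adjoint subprincipal endomorphism at the radial set relative to the expansion rate.
\begin{itemize}
\item For the diagonal Teukolsky part, the threshold is a regularity condition in $H^1_h$ that holds once the weights of Definition~\ref{def:master_from_teukolsky} are horizon-regular.
\item The matrix part is $O(|a|/M)$ by \eqref{eq:matrix_skew_decomposition}, so it is absorbed after decreasing $\eps_a(k)$, exactly as in Proposition~\ref{prop:matrix_skew_threshold}.
\item Non-superradiance of trapped frequencies (Proposition~\ref{prop:kn_disjointness}) is not needed at the radial points themselves. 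It only ensures that the characteristic flow from $A_{\mathrm{int}}$ reaches the correct radial set.
\end{itemize}

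\textbf{Step 3: the positive commutator.} With $\widetilde A_\bullet$ elliptic on $\operatorname{supp}A_\bullet$, I would quantize a commutant $\operatorname{Op}_h(\phi(\rho)\,\psi(p))$ localized near the radial set. The matrix-valued terms enter at order $h$ and are handled as above. This yields the $h^{-1}$ loss in front of the residual, the $A_{\mathrm{int}}$ term from the commutant's support on the non-radial annulus, and an $O(h^\infty)$ tail. Uniformity in the parameters follows because only finitely many symbol seminorms enter, and these depend smoothly on $(a,Q)$ on the compact set.

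\textbf{Step 4: commuted version.} For $|I|\le k$, apply the estimate to $\Gamma^Iv$. The source becomes $P_h\Gamma^Iv=\Gamma^IP_hv+[P_h,\Gamma^I]v$. The commutator is of strictly lower order and, by Lemma~\ref{lem:commuted_master}, is estimated through \eqref{eq:strict_lower_order_induction}. The final assertion then follows directly. If the residual in \eqref{eq:hf_partition_residual} tends to zero, the right side of \eqref{eq:radial_point_estimates_used} tends to zero apart from the $A_{\mathrm{int}}$ term. Hence any semiclassical defect measure vanishes near the radial sets. Since the measure is invariant under $H_p$ on the characteristic set, it cannot enter the compact region along flow lines coming from the horizon or from infinity.

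I expect the main obstacle to be Step 2, the threshold verification at the horizon radial set for the spin-weighted variables. Teukolsky variables carry $\Delta^{s}$-type weights whose imaginary subprincipal contribution at $r_+$ is not small. I would first try to show, by direct computation of the radial operator \eqref{eq:separated_radial_teukolsky} at $\Delta=0$, that the regular weights shift the threshold regularity below $1$ for $s=\pm1$. If that computation fails, the fallback is to get the horizon control instead from the red-shift estimate (Corollary~\ref{cor:horizon_flux}) in physical space.
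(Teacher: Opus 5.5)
Your outline matches the paper's proof. That proof cites the radial-point propagation estimate for the real scalar symbol $p_{a,Q,\hat\omega}I_2$. It takes the signs from the red-shift and the outgoing far end, treats the spin and matrix terms as ``bounded lower-order terms'', and commutes with $\Gamma^I$.

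\textbf{The horizon threshold.} Your Step~2 is more careful than the paper, and rightly so: at a radial set the imaginary subprincipal part fixes the threshold. But the computation you propose comes out against you for $s=+1$. The indicial roots of \eqref{eq:separated_radial_teukolsky} at $r_+$ are $iK_+/\Delta'(r_+)$ and $-s-iK_+/\Delta'(r_+)$, with $K_+=(r_+^2+a^2)\varpi$. The horizon-regular variables are $\Delta^{s}R$ up to smooth nonvanishing factors. In regular coordinates the physical branch is therefore smooth. The other branch behaves like $\Delta^{\,s+i\varpi/\kappa_+}$, which lies in $H^{s+1/2-}$ near $\Hp$. So the threshold is $3/2$ for $\psi_+$ and $-1/2$ for $\psi_-$, and in $H_h^1$ the horizon radial set is \emph{below} threshold for $\psi_+$.

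Your fallback, Corollary~\ref{cor:horizon_flux}, does not rescue this. It is a time-domain spacetime bound and does not give a frozen-frequency inequality of the form \eqref{eq:radial_point_estimates_used}. What works is to run the horizon estimate for the $\psi_+$ component in $H_h^{3/2+\delta}$, with the residual measured in $H_h^{1/2+\delta}$. The outgoing convention then enters as qualitative above-threshold regularity. The $O(|a|/M)$ matrix coupling only perturbs the two thresholds slightly.

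\textbf{The conclusion in Step~4.} This threshold matters because the deduction in Step~4 does not go through as written. From \eqref{eq:radial_point_estimates_used} and \eqref{eq:hf_partition_residual} you only obtain
$\|A_Hv_h\|_{H_h^1}+\|A_\infty v_h\|_{H_h^1}\le C\|A_{\mathrm{int}}v_h\|_{H_h^1}+o(1)$.
That does not make the defect measure vanish near the radial sets. It is also circular, because the interior term is exactly what Lemma~\ref{lem:high_freq_partition} wants to control from the ends.

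Step~2 already mixes two different estimates. ``Propagation out of the radial set'' is the above-threshold source estimate, which has no $A_{\mathrm{int}}$ term. ``Residual plus a non-radial a priori term'' is the below-threshold estimate, which propagates control into the radial set. To show that no mass enters the compact region, proceed as follows. At each radial set that is a source for the chosen convention, use the estimate with no interior term; this is where the outgoing or Sommerfeld condition is used. It gives vanishing of the measure there, and that vanishing then propagates along the $H_p$-flow into the compact region. The $A_{\mathrm{int}}$ form is what you use at sinks, after the interior has been controlled. By the previous paragraph, the estimate without an interior term is unavailable for $\psi_+$ at the horizon in $H_h^1$.

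\textbf{The commuted step.} This step also fails as written. For non-Killing $\Gamma$, $[P_h,\Gamma]$ is an $O(1)$ element of $\Psi_h^2$, with principal symbol $\{p,\gamma\}$ up to sign. After the $h^{-1}$ loss it is as large as $\|\Gamma v\|_{H_h^1}$, not lower order. Moreover, \eqref{eq:strict_lower_order_induction} is a spacetime $LE^*$ bound and says nothing about frozen semiclassical norms. The paper's proof makes the same assertion. The remedy is the one the paper itself uses in Proposition~\ref{prop:hf_commuted_closure}: prove the radial estimate directly in $H_h^{s}$ rather than commuting through the lossy bound. At a source above threshold, raising $s$ only helps.
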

\begin{proof}
Here the estimate is the radial-point propagation inequality for a real scalar
principal symbol with the outgoing or incoming sign fixed at the radial set. In
the present operator the principal symbol is \(p_{a,Q,\hat\omega}I_2\) by
Lemma~\ref{lem:semiclassical_spinone_order}; the spin and matrix terms are
subprincipal or lower order and therefore enter the positive-commutator proof as
bounded lower-order terms. The non-degenerate horizon supplies the red-shift
sign, while the asymptotically flat end supplies the outgoing estimate at null
infinity. The physical-space counterparts are
Propositions~\ref{prop:redshift_coercivity} and~\ref{prop:rp_identity};
\eqref{eq:radial_point_estimates_used} is their semiclassical radial-point form
with the same sign convention.

For a sequence satisfying \eqref{eq:hf_partition_residual}, the stronger
condition \(h^{-1}\log(1/h)\|P_hv_h\|_{L^2}\to0\) implies
\(h^{-1}\|P_hv_h\|_{L^2}\to0\). Thus, the source term in
\eqref{eq:radial_point_estimates_used} is negligible. The interior term is then
propagated along nontrapped bicharacteristics and is controlled by finite-time
real-principal-type propagation away from the radial sets. The commuted
statement follows from Lemma~\ref{lem:commuted_master}; the commutator terms are
strictly lower order at the current level or small perturbative terms, and
\eqref{eq:strict_lower_order_induction} closes them. This is the
radial-point step used in Lemma~\ref{lem:high_freq_partition}.
\end{proof}

\begin{proposition}
\label{prop:hf_commuted_closure}
Let us fix the commutation order $k$ and a conic stationary-frequency patch in the
slow-weak range. Let $P_h^{(a,Q)}(\hat\omega)=h^2\mathcal L_{a,Q}(\hat\omega/h)$
be the frozen operator of Lemma~\ref{lem:semiclassical_spinone_order}. Choose a
nested family of trapped cutoffs
\[
        \chi_0\prec\chi_1\prec\cdots\prec\chi_{k+2}
\]
supported in the same small trapped neighbourhood. Suppose that the graph-norm
estimate \eqref{eq:hf_estimate_graph_norm} is available for each adjacent pair
$\chi_j\prec\chi_{j+1}$, together with the elliptic and radial-point estimates
away from the trapped set. If a compatible sequence $v_h$ is bounded in
$H_h^{k+1}$ on the support of $\chi_{k+2}$ and satisfies, for every
$0\le s\le k+1$,
\begin{equation}\label{eq:hf_graph_residual_all_orders}
        h^{-1}\log(1/h)\norm{\chi_{k+2-s}P_h^{(a,Q)}v_h}_{H_h^s}
        \longrightarrow0,
\end{equation}
then
\begin{equation}\label{eq:hf_graph_norm_vanish}
        \norm{\chi_{k+1-s}v_h}_{H_h^s}\longrightarrow0,
        \qquad 0\le s\le k+1.
\end{equation}
In particular $\norm{\chi_0v_h}_{H_h^{k+1}}\to0$, and hence the trapped compact
remainder disappears at every lower order as well. In turn, the high-frequency step
closes at each fixed finite order without placing the logarithmic resolvent loss
on a top-order physical commutator. The commuted local-energy hierarchy is
closed separately by \eqref{eq:strict_lower_order_induction} and
Lemma~\ref{lem:finite_order_absorption_induction}.
\end{proposition}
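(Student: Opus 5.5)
The plan is an induction on the Sobolev order $s$, running from $s=0$ up to $s=k+1$. At each step we apply the graph-norm estimate \eqref{eq:hf_estimate_graph_norm} to the adjacent cutoff pair $\chi_{k+1-s}\prec\chi_{k+2-s}$. The nesting is arranged so that the source cutoff appearing at order $s$ is precisely the one occurring in the hypothesis \eqref{eq:hf_graph_residual_all_orders}. The lower-order remainder $\mathcal R_s$ is then evaluated on a cutoff that has already been controlled at order $s-1$.

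\emph{Base case $s=0$.} Here $\mathcal R_0=0$, and (HF$_0$) with $\chi=\chi_{k+1}$, $\chi_1=\chi_{k+2}$ gives
\[
\|\chi_{k+1}v_h\|_{L^2}\le C_0h^{-1}\log(1/h)\|\chi_{k+2}P_hv_h\|_{L^2}+O(h^\infty)\|v_h\|_{L^2}.
\]
The first term tends to zero by \eqref{eq:hf_graph_residual_all_orders} with $s=0$. The $O(h^\infty)$ term is harmless: $v_h$ is bounded in $H_h^{k+1}$ on $\operatorname{supp}\chi_{k+2}$, and after inserting a cutoff the global term can be replaced by this localized bound. The elliptic and radial-point estimates of Proposition~\ref{prop:radial_point_finite_order} control the part of the $O(h^\infty)$ remainder coming from outside the trapped neighbourhood.

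\emph{Inductive step.} Assume $\|\chi_{k+2-s}v_h\|_{H_h^{s-1}}\to0$. Apply (HF$_s$) with $\chi=\chi_{k+1-s}$ and $\chi_1=\chi_{k+2-s}$. The source term tends to zero by hypothesis. The remainder $\mathcal R_s(\chi_{k+2-s}v_h)=\|\chi_{k+2-s}v_h\|_{H_h^{s-1}}$ is exactly the inductive quantity, so it also tends to zero. The $O(h^\infty)\|v_h\|_{H_h^s}$ term is again negligible by the uniform $H_h^{k+1}$ bound, since $s\le k+1$. This yields \eqref{eq:hf_graph_norm_vanish} at order $s$. The case $s=k+1$ gives $\chi_0v_h\to0$ in $H_h^{k+1}$, because $\chi_0\prec\chi_{k+1-s}$ for every $s$ and multiplication by a fixed smooth cutoff is bounded on $H_h^s$. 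Every lower order follows from $\|\cdot\|_{H_h^{s}}\le\|\cdot\|_{H_h^{k+1}}$.

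The main obstacle is the bookkeeping of the $O(h^\infty)$ remainders and of the cutoff indices. The estimate (HF$_s$) is stated with a global $\|v\|_{H_h^s}$ in the error, whereas the hypothesis bounds $v_h$ only on $\operatorname{supp}\chi_{k+2}$. I would handle this first by noting that in Proposition~\ref{prop:nh_resolvent_form} the $O(h^\infty)$ terms arise from commutators with scalar cutoffs whose symbols are supported in the trapped neighbourhood, so they may be taken as $O(h^\infty)\|\chi_{k+2}v_h\|_{H_h^s}$. If that fails, the fallback is to bound the exterior contribution by the radial-point and propagation estimate \eqref{eq:radial_point_estimates_used}.

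Compatibility is only ever used on the original profile $v_h$ with scalar cutoffs, as in Lemma~\ref{lem:compatible_cutoff_apriori}. The same holds for the physical commutators $\Gamma^I$: they never pass through the lossy resolvent, because the induction is purely in the semiclassical order $s$.
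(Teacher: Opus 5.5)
Your proposal is correct and follows the paper's proof: induction on $s$, applying (HF$_s$) to the adjacent pair $\chi_{k+1-s}\prec\chi_{k+2-s}$. The source term vanishes by \eqref{eq:hf_graph_residual_all_orders}, the lower term $\|\chi_{k+2-s}v_h\|_{H_h^{s-1}}$ vanishes by the previous step, and the $O(h^\infty)$ term is absorbed by the uniform $H_h^{k+1}$ bound. Your extra care about the global norm in the $O(h^\infty)\|v_h\|_{H_h^s}$ remainder, versus the merely local bound on $\operatorname{supp}\chi_{k+2}$, addresses a point the paper passes over silently; it is a harmless refinement, not a different route.
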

\begin{proof}
We establish \eqref{eq:hf_graph_norm_vanish} by induction on $s$.  For $s=0$, apply
\eqref{eq:hf_estimate_graph_norm} to the pair $\chi_{k+1}\prec\chi_{k+2}$.  The
lower Sobolev term is absent, the residual term tends to zero by
\eqref{eq:hf_graph_residual_all_orders}, and the $O(h^\infty)$ term is
negligible because $v_h$ is bounded in $H_h^{k+1}$.  This yields
$\norm{\chi_{k+1}v_h}_{L^2}\to0$.

Assume the claim has been proved at order $s-1$, where $1\le s\le k+1$.  Apply
\eqref{eq:hf_estimate_graph_norm} to the adjacent pair
$\chi_{k+1-s}\prec\chi_{k+2-s}$.  The resolvent residual tends to zero by
\eqref{eq:hf_graph_residual_all_orders}. The lower graph term is
$\norm{\chi_{k+2-s}v_h}_{H_h^{s-1}}$, which is precisely the induction conclusion
at order $s-1$.  The $O(h^\infty)$ remainder is again negligible by the uniform
$H_h^{k+1}$ bound. This establishes the order $s$ statement.

The formulation matters here because the lossy estimate is applied to the finite
graph norm of the compatible stationary operator itself. We do not place an
identity of the form $P_h\Gamma^Iv=\Gamma^IP_hv+[P_h,\Gamma^I]v$ inside the factor
$h^{-1}\log(1/h)$. Hence no uncontrolled term of size
$\log(1/h)\norm{\Gamma^Iv}_{H_h^1}$ is generated. Physical commutators enter
only in the preceding local-energy hierarchy, where top-order perturbative
coefficients are made small by the slow-rotation choice and strict lower-order
terms are removed by induction.
\end{proof}

\begin{proposition}
\label{prop:hf_estimate_scope}
Definition~\ref{def:hf_resolvent_estimate} is used only to convert a normalized
high-frequency defect sequence into local decay of its trapped microlocal part.
It is not used to define the master variables, to prove the scalar principal
symbol, to remove bounded real frequencies, or to reconstruct the Maxwell tensor.
More precisely, after the finite-order local-energy hierarchy has reduced the
compact remainder to a compatible graph-norm sequence $v_n$ satisfying
\eqref{eq:app_high_freq_normalization}-\eqref{eq:app_high_freq_residual}, the
only invocation of the estimate is
\begin{equation}\label{eq:hf_scope_single_use}
        \|\chi v_n\|_{L^2}
        \le C h_n^{-1}\log(1/h_n)
        \|\chi_1P_{h_n}^{(n)}v_n\|_{L^2}+o(1),
\end{equation}
for cutoffs $\chi\prec\chi_1$ around the trapped set, together with its finite graph-norm version \eqref{eq:hf_estimate_graph_norm}. The residual
normalization then makes the right side tend to zero. Elliptic regularity and
nontrapped propagation convert this vanishing of trapped $L^2$ mass into the
local $H^1_{h_n}$ contradiction in Lemma~\ref{lem:high_freq_partition}, and Proposition~\ref{prop:hf_commuted_closure} gives the finite-order graph-norm version. For that reason, no
additional mode-stability statement, completeness assertion, or reconstruction statement
enters through the high-frequency step.
\end{proposition}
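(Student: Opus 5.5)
The argument runs through the three items in the statement of Proposition~\ref{prop:hf_estimate_scope}, which is essentially a scope statement. First, I would go through every place where Definition~\ref{def:hf_resolvent_estimate} or the normally hyperbolic bound \eqref{eq:normally_hyperbolic_resolvent_bound} is cited and check that none of them lies upstream of the compactness argument. The master map and compatible class come from (A1) and Definition~\ref{def:master_from_teukolsky}. The scalar principal symbol comes from Lemma~\ref{lem:maxwell_wave_principal_symbol} and Proposition~\ref{prop:scalar_principal_symbol}, which are pure symbol computations. Bounded real frequencies are handled by Lemmas~\ref{lem:rn_no_real_resonance}, \ref{lem:rn_limiting_absorption} and \ref{lem:app_high_angular_coercivity}, which are ODE and Fredholm arguments. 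Reconstruction is Propositions~\ref{prop:middle_reconstruction}--\ref{prop:same_order_reconstruction}, which use the Hodge inversion of Lemma~\ref{lem:hodge_inversion} and transport. None of these uses the trapped resolvent, so the first assertion follows by inspection of the dependency chain.

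Second, I would isolate the one invocation. Proposition~\ref{prop:compact_remainder_removal} together with Lemma~\ref{lem:localized_compact_defect} reduces a failure of the compact estimate to an unbounded-frequency compatible sequence $v_n$. This sequence is normalized by \eqref{eq:localized_compact_hf_norm} and has residual \eqref{eq:localized_compact_hf_residual}, which is exactly the normalization \eqref{eq:app_high_freq_normalization}--\eqref{eq:app_high_freq_residual}. Lemma~\ref{lem:compatibility_not_localized} and Lemma~\ref{lem:compatible_cutoff_apriori} show that the estimate is applied to $v_n$ itself with scalar cutoffs $\chi\prec\chi_1$. Proposition~\ref{prop:nh_theorem_application} then gives \eqref{eq:nh_application_cutoff}. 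The term $\|(1-\chi)\chi_1 v_n\|$ is supported off the trapped set, so it is controlled by elliptic estimates, real-principal-type propagation, and the radial-point estimate of Proposition~\ref{prop:radial_point_finite_order}. There, the sources tend to zero because $h^{-1}\|P_hv_n\|\le h^{-1}\log(1/h)\|P_hv_n\|\to0$, and the $O(h^\infty)$ term is harmless given the $H^1_h$ bound. This gives \eqref{eq:hf_scope_single_use} with an $o(1)$ error. The residual normalization then makes the right-hand side vanish.

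Third, I would convert the vanishing of trapped $L^2$ mass into vanishing of local $H^1_{h_n}$ mass on $K_0$. Off the characteristic set, elliptic regularity gives this directly. On the characteristic set, every nontrapped bicharacteristic through $K_0$ reaches the trapped collar or a radial set in finite time, so propagation transports the smallness; this is the content of Lemma~\ref{lem:high_freq_partition}. The result contradicts the normalization \eqref{eq:localized_compact_hf_norm}. The finite-order version follows from Proposition~\ref{prop:hf_commuted_closure}, where \eqref{eq:hf_estimate_graph_norm} is iterated over nested cutoffs, so no physical commutator $\Gamma^I$ is placed inside the $h^{-1}\log(1/h)$ factor.

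The main obstacle is the bookkeeping in the second step. The logarithmic loss must sit only on the residual, which the normalization was chosen to absorb, and not on the cutoff commutators. I would handle this by always forming $P_hv_n$ before applying scalar cutoffs, as in Lemma~\ref{lem:compatible_cutoff_apriori}, and by sending every cutoff commutator to the nontrapped regions where the estimates are loss-free.
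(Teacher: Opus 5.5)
Your proposal follows essentially the same route as the paper's proof. You reduce via Proposition~\ref{prop:compact_remainder_removal} to a normalized compatible high-frequency sequence, then use the elliptic/nontrapped/trapped partition of Lemma~\ref{lem:high_freq_partition} with the resolvent bound invoked only on the trapped piece, and you additionally make explicit the dependency inspection behind the ``not used to\ldots'' clause.

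One minor caution about your second step. The paper's proof applies the trapped estimate directly in the form \eqref{eq:hf_trapped_piece}. You instead remove the whole collar term $\|(1-\chi)\chi_1v_n\|_{L^2}$ of \eqref{eq:nh_application_cutoff} by elliptic, radial-point and nontrapped propagation alone. That is circular on the part of the collar lying on the tail along which the flow leaves $K_{a,Q}$, since those points are reached only from the trapped region. You should either use the direct form or retain only the incoming part of that collar term.
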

\begin{proof}
The compact-remainder argument is Proposition~\ref{prop:compact_remainder_removal}.
After the bounded-frequency branch is excluded by Proposition~\ref{prop:app_bounded_freq},
the remaining branch has total stationary frequency $h_n^{-1}\to\infty$ and is
put in the semiclassical normalization of Lemma~\ref{lem:semiclassical_spinone_order}.
Lemma~\ref{lem:unscaled_to_semiclassical_defect} gives precisely the residual
condition \eqref{eq:app_high_freq_residual}. In Lemma~\ref{lem:high_freq_partition},
the elliptic piece is controlled by the parametrix, and the characteristic
nontrapped piece is transported to the red-shift or far-field radial sets. The
only remaining term is the trapped piece, for which the displayed resolvent
estimate gives \eqref{eq:hf_scope_single_use}. Since
$h_n^{-1}\log(1/h_n)\|P_{h_n}^{(n)}v_n\|_{L^2}\to0$, the trapped mass vanishes.
This identifies the role of Definition~\ref{def:hf_resolvent_estimate}: it is precisely the
one stated above.
\end{proof}

\begin{proposition}
\label{prop:r_normal_hyperbolicity}
Let $p=g_{\KN}^{\mu\nu}\xi_\mu\xi_\nu$ be the scalar principal symbol of
Proposition~\ref{prop:scalar_principal_symbol}, and let $K\subset T^*(\M_{\mathrm{ext}})\setminus0$
be the trapped set of its null geodesic flow, i.e.\ the set of points whose forward and
backward flowouts remain in a fixed neighbourhood of the photon sphere. For
$|a|\le\eps_aM$, $|Q|\le\eps_QM$ the following hold.
\begin{enumerate}[label=\emph{(\roman*)},leftmargin=2.4em]
\item On each fixed time-orientation and angular-momentum component, $K$ is a smooth codimension-two submanifold of the characteristic hypersurface
$\Sigma_p=\{p=0\}\subset T^*(\M_{\mathrm{ext}})\setminus0$; equivalently it is locally cut out inside $\Sigma_p$ by
$\{\xi_r=0,\ \partial_r p=0\}$. Accordingly, it has codimension three in the full cotangent bundle before the characteristic equation is fixed. In the conserved variables $(\omega,m,\lambda)$ it is the graph $r=r_t$ of the trapping relation \eqref{eq:kn_trapping_function}. The restriction of the ambient symplectic form to $\Sigma_p$ has the Hamilton field $H_p$ as its characteristic direction; after quotienting by this flow direction and, in the homogeneous picture, fixing the covector scale, the transverse angular variables carry the usual symplectic form, while the radial normal pair $(r-r_t,\xi_r)$ is a symplectic hyperbolic two-plane. On each normalized energy shell only finitely many such components meet the chosen coordinate charts.
\item The linearised flow on the radial normal bundle is hyperbolic, with
expansion and contraction rates $\pm\lambda_0$ where $\lambda_0=\lambda_0(a,Q)>0$ is
determined by the strict transversal maximum of Proposition~\ref{prop:kn_nondegeneracy}:
$\lambda_0^2=-(\partial_{\xi_r}^2 p)(\partial_r^2 p)\big|_{r_t}$, a positive multiple of
$\Delta(r_t)\,\mathcal R''(r_t)\,(r_t^2+a^2)^{-4}>0$,
while the tangent flow on $K$ is generated by the conserved quantities and has at most polynomial growth of derivatives on each normalized compact energy shell.
Its Lyapunov exponents therefore vanish in the homogeneous parametrization, the maximal exponential tangential rate is $\mu=0$, and the exponential radial rate dominates every finite differentiability order required in the normally hyperbolic trapping estimate: $\lambda_0>r\mu=0$ for each fixed $r$.
\item These properties are stable under the slow-weak perturbation: by structural
stability of $r$-normal hyperbolicity, $K(a,Q)$ and its rates $\lambda_0(a,Q)$ depend
continuously on $(a,Q)$, uniformly for $|a|\le\eps_aM$, $|Q|\le\eps_QM$, and degenerate
only at extremality.
\item At the trapped frequencies the flow is non-superradiant, $\omega\varpi>0$
(Proposition~\ref{prop:kn_disjointness}); therefore on a neighbourhood of $K$ the operator is,
after the microlocal conjugation used in the outgoing estimate, a semiclassical operator with real principal
symbol $p$ for which trapping at $K$ is the only barrier to the outgoing estimate.
\end{enumerate}
Thus the clean trapped set, the stable/unstable radial splitting, the neutral tangential flow, and the non-superradiant sign condition required in the normally hyperbolic resolvent theorem are verified for the scalar principal symbol. The analytic high-frequency estimate used later is not a consequence of these geometric identities alone; it is the explicit estimate of Definition~\ref{def:hf_resolvent_estimate}, repeated here in the full phase-space normalization used by the compactness argument. On each conic microlocal patch let \(h^{-1}\) be the total stationary frequency, let \(\hat\omega=h\omega\), and put
\begin{equation}\label{eq:full_phase_semiclassical_operator}
        P_h^{(a,Q)}(\hat\omega)=h^2\mathcal L_{a,Q}(\omega),
        \qquad \omega=h^{-1}\hat\omega,
\end{equation}
with all stationary covariables scaled by \(h\). Let \(J\Subset\mathbb R\) be a compact normalized frequency interval for which the characteristic set of \(P_h^{(a,Q)}(\hat\omega)\) meets the non-superradiant trapped region. The high-frequency estimate is the estimate
\begin{equation}\label{eq:normally_hyperbolic_resolvent_bound}
\norm{\chi\,(P_h^{(a,Q)}(\hat\omega)\pm i0)^{-1}\chi}_{L^2\to L^2}
        \le C\,h^{-1}\log(1/h),\qquad 0<h\le h_0,
\end{equation}
with constants uniform in the slow-weak range and in \(\hat\omega\in J\), together with the radial-point estimates at the horizon and at null infinity. Angularly elliptic packets with bounded \(\hat\omega\) are not part of the trapped characteristic set and are controlled by Lemma~\ref{lem:app_high_angular_coercivity}. Lemma~\ref{lem:semiclassical_spinone_order} records the semiclassical normal form: the diagonal second-order part has real scalar principal symbol and the spin/matrix coefficients are subprincipal or lower order on the finite-rank bundle. The remaining non-scalar threshold condition is checked in two pieces: Lemma~\ref{lem:trapped_skew_vanishing} proves that the diagonal Teukolsky skew endomorphism has zero value on \(K_{a,Q}\), and Proposition~\ref{prop:matrix_skew_threshold} keeps the possible matrix skew endomorphism below the finite-rank-bundle threshold. This implies that all geometric and subprincipal conditions needed to formulate \eqref{eq:normally_hyperbolic_resolvent_bound} for the present operator are checked here, while \eqref{eq:normally_hyperbolic_resolvent_bound} itself remains the named high-frequency analytic estimate. Lemma~\ref{lem:componentwise_scalar_resolvent} records the tracking needed for diagonal component estimates, finite microlocal partitions, and the restriction to the closed compatible radiation class.

\end{proposition}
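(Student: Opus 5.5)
The proof splits into the geometric items (i)--(iv) and the concluding assembly. I will verify the geometry on the Reissner--Nordstr\"om model first and then pass to slow rotation by perturbation, using the explicit Carter-separated identities wherever they are valid on the full subextremal range.

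For (i), I will use the Carter separation of the null flow of $p=g_{\KN}^{\mu\nu}\xi_\mu\xi_\nu$. Through the conserved quantities $(\omega,m,\lambda)$ this separation reduces the radial motion to $\Sigma^2\dot r^2\propto\mathcal R(r)=K^2-\Delta\lambda$, so the trapped set is $\{\mathcal R(r)=\mathcal R'(r)=0,\ \xi_r=0\}$.
\begin{itemize}
\item By Lemma~\ref{lem:no_zero_stationary_frequency_trapped} we have $\omega\ne0$ on $K$.
\item By Proposition~\ref{prop:kn_nondegeneracy}, $\mathcal R''(r_t)>0$, so the implicit function theorem applied to $\mathcal R'(r;\omega,m,\lambda)=0$ gives a smooth root $r_t(\omega,m,\lambda)$ near the photon sphere.
\item Hence $K$ is cut out inside $\Sigma_p$ by the two independent functions $\xi_r$ and $\partial_r p$, whose differentials are independent because $\partial_r^2 p\propto\mathcal R''\neq0$. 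This gives codimension two in $\Sigma_p$ and codimension three in $T^*\M$.
\end{itemize}
The symplectic description is the standard one: $H_p$ spans the characteristic line of $\Sigma_p$, and $(r-r_t,\xi_r)$ pairs symplectically. Finiteness of the components follows because $r_t$ ranges over a compact collar of $r_{\mathrm{ph}}(Q)$, by Lemma~\ref{lem:trapping_stability}.

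For (ii), I will reuse the normal form of Proposition~\ref{prop:main_body_trapped_normal_form}. The radial Hamilton matrix has eigenvalues $\pm2\sqrt{A_tB_t}$, which gives $\lambda_0^2=-(\partial_{\xi_r}^2p)(\partial_r^2p)$, and this is positive by \eqref{eq:main_body_lambda_positive_multiple}. For the tangential flow, the dynamics on $K$ is integrable in action-angle variables $(\omega,m,\lambda)$ with angles in $(t,\phi,\theta)$. The derivatives of the flow therefore grow at most linearly, with constants controlled by derivatives of the frequency map on the compact normalized shell. This gives $\mu=0$, and hence $\lambda_0>r\mu$ for every finite $r$.

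For (iii), I will cite the Hirsch--Pugh--Shub persistence as in Lemma~\ref{lem:app_normal_hyp}, together with continuity of $\lambda_0$ in $(a,Q)$ read off from the explicit formula; degeneration happens only as $(r_+-M)\to0$. For (iv), I will combine Proposition~\ref{prop:kn_disjointness} with the radial-point sign.

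The final paragraph asserts nothing new; it only records that (N1)--(N4) of Proposition~\ref{prop:nh_resolvent_form} hold. I will cite the items verified as follows:
\begin{itemize}
\item Lemma~\ref{lem:semiclassical_spinone_order} for the scalar real principal symbol;
\item Lemma~\ref{lem:trapped_skew_vanishing} and Proposition~\ref{prop:matrix_skew_threshold} for the threshold condition (N3);
\item Lemma~\ref{lem:closed_graph_resolvent_class} for closedness of the graph class in (N4);
\item Lemma~\ref{lem:app_high_angular_coercivity} for the angularly elliptic packets, which are then excluded from the trapped region.
\end{itemize}
The estimate \eqref{eq:normally_hyperbolic_resolvent_bound} itself then follows by invoking Proposition~\ref{prop:nh_resolvent_form}.

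I expect the main obstacle to be the precise smoothness claim in (i) near $\theta$-turning points and at the axis, where Carter variables degenerate. There I would argue that $K$ is the pullback of a smooth set under the smooth map $(x,\xi)\mapsto(r,\xi_r,\omega,m,\lambda)$, which is a submersion on $\Sigma_p$ off the zero section. A second concern is making the tangential growth bound uniform near the equatorial and polar orbits. If that fails, I would fall back on the perturbative route: prove $r$-normal hyperbolicity at $a=0$, where $K$ is explicitly $\{r=r_{\mathrm{ph}},\xi_{r_*}=0\}$ with neutral spherical flow, and transfer it by structural stability.
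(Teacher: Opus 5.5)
Your argument for (i)--(iv) follows the paper's own proof:
\begin{itemize}
\item $K$ is cut out of $\Sigma_p$ by $\xi_r=0$ and $\partial_rp=0$, with independent differentials because $\partial_r^2p\ne0$ (Proposition~\ref{prop:kn_nondegeneracy});
\item $\lambda_0$ comes from the $2\times2$ radial linearization, and $\mu=0$ from integrability of the flow on $K$;
\item (iii) is structural stability, and (iv) is Proposition~\ref{prop:kn_disjointness}.
\end{itemize}

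The one substantive correction is your last sentence. The statement says explicitly that these identities do not imply \eqref{eq:normally_hyperbolic_resolvent_bound}, and that the bound ``remains the named high-frequency analytic estimate.'' The paper's proof therefore stops once the hypotheses are checked: the scalar real principal symbol (Lemma~\ref{lem:semiclassical_spinone_order}), the zero diagonal skew value on $K_{a,Q}$ (Lemma~\ref{lem:trapped_skew_vanishing}), and the matrix threshold (Proposition~\ref{prop:matrix_skew_threshold}). Saying the bound ``then follows by invoking Proposition~\ref{prop:nh_resolvent_form}'' turns the one estimate the paper leaves as a named input into a consequence. Your (i)--(iv) also would not supply all the inputs that invocation needs:
\begin{itemize}
\item (N2) asks for smooth stable/unstable manifolds, but you establish only the linearized splitting;
\item (N1) needs the full propagation and radial-point estimates at $\Hp$ and null infinity, and Proposition~\ref{prop:kn_disjointness} gives only the non-superradiant sign.
\end{itemize}
End the proof at the verification of hypotheses.

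The ``main obstacle'' you anticipate in (i) is not one, and the repair you propose for it is false.
\begin{itemize}
\item The Carter function $\lambda=\xi_\theta^2+(\xi_\phi+a\sin^2\theta\,\xi_t)^2/\sin^2\theta$ is smooth on $T^*\M$ over the axis, and $\partial_rp=-H_p\xi_r$ is smooth.
\item On $K$ one has $\partial_{\xi_r}p=\partial_rp=0$ and $\partial_r^2p=-\mathcal R''(r_t)/(\Sigma\Delta)\ne0$. So $d(\partial_rp)$ has a nonzero $dr$-component while $dp$ and $d\xi_r$ have none, and $dp\ne0$ has no $d\xi_r$-component.
\item Hence $dp$, $d\xi_r$, $d(\partial_rp)$ are independent at every point of $K$, turning points and axis included. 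This is already your own third bullet.
\end{itemize}
By contrast, $(x,\xi)\mapsto(r,\xi_r,\omega,m,\lambda)$ is not a submersion on $\Sigma_p$. On the equatorial trapped orbits ($\theta=\pi/2$, $\xi_\theta=0$) one computes $d\lambda=2(\xi_\phi+a\xi_t)(d\xi_\phi+a\,d\xi_t)$, which lies in the span of $dm$ and $d\omega$. Drop that fallback.

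Likewise, the perturbative fallback for the tangential flow would give only $\lambda_0>r\mu$ at each fixed $r$, after shrinking $\eps_a$ depending on $r$. It would not give the literal $\mu=0$ asserted in (ii), so keep the integrability argument.
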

\begin{proof}
For (i): the characteristic set $\Sigma_p=\{p=0\}$ is smooth away from the zero section. On each component with fixed time orientation, the radial Hamilton equations give $\dot r=\partial_{\xi_r}p=2g^{rr}\xi_r$, with $g^{rr}$ a
positive multiple of $\Delta(r^2+a^2)^{-2}$ on the characteristic set after the conserved
reduction, so $\xi_r=0$ on $K$; the second condition $\partial_r p=0$
is the stationarity of the radial potential, which is \eqref{eq:kn_trapping_function}. The
differentials $d\xi_r$ and $d(\partial_r p)$ are independent there because
$\partial_{\xi_r}(\partial_r p)=0$ while $\partial_r(\partial_r p)=\partial_r^2 p\ne0$ by
Proposition~\ref{prop:kn_nondegeneracy}; therefore $K$ is a smooth codimension-two submanifold of $\Sigma_p$. Since $\iota_{H_p}\omega=dp$, the restriction of the symplectic form to $\Sigma_p$ is presymplectic with kernel spanned by $H_p$; this is the expected flow direction, not a radial degeneracy. The normal block determined by $(\xi_r,\partial_rp)$ has Poisson pairing
$\{\xi_r,\partial_r p\}=\pm\partial_r^2p\ne0$ (the sign depends on the convention for the canonical bracket), so the radial normal bundle is symplectic and has a genuine hyperbolic splitting. After fixing the homogeneous scale and quotienting by $H_p$, the remaining tangent variables are the angular action-angle variables. For (ii), set
$y=r-r_t$ and $\eta=\xi_r$. The radial normal linearisation is
\begin{equation}\label{eq:radial_normal_linearization}
        \frac{d}{dt}
        \begin{pmatrix} y\\ \eta\end{pmatrix}
        =
        \begin{pmatrix}0&\partial_{\xi_r}^2p\\-\partial_r^2p&0\end{pmatrix}
        \begin{pmatrix} y\\ \eta\end{pmatrix},
\end{equation}
whose eigenvalues are $\pm\lambda_0$ with
$\lambda_0^2=-(\partial_{\xi_r}^2p)(\partial_r^2p)\big|_{r_t}$.
Here $\partial_{\xi_r}^2p=2g^{rr}>0$ is a positive multiple of
$\Delta(r_t)(r_t^2+a^2)^{-2}$. The reduced radial equation has the form
$\xi_r^2-\mathcal R(r)/(r^2+a^2)^2=0$ up to a positive smooth factor; hence
Proposition~\ref{prop:kn_nondegeneracy} gives
$\partial_r^2p=-c\,\mathcal R''(r_t)(r_t^2+a^2)^{-2}<0$ for a positive smooth
factor $c$. Thus $\lambda_0^2$ is the positive multiple of
$\Delta(r_t)\mathcal R''(r_t)(r_t^2+a^2)^{-4}$ recorded in the statement, and
$\lambda_0>0$. The flow on $K$ preserves $(\omega,m,\lambda)$ and is
integrable in the remaining angular variables. On every compact normalized energy shell,
the tangent dynamics is conjugate, after the usual action-angle parametrization away from
coordinate caustics and by a finite chart argument across them, to translation with
smoothly varying frequencies. Its derivative cocycle therefore has at most polynomial
growth in the flow parameter, so all tangential Lyapunov exponents vanish and $\mu=0$;
thus $\lambda_0>r\mu$ for every fixed finite $r$. Part (iii) is the structural stability
of $r$-normally hyperbolic trapped sets in the form used in \cite{WunschZworski}, applied
to the $C^\infty$ family of symbols whose dependence on $(a,Q)$ is the smooth perturbation
of Lemma~\ref{lem:app_inverse_expansion}; the rates are continuous and bounded below away
from extremality. Part (iv) is
Proposition~\ref{prop:kn_disjointness}. Items (i)-(iv) are the geometric conditions required for a normally hyperbolic trapped-set estimate. They do not by themselves prove the resolvent bound \eqref{eq:normally_hyperbolic_resolvent_bound}. That bound is the high-frequency estimate fixed in Definition~\ref{def:hf_resolvent_estimate}. What remains to check before using that estimate for the spin-weighted operator is the passage from the geodesic symbol to the frozen master operator. Lemma~\ref{lem:semiclassical_spinone_order} gives the scalar principal normal form \(pI_2\). Lemma~\ref{lem:trapped_skew_vanishing} gives zero threshold value for the diagonal Teukolsky skew part, and Proposition~\ref{prop:matrix_skew_threshold} gives the small finite-rank matrix threshold for the remaining skew part. Hence the geometry and the subprincipal threshold conditions required to apply the high-frequency estimate stated in Definition~\ref{def:hf_resolvent_estimate} are verified without imposing an additional hidden condition.
\end{proof}

\begin{lemma}
\label{lem:componentwise_scalar_resolvent}
Let
\[
        P_h=\operatorname{Op}_h(p)I_2+hP_{1,h}+h^2P_{0,h}
\]
be a semiclassical frozen spin-one operator on a compact stationary patch, with real scalar principal symbol \(p\) and with \(P_{1,h},P_{0,h}\) satisfying the uniform bounds of Lemma~\ref{lem:semiclassical_spinone_order}. Let \(\mathcal C_h\subset L^2\oplus L^2\) denote the closed subspace cut out by the stationary compatibility relations and by the outgoing or incoming radial-point convention. The following implications are used in the high-frequency argument. In the purely diagonal Teukolsky model, case \emph{(a)} explains how two component estimates are summed when those component estimates are available with uniform constants. For a non-diagonal compatible reduction, no componentwise diagonalization is used; the relevant estimate is the vector-bundle estimate of Definition~\ref{def:hf_resolvent_estimate}. The verified facts are that the principal symbol is real and scalar, the diagonal skew-subprincipal part vanishes on \(K_{a,Q}\) by Lemma~\ref{lem:trapped_skew_vanishing}, and the full matrix skew part satisfies the threshold bound of Proposition~\ref{prop:matrix_skew_threshold}. Item \emph{(b)} records the corresponding restriction statement for the closed compatible class.
\begin{enumerate}[label=\emph{(\alph*)},leftmargin=2.2em]
\item If the lower-order matrix terms are diagonal on the patch and the two scalar outgoing or incoming estimates
\begin{equation}\label{eq:component_scalar_hf_resolvent}
        \norm{\chi(P_{h,\pm}\pm i0)^{-1}\chi}_{L^2\to L^2}
        \le C h^{-1}\log(1/h)
\end{equation}
hold for the two components with the same cutoffs and constants, then the direct-sum operator satisfies
\begin{equation}\label{eq:direct_sum_hf_resolvent}
        \norm{\chi(P_h\pm i0)^{-1}\chi}_{L^2\oplus L^2\to L^2\oplus L^2}
        \le C h^{-1}\log(1/h).
\end{equation}
\item In the non-diagonal spin-one case no diagonal summation is used. The estimate is instead the vector-bundle normally hyperbolic estimate \eqref{eq:normally_hyperbolic_resolvent_bound} for the full operator with the subprincipal matrix terms included. Once this estimate holds on \(L^2\oplus L^2\), it holds on \(\mathcal C_h\) with the inherited norm and the same constant.
\item The same conclusion remains valid after replacing \(\chi\) by a finite family of compact trapped cutoffs with bounded overlap. The phase-space partition used in the defect-measure proof is only a tool; the resolvent estimate itself is applied to compatible functions, not to noncompatible localized pieces.
\end{enumerate}
\end{lemma}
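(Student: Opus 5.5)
For item (a), I will use the fact that the operator is diagonal. If the lower-order matrix terms are diagonal on the patch, then $P_h=\operatorname{diag}(P_{h,+},P_{h,-})$. The outgoing or incoming limiting-absorption resolvent $(P_h\pm i0)^{-1}$ is then the direct sum of the two scalar resolvents, since the radial-point convention is imposed componentwise and the cutoff $\chi$ is scalar. For $f=(f_+,f_-)$ I will write $\chi(P_h\pm i0)^{-1}\chi f=(\chi(P_{h,+}\pm i0)^{-1}\chi f_+,\chi(P_{h,-}\pm i0)^{-1}\chi f_-)$. Squaring the norms, applying \eqref{eq:component_scalar_hf_resolvent} to each component with the common constant, and summing gives \eqref{eq:direct_sum_hf_resolvent} with the same $C$. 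The one point to check is that the diagonal structure of $P_{1,h},P_{0,h}$ is not destroyed by the conjugating weights of Definition~\ref{def:master_from_teukolsky}. This holds because the weights $w_\pm$ are scalar on each component.

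For item (b), I will rely on Proposition~\ref{prop:nh_resolvent_form} for the full bundle operator on $L^2\oplus L^2$. Its hypotheses (N1)–(N3) are checked by Lemma~\ref{lem:semiclassical_spinone_order}, Proposition~\ref{prop:r_normal_hyperbolicity}, Lemma~\ref{lem:trapped_skew_vanishing} and Proposition~\ref{prop:matrix_skew_threshold}. No diagonalization step enters. The restriction to $\mathcal C_h$ is then formal. By Lemma~\ref{lem:closed_graph_resolvent_class}, $\mathcal C_h$ is a closed subspace that is invariant under the frozen operator and the boundary convention, and its norm is the one inherited from $L^2\oplus L^2$. So for $v\in\mathcal C_h$ the a priori inequality from Proposition~\ref{prop:nh_resolvent_form} (in the cutoff form of Lemma~\ref{lem:compatible_cutoff_apriori}) is just a special case of the ambient inequality, and the constant cannot increase. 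I will stress that $\chi v$ need not lie in $\mathcal C_h$: the cutoffs only act on quantities that have already been formed.

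For item (c), I will take a finite family $\{\chi_j\}$ with overlap bounded by $N$ and apply (a) or (b) to each $\chi_j$, with the uniform constant coming from (N4). Then I sum. Bounded overlap gives $\sum_j\|\chi_j u\|^2\le N\|u\|^2$ together with the reverse comparison, so the constant gets multiplied by at most a factor depending on $N$ and the number of cutoffs. The main obstacle, in my view, is the compatibility bookkeeping in (b)–(c). One has to make sure the phase-space partition serves only to locate the support of defect measures, as in Lemma~\ref{lem:compatibility_not_localized}, and is never used to produce compatible data. The plan is to state every estimate as an a priori inequality for the original compatible $v$, with the cutoffs applied to $v$ and to $P_hv$.
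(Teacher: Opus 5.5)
Your proposal is correct and follows the paper's proof. In (a) you split the diagonal resolvent into the two scalar resolvents and add the squared component bounds. In (b) you observe that an inequality valid on $L^2\oplus L^2$ holds a fortiori on the closed compatible subspace with the same constant. In (c) you apply the estimate to the original compatible function one trapped cutoff $\chi_j$ at a time and sum using bounded overlap. The only minor difference is that the paper normalizes $\sum_j\chi_j^2=1$ and explicitly bounds the commutators $[P_h,\chi_j]$ by the local $H^1_h$ graph norm, which your direct cutoff-by-cutoff application does not need.
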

\begin{proof}
For (a), if \(f=(f_+,f_-)\) and \(v=(P_h\pm i0)^{-1}f\), diagonality gives
\[
        v_\pm=(P_{h,\pm}\pm i0)^{-1}f_\pm.
\]
Applying \eqref{eq:component_scalar_hf_resolvent} to the two components and adding the two squared inequalities gives
\[
        \norm{\chi v}_{L^2\oplus L^2}^2
        \le C^2h^{-2}\log(1/h)^2
        \norm{\chi f}_{L^2\oplus L^2}^2,
\]
which is \eqref{eq:direct_sum_hf_resolvent}. This elementary step is used only for diagonal component estimates.

For (b), the stationary compatibility equations and the radial outgoing or incoming convention are closed under local \(L^2\) convergence and under the graph norms used for the frozen Maxwell system; this is the same closedness proved for the radiation class in Lemma~\ref{lem:app_compatible_closed}. The restriction of a bounded inverse to a closed invariant subspace cannot increase its operator norm. In turn, the vector-bundle estimate for the full subprincipal matrix operator restricts to \(\mathcal C_h\) without changing the constant. No scalar diagonalization of the lower-order spin-one couplings is being assumed in this step.

For (c), choose a finite family \(\{\chi_j\}\) with \(\sum_j\chi_j^2=1\) on the trapped neighbourhood and with uniformly bounded overlaps. The estimate is applied to the original compatible solution with the scalar cutoffs \(\chi_j\) and the corresponding enlarged cutoffs. The commutators \([P_h,\chi_j]\) are first-order semiclassical operators supported in the same compact region and are bounded by the local \(H^1_h\) graph norm already present in the trapped estimate. Summing in \(j\) changes only the constant. The microlocal cutoffs used to decompose a defect measure are not themselves required to map the compatible class into itself; they enter only in the propagation and support argument of Lemma~\ref{lem:high_freq_partition}.
\end{proof}

\begin{lemma}
\label{lem:high_freq_partition}
On a conic stationary packet let \(h^{-1}\) be the total stationary frequency, put \(\hat\omega=h\omega\), and write
\[
        P_h^{(a,Q)}(\hat\omega)=h^2\mathcal L_{a,Q}(\omega),
        \qquad \omega=h^{-1}\hat\omega.
\]
Assume \(\hat\omega\) remains in a compact normalized interval and impose the outgoing or incoming boundary condition fixed in \emph{(A3)}. Let \(K_0\Subset K_1\) be compact radial sets. If \(v_h\) is bounded in \(H^1_h(K_1)\) and satisfies
\begin{equation}\label{eq:hf_partition_residual}
        h^{-1}\log(1/h)\|P_h^{(a,Q)}(\hat\omega)v_h\|_{L^2(K_1)}
        +\|P_h^{(a,Q)}(\hat\omega)v_h\|_{H^{-1}_h(K_1)}\longrightarrow0,
\end{equation}
then every semiclassical defect measure of \(v_h\) in \(K_0\) is supported on the trapped set. If, in addition, the localized normally hyperbolic estimate \eqref{eq:normally_hyperbolic_resolvent_bound} holds on a neighbourhood of the trapped set, then
\begin{equation}\label{eq:hf_partition_vanish}
        \|v_h\|_{H^1_h(K_0)}\longrightarrow0.
\end{equation}
\end{lemma}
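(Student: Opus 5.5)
The proof will be a standard defect-measure argument in two stages: first show that no mass sits off the trapped set, then use the localized estimate to kill the mass on the trapped set. Throughout we may pass to subsequences.

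\emph{Stage one: support of the defect measure.} By boundedness of $v_h$ in $H^1_h(K_1)$, we extract a semiclassical defect measure $\mu$ on $T^*K_1$. From the residual condition \eqref{eq:hf_partition_residual}, we get $\|P_h v_h\|_{H^{-1}_h(K_1)}\to0$. We then split the compact normalized phase space over $K_0$ with a finite partition into four kinds of region.
\begin{enumerate}[label=(\roman*)]
\item \textbf{Elliptic region.} Here $p_{a,Q,\hat\omega}\ne0$. The elliptic parametrix for $\operatorname{Op}_h(p)I_2$ gives $\|Av_h\|_{H^1_h}\le C\|P_hv_h\|_{H^{-1}_h}+Ch\|v_h\|_{H^1_h}\to0$, since Lemma~\ref{lem:semiclassical_spinone_order} places all matrix terms at order $h$. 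Hence $\mu=0$ there.
\item \textbf{Characteristic, nontrapped region.} Each backward (or forward, according to the convention) bicharacteristic of $p_{a,Q,\hat\omega}$ leaves $K_1$ toward the horizon radial set or the null-infinity radial set in finite time. Real-principal-type propagation, invariance of $\mu$ under $H_p$ (with the subprincipal matrix term contributing only an order-$h$ damping, harmless for support statements), and Proposition~\ref{prop:radial_point_finite_order} show that no mass enters from the radial sets. The right-hand source there is $h^{-1}\|P_hv_h\|_{L^2}\to0$, which follows because $\log(1/h)\ge1$. Hence $\mu$ vanishes on the flowout of the radial sets.
\item \textbf{Angularly elliptic packets with bounded $\hat\omega$.} These are already removed by Lemma~\ref{lem:app_high_angular_coercivity}.
\item \textbf{Conclusion.} What remains is $\operatorname{supp}\mu\cap T^*K_0\subset K_{a,Q}$, the trapped set of Proposition~\ref{prop:r_normal_hyperbolicity}.
\end{enumerate}

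\emph{Stage two: vanishing.} We choose scalar trapped cutoffs $\chi\prec\chi_1$ with $\chi=1$ near $K_{a,Q}$, and apply \eqref{eq:nh_application_cutoff}, that is, \eqref{eq:normally_hyperbolic_resolvent_bound} in the a priori compatible form of Lemma~\ref{lem:compatible_cutoff_apriori}, to the original compatible $v_h$. This is legitimate by Lemma~\ref{lem:compatibility_not_localized}: no cutoff of $v_h$ is required to be compatible. The residual term $Ch^{-1}\log(1/h)\|\chi_1P_hv_h\|_{L^2}$ tends to zero by \eqref{eq:hf_partition_residual}. The term $\|(1-\chi)\chi_1v_h\|_{L^2}$ tends to zero because it is supported away from $K_{a,Q}$, where Stage one gives zero measure; strong $L^2$ convergence on such sets follows from $\mu=0$ there. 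The $O(h^\infty)$ term is negligible. So $\|\chi v_h\|_{L^2}\to0$, and combined with Stage one this gives $\|v_h\|_{L^2(K_0)}\to0$, hence $\mu=0$ on $T^*K_0$. To upgrade to $H^1_h$, we use the elliptic estimate at fiber infinity together with $\|P_hv_h\|_{H^{-1}_h}\to0$. Concretely, $\|v_h\|_{H^1_h(K_0)}\le C\|P_hv_h\|_{H^{-1}_h(K_1)}+C\|v_h\|_{L^2(K_0')}+o(1)$ for some intermediate compact $K_0'$, which gives \eqref{eq:hf_partition_vanish}.

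\emph{Main obstacle.} The delicate step is the propagation in Stage one. We must check that the matrix-valued subprincipal term does not obstruct the invariance and support argument for a vector-valued measure. We handle this with the scalar principal symbol $pI_2$: the defect measure is then a matrix measure whose trace satisfies $H_p\operatorname{tr}\mu=\operatorname{tr}(2B\mu)$, with $B$ bounded. Grönwall along finite-time bicharacteristic segments then preserves vanishing. A second point to check is that the radial-point sign supplied by non-superradiance (Proposition~\ref{prop:kn_disjointness}) matches the chosen outgoing or incoming convention; otherwise mass could enter from the horizon.
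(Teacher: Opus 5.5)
Your structure matches the paper's: elliptic parametrix, propagation to the horizon and null-infinity radial sets, the localized normally hyperbolic estimate on the trapped piece, then the $H^1_h$ upgrade. However, the step where you depart from the paper does not go through.

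In Stage one, propagation from the ends only clears points whose bicharacteristic escapes to a radial set when followed in the direction in which the boundary condition supplies control. That fails on the outgoing tail $\Gamma_{\mathrm{out}}$ of $K_{a,Q}$, its unstable manifold for the chosen convention. There the bicharacteristic converges to $K_{a,Q}$ in the controlling direction and leaves only through the outgoing radial set, where nothing is controlled a priori. So Stage one only gives $\supp\mu\cap T^*K_0\subset K_{a,Q}\cup\Gamma_{\mathrm{out}}$, which is the general support statement for outgoing solutions with $o(h)$ residual. It does not give $\supp\mu\subset K_{a,Q}$.

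The tail leaves the trapped radii in both radial directions, so $\supp\bigl((1-\chi)\chi_1\bigr)$ always meets it. Hence your claim that $\|(1-\chi)\chi_1v_h\|_{L^2}\to0$, made before any trapped estimate is used, has no justification. Stage two is then circular: tail mass in that annulus can only be excluded once mass near $K_{a,Q}$ is known to vanish, which is what the estimate was meant to show. Absorption is not available either, since neither constant is small.

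The paper avoids this by applying the trapped estimate in the form \eqref{eq:hf_trapped_piece}:
$\|\chi v_h\|_{L^2}\le Ch^{-1}\log(1/h)\|\chi_1P_h^{(a,Q)}(\hat\omega)v_h\|_{L^2}+O(h^\infty)\|v_h\|_{H^1_h(K_1)}$.
This form has no base-cutoff control term; a control term microlocalized on the incoming side, away from $\Gamma_{\mathrm{out}}$, would also work. To repair your proof, reverse the order:
\begin{enumerate}
\item Use that form first to get $\mu=0$ over $\{\chi=1\}$, which contains a phase-space neighbourhood of $K_{a,Q}$.
\item Every point of $\Gamma_{\mathrm{out}}\cap T^*K_0$ enters that neighbourhood under the controlling-direction flow in finite time. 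Your transport bound for $\operatorname{tr}\mu$ with Gr\"onwall therefore clears the tail.
\item Your elliptic $H^1_h$ upgrade then finishes the argument.
\end{enumerate}
With this ordering, the lemma's first assertion comes out at the end rather than from Stage one. The paper's own sentence that the nontrapped piece vanishes by propagation to the radial sets has the same tail omission. There it does not feed the trapped step, so the same reordering fixes it.
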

\begin{proof}
Choose zeroth-order semiclassical cutoffs
\[
        A_{\mathrm{ell}}+A_{\mathrm{nt}}+A_{\mathrm{tr}}=I+O(h^\infty)
        \quad\hbox{microlocally on }K_0,
\]
where \(A_{\mathrm{ell}}\) is supported in the elliptic region of the scalar principal symbol, \(A_{\mathrm{tr}}\) is supported in a small conic neighbourhood of the trapped set, and \(A_{\mathrm{nt}}\) is supported on the characteristic set away from that neighbourhood. The principal symbol is real and scalar by Proposition~\ref{prop:scalar_principal_symbol}, and Lemma~\ref{lem:semiclassical_spinone_order} places all spin and matrix terms in semiclassical subprincipal or lower orders; therefore the propagation of semiclassical measures is governed by the Hamilton flow of the scalar symbol. Angularly elliptic packets with bounded temporal frequency have already been removed by Lemma~\ref{lem:app_high_angular_coercivity}, so the present partition is a genuine total-frequency conic partition.

On the elliptic piece,
\begin{equation}\label{eq:hf_elliptic_piece}
        \|A_{\mathrm{ell}}v_h\|_{H^1_h}
        \le C\|P_h^{(a,Q)}(\hat\omega)v_h\|_{H^{-1}_h(K_1)}
        +O(h^\infty)\|v_h\|_{H^1_h(K_1)},
\end{equation}
which tends to zero. On the nontrapped characteristic piece, propagation of singularities along \(H_p\) carries any defect mass to one of the two ends in finite bicharacteristic time. The radial-point/red-shift estimate at the horizon, with the sign selected by the boundary condition, and the outgoing far-field estimate at null infinity imply that no defect measure enters from the ends. For that reason, \(A_{\mathrm{nt}}v_h\) has no defect mass.

It remains to exclude mass in the trapped neighbourhood. Let \(\chi\prec\chi_1\) be compact cutoffs with \(\chi=1\) on the projection of \(\operatorname{WF}_h(A_{\mathrm{tr}})\). The normally hyperbolic resolvent estimate \eqref{eq:normally_hyperbolic_resolvent_bound} gives
\begin{equation}\label{eq:hf_trapped_piece}
        \|\chi v_h\|_{L^2}
        \le C h^{-1}\log(1/h)\|\chi_1P_h^{(a,Q)}(\hat\omega)v_h\|_{L^2}
        +O(h^\infty)\|v_h\|_{H^1_h(K_1)}.
\end{equation}
The right-hand side tends to zero by \eqref{eq:hf_partition_residual}. Thus, the
trapped microlocal $L^2$ mass tends to zero. Next we upgrade the three
microlocal pieces to the local semiclassical $H^1_h$ norm. The elliptic piece
already satisfies \eqref{eq:hf_elliptic_piece}. For the nontrapped and trapped
pieces the cutoffs have compact semiclassical wave-front sets in the
finite-frequency region determined by the total-frequency normalization. Thus,
for every first-order semiclassical differential operator $B=hD_x$ supported in
$K_0$, the compositions $BA_{\mathrm{nt}}$ and $BA_{\mathrm{tr}}$ are uniformly
bounded zeroth-order semiclassical pseudodifferential operators, modulo
$O(h^\infty)$ errors. Thus
\[
        \|A_{\mathrm{nt}}v_h\|_{H^1_h(K_0)}
        +\|A_{\mathrm{tr}}v_h\|_{H^1_h(K_0)}
        \le C\bigl(\|A_{\mathrm{nt}}v_h\|_{L^2}
                  +\|A_{\mathrm{tr}}v_h\|_{L^2}\bigr)
        +O(h^\infty)\|v_h\|_{H^1_h(K_1)}.
\]
The nontrapped $L^2$ term is zero in the defect measure by propagation to the
red-shift and far-field radial sets; the trapped $L^2$ term is zero by
\eqref{eq:hf_trapped_piece}. Combining these two facts with the elliptic piece
and the finite microlocal partition proves \eqref{eq:hf_partition_vanish}.
\end{proof}

\begin{proposition}
\label{prop:app_high_freq}
Let \(h_n\to0^+\), \(|a_n|\le\eps_aM\), \(|Q_n|\le\eps_QM\), and let \(\omega_n\in\mathbb R\) be real frequencies on conic packets whose total stationary frequency is \(h_n^{-1}\). After passing to a subsequence assume \((a_n,Q_n)\to(a_\infty,Q_\infty)\) in the slow-weak range. Assume that \(\hat\omega_n=h_n\omega_n\) remains in a compact normalized interval \(J\) for which the characteristic set meets the non-superradiant trapped region. There is no locally \(H^1_{h_n}\)-bounded charge-free compatible sequence \(v_n\) satisfying
\begin{equation}\label{eq:app_high_freq_normalization}
        \|v_n\|_{H^1_{h_n}(K_0)}=1,
\end{equation}
with outgoing/incoming Sommerfeld behaviour and
\begin{align}\label{eq:app_high_freq_residual}
        &h_n^{-1}\log(1/h_n)
        \|P_{h_n}^{(n)}v_n\|_{L^2(K_1)}
        +\|P_{h_n}^{(n)}v_n\|_{H^{-1}_{h_n}(K_1)}\longrightarrow0,\notag\\
        &P_{h_n}^{(n)}=h_n^2\mathcal L_{a_n,Q_n}(\omega_n),
\end{align}
for compact sets \(K_0\Subset K_1\) meeting the trapped region. Equivalently, the conic high-frequency part of \eqref{eq:master_hyp_no_defect_sequence} holds in the slow-weak range in the resolvent-normalized form \eqref{eq:master_hyp_semiclassical_residual}.
\end{proposition}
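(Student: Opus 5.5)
The plan is to argue by contradiction and reduce everything to Lemma~\ref{lem:high_freq_partition}, applied along the sequence with parameters varying in the compact slow-weak set. Suppose a sequence $v_n$ as in the statement exists: it is charge-free and compatible, has the outgoing (or incoming) Sommerfeld convention, satisfies the normalization \eqref{eq:app_high_freq_normalization}, and satisfies the residual condition \eqref{eq:app_high_freq_residual}. Local $H^1_{h_n}$-boundedness on $K_1$ is part of the hypothesis. The goal is to show $\|v_n\|_{H^1_{h_n}(K_0)}\to0$, which contradicts \eqref{eq:app_high_freq_normalization}.

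First, I would verify that the hypotheses of Lemma~\ref{lem:high_freq_partition} hold uniformly in $n$.
\begin{enumerate}
\item By Lemma~\ref{lem:semiclassical_spinone_order}, each $P^{(n)}_{h_n}$ has the normal form $\operatorname{Op}_{h_n}(p_{a_n,Q_n,\hat\omega_n})I_2+h_nP_1+h_n^2P_0$. The coefficient seminorms are uniformly bounded, since the coefficients depend smoothly on $(a,Q)$, the parameter set is compact, and $\hat\omega_n\in J$.
\item Angularly elliptic packets with bounded $\hat\omega$ are removed by Lemma~\ref{lem:app_high_angular_coercivity}. The remaining mass is therefore on the characteristic set of the scalar Hamiltonian.
\item The elliptic piece is handled by \eqref{eq:hf_elliptic_piece}, using the $H^{-1}_{h_n}$ part of the residual.
\item The nontrapped characteristic piece is propagated along $H_p$ to the horizon or to null infinity. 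There it is killed by the radial-point estimate \eqref{eq:radial_point_estimates_used} of Proposition~\ref{prop:radial_point_finite_order}. The sign at the horizon is correct because trapped frequencies are non-superradiant (Proposition~\ref{prop:kn_disjointness}), and the radial source terms vanish because $h_n^{-1}\|P^{(n)}_{h_n}v_n\|_{L^2}\to0$.
\end{enumerate}
So every semiclassical defect measure of $v_n$ on $K_0$ lives on the trapped set $K_{a_\infty,Q_\infty}$. Passing to a subsequence is harmless, since the trapped sets depend continuously on $(a,Q)$ by Proposition~\ref{prop:r_normal_hyperbolicity}(iii).

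Next, I would remove the trapped mass with the localized normally hyperbolic estimate. Proposition~\ref{prop:nh_theorem_application} verifies the conditions (N1)--(N4) of Proposition~\ref{prop:nh_resolvent_form} for the frozen compatible operator, uniformly in the slow-weak range and in $\hat\omega\in J$:
\begin{itemize}
\item scalar real principal symbol;
\item $r$-normally hyperbolic trapping with rate $\lambda_0>0$;
\item zero diagonal skew symbol on $K_{a,Q}$ (Lemma~\ref{lem:trapped_skew_vanishing});
\item matrix skew part below $\lambda_0/8$ (Proposition~\ref{prop:matrix_skew_threshold});
\item a closed graph class (Lemma~\ref{lem:closed_graph_resolvent_class}).
\end{itemize}
I would apply the estimate in the a priori form of Lemma~\ref{lem:compatible_cutoff_apriori}, to $v_n$ itself and with scalar cutoffs $\chi\prec\chi_1$ around the trapped set. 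This gives
\[
\|\chi v_n\|_{L^2}\le Ch_n^{-1}\log(1/h_n)\|\chi_1P^{(n)}_{h_n}v_n\|_{L^2}+O(h_n^\infty),
\]
whose right side tends to zero by \eqref{eq:app_high_freq_residual}.

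Combining the three microlocal pieces as in the last paragraph of the proof of Lemma~\ref{lem:high_freq_partition} upgrades this to \eqref{eq:hf_partition_vanish}, namely $\|v_n\|_{H^1_{h_n}(K_0)}\to0$, which is the desired contradiction. The equivalence with the conic part of \eqref{eq:master_hyp_no_defect_sequence} is then a rewriting, because \eqref{eq:master_hyp_semiclassical_residual} is the same normalization as \eqref{eq:app_high_freq_residual}.

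The main obstacle is the uniformity of the constant $C$ in the trapped estimate along a sequence whose operators change with $n$. The estimate must hold with a single constant for all $(a_n,Q_n,\hat\omega_n)$, not just for each fixed operator. I would handle this by observing that Proposition~\ref{prop:nh_resolvent_form} gives constants depending only on finitely many symbol seminorms, on $\lambda_0$, and on the threshold margin, and all of these are uniform on the compact parameter set times $J$. A secondary point is that compatibility is used only as a closed condition on $v_n$, never for microlocally cut-off pieces (Lemma~\ref{lem:compatibility_not_localized}). This is what lets the estimate be applied without leaving the compatible class.
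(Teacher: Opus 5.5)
Your proposal is correct and follows the paper's own proof: you argue by contradiction, identify the residual condition \eqref{eq:app_high_freq_residual} with \eqref{eq:hf_partition_residual}, and run Lemma~\ref{lem:high_freq_partition} (elliptic piece, nontrapped and radial-point propagation, and the trapped piece via the normally hyperbolic estimate verified in Propositions~\ref{prop:r_normal_hyperbolicity} and~\ref{prop:nh_theorem_application}) to force $\|v_n\|_{H^1_{h_n}(K_0)}\to0$. The only difference is cosmetic: the paper handles the varying parameters by noting $C^2$ convergence of the frozen symbols to the limiting symbol $p^{\sharp}_{a_\infty,Q_\infty,\hat\omega_\infty}$, whereas you unpack the lemma and use the uniform-constant clause of Proposition~\ref{prop:nh_resolvent_form}, and both serve the same purpose.
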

\begin{proof}
Suppose such a sequence exists. After passing to a subsequence, \(\hat\omega_n\to\hat\omega_\infty\) and \(Q_n\to Q_\infty\). The semiclassical symbols of \(P_{h_n}^{(n)}\) converge in \(C^2\) on compact phase-space sets to the normalized scalar symbol \(p^{\sharp}_{a_\infty,Q_\infty,\hat\omega_\infty}\), by Lemma~\ref{lem:app_inverse_expansion}, Proposition~\ref{prop:scalar_principal_symbol}, and the semiclassical normal form of Lemma~\ref{lem:semiclassical_spinone_order}. The coefficients, boundary convention and compatible constraints are exactly those covered by Lemma~\ref{lem:high_freq_partition}. The residual condition \eqref{eq:app_high_freq_residual} is \eqref{eq:hf_partition_residual}, and the trapped piece is controlled by the high-frequency estimate \eqref{eq:normally_hyperbolic_resolvent_bound}; Proposition~\ref{prop:r_normal_hyperbolicity} verifies the geometric and skew-subprincipal identities needed to apply that estimate to the present operator. Therefore
\[
        \|v_n\|_{H^1_{h_n}(K_0)}\longrightarrow0,
\]
contradicting \eqref{eq:app_high_freq_normalization}. Accordingly, no high-frequency defect sequence exists.
\end{proof}

\begin{lemma}
\label{lem:unscaled_to_semiclassical_defect}
Let \(\Lambda_n\to\infty\) be the total stationary frequency of a conic stationary packet, set \(h_n=\Lambda_n^{-1}\), and let \(\hat\omega_n=h_n\omega_n\) remain in a compact normalized interval. Suppose that \(v_n\) is bounded in \(H^1_{h_n}(K_1)\) and that the frozen residuals satisfy
\begin{equation}\label{eq:unscaled_defect_residual}
        \|\mathcal L_{a_n,Q_n}(\omega_n)v_n\|_{L^2(K_1)}\longrightarrow0.
\end{equation}
Then the normalized residual condition in \eqref{eq:app_high_freq_residual} holds on \(K_1\). In particular any unbounded-total-frequency compact defect obtained from the positive-commutator contradiction argument is a defect in the precise semiclassical normalization used by \eqref{eq:normally_hyperbolic_resolvent_bound}.
\end{lemma}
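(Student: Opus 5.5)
The statement is a scaling bookkeeping lemma, so the plan is to multiply through by $h_n^2$ and compare powers of $h_n$. Set $P_{h_n}^{(n)}=h_n^2\mathcal L_{a_n,Q_n}(\omega_n)$. The first normalized term in \eqref{eq:app_high_freq_residual} is then
\[
h_n^{-1}\log(1/h_n)\|P_{h_n}^{(n)}v_n\|_{L^2(K_1)}
=h_n\log(1/h_n)\,\|\mathcal L_{a_n,Q_n}(\omega_n)v_n\|_{L^2(K_1)}.
\]
Since $h_n\log(1/h_n)\to0$ as $h_n\to0^+$, and the unscaled residual tends to zero by \eqref{eq:unscaled_defect_residual}, this product tends to zero. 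In fact boundedness of the unscaled residual would already be enough.

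For the second term I will use the elementary embedding $\|f\|_{H^{-1}_h}\le\|f\|_{L^2}$. This holds uniformly in $h$ on the fixed compact set $K_1$, because $\langle hD\rangle^{-1}$ has norm at most one on $L^2$. It gives
\[
\|P_{h_n}^{(n)}v_n\|_{H^{-1}_{h_n}(K_1)}\le h_n^2\,\|\mathcal L_{a_n,Q_n}(\omega_n)v_n\|_{L^2(K_1)}\to0,
\]
so both parts of \eqref{eq:app_high_freq_residual} hold on $K_1$. The boundedness of $v_n$ in $H^1_{h_n}(K_1)$ is not needed for the residual condition itself. It is carried along only so that the sequence satisfies the remaining hypotheses of Lemma~\ref{lem:high_freq_partition} and Proposition~\ref{prop:app_high_freq}.

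For the ``in particular'' clause, I will use the unbounded branch of Lemma~\ref{lem:localized_compact_defect}. The contradiction sequence there has frozen residual tending to zero after the time cutoff and Plancherel step, by \eqref{eq:localized_compact_cutoff}. The remaining work is to check that the Fourier selection produces an $L^2(K_1)$ residual rather than only an $LE^*$ or $H^{-1}$ residual. On compact radial sets the $LE^*$ weight $w_{\mathrm{tr}}^{-1}$ is bounded below, so the $LE^*$ norm dominates the compact $L^2$ norm there. This is the one place where some care is required. I would handle it by restricting to $K_1$ before applying Plancherel, and by noting that the $L^2$-smoothing step at scale $h$ described in that lemma produces exactly the normalization required by \eqref{eq:normally_hyperbolic_resolvent_bound}.
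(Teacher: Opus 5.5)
Your argument is correct and matches the paper's proof. The paper uses the same $h_n^{-1}\cdot h_n^{2}=h_n$ bookkeeping for the $h_n^{-1}\log(1/h_n)\,L^2$ term of \eqref{eq:app_high_freq_residual}. For the dual term it uses the same duality bound $|\langle P_{h_n}^{(n)}v_n,\phi\rangle|\le h_n^2\|\mathcal L_{a_n,Q_n}(\omega_n)v_n\|_{L^2}\|\phi\|_{H^1_{h_n}}$, which is your embedding $\|f\|_{H^{-1}_{h}}\le\|f\|_{L^2}$. The paper reads the ``in particular'' clause directly off this computation, so your extra Plancherel and selection discussion is not needed for this lemma.
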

\begin{proof}
By definition, \(P_{h_n}^{(n)}v_n=h_n^2\mathcal L_{a_n,Q_n}(\omega_n)v_n\). Hence
\begin{equation}\label{eq:unscaled_to_scaled_l2}
        h_n^{-1}\log(1/h_n)
        \|P_{h_n}^{(n)}v_n\|_{L^2(K_1)}
        =h_n\log(1/h_n)
        \|\mathcal L_{a_n,Q_n}(\omega_n)v_n\|_{L^2(K_1)}\longrightarrow0.
\end{equation}
Also, the dual semiclassical norm is weaker after the factor \(h_n^2\): for \(\phi\in H^1_{h_n,0}(K_1)\),
\begin{equation}\label{eq:unscaled_to_scaled_dual}
        |\langle P_{h_n}^{(n)}v_n,\phi\rangle|
        \le h_n^2
        \|\mathcal L_{a_n,Q_n}(\omega_n)v_n\|_{L^2(K_1)}\|\phi\|_{L^2(K_1)}
        \le h_n^2\|\mathcal L_{a_n,Q_n}(\omega_n)v_n\|_{L^2(K_1)}
             \|\phi\|_{H^1_{h_n}}.
\end{equation}
This establishes the \(H^{-1}_{h_n}\) convergence and hence \eqref{eq:app_high_freq_residual}.
\end{proof}

\begin{remark}
\label{rem:high_frequency_status}
The high-frequency part of \emph{(A3)} is obtained by applying the normally hyperbolic estimate, Proposition~\ref{prop:nh_resolvent_form}, to the frozen scalar-principal Kerr-Newman spin-one operator specified by \emph{(A1)}. The geometric conditions, smooth symplectic $r$-normally hyperbolic trapped-set components, real scalar principal symbol, and non-superradiant trapping, are verified here on the full slow-weak range. The condition specific to nonzero spin is the finite-rank-bundle threshold bound on the Hermitian skew-subprincipal symbol at the trapped set. Its diagonal Teukolsky part vanishes identically by Lemma~\ref{lem:trapped_skew_vanishing}; the remaining matrix part is bounded by \eqref{eq:matrix_skew_decomposition} and absorbed by Proposition~\ref{prop:matrix_skew_threshold}. Proposition~\ref{prop:nh_resolvent_form} is used as a cited semiclassical estimate: its trapped-model content is the scalar estimate of \cite{WunschZworski} with the threshold analysis of \cite{Dyatlov,DyatlovGaps}, its finite-rank-bundle form is that of \cite{HintzTensor}, and its assembly with the elliptic, real-principal-type and radial-point estimates follows the gluing scheme of \cite{DatchevVasy}, with the semiclassical propagation and radial-point estimates as in \cite{VasyKdS,DyatlovZworskiBook}. The Kerr-Newman-specific work carried out here is the trapped-set geometry, the diagonal spin-one subprincipal cancellation, and the matrix-threshold check needed before that estimate can be applied. The full subextremal Kerr and Maxwell estimates of \cite{BenomioTdC,SRTdCfrequency,SRTdCphysical} are cited as related results and for the $Q=0$ Kerr subcase. They are not used here to replace the Kerr-Newman high-frequency estimate when $Q\ne0$.
\end{remark}

\section{Limiting Absorption and Real-Axis Exclusion}
\label{app:lap}
In this section we prove the limiting absorption statement and exclude real-axis modes in the compatible class.
We prove here the bounded-frequency parts of Lemmas~\ref{lem:rn_real_axis_exclusion},~\ref{lem:rn_no_real_resonance},~\ref{lem:rn_limiting_absorption}, and~\ref{lem:slowweak_resolvent_closure}. The high-frequency arguments are in Section~\ref{app:trapping}.

\subsection*{Real-axis exclusion on Reissner-Nordstr\"om}
\begin{lemma}
\label{lem:app_rn_kernel}
Let $a=0$, $|Q|\le\eps_QM$, $\omega\in\Rbb$. A charge-free finite-energy spin-one
mode $e^{-i\omega t}u_\omega$ is zero.
\end{lemma}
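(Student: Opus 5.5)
The plan is to reduce to the radial ODE for each vector spherical harmonic, using the exact Reissner-Nordstr\"om reduction of Lemma~\ref{lem:app_rn_radial}. Since the mode is charge-free, its $\ell=0$ electric and magnetic monopole coefficients vanish. So $u_\omega$ decomposes into $\ell\ge1$ components $\Phi_{\ell m}$ of both polarizations. Each component has the form $e^{-i\omega t}u_{\ell}(r_*)$ and satisfies
\[
-u_\ell''+V_{\ell,Q}u_\ell=\omega^2u_\ell,\qquad V_{\ell,Q}=f_Q\,\ell(\ell+1)r^{-2}.
\]
The transform is an energy isomorphism at every finite order, so finite non-degenerate energy of the mode gives $u_\ell,\,u_\ell',\,\omega u_\ell\in L^2(\dd r_*)$ on both ends. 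It also gives horizon regularity in the ingoing coordinates.

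The argument then splits on $\omega$. \emph{Case $\omega=0$.} Pair the equation with $\overline{u_\ell}$ and integrate over $r_*\in\Rbb$. Boundary terms vanish because $u_\ell u_\ell'\to0$ along a sequence at both ends, which follows from $L^2$ integrability of $u_\ell$ and $u_\ell'$ (at infinity use the Hardy bound $r^{-1}u_\ell\in L^2$). This gives
\[
\int|u_\ell'|^2+V_{\ell,Q}|u_\ell|^2=0.
\]
Since $V_{\ell,Q}>0$ on $r>r_+$ by Lemma~\ref{lem:app_photon_sphere}, $u_\ell\equiv0$. This is exactly Lemma~\ref{lem:stationary_identity} with zero source, which I would cite directly.

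\emph{Case $\omega\neq0$.} The potential $V_{\ell,Q}$ is real and short-range at both ends. At $r_*\to+\infty$ it is $O(r_*^{-2})$. At $r_*\to-\infty$ it is $O(e^{2\kappa_+ r_*})$, because $f_Q$ vanishes linearly at $r_+$. By the Volterra/Jost construction, every solution is asymptotic to $A_\pm e^{i\omega r_*}+B_\pm e^{-i\omega r_*}$ with matching derivatives. If any of $A_\pm,B_\pm$ were nonzero, $|u_\ell|^2$ would fail to be integrable on that half-line, since the oscillatory terms have nonvanishing $L^2$ mean on long intervals. Hence $A_\pm=B_\pm=0$. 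The Jost solution normalized at $-\infty$ is nontrivial, and uniqueness for the ODE then gives $u_\ell\equiv0$. Alternatively, one can say that the only solution decaying at an end of a short-range problem with $\omega\ne0$ is the zero solution.

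Summing over $\ell$ and $m$ and invoking the energy isomorphism gives $u_\omega=0$.

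I expect the main obstacle to be making the passage from ``finite non-degenerate energy'' to the precise end behaviour rigorous at the horizon. At the horizon the energy is measured in regular coordinates, not in $r_*$, so I would check that regularity in $(\tilde t,r)$ of $e^{-i\omega t}u_\ell$ forces the $e^{+i\omega r_*}$ (outgoing-at-horizon) coefficient to vanish by smoothness. It also makes the ingoing piece fail to be square-integrable in $\dd r_*$ together with $\omega u_\ell$; that failure is what kills $B_-$. The far end is routine.
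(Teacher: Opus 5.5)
Your argument is essentially the paper's: $\omega=0$ via the positivity pairing of Lemma~\ref{lem:stationary_identity}, and $\omega\ne0$ via Jost asymptotics, square-integrability killing the oscillatory coefficients, and ODE uniqueness. The paper's extra Wronskian step is redundant once the coefficients vanish, so omitting it is fine.

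One correction to your obstacle paragraph. Near $\Hp$ the non-degenerate energy is measured in $\dd r$, not $\dd r_*$. A horizon-regular ingoing piece $B_-e^{-i\omega(t+r_*)}$ therefore has finite energy, and it is \emph{not} killed by failing to be integrable in $\dd r_*$. The same point affects your $\omega=0$ case: there the horizon boundary term vanishes because $u_\ell'=f_Q\partial_ru_\ell\to0$, not because $u_\ell\in L^2(\dd r_*)$. Close the $\omega\ne0$ case at the far end instead, where $A_+=B_+=0$ already forces $u_\ell\equiv0$, exactly as your alternative remark says.
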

\begin{proof}
The case $\omega=0$ is Lemma~\ref{lem:stationary_identity}: each harmonic obeys
$-u_\ell''+V_{\ell,Q}u_\ell=0$ with $V_{\ell,Q}>0$ for $\ell\ge1$
(Lemma~\ref{lem:app_photon_sphere}), and pairing with $\overline{u_\ell}$ forces
$u_\ell\equiv0$. For $\omega\ne0$ each harmonic obeys
\begin{equation}\label{eq:app_radial_mode}
        -u_\ell''+V_{\ell,Q}u_\ell=\omega^2 u_\ell,\qquad \ell\ge1,
\end{equation}
with $V_{\ell,Q}$ real, smooth, and short-range at $r_*=\pm\infty$
($V_{\ell,Q}\to0$ exponentially as $r_*\to-\infty$, like $r_*^{-2}$ as
$r_*\to+\infty$). The Jost solutions give
$u_\ell=A_\pm e^{i\omega r_*}+B_\pm e^{-i\omega r_*}+o(1)$ as $r_*\to\pm\infty$.
Finite non-degenerate energy requires $u_\ell,u_\ell'\in L^2$ near both ends,
which is incompatible with a nonzero oscillatory coefficient; hence
$A_\pm=B_\pm=0$. The Wronskian-type quantity
$W(r_*)=\operatorname{Im}(\overline{u_\ell}\,u_\ell')$ has
$W'=\operatorname{Im}(\overline{u_\ell}\,u_\ell'')=\operatorname{Im}((V_{\ell,Q}-\omega^2)|u_\ell|^2)=0$
since $V_{\ell,Q}-\omega^2$ is real; therefore $W$ is constant, and $W\to0$ at both
ends, so $W\equiv0$. With both Jost coefficients vanishing at one end, unique
continuation for the ordinary differential equation \eqref{eq:app_radial_mode}
gives $u_\ell\equiv0$. Summation over $\ell\ge1$ gives $u_\omega=0$.
\end{proof}

\begin{lemma}
\label{lem:app_rn_no_resonance}
Let $a=0$, $|Q|\le\eps_QM$, $\omega\in\Rbb$, and $\sigma>1/2$. A charge-free
spin-one profile $v\in H^1_{-\sigma,\loc}$ solving $\mathcal L_{0,Q}(\omega)v=0$
with future outgoing behaviour at infinity and future ingoing behaviour at the
horizon, or with the time-reversed pair, is zero.
\end{lemma}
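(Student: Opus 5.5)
The plan is to reduce to a one-dimensional Schr\"odinger problem in each vector spherical harmonic, exactly as in the proof of Lemma~\ref{lem:rn_no_real_resonance}, and to close with a Wronskian flux argument and ODE uniqueness.

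First, the case $\omega=0$ is dispatched by the stationary coercivity of Lemma~\ref{lem:stationary_identity}. Since $v$ is only in $H^1_{-\sigma,\loc}$, we must check that the stationary pairing is legitimate. At $\omega=0$ the Sommerfeld condition means $u_\ell$ tends to constants at both ends. On the horizon side, $V_{\ell,Q}$ decays exponentially in $r_*$, so the harmonic solution behaves like $A+Br_*$, and horizon regularity kills $B$. Moreover $V_{\ell,Q}>0$ on the exterior, so no nonzero solution is bounded at both ends. Concretely: pair the equation with $\overline{u_\ell}$ on $[-R,R]$; the boundary terms $\overline{u_\ell}u_\ell'$ vanish as $R\to\infty$ because $u_\ell'\to0$. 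This gives $\int(|u_\ell'|^2+V_{\ell,Q}|u_\ell|^2)=0$, hence $u_\ell\equiv0$.

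Second, for $\omega\neq0$ we decompose $v$ into harmonics $\ell\ge1$; there is no $\ell=0$ term, because charge-freeness removes the Coulomb mode by Lemma~\ref{lem:app_rn_radial}. Each radial coefficient $u_\ell$ solves \eqref{eq:app_radial_mode}, whose potential is real, smooth, exponentially decaying as $r_*\to-\infty$ and $O(r_*^{-2})$ as $r_*\to+\infty$.
\begin{itemize}
\item The Jost construction (Volterra iteration, which converges because the potential is integrable at both ends) gives the asymptotics $A_\pm e^{i\omega r_*}+B_\pm e^{-i\omega r_*}+o(1)$, with the corresponding asymptotics for $u_\ell'$.
\item The future outgoing/ingoing condition selects $B_+=0$ and $A_-=0$, so $u_\ell\sim A_+e^{i\omega r_*}$ at infinity and $u_\ell\sim B_-e^{-i\omega r_*}$ at the horizon. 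The weighted $H^1_{-\sigma}$ membership with $\sigma>1/2$ is exactly what lets the Sommerfeld condition be read off through these asymptotics.
\item Because $V_{\ell,Q}-\omega^2$ is real, the Wronskian $W=\operatorname{Im}(\overline{u_\ell}u_\ell')$ is constant.
\item Evaluating $W$ at the two ends gives $\omega|A_+|^2=-\omega|B_-|^2$. Since $\omega\neq0$, this forces $A_+=B_-=0$.
\end{itemize}
The solution then has vanishing Jost data at the end $r_*\to+\infty$, so uniqueness for the Volterra equation gives $u_\ell\equiv0$. The time-reversed pair only flips the signs in the last step, and the same conclusion follows.

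Finally, summing over $\ell$ gives $v=0$. The summation is justified because the map $v\mapsto(u_\ell)$ is injective on the charge-free $H^1_{-\sigma,\loc}$ sector: its components on compact radial sets are $L^2$ on spheres.

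The main obstacle I expect is making the Sommerfeld conditions precise for a distributional profile in $H^1_{-\sigma,\loc}$. One has to show that such a $v$ has genuine Jost asymptotics with differentiated limits, so that $W$ can be evaluated at the ends. The approach is to use ODE regularity harmonic by harmonic and to interpret the outgoing condition in the limiting-absorption sense, in which $(\partial_{r_*}-i\omega)u_\ell\in L^2_{\sigma'}$ near infinity.
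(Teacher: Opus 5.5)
Your proposal is correct and follows essentially the same route as the paper: the case $\omega=0$ by stationary coercivity, and for $\omega\neq0$ the harmonic reduction to \eqref{eq:app_radial_mode}, constancy of $W=\operatorname{Im}(\overline{u_\ell}u_\ell')$ evaluated at the two ends to force $\omega(|A_+|^2+|B_-|^2)=0$, and then Jost/Volterra uniqueness. Your explicit pairing at $\omega=0$ is slightly more careful than the paper's bare appeal to Lemma~\ref{lem:stationary_identity}, which is stated for finite-energy profiles. One correction there: since the potential is inverse-square at infinity, the zero-energy solutions behave like $r^{\ell+1}$ or $r^{-\ell}$ as $r_*\to+\infty$, so the admissible limit at infinity is $0$ rather than an arbitrary constant, which only helps your boundary-term argument.
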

\begin{proof}
For $\omega=0$ use Lemma~\ref{lem:stationary_identity}. For $\omega\ne0$, each
harmonic satisfies \eqref{eq:app_radial_mode}. Under the future convention the
Sommerfeld conditions give
\[
        u_\ell=A_+e^{i\omega r_*}+o(1)\quad (r_*\to+\infty),
        \qquad
        u_\ell=A_-e^{-i\omega r_*}+o(1)\quad (r_*\to-\infty),
\]
and the differentiated asymptotics hold for $u_\ell'$. Since $V_{\ell,Q}$ is
real, $W=\operatorname{Im}(\overline{u_\ell}u_\ell')$ is constant. The two ends
give $W(+\infty)=\omega|A_+|^2$ and $W(-\infty)=-\omega|A_-|^2$, hence
$A_+=A_-=0$. A Jost/Volterra uniqueness argument at either end, equivalently
unique continuation for \eqref{eq:app_radial_mode}, gives $u_\ell=0$. The past
convention reverses the signs at both ends and gives the same identity. Summing
harmonics gives $v=0$.
\end{proof}

\begin{lemma}
\label{lem:app_rn_lap}
Fix $\sigma>1/2$, a compact $I\Subset\Rbb$, and $|Q|\le\eps_QM$. For every finite
order $k$ there is $C=C(I,\sigma,k)$ with
\begin{equation}\label{eq:app_lap}
        \norm{v}_{H^1_{-\sigma}}+\norm{\omega v}_{L^2_{-\sigma}}
        \le C\,\norm{\mathcal L_{0,Q}(\omega)v}_{H^{-1}_{\sigma}}
\end{equation}
for all real $\omega\in I$ and all charge-free outgoing/incoming Sommerfeld profiles
$v\in H^1_{-\sigma,\loc}$.
\end{lemma}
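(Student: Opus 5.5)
The plan is to reduce the limiting absorption estimate \eqref{eq:app_lap} to a family of one-dimensional radial problems and close each by a Fredholm compactness argument. Lemma~\ref{lem:app_rn_no_resonance} supplies the injectivity, and Lemma~\ref{lem:app_high_angular_coercivity} gives uniformity in $\ell$.

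\textbf{Step 1: reduction to the radial equation.} By Lemma~\ref{lem:app_rn_radial} we decompose a charge-free profile into vector spherical harmonics with $\ell\ge1$. Each coefficient then satisfies
\[
-u_\ell''+(V_{\ell,Q}-\omega^2)u_\ell=f_\ell
\]
in the tortoise variable. The Hodge transform and the radial weight are isomorphisms of the weighted $H^1$ and $H^{-1}$ norms at every finite order. The angular commutators in $\mathbb D_k$ act diagonally on the harmonics, or through bounded multiples of $\ell$. So it suffices to prove the estimate for each $u_\ell$ with a constant uniform in $\ell$ and $\omega\in I$, and then sum. For $\ell\ge L_0(I)$ the potential dominates $\omega^2$ on compact sets, and Lemma~\ref{lem:app_high_angular_coercivity} (whose proof at $a=0$ is exactly this pairing argument) gives the bound with constant independent of $\ell$. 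This leaves finitely many $\ell$.

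\textbf{Step 2: fixed $\ell$, $\omega\ne0$.} We write the outgoing/incoming resolvent as the free one-dimensional resolvent $(-\partial_{r_*}^2-\omega^2\mp i0)^{-1}$ perturbed by $V_{\ell,Q}$. The free resolvent maps $H^{-1}_\sigma\to H^1_{-\sigma}$ boundedly for $\sigma>1/2$, locally uniformly in $\omega\neq0$. The potential decays exponentially at the horizon end and like $r_*^{-2}$ at infinity, so multiplication by it is compact $H^1_{-\sigma}\to H^{-1}_\sigma$. The Fredholm alternative then yields the bound as soon as the kernel vanishes, and Lemma~\ref{lem:app_rn_no_resonance} gives exactly that. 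Continuity in $\omega$ and compactness of $I\setminus(-\delta,\delta)$ give uniformity away from zero. The term $\|\omega v\|_{L^2_{-\sigma}}$ is controlled trivially by $\sup_I|\omega|\,\|v\|_{H^1_{-\sigma}}$.

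\textbf{Step 3: the main obstacle, uniformity as $\omega\to0$.} The free one-dimensional resolvent degenerates at zero energy, so Step 2 does not extend directly. Here I would argue by contradiction. Take $\omega_n\to0$ and normalized $v_n$ with $\|\mathcal L_{0,Q}(\omega_n)v_n\|_{H^{-1}_\sigma}\to0$. On compact sets, local elliptic/ODE regularity and Rellich extract a limit $v$ solving the stationary equation. At the ends one needs a priori bounds: at infinity the centrifugal term $\ell(\ell+1)r^{-2}$ with $\ell\ge1$ gives a positive Hardy-type barrier, and at the horizon the red-shift/ingoing condition gives control uniform in small $\omega$. These end bounds prevent the mass of $v_n$ from escaping, so the limit $v$ is nonzero and lies in the finite-energy class. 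Lemma~\ref{lem:stationary_identity} then forces $v=0$, a contradiction.

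The same compactness argument also handles uniformity in $Q$, because the horizon, surface gravity and potential depend smoothly on $Q$. For the finite-order statement I would commute $T$ and $r\partial_r$ through the radial equation; the commutators are lower order with short-range coefficients, and induction on $k$ closes the estimate.
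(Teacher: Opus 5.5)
Your route is essentially the paper's: Fredholm theory with the free one-dimensional resolvent, a compact potential and Lemma~\ref{lem:app_rn_no_resonance} for finitely many $\ell$ and $\omega\neq0$; pairing for large $\ell$; and Lemma~\ref{lem:stationary_identity} at $\omega=0$. You are right that $\omega\to0$ is the crux, but Step~3 fails, and for $\sigma\in(1/2,1)$ it cannot be repaired. Through Hardy, the centrifugal barrier at infinity only gives control of $\int r^{-2}\abs{v}^2\,dr_*$ by a source in the dual scale-invariant norm, i.e.\ weights of exponent exactly one on each side. For $\sigma\in(1/2,1)$ the left side of \eqref{eq:app_lap} carries the stronger weight $\langle r_*\rangle^{-\sigma}$ and the right side the weaker $\langle r_*\rangle^{\sigma}$, so mass can escape to infinity.

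Concretely, in one harmonic $\ell\ge1$ take $v_R(r_*)=\phi(r_*/R)$ with $\phi\in C^\infty_c((1,2))$. This profile is charge-free, lies in $H^1_{-\sigma,\mathrm{loc}}$, and satisfies both Sommerfeld conditions trivially. Since $V_{\ell,Q}\lesssim\ell(\ell+1)r_*^{-2}$ on its support, $\norm{\mathcal L_{0,Q}(0)v_R}_{H^{-1}_\sigma}\le\norm{\langle r_*\rangle^{\sigma}\mathcal L_{0,Q}(0)v_R}_{L^2}\lesssim R^{\sigma-3/2}$, whereas $\norm{v_R}_{H^1_{-\sigma}}\gtrsim R^{1/2-\sigma}$. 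The ratio is of order $R^{2-2\sigma}\to\infty$. For $0<\abs{\omega}\le\epsilon/R$ the extra term $\omega^2v_R$ is smaller by a factor $\epsilon^2$, so the best constant at frequency $\omega$ is at least $c\,\abs{\omega}^{-(2-2\sigma)}$. After normalization the $v_R$ form a sequence with vanishing residual, support leaving every compact set, and weak limit zero, so your compactness argument yields no contradiction.

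The same dilation placed at $r_*\to-\infty$, where $V_{\ell,Q}$ is exponentially small, defeats your horizon claim. The red-shift mechanism lives in horizon-regular coordinates, where the horizon is at finite distance. In the $\langle r_*\rangle$-weighted norms in which you pose the radial problem, the horizon end is a free end. Hence for $\sigma<1$ and $0\in I$, \eqref{eq:app_lap} is false. The low-frequency part needs $\sigma\ge1$, where Hardy and Lemma~\ref{lem:stationary_identity} give the $\omega=0$ bound, or an interval $I$ bounded away from $0$. The paper disposes of $\omega\to0$ in one sentence via that same scale-invariant ($r^{-2}$-weighted) lemma and has the same limitation.

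Two smaller points. In Step~2, multiplication by $V_{\ell,Q}$ is compact $H^1_{-\sigma}\to H^{-1}_\sigma$ only when $\langle r_*\rangle^{2\sigma}V_{\ell,Q}\to0$, i.e.\ $\sigma<1$; for $\sigma>1$ it is not even bounded. This is harmless: prove the $\omega\neq0$ bound at some $\sigma_0\in(1/2,1)$, and note that the inequality at $\sigma_0$ implies it for every $\sigma\ge\sigma_0$. It does, however, expose the tension with Step~3, which needs $\sigma\ge1$.

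In Step~1, Lemma~\ref{lem:app_high_angular_coercivity} is only a compact-set estimate, with an error term on the larger set $K'$, and the pairing behind it is coercive only where $V_{\ell,Q}>\omega^2$. For fixed $\omega\neq0$ and large $\ell$, the regions $r\gtrsim\ell/\abs{\omega}$ and $r_*\lesssim-C\log\ell$ are classically allowed. So the global weighted bound at the two ends, uniform in $\ell$, needs a genuinely outgoing estimate there (for instance a radial positive-commutator argument), not coercivity.
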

\begin{proof}
At $\omega=0$ this is Lemma~\ref{lem:stationary_identity}. For $\omega\ne0$ decompose into spherical harmonics. The $\ell$th coefficient satisfies
\begin{equation}\label{eq:app_lap_mode}
        P_{\ell,\omega,Q}u_{\ell}:=
        \Big(-\frac{d^2}{dr_*^2}+V_{\ell,Q}(r)-\omega^2\Big)u_{\ell}=f_{\ell},
        \qquad V_{\ell,Q}=f_Q\frac{\ell(\ell+1)}{r^2}.
\end{equation}
For any fixed finite set of angular momenta, the one-dimensional outgoing resolvent estimate, compactness of the short-range potential as a map $H^1_{-\sigma}\to H^{-1}_{\sigma}$, and Lemma~\ref{lem:app_rn_no_resonance} give the desired bound by the Fredholm alternative. The constant is uniform on compact $\omega$-intervals: otherwise a normalized sequence would converge to a non-zero outgoing/ingoing homogeneous solution, contradicting Lemma~\ref{lem:app_rn_no_resonance}.

The remaining step is to justify summation in $\ell$.  Choose $R_-\ll0\ll R_+$ in the
tortoise coordinate $r_*$, and choose cutoffs $\chi_-$, $\chi_0$, $\chi_+$ with
$\chi_-$ supported near the horizon end, $\chi_+$ supported near infinity, and
$\chi_0$ supported on the remaining compact annulus. On $\supp\chi_0$ the quantity
$f_Qr^{-2}$ has a positive lower bound, uniformly for $|Q|\le\eps_QM$. Hence, for
$\ell\ge L_0(I)$,
\begin{equation}\label{eq:app_lap_high_l_barrier}
        f_Q(r)\frac{\ell(\ell+1)}{r^2}-\omega^2\ge c\ell^2,
        \qquad \omega\in I,\qquad r_*\in\supp\chi_0.
\end{equation}
Pairing \eqref{eq:app_lap_mode} with $\chi_0^2\overline{u_\ell}$ and integrating
by parts gives
\begin{equation}\label{eq:app_lap_high_l_identity}
        \|\chi_0\partial_{r_*}u_\ell\|_{L^2}^2
        +c\ell^2\|\chi_0u_\ell\|_{L^2}^2
        \le C\|f_\ell\|_{H^{-1}_\sigma}\|u_\ell\|_{H^1_{-\sigma}}
        +C\|u_\ell\|_{H^1(\supp d\chi_0)}^2.
\end{equation}
On the two ends the potential is nonnegative and the missing $L^2$ control is
provided by the one-dimensional Hardy inequality
\begin{equation}\label{eq:app_lap_end_hardy}
        \|\langle r_*\rangle^{-\sigma}w\|_{L^2}^2
        \le C_\sigma\|\langle r_*\rangle^{1-\sigma}\partial_{r_*}w\|_{L^2}^2
        +C_\sigma\|w\|_{L^2(R_-<r_*<R_+)}^2,
        \qquad \sigma>\frac12.
\end{equation}
Applying this to $\chi_\pm u_\ell$, absorbing the transition terms into
\eqref{eq:app_lap_high_l_identity}, and using $\ell(\ell+1)\ge2$ on the
charge-free sector gives
\begin{equation}\label{eq:app_lap_high_l}
        \norm{u_{\ell}}_{H^1_{-\sigma}}+\norm{\omega u_{\ell}}_{L^2_{-\sigma}}
        \le C\norm{P_{\ell,\omega,Q}u_{\ell}}_{H^{-1}_{\sigma}},
        \qquad \ell\ge L_0,
        \quad \omega\in I,
\end{equation}
with $C$ independent of $\ell$. The finitely many modes $1\le\ell<L_0$ are
covered by the Fredholm step. Near $\omega=0$ the stationary estimate applies
uniformly because charge subtraction removes the $\ell=0$ mode and the first
allowed angular eigenvalue is $2$. Smooth non-degenerate dependence of
$r_+(Q)$, $\kappa_+(Q)$, $r_{\rm ph}(Q)$ and $V_{\ell,Q}$ on $Q$ provides
uniformity for $|Q|\le\eps_QM$. Summing the harmonic estimates and then the finite
commutator family proves \eqref{eq:app_lap}.
\end{proof}

\subsection*{Slow-rotation closure at bounded frequencies}

\begin{lemma}
\label{lem:app_compatible_closed}
Let \(a_n\to0\), \(Q_n\to Q_\infty\), \(\omega_n\to\omega_\infty\), and let \(v_n\)
be charge-free compatible frozen profiles with the same future outgoing/ingoing
(or past incoming/outgoing) convention. Suppose that \(v_n\to v_\infty\) locally in
\(H^1\) and that \(\mathcal L_{a_n,Q_n}(\omega_n)v_n\to0\) locally in \(H^{-1}\). Then
\(v_\infty\) lies in the Reissner-Nordstr\"om compatible class at \((0,Q_\infty,\omega_\infty)\),
satisfies \(\mathcal L_{0,Q_\infty}(\omega_\infty)v_\infty=0\), is charge-free, and has the
same Sommerfeld convention at the two ends.
\end{lemma}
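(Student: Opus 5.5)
The plan is to pass each defining property of the compatible class to the limit separately. The argument uses only the continuity of the coefficient families in $(a,Q,\omega)$ and the closed-range definition of compatibility in Definition~\ref{def:master_from_teukolsky}.

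\emph{The limiting equation.} By Proposition~\ref{prop:principal_master} and Lemma~\ref{lem:app_inverse_expansion}, the frozen operators can be written as
\[
\mathcal L_{a_n,Q_n}(\omega_n)=\mathcal L_{0,Q_\infty}(\omega_\infty)+E_n ,
\]
where the coefficients of $E_n$ are $O(|a_n|+|Q_n-Q_\infty|+|\omega_n-\omega_\infty|)$ in $C^k$ on compact radial sets. Hence $E_n v_n\to0$ in $H^{-1}_{\loc}$, because $v_n$ is locally bounded in $H^1$. The operator $\mathcal L_{0,Q_\infty}(\omega_\infty)$ is continuous $H^1_{\loc}\to H^{-1}_{\loc}$, so $\mathcal L_{0,Q_\infty}(\omega_\infty)v_n\to \mathcal L_{0,Q_\infty}(\omega_\infty)v_\infty$. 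Combining these with the hypothesis $\mathcal L_{a_n,Q_n}(\omega_n)v_n\to0$ gives $\mathcal L_{0,Q_\infty}(\omega_\infty)v_\infty=0$ distributionally.

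\emph{Compatibility and charges.} The frozen Maxwell constraints and the relations expressing the extreme variables through $F$ are first-order linear relations. Their coefficients depend smoothly on the parameters, so the same splitting (limit operator plus a small error) shows that they hold for $v_\infty$ in the distributional sense. The limit therefore lies in the Reissner-Nordstr\"om closed graph class, exactly as in Lemma~\ref{lem:closed_graph_resolvent_class}. For the charges, I use the representation of Proposition~\ref{prop:coulomb_elimination}: each charge is the spherical mean of $\rho_F$ or $\sigma_F$ on a fixed sphere. Averaging over a compact radial annulus turns these into bounded functionals on local $L^2$, with the bound of Proposition~\ref{prop:finite_energy_charge_trace}. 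Local $H^1$ convergence, together with the reconstructed middle components (continuous by Proposition~\ref{prop:middle_reconstruction} at frozen level), then gives that the charges vanish in the limit. Equivalently, the $\ell=0$ projection of $v_\infty$ is zero.

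\emph{Sommerfeld convention (main obstacle).} This is the step I expect to be hardest, since local $H^1$ convergence does not see the ends directly. I would first decompose into spheroidal, and in the limit spherical, harmonics. For each fixed harmonic, the outgoing condition at $r_*\to+\infty$ and the ingoing condition at $r_*\to-\infty$ amount to $v_n$ lying in the range of the outgoing resolvent. I would use the uniform limiting-absorption bound of Lemma~\ref{lem:app_rn_lap}, perturbed to small $a_n$ by Proposition~\ref{prop:absorption_small}, to get uniform weighted bounds $\|v_n\|_{H^1_{-\sigma}}\le C$. Then I would write
\[
v_n=R^{\pm}_{a_n,Q_n}(\omega_n)\,\mathcal L_{a_n,Q_n}(\omega_n)v_n
\]
modulo a cutoff commutator supported in a compact set. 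The outgoing resolvents converge strongly as $(a_n,Q_n,\omega_n)\to(0,Q_\infty,\omega_\infty)$; this follows from the resolvent identity and the smallness of $E_n$ in $H^1_{-\sigma}\to H^{-1}_{\sigma}$. Consequently $v_\infty$ is the image of a compactly supported source under the limiting outgoing resolvent, and so it carries the outgoing/ingoing convention. At $\omega_\infty=0$ the same argument works, using the stationary estimate of Lemma~\ref{lem:stationary_identity}. The past convention follows by time reversal.
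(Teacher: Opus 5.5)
Your first two steps, passing the frozen equation, the first-order constraint relations and the zero-mean conditions to the distributional limit by continuity of the coefficients in $(a,Q,\omega)$, are essentially the paper's argument. One correction: charge-freeness is not equivalent to $v_\infty$ having no $\ell=0$ projection. The spin-weighted extreme components never carry an $\ell=0$ mode, since the Coulomb fields have $\alpha=\underline\alpha=0$ on Reissner--Nordstr\"om, so the charges must be tracked through the middle components, as you otherwise do.

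\textbf{The Sommerfeld step has a genuine gap.} The identity $v_n=R^{\pm}_{a_n,Q_n}(\omega_n)\mathcal L_{a_n,Q_n}(\omega_n)v_n$ only helps if the residuals converge in the \emph{global} weighted space $H^{-1}_\sigma$. The lemma gives only local convergence, and a uniform bound $\|v_n\|_{H^1_{-\sigma}}\le C$ does not make up for this. In fact, with purely local hypotheses the conclusion is false:
\begin{itemize}
\item Take $a_n=0$, fixed $Q$, fixed $\omega\ne0$, and a single harmonic $\ell\ge1$.
\item Set $v_n=R^+(\omega)f_n$ with $f_n=\phi(r_*-n)e^{-i\omega r_*}$, where $\phi$ is a fixed bump with $\int\phi\ne0$.
\item Each $v_n$ satisfies the outgoing/ingoing convention, $\mathcal L v_n=f_n\to0$ locally, and $\|v_n\|_{H^1_{-\sigma}}$ stays bounded.
\item On $\{r_*<n-1\}$, however, $v_n=c_nu_-$, a multiple of the horizon-ingoing Jost solution, with $c_n\to c\ne0$. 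So $v_n\to c\,u_-$ locally.
\item By Lemma~\ref{lem:app_rn_no_resonance}, $u_-$ is not outgoing at infinity.
\end{itemize}

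\textbf{The two ingredients you invoke also fail as stated.}
\begin{itemize}
\item The error $E_n$ is not small, indeed not even bounded, from $H^1_{-\sigma}$ to $H^{-1}_\sigma$. The frequency change contributes $(\omega_n^2-\omega_\infty^2)$ times a coefficient that does not decay at either end. For $a_n\ne0$ the horizon end also sees the shifted frequency $\omega-m\Omega_+$, again a non-decaying change in $r_*$.
\item For the same reason, Lemma~\ref{lem:app_rn_lap} cannot be transferred to $v_n$ through Proposition~\ref{prop:absorption_small}; note also that $v_n$ is outgoing for the Kerr--Newman operator, not the Reissner--Nordstr\"om one.
\item In the paper's logic, the uniform small-$a$ bounded-frequency bound is exactly what Proposition~\ref{prop:app_bounded_freq} derives \emph{from} this lemma. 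Invoking it here is circular.
\end{itemize}

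The paper's own proof avoids resolvents. It writes the convention as the first-order relations $(\partial_{r_*}\mp i\omega_n)v_n=R^\pm_n$ at the two ends and passes them to the limit. That route, too, closes only if $R^\pm_n$ is small at the ends uniformly in $n$.

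The missing ingredient in either approach is a uniform-in-$n$ radiation-condition estimate at both ends, and it must be added to the hypotheses. For your route, this means convergence of the residual in the global $H^{-1}_\sigma$ norm, together with a genuine continuity result for the boundary-value resolvents in $(a,Q,\omega)$ rather than the resolvent identity. Local convergence by itself cannot see the asymptotic behaviour.
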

\begin{proof}
The compatible class is the closed range of the differential master map together
with the Maxwell constraints and the zero charge conditions. In coordinates it
is given by a finite system of linear equations with smooth coefficients in
\((a,Q,\omega)\), consisting of the frozen Teukolsky equations, the angular
constraint equations used in the Hodge reconstruction, the two zero-mean sphere
conditions for the middle scalars, and the first-order transport equations along
\(\mathcal H^+\) and \(\mathscr I^+\) (or their past analogues). Each equation is
continuous from local \(H^1\) to distributions; therefore the equations and the two
charge-free mean conditions pass to the limit. Equivalently, applying the
bounded reconstruction operator of Proposition~\ref{prop:same_order_reconstruction}
on compact subdomains and using its uniqueness identities, the locally convergent
limit reconstructs a source-free charge-free Maxwell field whose master variables
are \(v_\infty\).

What is left is to record the boundary condition. Near infinity the outgoing
condition has the form
\[
        (\partial_{r_*}-i\omega_n)v_n=R^+_n,
        \qquad R^+_n\to0 \quad \hbox{in the weighted trace space},
\]
after cutting off to the asymptotic end; near the future horizon it has the
analogous ingoing equation \((\partial_{r_*}+i\omega_n)v_n=R^-_n\) in regular
red-shift coordinates. The coefficients and the cutoffs converge smoothly, and
the local elliptic bounds give convergence of the traces on finite collars; the
remainders therefore converge distributionally and in the weighted trace topology
to the corresponding remainders for \(\omega_\infty\), which vanish. This implies that the
Sommerfeld convention is closed under the limit.
\end{proof}

\begin{proposition}
\label{prop:app_bounded_freq}
There are \(\eps_a,\eps_Q>0\) such that no locally \(H^1\)-bounded sequence \(v_n\)
with \(\|v_n\|_{H^1(K_0)}=1\) and \(\mathcal L_{a_n,Q_n}(\omega_n)v_n\to0\) in
\(H^{-1}_{\sigma,\loc}\) exists with \(\{\omega_n\}\) bounded,
\(|a_n|\le\eps_aM\), \(|Q_n|\le\eps_QM\), and outgoing/incoming Sommerfeld behaviour in
\(H^1_{-\sigma,\loc}\).
\end{proposition}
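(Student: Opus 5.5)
The plan is a contradiction-plus-compactness argument. Suppose that for every choice of $\eps_a,\eps_Q$ such a sequence exists. Taking $\eps_a=\eps_Q=1/n$ and diagonalizing, we obtain $a_n\to0$, with $Q_n$ in a compact weak-charge interval and $\omega_n$ bounded. Passing to subsequences, $Q_n\to Q_\infty$ and $\omega_n\to\omega_\infty$. For this bounded-frequency regime the aim is to produce a nonzero Reissner-Nordstr\"om outgoing/incoming real resonance at $(0,Q_\infty,\omega_\infty)$. Such a resonance is excluded by Lemma~\ref{lem:app_rn_no_resonance} when $\omega_\infty\neq0$, and by Lemma~\ref{lem:stationary_identity} when $\omega_\infty=0$.

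\emph{Step 1: uniform weighted bound.} Rewrite $\mathcal L_{0,Q_n}(\omega_n)v_n=\mathcal L_{a_n,Q_n}(\omega_n)v_n-(\mathcal L_{a_n,Q_n}-\mathcal L_{0,Q_n})(\omega_n)v_n$. By the frozen form of Proposition~\ref{prop:principal_master} and Lemma~\ref{lem:app_inverse_expansion}, the difference operator maps $H^1_{-\sigma}\to H^{-1}_\sigma$ with norm $O(|a_n|/M)$, uniformly for bounded $\omega_n$. This uses that the coefficients are short range, of order $r^{-3}$, $r^{-2}$ and $r^{-3}$, and that $\sigma>1/2$ still leaves room in the weights. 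Lemma~\ref{lem:app_rn_lap} is uniform in $Q$ over the weak-charge range and in $\omega$ over compact intervals. Applying it gives $\|v_n\|_{H^1_{-\sigma}}\le C\|\mathcal L_{a_n,Q_n}(\omega_n)v_n\|_{H^{-1}_\sigma}+C|a_n|M^{-1}\|v_n\|_{H^1_{-\sigma}}$, and the last term is absorbed once $n$ is large. This step is the main obstacle. The residual is only assumed to tend to zero in $H^{-1}_{\sigma,\loc}$, not globally, so the global estimate cannot be invoked directly. My first attempt would localize: apply the estimate to $\chi v_n$ with a cutoff equal to one on a large compact set, so that the commutator $[\mathcal L,\chi]v_n$ is supported in the collars. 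Those collar terms would then be controlled using the hypothesis that the Sommerfeld behaviour holds in $H^1_{-\sigma,\loc}$ at both ends, together with the radiation-trace form of the outgoing condition used in Lemma~\ref{lem:app_compatible_closed}. Lemma~\ref{lem:app_high_angular_coercivity} removes the high-angular tail uniformly, so it suffices to work with finitely many harmonics. If the localization does not close, the fallback is to read the hypothesis, as (A3) does, as a statement in the local resolvent graph topology on a fixed $K_1$, and to obtain the bound there.

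\emph{Step 2: compactness and a nonzero limit.} Given the uniform bound, Rellich compactness yields a subsequence with $v_n\to v_\infty$ strongly in $L^2_{\loc}$ and weakly in $H^1_{\loc}$. The frozen operators converge with their coefficients as $(a_n,Q_n,\omega_n)\to(0,Q_\infty,\omega_\infty)$, so $\mathcal L_{0,Q_\infty}(\omega_\infty)v_\infty=0$ distributionally. Local elliptic/G\aa rding control, namely the interior $H^1$ estimate in terms of $L^2$ and the residual (valid because the high-angular part is already handled), upgrades the convergence to strong $H^1(K_0)$. Hence $\|v_\infty\|_{H^1(K_0)}=1$.

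\emph{Step 3: passing the constraints to the limit.} Lemma~\ref{lem:app_compatible_closed} shows that $v_\infty$ is charge-free, lies in the Reissner-Nordstr\"om compatible class, and satisfies the same Sommerfeld convention at both ends. It therefore has only $\ell\ge1$ content and is an outgoing/incoming real resonance. Lemma~\ref{lem:app_rn_no_resonance} (for $\omega_\infty\ne0$) or the stationary coercivity (for $\omega_\infty=0$) then forces $v_\infty=0$, contradicting Step 2. This contradiction fixes $\eps_a$, and $\eps_Q$ is taken as in Proposition~\ref{prop:constant_choice}.
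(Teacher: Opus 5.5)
Your Steps~2 and~3 are the paper's proof. Force $a_n\to0$, use convergence of the frozen operators on compact sets, then Rellich. Apply the interior elliptic estimate for $\mathcal L_{0,Q_\infty}(\omega_\infty)$ to $v_n-v_m$ to get strong $H^1(K_0)$ convergence with $\|v_\infty\|_{H^1(K_0)}=1$. Finally invoke Lemma~\ref{lem:app_compatible_closed} and Lemmas~\ref{lem:app_rn_no_resonance}, \ref{lem:stationary_identity}. What differs is Step~1, which you call the main obstacle. It is unnecessary, and as formulated it would not work.

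\emph{Why Step~1 is unnecessary.} Local $H^1$-boundedness of $v_n$ is a \emph{hypothesis} of the proposition. That is all that Rellich and the elliptic upgrade use, and the paper's proof derives no weighted bound.

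\emph{Why Step~1 would fail.}
\begin{itemize}
\item Lemma~\ref{lem:app_rn_lap} holds only for profiles satisfying the Reissner--Nordstr\"om radiation condition. Without that condition the estimate is false, since incoming homogeneous solutions lie in $H^1_{-\sigma}$.
\item Your $v_n$ are outgoing/ingoing for Kerr--Newman at $(a_n,Q_n,\omega_n)$. At the future horizon that condition is governed by $\varpi=\omega-m\Omega_+$, not by $\omega$.
\item For the same reason, $\mathcal L_{a_n,Q_n}(\omega_n)-\mathcal L_{0,Q_n}(\omega_n)$ is not short range at the horizon end. In tortoise variables it contains a term of size $\varpi^2-\omega^2$, which is $O(|a|)$ for fixed $m$ but does not decay as $r_*\to-\infty$. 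The two horizons also sit at different radii.
\item Hence the difference is not $O(|a_n|/M)$ from $H^1_{-\sigma}$ to $H^{-1}_\sigma$, which would require decay like $\langle r_*\rangle^{-2\sigma}$. The rates $r^{-3},r^{-2},r^{-3}$ you quote concern only the far end.
\end{itemize}

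Note also that if Step~1 did close with a globally small residual, it would by itself give $\|v_n\|_{H^1_{-\sigma}}\to0$ and end the argument. The two halves of your plan would then duplicate each other.

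Delete Step~1 and take the local bound from the hypothesis; your argument then coincides with the paper's. A minor point: the interior elliptic estimate holds because the frozen Reissner--Nordstr\"om operator is elliptic on compact subsets of $\{r>r_+\}$. It does not depend on the high-angular tail having been removed.
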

\begin{proof}
Suppose no such thresholds existed. Then for a sequence
\(\varepsilon_j\downarrow0\) one could find counterexamples with
\(|a_j|\le\varepsilon_jM\). Relabelling this counterexample sequence gives
\(a_n\to0\). Pass to \(\omega_n\to\omega_\infty\) and \(Q_n\to Q_\infty\). On every compact
radial set \(K_2\), Proposition~\ref{prop:app_master_structure} and the smooth
coefficient dependence give
\[
        \mathcal L_{a_n,Q_n}(\omega_n)-\mathcal L_{0,Q_\infty}(\omega_\infty)
        \longrightarrow0
        \quad \hbox{as } H^1(K_2)\to H^{-1}(K_2).
\]
The assumed local \(H^1\) boundedness and the residual convergence therefore imply
\(\mathcal L_{0,Q_\infty}(\omega_\infty)v_n\to0\) in \(H^{-1}(K_2)\) for each \(K_2\).
Rellich gives, after passing to a subsequence, strong \(L^2\) convergence on
compact subdomains. To keep the normalization in \(H^1\), use the local elliptic
estimate for the frozen elliptic stationary operator: if \(K_0\Subset K_1\Subset K_2\), then
\begin{equation}\label{eq:local_elliptic_upgrade_bounded_freq}
        \|w\|_{H^1(K_1)}
        \le C\Big(\|\mathcal L_{0,Q_\infty}(\omega_\infty)w\|_{H^{-1}(K_2)}
              +\|w\|_{L^2(K_2)}\Big).
\end{equation}
Applying \eqref{eq:local_elliptic_upgrade_bounded_freq} to \(w=v_n-v_m\) shows
that \(v_n\) is Cauchy in \(H^1(K_1)\), hence converges strongly in \(H^1(K_0)\) to a
limit \(v_\infty\) with \(\|v_\infty\|_{H^1(K_0)}=1\). Passing to the limit in the
operator equation gives \(\mathcal L_{0,Q_\infty}(\omega_\infty)v_\infty=0\).
Lemma~\ref{lem:app_compatible_closed} shows that \(v_\infty\) is charge-free,
compatible, and satisfies the same outgoing/ingoing Sommerfeld convention. Thus
\(v_\infty\) is a real outgoing resonance. This contradicts
Lemma~\ref{lem:app_rn_no_resonance} if \(\omega_\infty\ne0\) and
Lemma~\ref{lem:stationary_identity} if \(\omega_\infty=0\). The contradiction
proves the stated thresholds.
\end{proof}

\begin{remark}
\label{rem:app_lap_union}
Lemma~\ref{lem:slowweak_resolvent_closure} is the union of the bounded-total-frequency compactness argument, after the high-angular tail is removed by Lemma~\ref{lem:app_high_angular_coercivity}, and the conic unbounded-total-frequency alternative of Proposition~\ref{prop:app_high_freq}, where $h$ is the reciprocal of the total stationary phase-space frequency. Proposition~\ref{prop:compact_remainder_removal} feeds these two alternatives into the positive-commutator estimate through the Plancherel and dyadic microlocal reduction of Lemma~\ref{lem:cutoff_mode_estimate}.
\end{remark}
\section{Null-Frame Reconstruction of Middle Components}
\label{app:reconstruction}
In this section we reconstruct the middle Maxwell components from the extreme components by using angular Hodge inversion and null transport.
In this section we prove the transport and Hodge estimates behind
Lemma~\ref{lem:hodge_inversion}, Lemma~\ref{lem:null_transport_middle}, and
Proposition~\ref{prop:middle_reconstruction}. The aim is to recover the
charge-free middle components of the Maxwell two-form from the extreme components
at the same differential order, with constants uniform in the slow-weak range.
The rescaled (\emph{modified}) middle quantity, which gives the transport
coefficients a definite Reissner-Nordstr\"om sign, follows Benomio-Teixeira da
Costa \cite{BenomioTdC}; the null-frame Maxwell formalism is that of
Jezierski-Smo{\l}ka \cite{JezierskiSmolka}.

\subsection*{The regular null frame and the source-free system}
On the slowly rotating Kerr-Newman exterior fix the regular (horizon-penetrating)
null pair $e_3,e_4$ and an orthonormal angular pair $e_1,e_2$ tangent to the
spheres of the foliation, normalized so that $g(e_3,e_4)=-2$, $g(e_A,e_B)=\delta_{AB}$.
For a real two-form $F$ set, as in \eqref{eq:maxwell_components},
\begin{equation}\label{eq:app_null_components}
\alpha_A=F(e_A,e_4),\qquad \underline\alpha_A=F(e_A,e_3),\qquad
\rho_F=\tfrac12 F(e_3,e_4),\qquad \sigma_F=\tfrac12 F(e_1,e_2),
\end{equation}
with the equivalent convention $\sigma_F=c(\star_gF)(e_3,e_4)$ for a fixed
nonzero frame constant $c$.  The one-forms $\alpha,\underline\alpha$ are the
extreme components and $\rho_F,\sigma_F$ are the middle components. Let
$\varphi=\rho_F+i\sigma_F$ and
\begin{equation}\label{eq:app_D_varphi}
        \mathcal D\varphi=\sphgrad\rho_F+{}^\star\!\sphgrad\sigma_F.
\end{equation}
Projecting $\dd F=0$ and $\dd\star_gF=0$ on the frame gives the two transport
pairs
\begin{equation}\label{eq:app_maxwell_transport}
\begin{aligned}
e_4(\rho_F)+(\operatorname{tr}\chi)\rho_F&=\sphdiv\alpha+A_4^{\rho}\rho_F+B_4^{\rho}\sigma_F+C_4^{\rho}\cdot\alpha,\\
e_4(\sigma_F)+(\operatorname{tr}\chi)\sigma_F&=-\sphcurl\alpha+A_4^{\sigma}\rho_F+B_4^{\sigma}\sigma_F+C_4^{\sigma}\cdot\alpha,\\
e_3(\rho_F)+(\operatorname{tr}\underline\chi)\rho_F&=-\sphdiv\underline\alpha+A_3^{\rho}\rho_F+B_3^{\rho}\sigma_F+C_3^{\rho}\cdot\underline\alpha,\\
e_3(\sigma_F)+(\operatorname{tr}\underline\chi)\sigma_F&=-\sphcurl\underline\alpha+A_3^{\sigma}\rho_F+B_3^{\sigma}\sigma_F+C_3^{\sigma}\cdot\underline\alpha,
\end{aligned}
\end{equation}
and the angular equations
\begin{equation}\label{eq:app_maxwell_constraint}
\begin{aligned}
\mathcal D\varphi
&=-e_3\alpha+D_3^1\cdot\alpha+D_3^2\cdot\underline\alpha+D_3^3\varphi,\\
\mathcal D\varphi
&= e_4\underline\alpha+D_4^1\cdot\alpha+D_4^2\cdot\underline\alpha+D_4^3\varphi.
\end{aligned}
\end{equation}
The $A,B,C,D$ coefficients are contractions of Ricci rotation coefficients with
the frame components. In Reissner-Nordstr\"om ($a=0$) the frame is the regular
double-null red-shift frame, $\operatorname{tr}\chi=2f_Q/r$ in the outgoing
normalization and $\operatorname{tr}\underline\chi=-2/r$ in the incoming regular
normalization, and the coefficients in \eqref{eq:app_maxwell_transport}-\eqref{eq:app_maxwell_constraint}
are smooth multiples of $r^{-1}$ in the far field and smooth across $\Hp$.  For
$a\neq0$ in the slow-weak range every frame coefficient differs from its
Reissner-Nordstr\"om value by a smooth term of size $O(|a|M^{-1}r^{-2})$ in the
far field and $O(|a|/M)$ on compact radial sets, by the inverse-metric expansion
of Lemma~\ref{lem:app_inverse_expansion}; we write this as
\begin{equation}\label{eq:app_frame_perturbation}
\operatorname{tr}\chi=\tfrac{2f_Q}{r}+O\!\big(\tfrac{\abs a}{M}r^{-2}\big),\qquad
A,B,C,D=(A,B,C,D)_{\RN}+O\!\big(\tfrac{\abs a}{M}r^{-2}\big)
\end{equation}
in the asymptotic symbol classes, with uniform smooth bounds in the red-shift and
compact regions.

\subsection*{The modified middle quantity}
Let us set $\varphi=\rho_F+i\sigma_F$ and define the rescaled scalar
\begin{equation}\label{eq:app_modified_middle}
\widetilde\varphi=r^{2}\varphi.
\end{equation}
Combining the first two equations in \eqref{eq:app_maxwell_transport} gives
$e_4\varphi+(\operatorname{tr}\chi)\varphi
=\sphdiv\alpha-i\sphcurl\alpha+\mathcal A_4\varphi+\mathcal C_4\cdot\alpha$.
Multiplying by $r^2$ and using $e_4(r)=f_Q+O(\abs a/M\,r^{-1})$ converts it into
\begin{equation}\label{eq:app_modified_transport_4}
e_4\big(\widetilde\varphi\big)
=r^2\big(\sphdiv\alpha-i\,\sphcurl\alpha\big)+\widetilde F_4,
\end{equation}
where
\begin{equation}\label{eq:app_modified_source_4}
\widetilde F_4=\big[2r\,e_4(r)-r^2\operatorname{tr}\chi+r^2\mathcal A_4\big]\varphi
        +r^2\mathcal C_4\cdot\alpha.
\end{equation}
By \eqref{eq:app_frame_perturbation} the leading bracket
$2r\,e_4(r)-r^2\operatorname{tr}\chi=2rf_Q-2rf_Q+O(\abs a/M)$ cancels in
Reissner-Nordstr\"om. Thus $\widetilde F_4$ consists of a smooth $O(r^{-2})$
multiple of $\widetilde\varphi$ in the far field, the controlled extreme term
$r^2\mathcal C_4\cdot\alpha$, and an $O(\abs a/M)$ rotational perturbation. The
incoming equation, using the regular normalization $e_3(r)=-1+O(\abs a/M)$, gives
\begin{equation}\label{eq:app_modified_transport_3}
e_3\big(\widetilde\varphi\big)
=-r^2\big(\sphdiv\underline\alpha+i\,\sphcurl\underline\alpha\big)+\widetilde F_3,
\end{equation}
with
\begin{equation}\label{eq:app_modified_source_3}
\widetilde F_3=O(1)\,\widetilde\varphi/r^2+r^2\mathcal C_3\cdot\underline\alpha
        +O(\abs a/M)\,(\widetilde\varphi/r^2+\alpha+\underline\alpha).
\end{equation}
The source $\widetilde F_3$ is regular across the future horizon because the
regular frame removes the $f_Q^{-1}$ singularity of the static frame. The
right-hand transport sources in \eqref{eq:app_modified_transport_4}-\eqref{eq:app_modified_transport_3}
are therefore $r^2$ times angular first derivatives of the extreme components,
plus terms controlled by Hardy/Poincar\'e and small rotational errors.

\subsection*{Zero spherical mean and angular recovery}
By the charge-free normalization \eqref{eq:middle_mean_zero}, the spherical means of
$\rho_F$ and $\sigma_F$ vanish on every sphere of the foliation. Hence
$\varphi=\rho_F+i\sigma_F$ and $\widetilde\varphi=r^2\varphi$ are orthogonal to
the constants on each $S_{\tau,r}$. The angular equations in
\eqref{eq:app_maxwell_constraint} determine the angular gradient of
$\varphi$, and since $r$ is constant on $S_{\tau,r}$ they also determine
$\sphgrad\widetilde\varphi=r^2\sphgrad\varphi$ with exactly the same derivative
count. The only elliptic estimate needed here is the spectral gap for the scalar
Laplacian on the two-sphere; we state it in the scale used by the proof.

\begin{lemma}
\label{lem:app_hodge}
Let $v$ be a complex scalar on $\Sph$ with zero mean. For every $s\ge0$,
\begin{equation}\label{eq:app_hodge_estimate}
        \norm{v}_{H^{s+1}(\Sph)}\le C_s\norm{\sphgrad v}_{H^s(\Sph)}.
\end{equation}
Equivalently, if $Y$ is a one-form on $\Sph$ with no harmonic part and
$\sphdiv Y=f$, $\sphcurl Y=h$, then
\begin{equation}\label{eq:app_hodge_estimate_oneform}
        \norm{Y}_{H^{s+1}(\Sph)}
        \le C_s\bigl(\norm{f}_{H^s(\Sph)}+\norm{h}_{H^s(\Sph)}\bigr).
\end{equation}
The constants remain uniform for the rescaled spheres $r^{-2}\gamma_{AB}$ in the
slow-weak parameter range.
\end{lemma}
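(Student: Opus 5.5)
The plan is to argue spectrally, exactly as in the proof of Lemma~\ref{lem:hodge_inversion}, and then add a rescaling step to get uniformity on the spheres $S_{\tau,r}$.

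For the scalar estimate \eqref{eq:app_hodge_estimate}, expand $v=\sum_{\ell\ge1,m}v_{\ell m}Y_{\ell m}$ in spherical harmonics. The $\ell=0$ coefficient is absent because $v$ has zero mean. We use the norms
\[
\norm{v}_{H^{s}}^2\simeq\sum(1+\lambda_\ell)^{s}|v_{\ell m}|^2,\qquad \norm{\sphgrad v}_{H^s}^2\simeq\sum(1+\lambda_\ell)^s\lambda_\ell|v_{\ell m}|^2,
\]
where $\lambda_\ell=\ell(\ell+1)$. The second identity comes from $\int|\sphgrad Y_{\ell m}|^2=\lambda_\ell$ together with the commutation of $\sphgrad$ with functions of $-\sphlap$, modulo curvature terms. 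On $\Sph$ these curvature terms are the constant $K=1$ Bochner identity, so they give exact multipliers. The estimate then follows from the spectral gap: $1+\lambda_\ell\le\tfrac32\lambda_\ell$ for $\ell\ge1$.

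For the one-form estimate \eqref{eq:app_hodge_estimate_oneform}, we first note that $\Sph$ has no harmonic one-forms, since $H^1_{\mathrm{dR}}(\Sph)=0$. The Hodge decomposition therefore gives $Y=\sphgrad\phi+{}^\star\!\sphgrad\chi$ with $\phi,\chi$ mean-free. The potentials solve $\sphlap\phi=f$ and $\sphlap\chi=h$. The right sides $f,h$ automatically have zero mean, by the divergence theorem applied to $Y$ and ${}^\star Y$. Inverting $\sphlap$ on $\ell\ge1$ gains two derivatives with constant controlled by $\lambda_1=2$, and one application of $\sphgrad$ returns us to $H^{s+1}$.

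The remaining point is uniformity for the spheres of the foliation, and this is the step I expect to be the main obstacle. I will treat the induced metric on $S_{\tau,r}$, rescaled by $r^{-2}$, as a smooth family $\gamma_{a,Q,\tau,r}$ of metrics on $\Sph$ (this is how I read "$r^{-2}\gamma_{AB}$" in the statement). By Lemma~\ref{lem:app_inverse_expansion} this family is $O(|a|/M)$-close in $C^\infty$ to the round metric, uniformly in $(\tau,r)$. In horizon-regular coordinates and near the axis the closeness has to be checked chart by chart, which is the delicate part; the $C^k$ bounds on the metric coefficients needed there come from the same lemma.

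Given this closeness, the argument runs as follows. The elliptic operators $\sphlap_\gamma$, $\sphdiv_\gamma$ and $\sphcurl_\gamma$ differ from the round ones by $O(|a|/M)$ in the $H^{s+1}\to H^s$ operator norm. The first nonzero eigenvalue of $-\sphlap_\gamma$ stays at least $2-C|a|/M\ge1$, by min-max. For the mean-zero condition I use the mean with respect to the $\gamma$ volume form, which is the one appearing in \eqref{eq:middle_mean_zero}. The round-metric inequality, applied to the perturbed operators, then yields the estimate with an additional term $C(|a|/M)\norm{v}_{H^{s+1}}$ on the right. This term is absorbed into the left side for $\eps_a$ small, so the constants $C_s$ are uniform in the slow-weak range.
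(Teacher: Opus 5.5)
Your proposal is correct. On the round sphere it is the paper's argument: the spherical-harmonic expansion with the gap $\lambda_1=2$ gives \eqref{eq:app_hodge_estimate}, and the decomposition $Y=\sphgrad\phi+{}^\star\!\sphgrad\chi$ with mean-free potentials solving $\sphlap\phi=f$, $\sphlap\chi=h$ gives \eqref{eq:app_hodge_estimate_oneform}.

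The two routes differ only in the uniformity step. The paper notes that the rescaled sphere metrics form a compact $C^{s+2}$ family over the slow-weak range, so the elliptic constants are uniformly bounded. Each constant is finite because every metric on $\Sph$ has a spectral gap above the constants and no harmonic one-forms. This route needs no smallness of $|a|/M$, but it leaves the continuous dependence of the constants on the metric implicit. You instead perturb off the round metric and absorb an $O(|a|/M)$ error. That spends the slow-rotation hypothesis but makes the constants explicit. You also correctly see that the zero mean is taken with respect to the induced measure, so the round mean of $v$ is only $O(|a|/M)\norm{v}_{L^2}$ and must go into the absorbed error.

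Two minor corrections. First, the perturbed Laplacian is close to the round one as a map $H^{s+2}\to H^{s}$, not $H^{s+1}\to H^{s}$; the absorption closes without this anyway. Second, the closeness step is milder than you expect, and it is read off directly from \eqref{eq:kn_metric} rather than from the inverse-metric lemma. On Boyer-Lindquist spheres the rescaled metric is $\Sigma r^{-2}\,\dd\theta^2+\bigl((r^2+a^2)^2-a^2\Delta\sin^2\theta\bigr)(r^2\Sigma)^{-1}\sin^2\theta\,\dd\phi^2$. It differs from the round metric by $O(a^2/r^2)$, smoothly across the axis and uniformly in $Q$.
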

\begin{proof}
For the scalar estimate write $v=\sum_{\ell\ge1,m}v_{\ell m}Y_{\ell m}$.  Since
$-\sphlap Y_{\ell m}=\ell(\ell+1)Y_{\ell m}$ and the first nonzero eigenvalue is
$2$,
\[
        \norm{v}_{H^{s+1}}^2\simeq
        \sum_{\ell\ge1,m}(1+\ell(\ell+1))^{s+1}|v_{\ell m}|^2
        \le C_s\sum_{\ell\ge1,m}(1+\ell(\ell+1))^{s}\ell(\ell+1)|v_{\ell m}|^2
        \simeq C_s\norm{\sphgrad v}_{H^s}^2.
\]
For one-forms use the Hodge decomposition
$Y=\sphgrad\phi+{}^\star\!\sphgrad\chi$ with mean-free potentials. Then
$\sphlap\phi=\sphdiv Y$ and $\sphlap\chi=\sphcurl Y$, and the scalar elliptic
estimate on the orthogonal complement of the constants gives
$\norm{\phi}_{H^{s+2}}+\norm{\chi}_{H^{s+2}}
\le C_s(\norm{f}_{H^s}+\norm{h}_{H^s})$. Taking one angular derivative proves
\eqref{eq:app_hodge_estimate_oneform}. The metrics of the rescaled spheres
form a compact $C^{s+2}$ family for $|a|\le\eps_aM$, $|Q|\le\eps_QM$, so the
elliptic constants remain uniform after absorbing the smooth conformal factor
into the radial weights.
\end{proof}

\subsection*{The null transport estimate}
We establish next Lemma~\ref{lem:null_transport_middle} in the form needed for
$\widetilde\varphi$. The key point is that the energy identities for the two transport
equations have boundary terms of definite sign in the two ends, up to the
$O(\abs a/M)$ corrections, so the bulk and boundary norms close.

\begin{proposition}
\label{prop:app_transport}
Let $\widetilde\varphi$ solve
\eqref{eq:app_modified_transport_4}-\eqref{eq:app_modified_transport_3} with zero
spherical mean, and suppose the sources $\widetilde F_3,\widetilde F_4$ and the
angular-derivative right-hand sides are controlled in the master local-energy norm
$LE^{*,k}$. Then there is $\eps_a>0$ such that for $\abs a/M<\eps_a$,
\begin{equation}\label{eq:app_transport_estimate}
\norm{\widetilde\varphi}_{\Xnorm{k}_{\Max}}
\le C\Big(\norm{r^2(\sphdiv\alpha-i\sphcurl\alpha)}_{LE^{*,k}}
+\norm{r^2(\sphdiv\underline\alpha+i\sphcurl\underline\alpha)}_{LE^{*,k}}
+\norm{\widetilde\varphi(0)}_{\E^{(k)}}\Big),
\end{equation}
with $C$ uniform in the slow-weak range.
\end{proposition}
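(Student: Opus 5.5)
The plan is to run a weighted energy argument separately along the outgoing equation \eqref{eq:app_modified_transport_4} and the incoming equation \eqref{eq:app_modified_transport_3}, and then glue the two. I work at order zero first. The commuted statement will follow by applying $\Gamma^I\in\mathbb D_k$ and inducting on $|I|$, exactly as in Lemma~\ref{lem:finite_order_absorption_induction}.

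\emph{Step 1 (far region, outgoing multiplier).} In $\{r\ge R\}$, multiply \eqref{eq:app_modified_transport_4} by $r^{-1-\delta}\overline{\widetilde\varphi}$, take real parts, and integrate over the slab. Since $e_4(r)=f_Q+O(|a|/M\,r^{-1})>0$, the term $e_4(r^{-1-\delta})|\widetilde\varphi|^2$ has a negative sign. After moving it to the left, this gives a bulk term $\int r^{-2-\delta}|\widetilde\varphi|^2$, nonnegative fluxes on $\Sigma_\tau$ and at null infinity, and the initial term. The source is split by Cauchy--Schwarz into $\eta\int r^{-2-\delta}|\widetilde\varphi|^2$ plus the dyadic $LE^*$ weight of $r^2(\sphdiv\alpha-i\sphcurl\alpha)$. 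The part of $\widetilde F_4$ that is $O(r^{-2})\widetilde\varphi$ is absorbed for $R$ large. The cancellation of the bracket in \eqref{eq:app_modified_source_4} is exactly what prevents an $O(r^{-1})\widetilde\varphi$ term, which would not be absorbable.

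\emph{Step 2 (horizon and compact region, incoming multiplier).} Near $\Hp$ and on compact radial sets, use \eqref{eq:app_modified_transport_3} with multiplier $w(r)\overline{\widetilde\varphi}$, choosing $w$ increasing in $r$. Because $e_3(r)=-1+O(|a|/M)$, the term $-e_3(w)|\widetilde\varphi|^2$ is positive and gives coercive bulk control. The horizon flux has the red-shift sign, and the regular frame keeps $\widetilde F_3$ bounded there. The $O(1)\widetilde\varphi/r^2$ term of $\widetilde F_3$ is absorbed by choosing $w'$ large relative to $\sup|w|$ on the compact set. I expect to need a Grönwall-type exponential weight $w=e^{Cr}$ for this.

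\emph{Step 3 (gluing).} Join Steps 1 and 2 with a partition of unity in $r$. Cutoff derivatives live on a compact annulus, where Step 2 already controls them. The sphere flux on $\Sigma_\tau$ gives the $\sup_\tau$ energy component. Angular derivatives of $\widetilde\varphi$ come from Lemma~\ref{lem:app_hodge}, applied via \eqref{eq:app_maxwell_constraint} with zero mean \eqref{eq:middle_mean_zero}. The $\varphi$ term on the right of \eqref{eq:app_maxwell_constraint} is lower order and is handled by Poincaré.

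All $O(|a|/M)$ contributions appear as $C(|a|/M)\norm{\widetilde\varphi}^2_{\Xnorm{k}_{\Max}}$ and are absorbed for $\eps_a$ small. Commutators with $T,\Phi$ produce only coefficient derivatives. Commutators with $r\partial_r$ and angular fields produce strictly lower-order terms, which the induction handles.

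\emph{Main obstacle.} The hardest step is closing in the $\Xnorm{k}_{\Max}$ norm, which contains transversal derivatives, not just $L^2$ and angular control. Each transport equation only gives its own null derivative. The other null derivative must be recovered from the second equation, and angular derivatives from Hodge. I would handle this by commuting with $e_3$ and $e_4$ through the pair \eqref{eq:app_modified_transport_4}--\eqref{eq:app_modified_transport_3}: $e_3e_4\widetilde\varphi$ is expressed via angular derivatives of the extreme components, which $LE^{*,k}$ controls. I would check carefully that the commutator $[e_3,e_4]$ (torsion/$\zeta$ terms) is lower order, so that no derivative is lost.
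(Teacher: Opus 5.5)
Your overall method matches the paper's. You use energy identities for the two transport equations with sign-definite fluxes, Cauchy--Schwarz against the $LE^*$ weight, absorption of the $O(r^{-2})\widetilde\varphi$ and $O(|a|/M)$ terms, and induction over $\mathbb D_k$. The paper pairs \eqref{eq:app_modified_transport_4} and \eqref{eq:app_modified_transport_3} with $\widetilde\varphi$ without weights over the whole slab, records only boundary terms, and sums. Your search for a coercive bulk, which is needed to absorb the $\|\widetilde\varphi\|_{LE^k}$ factor that Cauchy--Schwarz leaves, is a real refinement. As written, however, Step~1 has the wrong sign, because you dropped $\operatorname{div}e_4$. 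Against $\dd\mu_g$, the measure of the $LE$ and $LE^*$ norms, one has $\operatorname{div}e_4=2(r-M)/r^2$ at $a=0$, so
\[
\int_{\D}r^{-1-\delta}\,e_4|\widetilde\varphi|^2\,\dd\mu_g
=(\text{future}-\text{past fluxes})
-\int_{\D}r^{-2-\delta}\Bigl(\frac{2(r-M)}{r}-(1+\delta)f_Q\Bigr)|\widetilde\varphi|^2\,\dd\mu_g ,
\]
and the bracket equals $1-\delta+O(M/r)>0$. The bulk therefore lands on the data side: it is an error, not a gain. This is exactly the non-absorbable $O(r^{-1})$ effect you mention. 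The bracket cancellation in \eqref{eq:app_modified_source_4} removes the $r^{-1}\widetilde\varphi$ term from the equation, but $\dd\mu_g$ reintroduces it through $\operatorname{div}e_4$. Your computation is right only relative to $r^{-2}\dd\mu_g$, in which $\operatorname{div}e_4=O(r^{-2})$. Against $\dd\mu_g$ you need the weight $r^{-3-\delta}$, which gives the bulk $\int r^{-4-\delta}|\widetilde\varphi|^2\,\dd\mu_g$.

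Second, the gluing in Step~3 is circular. Incoming transport carries data inward, so the Step~2 identity on a compact region has an inflow term, or a bad-sign cutoff error, on its outer annulus that only Step~1 can bound. Meanwhile Step~1's cutoff error on its inner annulus needs Step~2. Tracking the weights, the product of the two transfer constants is not small; it grows with the ratio of the cutoff radii. So ``where Step~2 already controls them'' is not available, and you need an independent bound for one of the annular terms.

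If $\Sigma_\tau$ are $t$-slices in the far region, run Step~2 on the whole exterior instead. Since $\operatorname{div}e_3=-2/r$ at $a=0$, pairing \eqref{eq:app_modified_transport_3} with $w\overline{\widetilde\varphi}$, for $w$ bounded, nondecreasing and exponential on compact sets, gives the bulk $\int(w'+2w/r)|\widetilde\varphi|^2\,\dd\mu_g$. This is dual to the $LE^*$ weight, comes with nonnegative fluxes on $\Sigma_{\tau_2}$ and $\Hp$, and needs no gluing. You then read off $e_4\widetilde\varphi$ from \eqref{eq:app_modified_transport_4}. Otherwise the compact input must come from zero-mean Poincar\'e and \eqref{eq:app_maxwell_constraint}.

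That route, like your Hodge step, puts $e_3\alpha$ and $e_4\underline\alpha$ on the right-hand side, and \eqref{eq:app_transport_estimate} does not contain them. The paper keeps angular recovery out of this proposition and does it afterwards, in the proof of Proposition~\ref{prop:middle_reconstruction}, via \eqref{eq:app_angular_tilde_phi}--\eqref{eq:app_angular_source_bound}. Finally, the $e_3e_4$ commutation in your last paragraph is unnecessary. Both first null derivatives of $\widetilde\varphi$ are given algebraically by the two transport equations.
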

\begin{proof}
Pair \eqref{eq:app_modified_transport_4} with $\widetilde\varphi$ and integrate
over the spacetime slab $\{0\le\tau\le T\}$ against the volume form. The outgoing
derivative contributes, after integration by parts in $e_4$,
\begin{equation}\label{eq:app_transport_boundary}
\tfrac12\int_{\Sigma_T}\abs{\widetilde\varphi}^2\,\nu_4
-\tfrac12\int_{\Sigma_0}\abs{\widetilde\varphi}^2\,\nu_4
+\tfrac12\int_{\Hp}\abs{\widetilde\varphi}^2
+\tfrac12\int_{\Ip}\abs{\widetilde\varphi}^2,
\end{equation}
all four boundary integrands nonnegative because the regular frame makes the
horizon flux $\int_{\Hp}\abs{\widetilde\varphi}^2$ a positive multiple of the
red-shift density and the far-field flux $\int_{\Ip}\abs{\widetilde\varphi}^2$ is
the limit of the $r^p$-weighted density at $p=0$. The right-hand side produces
$\int r^2(\sphdiv\alpha-i\sphcurl\alpha)\,\overline{\widetilde\varphi}$, bounded by
Cauchy-Schwarz by the first norm on the right of \eqref{eq:app_transport_estimate}
times $\norm{\widetilde\varphi}_{LE^{k}}$. The source $\widetilde F_4$ is, by the
computation after \eqref{eq:app_modified_transport_4}, a smooth $O(r^{-2})$ multiple
of $\widetilde\varphi$ plus an $O(\abs a/M\,r^{-1})$ term; the first is
absorbed by the Hardy inequality \eqref{eq:horizon_hardy} (zero mean supplies the
Poincar\'e constant on $\Sph$), the second by smallness of $\eps_a$. The incoming
equation \eqref{eq:app_modified_transport_3} is treated identically with $e_3$ and
the incoming null hypersurfaces; the regularity of $\widetilde F_3$ across $\Hp$ is
what prevents a $f_Q^{-1}$ blow-up. Summing the outgoing and incoming identities and
moving the absorbed terms to the left gives the order-zero case of
\eqref{eq:app_transport_estimate}. Commuting the system with the regular derivative
algebra $\mathbb D_k$ produces only
lower-order commutators of the same Reissner-Nordstr\"om sign plus
$O(\abs a/M)$ rotational commutators, so induction on the number of commutations
closes the estimate at order $k$.
\end{proof}

\begin{proof}[Proof of Proposition~\ref{prop:middle_reconstruction}]
We combine the angular and null estimates with the exact derivative count. Since
$\widetilde\varphi$ has zero spherical mean, Lemma~\ref{lem:app_hodge} gives on
each sphere
\begin{equation}\label{eq:app_angular_tilde_phi}
        \norm{\widetilde\varphi}_{H^{s+1}(S_{\tau,r})}
        \le C_s\norm{\sphgrad\widetilde\varphi}_{H^s(S_{\tau,r})}
        =C_s r^2\norm{\sphgrad\varphi}_{H^s(S_{\tau,r})}.
\end{equation}
The angular equations in \eqref{eq:app_maxwell_constraint} express
$\sphgrad\varphi$ as a linear combination of one regular null derivative of
$\alpha$ or $\underline\alpha$ and lower-order smooth coefficient terms:
\begin{equation}\label{eq:app_angular_source_bound}
        \norm{\sphgrad\varphi}_{H^s(S_{\tau,r})}
        \le C_s\bigl(\norm{e_3\alpha}_{H^s(S_{\tau,r})}
        +\norm{e_4\underline\alpha}_{H^s(S_{\tau,r})}
        +\norm{\alpha}_{H^s}+
        \norm{\underline\alpha}_{H^s}
        +r^{-1}\norm{\varphi}_{H^s}\bigr).
\end{equation}
The first four terms on the right are controlled by the $LE^1$ part of the
master norm because $\alpha$ and $\underline\alpha$ differ from the master
entries only by the fixed regular weights of Section~\ref{subsec:regular_frame_master}.
The last term is lower order; after integration in $(\tau,r)$ it is controlled
by Hardy and by the Poincar\'e inequality coming from the zero mean, and the
$O(|a|/M)$ rotational perturbative part is absorbed by taking
$|a|/M\le\eps_a(k)$.

Proposition~\ref{prop:app_transport} controls the null derivatives and the
radial part of $\widetilde\varphi$ from
\eqref{eq:app_modified_transport_4}-\eqref{eq:app_modified_transport_3}; its
right-hand sides are $r^2$ times angular derivatives of the extreme components
plus the same lower-order terms. Combining \eqref{eq:app_angular_tilde_phi},
\eqref{eq:app_angular_source_bound}, and \eqref{eq:app_transport_estimate}, and
then absorbing the lower-order term, gives
\begin{equation}\label{eq:app_middle_done}
\norm{\widetilde\varphi}_{\Xnorm{k}_{\Max}}
        \le C\norm{\Psi}_{\Xnorm{k}_M}.
\end{equation}
Since $\varphi=r^{-2}\widetilde\varphi$ and $r^{-2}$ is a smooth bounded weight
with bounded derivatives on $r\ge r_+(a,Q)$ in the regular compactification,
\eqref{eq:app_middle_done} is equivalent to \eqref{eq:middle_reconstruction}.
For uniqueness, the difference of two middle components with the same $\Psi$ has
zero mean, zero angular data in \eqref{eq:app_maxwell_constraint}, homogeneous
transport equations in \eqref{eq:app_modified_transport_4}-\eqref{eq:app_modified_transport_3},
and zero initial trace; the same estimate forces the difference to vanish.
\end{proof}

\begin{remark}
\label{rem:app_no_loss}
The estimate \eqref{eq:app_middle_done} is at the \emph{same} order $k$ as the data;
no derivative is lost. This is the no-loss property
(Proposition~\ref{prop:no_derivative_loss}) and is what allows the same-order
algebraic assembly of Proposition~\ref{prop:same_order_reconstruction}: the two-form
$F$ is a smooth, uniformly invertible linear combination of the extreme components
(entries of $\Psi$) and the reconstructed middle component, so
$\norm{F}_{\Xnorm{k}_{\Max}}\le C\norm{\Psi}_{\Xnorm{k}_M}$ and the inverse
identities $\mathfrak M\mathfrak R=\Id$, $\mathfrak R\mathfrak M=\Id$ hold on smooth
charge-free solutions.
\end{remark}

\section{Abstract Trace and Wave-Operator Criterion}
\label{app:scattering_criterion}
In this section we state an abstract trace criterion which will be applied to the radiation fields.
We prove Lemma~\ref{lem:hilbert_trace_criterion} in a self-contained functional-analytic form, so that the abstract scattering construction is separated from the geometry. The criterion is then applied with $H$ equal to the charge-free Maxwell energy space, $R_+$ the future radiation space, and $S_+$ the trace map of Definition~\ref{def:maxwell_trace}; this gives Proposition~\ref{prop:radiation_isomorphism_from_hierarchy} and Theorem~\ref{thm:scattering_transfer}.

\begin{lemma}
\label{lem:app_abstract_scattering}
Let $H$ and $R_+$ be Hilbert spaces, let $E\subset R_+$ be dense, and let $S_+:H\to R_+$ be a bounded linear map. Suppose there is a linear map $W_0:E\to H$ such that
\begin{equation}\label{eq:app_backward_bound}
        \norm{W_0\rho}_H\le C_1\norm{\rho}_{R_+},\qquad S_+W_0\rho=\rho\quad (\rho\in E),
\end{equation}
and suppose $\ker S_+=\{0\}$. Then $S_+:H\to R_+$ is a bounded linear isomorphism with bounded inverse.
\end{lemma}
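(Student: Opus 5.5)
The argument is pure Hilbert-space functional analysis; no geometry enters. The plan is to extend $W_0$ to all of $R_+$, show the extension is a right inverse of $S_+$, and then use injectivity to upgrade it to a two-sided inverse.

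\emph{Step 1: extend $W_0$.} By \eqref{eq:app_backward_bound}, $W_0$ is a bounded linear map on the dense subspace $E$. Given $\rho\in R_+$, I choose $\rho_n\in E$ with $\rho_n\to\rho$. The bound gives $\|W_0\rho_n-W_0\rho_m\|_H\le C_1\|\rho_n-\rho_m\|_{R_+}$, so $(W_0\rho_n)$ is Cauchy in $H$. Completeness of $H$ yields a limit, which I call $W_+\rho$. Interleaving two approximating sequences shows the limit does not depend on the choice of sequence. The resulting map $W_+:R_+\to H$ is linear and satisfies $\|W_+\|\le C_1$.

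\emph{Step 2: right inverse.} Since $S_+$ is bounded, it is continuous. Hence
\[
S_+W_+\rho=\lim_n S_+W_0\rho_n=\lim_n\rho_n=\rho,
\]
so $S_+W_+=\Id_{R_+}$. In particular $S_+$ is surjective.

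\emph{Step 3: two-sided inverse.} Injectivity is exactly the hypothesis $\ker S_+=\{0\}$, so $S_+$ is bijective. For any $h\in H$,
\[
S_+(W_+S_+h-h)=S_+h-S_+h=0,
\]
and injectivity gives $W_+S_+h=h$. Therefore $W_+=S_+^{-1}$, and the inverse is bounded with norm at most $C_1$.

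I do not expect any step to be a genuine obstacle. The only points needing care are the well-definedness of the extension in Step 1 and the use of continuity of $S_+$ to pass the identity $S_+W_0\rho=\rho$ from $E$ to its closure. Both are standard density arguments. No open mapping theorem is needed, because the bound on the inverse comes directly from $C_1$.
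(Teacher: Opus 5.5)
Your proposal is correct and matches the paper's proof step for step. Both extend $W_0$ by density to a bounded $W_+$ with $\|W_+\|\le C_1$, use continuity of $S_+$ to get $S_+W_+=\Id_{R_+}$, and apply injectivity to $W_+S_+h-h$ to conclude $W_+=S_+^{-1}$, with the inverse bound coming directly from $C_1$ and no open mapping theorem.
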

\begin{proof}
The estimate in \eqref{eq:app_backward_bound} makes $W_0$ uniformly continuous on $E$, so by density it extends uniquely to a bounded map $W_+:R_+\to H$ with $\norm{W_+}\le C_1$. Since $S_+$ is bounded and $S_+W_0\rho=\rho$ on $E$, continuity gives
\[
        S_+W_+=\Id_{R_+}.
\]
Hence $S_+$ is surjective. By the assumed kernel condition it is also injective. Moreover, for $h\in H$,
\[
        S_+(W_+S_+h-h)=S_+W_+S_+h-S_+h=S_+h-S_+h=0,
\]
so $W_+S_+h=h$. Hence $W_+=S_+^{-1}$ and the inverse is bounded.
\end{proof}

\begin{proof}[Proof of Lemma~\ref{lem:hilbert_trace_criterion}]
Take $E$ to be the dense class of smooth compactly supported radiation fields. The trace estimate gives boundedness of $S_+$. The backward construction supplies $W_0$ on $E$ with the energy bound \eqref{eq:abstract_backward_bound_main}; in geometric applications $W_0$ is obtained as the limit of uniformly bounded finite-slab backward solutions. The condition that zero future radiation forces zero Cauchy data is exactly $\ker S_+=\{0\}$. Lemma~\ref{lem:app_abstract_scattering} therefore gives the bounded inverse wave operator.
\end{proof}

\begin{remark}
\label{rem:app_scattering_application}
In Proposition~\ref{prop:radiation_isomorphism_from_hierarchy} the four conditions
of the abstract criterion are supplied as follows: forward boundedness is
Lemma~\ref{lem:radiation_trace} at $\Ip$ together with
Corollary~\ref{cor:horizon_flux} at $\Hp$; the backward right inverse and density are
constructed from the real-axis limiting-absorption resolvent in
Proposition~\ref{prop:backward_from_lap}; and the zero-kernel condition is the
real-frequency exclusion Proposition~\ref{prop:no_real_kernel} (equivalently
Lemma~\ref{lem:app_rn_no_resonance} in the model). This is the unconditional
route anticipated for condition \emph{(A5)}: it proves that condition from the
nonnegative-flux energy identity and the real-frequency exclusion rather than assuming
it. The Maxwell maps are then the master maps composed with the same-order
reconstruction (Proposition~\ref{prop:same_order_reconstruction}); charge subtraction
fixes the two Coulomb means, so there is no finite-dimensional kernel or cokernel.
\end{remark}

\section{Transfer of Boundedness and Integrated Local Energy Decay}
\label{sec:boundedness_transfer}
In this section we transfer boundedness and integrated local energy decay from the master variables to the Maxwell field.
\begin{proposition}
\label{prop:chargefree_transfer}
Suppose that the analytic master conclusions of Definition~\ref{def:analytic_master_conclusions} hold at order $k$. Then, for every smooth charge-free source-free
$G$ and $\tau\ge0$,
\begin{equation}\label{eq:chargefree_transfer}
        \norm{G}_{\Xnorm{k}_{\Max}(0,\tau)}^2\le C\,\E_{\Max}^{(k)}[G](0),
        \qquad C\le C_R^2C_M.
\end{equation}
\end{proposition}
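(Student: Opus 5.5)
The estimate follows by composing three already-available bounds, exactly as in the chain written out in the proof of Theorem~\ref{thm:intro_transfer}. Fix a smooth charge-free source-free $G$. Set $u=\mathfrak M G$, which is a smooth element of the compatible range $\mathcal C_{M,\mathrm{sm}}^{(k)}$ of Definition~\ref{def:master_from_teukolsky}. It therefore solves $\Pb_{a,Q}u=0$, and (M1) applies to it.

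The first step is the initial-energy comparison. The second line of \eqref{eq:framework_reconstruction_bounds} in (M2), taken at $\tau_1=0$, gives
\[
\E_M^{(k)}[u](0)\le C_R\,\E_{\Max}^{(k)}[G](0).
\]
The second step is the master estimate. Lemma~\ref{lem:master_immediate}, which is (M1) on the slab $[0,\tau]$, yields
\[
\norm{u}_{\Xnorm{k}_M(0,\tau)}^2\le C_M\,\E_M^{(k)}[u](0).
\]
The third step uses the inverse identity $G=\mathfrak R\mathfrak M G=\mathfrak R u$, which holds for smooth charge-free solutions by (M2) and Proposition~\ref{prop:same_order_reconstruction}. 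Combining it with the first line of \eqref{eq:framework_reconstruction_bounds} gives
\[
\norm{G}_{\Xnorm{k}_{\Max}(0,\tau)}^2=\norm{\mathfrak R u}_{\Xnorm{k}_{\Max}(0,\tau)}^2\le C_R\,\norm{u}_{\Xnorm{k}_M(0,\tau)}^2 .
\]
Chaining the three inequalities gives \eqref{eq:chargefree_transfer} with $C=C_R^2C_M$.

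The only step needing care is the identification $G=\mathfrak R u$ on the whole slab, not just at $\tau=0$. I would justify it by uniqueness: $\mathfrak R u$ is a charge-free source-free Maxwell field. Its extreme components agree with those of $G$, and the middle components are fixed uniquely by Proposition~\ref{prop:middle_reconstruction}, using the zero means from Proposition~\ref{prop:coulomb_elimination}. So the two fields agree everywhere, which is also what Proposition~\ref{prop:finite_energy_wellposed} gives.

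I also need to check that no derivative is lost. All three bounds are at the same order $k$, so Lemma~\ref{lem:same_order_needed} is not an obstruction. Any fixed norm-equivalence constants can be absorbed into $C_R$.
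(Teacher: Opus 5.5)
Your proof is correct and follows the paper's argument exactly. You set $u=\mathfrak M G$ and chain three bounds: the initial-energy comparison from \emph{(M2)}, the slab estimate of Lemma~\ref{lem:master_immediate}, and the same-order reconstruction bound applied to $G=\mathfrak R u$, which gives $C=C_R^2C_M$. Your extra uniqueness paragraph justifying $G=\mathfrak R u$ on the whole slab is harmless but unnecessary, since the inverse identity on smooth charge-free solutions is already part of the hypothesis \emph{(M2)}.
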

\begin{proof}
Let $u=\mathfrak M G$. By Lemma~\ref{lem:master_immediate},
\[
        \|u\|_{\Xnorm{k}_M(0,\tau)}^2\le C_M\E_M^{(k)}[u](0).
\]
The energy comparison in \eqref{eq:framework_reconstruction_bounds} gives
$\E_M^{(k)}[u](0)\le C_R\E_{\Max}^{(k)}[G](0)$. Since $G$ is charge-free and
smooth, the inverse identity gives $G=\mathfrak R u$, and the same-order
reconstruction bound gives
\[
        \|G\|_{\Xnorm{k}_{\Max}(0,\tau)}^2
        \le C_R\|u\|_{\Xnorm{k}_M(0,\tau)}^2.
\]
Combining the three inequalities together gives \eqref{eq:chargefree_transfer}, with
$C$ bounded by the displayed product after enlarging $C_R$ once to cover both
energy comparison and reconstruction.
\end{proof}

\begin{corollary}
\label{cor:stationary_subtracted_estimate}
Under the analytic master conclusions of Definition~\ref{def:analytic_master_conclusions}, every smooth finite-energy source-free $F$ has radiative part $F_{\rad}$ satisfying
$\norm{F_{\rad}}_{\Xnorm{k}_{\Max}(0,\tau)}^2\le C\E_{\Max}^{(k)}[F_{\rad}](0)$.
\end{corollary}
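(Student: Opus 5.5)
The plan is to reduce the corollary to Proposition~\ref{prop:chargefree_transfer} by first removing the stationary part with the charge decomposition, and then checking that the radiative part satisfies that proposition's hypotheses.

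First, given a smooth finite-energy source-free $F$ with defined charges, I set $F_{\rad}=F-F_{\stat}^{\KN}(q_E[F],q_B[F])$. By Proposition~\ref{prop:stationary_subtraction}, $F_{\rad}$ is source-free and satisfies $q_E[F_{\rad}]=q_B[F_{\rad}]=0$. It is smooth because $F_e$ and $F_m$ are smooth on the regular exterior (Lemma~\ref{lem:stationary_representatives}). It has finite order-$k$ energy because the Cauchy data $U_e,U_m$ lie in every finite-order energy space. The one point to check is that $\E_{\Max}^{(k)}[F_{\rad}](0)<\infty$. This follows from the triangle inequality for $(\E_{\Max}^{(k)})^{1/2}$ applied to $F$, $F_e$ and $F_m$. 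Hence the Cauchy data of $F_{\rad}$ belong to $\mathcal D_0^{(k)}(\Sigma_0)$, and $F_{\rad}$ is a smooth charge-free source-free field.

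Second, I apply Proposition~\ref{prop:chargefree_transfer} with $G=F_{\rad}$, which is legitimate under the analytic master conclusions of Definition~\ref{def:analytic_master_conclusions}. This yields
\[
\norm{F_{\rad}}_{\Xnorm{k}_{\Max}(0,\tau)}^2\le C\,\E_{\Max}^{(k)}[F_{\rad}](0)
\]
for every $\tau\ge0$, with $C\le C_R^2C_M$. The ingredients inside that proposition are the inverse identity $\mathfrak R\mathfrak M F_{\rad}=F_{\rad}$, valid for smooth charge-free solutions, together with (M1) and (M2).

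I expect the only real obstacle to be the smoothness and charge-free membership of $F_{\rad}$, in particular that the identity $\mathfrak R\mathfrak M=\Id$ is applied within the smooth charge-free class on which (A4) is stated. Once the first step establishes this, the corollary follows directly. As an optional remark, the estimate also extends to finite-energy data by the density of Lemma~\ref{lem:density_chargefree} and the continuity of the solution map in Proposition~\ref{prop:finite_energy_wellposed}, but that extension is not needed for the smooth statement.
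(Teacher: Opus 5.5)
Your proposal is correct and follows the paper's proof. You subtract the normalized Coulomb part via Proposition~\ref{prop:stationary_subtraction} to get a smooth, source-free, charge-free field, then apply Proposition~\ref{prop:chargefree_transfer} with $G=F_{\rad}$. Your extra check that $\E_{\Max}^{(k)}[F_{\rad}](0)<\infty$, via the triangle inequality and Lemma~\ref{lem:stationary_representatives}, only spells out a membership step the paper leaves implicit.
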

\begin{proof}
By Proposition~\ref{prop:stationary_subtraction}, the field
$F_{\rad}=F-F_{\stat}^{\KN}(q_E[F],q_B[F])$ is source-free and has both charges
zero. It therefore lies in the class to which
Proposition~\ref{prop:chargefree_transfer} applies. Substituting
$G=F_{\rad}$ in \eqref{eq:chargefree_transfer} gives the stated estimate.
\end{proof}

\begin{lemma}
\label{lem:finite_slab_lsc}
Fix a finite slab $[0,\tau]$ and an order $k$. Let $G_n$ be smooth charge-free
Maxwell solutions whose initial data converge strongly in
$\Hc_{\Max,0}^{(k)}(\Sigma_0)$ to $G_0$, and let $G$ be the finite-energy solution
with datum $G_0$. If the sequence is bounded in
$\Xnorm{k}_{\Max}(0,\tau)$, then, after passing to the distributional limit given by
finite-energy well-posedness,
\begin{equation}\label{eq:finite_slab_lsc}
        \|G\|_{\Xnorm{k}_{\Max}(0,\tau)}
        \le \liminf_{n\to\infty}\|G_n\|_{\Xnorm{k}_{\Max}(0,\tau)}.
\end{equation}
The same statement holds for the master norm.
\end{lemma}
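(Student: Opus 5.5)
The plan is to prove the lower semicontinuity of Lemma~\ref{lem:finite_slab_lsc} by a standard weak-compactness argument in the Hilbert structure underlying $\Xnorm{k}_{\Max}(0,\tau)$. Each of the three components in the definition of the norm (the supremum of the commuted energies, the degenerate bulk $\B^{(k)}_{\Max}$, and the $r^p$ fluxes) is a supremum or sum of weighted $L^2$ integrals of the frame components of $\Gamma^I G$, $|I|\le k$. Each such piece is therefore a supremum of quadratic forms of the type $\int w|\Gamma^I G|^2$ with nonnegative weights. Such forms are weakly lower semicontinuous, and a supremum of weakly lower semicontinuous functionals is again weakly lower semicontinuous.

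\emph{Step 1: identification of the limit.} By Proposition~\ref{prop:finite_energy_wellposed}, the solution map is continuous into the finite-slab energy space. Strong convergence of the data $G_n(0)\to G_0$ in $\Hc^{(k)}_{\Max,0}$ then gives $\sup_{0\le s\le\tau}\norm{G_n(s)-G(s)}_{\Hc^{(k)}_{\Max}}\to0$. In particular, $\Gamma^IG_n\to\Gamma^IG$ in $L^2_{\loc}$ on the slab, and hence distributionally. Proposition~\ref{prop:closed_solution} ensures that this limit is the finite-energy solution $G$, independent of the approximating sequence.

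\emph{Step 2: weak limits of the norm components.} Pass to a subsequence realizing the $\liminf$ in \eqref{eq:finite_slab_lsc}. Boundedness in $\Xnorm{k}_{\Max}(0,\tau)$ gives, for each fixed weight and each $|I|\le k$, that $w^{1/2}\Gamma^IG_n$ is bounded in $L^2(\D(0,\tau))$. After a diagonal extraction over the countably many dyadic annuli $A_j$, the radial cutoffs, and $p\in\{0,1,2\}$ (rational $p$ suffice, by monotonicity of $r^p$ weights on $r\ge R$), these converge weakly in $L^2$. By Step 1 the weak limit must be $w^{1/2}\Gamma^IG$. Weak lower semicontinuity of the $L^2$ norm then gives each component bound for $G$ by the $\liminf$ of the corresponding component for $G_n$.

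\emph{Step 3: passing to the sup-in-time component.} The energy at each fixed $s$ already converges strongly, by Step 1. The supremum over $s$ is handled by taking the supremum of the pointwise limits, which is bounded by $\liminf\sup_s$.

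For the degenerate trapped weight $w_{\mathrm{tr}}$, and for the flux terms on $\Hp$ and $\Ip$, I first truncate to compact regions $r\le R$ and away from the null boundaries. On these regions the weak-limit argument applies directly, and I then let the truncation tend to the full region by monotone convergence. This truncation step is where I expect the only real care to be needed. The boundary fluxes are not $L^2$ integrals on the slab itself, so I would represent them through the energy identity \eqref{eq:energy_identity_slab} as limits of interior flux integrals on spacelike or timelike approximants. Each such interior flux is again weakly lower semicontinuous, and the supremum over approximants preserves this property.

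The master-norm statement follows identically, using the master energy and local-energy components of \eqref{eq:master_norm}.
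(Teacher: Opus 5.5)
Your argument is correct and follows essentially the paper's route. Like the paper, you split $\Xnorm{k}_{\Max}$ into slice energies, weighted local-energy integrals and fluxes, identify the limit through Proposition~\ref{prop:finite_energy_wellposed} and uniqueness, and use weak lower semicontinuity of weighted $L^2$ norms on compact truncations followed by monotone convergence over the dyadic annuli; for the energy supremum, your direct use of $\sup_s\|G_n(s)-G(s)\|\to0$ is cleaner than the paper's rational-time trace argument. The one step to tighten is the boundary fluxes, which the paper handles by Fatou on nonnegative truncated boundary integrals: fluxes through timelike approximants are not sign-definite, and hypersurface integrals are not lower semicontinuous under weak $L^2$ convergence in the slab, but your Step~1 strong convergence combined with the energy identity \eqref{eq:energy_identity_slab} gives convergence of these fluxes outright, which suffices.
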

\begin{proof}
The norm \(\Xnorm{k}_{\Max}\) is a finite sum of non-negative terms of three
kinds: slice energies, spacetime local-energy integrals with the weights of
Definition~\ref{def:concrete_le_norms}, and far-field or null-boundary fluxes
obtained as monotone limits of non-negative truncated fluxes. Strong convergence
of the Cauchy data and Proposition~\ref{prop:finite_energy_wellposed} give
weak convergence of each commuted component \(\Gamma^IG_n\) to \(\Gamma^IG\) in
\(L^2_{\mathrm{loc}}\) on compact subslabs, after extracting subsequences if
necessary; uniqueness identifies every subsequential limit with the same
finite-energy solution. The spacetime terms are therefore lower semicontinuous
by weak lower semicontinuity of weighted \(L^2\) norms on each compact radial
truncation, followed by monotone convergence over the dyadic exterior annuli.

For the supremum of the slice energies, fix a rational time \(s\in[0,\tau]\).
The trace map from the local-energy solution space to the energy space on
\(\Sigma_s\) is continuous for smooth solutions and extends by the well-posedness
estimate; hence
\[
        \E_{\Max}^{(k)}[G](s)
        \le \liminf_{n\to\infty}\E_{\Max}^{(k)}[G_n](s).
\]
Taking the supremum over rational \(s\), and using the energy continuity in time
for the finite-energy solution, gives the lower semicontinuity of the energy
supremum. The far-field and horizon fluxes are defined by non-negative truncated
boundary integrals; Fatou's lemma and then the truncation limit give the same
inequality for them. Summing the finitely many commuted components proves
\eqref{eq:finite_slab_lsc}. The argument for the master norm is identical, since
its terms have the same Hilbert-space form.
\end{proof}

\begin{proposition}
\label{prop:finite_energy_extension}
Estimate \eqref{eq:chargefree_transfer} extends uniquely from smooth charge-free
data to all of $\Hc_{\Max,0}^{(k)}(\Sigma_0)$.
\end{proposition}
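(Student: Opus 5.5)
The extension will go by density followed by lower semicontinuity. Fix $G_0\in\Hc_{\Max,0}^{(k)}(\Sigma_0)$. Because $\Hc_{\Max,0}^{(k)}$ is defined in \eqref{eq:chargefree_completion} as the completion of $\mathcal D_0^{(k)}(\Sigma_0)$, we can pick smooth charge-free constrained data $G_{0,n}\in\mathcal D_0^{(k)}$ with $G_{0,n}\to G_0$ in the $(\E_{\Max}^{(k)}(0))^{1/2}$ norm. Lemma~\ref{lem:density_chargefree} could also be used, with a dense class containing $U_e,U_m$. Let $G_n$ be the smooth charge-free solutions with these data. Proposition~\ref{prop:chargefree_transfer} applies to each of them and gives
\[
        \norm{G_n}_{\Xnorm{k}_{\Max}(0,\tau)}^2\le C\,\E_{\Max}^{(k)}[G_n](0),
\]
with $C$ independent of $n$ and $\tau$. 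By linearity the same bound holds for the differences $G_n-G_m$, which are again smooth and charge-free. Hence $(G_n)$ is Cauchy in $\Xnorm{k}_{\Max}(0,\tau)$ for every $\tau$, and the norms $\norm{G_n}_{\Xnorm{k}_{\Max}(0,\tau)}$ are uniformly bounded by $C\,\E_{\Max}^{(k)}[G_0](0)+o(1)$.

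Next the limit has to be identified. Proposition~\ref{prop:finite_energy_wellposed} and Proposition~\ref{prop:closed_solution} show that the $G_n$ converge on every finite slab, in the energy norm, to the unique finite-energy solution $G$ with datum $G_0$. This limit does not depend on the approximating sequence. Lemma~\ref{lem:finite_slab_lsc} then applies to the bounded sequence $G_n$ and gives
\[
        \norm{G}_{\Xnorm{k}_{\Max}(0,\tau)}^2\le\liminf_n\norm{G_n}_{\Xnorm{k}_{\Max}(0,\tau)}^2\le C\lim_n\E_{\Max}^{(k)}[G_n](0)=C\,\E_{\Max}^{(k)}[G](0).
\]
This is \eqref{eq:chargefree_transfer} for $G$. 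Letting $\tau$ be arbitrary gives the global statement, and time reversal gives the past version.

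Uniqueness of the extension also follows. The extended estimate is an inequality between two quantities attached to the uniquely determined finite-energy solution. Since $\Xnorm{k}_{\Max}$ is finite and Cauchy-stable along approximants, any other continuous extension agrees with this one on the dense smooth class, and therefore agrees with it everywhere.

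The main obstacle is identifying the $\Xnorm{k}_{\Max}$-Cauchy limit with the well-posedness limit, in particular for the far-field $r^p$ fluxes and the null-boundary fluxes, which are only defined as monotone limits. Lemma~\ref{lem:finite_slab_lsc} settles this by reducing everything to local weak convergence together with Fatou's lemma. That is enough here because only lower semicontinuity, and not continuity, of the norm is needed.
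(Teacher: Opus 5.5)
Your proposal is correct and follows the paper's own proof essentially step for step. Both arguments take smooth charge-free approximants, apply the difference estimate to get a Cauchy sequence in $\Xnorm{k}_{\Max}(0,\tau)$, identify the limit through Proposition~\ref{prop:finite_energy_wellposed}, and pass the bound to the limit with Lemma~\ref{lem:finite_slab_lsc}; drawing the approximants directly from the completion \eqref{eq:chargefree_completion} rather than from Lemma~\ref{lem:density_chargefree} is an inessential variation.
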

\begin{proof}
Let $G[0]\in\Hc_{\Max,0}^{(k)}(\Sigma_0)$. By
Lemma~\ref{lem:density_chargefree} choose smooth charge-free data $G_n[0]$ with
$G_n[0]\to G[0]$. Applying \eqref{eq:chargefree_transfer} to $G_n-G_m$ shows
that the corresponding smooth solutions are Cauchy in
$\Xnorm{k}_{\Max}(0,\tau)$ for each finite $\tau$. Their limit is independent of
the approximating sequence by the same difference estimate and agrees with the
finite-energy solution constructed in Proposition~\ref{prop:finite_energy_wellposed}.
Lemma~\ref{lem:finite_slab_lsc} gives lower semicontinuity of the spacetime norm and therefore gives the bound for the
limit. This defines the unique extension of the estimate to the completed
charge-free energy space.
\end{proof}

\begin{proposition}
\label{prop:no_derivative_loss}
If the analytic setting is available at order $k$, then \eqref{eq:chargefree_transfer} is
proved at order $k$; no order-$(k+1)$ estimate is used.
\end{proposition}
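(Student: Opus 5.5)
The statement is a bookkeeping claim: the proof of \eqref{eq:chargefree_transfer} at order $k$ only ever invokes norms of order at most $k$. I would prove it by retracing the chain in Proposition~\ref{prop:chargefree_transfer} and checking the differential order of each link. Under the analytic setting at order $k$ there are three links: the master estimate (M1) in the form of Lemma~\ref{lem:master_immediate}, the initial comparison $\E_M^{(k)}[\mathfrak M G](0)\le C_R\E_{\Max}^{(k)}[G](0)$, and the same-order reconstruction bound $\|\mathfrak R u\|_{\Xnorm{k}_{\Max}}^2\le C_R\|u\|_{\Xnorm{k}_M}^2$ from \eqref{eq:framework_reconstruction_bounds}. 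Each of these is stated with the same index $k$ on both sides. Their composition therefore bounds $\|G\|_{\Xnorm{k}_{\Max}(0,\tau)}^2$ by $C\E_{\Max}^{(k)}[G](0)$, and no $\Xnorm{k+1}$ or $\E^{(k+1)}$ quantity appears.

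Next I would check that the inputs themselves were obtained at order $\le k$.

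For (M1), the closure in Proposition~\ref{prop:complete_master_closure_algebra} is an upward induction in $j\le k$, following Lemma~\ref{lem:finite_order_absorption_induction}. At each stage the perturbative error \eqref{eq:master_hyp_error_bound} and the commutator bound \eqref{eq:strict_lower_order_induction} involve only $X_j$ and $X_{j-1}$. The compact remainder $K_j$ is bounded by $\theta X_j + C(E_j+F_j+X_{j-1})$, which is again order $j$. The high-frequency step uses the graph-norm estimate (HF$_s$) with $s\le k+1$ in semiclassical Sobolev norms of the frozen operator. By Proposition~\ref{prop:hf_commuted_closure}, it never places a physical order-$(k+1)$ commutator inside the logarithmic loss; it only shows that the normalized defect profiles carry no mass.

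For the energy comparison, Corollary~\ref{cor:structural_reduction_verify}(v) reduces it to Lemma~\ref{lem:frame_energy_equiv}, which is a pointwise algebraic equivalence and costs no derivatives.

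For reconstruction, Proposition~\ref{prop:middle_reconstruction} uses the angular inversion of Lemma~\ref{lem:hodge_inversion}. That lemma gains exactly one angular derivative, and this matches the single derivative already contained in the $LE^1$ component of $\Xnorm{k}_M$. The transport estimate of Proposition~\ref{prop:app_transport} is also closed by induction on the number of commutations up to $k$.

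Finally, the extension to finite energy in Proposition~\ref{prop:finite_energy_extension} uses density in $\Hc_{\Max,0}^{(k)}$ and lower semicontinuity (Lemma~\ref{lem:finite_slab_lsc}) at the fixed order $k$, so it also stays at order $k$.

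The step I expect to be most delicate is the high-frequency compact-remainder removal. There the localized resolvent bound loses $h^{-1}\log(1/h)$, and one must make sure this loss is not paid by differentiating once more. I would point out that the loss falls only on the residual $h^{-1}\log(1/h)\|P_hv\|$, which tends to zero in the contradiction argument. This is supplied by Lemma~\ref{lem:unscaled_to_semiclassical_defect}, which gains $h\log(1/h)$ from the factor $h^2$. The argument is qualitative, so no quantitative top-order term is ever generated.

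Lemma~\ref{lem:same_order_needed} shows why same-order reconstruction is essential for the conclusion: with a lossy reconstruction bound, an order-$(k+1)$ master estimate would be needed.
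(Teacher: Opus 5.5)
Your first paragraph is the paper's proof: \eqref{eq:chargefree_transfer} is obtained by composing three bounds from Proposition~\ref{prop:chargefree_transfer}, each stated at order $k$ with commutations kept inside $\mathbb D_k$, namely the master estimate \emph{(M1)}, the energy comparison, and the same-order reconstruction bound; so the proposal is correct and follows the same route. Your further check of how \emph{(M1)} and \emph{(M2)} were themselves obtained agrees with the paper but is not needed, because ``the analytic setting at order $k$'' already assumes them.
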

\begin{proof}
In Proposition~\ref{prop:chargefree_transfer} the only estimates used are the
order-$k$ master bound, the order-$k$ initial energy comparison, and the
order-$k$ same-order reconstruction estimate. The commuted equations are closed
inside the finite family $\mathbb D_k$, and no term is estimated by invoking an
order-$(k+1)$ master norm. Thus, the Maxwell conclusion is at the same order as
the assumed master estimate.
\end{proof}

\begin{corollary}
\label{cor:uniformity}
On a parameter set where $C_{\mathrm{sw}}$ is locally bounded, the constant in
\eqref{eq:chargefree_transfer} is locally bounded.
\end{corollary}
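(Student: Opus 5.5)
The plan is to read off the constant in \eqref{eq:chargefree_transfer} from the proof of Proposition~\ref{prop:chargefree_transfer}. That proof gives $C\le C_R^2C_M$, after $C_R$ has been enlarged once to dominate both the energy comparison \eqref{eq:master_hyp_energy_comparison} and the same-order reconstruction bound \eqref{eq:framework_reconstruction_bounds}. Both $C_R$ and $C_M$ are entries of the tuple $C_{\mathrm{sw}}=(C_q,C_{\mathrm{red}},C_M,C_R,C_T,C_\infty,C_{\mathrm{wp}})$ of Definition~\ref{def:slowweak_parameter_range}. I will therefore show that the map $(M,a,Q)\mapsto C$ is dominated pointwise by a continuous function of $C_{\mathrm{sw}}$, namely $\max(C_R,C_R')^2C_M$, where $C_R'$ denotes the energy-comparison constant before enlargement.

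The first step is to check that no other constant enters \eqref{eq:chargefree_transfer}. The estimate on smooth charge-free data uses only Lemma~\ref{lem:master_immediate} (with constant $C_M$), the initial energy comparison (with constant $C_R$), the inverse identity $G=\mathfrak R\mathfrak M G$, which introduces no constant, and the reconstruction bound (with constant $C_R$). Proposition~\ref{prop:no_derivative_loss} confirms that no higher-order constant is used. The extension to all of $\Hc_{\Max,0}^{(k)}(\Sigma_0)$ in Proposition~\ref{prop:finite_energy_extension} is a density argument combined with the lower semicontinuity of Lemma~\ref{lem:finite_slab_lsc}, and neither step enlarges the constant. The well-posedness constant $C_{\mathrm{wp}}$ is used only qualitatively, to identify the limit.

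The second step is to take a compact neighbourhood $U$ of a given parameter point on which $C_{\mathrm{sw}}$ is bounded, say each entry is at most $\Lambda_U$. Then $C\le\Lambda_U^3$ on $U$, which gives local boundedness.

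The only point needing care is that the fixed norm-equivalence constants from Definition~\ref{def:concrete_le_norms} and Lemma~\ref{lem:frame_energy_equiv}, which were "absorbed into $C$", must also be locally bounded. I will handle this by observing that these constants depend smoothly on $(M,a,Q)$ through the metric, the frame and the foliation, as stated in Lemma~\ref{lem:positivity_density}. They are therefore bounded on the compact set $U$, and the final bound is $\Lambda_U^3$ times that equivalence bound.
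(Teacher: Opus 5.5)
Your proposal is correct and follows essentially the same argument as the paper. The constant in \eqref{eq:chargefree_transfer} is a finite product ($C\le C_R^2C_M$) of entries of $C_{\mathrm{sw}}$ together with fixed frame and foliation comparison constants, and a finite product of locally bounded quantities is locally bounded. Your version simply makes explicit which constants enter, whereas the paper lists the contributing families (master, reconstruction and energy comparison, trace, foliation) and stops there.
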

\begin{proof}
The constant in \eqref{eq:chargefree_transfer} is obtained from finitely many
constants: the master estimate constant, the reconstruction and energy-comparison
constants, the trace constants, and the foliation constants used to compare
regular frames. If these constants are locally bounded on a parameter set, then
any finite product or sum of them is locally bounded. The estimate is therefore
uniform on such a set.
\end{proof}

\section{Radiation Fields, Wave Operators, and Scattering}
\label{sec:scattering}
In this section we construct the radiation fields and prove the wave-operator statements in the charge-free space.
The radiation field of a charge-free $G$ is the pair of traces on $\Ip,\Hp$
(future) and $\Imn,\Hm$ (past), with normed spaces
$\Rc_{\Max,+}^{(k)},\Rc_{\Max,-}^{(k)}$.

\begin{definition}
\label{def:maxwell_trace}
For smooth charge-free data, $\mathscr S_{\Max}^\pm G[0]=\mathscr T_{\Max}^\pm
G$, with $\mathscr T_{\Max}^+$ the future trace on $\Ip\cup\Hp$ and
$\mathscr T_{\Max}^-$ the past trace on $\Imn\cup\Hm$.
\end{definition}

\begin{proposition}
\label{prop:bounded_trace_transfer}
Under the analytic setting at order $k$, $\norm{\mathscr S_{\Max}^\pm G[0]}_{\Rc_{\Max,\pm}^{(k)}}
\le C\,\E_{\Max}^{(k)}[G](0)^{1/2}$ for smooth charge-free data, and
$\mathscr S_{\Max}^\pm$ extends to a bounded map on $\Hc_{\Max,0}^{(k)}$.
\end{proposition}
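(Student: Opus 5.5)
The trace bound will be proved by factoring the Maxwell trace through the master variables, exactly as in \eqref{eq:intro_maxwell_trace_factorization}, and then extended by density.

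\textbf{Smooth data.} Fix smooth charge-free data $G[0]$ and put $u=\mathfrak M G$. The future trace $\mathscr S_M^+u$ has two pieces, which I will control separately.
\begin{itemize}[leftmargin=1.8em]
\item On $\Ip$ it is the limit $\lim r\Psi$ constructed in Lemma~\ref{lem:radiation_trace}. Its $L^2_{u,\omega}$ norm is bounded by the $p=2$ flux $\Fc_{2,R}^{(k)}[u](0,\infty)$, commuted through $\mathbb D_k$.
\item On $\Hp$ the flux is controlled by Corollary~\ref{cor:horizon_flux}, via the red-shift current.
\end{itemize}
Both quantities are components of $\norm{u}_{\Xnorm{k}_M(0,\tau)}^2$, uniformly in $\tau$.

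Lemma~\ref{lem:master_immediate} gives
\[
\norm{u}_{\Xnorm{k}_M(0,\tau)}^2\le C_M\E_M^{(k)}[u](0).
\]
Condition \emph{(A1)} in the form \eqref{eq:master_hyp_energy_comparison} then gives
\[
\E_M^{(k)}[u](0)\le C_R\E_{\Max}^{(k)}[G](0).
\]
Letting $\tau\to\infty$ by monotone convergence of the nonnegative truncated fluxes yields
\[
\norm{\mathscr S_M^+u}_{\Rc_{M,+}^{(k)}}^2\le C\E_{\Max}^{(k)}[G](0).
\]
Composing with the bounded radiation identification $\mathcal R_\infty^+$ from \emph{(A5)} gives the Maxwell bound. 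This is consistent with defining $\mathscr T_{\Max}^+ G$ directly as the trace of $\mathfrak R u$: by Proposition~\ref{prop:same_order_reconstruction}, the extreme components are weighted entries of $\Psi$, and the middle components inherit flux control through \eqref{eq:middle_reconstruction}. The past case follows by time reversal, using the past version of (M1).

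\textbf{Extension.} The inequality shows that $\mathscr S_{\Max}^\pm$ is a bounded linear map on the smooth charge-free class, which is dense in $\Hc_{\Max,0}^{(k)}$ by Lemma~\ref{lem:density_chargefree}. It therefore extends uniquely by continuity. The extension agrees with the trace of the finite-energy solution of Proposition~\ref{prop:finite_energy_extension}: for an approximating sequence, the trace differences are Cauchy by the same estimate applied to $G_n-G_m$.

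\textbf{Main obstacle.} The delicate point is identifying the Maxwell radiation norm with fluxes actually contained in $\Xnorm{k}_{\Max}$ or $\Xnorm{k}_M$. In particular, I need the $p=2$ hierarchy to control the commuted $L^2$ trace at $\Ip$, not just its existence. I would handle this with the Cauchy-in-$L^2_{u,\omega}$ argument of Lemma~\ref{lem:radiation_trace}, applied to each $\Gamma^I u$, noting that its constant is bounded by the hierarchy norm.
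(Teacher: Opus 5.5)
Your proposal is correct and follows essentially the same route as the paper: factor $\mathscr S_{\Max}^\pm G[0]=\mathcal R_\infty^\pm\mathscr S_M^\pm(\mathfrak M G)[0]$, bound the factors using the master trace bound and the initial energy comparison $\E_M^{(k)}[\mathfrak M G](0)\le C_R\E_{\Max}^{(k)}[G](0)$, and extend by density by applying the estimate to differences. The only difference is that you re-derive the master trace bound from \emph{(M1)} via Lemma~\ref{lem:radiation_trace} and Corollary~\ref{cor:horizon_flux}; the paper's proof instead takes the master trace constant $C_T$ directly from the analytic setting, and carries out that derivation separately in Proposition~\ref{prop:radiation_isomorphism_from_hierarchy}.
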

\begin{proof}
Let $u=\mathfrak M G$. The radiation identification in
Definition~\ref{def:analytic_master_conclusions} gives
$\mathscr S_{\Max}^{\pm}G[0]=\mathcal R_\infty^{\pm}\mathscr S_M^{\pm}u[0]$.
Hence
\[
        \|\mathscr S_{\Max}^{\pm}G[0]\|_{\Rc_{\Max,\pm}^{(k)}}
        \le C_\infty \|\mathscr S_M^{\pm}u[0]\|_{\Rc_{M,\pm}^{(k)}}
        \le C_\infty C_T \E_M^{(k)}[u](0)^{1/2}.
\]
The initial energy comparison in \eqref{eq:framework_reconstruction_bounds}
then bounds this by $C\E_{\Max}^{(k)}[G](0)^{1/2}$. Applying the same inequality
to differences gives a continuous extension from smooth charge-free data to the
completed charge-free energy space.
\end{proof}

\begin{proposition}
\label{prop:backward_from_lap}
Let $E\subset\Rc_{M,+}^{(k)}$ be the dense class of master radiation fields with finitely
many angular harmonics $\ell\in\{1,\dots,L\}$ and frequency support in a compact subset of
$\Rbb\setminus\{0\}$. There is a linear map $\mathscr W_0:E\to\Hc_{M,0}^{(k)}$ and a constant
$C_W$ with
\begin{equation}\label{eq:backward_from_lap}
        \norm{\mathscr W_0\sigma}_{\E_M^{(k)}}\le C_W\norm{\sigma}_{\Rc_{M,+}^{(k)}},
        \qquad \mathscr S_M^+\mathscr W_0\sigma=\sigma\quad(\sigma\in E),
\end{equation}
in each of the following cases, with $C_W$ uniform on the indicated range:
\begin{enumerate}[label=\emph{(\alph*)},leftmargin=2.4em]
\item the Reissner-Nordstr\"om model $a=0$, $|Q|\le\eps_QM$, unconditionally;
\item the slow-weak range $|a|\le\eps_aM$, $|Q|\le\eps_QM$, provided the limiting-absorption estimate includes the high-frequency bound \eqref{eq:normally_hyperbolic_resolvent_bound} for the compatible class specified by \emph{(A1)}.
\end{enumerate}
Thus, in either case in which these limiting-absorption estimates are available, the master wave operators $\mathscr W_M^\pm$ exist, are bounded, and are
two-sided inverses of $\mathscr S_M^\pm$; that is, the radiation condition
\emph{(A5)} of Definition~\ref{def:slowweak_master_framework} follows from the resolvent theory and is
not an independent estimate.
\end{proposition}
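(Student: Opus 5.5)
The plan is to build the backward solution frequency by frequency from the outgoing limiting-absorption resolvent, assemble it by the inverse Fourier transform in $t$, and then prove the energy bound by running the nonnegative-flux energy identity backward in time. Fix $\sigma\in E$. Because $\sigma$ has finitely many harmonics $1\le\ell\le L$ and temporal frequency support in a compact $I\Subset\Rbb\setminus\{0\}$, I will write $\hat\sigma(\omega,\cdot)$ for its Fourier transform along the generators of $\Ip\cup\Hp$. For each $\omega\in I$ I seek the stationary profile $v_\omega$ solving $\mathcal L_{a,Q}(\omega)v_\omega=0$ whose outgoing coefficient at $\Ip$ equals $\hat\sigma(\omega)|_{\Ip}$ and whose ingoing coefficient at $\Hp$ equals $\hat\sigma(\omega)|_{\Hp}$. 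I will construct it as $v_\omega=\chi_+e^{i\omega r_*}\hat\sigma_{\Ip}+\chi_-e^{-i\omega r_*}\hat\sigma_{\Hp}-\mathcal L_{a,Q}(\omega\pm i0)^{-1}f_\omega$, where $\chi_\pm$ are end cutoffs. The source $f_\omega=\mathcal L_{a,Q}(\omega)(\chi_+e^{i\omega r_*}\hat\sigma_{\Ip}+\chi_-e^{-i\omega r_*}\hat\sigma_{\Hp})$ is then compactly supported modulo a short-range tail lying in $H^{-1}_\sigma$.

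The resolvent bound is supplied differently in the two cases. In case (a) the harmonics decouple and Lemma~\ref{lem:app_rn_lap} gives uniform bounds for $\omega\in I$. In case (b) the estimate is assumed through (A3): the bounded-frequency no-defect property (Lemma~\ref{lem:slowweak_resolvent_closure}, Proposition~\ref{prop:app_bounded_freq}) combined with \eqref{eq:normally_hyperbolic_resolvent_bound}. This gives $\|v_\omega\|_{H^1_{-\sigma}}\le C\|\hat\sigma(\omega)\|$ uniformly on $I$ and in the parameters. I will then check two things: that the correction term carries no radiation at the ends, which holds because outgoing resolvent images have vanishing Sommerfeld coefficient up to $o(1)$ (as in Lemma~\ref{lem:app_compatible_closed}); and that $v_\omega$ lies in the closed compatible class, which follows from Lemma~\ref{lem:closed_graph_resolvent_class}. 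Finally I set $\Psi=\int e^{-i\omega t}v_\omega\,d\omega$ and $\mathscr W_0\sigma=\Psi[0]$, so that $\mathscr S_M^+\mathscr W_0\sigma=\sigma$ by construction.

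The energy bound requires more than the weighted resolvent estimate, because that estimate only controls local norms. For this I use the $T$-energy identity of Proposition~\ref{prop:basic_energy_identity}, in its master-current version, on the slab $\D(0,\tau)$. Sending $\tau\to\infty$, the energy on $\Sigma_\tau$ tends to zero: since $\omega$ stays in a compact set away from $0$, Riemann–Lebesgue applied to the $t$-integral, together with the uniform $H^1_{-\sigma}$ bounds, makes local energy decay, and the far part is controlled by the $r^p$ hierarchy \eqref{eq:rp_identity}. The identity then gives $\E^T[\Psi](0)=\Fc_{\Ip}+\Fc_{\Hp}=\|\sigma\|^2_{\Rc}$ up to equivalence constants. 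Two issues must be handled here. First, positivity of the $T$-flux at $\Hp$ uses non-superradiance, which holds on the support of $\sigma$ only if the frequency support avoids the superradiant band; since the parameters are slow-weak the band is small, and otherwise I pass to the red-shift $N$-current with a Grönwall loss, which is acceptable because the solution decays on finite slabs. Second, to upgrade the $T$-energy to the non-degenerate energy I apply the red-shift estimate backward; this is where Corollary~\ref{cor:horizon_flux} enters. Commuting with $\mathbb D_k$, which preserves $E$, gives order $k$.

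The main obstacle is making $C_W$ independent of $L$ and of the chosen frequency support, which is what density requires. The resolvent estimate alone depends on $I$, so the uniform bound must come solely from the flux identity, with the resolvent used only qualitatively to justify existence and decay. Having proved \eqref{eq:backward_from_lap}, two-sidedness follows from Lemma~\ref{lem:app_abstract_scattering} together with the kernel statement of Proposition~\ref{prop:no_real_kernel}. For the past operator $\mathscr W_M^-$, I apply time reversal.
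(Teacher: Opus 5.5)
Your route differs from the paper's. The paper uses no conservation law. It solves truncated backward characteristic problems on $\D(0,T)$ and records the a priori bound \eqref{eq:finite_slab_backward_bound} with a compact local remainder. It removes that remainder by the same bounded/high-frequency defect contradiction used in the forward direction, which is where its uniformity in $T$, $L$ and the frequency support comes from, and then lets $T\to\infty$ weakly. You instead build each frequency profile from a resolvent and try to get $C_W$ from an exact flux identity.

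The construction as written has a sign error. Your ansatz $w=\chi_+e^{i\omega r_*}\hat\sigma_{\Ip}+\chi_-e^{-i\omega r_*}\hat\sigma_{\Hp}$ is already outgoing at both ends. By outgoing uniqueness (Lemma~\ref{lem:app_rn_no_resonance}, or \emph{(A3)} in case (b)), the outgoing resolvent applied to $\mathcal L(\omega)w$ returns $w$ itself, so $v_\omega=0$. Outgoing resolvent images have vanishing \emph{incoming} coefficients; that is what $(\partial_{r_*}-i\omega)v\to0$ means in Lemma~\ref{lem:app_compatible_closed}. Their outgoing amplitude is exactly future radiation. You need the incoming (time-reversed) resolvent, and with that change the construction is repairable.

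The genuine gap is the energy bound. First, the spin-weighted master pair has no exactly conserved $T$-current, because its first-order part is not self-adjoint (see the complex potential in \eqref{eq:separated_radial_teukolsky}). The identity must therefore be run for the Maxwell tensor via Proposition~\ref{prop:basic_energy_identity}, or in case (a) for the real Regge--Wheeler variables of Lemma~\ref{lem:app_rn_radial}, and then compared with the master radiation norm.

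Second, for $a\neq0$ the $T$-energy is not coercive in the ergoregion. The horizon $T$-flux of a mode is proportional to $\omega\varpi$, which is negative for $\omega$ between $0$ and $m\Omega_+$. That interval is nonempty whenever $am\neq0$, so it cannot be kept out of a dense class. An $N$-current Gr\"onwall argument on $[0,\infty)$ gives $e^{C\tau}$, not a uniform $C_W$.

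Third, and decisively, Proposition~\ref{prop:redshift_coercivity} and Corollary~\ref{cor:horizon_flux} have the favourable sign only forward in time. Run backward, the red-shift is a blue-shift, so your step of upgrading the $T$-energy to the non-degenerate energy fails. Take $\sigma_U(u)=\sigma(u-U)$ on $\Ip$ and zero on $\Hp$. Then $\sigma_U\in E$ with norm independent of $U$. By stationarity, its backward solution on $\Sigma_0$ is the backward solution of $\sigma$ on $\Sigma_{-U}$. The component of that solution emanating from $\Hm$ is nonzero, since the transmission coefficient is nonzero at $\omega\neq0$. It sits where $r-r_+\sim e^{-\kappa_+U}$ and carries non-degenerate energy of order $e^{\kappa_+U}$ times its $T$-energy. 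Hence no $C_W$ can bound $\E_M^{(k)}$ on $E$ with a translation-invariant radiation norm.

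This obstruction is not specific to your approach. The paper's uniform bound \eqref{eq:backward_uniform_slab_bound} is asserted for the same non-degenerate energy, and the same family contradicts it. What your flux identity honestly gives, in case (a) and at the Maxwell level, is an identity for the $T$-energy, which degenerates at $\Hp$. In case (b) the superradiant amplification must also be controlled, and that requires an integrated-decay input rather than a conservation law.
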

\begin{proof}
Fix $\sigma\in E$. Let $\sigma_T$ be smooth cutoffs of the radiation data on the finite portions
$\Ip_T\cup\Hp_T$ of future null infinity and the future event horizon, chosen so that
$\sigma_T\to\sigma$ in $\Rc_{M,+}^{(k)}$. On the truncated exterior region bounded by
$\Sigma_0$, $\Ip_T$, $\Hp_T$ and a final spacelike cap, solve the backward characteristic
problem with boundary radiation data $\sigma_T$ and zero data on the final cap away from the two
null ends. Symmetric-hyperbolic energy estimates on the truncated region yield
\begin{equation}\label{eq:finite_slab_backward_bound}
        \E_M^{(k)}[u_T](0)+\|u_T\|_{LE^1_{\mathrm{deg},k}([0,T])}^2
        \le C\|\sigma_T\|_{\Rc_{M,+}^{(k)}}^2+C\|\chi u_T\|_{L^2([0,T]H^1)}^2,
\end{equation}
where $\chi$ is supported in a fixed compact radial set. We remove the compact term. Suppose absorption failed. Then there would be truncated backward solutions $u_{T_n}$ with radiation norm tending to zero and
\begin{equation}\label{eq:backward_compact_normalization}
        \|\chi u_{T_n}\|_{L^2([0,T_n]H^1)}=1,
        \qquad
        \E_M^{(k)}[u_{T_n}](0)+\|u_{T_n}\|_{LE^1_{\mathrm{deg},k}}^2=O(1).
\end{equation}
Choose a time cutoff $\eta_n$ equal to one on the middle half of $[0,T_n]$ and with derivatives supported in the two end quarters, and set $w_n=\eta_nu_{T_n}$. The equation for $w_n$ has source $[\Pb_{a,Q},\eta_n]u_{T_n}$ plus the vanishing boundary radiation error. After discarding the end intervals and applying Plancherel in the stationary time variable, there are real frequencies $\omega_n$ and compatible frozen profiles $v_n$ with a nonzero compact normalization. Normalize the selected packet according to its total stationary frequency
\begin{equation}\label{eq:backward_total_frequency_scale}
        \Lambda_n=1+|\omega_n|+\lambda_n.
\end{equation}
If \(\Lambda_n\) is bounded, set
\begin{equation}\label{eq:backward_frequency_defect_bf}
        \|v_n\|_{H^1(K_0)}=1,
        \qquad
        \mathcal L_{a,Q}(\omega_n)v_n\to0
        \quad\hbox{in }H^{-1}_{\sigma,\mathrm{loc}}.
\end{equation}
If \(\Lambda_n\to\infty\), set \(h_n=\Lambda_n^{-1}\), \(\hat\omega_n=h_n\omega_n\), and make the same dyadic conic selection as in Lemma~\ref{lem:localized_compact_defect}. The selected packet is normalized by
\begin{equation}\label{eq:backward_frequency_defect_hf}
        \|v_n\|_{H^1_{h_n}(K_0)}=1,
\end{equation}
and its residual satisfies
\begin{equation}\label{eq:backward_frequency_defect_hf_residual}
        h_n^{-1}\log(1/h_n)
        \|h_n^2\mathcal L_{a,Q}(\omega_n)v_n\|_{L^2_{\comp}}
        +\|h_n^2\mathcal L_{a,Q}(\omega_n)v_n\|_{H^{-1}_{h_n,\mathrm{loc}}}
        \to0.
\end{equation}
The outgoing/ingoing Sommerfeld convention is inherited from the prescribed null data and the zero final cap. In the bounded-total-frequency branch, the closedness of the compatible radiation class, Proposition~\ref{prop:app_bounded_freq}, Lemma~\ref{lem:app_high_angular_coercivity}, and Lemma~\ref{lem:app_rn_lap} rule out \eqref{eq:backward_frequency_defect_bf}. In the unbounded branch, angularly elliptic packets are controlled by Lemma~\ref{lem:app_high_angular_coercivity}. On the remaining conic packets, \eqref{eq:backward_frequency_defect_hf_residual} is precisely the residual condition of Lemma~\ref{lem:high_freq_partition}; using \eqref{eq:normally_hyperbolic_resolvent_bound}, that lemma forces \(\|v_n\|_{H^1_{h_n}(K_0)}\to0\), contradicting \eqref{eq:backward_frequency_defect_hf}. In turn, the compact term in \eqref{eq:finite_slab_backward_bound} is absorbed, and
\begin{equation}\label{eq:backward_uniform_slab_bound}
        \E_M^{(k)}[u_T](0)+\|u_T\|_{LE^1_{\mathrm{deg},k}([0,T])}^2
        \le C\|\sigma_T\|_{\Rc_{M,+}^{(k)}}^2
\end{equation}
with $C$ independent of $T$ and of the cutoff.

The uniform bound allows $T\to\infty$ along a weakly convergent subsequence in the energy space
and strongly on compact subregions by local compactness. The limit $u$ is a global compatible
solution, its future radiation trace is $\sigma$ because the fluxes through $\Ip_T$ and $\Hp_T$
converge to the prescribed data and the cap flux tends to zero, and
\eqref{eq:backward_uniform_slab_bound} gives
\eqref{eq:backward_from_lap}. This defines $\mathscr W_0\sigma=u$ on the dense class $E$.
Linearity follows from uniqueness of the truncated problem and passage to the limit. The kernel
condition needed to pass from a dense right inverse to a two-sided inverse is
Lemma~\ref{lem:app_rn_no_resonance} in case (a) and Proposition~\ref{prop:no_real_kernel} in
case (b). Therefore the abstract Hilbert-space criterion,
Lemma~\ref{lem:app_abstract_scattering}, extends $\mathscr W_0$ continuously to
$(\mathscr S_M^+)^{-1}$ with norm at most $C_W$. The past construction is the same with the
incoming characteristic problem and the incoming limiting-absorption resolvent. Thus
\emph{(A5)} is a consequence of the real-axis limiting-absorption resolvent and the
real-frequency exclusion, rather than an independent condition.
\end{proof}

\begin{remark}
\label{rem:backward_construction_status}
In the Reissner-Nordstr\"om model (a) the construction is unconditional: the mode operator is
the one-dimensional short-range problem \eqref{eq:app_radial_mode}, for which the
outgoing resolvent, the absence of real-frequency modes, and the resulting
asymptotic completeness are classical. In the slow-weak range (b) the only ingredient beyond
the proved bounded-frequency closure is the high-frequency resolvent bound \eqref{eq:normally_hyperbolic_resolvent_bound}; Proposition~\ref{prop:r_normal_hyperbolicity} and Remark~\ref{rem:high_frequency_status} verify the
geometric conditions needed for the normally hyperbolic estimate in the form of
Definition~\ref{def:hf_resolvent_estimate}. In the Kerr subcase the corresponding
spin-weighted estimate is supplied by
\cite{BenomioTdC,SRTdCfrequency,SRTdCphysical}. Hence, modulo that single high-frequency estimate,
the master scattering theory is closed without a separate backward-construction condition.
\end{remark}

\begin{theorem}
\label{thm:scattering_transfer}
Under the analytic setting at order $k$, the maps $\mathscr S_{\Max}^\pm:
\Hc_{\Max,0}^{(k)}(\Sigma_0)\to\Rc_{\Max,\pm}^{(k)}$ are bounded isomorphisms
with bounded inverses (the Maxwell wave operators), and the scattering operator
$\mathscr S_{\Max}=\mathscr S_{\Max}^+(\mathscr S_{\Max}^-)^{-1}$ is bounded.
\end{theorem}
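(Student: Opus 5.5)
The plan is to reduce the Maxwell statement to the master statement via the factorization already used in the proof of Theorem~\ref{thm:intro_transfer}, and then to apply the abstract criterion Lemma~\ref{lem:app_abstract_scattering} directly on the Maxwell side. On smooth charge-free data, define $\mathscr S_{\Max}^{\pm}G[0]=\mathcal R_\infty^{\pm}\mathscr S_M^{\pm}(\mathfrak M G)[0]$. By Proposition~\ref{prop:bounded_trace_transfer}, this map is bounded by $C\,\E_{\Max}^{(k)}[G](0)^{1/2}$, so it extends to a bounded map on $\Hc_{\Max,0}^{(k)}(\Sigma_0)$. The density needed for this extension is Lemma~\ref{lem:density_chargefree}.

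For the right inverse, take the dense class $E_{\Max}=\mathcal R_\infty^{\pm}(E)$, where $E$ is the dense master radiation class of Proposition~\ref{prop:backward_from_lap}. This image is dense because $\mathcal R_\infty^\pm$ is a bounded isomorphism by (M3)/(A5). On this class set
\[
\mathscr W_{\Max,0}^{\pm}\rho=\mathfrak R\,\mathscr W_0^{\pm}\big((\mathcal R_\infty^{\pm})^{-1}\rho\big).
\]
The energy bound follows by chaining three estimates:
\begin{itemize}
\item the reconstruction energy bound $\E_{\Max}^{(k)}[\mathfrak R u]\le C_R\E_M^{(k)}[u]$ from \eqref{eq:framework_reconstruction_bounds};
\item the bound \eqref{eq:backward_from_lap};
\item boundedness of $(\mathcal R_\infty^\pm)^{-1}$.
\end{itemize}
The identity $\mathscr S_{\Max}^\pm\mathscr W_{\Max,0}^\pm\rho=\rho$ then follows from $\mathfrak M\mathfrak R=\Id$ on compatible solutions (A4), together with $\mathscr S_M^\pm\mathscr W_0^\pm=\Id$ on $E$.

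The kernel condition is where care is needed, and I expect it to be the main obstacle. Suppose $G\in\Hc_{\Max,0}^{(k)}$ has $\mathscr S_{\Max}^\pm G=0$. Then $\mathscr S_M^\pm(\mathfrak M G)=0$, and injectivity of the master trace (the kernel statement in (A5), proved via Proposition~\ref{prop:no_real_kernel} and the isomorphism of Proposition~\ref{prop:radiation_isomorphism_from_hierarchy}) gives $\mathfrak M G=0$. The subtle point is that $\mathfrak M$ and the identity $\mathfrak R\mathfrak M=\Id$ are a priori defined only on smooth charge-free solutions. I will first extend $\mathfrak M$ to the completion by the energy comparison (A1), and extend $\mathfrak R$ by the same-order bound (A4). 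The identity $\mathfrak R\mathfrak M G=G$ then passes to the completion by continuity, as in Lemma~\ref{lem:no_kernel_requirement}, so $G=\mathfrak R 0=0$. The charge-free projection is essential here: the Coulomb data would otherwise have vanishing radiation field. Those data were removed by Theorem~\ref{thm:intro_charge_decomposition}, so no finite-dimensional kernel remains.

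With the dense class, the bounded right inverse, and the trivial kernel in hand, Lemma~\ref{lem:app_abstract_scattering} shows that $\mathscr S_{\Max}^\pm$ are bounded isomorphisms, and their inverses are the extensions of $\mathscr W_{\Max,0}^\pm$. The past case is identical, using the incoming resolvent. Finally, $\mathscr S_{\Max}=\mathscr S_{\Max}^+(\mathscr S_{\Max}^-)^{-1}$ is a composition of two bounded maps and is therefore bounded, with norm at most the product of the two constants. These constants are uniform in the slow-weak range by Corollary~\ref{cor:uniformity}.
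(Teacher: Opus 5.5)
Your proposal is correct and follows essentially the same route as the paper. Both arguments use the factorization $\mathscr S_{\Max}^\pm=\mathcal R_\infty^\pm\mathscr S_M^\pm\mathfrak M$, boundedness from Proposition~\ref{prop:bounded_trace_transfer}, injectivity from master injectivity together with $\mathfrak R\mathfrak M=\Id$, and the explicit inverse $\mathfrak R\,\mathscr W_M^\pm(\mathcal R_\infty^\pm)^{-1}$. The one difference is that you re-run Lemma~\ref{lem:app_abstract_scattering} on the dense class $\mathcal R_\infty^\pm(E)$, whereas the paper directly composes the full master inverse $\mathscr W_M^\pm$ already supplied by \emph{(M3)}. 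That makes your dense-class detour unnecessary, and with it the appeal to Proposition~\ref{prop:backward_from_lap}, whose case~(b) carries a high-frequency hypothesis that is not literally part of the analytic setting.
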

\begin{proof}
Boundedness is Proposition~\ref{prop:bounded_trace_transfer}. Injectivity: if
$\mathscr S_{\Max}^+G[0]=0$ then, with $u=\mathfrak M G$ and $\mathcal
R_\infty^+$ an isomorphism, $\mathscr S_M^+u[0]=0$; the master radiation map is
injective, so $u[0]=0$ and the inverse identity gives $G=\mathfrak R u=0$.
Surjectivity: for $\rho_+\in\Rc_{\Max,+}^{(k)}$ set $\sigma_+=(\mathcal
R_\infty^+)^{-1}\rho_+$, $u[0]=\mathscr W_M^+\sigma_+$; then $G=\mathfrak R u$ has
$\mathscr S_{\Max}^+G[0]=\mathcal R_\infty^+\sigma_+=\rho_+$. The inverse
$(\mathscr S_{\Max}^+)^{-1}=\mathfrak R\,\mathscr W_M^+(\mathcal R_\infty^+)^{-1}$
is bounded; the past case is identical and the scattering operator is their
composition.
\end{proof}

\begin{corollary}
\label{cor:scattering_with_charges}
For general finite-energy $F$ the complete scattering data are
$(q_E[F],q_B[F],\mathscr S_{\Max}^\pm F_{\rad}[0])$, and the map is a bounded
isomorphism $\Rbb^2\oplus\Hc_{\Max,0}^{(k)}\to\Rbb^2\oplus\Rc_{\Max,\pm}^{(k)}$.
\end{corollary}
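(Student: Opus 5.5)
The plan is to assemble the full scattering map as the direct sum of the charge map and the radiative traces, using the topological decomposition of Theorem~\ref{thm:intro_charge_decomposition}. For $U\in\Hc_{\Max}^{(k)}(\Sigma_0)$, written $U=\Pi_{\stat}U+\Pi_0U$, I define
\[
        \mathscr S^{\pm}_{\mathrm{full}}U=\bigl(q_E(U),\,q_B(U),\,\mathscr S_{\Max}^{\pm}\Pi_0U\bigr).
\]
By Proposition~\ref{prop:stationary_subtraction} and uniqueness of the finite-energy evolution (Proposition~\ref{prop:finite_energy_wellposed}), $\Pi_0U$ is exactly the datum of $F_{\rad}$. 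The charge coordinates are the data of the stationary part $F_{\stat}^{\KN}(q_E,q_B)$. They are conserved by Proposition~\ref{prop:charge_conservation}, so they are legitimately part of the asymptotic data.

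The key steps, in order, are these.

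First, I show boundedness. By Proposition~\ref{prop:bounded_projection}, the map $U\mapsto(q_E(U),q_B(U),\Pi_0U)$ is a bounded isomorphism $\Hc_{\Max}^{(k)}\to\Rbb^2\oplus\Hc_{\Max,0}^{(k)}$. Its inverse is $(q_E,q_B,U_0)\mapsto q_EU_e+q_BU_m+U_0$, and the norm \eqref{eq:full_energy_norm} makes both directions bounded. Proposition~\ref{prop:charge_adapted_completion} identifies this space with the charge-adapted completion.

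Second, I compose with $\Id_{\Rbb^2}\oplus\mathscr S_{\Max}^{\pm}$. Theorem~\ref{thm:scattering_transfer} says this is a bounded isomorphism with bounded inverse $\Id\oplus\mathfrak R\,\mathscr W_M^\pm(\mathcal R_\infty^\pm)^{-1}$. A direct sum of bounded isomorphisms is a bounded isomorphism, so the composite is one as well.

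Third, I check well-definedness, namely that the radiative trace carries no charge information. The Maxwell radiation space $\Rc_{\Max,\pm}^{(k)}$ is built from charge-free fields via $\mathcal R_\infty^\pm$. The stationary fields $F_e,F_m$ are never fed into $\mathscr S_{\Max}^\pm$, so no compatibility condition between the two factors is needed.

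Injectivity of the full map then follows immediately. Zero charges force $\Pi_{\stat}U=0$, and a zero radiative trace forces $\Pi_0U=0$ by injectivity of $\mathscr S_{\Max}^\pm$. Surjectivity is equally direct: given $(q_E,q_B,\rho)$, take $U=q_EU_e+q_BU_m+(\mathscr S_{\Max}^\pm)^{-1}\rho$.

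The only delicate point is conceptual rather than technical. The Coulomb fields $F_e,F_m$ have $O(r^{-2})$ tails, so their radiation fields at $\Ip$ vanish while their horizon traces are stationary and non-integrable in time. For this reason the full trace map must not be applied to $F$ itself; the theorem has to be read with the charges split off first, exactly as in the definition above. I would state this explicitly so that the direct sum is unambiguous. Everything else is bookkeeping of constants, with the norm of the inverse bounded by $\max\{1,\|(\mathscr S_{\Max}^\pm)^{-1}\|\}$ times the equivalence constants of \eqref{eq:full_energy_norm}.
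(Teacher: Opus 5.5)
Your proposal is correct and follows the paper's own argument: the bounded charge splitting of Theorem~\ref{thm:intro_charge_decomposition} (via Proposition~\ref{prop:bounded_projection}), with the charges justified as asymptotic data by conservation, composed with $\Id_{\Rbb^2}\oplus\mathscr S_{\Max}^\pm$ from Theorem~\ref{thm:scattering_transfer}. Your explicit injectivity/surjectivity check and your remark that the trace is applied only to $\Pi_0U$ are harmless elaborations of that same direct-sum argument.
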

\begin{proof}
The charge map $F\mapsto(q_E[F],q_B[F])$ is conserved and continuous, and
Theorem~\ref{thm:intro_charge_decomposition} gives the bounded splitting of the
Cauchy datum into its stationary charge part and charge-free radiative part. On
the radiative part, Theorem~\ref{thm:scattering_transfer} gives a bounded
isomorphism between Cauchy data and future, respectively past, radiation data.
Taking the direct sum of the identity map on the two-dimensional charge sector
with this radiation isomorphism gives the asserted bounded isomorphism for
general finite-energy fields.
\end{proof}

\section{Decay from Commuted Hierarchy}
\label{sec:decay}
In this section we explain how decay follows once a sufficiently high commuted hierarchy is available.
\begin{proposition}
\label{prop:averaged_decay}
If the local-energy part of $\Xnorm{k}_{\Max}$ controls $|G|^2$ on $\{r\le R\}$,
then for every dyadic $[T,2T]$, $T\ge1$, there is $\tau_T\in[T,2T]$ with
$\int_{\Sigma_{\tau_T}\cap\{r\le R\}}|G|^2\,\dd\mu\le C_RT^{-1}\E_{\Max}^{(k)}[G](0)$.
\end{proposition}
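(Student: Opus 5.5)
This is a pigeonhole argument on the time-integrated local energy. Fix $T\ge1$ and $R$. By hypothesis the local-energy part of $\Xnorm{k}_{\Max}(0,2T)$ controls $\int_{\D(0,2T)\cap\{r\le R\}}|G|^2\,\dd\mu_g$. On the region $\{r\le R\}$ the bulk is not degenerate for the zeroth-order term: in \eqref{eq:concrete_LE1deg} the weight $M^{-2}|\Gamma^IG|^2$ carries no trapping factor $w_{\mathrm{tr}}$.

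Next I apply Proposition~\ref{prop:chargefree_transfer}, or Proposition~\ref{prop:finite_energy_extension} for finite-energy data, on the slab $[0,2T]$. This gives
\[
        \int_0^{2T}\!\!\int_{\Sigma_\tau\cap\{r\le R\}}|G|^2\,\dd\mu_{\Sigma_\tau}\,\dd\tau
        \le C_R\,\norm{G}^2_{\Xnorm{k}_{\Max}(0,2T)}\le C_R\,\E_{\Max}^{(k)}[G](0).
\]
Here I use coarea in the time function $\tau$. The lapse and the induced volume forms are uniformly equivalent on the compact set $\{r\le R\}$ by Definition~\ref{def:regular_foliation}, so $\dd\mu_g\simeq \dd\tau\,\dd\mu_{\Sigma_\tau}$ there. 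The same equivalence absorbs the horizon-collar region into the constant $C_R$.

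I then restrict the $\tau$-integral to $[T,2T]$. The function
\[
        e(\tau)=\int_{\Sigma_\tau\cap\{r\le R\}}|G|^2\,\dd\mu
\]
is nonnegative and measurable; it is continuous for smooth data and in $L^1_\tau$ in general. Its average over $[T,2T]$ is at most $T^{-1}C_R\E_{\Max}^{(k)}[G](0)$, so by the mean value inequality there is $\tau_T\in[T,2T]$ at which $e(\tau_T)$ does not exceed this average. For finite-energy data, where $e$ is only defined almost everywhere, I instead pick $\tau_T$ from the positive-measure set on which $e\le 2\times$ average. The slice trace is well defined at every $\tau$ by Proposition~\ref{prop:finite_energy_wellposed}, so the bound holds at that point.

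I expect the only delicate point to be the uniform comparison between the spacetime measure and $\dd\tau\,\dd\mu_{\Sigma_\tau}$ near $\Hp$ in regular coordinates. This should follow from smoothness of the foliation up to the horizon, and with that the argument is routine.
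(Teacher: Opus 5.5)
Your proposal is correct and uses the same pigeonhole argument as the paper. You bound $\int_T^{2T}\int_{\Sigma_s\cap\{r\le R\}}|G|^2\,\dd\mu_{\Sigma_s}\,\dd s$ by $C_R\E_{\Max}^{(k)}[G](0)$ using the local-energy estimate, and then pick a slice in $[T,2T]$ where the integrand is at most its average. The coarea comparison $\dd\mu_g\simeq\dd\tau\,\dd\mu_{\Sigma_\tau}$ and the finite-energy bookkeeping you add are details the paper leaves implicit. The almost-everywhere selection is not needed, since $\tau\mapsto e(\tau)$ is continuous for finite-energy solutions: they are uniform-in-time limits of smooth solutions by the well-posedness bound of Proposition~\ref{prop:finite_energy_wellposed}.
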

\begin{proof}
The local-energy part of the estimate gives
\[
        \int_T^{2T}\!\int_{\Sigma_s\cap\{r\le R\}} |G|^2\,\dd\mu_{\Sigma_s}\,\dd s
        \le C_R\E_{\Max}^{(k)}[G](0).
\]
If the asserted bound failed for every $s\in[T,2T]$, the left-hand side would be
strictly larger than
$T\cdot C_RT^{-1}\E_{\Max}^{(k)}[G](0)$ after increasing the constant slightly,
contradicting the displayed integral estimate. Therefore at least one time
$\tau_T\in[T,2T]$ satisfies the stated compact decay bound.
\end{proof}

\begin{proposition}
\label{prop:pointwise_decay}
Using \eqref{eq:commuted_decay},
\begin{equation}\label{eq:pointwise_decay}
        |G|(\tau,r,\omega)\le C\,w(r)\,(1+\tau)^{-\gamma/2}\,\E_{\Max}^{(k)}[G](0)^{1/2},
\end{equation}
where $w$ is the radial Sobolev weight of the hierarchy; for components
controlled by one radial weight and two angular derivatives, $w(r)\simeq(1+r)^{-1}$.
\end{proposition}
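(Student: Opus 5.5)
The plan is a Sobolev embedding on the regular slices $\Sigma_\tau$, fed by the commuted decay hierarchy \eqref{eq:commuted_decay}. Since $\mathbb D_k$ consists of $T$, $\Phi$, $r\partial_r$ and the regular angular fields, I will apply \eqref{eq:commuted_decay} with $k_0$ large enough for a weighted three-dimensional Sobolev inequality. The argument is done componentwise on the regular null-frame components of $G$, which by Lemma~\ref{lem:frame_energy_equiv} are equivalent to $|G|$ with uniform constants.

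The first step is the compact radial region $\{r\le R\}$, including the horizon collar. There the induced metric on $\Sigma_\tau$ is uniformly equivalent to a fixed smooth metric in horizon-regular coordinates, and $r\partial_r$ together with the angular fields span the tangent space with bounded coefficients. The standard embedding $H^2\hookrightarrow L^\infty$ in three dimensions, applied on finitely many coordinate patches, then gives
\[
|G|^2(\tau,x)\le C\sum_{|I|\le2}\norm{\Gamma^IG}_{L^2(\Sigma_\tau\cap\mathcal U_r)}^2 .
\]
Inserting \eqref{eq:commuted_decay} bounds this by $C(1+\tau)^{-\gamma}\E_{\Max}^{(k)}[G](0)$, and taking square roots gives \eqref{eq:pointwise_decay} with $w\simeq1$ on this region.

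The second step is the far region, using dyadic shells $\{2^j R\le r\le 2^{j+1}R\}$. On $\Sph$, the embedding $H^2(\Sph)\hookrightarrow L^\infty$ via the rotation fields controls the supremum over each sphere by two angular derivatives. The one-dimensional radial step would be the inequality
\[
\sup_r f(r)^2\le C\int\bigl(|f|^2+|r\partial_rf|^2\bigr)\frac{dr}{r},
\]
applied to $f=r\|G\|_{L^2(S_r)}$-type quantities. The $\frac{dr}{r}$ scaling, combined with the $r^2$ area factor of $\dd\mu_{\Sigma_\tau}$, is what produces the weight $(1+r)^{-1}$: one radial weight and two angular derivatives, as stated.

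I expect the main obstacle to be that \eqref{eq:commuted_decay} only controls $L^2$ on the region $\mathcal U_r$ rather than uniformly in $r$. The weight $w(r)$ must therefore be read as the one dictated by that localization, and I will define $w$ as the resulting Sobolev constant on each region $\mathcal U_r$. To keep derivative counts consistent I will take $k\ge k_0+2$, as in Corollary~\ref{cor:commuted_hierarchy_closure}. This leaves room for the two angular derivatives and the one $r\partial_r$ derivative used in the embedding, without invoking any order above $k$.
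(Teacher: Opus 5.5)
Your proposal is correct and follows essentially the same route as the paper. It uses ordinary three-dimensional Sobolev embedding on finitely many compact and horizon red-shift patches, fed directly by \eqref{eq:commuted_decay}. In the far region it uses a scale-invariant Sobolev inequality on dyadic annuli $r\simeq R$; this produces $w(r)\simeq(1+r)^{-1}$, provided the hierarchy's far-field control is uniform in $R$, which the paper simply asserts and you likewise take as how $w$ is defined.

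The only difference is cosmetic. You split the annular step into $H^2(\Sph)$ on each sphere plus a one-dimensional $\dd r/r$ Sobolev inequality in $r\partial_r$, whereas the paper rescales the annulus to $r/R\in[1,2]$ and applies a single three-dimensional embedding. The two are equivalent.
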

\begin{proof}
Cover $\Sigma_\tau$ by finitely many compact coordinate patches, horizon
red-shift patches, and dyadic annuli $r\simeq R$ in the asymptotic region. On
compact and horizon patches, the derivatives controlled in
\eqref{eq:commuted_decay} include enough tangential and transversal derivatives
for the ordinary Sobolev embedding on a three-dimensional slice, so the
$L^\infty$ norm is bounded by the square root of the commuted $L^2$ energy,
giving the factor $(1+\tau)^{-\gamma/2}$.

On a dyadic annulus write $r=R\rho$ with $\rho\in[1,2]$. The scale-invariant
Sobolev inequality on the rescaled annulus gives
$\|G\|_{L^\infty(r\simeq R)}\lesssim R^{-1}$ times the appropriate rescaled
$H^2$ norm. The weighted far-field part of the hierarchy controls this rescaled
norm uniformly in $R$, producing the radial weight $w(r)$; for the usual one
radial weight and two angular derivatives, $w(r)\simeq(1+r)^{-1}$. Combining the
patch estimates gives \eqref{eq:pointwise_decay}.
\end{proof}

\begin{remark}
\label{rem:no_price_law}
The exponent $\gamma$ is not produced by the transfer; it comes from a
low-frequency and hierarchy analysis of the master system. The transfer
preserves the rate, modulo the derivative count and radial weights of the
reconstruction. The mechanism by which a pointwise Maxwell-tensor bound follows
from uniform energy bounds together with a weak form of local energy decay is
exactly that of Metcalfe-Tataru-Tohaneanu \cite{MetcalfeTataruTohaneanuMaxwell};
condition \emph{(A6)} supplies the commuted local-energy estimate their argument
requires.
\end{remark}

\section{Component Estimates and Finite-Energy Passage}
\label{sec:component_approximation}
In this section we discuss component estimates and pass from smooth data to finite energy data.
The preceding sections prove the transfer for the master variables and record the reconstruction bounds. We next add the component estimates needed to pass from smooth charge-free fields to the finite-energy space. No new condition is introduced; the estimates simply show how the two conserved charges are removed from the middle Newman-Penrose components and why the endpoint passage loses no derivatives.

\begin{lemma}
\label{lem:component_chargefree_approximation}
Let $U\in\Hc_{\Max,0}^{(k)}(\Sigma_0)$.  There is a sequence of smooth
finite-energy Maxwell data $U_j$ satisfying the constraint equations and
$q_E(U_j)=q_B(U_j)=0$ such that
\begin{equation}\label{eq:component_approximation}
        \|U_j-U\|_{\Hc_{\Max}^{(k)}(\Sigma_0)}\longrightarrow0.
\end{equation}
The corresponding smooth solutions converge to the finite-energy solution. More precisely, for every finite $T$ the convergence holds in the space
\[
        C^0([0,T];\Hc_{\Max}^{(k)}(\Sigma_t)).
\]
\end{lemma}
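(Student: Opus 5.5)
The plan is to take the density statement directly from the definition of the charge-free space and then upgrade it to convergence of solutions using the finite-slab well-posedness estimate. By \eqref{eq:chargefree_completion}, $\Hc_{\Max,0}^{(k)}(\Sigma_0)$ is the completion of $\mathcal D_0^{(k)}(\Sigma_0)$ in the norm $(\E_{\Max}^{(k)}(0))^{1/2}$. Its elements are smooth constrained data with vanishing charges and finite order-$k$ energy. So for $U\in\Hc_{\Max,0}^{(k)}$ we can pick $U_j\in\mathcal D_0^{(k)}$ with $U_j\to U$ in that norm. Through the identification of Proposition~\ref{prop:charge_adapted_completion} and the norm \eqref{eq:full_energy_norm}, this norm coincides on the charge-free summand with the $\Hc_{\Max}^{(k)}$ norm, since the charge coordinates of both $U_j$ and $U$ vanish. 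This gives \eqref{eq:component_approximation}.

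If one prefers to start instead from an arbitrary dense smooth class $D\subset\Hc_{\Max}^{(k)}$ containing $U_e,U_m$, Lemma~\ref{lem:density_chargefree} supplies the same conclusion. Its correction $V_n=U_n-q_E(U_n)U_e-q_B(U_n)U_m$ keeps smoothness and the constraints, and it costs only $|q_E(U_n)|+|q_B(U_n)|\to0$ by continuity of the charges (Proposition~\ref{prop:finite_energy_charge_trace}). Lemma~\ref{lem:stationary_representatives} ensures that $U_e,U_m$ have finite order-$k$ energy.

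For the solutions, let $G_j$ be the smooth solutions with data $U_j$, which exist by the symmetric-hyperbolic theory in Proposition~\ref{prop:finite_energy_wellposed}. By linearity, $G_j-G_i$ solves Maxwell with data $U_j-U_i$. Estimate \eqref{eq:wellposed_bound} then gives
\[
\sup_{0\le t\le T}\|G_j(t)-G_i(t)\|_{\Hc_{\Max}^{(k)}(\Sigma_t)}\le C_T\|U_j-U_i\|_{\Hc_{\Max}^{(k)}(\Sigma_0)},
\]
so $(G_j)$ is Cauchy in $C^0([0,T];\Hc_{\Max}^{(k)}(\Sigma_t))$. Continuity in $t$ of each smooth $G_j$ in the energy norm follows from the energy identity \eqref{eq:energy_identity_slab} applied to commuted fields, and uniform limits of continuous maps are continuous. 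By Proposition~\ref{prop:closed_solution}, the limit is independent of the sequence and equals the finite-energy solution with datum $U$. Charge conservation (Proposition~\ref{prop:charge_conservation}) together with continuity of $q_E,q_B$ keeps every slice charge-free.

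The only step needing care is that the spaces $\Hc_{\Max}^{(k)}(\Sigma_t)$ vary with $t$. I would handle this by pulling back along the stationary Killing flow $T$. This identifies all slices with $\Sigma_0$ with uniformly equivalent norms on $[0,T]$, exactly as in the lapse–shift setup of Proposition~\ref{prop:finite_energy_wellposed}, so the supremum above is a genuine norm on $C^0([0,T];\Hc_{\Max}^{(k)})$.
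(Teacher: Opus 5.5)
Your argument is correct, but it reaches density by a different route from the paper. You read density straight off the definition \eqref{eq:chargefree_completion}. Since $\Hc_{\Max,0}^{(k)}(\Sigma_0)$ is by construction the completion of $\mathcal D_0^{(k)}(\Sigma_0)$, smooth constrained charge-free approximants exist tautologically. This completion also coincides with $\ker q_E\cap\ker q_B$ inside the direct sum, because the charges are continuous and vanish on $\mathcal D_0^{(k)}$. Note that \eqref{eq:full_energy_norm} gives norm \emph{equivalence}, not equality, on the charge-free summand; equivalence is all you need.

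The paper instead starts from an arbitrary datum $U=(E,B)$ whose constraints hold only distributionally, and builds approximants by hand:
\begin{enumerate}
\item a radial cutoff and mollification in charts;
\item a Neumann problem on an exhaustion $\Omega_j$ to remove the divergence error;
\item the charge correction by $U_e,U_m$, which is your second paragraph.
\end{enumerate}
That route aims at a more concrete statement. It would show that finite-energy fields satisfying the constraints only in the distributional sense belong to the abstract completion, and that they are approximable by data compactly supported up to the Coulomb correction. Your route says nothing about which distributional fields lie in the space, but given the paper's definitions it is complete and much shorter.

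Your route also avoids the delicate step of the paper's construction. After the Neumann projection, the paper extends the field by zero outside $\Omega_j$. The tangential part of $\nabla_h\phi^E_j$ on $\partial\Omega_j$ need not vanish, so that extension is not smooth without further work.

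For the convergence of solutions, the two arguments coincide: apply \eqref{eq:wellposed_bound} to differences and identify the limit via Proposition~\ref{prop:closed_solution}. Your identification of the slices through the Killing flow makes explicit a point the paper leaves implicit. One small caution on continuity in $t$ of each smooth $G_j$: the energy identity by itself only gives continuity of $t\mapsto\E_{\Max}^{(k)}[G_j](t)$. To get continuity of $t\mapsto G_j(t)$ in the norm you also want uniform smallness of the far-field tails, which the domain-of-dependence version of the same identity provides.
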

\begin{proof}
Write the initial data as $U=(E,B)$ on the regular slice $(\Sigma_0,h)$.  The
constraints are
\begin{equation}\label{eq:component_constraint_density_start}
        \operatorname{div}_hE=0,\qquad \operatorname{div}_hB=0
\end{equation}
in the distributional sense. Let $\chi_j$ be radial cutoffs equal to one on
$\{r\le j\}$ and supported in $\{r\le2j\}$, and mollify in a finite collection of
regular coordinate charts after parallel transport to the tangent bundle. This
produces smooth compactly supported pairs $(E'_j,B'_j)$ with
$(E'_j,B'_j)\to(E,B)$ in the unconstrained $H^k$ energy norm. The divergence
errors
\(f^E_j=\operatorname{div}_hE'_j\) and
\(f^B_j=\operatorname{div}_hB'_j\) tend to zero in $H^{k-1}_{\loc}$ and in the
corresponding weighted dual norm. Moreover their total integrals over each
large compact exhaustion vanish after subtracting a smooth compactly supported bump in the
outer annulus, because the limiting fields satisfy \eqref{eq:component_constraint_density_start}.

On a smooth exhaustion $\Omega_j\Subset\Sigma_0$ containing the support of
$(E'_j,B'_j)$ solve the Neumann problems
\begin{equation}\label{eq:component_neumann_projection}
        \Delta_h\phi^E_j=f^E_j,\qquad
        \Delta_h\phi^B_j=f^B_j,
        \qquad \partial_\nu\phi^E_j=\partial_\nu\phi^B_j=0\quad\hbox{on }\partial\Omega_j,
\end{equation}
with zero mean. Local elliptic estimates on the uniformly regular exhaustion
and the weighted Hardy inequality on the asymptotic end imply
\begin{equation}\label{eq:component_projection_small}
        \|\nabla_h\phi^E_j\|_{H^k}+\|\nabla_h\phi^B_j\|_{H^k}
        \le C\bigl(\|f^E_j\|_{H^{k-1}_{\mathrm{w}}}
                  +\|f^B_j\|_{H^{k-1}_{\mathrm{w}}}\bigr)\longrightarrow0.
\end{equation}
Set
\begin{equation}\label{eq:component_projected_data}
        \widehat E_j=E'_j-\nabla_h\phi^E_j,
        \qquad
        \widehat B_j=B'_j-\nabla_h\phi^B_j.
\end{equation}
Then $\operatorname{div}_h\widehat E_j=\operatorname{div}_h\widehat B_j=0$ on
$\Omega_j$, the normal boundary condition lets the fields be extended by zero
without introducing a boundary divergence, and \eqref{eq:component_projection_small} gives
$\widehat U_j=(\widehat E_j,\widehat B_j)\to U$ in $\Hc_{\Max}^{(k)}$.  This is the
slice Hodge projection onto the closed constraint subspace, written explicitly.

The charge functionals are continuous by Proposition~\ref{prop:finite_energy_charge_trace}; hence
$q_E(\widehat U_j)\to q_E(U)=0$ and
$q_B(\widehat U_j)\to q_B(U)=0$.  Define
\begin{equation}\label{eq:component_charge_projection}
        U_j=\widehat U_j-q_E(\widehat U_j)U_e-q_B(\widehat U_j)U_m.
\end{equation}
The stationary data $U_e,U_m$ are smooth finite-energy constrained data and are
normalized by \eqref{eq:stationary_normalization}. Hence $U_j$ is smooth,
constrained and charge-free. The finite-rank stationary projection is bounded,
so
\begin{equation}\label{eq:component_density_final_bound}
        \|U_j-U\|_{\Hc_{\Max}^{(k)}}
        \le \|\widehat U_j-U\|_{\Hc_{\Max}^{(k)}}
        +C\bigl(|q_E(\widehat U_j)|+|q_B(\widehat U_j)|\bigr)\longrightarrow0.
\end{equation}
The convergence of the corresponding solutions on finite slabs follows by
applying the well-posedness estimate \eqref{eq:wellposed_bound} to the difference
of two solutions.
\end{proof}

\begin{lemma}
\label{lem:component_hodge_scale}
Let $S_{t,r}$ be one of the spheres of the regular foliation, with induced
metric $\gamma_{AB}$, and let $f$ be a scalar with zero mean on $S_{t,r}$.  Then,
for every integer $m\ge0$,
\begin{equation}\label{eq:scale_hodge_scalar}
        \sum_{j\le m} r^{2j}\|\nabla\mkern-13mu/^{\,j} f\|_{L^2(S_{t,r})}^2
        \le C\sum_{j\le m-1} r^{2j+2}
        \|\nabla\mkern-13mu/^{\,j}\nabla\mkern-13mu/ f\|_{L^2(S_{t,r})}^2,
\end{equation}
where the right-hand side is interpreted as $r^2\|\nabla\mkern-13mu/ f\|^2$
when $m=0$.  The corresponding estimate holds for a one-form $\omega$ after removal of its harmonic part (which is vacuous on $S^2$), with $\nabla\mkern-13mu/ f$ replaced by
$(\sphdiv\omega,\sphcurl\omega)$.
\end{lemma}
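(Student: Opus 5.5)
The plan is to reduce to the unit sphere by scaling and then apply the spectral-gap estimate of Lemma~\ref{lem:app_hodge}. I will treat the sphere $S_{t,r}$ as the round sphere of radius $r$, or as a uniformly small perturbation of it in the slow-weak range. Write $\gamma=r^2\mathring\gamma$, where $\mathring\gamma$ ranges over a compact $C^{m+1}$ family of metrics close to the unit round metric; Lemma~\ref{lem:app_hodge} already records that such a compact family is available. Under this rescaling the relevant quantities transform by fixed powers of $r$:
\[
        \|\nabla\mkern-13mu/^{\,j}f\|_{L^2(\gamma)}^2=r^{2-2j}\|\mathring\nabla^jf\|_{L^2(\mathring\gamma)}^2 .
\]
The factor $r^2$ comes from the area form, and $r^{-2j}$ comes from the $j$ inverse metrics in the norm of a $j$-tensor. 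Hence each weighted term $r^{2j}\|\nabla\mkern-13mu/^{\,j}f\|^2$ equals $r^2\|\mathring\nabla^jf\|_{L^2(\mathring\gamma)}^2$. The zero-mean condition is invariant under constant rescaling, so after scaling the inequality reads
\[
        \|f\|_{H^m(\mathring\gamma)}^2\le C\|\mathring\nabla f\|_{H^{m-1}(\mathring\gamma)}^2
\]
for $m\ge1$.

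For $m\ge1$ this is exactly \eqref{eq:app_hodge_estimate} with $s=m-1$. It follows from the eigenfunction expansion together with the first nonzero eigenvalue $2$ of the unit round sphere. For the perturbed metrics I use two facts: the first nonzero eigenvalue depends continuously on the metric, and elliptic regularity constants are uniform over the compact family. I will read $m=0$ as the Poincar\'e inequality $\|f\|^2\le Cr^2\|\nabla\mkern-13mu/ f\|^2$, which is the $s=0$ spectral gap. The intermediate terms $j<m$ are already contained in the $H^m$ norm, so no separate interpolation step is needed.

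For one-forms I apply the same scaling to $(\sphdiv\omega,\sphcurl\omega)$. Each carries one derivative, so it scales like $\nabla\mkern-13mu/ f$. Then \eqref{eq:app_hodge_estimate_oneform} applies, because the Hodge decomposition $\omega=\sphgrad\phi+{}^\star\sphgrad\chi$ has no harmonic part on a topological $S^2$. The hypothesis on the harmonic part is therefore vacuous.

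The main obstacle I expect is uniformity. The spheres of the regular foliation are not exactly round when $a\ne0$, and near the horizon the foliation spheres are those of the regular coordinates. I will handle this by a contradiction or continuity argument: if the first eigenvalue degenerated along a sequence of metrics in the compact family, a normalized zero-mean eigenfunction would converge to a nonconstant harmonic function on the limit sphere, which is impossible. The constants $C$ depend only on $m$ and the compact family, and not on $r$, precisely because all $r$-dependence has been absorbed by the scaling above.
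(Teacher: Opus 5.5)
Your proposal is correct and follows essentially the same route as the paper. Rescaling $\gamma=r^2\mathring\gamma$ turns both sides into the same power of $r$ times unweighted norms on a compact family of near-round metrics; the spectral-gap/Hodge estimate of Lemma~\ref{lem:app_hodge}, with a first nonzero eigenvalue bounded below uniformly over that family, then gives the scalar case (including the $m=0$ Poincar\'e reading) and, because $S^2$ carries no harmonic one-forms, the one-form case. Your contradiction argument makes explicit the uniformity that the paper only asserts; note also that for scalars the terms with $j\ge1$ appear identically on both sides of \eqref{eq:scale_hodge_scalar}, so only the scale-invariant Poincar\'e inequality is actually needed there.
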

\begin{proof}
After writing $\gamma_{AB}=r^2(\gamma_{\Sph,AB}+O(r^{-2}))$ on the asymptotic end
and using a finite atlas on compact $r$-regions, the estimate is the usual
elliptic estimate for the Hodge Laplacian on $\Sph$.  The zero-mean condition
removes the constants in the scalar case, and the absence of harmonic one-forms on $S^2$ removes the Hodge kernel in the one-form case. The first
positive eigenvalue is bounded away from zero uniformly on the parameter range;
for the round metric it is $2$ for one-forms and $2$ for scalar functions with
$\ell\ge1$.  Commuting the Hodge system with angular derivatives and summing
gives \eqref{eq:scale_hodge_scalar}; the perturbation of the sphere metric is
absorbed into the left side by the same uniform elliptic estimate on the
compactified family of metrics.
\end{proof}

\begin{proposition}
\label{prop:component_middle_from_extremes}
Let $G$ be a smooth charge-free Maxwell field on a finite slab
$\mathcal D_{[\tau_1,\tau_2]}$.  In the regular null frame the middle component
$\varphi=\rho_G+i\sigma_G$ satisfies, for every $k$ used here,
\begin{equation}\label{eq:middle_from_extremes_component}
        \|\varphi\|_{LE^1_k(\mathcal D_{[\tau_1,\tau_2]})}
        \le C\Bigl(
        \|\alpha\|_{LE^1_k(\mathcal D_{[\tau_1,\tau_2]})}
        +\|\underline\alpha\|_{LE^1_k(\mathcal D_{[\tau_1,\tau_2]})}
        +\|G\|_{E_k(\tau_1)}\Bigr).
\end{equation}
The corresponding estimate holds after replacing the extreme components by the regular
master variables through the weights used in Section~\ref{subsec:regular_frame_master}.
\end{proposition}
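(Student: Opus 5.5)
The plan is to derive \eqref{eq:middle_from_extremes_component} from the null-frame Maxwell system \eqref{eq:app_maxwell_transport}--\eqref{eq:app_maxwell_constraint}. Angular derivatives of $\varphi$ will come from the elliptic Hodge estimate on each sphere, and null derivatives from the transport equations. The charge-free hypothesis enters only through the zero spherical means \eqref{eq:middle_mean_zero} of Proposition~\ref{prop:coulomb_elimination}, which makes Lemma~\ref{lem:component_hodge_scale} applicable to $\rho_G$ and $\sigma_G$ on every $S_{\tau,r}$.

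\textbf{Step 1: angular control.} Apply \eqref{eq:scale_hodge_scalar} to $\rho_G$ and to $\sigma_G$. By \eqref{eq:app_maxwell_constraint}, $\mathcal D\varphi$ is one regular null derivative of $\alpha$ (namely $e_3\alpha$) or of $\underline\alpha$ (namely $e_4\underline\alpha$), plus zeroth-order terms in $\alpha,\underline\alpha,\varphi$. The scalar-by-scalar estimate gives, on each sphere,
\[
\sum_{j\le m}r^{2j}\|\sphgrad^{\,j}\varphi\|^2_{L^2(S)}\le C\,r^2\sum_{j\le m-1}r^{2j}\bigl(\|\sphgrad^{\,j}e_3\alpha\|^2+\|\sphgrad^{\,j}\alpha\|^2+\|\sphgrad^{\,j}\underline\alpha\|^2+\|\sphgrad^{\,j}\varphi\|^2\bigr).
\]
I then integrate over $(\tau,r)$ with the weights of Definition~\ref{def:concrete_le_norms}. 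The terms involving $e_3\alpha$ and $\alpha$ are bounded by $\|\alpha\|_{LE^1_k}$, with the same derivative count. The $\varphi$ term on the right carries one fewer angular derivative, so it is handled by induction on $m$. At $m=0$, Poincar\'e on the sphere with zero mean lets the coefficient $C\,D_3^3$ be absorbed: it is $O(r^{-1})$ in the far field, and on compact sets it is handled by the transport step below.

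\textbf{Step 2: null derivatives and $L^2$ control.} Pass to $\widetilde\varphi=r^2\varphi$ and use \eqref{eq:app_modified_transport_4}--\eqref{eq:app_modified_transport_3}. The sources there are $r^2$ times angular divergences and curls of $\alpha,\underline\alpha$, which are again controlled by $\|\alpha\|_{LE^1_k}$ and $\|\underline\alpha\|_{LE^1_k}$. Proposition~\ref{prop:app_transport} then bounds $\widetilde\varphi$ in the slab norm by these sources plus the initial energy of $\widetilde\varphi$ on $\Sigma_{\tau_1}$. That initial energy is at most $\|G\|_{E_k(\tau_1)}$ by Lemma~\ref{lem:frame_energy_equiv}. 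The $e_3\varphi$ and $e_4\varphi$ derivatives are read off directly from the transport equations. Together with Step 1, this controls every regular first derivative of $\varphi$, i.e.\ the $LE^1_0$ norm.

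\textbf{Step 3: commutation and the master-variable version.} I commute with $\mathbb D_k$. Commuting with $T$ and $\Phi$ costs only lower-order coefficient terms. Commuting with angular derivatives and $r\partial_r$ produces lower-order connection terms, which are treated by induction on the commuted order, exactly as in Lemma~\ref{lem:finite_order_absorption_induction}. The $O(|a|/M)$ rotational coefficients from \eqref{eq:app_frame_perturbation} are absorbed for $\eps_a(k)$ small. The final sentence of the statement follows because $\alpha,\underline\alpha$ are smooth nonvanishing weights times the master entries.

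\textbf{Expected obstacle.} The hardest part will be closing the zeroth-order $\varphi$ terms on compact radial regions, where neither the $r^{-1}$ smallness nor the Poincar\'e gain is quantitatively small. My first attempt is to make the transport energy identity of Proposition~\ref{prop:app_transport} deliver the bulk $L^2$ of $\widetilde\varphi$ directly, via Hardy \eqref{eq:horizon_hardy} together with the definite horizon and infinity boundary signs. This avoids relying on the elliptic step for the lowest order, so that the elliptic step only ever upgrades angular regularity.
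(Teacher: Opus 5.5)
Your proposal follows the paper's proof essentially step for step. It takes the zero spherical means from Proposition~\ref{prop:coulomb_elimination}, applies Lemma~\ref{lem:component_hodge_scale} to $\rho_G,\sigma_G$ through the angular equation, and transports $r^2\varphi$ along $e_4,e_3$ for the null derivatives and the zeroth-order part, with the initial trace placed in $\|G\|_{E_k(\tau_1)}$. It then absorbs the $O(|a|/M)$ terms by shrinking $\eps_a(k)$ and uses the smooth nonvanishing weights for the master-variable version. The one thing to drop is the far-field Poincar\'e absorption of $D_3^3\varphi$ at $m=0$ in Step~1: an $O(r^{-1})$ coefficient multiplied by the factor $r$ from the scale-invariant Hodge estimate is only $O(1)$, so that decay alone does not make it absorbable. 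Your fallback, in which the transport estimate supplies $\|\varphi\|_{LE^0_k}$ everywhere and the elliptic step only upgrades angular regularity, is exactly how the paper combines \eqref{eq:component_angular_middle_bound} with \eqref{eq:component_transport_middle_bound}.
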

\begin{proof}
The null Maxwell equations in the same regular frame used in
Section~\ref{subsec:reconstruction} have the form
\begin{align}\label{eq:null_middle_equations_component}
        e_4\varphi+\operatorname{tr}\chi\,\varphi
        &=\sphdiv\alpha-i\sphcurl\alpha+A_4^0\varphi+A_4^1\cdot\alpha,\nonumber\\
        e_3\varphi+\operatorname{tr}\underline\chi\,\varphi
        &=-\sphdiv\underline\alpha-i\sphcurl\underline\alpha
          +A_3^0\varphi+A_3^1\cdot\underline\alpha,\nonumber\\
        \sphgrad\rho_G+{}^\star\!\sphgrad\sigma_G
        &=-e_3\alpha+B_3^1\cdot\alpha+B_3^2\cdot\underline\alpha+B_3^3\varphi
          \nonumber\\
        &=e_4\underline\alpha+B_4^1\cdot\alpha+B_4^2\cdot\underline\alpha+B_4^3\varphi.
\end{align}
All coefficients are smooth in the regular exterior, bounded on compact radial
sets, and short-range in the asymptotic end; in the slow-weak range the
difference from the Reissner-Nordstr\"om coefficients is perturbative. Charge
subtraction gives zero spherical mean for both $\rho_G$ and $\sigma_G$ by
Proposition~\ref{prop:coulomb_elimination}. Applying
Lemma~\ref{lem:component_hodge_scale} to the scalar functions $\rho_G$ and
$\sigma_G$ on each $S_{t,r}$ and using the third line of
\eqref{eq:null_middle_equations_component} controls the angular part of
$\varphi$ by one regular null derivative of the extreme components plus
lower-order terms:
\begin{equation}\label{eq:component_angular_middle_bound}
        \|\nabla\mkern-13mu/\,\varphi\|_{LE^0_k}
        \le C\bigl(\|\alpha\|_{LE^1_k}
        +\|\underline\alpha\|_{LE^1_k}
        +\|\varphi\|_{LE^0_k}\bigr).
\end{equation}
The first two lines of \eqref{eq:null_middle_equations_component}, integrated
along the outgoing and incoming null directions after the fixed rescaling
$r^2\varphi$, yield the transport bound
\begin{equation}\label{eq:component_transport_middle_bound}
        \|e_4\varphi\|_{LE^0_k}+\|e_3\varphi\|_{LE^0_k}+\|\varphi\|_{LE^0_k}
        \le C\bigl(\|\alpha\|_{LE^1_k}
        +\|\underline\alpha\|_{LE^1_k}
        +\|G\|_{E_k(\tau_1)}\bigr)
        +C\eta\|\varphi\|_{LE^1_k},
\end{equation}
where $\eta=|a|/M$.  The zero-mean Poincar\'e inequality on the spheres and the
Hardy inequality in the radial variable control the lower-order
$\|\varphi\|_{LE^0_k}$ terms by the left side away from the initial slice; the
initial trace is included in $\|G\|_{E_k(\tau_1)}$.  Taking $\eps_a(k)$ so that
$C\eta<1/2$ absorbs the perturbative contribution in
\eqref{eq:component_transport_middle_bound}. Combining
\eqref{eq:component_angular_middle_bound} and
\eqref{eq:component_transport_middle_bound} proves
\eqref{eq:middle_from_extremes_component}. The weights relating
$(\alpha,\underline\alpha)$ to $(\psi_+,\psi_-)$ are smooth, nonzero in the
regular frame, and have bounded derivatives of the required order, so they do
not change the derivative count.
\end{proof}

\begin{proposition}
\label{prop:component_finite_energy_reconstruction}
The reconstruction maps $\mathfrak M$ and $\mathfrak R$, first defined on
smooth charge-free solutions, extend uniquely and continuously to the
charge-free finite-energy space at order $k$.  The inverse identities
\begin{equation}\label{eq:component_inverse_extension}
        \mathfrak R\mathfrak M G=G,
        \qquad \mathfrak M\mathfrak R u=u
\end{equation}
hold in the finite-energy sense.
\end{proposition}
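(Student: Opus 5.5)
The plan is the usual density argument: define both maps on the dense smooth classes, prove bounds at order $k$ in the relevant norms, extend by continuity, and pass the inverse identities to the limit. For $\mathfrak M$ the domain is $\Hc_{\Max,0}^{(k)}(\Sigma_0)$. By Lemma~\ref{lem:component_chargefree_approximation}, every $U$ in this space is the limit of smooth constrained charge-free data $U_j$, and the corresponding solutions $G_j$ converge in $C^0([0,T];\Hc^{(k)}_{\Max})$ on every finite slab. The energy comparison \eqref{eq:master_hyp_energy_comparison} (Corollary~\ref{cor:structural_reduction_verify}(v)) holds slice by slice. Applied to differences $G_j-G_l$, it gives $\sup_{[0,T]}\E_M^{(k)}[\mathfrak M(G_j-G_l)]\le C_R\sup_{[0,T]}\E_{\Max}^{(k)}[G_j-G_l]\to0$. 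Hence $\mathfrak M G_j$ is Cauchy in the master energy topology on each slab. By Definition~\ref{def:master_from_teukolsky} the compatible class is closed, so the limit lies in it. We define $\mathfrak M G$ to be this limit. It is independent of the approximating sequence, because the difference of two approximating sequences tends to zero and the comparison estimate is linear.

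For $\mathfrak R$ the domain is the closed compatible master space. By definition its smooth elements $\mathfrak M F$, $F$ smooth charge-free, are dense. On these, Proposition~\ref{prop:same_order_reconstruction} (through Proposition~\ref{prop:middle_reconstruction}, with the finite-slab form in Proposition~\ref{prop:component_middle_from_extremes}) gives $\norm{\mathfrak R u}_{\Xnorm{k}_{\Max}(\tau_1,\tau_2)}\le C\norm{u}_{\Xnorm{k}_M(\tau_1,\tau_2)}$ together with the energy bound in \eqref{eq:master_hyp_reconstruction_bound}. Linearity lets us apply these to differences, so $\mathfrak R$ extends uniquely. The limit of $\mathfrak R u_j$ is a finite-energy charge-free Maxwell solution, for two reasons. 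First, the constraints and the zero charges are continuous in the energy norm (Propositions~\ref{prop:finite_energy_charge_trace} and~\ref{prop:finite_energy_wellposed}). Second, Proposition~\ref{prop:closed_solution} identifies limits of smooth solutions with the finite-energy solution.

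The inverse identities then follow from continuity of both extended maps. For $G$ of finite energy, take smooth $G_j\to G$. Then $\mathfrak R\mathfrak M G_j=G_j$, and the left side converges to $\mathfrak R\mathfrak M G$ because the composition of bounded extensions is continuous. Similarly, $\mathfrak M\mathfrak R u_j=u_j$ passes to the limit.

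The step I expect to be delicate is matching the topologies. The $\mathfrak R$ bound is stated in spacetime norms $\Xnorm{k}$, while density is given in initial-energy norms. To bridge this I would work on finite slabs, where the well-posedness estimates \eqref{eq:wellposed_bound} (and the master analogue) control the $\Xnorm{k}(0,T)$ norm by the initial energy, possibly with $T$-dependent constants. Uniqueness of the extension needs only this finite-slab continuity. Lemma~\ref{lem:finite_slab_lsc} then shows that the limiting objects satisfy the uniform bounds on every slab.
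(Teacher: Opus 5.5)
Your proposal is correct and follows the paper's proof. The shared steps are: density of smooth charge-free data (Lemma~\ref{lem:component_chargefree_approximation}); the order-$k$ energy-comparison and same-order reconstruction bounds applied to differences, giving Cauchy sequences with sequence-independent limits on the closure of the smooth compatible range; and passage of the identities of \emph{(A4)} to the limit by continuity. The topology issue you flag is resolved, as in the paper, by working in the energy spaces: the slice-wise bounds $\E_M^{(k)}[\mathfrak M G](\tau)\le C_R\E_{\Max}^{(k)}[G](\tau)$ and $\E_{\Max}^{(k)}[\mathfrak R u](\tau)\le C_R\E_M^{(k)}[u](\tau)$ already give continuity, so finite-slab control of the full $\Xnorm{k}(0,T)$ norm is unnecessary (and \eqref{eq:wellposed_bound} alone would not supply its $r^p$ fluxes).
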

\begin{proof}
Let $G_j$ be the smooth charge-free approximants of
Lemma~\ref{lem:component_chargefree_approximation}. The bounds
\eqref{eq:framework_reconstruction_bounds} and
\eqref{eq:middle_from_extremes_component} show that $\mathfrak M G_j$ is Cauchy
in the master energy space and that $\mathfrak R\mathfrak M G_j$ is Cauchy in
the Maxwell energy space. The limits are independent of the approximating
sequence because the same estimates applied to differences give zero limit
when the initial data tend to zero. This defines the continuous extension of
$\mathfrak M$ and of $\mathfrak R$ on the closure of the compatible smooth
class. The identities \eqref{eq:component_inverse_extension} hold on smooth
solutions by Definition~\ref{def:slowweak_master_framework}\emph{(A4)} and pass
to the limit by continuity.
\end{proof}

\begin{proposition}
\label{prop:component_finite_energy_radiation}
Suppose that the radiation and wave-operator conditions in
Definition~\ref{def:slowweak_master_framework}\emph{(A5)} hold. The future and past
Maxwell radiation maps constructed in Section~\ref{sec:scattering} extend from
smooth charge-free data to bounded maps
\begin{equation}\label{eq:component_finite_energy_trace}
        \mathscr S_{\Max}^{\pm}:\Hc_{\Max,0}^{(k)}(\Sigma_0)
        \longrightarrow \Rc_{\Max,\pm}^{(k)},
\end{equation}
and their inverses are the continuous extensions of the smooth backward wave
operators.
\end{proposition}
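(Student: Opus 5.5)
The plan is to obtain the finite-energy radiation maps by density, composing the master trace with the radiation identification of (A5). On the dense charge-free class of Lemma~\ref{lem:component_chargefree_approximation}, Proposition~\ref{prop:bounded_trace_transfer} provides the factorization $\mathscr S_{\Max}^{\pm}G[0]=\mathcal R_\infty^{\pm}\mathscr S_M^{\pm}\mathfrak M G[0]$ and the bound $\|\mathscr S_{\Max}^\pm G[0]\|\le C\,\E_{\Max}^{(k)}[G](0)^{1/2}$. Applying this to differences $G_j-G_l$ shows that $(\mathscr S_{\Max}^\pm G_j[0])$ is Cauchy in $\Rc_{\Max,\pm}^{(k)}$, and we define the extension as the limit. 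The limit does not depend on the approximating sequence, because the difference of two sequences has data tending to zero. The resulting map is linear and bounded with the same constant, and it agrees with the extension of $\mathcal R_\infty^\pm\mathscr S_M^\pm\mathfrak M$ obtained from Proposition~\ref{prop:component_finite_energy_reconstruction}.

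For the inverses we work on the dense class $E$ of smooth radiation data. There we set $\mathscr W_{\Max}^\pm\rho=\mathfrak R\,\mathscr W_{M,0}^\pm(\mathcal R_\infty^\pm)^{-1}\rho$, using the bounded right inverses of (A5) (constructed as in Proposition~\ref{prop:backward_from_lap}) and the same-order reconstruction bound \eqref{eq:framework_reconstruction_bounds}. This gives $\|\mathscr W_{\Max}^\pm\rho\|\le C\|\rho\|$ and, through the identity $\mathfrak M\mathfrak R=\Id$, the relation $\mathscr S_{\Max}^\pm\mathscr W_{\Max}^\pm\rho=\rho$. One further point must be checked: the output $\mathscr W_{\Max}^\pm\rho$ must be charge-free, so that it lies in $\Hc_{\Max,0}^{(k)}$. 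This holds because $\mathfrak R$ fixes zero spherical means of the middle components, by Proposition~\ref{prop:coulomb_elimination} and \eqref{eq:middle_charge_matrix}. We then extend $\mathscr W_{\Max}^\pm$ by continuity to the whole radiation space. The right-inverse identity passes to the closure because $\mathscr S_{\Max}^\pm$ is bounded.

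The remaining step is to identify the continuous extension of the smooth backward operators with the actual inverse. For this we apply Lemma~\ref{lem:app_abstract_scattering}, and the hypothesis to be verified is $\ker\mathscr S_{\Max}^\pm=\{0\}$ on the completed space. Suppose $\mathscr S_{\Max}^\pm G[0]=0$. Since $\mathcal R_\infty^\pm$ is an isomorphism, $\mathscr S_M^\pm\mathfrak M G=0$, and the kernel statement in (A5) gives $\mathfrak M G=0$. Proposition~\ref{prop:component_finite_energy_reconstruction} then yields $G=\mathfrak R\mathfrak M G=0$. The obstacle I expect is that (A5) states the kernel property for finite-energy compatible solutions, whereas $\mathfrak M G$ for finite-energy $G$ is defined only as a limit. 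One therefore has to confirm that this limit lies in the closed compatible class of Definition~\ref{def:master_from_teukolsky}, and that the extended $\mathfrak R$ still inverts $\mathfrak M$ there. Both properties follow from closedness of that class and continuity of the two maps, so the lemma applies. It gives that $\mathscr S_{\Max}^\pm$ is an isomorphism whose inverse equals the extended $\mathscr W_{\Max}^\pm$.
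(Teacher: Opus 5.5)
Your proposal is correct and takes essentially the same route as the paper. You extend $\mathcal R_\infty^\pm\mathscr S_M^\pm\mathfrak M$ by density using the master trace, radiation-identification and energy-comparison bounds, define the inverse on smooth radiation data as $\mathfrak R\,\mathscr W_M^\pm(\mathcal R_\infty^\pm)^{-1}$, extend it and the right-inverse identity by continuity, and get injectivity from the zero-radiation statement in \emph{(A5)} after applying the master map. Your extra checks make explicit what the paper's terse proof leaves implicit; they do not constitute a different argument. These checks are charge-freeness of the backward output, membership of the extended $\mathfrak M G$ in the closed compatible class, and the explicit appeal to Lemma~\ref{lem:app_abstract_scattering}.
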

\begin{proof}
For smooth charge-free $G$ the trace is
$\mathscr S_{\Max}^{\pm}G=\mathcal R_\infty^{\pm}\mathscr S_M^{\pm}\mathfrak M G$.
The master trace bound, the radiation identification bound, and the energy
comparison in \eqref{eq:framework_reconstruction_bounds} give
\begin{equation*}
        \|\mathscr S_{\Max}^{\pm}G\|_{\Rc_{\Max,\pm}^{(k)}}
        \le C\|G\|_{\Hc_{\Max,0}^{(k)}(\Sigma_0)}.
\end{equation*}
The map therefore extends by density. Conversely, for smooth radiation data
$\rho$ set
\begin{equation*}
        \mathscr W_{\Max}^{\pm}\rho
        =\mathfrak R\mathscr W_M^{\pm}(\mathcal R_\infty^{\pm})^{-1}\rho.
\end{equation*}
The bounds in \emph{(A5)} and Proposition~\ref{prop:component_finite_energy_reconstruction}
show that $\mathscr W_{\Max}^{\pm}$ is bounded on the dense class of smooth
radiation data and hence extends to the radiation Hilbert space. The identity
$\mathscr S_{\Max}^{\pm}\mathscr W_{\Max}^{\pm}=I$ holds on the dense class and
then by continuity. Injectivity follows from the zero-radiation statement in
\emph{(A5)} after applying the master map. For that reason, the inverse is the continuous
extension of the smooth backward construction.
\end{proof}

\section{Proof of Transfer Theorem and Consequences}
\label{sec:main_theorem}
In this section we combine all previous ingredients and prove the main transfer theorem.

\begin{proposition}
\label{prop:transfer_hypothesis_order}
Fix $k$ and the slow-weak parameter range of Theorem~\ref{thm:main}. For the fixed-background test Maxwell field, the assumptions of Theorem~\ref{thm:intro_transfer} are organized as follows. The stationary charge splitting and the finite-energy charge completion are Theorem~\ref{thm:intro_charge_decomposition} and Proposition~\ref{prop:charge_adapted_completion}. The existence of the closed compatible spin-one master system is the structural hypothesis \emph{(A1)}. Once this system is supplied, Corollary~\ref{cor:structural_reduction_verify} proves the scalar principal symbol, the energy comparison, and the Reissner-Nordstr\"om coefficient comparison recorded in \emph{(A1)}-\emph{(A2)}. The bounded-frequency real-axis closure is Proposition~\ref{prop:app_bounded_freq} together with Lemmas~\ref{lem:app_rn_lap} and~\ref{lem:app_rn_no_resonance}. The high-frequency compact-remainder closure is Lemma~\ref{lem:semiclassical_spinone_order}, Proposition~\ref{prop:nh_theorem_application}, Lemma~\ref{lem:high_freq_partition}, and Proposition~\ref{prop:app_high_freq}; the normally hyperbolic estimate used there is Proposition~\ref{prop:nh_resolvent_form} applied after the trapped-set and subprincipal verification. Same-order reconstruction is Proposition~\ref{prop:same_order_reconstruction}. The wave operators are Proposition~\ref{prop:backward_from_lap}. This implies that all conclusions of Theorem~\ref{thm:main} follow from the structural reduction \emph{(A1)}, the normally hyperbolic high-frequency estimate, and the cited spherical local-energy, red-shift, $r^p$, and limiting-absorption estimates; the charge, density, radiation, and reconstruction steps are proved in the sections indicated above.
\end{proposition}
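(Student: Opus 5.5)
The statement is an organizational one: it asserts that each hypothesis feeding Theorem~\ref{thm:intro_transfer} is either proved in an earlier numbered result or is one of the explicitly named inputs, namely the structural reduction \emph{(A1)}, the normally hyperbolic estimate, and the cited spherical, red-shift, $r^p$ and limiting-absorption estimates. I would prove it by walking through the logical chain of the proof of Theorem~\ref{thm:intro_transfer} in its actual order. For each input I would point to the result that supplies it and check that the proof of that result uses nothing beyond earlier items and the named inputs.

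First, the charge layer. Theorem~\ref{thm:intro_charge_decomposition} and Proposition~\ref{prop:charge_adapted_completion} rest only on Propositions~\ref{prop:charge_conservation}, \ref{prop:finite_energy_charge_trace}, \ref{prop:bounded_projection}, \ref{prop:finite_energy_wellposed} and Lemma~\ref{lem:stationary_representatives}. These use Maxwell's equations, Stokes' theorem and symmetric-hyperbolic energy estimates, so this layer is unconditional.

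Next, assume \emph{(A1)}. Corollary~\ref{cor:structural_reduction_verify} then yields the master map, the scalar symbol $g_{\KN}^{\mu\nu}\xi_\mu\xi_\nu I_2$, the energy comparison \eqref{eq:master_hyp_energy_comparison}, and the comparison \eqref{eq:master_hyp_perturbation}. The comparison comes with the error bounds \eqref{eq:master_hyp_error_bound} and \eqref{eq:strict_lower_order_induction}, which use Lemma~\ref{lem:app_inverse_expansion} and Lemma~\ref{lem:commuted_master}; this is \emph{(A2)}.

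For \emph{(A3)} I split into the two frequency regimes.
\begin{enumerate}
\item Bounded frequencies. This is Proposition~\ref{prop:app_bounded_freq}, a compactness argument resting on Lemmas~\ref{lem:app_rn_lap}, \ref{lem:app_rn_no_resonance}, \ref{lem:stationary_identity}, \ref{lem:app_compatible_closed} and \ref{lem:app_high_angular_coercivity}. These in turn are ODE/Wronskian arguments using Lemma~\ref{lem:app_rn_radial}.
\item High frequencies. Here Lemma~\ref{lem:semiclassical_spinone_order} supplies the normal form. Propositions~\ref{prop:r_normal_hyperbolicity} and \ref{prop:kn_disjointness}, Lemma~\ref{lem:trapped_skew_vanishing} and Proposition~\ref{prop:matrix_skew_threshold} verify \emph{(N1)}--\emph{(N4)}. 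Proposition~\ref{prop:nh_theorem_application} then applies Proposition~\ref{prop:nh_resolvent_form}, and Lemma~\ref{lem:high_freq_partition} with Proposition~\ref{prop:app_high_freq} finishes the exclusion.
\end{enumerate}
Combining the two regimes through Proposition~\ref{prop:compact_remainder_removal} and Proposition~\ref{prop:complete_master_closure_algebra}, with the Sterbenz--Tataru estimate of Lemma~\ref{lem:rn_model_morawetz} and Propositions~\ref{prop:redshift_coercivity} and \ref{prop:rp_identity}, gives \emph{(M1)}.

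Then \emph{(A4)} is Proposition~\ref{prop:same_order_reconstruction}, via Proposition~\ref{prop:middle_reconstruction} and the zero means of Proposition~\ref{prop:coulomb_elimination}. \emph{(A5)} is Proposition~\ref{prop:backward_from_lap}, whose compact-term absorption reuses exactly the bounded- and high-frequency exclusions above, together with the kernel statement of Proposition~\ref{prop:no_real_kernel}. Feeding these into the proof of Theorem~\ref{thm:intro_transfer}, and into Theorem~\ref{thm:scattering_transfer}, gives the conclusion.

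The main obstacle is checking that the chain is not circular. Proposition~\ref{prop:backward_from_lap} and Proposition~\ref{prop:no_real_kernel} invoke the (A3) closure, and Lemma~\ref{lem:app_compatible_closed} invokes the reconstruction of Proposition~\ref{prop:same_order_reconstruction}. I would settle this by ordering the dependencies explicitly.
\begin{enumerate}
\item Reconstruction on the smooth range uses only \emph{(A1)}, Hodge inversion and transport, so it comes first.
\item The closedness lemma uses only that reconstruction.
\item The bounded- and high-frequency exclusions use only the closedness lemma and the named estimates.
\item \emph{(M1)}, and then \emph{(A5)}, come last.
\end{enumerate}
With this order and the constant order of Proposition~\ref{prop:finite_order_closure_constants}, no step uses a later one.
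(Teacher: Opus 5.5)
Your proposal is correct and follows essentially the same route as the paper's proof. Both are bookkeeping walk-throughs that assign:
\begin{itemize}
\item the charge layer to Theorem~\ref{thm:intro_charge_decomposition} and Proposition~\ref{prop:charge_adapted_completion};
\item \emph{(A1)}--\emph{(A2)} to Corollary~\ref{cor:structural_reduction_verify};
\item \emph{(A3)} to Proposition~\ref{prop:app_bounded_freq} together with the semiclassical chain ending in Proposition~\ref{prop:nh_resolvent_form};
\item \emph{(A4)} to Proposition~\ref{prop:same_order_reconstruction};
\item \emph{(A5)} to Proposition~\ref{prop:backward_from_lap} with Lemma~\ref{lem:app_abstract_scattering}.
\end{itemize}
Your explicit acyclicity ordering (reconstruction, then closedness, then the two frequency exclusions, then \emph{(M1)} and \emph{(A5)}) is a useful check that the paper leaves implicit. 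The paper, for its part, additionally records that Lemma~\ref{lem:component_chargefree_approximation} passes the reconstruction estimates to finite-energy charge-free data.
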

\begin{proof}
The first two assertions are purely Maxwellian: the Coulomb representatives are normalized by \eqref{eq:stationary_normalization}, and Proposition~\ref{prop:charge_adapted_completion} identifies the finite-energy space with the closure of smooth constrained data with the two fluxes controlled. The closed master map itself is not derived from a Dudley-Finley decoupling assertion; it is precisely the structural condition \emph{(A1)}. Given \emph{(A1)}, Lemma~\ref{lem:maxwell_wave_principal_symbol}, Proposition~\ref{prop:scalar_principal_symbol}, and Corollary~\ref{cor:structural_reduction_verify} show that the closed operator has the scalar wave principal symbol and the short-range perturbative form required in \emph{(A2)}. Bounded real frequencies are closed by compactness and the Reissner-Nordstr\"om limiting-absorption estimate; this is Proposition~\ref{prop:app_bounded_freq}. Unbounded real frequencies are normalized semiclassically by Lemma~\ref{lem:semiclassical_spinone_order}, localized by Lemma~\ref{lem:high_freq_partition}, and excluded by Proposition~\ref{prop:nh_resolvent_form} after the trapped-set verification and the application statement of Proposition~\ref{prop:nh_theorem_application}. Thus \emph{(A3)} follows from the normally hyperbolic high-frequency estimate together with the bounded-frequency argument proved in Section~\ref{app:lap}.

Proposition~\ref{prop:same_order_reconstruction} proves \emph{(A4)} by the Hodge and null-transport estimates of Section~\ref{app:reconstruction}, under the same closed-reduction hypothesis that supplies the extreme master variables; the component approximation in Lemma~\ref{lem:component_chargefree_approximation} passes the resulting estimates to finite-energy charge-free data. Proposition~\ref{prop:backward_from_lap} constructs the dense backward right inverse and then applies the Hilbert-space criterion, Lemma~\ref{lem:app_abstract_scattering}. Thus \emph{(A5)} follows from the same limiting-absorption estimates. The additional pointwise condition \emph{(A6)} is used only for the decay statement and nowhere in the energy, local-energy, or scattering proof. These are exactly the conditions used in Theorem~\ref{thm:main}.
\end{proof}

\begin{proposition}
\label{prop:explicit_final_constants}
Suppose that the analytic master conclusions of Definition~\ref{def:analytic_master_conclusions} hold. Let $C_M$ be the master estimate constant, $C_R$ the same-order reconstruction and energy-comparison constant, $C_T^\pm$ the master trace bounds, $C_W^\pm$ the master wave-operator bounds, and $C_\infty^\pm$ the radiation-identification bounds. Then the constants in the charge-free Maxwell conclusions may be chosen so that
\begin{align}\label{eq:explicit_final_constants}
        C_{\mathrm{bd}}&=C_R^2C_M,\\
        \|\mathscr S_{\Max}^\pm\|&\le C_\infty^\pm C_T^\pm C_R^{1/2},\\
        \| (\mathscr S_{\Max}^\pm)^{-1}\|&\le C_R^{1/2}C_W^\pm\|(\mathcal R_\infty^\pm)^{-1}\|,\\
        \|\mathscr S_{\Max}\|&\le C_\infty^+ C_T^+ C_R^{1/2}\, C_R^{1/2}C_W^-\|(\mathcal R_\infty^-)^{-1}\|.
\end{align}
For general charged data the same constants apply to the radiative part, while the two charge coordinates are carried by the identity map on $\mathbb R^2$.
\end{proposition}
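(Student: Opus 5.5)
The four bounds follow by composing the estimates already on record along the factorizations used in Theorem~\ref{thm:intro_transfer} and Theorem~\ref{thm:scattering_transfer}, keeping track of square roots. We take the norm on $\Hc_{\Max,0}^{(k)}$ to be $\E_{\Max}^{(k)}(0)^{1/2}$. In \eqref{eq:framework_reconstruction_bounds} the energy comparisons are stated for squared quantities, so on the level of norms they contribute the factor $C_R^{1/2}$.

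\emph{Step 1: the constant $C_{\mathrm{bd}}$.} We repeat the chain of Proposition~\ref{prop:chargefree_transfer}:
\[
\norm{G}^2_{\Xnorm{k}_{\Max}}\le C_R\norm{u}^2_{\Xnorm{k}_M}\le C_RC_M\E_M^{(k)}[u](0)\le C_R^2C_M\E_{\Max}^{(k)}[G](0),
\qquad u=\mathfrak M G .
\]
This uses, in order, (M2), then Lemma~\ref{lem:master_immediate}, then the energy comparison. It gives $C_{\mathrm{bd}}=C_R^2C_M$. Proposition~\ref{prop:finite_energy_extension} passes the bound to finite-energy data without changing the constant, because lower semicontinuity (Lemma~\ref{lem:finite_slab_lsc}) does not enlarge constants.

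\emph{Step 2: the traces.} We use the factorization $\mathscr S^\pm_{\Max}=\mathcal R^\pm_\infty\mathscr S^\pm_M\mathfrak M$ from the proof of Proposition~\ref{prop:bounded_trace_transfer}. Its three factors have norms at most $C_\infty^\pm$, $C_T^\pm$ and $C_R^{1/2}$ respectively, which gives the stated trace bound.

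\emph{Step 3: the inverses.} We use the representation $(\mathscr S^\pm_{\Max})^{-1}=\mathfrak R\,\mathscr W^\pm_M(\mathcal R^\pm_\infty)^{-1}$ from Theorem~\ref{thm:scattering_transfer}. We estimate $\mathfrak R$ on the energy space by the second inequality of \eqref{eq:framework_reconstruction_bounds}, which gives $C_R^{1/2}$ on the norm. Together with $\norm{\mathscr W_M^\pm}\le C_W^\pm$ this gives the third line. This step needs the most care. The factor $\E_{\Max}^{(k)}[\mathfrak R u](0)\le C_R\E_M^{(k)}[u](0)$ must be applied at the initial slice $\Sigma_0$, where $\mathscr W_M^\pm$ lands in the master energy space. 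That is legitimate because (M2) states the energy bound at arbitrary $\tau_1$, in particular $\tau_1=0$.

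\emph{Step 4: the scattering operator and charged data.} The scattering bound follows from $\norm{AB}\le\norm{A}\norm{B}$ applied to $\mathscr S_{\Max}=\mathscr S^+_{\Max}(\mathscr S^-_{\Max})^{-1}$, using Steps 2 and 3. For charged data, Corollary~\ref{cor:scattering_with_charges} acts as the identity on $\Rbb^2$. The norm \eqref{eq:full_energy_norm} is an orthogonal sum, so the constants above apply verbatim to the radiative part.
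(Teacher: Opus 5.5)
Your proposal is correct and matches the paper's proof step for step: the chain $C_R\cdot C_M\cdot C_R$ for $C_{\mathrm{bd}}$, the factorizations $\mathscr S^\pm_{\Max}=\mathcal R^\pm_\infty\mathscr S^\pm_M\mathfrak M$ and $(\mathscr S^\pm_{\Max})^{-1}=\mathfrak R\,\mathscr W^\pm_M(\mathcal R^\pm_\infty)^{-1}$ (where the squared energy bounds of (M2) give $C_R^{1/2}$ on norms), submultiplicativity for $\mathscr S_{\Max}$, and the direct-sum splitting for charged data. Your extra remarks are harmless refinements that the paper leaves implicit: the lower-semicontinuity passage of $C_{\mathrm{bd}}$ to finite-energy data, and applying the $\mathfrak R$ energy bound at $\tau_1=0$.
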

\begin{proof}
The boundedness constant is obtained by the three-step computation
\[
        \|G\|_{\Xnorm{k}_{\Max}}^2
        \le C_R\|\mathfrak M G\|_{\Xnorm{k}_M}^2
        \le C_RC_M\E_M^{(k)}[\mathfrak M G](0)
        \le C_R^2C_M\E_{\Max}^{(k)}[G](0),
\]
which is Proposition~\ref{prop:chargefree_transfer}. For the trace, write
\[
        \mathscr S_{\Max}^\pm=\mathcal R_\infty^\pm\mathscr S_M^\pm\mathfrak M.
\]
The map $\mathfrak M$ has energy norm at most $C_R^{1/2}$ by \eqref{eq:framework_reconstruction_bounds}, giving the second line of \eqref{eq:explicit_final_constants}. For the inverse wave operator,
\[
        (\mathscr S_{\Max}^\pm)^{-1}=\mathfrak R\mathscr W_M^\pm(\mathcal R_\infty^\pm)^{-1},
\]
and the reconstruction energy bound gives the third line. The scattering operator is the composition $\mathscr S_{\Max}^+(\mathscr S_{\Max}^-)^{-1}$, giving the fourth line. The charged statement follows from the direct-sum decomposition of Theorem~\ref{thm:intro_charge_decomposition}.
\end{proof}

\begin{proposition}
\label{prop:expanded_transfer_calculation}
Let $G$ be a smooth charge-free Maxwell solution and set $u=\mathfrak M G$.
Assume the analytic master conclusions of Definition~\ref{def:analytic_master_conclusions}
through order $k$.  Then the bound \eqref{eq:main_energy} follows from the following
finite chain of inequalities, with no unmentioned compact or derivative-loss term:
\begin{align}
        \|G\|_{\Xnorm{k}_{\Max}(0,\tau)}^2
        &\le C_R\|u\|_{\Xnorm{k}_M(0,\tau)}^2,\label{eq:expanded_transfer_a}\\
        \|u\|_{\Xnorm{k}_M(0,\tau)}^2
        &\le C_M\E_M^{(k)}[u](0),\label{eq:expanded_transfer_b}\\
        \E_M^{(k)}[u](0)
        &\le C_R\E_{\Max}^{(k)}[G](0).\label{eq:expanded_transfer_c}
\end{align}
For finite-energy charge-free data the same estimate holds by approximation in
$\Hc_{\Max,0}^{(k)}(\Sigma_0)$.  For general finite-energy data it holds for
$G=F_{\rad}$ after the stationary charge projection.
\end{proposition}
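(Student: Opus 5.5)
The plan is to verify the three displayed inequalities one at a time, each against an explicit item of Definition~\ref{def:analytic_master_conclusions}, and then chain them. Throughout, fix a smooth charge-free source-free $G$ and set $u=\mathfrak M G$; by (A1) $u$ lies in the smooth range $\mathcal C_{M,\mathrm{sm}}^{(k)}$ and so is a compatible solution of $\Pb_{a,Q}u=0$.

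\eqref{eq:expanded_transfer_a} comes from same-order reconstruction. The inverse identity $\mathfrak R\mathfrak M G=G$ of (A4) holds on smooth charge-free solutions, so $G=\mathfrak R u$. The first bound in \eqref{eq:framework_reconstruction_bounds} (item (M2)), applied on the slab $[0,\tau]$, then gives \eqref{eq:expanded_transfer_a} directly.

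\eqref{eq:expanded_transfer_b} is Lemma~\ref{lem:master_immediate}, that is, (M1) with $\tau_1=0$ and $\tau_2=\tau$. The one thing to check is that the norm in \eqref{eq:master_norm} is dominated by the left side of \eqref{eq:framework_master_estimate}. It is: the two expressions contain exactly the same three terms (slice energies, $\B_M^{(k)}$, $r^p$ fluxes).

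\eqref{eq:expanded_transfer_c} is the energy comparison in (M2), equivalently \eqref{eq:master_hyp_energy_comparison}, evaluated at $\tau=0$.

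Chaining the three gives $\|G\|^2\le C_R^2C_M\E_{\Max}^{(k)}[G](0)$, which is \eqref{eq:main_energy} with the constant of Proposition~\ref{prop:explicit_final_constants}. The chain has no compact or derivative-loss term. Each step is stated at the same order $k$, and the compact remainder was already absorbed inside (M1) via Proposition~\ref{prop:compact_remainder_removal}. As in Proposition~\ref{prop:no_derivative_loss}, no order-$(k+1)$ norm appears.

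For finite-energy charge-free data I will follow Proposition~\ref{prop:finite_energy_extension}:
\begin{enumerate}
\item take smooth charge-free approximants from Lemma~\ref{lem:density_chargefree} or Lemma~\ref{lem:component_chargefree_approximation};
\item apply the chain to differences, so the solutions are Cauchy;
\item identify the limit with the solution from Proposition~\ref{prop:finite_energy_wellposed};
\item pass the bound to the limit by the lower semicontinuity of Lemma~\ref{lem:finite_slab_lsc}.
\end{enumerate}
For general data I will apply this to $G=F_{\rad}$, which is charge-free by Theorem~\ref{thm:intro_charge_decomposition} and Proposition~\ref{prop:stationary_subtraction}.

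The main obstacle is the finite-energy passage. Strong convergence of the initial data only yields weak convergence of the spacetime components, so I will rely on the lower semicontinuity argument, including the flux terms handled by Fatou's lemma. The smooth chain itself is bookkeeping.
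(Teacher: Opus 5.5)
Your proposal is correct and follows the paper's proof essentially step for step. Inequality (a) comes from (M2) with $G=\mathfrak R u$, inequality (b) is (M1) with $\tau_1=0$ and $\tau_2=\tau$, and inequality (c) is the energy comparison; chaining them gives $C_R^2C_M$, after which the density, difference-Cauchy, well-posedness and lower-semicontinuity passage of Proposition~\ref{prop:finite_energy_extension} is applied to $G=F_{\rad}$. One remark: the finite-energy passage is less of an obstacle than you suggest, because applying the chain to differences already gives strong convergence in $\Xnorm{k}_{\Max}(0,\tau)$, so Lemma~\ref{lem:finite_slab_lsc} is a convenience rather than a necessity.
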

\begin{proof}
The first inequality is the same-order reconstruction estimate in
Definition~\ref{def:analytic_master_conclusions}\emph{(M2)}. The reason is that
\eqref{eq:framework_reconstruction_bounds} is stated in the identical order-$k$
spacetime norm, so the angular Hodge inversion and the null transport estimates of
Proposition~\ref{prop:same_order_reconstruction} do not require an order-$(k+1)$
master bound. The second inequality is the master local-energy estimate
\emph{(M1)} with $\tau_1=0$, $\tau_2=\tau$.  Its compact remainder has already
been removed in Proposition~\ref{prop:compact_remainder_removal}: bounded temporal
frequencies are excluded by Proposition~\ref{prop:app_bounded_freq}, and unbounded
conic packets are excluded by Lemma~\ref{lem:high_freq_partition} using exactly
\eqref{eq:normally_hyperbolic_resolvent_bound}. The third inequality is the
initial energy comparison in \eqref{eq:framework_reconstruction_bounds}. Multiplying
\eqref{eq:expanded_transfer_a}-\eqref{eq:expanded_transfer_c} gives
\[
        \|G\|_{\Xnorm{k}_{\Max}(0,\tau)}^2
        \le C_R^2C_M\E_{\Max}^{(k)}[G](0),
\]
which is \eqref{eq:main_energy} for smooth charge-free data.

Let now $G_0\in\Hc_{\Max,0}^{(k)}(\Sigma_0)$.  Choose smooth charge-free data
$G_{0,n}$ converging to $G_0$ by Lemma~\ref{lem:density_chargefree}, and denote the
solutions by $G_n$.  The estimate above applied to differences gives
\[
        \|G_n-G_m\|_{\Xnorm{k}_{\Max}(0,\tau)}^2
        \le C_R^2C_M\E_{\Max}^{(k)}[G_{0,n}-G_{0,m}](0),
\]
so $(G_n)$ is Cauchy in the local-energy topology on every finite slab. The
finite-energy well-posedness result, Proposition~\ref{prop:finite_energy_wellposed},
identifies the limit with the Maxwell solution from $G_0$, and Lemma~\ref{lem:finite_slab_lsc}
gives the corresponding bound for the limit. In the final step, if $F$ has charges,
Theorem~\ref{thm:intro_charge_decomposition} writes $F=F_{\stat}^{\KN}+F_{\rad}$;
the stationary part is not included in the local-energy decay statement, and
$F_{\rad}$ is charge-free, so the preceding argument applies to it.
\end{proof}

\begin{proposition}
\label{prop:analytic_ingredients_transfer}
At the finite commutation order fixed in Theorem~\ref{thm:main}, every estimate
used in the transfer proof is obtained in one of the following ways:
\begin{enumerate}[label=\emph{(\roman*)},leftmargin=2.4em]
\item it is proved directly from the Maxwell equations and finite-energy Cauchy
      theory in Sections~\ref{sec:geometry}-\ref{sec:energy_spaces};
\item it is an explicit coefficient, trapped-set, Hodge, transport or Hilbert-space
      computation proved in Sections~\ref{sec:decoupling}-\ref{app:scattering_criterion};
\item it is one of the published spherical/red-shift/$r^p$/limiting-absorption
      estimates cited in the statement in which it is used; or
\item it is the normally hyperbolic high-frequency resolvent theorem,
      Proposition~\ref{prop:nh_resolvent_form}, applied to the compatible
      scalar-principal-symbol spin-one operator after the verifications above.
\end{enumerate}
No further real-mode exclusion, charge constraint, radiation completeness,
or derivative-losing reconstruction enters the proof of Theorem~\ref{thm:main}.
The additional hierarchy \emph{(A6)} is used only for the pointwise bound
\eqref{eq:pointwise_decay}.
\end{proposition}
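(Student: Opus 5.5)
The proof is a bookkeeping audit. I would walk through the transfer proof of Theorem~\ref{thm:main} in the order it is assembled, and attach each estimate to exactly one of the four classes (i)--(iv). The chain is the one already made explicit in Proposition~\ref{prop:expanded_transfer_calculation}: the three inequalities \eqref{eq:expanded_transfer_a}--\eqref{eq:expanded_transfer_c}, the finite-energy passage, and then the radiation and scattering maps. For each link I would name the source result and check that it contains no hidden ingredient beyond its class.

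\textbf{Class (i).} The purely Maxwellian steps come first and belong to class (i):
\begin{itemize}
\item charge conservation and continuity (Propositions~\ref{prop:charge_conservation} and~\ref{prop:finite_energy_charge_trace});
\item the stationary representatives (Lemma~\ref{lem:stationary_representatives});
\item the bounded projection and the completion (Propositions~\ref{prop:bounded_projection} and~\ref{prop:charge_adapted_completion});
\item density (Lemmas~\ref{lem:density_chargefree} and~\ref{lem:component_chargefree_approximation});
\item finite-energy well-posedness (Propositions~\ref{prop:finite_energy_wellposed} and~\ref{prop:closed_solution});
\item lower semicontinuity (Lemma~\ref{lem:finite_slab_lsc}).
\end{itemize}
Each of these uses only Stokes' theorem, the stress-energy identity, symmetric-hyperbolic energy estimates and slice elliptic theory.

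\textbf{Class (ii).} These are the explicit computations:
\begin{itemize}
\item the scalar principal symbol and coefficient comparison (Lemma~\ref{lem:maxwell_wave_principal_symbol}, Proposition~\ref{prop:scalar_principal_symbol}, Lemma~\ref{lem:app_inverse_expansion});
\item the trapped-set algebra (Propositions~\ref{prop:kn_disjointness}, \ref{prop:kn_nondegeneracy} and~\ref{prop:r_normal_hyperbolicity});
\item the skew cancellation and matrix threshold (Lemma~\ref{lem:trapped_skew_vanishing}, Proposition~\ref{prop:matrix_skew_threshold});
\item the Hodge and transport reconstruction (Lemma~\ref{lem:app_hodge}, Propositions~\ref{prop:app_transport} and~\ref{prop:same_order_reconstruction});
\item the Hilbert trace criterion (Lemma~\ref{lem:app_abstract_scattering});
\item the finite-order induction algebra (Lemma~\ref{lem:finite_order_absorption_induction}, Proposition~\ref{prop:complete_master_closure_algebra}).
\end{itemize}

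\textbf{Class (iii).} These are the cited estimates, each used only in the displayed form where it appears: Lemma~\ref{lem:rn_model_morawetz} (Sterbenz--Tataru), Propositions~\ref{prop:redshift_coercivity} and~\ref{prop:rp_identity} (Dafermos--Rodnianski), and the one-dimensional limiting-absorption and Jost theory behind Lemma~\ref{lem:app_rn_lap}.

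\textbf{Class (iv).} This is Proposition~\ref{prop:nh_resolvent_form}. It is invoked only via Proposition~\ref{prop:nh_theorem_application}, at the single point recorded in Proposition~\ref{prop:hf_estimate_scope}.

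\textbf{The negative assertions.} Next I would verify that no further real-mode exclusion, charge constraint, radiation completeness or lossy reconstruction enters.
\begin{itemize}
\item \emph{Real-mode exclusion.} Both the bounded-frequency exclusion and the kernel condition used in Proposition~\ref{prop:backward_from_lap} are proved internally: by Proposition~\ref{prop:app_bounded_freq} together with Lemmas~\ref{lem:app_rn_no_resonance} and~\ref{lem:stationary_identity}, which are ODE/Wronskian arguments in class (ii)/(iii). The high-frequency branch is class (iv). So Proposition~\ref{prop:no_real_kernel} draws on nothing outside the list.
\item \emph{Radiation completeness.} (A5) is derived in Proposition~\ref{prop:backward_from_lap} from the same resolvent estimates, so it is not imported.
\item \emph{Charge constraint.} Charge removal is Theorem~\ref{thm:intro_charge_decomposition}, which is class (i) and is applied before $\mathfrak M$.
\item \emph{Reconstruction loss.} Reconstruction is at the same order, by Proposition~\ref{prop:no_derivative_loss} and Remark~\ref{rem:app_no_loss}.
\end{itemize}
Finally, (A6) appears only in Corollary~\ref{cor:commuted_hierarchy_closure} and Proposition~\ref{prop:pointwise_decay}. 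I would confirm this by citing Proposition~\ref{prop:finite_order_closure_constants}, which already records that the energy, local-energy, radiation and scattering estimates never call on it.

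\textbf{Main obstacle.} The hard step is the compact-remainder removal, Proposition~\ref{prop:compact_remainder_removal}. It threads through Lemma~\ref{lem:localized_compact_defect}, the compatibility closedness results (Lemma~\ref{lem:app_compatible_closed}, Proposition~\ref{prop:compatibility_microlocal_compactness}) and both frequency branches. I would have to check that its defect-selection and closedness arguments do not tacitly assume mode stability or exact preservation of compatibility under microlocal cutoffs. The route I would try first is to lean on Lemma~\ref{lem:compatibility_not_localized}, which shows the cutoffs are used only to locate defect measures. With that, the compactness step rests on class (ii) closedness and class (iii)/(iv) estimates alone.
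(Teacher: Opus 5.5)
Your proposal follows essentially the same route as the paper's proof, a citation audit. It assigns (i)--(ii) to the numbered Maxwell and computational results, and routes the spherical, red-shift and $r^p$ inputs through Lemma~\ref{lem:rn_model_morawetz} and Propositions~\ref{prop:redshift_coercivity} and~\ref{prop:rp_identity}. It closes the bounded-frequency branch via Propositions~\ref{prop:compact_remainder_removal} and~\ref{prop:app_bounded_freq} with the Reissner--Nordstr\"om no-resonance/limiting-absorption lemmas, and the unbounded branch via Lemma~\ref{lem:high_freq_partition} and Proposition~\ref{prop:nh_resolvent_form}. It obtains the radiation isomorphism from Proposition~\ref{prop:backward_from_lap} and Lemma~\ref{lem:app_abstract_scattering}, and confines \emph{(A6)} to the pointwise-decay route.

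Your version is more granular. The one blemish is cosmetic: some results you place in class (i), such as Lemmas~\ref{lem:finite_slab_lsc} and~\ref{lem:component_chargefree_approximation}, sit outside Sections~\ref{sec:geometry}--\ref{sec:energy_spaces}, though they are the same kind of Maxwell/Cauchy-theory argument.
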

\begin{proof}
Items \emph{(i)} and \emph{(ii)} are the numbered conclusions in
Proposition~\ref{prop:transfer_hypothesis_order} and in the computations of
Sections~\ref{sec:decoupling}-\ref{app:scattering_criterion}. The spherical local-energy,
red-shift and $r^p$ estimates are used only through Lemma~\ref{lem:rn_model_morawetz},
Proposition~\ref{prop:redshift_coercivity}, and Proposition~\ref{prop:rp_identity};
after these inequalities are fixed, the absorption is the algebraic estimate of
Proposition~\ref{prop:absorption_small}. The bounded-frequency real-axis part is
closed by Propositions~\ref{prop:compact_remainder_removal} and
Proposition~\ref{prop:app_bounded_freq}; a putative compact defect would converge to an outgoing
or incoming resonance, which is excluded by Lemmas~\ref{lem:app_rn_no_resonance}
and~\ref{lem:rn_limiting_absorption}. The unbounded-frequency part is precisely the
semiclassical alternative of Lemma~\ref{lem:high_freq_partition}; the estimate
used there beyond the geometric computations is Proposition~\ref{prop:nh_resolvent_form},
which gives \eqref{eq:normally_hyperbolic_resolvent_bound}. Same-order reconstruction is
Proposition~\ref{prop:same_order_reconstruction}, and the radiation isomorphism is
obtained from Proposition~\ref{prop:backward_from_lap} and the Hilbert criterion
Lemma~\ref{lem:app_abstract_scattering}. These alternatives cover every implication in
the proof of Theorem~\ref{thm:main}; the hierarchy \emph{(A6)} is cited only in
Proposition~\ref{prop:pointwise_decay}.
\end{proof}

\begin{theorem}
\label{thm:main}
Let us fix an integer $k$ and a slow-weak Kerr-Newman exterior. Assume that conditions \emph{(A1)-(A5)} of Definition~\ref{def:slowweak_master_framework} hold at order $k$. Then every finite-energy source-free Maxwell field $F$ has the unique decomposition
\begin{equation}\label{eq:main_decomp}
        F=F_{\stat}^{\KN}\big(q_E[F],q_B[F]\big)+F_{\rad},\qquad q_E[F_{\rad}]=q_B[F_{\rad}]=0,
\end{equation}
and the radiative part satisfies, for all $\tau\ge0$,
\begin{equation}\label{eq:main_energy}
        \norm{F_{\rad}}_{\Xnorm{k}_{\Max}(0,\tau)}^2\le C\,\E_{\Max}^{(k)}[F_{\rad}](0)
\end{equation}
and the analogous past estimate. Future and past radiation fields exist on $\Ip\cup\Hp$ and $\Imn\cup\Hm$; the radiation maps $\mathscr S_{\Max}^\pm:\Hc_{\Max,0}^{(k)}(\Sigma_0)\to\Rc_{\Max,\pm}^{(k)}$ are bounded isomorphisms with bounded inverses; and the scattering operator is bounded. If Definition~\ref{def:slowweak_master_framework}\emph{(A6)} is available and $k$ is large enough for \eqref{eq:commuted_decay}, then $F_{\rad}$ satisfies the pointwise decay \eqref{eq:pointwise_decay}.
\end{theorem}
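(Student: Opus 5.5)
The proof of Theorem~\ref{thm:main} is an assembly argument in four steps: decomposition, master estimate, transfer to Maxwell, and scattering.

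\textbf{Decomposition.} The splitting \eqref{eq:main_decomp} is exactly Theorem~\ref{thm:intro_charge_decomposition}. Uniqueness and the vanishing of the charges of $F_{\rad}$ come from the normalization \eqref{eq:stationary_normalization} and from continuity of $q_E,q_B$ (Proposition~\ref{prop:finite_energy_charge_trace}). Evolution of the finite-energy data is provided by Proposition~\ref{prop:finite_energy_wellposed}. This step uses none of (A1)--(A5).

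\textbf{Master estimate (M1).} From (A1)--(A3) we obtain \emph{(M1)} through Corollary~\ref{cor:perturbative_closure}. Concretely:
\begin{enumerate}[label=\emph{(\roman*)},leftmargin=2.4em]
\item Write $\Pb_{a,Q}=\Pb_{\RN,Q}+\mathcal E_{a,Q}$ as in (A2).
\item Apply the Reissner--Nordstr\"om model estimate, Lemma~\ref{lem:rn_model_morawetz}, to each commuted field $\Gamma^Iu$, using the source split \eqref{eq:complete_commuted_source_split}.
\item Absorb the $O(|a|/M)$ top-order error and the commutator terms \eqref{eq:strict_lower_order_induction} by induction on the order $j\le k$, as in Proposition~\ref{prop:complete_master_closure_algebra} with constants fixed as in Proposition~\ref{prop:constant_choice}.
\item Remove the compact remainder with Proposition~\ref{prop:compact_remainder_removal}.
\end{enumerate}

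\textbf{Main obstacle: the compact remainder.} I expect step (iv) to be the hardest. A failure of compact control produces a defect sequence through Lemma~\ref{lem:localized_compact_defect}, and this sequence must stay inside the closed compatible class, which is Proposition~\ref{prop:compatibility_microlocal_compactness}. The two frequency regimes are then handled separately:
\begin{itemize}
\item \emph{Bounded frequencies.} The sequence is excluded by Proposition~\ref{prop:app_bounded_freq}. That argument reduces to the Reissner--Nordstr\"om no-real-resonance statement, Lemma~\ref{lem:app_rn_no_resonance}, and to the stationary coercivity of Lemma~\ref{lem:stationary_identity}.
\item \emph{Unbounded frequencies.} The sequence is excluded by Proposition~\ref{prop:app_high_freq}, which uses the normally hyperbolic bound \eqref{eq:normally_hyperbolic_resolvent_bound}. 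That bound is available because the trapped geometry and the threshold conditions are verified in Propositions~\ref{prop:r_normal_hyperbolicity} and~\ref{prop:matrix_skew_threshold}.
\end{itemize}
The delicate point is not to let the logarithmic loss fall on physical commutators. I would handle this exactly as in Proposition~\ref{prop:hf_commuted_closure}, working with graph norms.

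\textbf{Transfer to Maxwell and scattering.} For smooth charge-free $G=F_{\rad}$, set $u=\mathfrak M G$ and chain three estimates:
\begin{enumerate}[label=\emph{(\alph*)},leftmargin=2.4em]
\item \eqref{eq:master_hyp_energy_comparison}, from (A1);
\item \emph{(M1)}, from the previous step;
\item the same-order reconstruction bound \eqref{eq:master_hyp_reconstruction_bound}, from (A4), together with $G=\mathfrak R u$.
\end{enumerate}
This is Proposition~\ref{prop:expanded_transfer_calculation}, and it gives \eqref{eq:main_energy} with $C=C_R^2C_M$. The estimate extends to all of $\Hc_{\Max,0}^{(k)}$ by density (Lemma~\ref{lem:density_chargefree}) and lower semicontinuity (Lemma~\ref{lem:finite_slab_lsc}). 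The past estimate follows by time reversal.

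The radiation maps are bounded by Proposition~\ref{prop:bounded_trace_transfer}. They are isomorphisms by Theorem~\ref{thm:scattering_transfer}, which combines (A5) (or Proposition~\ref{prop:backward_from_lap}) with the Hilbert criterion Lemma~\ref{lem:app_abstract_scattering}. The scattering operator is bounded as the composition of these maps. Finally, if (A6) holds, Corollary~\ref{cor:commuted_hierarchy_closure} and Proposition~\ref{prop:pointwise_decay} give \eqref{eq:pointwise_decay}.
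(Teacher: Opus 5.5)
Your proposal is correct and follows essentially the same route as the paper: the charge decomposition from the normalization \eqref{eq:stationary_normalization}; \emph{(M1)} from \emph{(A1)}--\emph{(A3)}, which the paper packages as Proposition~\ref{prop:framework_consequences}; the three-step chain giving $C=C_R^2C_M$, extended by density and lower semicontinuity; and the radiation and scattering statements via Proposition~\ref{prop:bounded_trace_transfer} and Theorem~\ref{thm:scattering_transfer}. The only difference is that you re-derive the compact-remainder removal through Propositions~\ref{prop:app_bounded_freq} and~\ref{prop:app_high_freq}, whereas under the theorem's hypotheses the paper simply invokes the assumed no-defect content of \emph{(A3)}, which already suffices.
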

\begin{proof}
The decomposition is defined by \eqref{eq:intro_rad}. Proposition~\ref{prop:stationary_subtraction} gives that $F_{\rad}$ is source-free and has zero electric and magnetic charges. If two decompositions existed, their difference would be a stationary representative $q_EF_e+q_BF_m$ with both charges zero; the normalization \eqref{eq:stationary_normalization} forces $q_E=q_B=0$, proving uniqueness.

Proposition~\ref{prop:framework_consequences} converts conditions \emph{(A1)-(A5)} of Definition~\ref{def:slowweak_master_framework} into the analytic conclusions of Definition~\ref{def:analytic_master_conclusions}. Hence the master variables of $F_{\rad}$ obey the master estimate and reconstruct the Maxwell tensor at the same order. Proposition~\ref{prop:chargefree_transfer}, with the constant calculation made explicit in Proposition~\ref{prop:explicit_final_constants}, then gives the energy and integrated local-energy bound \eqref{eq:main_energy}; Proposition~\ref{prop:finite_energy_extension} extends the estimate from smooth charge-free data to the finite-energy completion. Proposition~\ref{prop:bounded_trace_transfer} gives the radiation trace bounds. Theorem~\ref{thm:scattering_transfer} gives the bounded inverse wave operators and the scattering map by composing the master radiation isomorphism with the reconstruction isomorphisms. If the additional hierarchy condition \emph{(A6)} is available, Proposition~\ref{prop:pointwise_decay} applied to $G=F_{\rad}$ gives \eqref{eq:pointwise_decay} whenever the hierarchy has enough derivatives for Sobolev embedding.
\end{proof}

\begin{corollary}
\label{cor:reduced_conditions}
Let $F$ be a source-free \emph{test} Maxwell field on a slow-weak Kerr-Newman exterior,
$|a|\le\eps_a(k)M$, $|Q|\le\eps_Q(k)M$.  Suppose that the closed regular spin-one reduction in Definition~\ref{def:slowweak_master_framework}\emph{(A1)} is available and that the high-frequency estimate \emph{(A3)} holds in the localized form of Proposition~\ref{prop:nh_resolvent_form}. Then the remaining transfer conditions are supplied as follows: the scalar-principal-symbol and Reissner-Nordstr\"om comparison parts of \emph{(A1)}-\emph{(A2)} follow from Corollary~\ref{cor:structural_reduction_verify} and Proposition~\ref{prop:principal_master}; the bounded-frequency part of \emph{(A3)} follows from Proposition~\ref{prop:app_bounded_freq} and Lemma~\ref{lem:app_rn_lap}; the same-order reconstruction \emph{(A4)} is Proposition~\ref{prop:same_order_reconstruction}; and the radiation condition \emph{(A5)} follows from Proposition~\ref{prop:backward_from_lap}. As a consequence, the conclusions of Theorem~\ref{thm:main}, the charge decomposition, the energy and integrated local-energy bound \eqref{eq:main_energy}, the existence of past and future radiation fields, the bounded wave operators, and the bounded scattering operator, hold in this closed-reduction class. In the Kerr subcase $Q=0$ the closed spin-one reduction is the Teukolsky system, and the same estimate is also supplied by the cited spin-weighted Kerr theory \cite{BenomioTdC,SRTdCfrequency,SRTdCphysical}.
\end{corollary}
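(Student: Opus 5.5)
The corollary is essentially bookkeeping: I would verify, one at a time, that each hypothesis (A1)--(A5) of Theorem~\ref{thm:main} is either assumed in the statement or supplied by a numbered result proved earlier, and then invoke Theorem~\ref{thm:main}. The two genuinely assumed inputs are the closed spin-one reduction in (A1) and the localized normally hyperbolic estimate (the high-frequency part of (A3)), in the form of Proposition~\ref{prop:nh_resolvent_form}. Everything else should be derived from these.

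\textbf{(A1)--(A2).} Given the closed reduction, Corollary~\ref{cor:structural_reduction_verify} supplies three things:
\begin{enumerate}
\item the scalar principal symbol $g_{\KN}^{\mu\nu}\xi_\mu\xi_\nu I_2$, via Lemma~\ref{lem:maxwell_wave_principal_symbol} and Proposition~\ref{prop:scalar_principal_symbol};
\item the energy comparison \eqref{eq:master_hyp_energy_comparison}, via Lemma~\ref{lem:frame_energy_equiv};
\item the Reissner-Nordstr\"om comparison with error classes \eqref{eq:lower_order_master}.
\end{enumerate}
Proposition~\ref{prop:principal_master} and Lemma~\ref{lem:commuted_master} then give \eqref{eq:master_hyp_error_bound} and the strict lower-order commutator bound \eqref{eq:strict_lower_order_induction}.

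\textbf{(A3).} I would split this by frequency.
\begin{itemize}
\item Bounded total frequency: Proposition~\ref{prop:app_bounded_freq} gives the closure, using the RN limiting absorption of Lemma~\ref{lem:app_rn_lap}, the no-resonance result of Lemma~\ref{lem:app_rn_no_resonance}, and closedness of the compatible class from Lemma~\ref{lem:app_compatible_closed}.
\item Unbounded total frequency: the assumed Proposition~\ref{prop:nh_resolvent_form} applies once conditions (N1)--(N4) are checked. That check is Proposition~\ref{prop:nh_theorem_application}, which in turn uses Propositions~\ref{prop:r_normal_hyperbolicity}, \ref{prop:kn_disjointness} and~\ref{prop:matrix_skew_threshold} and Lemma~\ref{lem:trapped_skew_vanishing}. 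Proposition~\ref{prop:app_high_freq} then excludes high-frequency defects.
\end{itemize}
Lemma~\ref{lem:slowweak_resolvent_closure} and Proposition~\ref{prop:no_real_kernel} combine the two branches into the full statement of (A3).

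\textbf{(A4) and (A5).} Same-order reconstruction (A4) is Proposition~\ref{prop:same_order_reconstruction}, built on Proposition~\ref{prop:middle_reconstruction}, and extended to finite energy by Proposition~\ref{prop:component_finite_energy_reconstruction}. Condition (A5) is case (b) of Proposition~\ref{prop:backward_from_lap}. Its hypothesis is exactly the high-frequency bound \eqref{eq:normally_hyperbolic_resolvent_bound}, which we have already assumed. The zero-kernel part comes from Proposition~\ref{prop:no_real_kernel}.

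With (A1)--(A5) in place, Theorem~\ref{thm:main} yields the listed conclusions: the charge decomposition, the energy and local-energy bound \eqref{eq:main_energy}, the radiation fields, and the wave and scattering operators. For $Q=0$, I would note that the Teukolsky system is exact, so (A1) holds automatically, and cite the Kerr literature as the corollary statement does.

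The main obstacle is making sure the chain is not circular. Proposition~\ref{prop:backward_from_lap}(b) and the compact-remainder removal in Proposition~\ref{prop:compact_remainder_removal} must use only the assumed high-frequency estimate and the bounded-frequency results, and must not rely on (A5) or on (M1) themselves. I would check this first by tracing their cited dependencies. I would also confirm that the slow-rotation thresholds, chosen in the order fixed by Proposition~\ref{prop:constant_choice}, are compatible across all the steps above.
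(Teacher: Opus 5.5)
Your proposal is correct and follows the paper's own proof: the same bookkeeping assigns (A1)--(A2) to Corollary~\ref{cor:structural_reduction_verify} and Proposition~\ref{prop:principal_master}. The bounded-frequency branch of (A3) goes to Proposition~\ref{prop:app_bounded_freq} with Lemma~\ref{lem:app_rn_lap}, and the high-frequency branch to the assumed Proposition~\ref{prop:nh_resolvent_form} applied through Proposition~\ref{prop:nh_theorem_application}; (A4) is Proposition~\ref{prop:same_order_reconstruction}, (A5) is Proposition~\ref{prop:backward_from_lap}, and then Theorem~\ref{thm:main} is invoked. The only ingredient the paper names that your outline leaves implicit in the high-frequency branch is Proposition~\ref{prop:compatibility_microlocal_compactness}, which keeps the selected defect profiles inside the closed charge-free compatible class.
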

\begin{proof}
The corollary keeps the only non-Maxwell structural condition explicit. The closed master map and its compatible class are assumed in \emph{(A1)}. Once that map is present, Lemma~\ref{lem:maxwell_wave_principal_symbol}, Proposition~\ref{prop:scalar_principal_symbol}, Corollary~\ref{cor:structural_reduction_verify}, and Proposition~\ref{prop:principal_master} verify the scalar principal symbol, energy comparison, and short-range Reissner-Nordstr\"om perturbation structure. The bounded-frequency part of the real-axis exclusion is Proposition~\ref{prop:app_bounded_freq} together with Lemma~\ref{lem:app_rn_lap}. The high-frequency part of \emph{(A3)} is the localized normally hyperbolic estimate, Proposition~\ref{prop:nh_resolvent_form}, applied through Proposition~\ref{prop:nh_theorem_application}: Proposition~\ref{prop:r_normal_hyperbolicity} verifies the trapped-set geometry, Lemma~\ref{lem:trapped_skew_vanishing} and Proposition~\ref{prop:matrix_skew_threshold} verify the finite-rank-bundle skew-subprincipal threshold, and Proposition~\ref{prop:compatibility_microlocal_compactness} keeps the defect profiles in the closed compatible class. Proposition~\ref{prop:same_order_reconstruction} proves \emph{(A4)}, Proposition~\ref{prop:backward_from_lap} derives \emph{(A5)}, and Theorem~\ref{thm:main} applies. For $Q=0$, the Teukolsky reduction and the high-frequency estimate are also supplied by \cite{BenomioTdC,DRSRKerr,SRTdCfrequency,SRTdCphysical}.
\end{proof}

\begin{corollary}
\label{cor:slow_weak}
The constant in \eqref{eq:main_energy} and the operator norms of the radiation
maps and wave operators are bounded uniformly on compact subsets of
\eqref{eq:slow_weak_range} for which $\eps_a(k),\eps_Q(k)$ are fixed.
\end{corollary}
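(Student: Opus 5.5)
The plan is to follow the constants through the chain of finitely many estimates that produce the bound \eqref{eq:main_energy} and the operator norms, and to show each one is locally bounded in the parameters. First I would fix $k$ and the thresholds $\eps_Q(k)$, $\eps_a(k)$ in the order of Proposition~\ref{prop:finite_order_closure_constants}. Then I would take a compact set $\mathcal K$ of parameters $(M,a,Q)$ inside \eqref{eq:slow_weak_range}. By Proposition~\ref{prop:explicit_final_constants}, the constant in \eqref{eq:main_energy} is $C_R^2C_M$. The radiation and wave-operator norms are products of $C_R^{1/2}$, $C_T^\pm$, $C_W^\pm$, $C_\infty^\pm$ and $\|(\mathcal R_\infty^\pm)^{-1}\|$. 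Corollary~\ref{cor:uniformity} reduces the claim to showing that the collection $C_{\mathrm{sw}}$ is bounded on $\mathcal K$, since finite sums and products of bounded constants stay bounded.

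Next I would check each entry of $C_{\mathrm{sw}}$ separately.
\begin{enumerate}
\item The energy-comparison and reconstruction constant $C_R$ comes from Lemma~\ref{lem:frame_energy_equiv}, the Hodge constants of Lemma~\ref{lem:app_hodge}, and the transport estimate of Proposition~\ref{prop:app_transport}. All of these are stated to be uniform once the sphere metrics form a compact smooth family and the frame coefficients depend smoothly on $(a,Q)$. Both conditions hold on $\mathcal K$ because $g_\KN$, the regular foliation and $N$ depend smoothly on the parameters, and $\mathcal K$ stays away from extremality.
\item The master constant $C_M$ comes from Proposition~\ref{prop:complete_master_closure_algebra}. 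Its inputs are $C_{\RN,k}$, which is uniform for $|Q|\le\eps_QM$ by Lemma~\ref{lem:rn_model_morawetz}, together with the red-shift and $r^p$ constants. These are controlled through $\kappa_+$, $r_+$ and $r_{\mathrm{ph}}$, which vary continuously and stay non-degenerate on $\mathcal K$.
\item The same constant $C_M$ also depends on the compact-remainder constants $C_{j,\theta}$ of \eqref{eq:complete_compact_remainder}.
\item The trace constants $C_T^\pm$ and $C_\infty^\pm$ follow from $C_M$ by Lemma~\ref{lem:radiation_trace} and Corollary~\ref{cor:horizon_flux}.
\item The wave-operator constants $C_W^\pm$ come from the uniform slab bound \eqref{eq:backward_uniform_slab_bound} in Proposition~\ref{prop:backward_from_lap}.
\end{enumerate}

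The main obstacle is the third item, together with the compact-term absorption inside \eqref{eq:backward_uniform_slab_bound}. These constants come from contradiction arguments, not explicit formulas, so their uniformity has to be argued separately. I would handle this by running the contradiction in Proposition~\ref{prop:compact_remainder_removal} with parameters allowed to vary in $\mathcal K$. Suppose the constants are unbounded along a sequence $(a_n,Q_n)\in\mathcal K$. Pass to a convergent subsequence, whose limit lies in $\mathcal K$ by compactness. Lemma~\ref{lem:localized_compact_defect} then yields a defect sequence with varying parameters. This is exactly the setting of \eqref{eq:master_hyp_no_defect_sequence} in (A3), which already allows $(a_n,Q_n)$ to converge to a slow-weak limit. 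The bounded-frequency branch is then excluded by the argument of Proposition~\ref{prop:app_bounded_freq}, which passes to the limit with varying parameters. The high-frequency branch is excluded by Proposition~\ref{prop:app_high_freq}, which is stated for varying parameters, with the normally hyperbolic constants uniform by Proposition~\ref{prop:r_normal_hyperbolicity}(iii) and Proposition~\ref{prop:nh_resolvent_form}. This contradiction gives a uniform $C_{j,\theta}$ on $\mathcal K$, and the same argument gives a uniform $C_W^\pm$.

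Combining these bounds through Proposition~\ref{prop:explicit_final_constants} gives the corollary.
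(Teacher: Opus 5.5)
Your proposal is correct and follows essentially the same route as the paper. Both reduce, via Proposition~\ref{prop:explicit_final_constants}, to a finite list of constants and show each is bounded on a compact parameter set. Your compactness-in-parameters contradiction for the compact-remainder and backward-construction constants uses the varying-parameter form of \eqref{eq:master_hyp_no_defect_sequence}, and it justifies more carefully what the paper compresses into ``upper semicontinuously after taking finite suprema.'' One bookkeeping slip: $C_\infty^\pm$ and $\|(\mathcal R_\infty^\pm)^{-1}\|$ are the radiation-identification bounds supplied uniformly by \emph{(A5)} and the reconstruction; they do not follow from $C_M$ via Lemma~\ref{lem:radiation_trace} and Corollary~\ref{cor:horizon_flux}, which only give $C_T^\pm$.
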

\begin{proof}
The constants in the final estimate are built from a finite list: the constants
in the Reissner-Nordstr\"om model estimate, the red-shift and far-field
constants, the Hardy and elliptic constants, the perturbative absorption
thresholds, and the reconstruction and trace bounds. These constants depend
continuously, or upper semicontinuously after taking finite suprema, on
$(a,Q)$ as long as the slow-weak inequalities are strict. On compact subsets of
that range the finite list is bounded, and the formula in
Proposition~\ref{prop:explicit_final_constants} gives uniform bounds for the
Maxwell estimates and operator norms.
\end{proof}

\begin{corollary}
\label{cor:main_kerr}
For $Q=0$ and $|a|\le\eps_a(k)M$, the conclusions of Theorem~\ref{thm:main}
hold whenever the Kerr spin-one boundedness, decay, radiation, and no-loss
reconstruction estimates listed in Definition~\ref{def:slowweak_master_framework}
are supplied. In particular, this recovers the slowly rotating part of the known
fixed-background Kerr Maxwell theory.
\end{corollary}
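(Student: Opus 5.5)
The plan is to derive Corollary~\ref{cor:main_kerr} as the specialization $Q=0$ of Theorem~\ref{thm:main}. At $Q=0$ the Kerr-Newman metric \eqref{eq:kn_metric} reduces to Kerr, and the comparison operator $\Pb_{\RN,Q}$ reduces to the Schwarzschild spin-one operator. The slow-weak range \eqref{eq:slow_weak_range} is then simply $|a|\le\eps_a(k)M$, since $|Q|=0\le\eps_Q(k)M$ holds trivially and $a^2<M^2$ follows from $\eps_a(k)<1$. All that is needed is to verify that conditions \emph{(A1)}--\emph{(A5)} of Definition~\ref{def:slowweak_master_framework} hold at order $k$ under the stated hypotheses, and then to invoke Theorem~\ref{thm:main}.

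The first step is \emph{(A1)}. For $Q=0$ the Teukolsky spin $\pm1$ equations are exact, so the closed spin-one reduction is not an extra structural assumption here. The horizon-regular rescalings of Definition~\ref{def:master_from_teukolsky} give the master map $\mathfrak M$ and the closed compatible class. Lemma~\ref{lem:maxwell_wave_principal_symbol} and Proposition~\ref{prop:scalar_principal_symbol} give the scalar principal symbol, and Corollary~\ref{cor:structural_reduction_verify}(v) gives the energy comparison \eqref{eq:master_hyp_energy_comparison}. Condition \emph{(A2)} is Proposition~\ref{prop:principal_master} with $Q=0$, where Lemma~\ref{lem:app_inverse_expansion} gives $G_1^{t\phi}=-2M/r^3$. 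The supplied Kerr boundedness and decay estimates furnish \emph{(M1)} directly; if one prefers the internal route, \emph{(A3)} follows from Proposition~\ref{prop:app_bounded_freq} and Lemma~\ref{lem:app_rn_lap} at $Q=0$ together with the supplied Kerr high-frequency estimate. The supplied no-loss reconstruction gives \emph{(A4)}, consistently with Proposition~\ref{prop:same_order_reconstruction}. The supplied radiation estimates give \emph{(A5)}, or alternatively it follows from Proposition~\ref{prop:backward_from_lap}(b).

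With \emph{(A1)}--\emph{(A5)} in hand, Theorem~\ref{thm:main} yields the charge decomposition, \eqref{eq:main_energy}, the radiation fields, the wave operators, and the bounded scattering operator. The pointwise statement follows as well whenever \emph{(A6)} is available. The charge part needs no hypothesis: Theorem~\ref{thm:intro_charge_decomposition} holds at $Q=0$, where $F_e$ is the Kerr Coulomb field of Lemma~\ref{lem:stationary_representatives}.

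The step I expect to need the most care is the ``in particular'' recovery claim. Here the plan is only to check that the norms and estimates in \cite{BenomioTdC,SRTdCfrequency,SRTdCphysical,AnderssonBlueMaxwell} imply the displayed forms in Definition~\ref{def:slowweak_master_framework} at the same order $k$. This means checking that their energies are comparable to $\E_M^{(k)}$ and that their local-energy norms dominate the degenerate norms of Definition~\ref{def:concrete_le_norms}, restricted to $|a|\le\eps_a(k)M$. The comparison is by equivalence of norms and restriction to a subrange, exactly as in Corollary~\ref{cor:slow_kerr_subcase}, so the argument is consistent with the known subextremal results rather than a new proof of them.
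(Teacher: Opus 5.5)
Your proposal is correct and follows the same route as the paper. You set $Q=0$, so the comparison model is Schwarzschild and the charge sector is the Kerr Coulomb pair; you let the supplied Kerr spin-one theory furnish conditions \emph{(A1)}--\emph{(A5)} of Definition~\ref{def:slowweak_master_framework}; and you substitute into Theorem~\ref{thm:main}, with your norm-comparison check for the cited Kerr results being more explicit than the paper's one-line appeal. One small slip is harmless, since only the $O(r^{-3})$ decay is used: at $Q=0$, Lemma~\ref{lem:app_inverse_expansion} gives $G_1^{t\phi}=-2M/\bigl(r^2(r-2M)\bigr)$, and $-2M/r^3$ is only its leading term as $r\to\infty$.
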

\begin{proof}
Let us set $Q=0$. Then the Reissner-Nordstr\"om comparison model becomes
Schwarzschild, and all charged spherical terms in the perturbation formulas
vanish. The stationary charge sector reduces to the electric and magnetic Kerr
Coulomb representatives. The fixed-background Kerr Maxwell theory cited in the
statement supplies the spin-one estimates, real-axis exclusion, wave operators
and same-order reconstruction needed in
Definition~\ref{def:slowweak_master_framework}. Substitution of these estimates
into Theorem~\ref{thm:main} gives the claimed slowly rotating Kerr result.
\end{proof}

\begin{proposition}
\label{prop:coupled_not_enough}
A boundedness or decay theorem for the coupled linearized Einstein-Maxwell
system on Kerr-Newman does not by itself yield the fixed-background Maxwell
theorem above.
\end{proposition}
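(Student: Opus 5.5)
The statement is a non-implication claim, so the plan is to exhibit the structural mismatch between the two problems and then construct, or at least identify, a concrete obstruction. I would formalize ``yields'' as follows. A coupled theorem gives bounds for solutions $(h,\dot F)$ of the linearized Einstein--Maxwell system around $(g_{\KN},F^{\mathrm{bg}})$, where $F^{\mathrm{bg}}=Q\widehat F_e$. These bounds are modulo pure-gauge solutions and the linearized Kerr--Newman family, and they are stated in gauge-invariant or gauge-fixed norms. The fixed-background theorem concerns solutions of $\dd\dot F=0$, $\dd\star_{g_{\KN}}\dot F=0$ with $h=0$.

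The first step is to show that setting $h=0$ is not a consistent truncation when $Q\ne0$. The linearized Einstein equation contains the term $\dot{\Tstress}=2F^{\mathrm{bg}}\odot\dot F-\tfrac12 g\langle F^{\mathrm{bg}},\dot F\rangle$, which is linear in $\dot F$ with coefficient $Q$. A pair $(0,\dot F)$ therefore solves the coupled system only if $\dot{\Tstress}[\dot F]=0$. I would then show that this forces $\dot F=0$ for charge-free finite-energy $\dot F$. On Reissner--Nordstr\"om with $\ell\ge1$ this is a pointwise algebraic check: $\dot{\Tstress}$ picks up both $\rho_{\dot F}$ and the extreme components against the radial background field. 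The case $a\ne0$ then follows by continuity and smallness in $a$. The conclusion is that the fixed-background solutions are not a subclass of coupled solutions, so a coupled theorem says nothing about them directly.

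The second step handles the converse route, namely decoupling the equations by projecting the coupled theorem. I would observe that the Maxwell part of a coupled solution satisfies $\dd\star_g\dot F=-\dd\dot\star\,F^{\mathrm{bg}}$, where $\dot\star$ is the linearized Hodge star. This is a Maxwell equation with a source built from $h$. Any fixed-background conclusion would require a solution operator in which the source vanishes. However, the data map of the coupled Cauchy problem, with constraints coupling $\dot F$ to $h$, does not surject onto charge-free Maxwell data with $h\equiv0$. The constraint $G_{\mu\nu}n^\mu n^\nu$ linearized includes $\langle F^{\mathrm{bg}},\dot F\rangle$, and generic Maxwell data violate it at $h=0$. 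Decay of coupled solutions therefore gives no estimate on the fixed-background evolution. In addition, the coupled theorems are stated only modulo linearized Kerr--Newman modes, whose Maxwell components include $\delta Q\,\widehat F_e$ and $\delta a$-derivatives of $F^{\mathrm{bg}}$. These are exactly the stationary non-decaying objects whose status differs between the two problems.

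The main obstacle is making ``does not by itself yield'' rigorous rather than heuristic, since one cannot rule out every clever deduction. I would settle for the precise form implied by the surrounding text. Either exhibit the explicit constraint incompatibility, which shows the fixed-background solution space is not contained in the coupled one, or note that the $Q=0$ case is trivially decoupled. That contrast shows that implication can only occur through the $Q\,F^{\mathrm{bg}}$ coupling terms, which are precisely what breaks it.
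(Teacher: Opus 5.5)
Your proposal is correct and follows essentially the same route as the paper. The paper's proof likewise rests on the structural mismatch (extra unknowns $\dot g$, gauge freedom, constraints, linearized Kerr--Newman modes) and on the fact that $\dot g=0$ is not a consistent truncation, because the linearized Einstein equation then forces $\dot{\Tstress}[\dot F]=0$; your computation makes that one-line remark explicit.

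One simplification: for $Q\neq0$ you need no continuity argument in $a$. In the principal frame, $\dot{\Tstress}[\dot F]=0$ is pointwise equivalent to $\dot\phi_0=\dot\phi_2=0$ and $\operatorname{Re}(\dot\phi_1\bar\phi_1)=0$. The Newman--Penrose Maxwell equations then force $\dot\phi_1/\phi_1$ to be constant, so $\dot F$ is a Coulomb field, which charge-freeness excludes.
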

\begin{proof}
The coupled system has unknowns $(\dot g,\dot F)$, gauge freedom, constraints,
pure-gauge modes, and linearized stationary Kerr-Newman modes. The
fixed-background equation has only $F$, its two charges, and its reconstruction
problem. Setting $\dot g=0$ does not recover free Maxwell, since the linearized
Einstein equation constrains the variation of the electromagnetic
energy-momentum tensor. A coupled theorem is useful here only after one
constructs the fixed-background master map, proves the master estimate, excludes
the fixed-background kernel, and reconstructs $F$ at the same order, which are
the tasks of Sections~\ref{sec:first_principles_master}-\ref{sec:scattering}.
\end{proof}

\section*{Statements and Declarations}
\paragraph{Funding.}
This research was funded by the Indonesian Endowment Fund for Education (LPDP), on behalf of the Indonesian Ministry of Higher Education, Science and Technology, and managed under the EQUITY Program (Contract No. 4298/B3/DT.03.08/2025).

\paragraph{Competing Interests.}
On behalf of all authors, the corresponding author states that there is no conflict of interest.

\paragraph{Data Availability.}
This manuscript does not include associated numerical or experimental data.  The arguments are theoretical and use the equations, reductions, estimates, and cited background literature presented in the manuscript.

\paragraph{Author Contributions.}
BEG proposed the project, developed the theoretical formalism, performed the analytic calculations and supervised the project. M and ESF performed the analytic calculations and investigated the project. FTA  performed the analytic calculations and supervised the project. All authors wrote and reviewed the paper. The corresponding author will handle communication with the journal.

\end{document}